\documentclass[
10pt, 
a4paper, 
oneside, 
headinclude,footinclude, 
]{scrartcl}

%
%
%
%


\usepackage[
nochapters, 
pdfspacing, 
dottedtoc 
]{classicthesis} 
\usepackage{amsopn}
\usepackage[T1]{fontenc} 
\usepackage{longtable}
\usepackage{multirow}
\usepackage[utf8]{inputenc} 
\usepackage{hhline}
\usepackage{graphicx} 
\graphicspath{{Figures/}} 
\usepackage{indentfirst}
\usepackage{enumitem} 
\usepackage{savesym}
\usepackage{mathrsfs}
\usepackage{xcolor}
\usepackage{geometry}
\geometry{hmargin={1.7cm,1.7cm},vmargin={2cm,2cm},includehead,includefoot}

\usepackage[
    backend=biber,
    style=numeric,
    natbib=false,
    url=false, 
    doi=false,
    eprint=false,
    maxnames=50
]{biblatex}

\usepackage{esint}
\usepackage{subfig} 

\usepackage{amsmath,amssymb,amsthm,amsfonts} 

\usepackage{varioref} 

\usepackage{thmtools}
\usepackage{thm-restate}

\usepackage{hyperref}
\newcommand{\vertiii}[1]{{\left\vert\kern-0.25ex\left\vert\kern-0.25ex\left\vert #1 
    \right\vert\kern-0.25ex\right\vert\kern-0.25ex\right\vert}}

\theoremstyle{plain}
\newtheorem{teorema}{Theorem}[section]
\newtheorem{proposizione}[teorema]{Proposition}
\newtheorem{lemma}[teorema]{Lemma}
\newtheorem{corollario}[teorema]{Corollary}
\newtheorem*{theorem*}{Theorem}

\theoremstyle{definition}
\newtheorem{definizione}{Definition}[section]

\theoremstyle{remark}
\newtheorem{osservazione}{Remark}[section]

\newcommand{\N}{\mathbb{N}}

\newcommand{\R}{\mathbb{R}}

\newcommand{\HH}{\mathbb{H}}

\newcommand{\Tan}{\text{Tan}}
\newcommand{\res}


\DeclareMathOperator*{\supp}{supp}

\DeclareMathOperator*{\diam}{diam}

\DeclareMathOperator*{\dist}{dist_{eu}}

\hypersetup{
colorlinks=true, breaklinks=true,bookmarksnumbered,
urlcolor=webbrown, linkcolor=RoyalBlue, citecolor=webgreen, 
pdftitle={}, 
pdfauthor={\textcopyright}, 
pdfsubject={}, 
pdfkeywords={}, 
pdfcreator={pdfLaTeX}, 
pdfproducer={LaTeX with hyperref and ClassicThesis} 
} 


\addbibresource{ref.bib}

\title{\normalfont\spacedallcaps{Geometry of $1$-codimensional measures in Heisenberg groups}} 

\author{\spacedlowsmallcaps{Andrea Merlo\textsuperscript{*}}} 

\date{} 


\begin{document}


\renewcommand{\sectionmark}[1]{\markright{\spacedlowsmallcaps{#1}}} 
\lehead{\mbox{\llap{\small\thepage\kern1em\color{halfgray} \vline}\color{halfgray}\hspace{0.5em}\rightmark\hfil}} 

\pagestyle{scrheadings} 


\maketitle 

\setcounter{tocdepth}{2} 




\paragraph*{Abstract} 
This paper is devoted to the study of tangential properties of measures with density in the Heisenberg groups $\HH^n$. Among other results we prove that measures with $(2n+1)$-density have only flat tangents and conclude the classification of uniform measures in $\HH^1$.

\paragraph*{Keywords} Preiss's rectifiability Theorem, Heisenberg groups, density problem.

\paragraph*{MSC (2010)} 28A75, 28A78, 53C17.

{\let\thefootnote\relax\footnotetext{* \textit{Université Paris-Saclay, 307 Rue Michel Magat Bâtiment, 91400 Orsay, France.\\
A. M. is supported by the Simons
Foundation grant 601941, GD.}}}

\section{Introduction}

In Euclidean spaces the notion of rectifiability of a measure is linked to the metric by the celebrated:

\begin{teorema}[Preiss, \cite{Preiss1987GeometryDensities}]\label{Preiss}
Suppose $0\leq m\leq n$ are integers, $\phi$ is a Radon measure on $\R^n$ and:
\begin{equation}
    0<\Theta^m(\phi,x):=\lim_{r\to 0}\frac{\phi(U_r(x))}{r^m}<\infty\qquad \text{at }\phi\text{-almost every }x,
    \label{eq_3}
\end{equation}
where $U_r(x)$ is the Euclidean ball of center $x$ and radius $r$. Then $\phi$ is $m$-rectifiable, i.e.,  $\phi$ is absolutely continuous with respect to the $m$-dimensional Hausdorff measure $\mathcal{H}^m$ and $\phi$-almost all of $\R^n$ can be covered by countably many $m$-dimensional Lipschitz submanifolds of $\R^n$.
\end{teorema}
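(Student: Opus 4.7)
The plan is to reduce the statement to a structural property of \emph{tangent measures}. Under the density hypothesis, standard compactness arguments produce, at $\phi$-almost every $x$, a nonempty set $\Tan(\phi,x)$ of tangent measures to $\phi$ at $x$, and every $\nu\in\Tan(\phi,x)$ turns out to be $m$-\emph{uniform}, i.e.\ $\nu(U_r(y))=c\,r^m$ for some constant $c>0$ and all $y\in\supp(\nu)$, $r>0$. Consequently it suffices to prove: (i) at $\phi$-a.e.\ $x$, every $\nu\in\Tan(\phi,x)$ is \emph{flat}, namely a constant multiple of $\mathcal{H}^m$ restricted to an affine $m$-plane; and (ii) a Radon measure whose tangent measures are $\phi$-a.e.\ flat is $m$-rectifiable.

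Part (ii) is the Marstrand--Mattila rectifiability criterion, a now well-established piece of geometric measure theory: flatness of tangents yields, through a big-pieces Lipschitz graph construction, countably many $m$-dimensional Lipschitz images that cover $\phi$-a.e.\ point. Part (i) is where the genuine work lies. I would first establish Marstrand's theorem, that the existence of an $m$-uniform measure on $\R^n$ forces $m\in\{0,1,\dots,n\}$, by a blow-up argument at a point of the support. I would then study the geometry of the support of a general $m$-uniform measure $\nu$ via its \emph{moments}
\[
b_k(\nu,r):=\int e^{-|y|^2/r^2}\,P_k(y)\,d\nu(y),
\]
for an appropriate family of polynomials $P_k$; the key algebraic identity is that these moments satisfy exact recursion relations that force $\supp(\nu)$ to be a real-analytic variety, on which one can control curvature quantitatively.

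The main obstacle, and the heart of Preiss's argument, is showing that for $\phi$-a.e.\ $x$ every tangent measure at $x$ is flat. The set $\Tan(\phi,x)$ is invariant under taking further tangents and under scalar dilations, and it is connected in the topology of weak-$\ast$ convergence of Radon measures. I would first prove that the blow-\emph{down} of any $m$-uniform measure, i.e.\ its tangent at infinity, is flat, by a delicate analysis of the leading-order behavior of the second moment $\int \mathrm{dist}(y,V)^2\,d\nu(y)$ over trial $m$-planes $V$. A connectedness argument then propagates flatness from infinity to every scale: if some scale carried a non-flat tangent, interpolation in $\Tan(\phi,x)$ would produce a non-flat $m$-uniform measure whose tangent at infinity is flat, contradicting the moment estimate.

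With flatness of tangent measures established $\phi$-a.e., (i) and (ii) combine to yield the theorem. I expect the second-moment analysis and the propagation of flatness from infinity to finite scales to be by far the most delicate step, as it is here that one must exclude exotic $m$-uniform measures (such as the Kowalski--Preiss light cone in $\R^4$) from appearing as tangents on a set of positive $\phi$-measure, even though they exist as free-standing examples.
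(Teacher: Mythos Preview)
The paper does not prove this theorem: it is quoted as a known result of Preiss (1987) and serves only as background and motivation for the paper's actual contribution, which is the Heisenberg analogue (Theorem~\ref{main}). There is therefore no ``paper's own proof'' to compare against.

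That said, your outline is a faithful sketch of Preiss's original strategy, and it matches the brief discussion the paper gives immediately after the statement: the hard step is establishing the inclusion $\Tan(\phi,x)\subseteq \Theta^m(\phi,x)\{\mathcal{H}^m\llcorner V\}$ at $\phi$-a.e.\ $x$, after which the Marstrand--Mattila rectifiability criterion (Theorem~5.1 in \cite{DeLellis2008RectifiableMeasures}) yields rectifiability. Your identification of moments, uniqueness and flatness of the tangent at infinity, and the connectedness/propagation argument as the key ingredients is correct, and your remark about excluding exotic uniform measures like the Kowalski--Preiss cone is exactly the issue. One small caveat: your description of the blow-down step (``the tangent at infinity of any $m$-uniform measure is flat'') is not quite how Preiss's argument runs; rather, one shows that \emph{if} a uniform measure has a flat tangent at infinity then it is itself flat, and then uses connectedness of $\Tan(\phi,x)$ together with the existence of \emph{some} flat tangent to rule out non-flat tangents. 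The paper in fact reproduces precisely this architecture in the Heisenberg setting (properties \ref{(uno)} and \ref{(due)} in Section~\ref{section:dis}).
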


The strategy of the proof of Theorem \ref{Preiss} could be ideally divided into three steps. First of all, one asks if the existence of the limit \eqref{eq_3} implies that $m$ is an integer. This is actually the case and it was proven by J. Marstrand in \cite{Marstrandoriginal}. Secondly, one faces the question of whether the regularity yielded by \eqref{eq_3} is sufficient to give some \emph{geometric} information on the local structure of the measure $\phi$. This difficult task was carried out by D. Preiss in \cite{Preiss1987GeometryDensities} where he proved that if \eqref{eq_3} holds, then $\phi$ has flat tangents, i.e.:
\begin{equation}
\Tan(\phi,x)\subseteq \Theta^m(\phi,x)\{\mathcal{H}^m\llcorner V:V\text{ is an }m\text{-plane}\}\text{ at } \phi\text{-almost every }x\in\R^n.
\label{flatter}
\end{equation}
Finally, the third step is to patch together the point-wise information given by \eqref{flatter} to infer that the measure $\phi$ is supported on an $m$-dimensional rectifiable set. This was obtained by successive steps by J. Marstrand and P. Mattila in \cite{Marstrand61} and \cite{mattila75}. For a modern account on these type of results, now known as \emph{Marstrand-Mattila rectifiability criteria}, we refer to \cite[Chapter 5]{DeLellis2008RectifiableMeasures}.

The most difficult part of the above argument is by far the second step and it is worth mentioning that all the techniques employed by D. Preiss invoke in a substantial way the algebraic structure of the Euclidean distance. So much so that if we replace the Euclidean distance defining $U_r(x)$ in \eqref{eq_3} with some other norm on $\R^n$, at the moment we are writing we do not even know if Marstrand's theorem holds.
The only progress in this direction, to our knowledge, was done by A. Lorent, who proved that $2$-locally uniform measures in $\ell^3_\infty$ are rectifiable, see \cite[Theorem 5]{LORENT2003RECTIFIABILITYDENSITY}. As one should expect, although the assumption of local uniformity is far stronger than the mere existence of the density, already in this strengthened hypothesis the proof is really intricate and even in this case the shape of the balls plays a fundamental role in the computations.

In this paper we investigate to what extent the local structure of $1$-codimensional measures in $\HH^n$ is affected by their regular behaviour on Koranyi balls. Although the Heisenberg group shares many similarities with the underlying Euclidean space $\R^{2n+1}$, it has Hausdorff dimension $2n+2$ and it is a $k$-purely unrectifiable metric space for any $k\in\{n+1,\ldots,2n+2\}$, i.e. for any compact set $K\subseteq \R^k$ and any Lipschitz function $f:K\to(\HH^n,\lVert\cdot\rVert)$ we have:
$$\mathcal{H}^k_{\lVert\cdot\rVert}(f(K))=0,$$
where $\mathcal{H}^k_{\lVert\cdot\rVert}$ is Hausdorff measure associated to the Koranyi norm, see for instance  \cite[Theorem 7.2]{Ambrosio2000RectifiableSpaces} or \cite[Theorem 1.1]{MR2105335}. 
This degeneracy of the structure of $\HH^n$ poses a big obstacle to extending many Euclidean results and definitions to the context of the Heisenberg groups, or in bigger generality to Carnot groups. In particular it is not a priori clear what the correct notion of rectifiability should be, or even if there is one. In the paper \cite{Serapioni2001RectifiabilityGroup} B. Franchi, R. Serapioni and F. Serra Cassano, introduced an intrinsic notion of rectifiability in $\HH^n$. In their definition a set $E\subseteq \HH^n$ is said to be $\mathcal{C}^1_\HH$-\emph{rectifiable} if for $\mathcal{H}^{2n+1}_{\lVert\cdot\rVert}$-almost every $x\in E$ there exists a $(2n+1)$-dimensional homogeneous subgroup $V_x$ such that:
$$\Tan_{2n+1}(\mathcal{H}^{2n+1}_{\lVert\cdot\rVert}\llcorner E,x)=\{\mathcal{H}^{2n+1}_{\lVert\cdot\rVert}\llcorner V_x\},$$
and $\Theta^{2n+1}_*(\mathcal{H}^{2n+1}_{\lVert\cdot\rVert}\llcorner E,x)>0$ almost everywhere. 
This definition has the considerable feature of making the recovery of De Giorgi's rectifiability theorem of boundaries of finite perimeter sets possible in $\HH^n$, see \cite{Serapioni2001RectifiabilityGroup}.
In the recent preprint \cite{antonelli2020rectifiable}, the author of this work in collaboration with G. Antonelli, inspired by the above characterisation of $\mathcal{C}^1_\HH$-rectifiable sets prove that in arbitrary Carnot groups the following holds: if a measure $\phi$ has a (almost everywhere) unique tangent of dimension $h$ and it has positive $h$-lower density, then the density exists almost everywhere. This suggests that at every codimension and in any Carnot group, measures with unique tangents and positive lower density may be characterized by the coincidence of the lower and upper density. For further details on this we refer to \cite[Section 3]{merloMM} and to \cite{antonelli2020rectifiable}.

This paper addresses the question of whether the definition of $\mathcal{C}^1_\HH$-rectifiability given by Franchi, Serapioni and Serra Cassano can be characterised by the metric in a similar way as rectifiability is in the Euclidean spaces. In other words we are interested in determining if a result in the spirit of Theorem \ref{Preiss} is available in $\HH^n$, where \emph{rectifiability} is replaced with $\mathcal{C}^1_\HH$-\emph{rectifiability}.
In particular, the main goal of this paper is to prove the analogue of the inclusion \eqref{flatter} in the Heisenberg groups:

\begin{teorema}\label{main}
Suppose $\phi$ is a Radon measure in $\HH^n$ such that for $\phi$-almost every $x\in\HH^n$ we have:
\begin{equation}
    0<\Theta^{2n+1}(\phi,x):=\lim_{r\to 0}\frac{\phi(B_r(x))}{r^{2n+1}}<\infty,
    \label{eq_2}
\end{equation}
where $B_r(x)$ is the Koranyi ball. Then for $\phi$-almost every $x\in\HH^n$:
$$\Tan_{2n+1}(\phi,x)\subseteq \Theta^{2n+1}(\phi,x)\mathfrak{M}(2n+1),$$
where $\mathfrak{M}(2n+1)$ is the family of the Haar measures of $(2n+1)$-homogeneous subgroups of $\HH^n$ which assign measure $1$ to the unit ball.
\end{teorema}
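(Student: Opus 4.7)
My plan is to adapt Preiss's Euclidean scheme to the Heisenberg setting. The overall strategy is to show that every tangent measure $\psi\in\Tan_{2n+1}(\phi,x)$ is, for $\phi$-almost every $x$, a \emph{uniform} measure at the density $\theta:=\Theta^{2n+1}(\phi,x)$, i.e.\ $\psi(B_r(y))=\theta\,r^{2n+1}$ for every $y\in\supp\psi$ and every $r>0$; and then to classify such uniform measures as Haar measures of $(2n+1)$-dimensional homogeneous subgroups normalised to give mass $1$ to the Koranyi unit ball, thereby placing every element of $\Tan_{2n+1}(\phi,x)$ inside $\theta\,\mathfrak{M}(2n+1)$.

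\textbf{Step 1: uniformity of tangents.} I would first establish the \emph{tangent-of-a-tangent} principle in $\HH^n$: since Koranyi balls are covariant under the intrinsic dilations $\delta_\lambda$ and under left translations, the set $\Tan_{2n+1}(\phi,x)$ is stable at $\phi$-a.e.\ $x$ under further blow-ups at any point of the support and at any scale. Combined with \eqref{eq_2}, this forces $\Theta^{2n+1}(\psi,\cdot)\equiv\theta$ on $\supp\psi$, and the cone/invariance structure of $\Tan_{2n+1}$ upgrades the asymptotic identity to a genuine equality $\psi(B_r(y))=\theta\,r^{2n+1}$ at every scale $r>0$ and every $y\in\supp\psi$: every tangent measure is uniform with constant $\theta$.

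\textbf{Step 2: rigidity through moments.} To analyse uniform measures, I would introduce a Koranyi analogue of Preiss's heat-type moments,
\[F_s(y) := \int_{\HH^n} e^{-s\lVert y^{-1}z\rVert^2}\,d\psi(z),\qquad s>0,\]
and exploit uniformity to show that $F_s$ is a function of $s$ alone on $\supp\psi$. Expanding $F_s$ both as $s\to 0^+$ and as $s\to+\infty$ produces a family of integral identities for the Koranyi moments of $\psi$; differentiating them along the horizontal and vertical left-invariant vector fields of $\HH^n$ translates these identities into algebraic constraints on $\supp\psi$, forcing invariance under the intrinsic dilations $\delta_\lambda$, closure under the Heisenberg product by its own points, and ultimately the structure of a homogeneous subgroup in exponential coordinates.

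\textbf{Step 3: conclusion and main obstacle.} Once $\supp\psi$ is shown to be a $(2n+1)$-dimensional homogeneous subgroup $V\subseteq\HH^n$, uniformity together with the normalisation of Haar measure yields $\psi=\theta\,\mathcal{H}^{2n+1}\llcorner V\in\theta\,\mathfrak{M}(2n+1)$, closing the argument. The main obstacle is Step 2: unlike the Euclidean ball, the Koranyi ball is neither convex nor rotationally symmetric, and the Koranyi norm is non-smooth along the characteristic directions, so the clean Taylor expansions which in $\R^n$ force the support onto an affine plane become substantially more delicate. The non-trivial interplay between horizontal and vertical directions in the Heisenberg Lie algebra must be handled carefully in order to rule out uniform measures whose support is a $\delta_\lambda$-invariant cone but fails to be a subgroup; I expect this step to absorb most of the new technical work.
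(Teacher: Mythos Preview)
Your Step~1 is fine and matches the paper. The genuine gap is Step~2: you propose to show that \emph{every} $(2n+1)$-uniform measure has support equal to a homogeneous subgroup. That is a much stronger statement than the theorem requires, and there is no reason to expect it to be true. Already in $\R^n$ it fails: the Kowalski--Preiss cone $\{x_1^2+x_2^2+x_3^2=x_4^2\}$ supports a $3$-uniform measure that is not flat. Preiss's theorem does \emph{not} proceed by classifying uniform measures; it proceeds by showing that \emph{among the tangents to $\phi$ at a typical point}, only the flat ones can occur. Your moment expansion will at best pin down $\supp\psi$ to a quadric (this is what the paper actually gets from moments), not to a subgroup.

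What the paper does instead is a disconnection argument. First, moments (with the polynomial weight $e^{-s\lVert z\rVert^4}$, not $e^{-s\lVert z\rVert^2}$, since $\lVert\cdot\rVert^4$ is a polynomial) place $\supp\psi$ inside an explicit quadric $\mathbb{K}(b,\mathcal{Q},\mathcal{T})$. Second, a structure theorem shows that $\supp\psi$ fills up connected components of the quadric minus its singular set. These two facts are then used to prove the key properties (i) flat tangent at infinity implies flat, and (ii) uniqueness of the tangent at infinity. Finally one constructs a continuous functional $\mathscr{F}$ on Radon measures and a threshold $\hbar>0$ such that $\mathscr{F}\le\hbar/2$ on flat measures while $\mathscr{F}>\hbar$ on every non-flat $(2n+1)$-uniform \emph{cone}; together with (i), (ii), the tangent-of-a-tangent principle, and the connectedness in scale of $r\mapsto r^{-(2n+1)}\phi_{x,r}$, this forces all tangents to be flat. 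Your proposal is missing this entire layer: the reduction to cones, the flatness-at-infinity rigidity, and the separating functional. Without it the argument cannot close, because the direct classification you aim for in Step~2 is likely false.
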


Theorem \ref{main} in conjunction with the Marstrand-Mattila rectifiability criterion for $\mathcal{C}^1_\HH$-rectifiable measures
\cite[Theorem 3]{merloMM}, yield the first extension beyond the Euclidean spaces of Preiss's rectifiability theorem, positively answering the question posed above:

\begin{teorema}\label{main:preiss}
Suppose $\phi$ is a Radon measure on the Heisenberg group $\HH^n$ such that for $\phi$-almost every $x\in\HH^n$, we have:
$$0<\Theta^{2n+1}(\phi,x):=\lim_{r\to 0}\frac{\phi(B_r(x))}{r^{2n+1}}<\infty,$$
where $B_r(x)$ are the metric balls relative to the Koranyi metric.
Then $\phi$ is absolutely continuous with respect to $\mathcal{H}^{2n+1}$ and $\HH^n$ can be covered $\phi$-almost all with countably many $C^1_{\mathbb{\HH}}$-regular surfaces.
\end{teorema}

For completeness, we remark that if some further hypothesis on the measure $\phi$ are allowed, there are a couple of results in literature in the spirit of Theorem \ref{Preiss} in non-Euclidean metric spaces. Besides the already mentioned rectifiability result by A. Lorent for locally $2$-uniform measure in $\ell^3_\infty$, it is worth mentioning the paper \cite{preisstiserBesicovitch} by D. Preiss and J. Ti\v{s}er where the authors tackle the metric analogue of the $1/2$-Besicovitch's problem.

The study of the density problem in the Heisenberg groups was started in 2015 by V. Chousionis and J. Tyson in \cite{Chousionis2015MarstrandsGroup}, where it has been proved that if $\phi$ is a Radon measure on $\HH^n$ such that:
\begin{equation}
    0<\Theta^{\alpha}(\phi,x):=\lim_{r\to 0}\frac{\phi(B_r(x))}{r^\alpha}<\infty \qquad \text{for }\phi\text{-a.e. }x\text{,}
    \label{besi}
\end{equation}
where $B_r(x)$ is the Koranyi ball, then $\alpha\in\{0,\ldots,2n+2\}$. If a Radon measure satisfies the condition \eqref{besi} for a certain $\alpha$, in the following we will say that $\phi$ has $\alpha$-density. This was done, very much as in the Euclidean spaces, by proving that \eqref{besi} implies that $\phi$-almost everywhere tangent measures to $\phi$ are $\alpha$-uniform measures, see Definition \ref{uniform}, and that  the support of such $\alpha$-uniform measures are analytic manifolds. The same strategy with minor modifications works in general Carnot groups when endowed with a left invariant polynomial norm. 

The development of those ideas allowed Chousionis, Tyson and Magnani in \cite{ChousionisONGROUP} to characterise $1$ and $2$ uniform measures in $\HH^1$ and to prove that vertically ruled $3$-uniform measures are the Haar measure of $3$-dimensional homogeneous subgroups of $\HH^1$. As a byproduct of our analysis we complete the characterisation of uniform measures in $\HH^1$, see Section \ref{conclusioni}.

We present here a survey of the strategy of the proof of Theorem \ref{main}, giving for each key step a brief discussion of the ideas involved. In Section \ref{sezione1}, we prove that the support of a uniform measure $\mu$ is contained in a quadratic surface. This is a result in the spirit of \cite[Theorem 17.3 ]{Mattila1995GeometrySpaces} (cfr. also with \cite[Theorem 4.1]{Kowalski1986Besicovitch-typeSubmanifolds}):

\begin{teorema}\label{T:1}
Let $\alpha\in\{1,\ldots,2n+1\}\setminus\{2\}$ and suppose that $\mu$ is an $\alpha$-uniform measure. Then, there are $b\in\R^{2n}$, $\mathcal{T}\in\R$ and $\mathcal{Q}\in\mathrm{Sym}(2n)$ with $\text{Tr}(\mathcal{Q})\neq 0$ such that:
$$\supp(\mu)\subseteq\mathbb{K}(b,\mathcal{Q},\mathcal{T}):=\{(x,t)\in\R^{2n+1}:\langle b,x\rangle+\langle x,\mathcal{Q} x\rangle+\mathcal{T}t=0\}.$$
\end{teorema}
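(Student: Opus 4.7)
The plan is to adapt the Kowalski--Preiss scheme (cf.\ Theorem 17.3 in Mattila's book) to the Koranyi geometry. After a left-translation assume $0 \in \supp(\mu)$, so $\alpha$-uniformity gives $\mu(B_r(y)) = c\,r^\alpha$ for every $y \in \supp(\mu)$ and every $r > 0$. The central tool is the moment generating function
$$F(x,s) := \int_{\HH^n} e^{-s\|x^{-1}y\|^4}\,d\mu(y),\qquad s > 0.$$
Layer-cake together with uniformity yields $F(x,s) = c\,\Gamma(\alpha/4+1)\,s^{-\alpha/4} =: K_\alpha s^{-\alpha/4}$ for every $x\in\supp(\mu)$, so the analytic function $x\mapsto F(x,s) - K_\alpha s^{-\alpha/4}$ vanishes identically on $\supp(\mu)$ for every $s > 0$.

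To extract polynomial information I would exploit the polynomial nature of the Koranyi distance: writing $x=(a,\tau)$, $y=(z,t)$, one has
$$\|x^{-1}y\|^4 = |z-a|^4 + 16(t-\tau-2\omega(a,z))^2,$$
so $\Delta(x,y) := \|x^{-1}y\|^4 - \|y\|^4$ is a polynomial in $x$ vanishing at $x=0$. Factoring $e^{-s\|x^{-1}y\|^4} = e^{-s\|y\|^4}\,e^{-s\Delta(x,y)}$ and Taylor-expanding the second factor, the constancy of $F$ on $\supp(\mu)$ is equivalent to
$$\sum_{k\ge 1}\frac{(-s)^k}{k!}\int\Delta(x,y)^k\,e^{-s\|y\|^4}\,d\mu(y) = 0\qquad\forall x\in\supp(\mu),\, s > 0,$$
each summand being polynomial in $x$ with $s$-dependent moment coefficients. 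Decomposing $\Delta$ by Heisenberg weight (giving $a_j$ weight $1$ and $\tau$ weight $2$) produces the weight-$1$ piece $\Delta^{(1)} = -4|z|^2\langle a,z\rangle - 64\,t\,\omega(a,z)$ and the weight-$2$ piece $\Delta^{(2)} = -32\,t\,\tau + 2|z|^2|a|^2 + 4\langle a,z\rangle^2 + 64\,\omega(a,z)^2$. Gathering all Heisenberg-weight-$\le 2$ contributions to the identity above -- the $k=1$ term against $\Delta^{(1)}+\Delta^{(2)}$ and the $k=2$ term against $(\Delta^{(1)})^2$ -- yields a polynomial of precisely the prescribed form
$$P_s(a,\tau) = \langle b_s,a\rangle + \langle a,\mathcal{Q}_s\,a\rangle + \mathcal{T}_s\,\tau$$
whose coefficients are explicit moments of $e^{-s\|y\|^4}d\mu$. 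To upgrade this formal weight-$\le 2$ expansion into the global vanishing $\supp(\mu)\subseteq\{P_s = 0\}$, I would first pass to a tangent measure $\nu$ of $\mu$ at $0$: since $\nu$ is still $\alpha$-uniform and is now dilation-invariant, $\supp(\nu)$ is $\delta_r$-invariant, which forces each Heisenberg-weight component of the identity to vanish separately on $\supp(\nu)$, in particular $P_s = 0$ there. Running the argument after left-translation at every base point of $\supp(\mu)$ and invoking analyticity of $F$ in $x$ then transfers the polynomial identity to $\mu$ itself. Finally, the condition $\text{Tr}(\mathcal{Q})\ne 0$ is checked by an explicit computation of the quadratic-in-$a$ moments defining $\mathcal{Q}_s$; the only exponent at which this trace vanishes identically turns out to be $\alpha = 2$, which explains the exclusion.

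The main obstacle is the passage from the formal Heisenberg-weight-$\le 2$ Taylor identity at the origin to a genuine global polynomial equation valid on all of $\supp(\mu)$: this requires careful bookkeeping of how the weight-$k$ contributions for $k\ge 3$ interact with the weight-$\le 2$ part and a delicate tangent-measure / analytic-continuation step that transports the identity from the dilation-invariant $\nu$ back to $\mu$. A secondary but still subtle point is the trace computation: one must track the dependence on $\alpha$ through all contributing moments precisely enough to isolate the obstruction that vanishes exactly at $\alpha = 2$.
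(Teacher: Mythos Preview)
Your moment setup --- the generating function $F(x,s)$, the polarization $\Delta=\|x^{-1}y\|^4-\|y\|^4$, and the Heisenberg-weight decomposition into pieces homogeneous of degree $1,2,3$ in $x$ --- matches the paper exactly (there these pieces are called $L,Q,T$). The difficulties you flag are real, but your proposed resolutions of both are not correct.

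\textbf{Construction of the quadric.} The weight-$\le 2$ polynomial $P_s$ does \emph{not} vanish on $\supp(\mu)$ for any finite $s$; there is always an error coming from the higher-weight contributions. Your plan to kill the error by passing to a dilation-invariant tangent $\nu$ at $0$ gives a quadric for $\nu$, not for $\mu$, and there is no analytic-continuation mechanism that transfers the equation back: the coefficients $b_s,\mathcal{Q}_s,\mathcal{T}_s$ are moments of $\mu$ and of $\nu$ respectively, and these are different measures. The paper instead proves a quantitative bound
\[
\bigl|\langle b(s),u_H\rangle+\langle\mathcal{Q}(s)u_H,u_H\rangle+\mathcal{T}(s)u_T\bigr|\le s^{1/4}\lVert u\rVert^3 B'(s^{1/4}\lVert u\rVert)
\]
directly for $\mu$ (by estimating every term of weight $\ge 3$ individually), shows the curves $\mathcal{Q}(s),\mathcal{T}(s)$ are bounded as $s\to 0$, and then extracts a subsequential limit $(\overline b,\overline{\mathcal{Q}},\overline{\mathcal{T}})$: since the right-hand side tends to $0$, the limit polynomial vanishes on $\supp(\mu)$. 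No tangent measures appear at this stage.

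\textbf{Non-degeneracy of the trace.} This is not a computation in $\alpha$ alone. One finds
\[
\text{Tr}(\mathcal{Q}(s))=\frac{s^{(m+2)/4}}{C(m)}\int|z_H|^2\bigl(8s\lVert z\rVert^4-(8+4n)\bigr)e^{-s\lVert z\rVert^4}\,d\mu(z),
\]
which depends on $\mu$, not only on $m=\alpha$; there is no value of $\alpha$ at which this expression is forced to be nonzero by pure arithmetic. The paper's argument is indirect and geometric: writing $f(s)=\int|z_H|^2e^{-s\lVert z\rVert^4}d\mu$ and recognizing $\text{Tr}(\mathcal{Q}(s))$ as a first-order ODE in $s^{3/2+m/4}f(s)$, one shows that $\text{Tr}(\mathcal{Q}(s))\to 0$ forces $s^{(m+2)/4}f(s)\to 0$. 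This is then translated into the statement that $\supp(\mu)\setminus B_R(0)$ is asymptotically contained in arbitrarily thin conical neighbourhoods of the vertical axis $\mathcal{V}=\{z_H=0\}$, which in turn forces every tangent at infinity to be supported on $\mathcal{V}$, hence equal to $\mathcal{S}^2_{\mathcal{V}}$, hence $m=2$. The exclusion $\alpha\ne 2$ enters only at this final step, not through an algebraic identity in $\alpha$.
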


Despite the fact that we already know (thanks to \cite[Proposition 3.2]{Chousionis2015MarstrandsGroup}, that the supports of a uniform measure is an analytic variety, the algebraic simplicity of the quadrics containing the support in the $1$-codimensional case will be a fundamental simplification in our computations.

The proof of Theorem \ref{T:1} is based on an adaptation of the arguments of Section 3 of \cite{Preiss1987GeometryDensities}. In particular we have extended Preiss's moments to this non-Euclidean context, the Heisenberg moments $b_{k,s}^\mu$ are introduced in Definition \ref{defimom}, in such a way that it is possible to prove for any $s>0$ and any $u$ is the support of a given uniform measure $\mu$, that:
\begin{equation}
\Big\lvert \sum_{k=1}^{4}b_{k,s}^\mu(u)-s\lVert u\rVert^{4}\Big\rvert\leq s^{\frac{5}{4}}\lVert u\rVert^5(2+(s\lVert u\rVert^4)^{2}).
\label{numeroo1}
\end{equation} 
For more details on the above inequality and its proof we refer to Proposition \ref{expanzione}.
The left-hand side in the above expression is a polynomial of fourth degree in the coordinates of $u$, but with some work one can reduce \eqref{numeroo1}, see Proposition \ref{prop10}, to:
\begin{equation}
\Big\lvert\langle b(s), u_H\rangle+\langle \mathcal{Q}(s)[ u_H], u_H\rangle+\mathcal{T}(s)  u_T\Big\rvert\leq s^\frac{1}{4}\lVert u\rVert^3,
\nonumber
\end{equation}
where $u_H$ is the vector of the first $2n$ coordinates of $u$, $u_T$ is the last coordinate of $u$ and $b(s),\mathcal{Q}(s)$ and $\mathcal{T}(s)$ are introduced in Definition \ref{definizionecurve}.
From the above expression, sending $s$ to $0$ one gets the quadric containing $\supp(\mu)$. The most tricky part of Theorem \ref{T:1} is to show that $\text{Tr}(\mathcal{Q})\neq 0$ and to the proof of this fact is devoted the whole Subsection \ref{nondegge}. 

When $\mu$ is a $(2n+1)$-uniform measure, one expects that the fact that the support is contained in a quadric represents a strong information on the structure of $\supp(\mu)$. This idea is exploited in Section \ref{buchi} where we prove:

\begin{teorema}\label{T:2}
The support of a $(2n+1)$-uniform measure $\mu$ is the closure of a union of connected components of $\mathbb{K}(b,\mathcal{Q},\mathcal{T})\setminus\Sigma$, where $\Sigma$ is the set of those points where the tangent group to the surface $\mathbb{K}(b,\mathcal{Q},\mathcal{T})$ does not exists.
\end{teorema}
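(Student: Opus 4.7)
The plan is to show that $\supp(\mu)\cap(\mathbb{K}(b,\mathcal{Q},\mathcal{T})\setminus\Sigma)$ is both relatively closed and relatively open in $\mathbb{K}(b,\mathcal{Q},\mathcal{T})\setminus\Sigma$, whence it is automatically a union of connected components. Closedness is automatic since $\supp(\mu)$ is closed in $\HH^n$. Granted openness, taking closures in $\HH^n$ yields the theorem: the remaining points of $\supp(\mu)\cap\Sigma$ sit in $\overline{\supp(\mu)\setminus\Sigma}$ because $\Sigma$ is a nowhere dense subset of the analytic variety $\mathbb{K}(b,\mathcal{Q},\mathcal{T})$, whose smooth part is dense.

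The openness step is the substance of the argument, and I would prove it by contradiction via a blow-up at a moving base point. Fix $p\in\supp(\mu)\setminus\Sigma$ and suppose there exist $q_k\in\mathbb{K}(b,\mathcal{Q},\mathcal{T})$ with $q_k\to p$ and $q_k\notin\supp(\mu)$. Let $q_k'$ realize the Koranyi distance from $q_k$ to the closed set $\supp(\mu)$, set $s_k:=\lVert(q_k')^{-1}q_k\rVert$, and note $s_k\leq\lVert p^{-1}q_k\rVert\to 0$ and $q_k'\to p$. Since $p\notin\Sigma$ and $\Sigma$ is closed, for $k$ large $q_k'\in\mathbb{K}\setminus\Sigma$; the tangent homogeneous subgroup $V_{q_k'}$ is then well defined (it is read off the horizontal gradient of the polynomial defining $\mathbb{K}$) and depends continuously on the base point, so $V_{q_k'}\to V_p$ in local Hausdorff distance.

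Now rescale: set $\mu_k:=s_k^{-(2n+1)}(T_{q_k',1/s_k})_{*}\mu$, where $T_{q,\lambda}(x):=\delta_\lambda(q^{-1}x)$. Uniformity is preserved, so each $\mu_k$ is $(2n+1)$-uniform with the same constant $c:=\Theta^{2n+1}(\mu)$; the family is locally uniformly bounded, so a subsequence converges weakly to a $(2n+1)$-uniform measure $\nu$, which is nonzero because $0\in\supp(\mu_k)$ forces $\nu(B_r(0))\geq c\,r^{2n+1}$. Since $\supp(\mu_k)\subseteq\delta_{1/s_k}((q_k')^{-1}\mathbb{K}(b,\mathcal{Q},\mathcal{T}))\to V_p$ in local Hausdorff distance, $\supp(\nu)\subseteq V_p$. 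A $(2n+1)$-uniform measure whose support lies in the $(2n+1)$-homogeneous subgroup $V_p$ must equal $c\,\mathcal{H}^{2n+1}\llcorner V_p$: applying Theorem \ref{T:1} to $\nu$ would otherwise confine $\supp(\nu)$ to $V_p\cap\mathbb{K}'$ for some quadric $\mathbb{K}'=\mathbb{K}(b',\mathcal{Q}',\mathcal{T}')$ with $\mathrm{Tr}(\mathcal{Q}')\neq 0$, an intersection which, being either all of $V_p$ or of strictly smaller Hausdorff dimension, forces $V_p\subseteq\mathbb{K}'$ and then $\nu$ to be the Haar measure of $V_p$. Finally, $B_{s_k}(q_k)\cap\supp(\mu)=\emptyset$ rescales to $B_1(v_k)\cap\supp(\mu_k)=\emptyset$ with $v_k:=T_{q_k',1/s_k}(q_k)$ of Koranyi norm $1$; after passing to a further subsequence, $v_k\to v\in V_p$ with $\lVert v\rVert=1$, and weak convergence gives $\nu(B_{1/2}(v))=0$, contradicting $\nu=c\,\mathcal{H}^{2n+1}\llcorner V_p$.

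The main obstacle I anticipate is the identification of $\nu$ as a Haar measure on $V_p$: ruling out that a $(2n+1)$-uniform measure can live on a strict closed subset of $V_p$ requires combining Theorem \ref{T:1} with a dimension-counting argument that the intersection of a $(2n+1)$-homogeneous subgroup with a genuine quadric $\mathbb{K}(b',\mathcal{Q}',\mathcal{T}')$ is either the whole subgroup or of Hausdorff dimension at most $2n$, hence cannot carry a $(2n+1)$-uniform measure. A secondary technical point is to establish the continuity of $q\mapsto V_q$ on $\mathbb{K}\setminus\Sigma$ and the local Hausdorff convergence $\delta_{1/s_k}((q_k')^{-1}\mathbb{K})\to V_p$ in a form strong enough to pass to the limit in the support inclusion; this is a Taylor-expansion argument once one notes that $\Sigma$ is precisely the set where the horizontal part of the Euclidean normal to the quadric vanishes, but it must be stated carefully.
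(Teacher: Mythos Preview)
Your overall strategy is sound and more streamlined than the paper's, but the step you yourself flag as the main obstacle is a genuine gap, and your proposed route through Theorem~\ref{T:1} does not close it. If $\supp(\nu)\subseteq V_p$ and you apply Theorem~\ref{T:1} to $\nu$, you obtain $\supp(\nu)\subseteq\mathbb{K}(b',\mathcal{Q}',\mathcal{T}')$ with $\mathrm{Tr}(\mathcal{Q}')\neq 0$. Your dichotomy is correct---either $V_p\subseteq\mathbb{K}'$ or $V_p\cap\mathbb{K}'$ has Hausdorff dimension at most $2n$---but in the first branch you learn nothing: $V_p\subseteq\mathbb{K}'$ forces $\mathcal{T}'=0$ and $\mathcal{Q}'=\lambda\,\mathfrak{n}(p)\otimes\mathfrak{n}(p)$, which is compatible with $\supp(\nu)$ being any proper closed subset of $V_p$. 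The identification must instead be proved directly by a measure-comparison argument, which the paper carries out in Proposition~\ref{verticalsamoa}: since $\nu=\mathcal{S}^{2n+1}_{\supp(\nu)}$ by Proposition~\ref{supportoK} and $\mathcal{S}^{2n+1}_{V_p}$ is itself $(2n+1)$-uniform, one has $\mathcal{S}^{2n+1}_{\supp(\nu)}(B_r(0))=r^{2n+1}=\mathcal{S}^{2n+1}_{V_p}(B_r(0))$ for every $r>0$, and a ball missing from $\supp(\nu)$ inside $V_p$ contradicts this. With that lemma in hand your argument is complete.

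Your approach is genuinely different from the paper's. Rather than a Koranyi nearest point plus compactness of rescaled uniform measures, the paper splits into the horizontal ($\mathcal{T}\neq 0$) and vertical ($\mathcal{T}=0$) cases and in each chooses the \emph{Euclidean} nearest point from the hole to $\supp(\mu)$ (Propositions~\ref{spt1}, \ref{verticalfun}, \ref{spt2}). The Euclidean choice interacts cleanly with the explicit quadratic equation: a second-order Taylor expansion at the tangency point produces a one-sided bound on $w_T$ in terms of $w_H$ for every $w$ in the blowup, contradicting (via Proposition~\ref{verticalsamoa2}) that the tangent there is the full vertical hyperplane $V(\mathfrak{n})$. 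The paper's computations are longer and require the case split, whereas yours handles both cases uniformly and avoids the Taylor expansions---at the price of needing Proposition~\ref{verticalsamoa}, which the paper proves anyway as a preliminary.
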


The idea behind the proof of Theorem \ref{T:2} is the following. Suppose $y\in\mathbb{K}(b,\mathcal{Q},\mathcal{T})\setminus\supp(\mu)$ and let $z$ be a point with minimal Euclidean distance of $y$ from $\supp(\mu)$. If $z\not\in \Sigma$, thanks to  Proposition \ref{verticalsamoa2}, we know that $\Tan_{2n+1}(\mu,x)=\{\mathcal{C}^{2n+1}\llcorner V\}$ where $V$ is the tangent group to $\mathbb{K}(b,\mathcal{Q},\mathcal{T})$ at $z$ and $\mathcal{C}^{2n+1}\llcorner V$ is one of its Haar measures. However, by means of careful computations, see Propositions \ref{spt1} and \ref{spt2}, we show that the blowup of ``the hole in the support'' $B_{\lvert y-z\rvert}(y)\cap \mathbb{K}(b,\mathcal{Q},\mathcal{T})$ is a non-empty open subset of $V$, which is in contradiction with the fact that the support the blowup of $\mu$ at $z$ coincides with the whole $V$. This implies that the boundaries of holes of $\supp(\mu)$ inside $\mathbb{K}(b,\mathcal{Q},\mathcal{T})$ must be contained in $\Sigma$ and thus a standard connection argument proves Theorem \ref{T:2}.

Theorem \ref{T:2} allows us get a better understanding of the behaviour of $(2n+1)$-uniform measures at infinity. In particular using it we are able to prove that:
\begin{itemize}
    \item[(i)]if $\Tan_{2n+1}(\mu,\infty)\cap \mathfrak{M}(2n+1)\neq \emptyset$ then $\mu\in\mathfrak{M}(2n+1)$,
    \item[(ii)]the set $\Tan_{2n+1}(\mu,\infty)$ is a singleton.
\end{itemize} 
In the Euclidean space these properties arise from a careful analysis of the algebraic properties of moments. In our framework the structure of moments is much more complicated and therefore (i) and (ii) are proved by means of an explicit geometric construction which relies on Theorem \ref{T:2}. Thanks to these two properties of $(2n+1)$-uniform measures, in Section \ref{buchi}, we prove the following:

\begin{teorema}\label{T:3}
Suppose there exists a functional $\mathscr{F}:\mathcal{M}\to \R$, continuous in the weak-$*$ convergence of measures, and a constant $\hbar=\hbar(\HH^n)>0$ such that:
\begin{itemize}
\item[(i)] if $\mu\in\mathfrak{M}(2n+1)$ then $\mathscr{F}(\mu)\leq \hbar/2$,
\item[(ii)] if $\mu$ is a $(2n+1)$-uniform cone, see Definition \ref{conelli}, and $\mathscr{F}(\mu)\leq\hbar$, then $\mu\in\mathfrak{M}(2n+1)$.
\end{itemize}  
Then, for any $\phi$ Radon measure with $(2n+1)$-density and for $\phi$-almost every $x$:
$$\Tan_{2n+1}(\phi,x)\subseteq \Theta^{2n+1}(\phi,x)\mathfrak{M}(2n+1).$$
\end{teorema}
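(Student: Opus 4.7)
The plan is to reduce the flatness statement for tangent measures of $\phi$ to the detection capability of $\mathscr{F}$ on $(2n+1)$-uniform cones, as packaged in hypotheses (i)--(ii). The first step is to invoke the Heisenberg analogue of Preiss's theorem on tangent measures due to Chousionis and Tyson in \cite{Chousionis2015MarstrandsGroup}: under the density hypothesis \eqref{eq_2}, at $\phi$-a.e.\ $x$ every element of $\Tan_{2n+1}(\phi,x)$ is of the form $\Theta^{2n+1}(\phi,x)\mu$ for some $(2n+1)$-uniform measure $\mu$. It then suffices to prove that at $\phi$-a.e.\ $x$ every such uniform tangent $\mu$ lies in $\mathfrak{M}(2n+1)$.

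The second ingredient is the standard diagonal extraction: whenever $\nu\in\Tan_{2n+1}(\phi,x)$ and $\sigma\in\Tan_{2n+1}(\nu,\infty)$, after the appropriate renormalisation one still has $\sigma\in\Tan_{2n+1}(\phi,x)$. Now I would argue by contradiction: if the conclusion fails on a set $A$ with $\phi(A)>0$, then at $\phi$-a.e.\ $x\in A$ there is a non-flat uniform tangent $\nu$. By property (ii) of Section~\ref{buchi}, its tangent set at infinity is a singleton $\{\sigma\}$, a $(2n+1)$-uniform cone; by property (i) of the same section $\sigma$ must itself fail to be flat, for otherwise $\nu$ would be flat. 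Hypothesis (ii) on $\mathscr{F}$ then yields $\mathscr{F}(\sigma)>\hbar$, while the diagonal extraction provides $\sigma\in\Tan_{2n+1}(\phi,x)$.

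The contradiction has to be extracted from the simultaneous presence of the bad cone $\sigma$ and of flat competitors in $\Tan_{2n+1}(\phi,x)$. The latter are produced from the structural description of $\supp(\sigma)$ given by Theorems~\ref{T:1} and~\ref{T:2}, combined with Proposition~\ref{verticalsamoa2}: at every smooth point $z\in\supp(\sigma)\setminus\Sigma$ the tangent of $\sigma$ is a Haar measure in $\mathfrak{M}(2n+1)$, and once more the tangent-of-tangent principle feeds such a flat measure into $\Tan_{2n+1}(\phi,x)$. One then exploits the scale invariance $T_{0,s*}\sigma/s^{2n+1}=\sigma$ of the cone, together with the weak-$*$ continuity of $\mathscr{F}$, to show that the normalised blowups of $\phi$ at $x$ retain $\mathscr{F}$-values above $\hbar$ along a family of scales that is incompatible with the presence, at the same $x$, of a flat blowup on which $\mathscr{F}\le\hbar/2$ by hypothesis (i).

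The main obstacle I anticipate is precisely this last quantitative step: converting the qualitative coexistence of a bad cone tangent and of a flat tangent into an honest contradiction requires a careful interplay of the weak-$*$ continuity of $\mathscr{F}$, the cone rescaling invariance, and the Heisenberg moment expansion of Section~\ref{sezione1}. This is the analogue of Preiss's moment-based termination of the argument in the Euclidean case, and in our setting it is where the structural work of the preceding sections—especially the quadric support picture of Theorem~\ref{T:1} and the hole-closing statement of Theorem~\ref{T:2}—will cash in.
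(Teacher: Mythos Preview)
Your setup is correct up to the point where you have, at the same $x$, a flat tangent $\nu$ with $\mathscr{F}(\nu)\le\hbar/2$ and a non-flat cone tangent $\sigma$ with $\mathscr{F}(\sigma)>\hbar$. But you do not explain how to extract a contradiction from this, and your closing paragraph misidentifies what is needed: moments and the quadric picture of Sections~\ref{sezione1}--\ref{buchi} play no role here. The mere coexistence of two tangents with $\mathscr{F}$-values on opposite sides of $\hbar$ is not contradictory; the continuous function $f(r):=\mathscr{F}(r^{-(2n+1)}\phi_{x,r})$ is perfectly allowed to oscillate.

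The paper's argument supplies the missing mechanism. One picks scales $s_k<r_k\to0$ with $f(s_k)>\hbar$ and $f(r_k)<\hbar$, and by the intermediate value theorem chooses $\sigma_k\in[s_k,r_k]$ with $f(\sigma_k)=\hbar$ \emph{and} $f(r)\le\hbar$ for all $r\in[\sigma_k,r_k]$. Any subsequential limit $\xi$ of $\sigma_k^{-(2n+1)}\phi_{x,\sigma_k}$ is then a uniform tangent with $\mathscr{F}(\xi)=\hbar$; one checks $r_k/\sigma_k\to\infty$, so for every fixed $R>1$ eventually $R\sigma_k\in[\sigma_k,r_k]$, giving $\mathscr{F}(R^{-(2n+1)}\xi_{0,R})\le\hbar$. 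Passing to $R\to\infty$ and using \ref{(due)} (uniqueness of the tangent at infinity, so it is a cone $\psi$) yields $\mathscr{F}(\psi)\le\hbar$, whence $\psi$ is flat by hypothesis~(ii). Then \ref{(uno)} forces $\xi$ itself to be flat, contradicting $\mathscr{F}(\xi)=\hbar>\hbar/2$. The point you were missing is this construction of a \emph{third} tangent $\xi$ at the crossing scale, with the one-sided bound $f\le\hbar$ propagating to all its dilations.
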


The proof of Theorem \ref{T:3} follows closely its Euclidean counterpart, and it is a standard application of the very general principle that ``a tangent to a tangent is a tangent'', see Proposition \ref{preiss}. 

We are left to construct the functional $\mathscr{F}$ satisfying all the hypothesis of Theorem \ref{T:3}.
Suppose $\varphi$ is a smooth function with support contained in $B_2(0)$ such that $\varphi=1$ on $B_1(0)$. We claim that the functional:
$$\mathscr{F}(\mu):=\min_{\mathfrak{m}\in\mathbb{S}^{2n-1}}\int \varphi(z)\langle    \mathfrak{m},z_H\rangle^2 d\mu(z),$$
satisfies all the hypothesis of Theorem \ref{T:3} and therefore Theorem \ref{main} follows. 
The fact that $\mathscr{F}$ is a continuous operator on Radon measures is easy to prove, see Proposition \ref{conti}, and it is immediate to see that $\mathscr{F}$ is identically null on flat measures. The most challenging hypothesis to check, as in the Euclidean case, is the existence of $\hbar$.

Thanks to Theorem \ref{T:1} there are two kinds of $(2n+1)$-uniform measures. The ones which are contained in a quadric for with $\mathcal{T}=0$, that in the following are called \emph{vertical}, and the ones with $\mathcal{T}\neq 0$, that we will call \emph{horizontal}. 
The first step towards the verification of hypothesis (ii) of Theorem \ref{T:3} is the following:

\begin{teorema}\label{T:4}
There exists a constant $\mathfrak{C}_3(n)>0$ such that for any $\mathfrak{m}\in\mathbb{S}^{2n-1}$ and any horizontal $(2n+1)$-uniform cone $\mu$ we have:
$$\int_{B_1(0)} \langle \mathfrak{m},   z_H\rangle^2 d\mu(z)\geq\mathfrak{C}_3(n).$$
\end{teorema}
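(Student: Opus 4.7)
The plan is to argue by compactness and contradiction. Suppose the claim fails: then there exist horizontal $(2n+1)$-uniform cones $\mu_k$ and unit vectors $\mathfrak{m}_k \in \mathbb{S}^{2n-1}$ with $\int_{B_1(0)} \langle \mathfrak{m}_k, z_H\rangle^2 \, d\mu_k \to 0$. After rescaling so that the densities $\Theta^{2n+1}(\mu_k)$ are a fixed common value (both sides of the desired inequality scale linearly in $\mu$), weak-$*$ compactness of the family of $(2n+1)$-uniform measures of prescribed density---a standard fact in this theory---yields, along a subsequence, $\mu_k \rightharpoonup \mu_\infty$ and $\mathfrak{m}_k \to \mathfrak{m}_\infty \in \mathbb{S}^{2n-1}$. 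The limit $\mu_\infty$ inherits uniformity, density, and the cone property. Since uniform measures assign zero mass to spheres, the map $\mu \mapsto \int_{B_1(0)} \langle \mathfrak{m}, z_H\rangle^2 \, d\mu$ is continuous at $\mu_\infty$, so $\int_{B_1(0)} \langle \mathfrak{m}_\infty, z_H\rangle^2 \, d\mu_\infty = 0$, forcing $\supp(\mu_\infty) \cap B_1(0) \subseteq W := \{z \in \HH^n : z_H \cdot \mathfrak{m}_\infty = 0\}$; the cone property extends this to $\supp(\mu_\infty) \subseteq W$ globally.

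Applying Theorem \ref{T:1} to $\mu_\infty$ (with $\alpha = 2n+1 \neq 2$) gives $\supp(\mu_\infty) \subseteq \mathbb{K}(0, \mathcal{Q}_\infty, \mathcal{T}_\infty)$ for some symmetric $\mathcal{Q}_\infty$ with $\text{Tr}(\mathcal{Q}_\infty) \neq 0$ (the linear part vanishes since $\mu_\infty$ is a cone). Combined with the previous step, $\supp(\mu_\infty) \subseteq W \cap \mathbb{K}(0, \mathcal{Q}_\infty, \mathcal{T}_\infty)$, and the task becomes showing that this intersection cannot carry a nonzero $(2n+1)$-uniform measure. I split on $\mathcal{T}_\infty$: if $\mathcal{T}_\infty \neq 0$, the intersection is a $(2n-1)$-Euclidean-dimensional quadratic graph inside the vertical hyperplane $W$; if $\mathcal{T}_\infty = 0$ but the restriction of $\langle \cdot, \mathcal{Q}_\infty \cdot \rangle$ to $\mathfrak{m}_\infty^\perp$ is nonzero, it is a $t$-cylinder over a proper quadric cone in $\mathfrak{m}_\infty^\perp$. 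In both situations a direct computation gives Koranyi Hausdorff dimension at most $2n$, hence $\mathcal{S}^{2n+1}(W\cap \mathbb{K}) = 0$. The absolute continuity $\mu_\infty \ll \mathcal{S}^{2n+1}$ on $\supp(\mu_\infty)$ coming from the $(2n+1)$-density then contradicts $\mu_\infty(B_1(0)) > 0$.

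The main obstacle is the residual subcase $\mathcal{T}_\infty = 0$ with $\mathcal{Q}_\infty$ vanishing on $\mathfrak{m}_\infty^\perp$. Here the trace condition forces the $(\mathfrak{m}_\infty, \mathfrak{m}_\infty)$-entry of $\mathcal{Q}_\infty$ to be nonzero, and $\mathbb{K}(0, \mathcal{Q}_\infty, 0)$ degenerates to the union of $W$ with a second vertical hyperplane; Theorem \ref{T:2} applied to $\mu_\infty$ then forces $\mu_\infty$ to be the Haar measure on $W$, so $\mu_\infty \in \mathfrak{M}(2n+1)$. Ruling out this possibility amounts to ruling out that a sequence of horizontal cones $\mu_k$ can weak-$*$ degenerate to a flat measure, and this is where I expect the real work to lie. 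I plan to combine the fact that each cone $\mu_k$ is its own unique tangent at infinity (properties (i)--(ii) stated before Theorem \ref{T:3}) with the rigidity of horizontal cones provided by $\text{Tr}(\mathcal{Q}_k) \neq 0$, $\mathcal{T}_k \neq 0$, and the uniform density normalization, in order to show that the paraboloids $\supp(\mu_k)$ cannot collapse to a vertical plane without violating the uniformity of $\mu_k$. Making this obstruction quantitative is the technically delicate heart of the argument.
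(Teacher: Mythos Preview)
Your compactness-and-contradiction framework is the same high-level skeleton the paper uses, and your reduction is correct up to the point where you conclude that the limit $\mu_\infty$ is flat. (In fact your second paragraph is more elaborate than necessary: once you know $\supp(\mu_\infty)\subseteq W$ with $W$ a vertical hyperplane, Proposition~\ref{verticalsamoa} immediately gives $\mu_\infty=\mathcal{S}^{2n+1}_W$, so the case analysis on $\mathcal{T}_\infty$ and the restriction of $\mathcal{Q}_\infty$ can be skipped.) So you have correctly isolated the real problem: showing that a sequence of horizontal $(2n+1)$-uniform cones cannot weak-$*$ converge to a flat measure.

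The gap is in your plan for this last step. The tools you propose to use --- that each $\mu_k$ is its own tangent at infinity, and the qualitative facts $\text{Tr}(\mathcal{Q}_k)\neq 0$, $\mathcal{T}_k\neq 0$ --- carry no quantitative information and do not obstruct degeneration. Writing $\supp(\mu_k)\subseteq\mathbb{K}(0,\mathcal{D}_k,-1)$, the flat limit scenario is exactly $\vertiii{\mathcal{D}_k}\to\infty$ (Proposition~\ref{verticale2}), and nothing in your list prevents this. The paper's mechanism is entirely different and comes from Appendix~\ref{TYLR}: expanding the identity $\mu_k(B_r(\mathcal{X}))=r^{2n+1}$ to order $r^{2n+3}$ at every non-characteristic point $\mathcal{X}$ forces the third Taylor coefficient $\mathfrak{e}(\mathcal{X})$ to vanish, which is the algebraic constraint \eqref{eq16} of Theorem~\ref{appendicefinale} on $\mathcal{D}_k$ and the horizontal normal $\mathfrak{n}$. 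Dividing \eqref{eq16} by $\vertiii{\mathcal{D}_k}^2$ and passing to the limit yields a nontrivial polynomial identity for the eigenvalues of $\mathcal{Q}=\lim \mathcal{D}_k/\vertiii{\mathcal{D}_k}$ (Proposition~\ref{equazione}, Corollary~\ref{core}), which forces $\text{rk}(\mathcal{Q})=1$ (Proposition~\ref{omgflat}) and forces all but one eigenvalue of $\mathcal{D}_k$ to stay bounded (Proposition~\ref{bddss}). But a paraboloid with exactly one diverging eigenvalue collapses to a \emph{half}-plane, not a full vertical plane, contradicting flatness of the limit (Proposition~\ref{boundi}). This higher-order Taylor expansion is the missing idea; without it, the obstruction you are looking for is not visible.
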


The proof of Theorem \ref{T:4} requires the entire the entire Section \ref{HORRI}, but the arguments therein contained all rely on Theorem \ref{appendicefinale}, which is the main result of Appendix \ref{TYLR}. Since Theorem \ref{T:4} requires so much work, we wish to discuss its proof more carefully, in order to help the reader keep in mind what the final goal of the Section \ref{HORRI} and Appendix \ref{TYLR} is.

If $\mu$ is a horizontal $(2n+1)$-uniform cone, we can find $\mathcal{D}\in\mathrm{Sym}(2n)\setminus\{0\}$ such that $\supp(\mu)\subseteq \mathbb{K}(0,\mathcal{D},-1)$. In Theorem \ref{appendicefinale}, we prove that such $\mathcal{D}$ must satisfy the algebraic constraint \eqref{eq16} which implies that the operator norm $\vertiii{\mathcal{D}}$ of $\mathcal{D}$ is bounded from above and below by universal positive constants $\mathfrak{C}_1(n)$ and $\mathfrak{C}_2(n)$, respectively, see Propositions \ref{boundi} and \ref{staccato}, and thus Theorem \ref{T:4} follows. We refer to the proof of Theorem \ref{bellalei} for further details. 

While the bound from below easily follows from Theorem \ref{appendicefinale}, obtaining the bound from above is quite complicated. Suppose $\{\mu_i\}$ is a sequence of $(2n+1)$-uniform measures invariant under dilations and assume that $\supp(\mu_i)\subseteq\mathbb{K}(0,\mathcal{D}_i,-1)$. If the sequence $\vertiii{\mathcal{D}_i}$ diverges, then the limit points of the sequence $\{\mu_i\}$ can only be vertical $(2n+1)$-uniform cones. 
Defined $\mathcal{Q}$ to be one of the limit points of the sequence $\mathcal{D}_i/\vertiii{\mathcal{D}_i}$, one can show that the algebraic constraints given by Theorem \ref{appendicefinale} on $\mathcal{D}_i$ imply that for any $h\not\in\text{Ker}(\mathcal{Q})$ we have:
\begin{equation}
2(\text{Tr}(\mathcal{Q}^2)-2\langle \mathfrak{n},\mathcal{Q}^2\mathfrak{n}\rangle+\langle \mathfrak{n},\mathcal{Q}\mathfrak{n}\rangle^2)-(\text{Tr}(\mathcal{Q})-\langle \mathfrak{n},\mathcal{Q}\mathfrak{n}\rangle)^2=0,
\nonumber
\end{equation}
where $\mathfrak{n}:=\mathcal{Q}h/\lvert\mathcal{Q}h\rvert$. We refer to Proposition \ref{equazione} for further details. By this key observation, via Proposition \ref{omgflat} we prove that the sequence $\{\mu_i\}$ can only have a flat measure as limit points.
The fact that the limit must be flat together with the fact that all the eigenvalues of the $\mathcal{D}_i$ except one, see Proposition \ref{bddss}, which is again a consequence of Theorem \ref{appendicefinale}, must be bounded, implies that the assumption that such a sequence $\{\mu_i\}$ exists was absurd. Indeed the boundedness of all eigenvalues except one would prevent the limit of the $\mu_i$'s from being flat. See the proof of Proposition \ref{boundi} for further details.

The above argument shows that the functional $\mathscr{F}$ disconnects horizontal $(2n+1)$-uniform cones and flat measures. The last piece of information we need to apply Theorem \ref{T:3} is that $\mathscr{F}$ disconnects vertical non-flat $(2n+1)$-uniform cones from flat measures:

\begin{teorema}\label{T:6}
There exists a constant $\mathfrak{C}_{10}(n)>0$ such that if $\mu$ is a vertical $(2n+1)$-uniform cone for which:
$$\min_{\mathfrak{m}\in\mathbb{S}^{2n-1}}\int_{B_1(0)}\langle    \mathfrak{m},z_H\rangle^2 d\mu(z)\leq \mathfrak{C}_{10}(n),$$
then $\mu$ is flat.
\end{teorema}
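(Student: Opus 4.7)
The plan is a compactness-and-contradiction argument, parallel to the analysis of horizontal cones underlying Theorem \ref{T:4}. Assume toward a contradiction that no constant $\mathfrak{C}_{10}(n)>0$ as in the statement exists. Then there is a sequence $\{\mu_k\}$ of non-flat vertical $(2n+1)$-uniform cones with $\min_{\mathfrak{m}\in\mathbb{S}^{2n-1}}\int_{B_1(0)}\langle\mathfrak{m}, z_H\rangle^2\,d\mu_k\to 0$. By Theorem \ref{T:1}, each $\mu_k$ is supported in a vertical quadric $\mathbb{K}(b_k,\mathcal{Q}_k,0)$ with $\text{Tr}(\mathcal{Q}_k)\neq 0$.

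The first step is to extract a weak-$*$ subsequential limit $\mu_k\rightharpoonup\mu$; this is possible because the $\mu_k$'s have uniformly bounded mass on bounded sets, since $0\in\supp(\mu_k)$ gives $\mu_k(B_r(0))=r^{2n+1}$. By the standard closure of $(2n+1)$-uniform cones under weak-$*$ limits, $\mu$ is itself a nontrivial $(2n+1)$-uniform cone. Continuity of $\mathscr{F}$ (Proposition \ref{conti}) together with compactness of $\mathbb{S}^{2n-1}$ produces a minimizer $\mathfrak{m}^*$ with $\int\varphi(z)\langle\mathfrak{m}^*, z_H\rangle^2\,d\mu=0$; since $\varphi\equiv 1$ on $B_1(0)$, this forces $\supp(\mu)\cap B_1(0)\subseteq V:=\{z:\langle\mathfrak{m}^*, z_H\rangle=0\}$, and the cone structure propagates the inclusion to $\supp(\mu)\subseteq V$. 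Because $V$ is a vertical $(2n+1)$-homogeneous subgroup of $\HH^n$, the rigidity of $(2n+1)$-uniform measures on such subgroups (any such measure must be proportional to the Haar measure of $V$) forces $\mu\in\mathfrak{M}(2n+1)$: the limit $\mu$ is flat.

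The main difficulty is then to derive a contradiction from the fact that each $\mu_k$ is non-flat while the limit $\mu$ is flat. The plan is to track the quadric parameters along the sequence: normalize $\|(b_k,\mathcal{Q}_k)\|=1$ (which leaves $\mathbb{K}(b_k,\mathcal{Q}_k,0)$ invariant) and, passing to a further subsequence, obtain a limit $(b_\infty,\mathcal{Q}_\infty)$ with $\|(b_\infty,\mathcal{Q}_\infty)\|=1$. Upper semicontinuity of the support under weak-$*$ convergence then gives $V=\supp(\mu)\subseteq\mathbb{K}(b_\infty,\mathcal{Q}_\infty,0)$. An elementary factorization shows that $\langle b_\infty, x\rangle+\langle x,\mathcal{Q}_\infty x\rangle=\langle\mathfrak{m}^*, x\rangle(c_\infty+\langle\ell_\infty, x\rangle)$, so that the limit quadric splits as $V\cup V'_\infty$ with the second hyperplane $V'_\infty$ kept at bounded distance from the origin by the normalization.

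To harvest the contradiction, apply Theorem \ref{T:2} to each $\mu_k$: $\supp(\mu_k)$ is the closure of a union of connected components of $\mathbb{K}(b_k,\mathcal{Q}_k,0)\setminus\Sigma_k$, and non-flatness means $\supp(\mu_k)$ cannot fit inside any single hyperplane. As $k\to\infty$, the ``extra'' components of $\supp(\mu_k)$ must therefore cluster near $V'_\infty\setminus V$; a uniform $(2n+1)$-mass lower bound on bounded pieces of $\supp(\mu_k)$ (an immediate consequence of uniformity) produces nontrivial weak-$*$ limit mass on $V'_\infty\setminus V$, contradicting $\supp(\mu)\subseteq V$. The hardest step is to make this final mass persistence argument quantitative; I expect it to rest on an algebraic constraint on the vertical quadric data $(b,\mathcal{Q})$ in the spirit of Theorem \ref{appendicefinale}, ensuring that the non-flat structure of the $\mu_k$'s cannot be washed out in the limit.
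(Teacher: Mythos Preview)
Your compactness argument correctly shows that any weak-$*$ limit of the hypothetical sequence $\{\mu_k\}$ is flat, but the final ``mass persistence'' step is a genuine gap, not merely a technicality. Since the $\mu_k$ are cones, Proposition~\ref{CONO} gives $b_k=0$, so after normalization the limit quadric is $\mathbb{K}(0,\mathcal{Q}_\infty,0)$ with $V(\mathfrak{m}^*)\subseteq\mathbb{K}(0,\mathcal{Q}_\infty,0)$. Nothing prevents $\mathrm{rk}(\mathcal{Q}_\infty)=1$, in which case your ``second hyperplane'' $V'_\infty$ coincides with $V(\mathfrak{m}^*)$ and there is no set disjoint from $V$ on which to locate persisting mass. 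A priori, one can imagine non-flat vertical cones supported on quadrics $\mathbb{K}(0,\mathcal{Q}_k,0)$ with all but one eigenvalue of $\mathcal{Q}_k$ tending to zero, so that the quadric --- and hence the support --- collapses uniformly onto $V(\mathfrak{m}^*)$. Your proposal offers no mechanism to exclude this; the reference to ``algebraic constraints in the spirit of Theorem~\ref{appendicefinale}'' does not help, since that result concerns horizontal quadrics and yields nothing for the vertical case.

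The paper's argument (Theorem~\ref{fine}) is structurally different and avoids this difficulty by working with a single measure rather than a sequence. The key input is Proposition~\ref{SUPPORTO}: for every vertical $(2n+1)$-uniform cone $\mu$ and every $w\in\supp(\mu)$,
\[
\lvert w_H\rvert^2=(2n-1)\langle w_H, M w_H\rangle,\qquad M:=\fint u\otimes u\,d\omega_\mu(u),\quad \mathrm{Tr}(M)=1.
\]
Proposition~\ref{OXX} identifies the functional in the hypothesis with $\mathfrak{C}_7(n)\alpha_{2n}$, the smallest eigenvalue of $M$. A soft compactness lemma (Proposition~\ref{GEOM}) then shows that if $\alpha_{2n}$ is small, $\supp(\mu)$ must contain a point $z$ with $z_H$ within $\delta$ of the eigenvector $e_{2n-1}$. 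Plugging this $z$ into the displayed identity and manipulating the eigenvalue inequalities forces $\alpha_{2n-1}\geq 1/(2n-1)$; since $\mathrm{Tr}(M)=1$ and the eigenvalues are ordered, this pins down $\alpha_1=\cdots=\alpha_{2n-1}=1/(2n-1)$ and $\alpha_{2n}=0$, whence $\supp(\mu)\subseteq V(e_{2n})$ and $\mu$ is flat by Proposition~\ref{verticalsamoa}. This is a direct algebraic argument mirroring Preiss's Euclidean computation; the only compactness is in the auxiliary Proposition~\ref{GEOM}, which is far weaker than the contradiction you are trying to extract.
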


The proof of the above theorem relies on Theorems \ref{T:1}, \ref{T:2} and the representation formulas of Appendix \ref{appeA1} to get a very explicit and simple expression for the quadric containing $\supp(\mu)$, see Proposition \ref{SUPPORTO}. Thanks to the structural similarities of these quadrics to their Euclidean counterparts we were able to rearrange Preiss's original disconnection argument to conclude the proof of Theorem \ref{T:6}, see Theorem \ref{fine} and cfr. with the proof of  \cite[Theorem 3.14]{Preiss1987GeometryDensities}.

\section*{Acknowledgments} First of all, I would like to thank Roberto Monti: without his guidance and help this paper would have never seen the light of day. Not only he suggested me to study Preiss's theorem during my undergraduate years and sollecited my interest for sub-Riemmanian geometry afterwards, but he also helped me correct the (very badly written) early versions of this paper. 
Finally, I would like to thank the anonymous referee, who checked this work with unparalleled care, pointing out an endless number of inaccuracies and helping me to improve the exposition significantly.

\section*{Notation}
We add below a list of frequently used notations, together with the page of their first appearance:
\medskip

\begin{longtable}{c p{0.7\textwidth} p{\textwidth}}
$\lvert\cdot\rvert$\label{euclide} & Euclidean norm, & \pageref{euclide}\\

$\lVert\cdot\rVert$ & Koranyi norm, & \pageref{Koranyinorm}\\

$\vertiii{\cdot}$ & operator norm of matrices, & \pageref{operatornorm}\\

$\langle \cdot,\cdot\rangle$ & scalar product in $\R^{2n}$, & \pageref{preliminaries}\\

$V(\cdot,\cdot)$& polarisation function of the Koranyi norm, & \pageref{polariz}\\

$\pi_H(\cdot)$ & projection of $\R^{2n+1}$ onto the first $2n$ coordinates,& \pageref{preliminaries}\\

$\pi_T(\cdot)$ & projection of $\R^{2n+1}$ onto the last coordinate,& \pageref{preliminaries}\\

$x_H$ & shorthand for $\pi_H(x)$, & \pageref{preliminaries}\\

$x_T$ & shorthand for $\pi_T(x)$, & \pageref{preliminaries}\\

$\tau_x$ & left translation by $x$, & \pageref{preliminaries}\\

$D_\lambda$ & anisotropic dilations, & \pageref{dilatan}\\

$\mathcal{V}$ &  center of $\HH^n$, & \pageref{centerlli}\\

$U_r(x)$ &  open Euclidean ball of radius $r>0$ and center $x$, & \pageref{euclidean}\\

$B_r(x)$ &  open Koranyi ball of radius $r>0$ and center x, & \pageref{preliminaries}\\

$\Theta_\alpha(\phi,x)$ & $\alpha$-dimensional density of the Radon measure $\phi$ at the point $x$, &  \pageref{drens}\\

$\phi_{x,r}$ & dilated of a factor $r>0$ of the measure $\phi$ at the point $x\in\HH^n$, & \pageref{trasldil}\\

$\Tan_{\alpha}(\phi,x)$ & set of $\alpha$-dimensional tangent measures to the measure $\phi$ at $x$, & \pageref{trangents}\\

$\mathcal{U}(\alpha)$ & set of $\alpha$-uniform measures, & \pageref{uniform}\\

  $\mathcal{M}$ &  set of Radon measures on $\HH^n$, &\pageref{emme}\\

  $\supp(\mu)$ & support of the measure $\mu$, &\pageref{uniform}\\

  $\rightharpoonup$ & weak convergence of measures, &\pageref{convdeb}
  \\

  $V(\mathfrak{n})$ & the vertical hyperplane orthogonal to $\mathfrak{n}\in\R^{2n}$, &\pageref{plano}\\

$\mathbb{K}(b,\mathcal{Q},\mathcal{T})$ & the quadric $\langle b+\mathcal{Q}x,x\rangle+\mathcal{T}t=0$ where $(x,t)\in\R^{2n+1}$, & \pageref{simmi}\\
 
 $\Sigma(f)$ & characteristic set of the function $f:\R^{2n}\to\R$ & \pageref{sigmino}\\
 
 $\Sigma(F)$ & set where the horizontal gradient of the function $F:\HH^n\to\R$ is null, & \pageref{numbero2}\\
 
  $J$ & standard symplectic matrix,& \pageref{Koranyinorm}\\

  $\mathrm{Sym}(2n)$ &  set of symmetric matrices on $\R^{2n}$, &\pageref{simmi}\\
  
    $S(2n)$ &  subset of orthogonal matrices on $\R^{2n}$ inducing a linear isometry on $\HH^n$, & \pageref{inex}\\

  $\mathcal{C}_c$ & set of continuous functions with compact support,  &\pageref{compi}\\

$\mathcal{S}^\alpha$ & $\alpha$-dimensional spherical Hausdorff measure, & \pageref{Hausdro}\\

$\mathcal{H}^\alpha$ & $\alpha$-dimensional Hausdorff measure, & \pageref{Hausdro}\\

$\mathcal{C}^\alpha$ & $\alpha$-dimensional centered spherical Hausdorff measure, & \pageref{Hausdro}\\
  $\mathcal{H}^{k}_{eu}$ &  Euclidean $k$-dimensional Hausdorff measure, &\pageref{euclidean}\\

  $b_{k,s}^\mu$  & $k$-th moment of the measure $\mu$, & \pageref{defimom}
\end{longtable}

\paragraph{Other conventions:}
Throughout the paper the symbol $\phi$ will always denote a measure with density and the symbols $\mu,\nu$ uniform measures. Furthermore, with $\Gamma(t)$ we will always denote Euler's $\Gamma$ function:
$$\Gamma(t):=\int_0^\infty s^{t-1}e^{-s}ds.$$
Recall that $\Gamma$ enjoys the property that $\Gamma(t+1)=t\Gamma(t)$ for any $t>0$.

\section{Preliminaries}
\label{preliminaries}

In this preliminary section we recall many well known facts and introduce some notations. In case the proof of a Proposition is not present in literature, but the Euclidean argument applies verbatim, we will reduce ourselves to cite a reference where the Euclidean proof can be found.

\subsection{The Heisenberg group \texorpdfstring{$\HH^n$}{Lg}}

In this subsection we briefly recall some notations and very well known facts on the Heisenberg groups $\HH^n$. Let $\pi_H:\R^{2n+1}\to\R^{2n}$ be the projection onto the first $2n$ coordinates and $\pi_T:\R^{2n+1}\to \R$ be the projection onto the last one. The Lie groups $\HH^n$ are the smooth manifolds $\R^{2n+1}$ endowed with the product:
\begin{equation}
x*y:=\left(x_H+y_H, x_T+y_T+2\langle x_H,Jy_H\rangle\right),\nonumber
\end{equation}
where  $x_H$ and $x_T$ are shorthands for $\pi_H(x)$ and $\pi_T(x)$, while $J$ is the standard sympletic matrix on $\R^{2n}$:
$$J:=\begin{pmatrix}
    0      & \mathrm{id}_n   \\
    -\mathrm{id}_n     & 0  
\end{pmatrix},$$
We metrize the group $(\HH^n,*)$ with the \emph{Koranyi distance} $d(\cdot,\cdot):\HH^n\times\HH^n\to\R$ defined as:
\begin{equation}
d(x,y):=\big(\lvert y_H-x_H\rvert^4+\lvert y_T- x_T-2\langle x_H,Jy_H\rangle\rvert^2\big)^{1/4}\nonumber.
\end{equation}
Moreover, we let $\lVert x\rVert:=d(x,0)$\label{Koranyinorm} be the so called \emph{Koranyi norm} and $B_r(x):=\{z\in\HH^n:d(z,x)\leq r\}$ be the Koranyi ball. 
The geometry of $\HH^n$ is quite rich and it is well known that the metric $d(\cdot,\cdot)$ is left invariant, i.e., for any $z\in\HH^n$ one has:
$$d(z*x,z*y)=d(x,y).$$

As a consequence, left translations $\tau_x(y):=x*y$ are isometries and an elementary computations shows that $d(x,y)=\lVert x^{-1}*y\rVert=\lVert y^{-1}*x\rVert$.
Moreover, defined the anisotropic dilations $D_\lambda:\HH^n\to\HH^n$ as $D_\lambda(x):=(\lambda x_H,\lambda^2  x_T)$,\label{dilatan}
we also have that $d(\cdot,\cdot)$ is homogeneous with respect to $D_\lambda$, i.e.:
$$d(D_\lambda(x),D_\lambda(y))=\lambda d(x,y).$$
Besides left translations, we can find isometries of $(\HH^n,d)$ that behave like Euclidean orthogonal transformations on the first $2n$ coordinates. Define:
\begin{equation}
    \text{S}(2n):=\{U\in O(2n): U^T J U=J\}\cup\{U\in O(2n): U^T J U=-J\},
    \label{inex}
\end{equation}
where $O(2n)$ is the group of orthogonal transformations of $\R^{2n}$
and let $s$ be the function $s:\text{S}(2n)\to\{-1,1\}$ which satisfies $U^T J U=s(U) J$
.
It is easy to check that $\text{S}(2n)$ is a group under multiplication and that $s(\cdot)$ is a homomorphism between $(\text{S}(2n),\cdot)$ and $(\{-1,1\},\cdot)$. The following proposition gives us the aformentioned isometries of $\HH^n$:

\begin{proposizione}\label{isometrie2}
Let $U\in \text{S}(2n)$. The map $\Xi_U:\R^{2n}\times \R\to\R^{2n}\times \R$ defined as:
\begin{equation}
\label{numbero10}
\Xi_U:(x,t)\mapsto (Ux,s(U)t),
\end{equation}
is a surjective isometry of $\HH^n$ and in particular it is a continuous map.
\end{proposizione}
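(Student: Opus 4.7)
The plan is to verify the isometry property directly from the definition of the Koranyi distance, reducing everything to the two defining features of $S(2n)$: orthogonality ($U^TU=\mathrm{id}$) and the twisted commutation relation with the symplectic form ($U^TJU = s(U)J$).

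First I would compute $d(\Xi_U(x),\Xi_U(y))^4$ by substitution. The horizontal component becomes $|Uy_H - Ux_H|^4$, which reduces to $|y_H - x_H|^4$ immediately because $U$ is orthogonal. The vertical component becomes
\[
\bigl|s(U)y_T - s(U)x_T - 2\langle Ux_H, JUy_H\rangle\bigr|^2.
\]
The key step is then to recognise that
\[
\langle Ux_H, JUy_H\rangle = \langle x_H, U^TJUy_H\rangle = s(U)\langle x_H, Jy_H\rangle,
\]
where the second equality uses precisely the defining property of $U \in S(2n)$. This lets me factor $s(U)$ out of the whole vertical expression:
\[
s(U)y_T - s(U)x_T - 2s(U)\langle x_H, Jy_H\rangle = s(U)\bigl(y_T - x_T - 2\langle x_H, Jy_H\rangle\bigr).
\]
Since $s(U) \in \{-1,1\}$, the absolute value squared equals $|y_T - x_T - 2\langle x_H, Jy_H\rangle|^2$, and we recover exactly $d(x,y)^4$.

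No part of this is particularly delicate; the only thing to be careful about is keeping track of the sign $s(U)$ in the cross term and noticing that it lands uniformly in front of all three vertical pieces (including the term $-2\langle x_H, Jy_H\rangle$, which at first sight carries no $s(U)$ but gets one from the symplectic identity). There is no need to check multiplicativity or any group-theoretic property of $\Xi_U$ for the statement: the definition of isometry used in the paper is just distance preservation, which is what the above computation establishes.
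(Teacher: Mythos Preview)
Your proof is correct: the direct verification via orthogonality of $U$ and the relation $U^TJU=s(U)J$ is exactly the intended argument, and you have handled the sign bookkeeping cleanly. The paper itself does not include a proof of this proposition, treating it as an elementary check, so there is nothing further to compare.
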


We conclude this short subsection introducing the various types of Hausdorff measures we will deal with throughout the paper:

\begin{definizione}\label{Hausdro}
For any $\alpha\in[0,\infty)$ we define the  $\alpha$-dimensional {\em Hausdorff measure}\label{hausmeas} and the $\alpha$-dimensional {\em spherical Hausdorff measure} relative to the left invariant homogeneous metric $d$ those measures that acts as:\label{sphericaldhausmeas}
\begin{equation*}
    \begin{split}
        \mathcal{H}^\alpha(A):=\sup_{\delta>0}\inf \bigg\{\sum_{j=1}^{\infty} 2^{-h}(\diam E_j)^\alpha:A \subseteq \bigcup_{j=1}^{\infty} E_j,\, \diam E\leq \delta\bigg\},\\
        \mathcal{S}^{\alpha}(A):=\sup_{\delta>0}\inf\bigg\{\sum_{j=1}^\infty  r_j^\alpha:A\subseteq \bigcup_{j=1}^\infty B_{r_j}(x_j),~r_j\leq\delta\bigg\},
    \end{split}
\end{equation*}
on every Borel set $A\subseteq \HH^n$.
In addition, we define the $\alpha$-dimensional {\em centered Hausdorff measure} relative to $d$ as the measure that acts as\label{centredhausmeas}
$$
\mathcal{C}^{\alpha}(A):=\underset{E\subseteq A}{\sup}\,\,\mathcal{C}_0(E),\qquad \text{on every Borel set $A\subseteq \HH^n$,}
$$
where:
$$
\mathcal{C}^{\alpha}_0(E):=\sup_{\delta>0}\inf\bigg\{\sum_{j=1}^\infty  r_j^m:E\subseteq \bigcup_{j=1}^\infty B_{r_j}(x_j),~ x_j\in E,~r_j\leq\delta\bigg\}.
$$
We stress that $\mathcal{C}^\alpha$ is an outer Borel regular measure, see for instance \cite[Proposition 4.1]{EdgarCentered}.
\end{definizione}

\begin{osservazione}
\label{remarkone}
Although the Hausdorff measures defined above do not coincide, they are all equivalent and more precisely for any $\alpha>0$ we have:
$$\mathcal{H}^\alpha\leq \mathcal{S}^\alpha\leq 2^\alpha\mathcal{H}^\alpha\qquad\text{and}\qquad2^{-\alpha}\mathcal{S}\leq\mathcal{C}^\alpha\leq 2^{\alpha}\mathcal{S}^\alpha.$$
This can be obtained for instance putting together \cite[\S 2.10.2]{Federer1996GeometricTheory} and \cite[Remark  2.3, Lemma 2.5]{FSSCArea}.
\end{osservazione}

For further references and a much more comprehensive account on the Heisenberg groups we refer to the monographs \cite{Capogna2007} and \cite{Monti2014IsoperimetricGroup}.

\subsection{Measures with density and their blowups}
We recall in this subsection some very well known facts about measures with density and their blowups.
\label{dns}

\begin{definizione}\label{drens}
Fix an $\alpha>0$. A Radon measure $\phi$ on $\HH^n$ is said to have $\alpha$\emph{-density} if the limit:
$$\Theta^\alpha(\phi,x):=\lim_{r\to 0}\frac{\phi(B_r(x))}{r^\alpha},$$
exists finite and non-zero for $\phi$-almost every $x\in \HH^n$. 
\end{definizione}

Assume $\{\mu_k\}$ is a sequence of measures in $\mathcal{M}$\label{emme}, the set of Radon measures on $\HH^n$. We say that $\{\mu_k\}$ converges to $\mu$ and write $\mu_k\rightharpoonup \mu$, if:\label{convdeb}
$$\lim_{k\to\infty}\int f(x)d\mu_k(x)=\int f(x)d\mu(x)\qquad\text{for any }f\in\mathcal{C}_c(\R^n).$$\label{compi}
Since the paper is concerned with the study of the tangents to measures with $\alpha$-density, we need a meaningful concept of tangent for a measure. Let $\phi$ be a Radon measure on $\HH^n$ with $\alpha$-density and denote by $\phi_{x,r}$ the measure that satisfies the identity:
\begin{equation}
    \phi_{x,r}(A)=\phi(x*D_r(A)),
    \label{trasldil}
\end{equation}
for any Borel set $A\subseteq \HH^n$. The set of tangent measures to $\phi$ at $x$ is denoted by $\Tan_\alpha(\phi,x)$ and it consists of the Radon measures $\mu$ for which there exists a sequence $r_i\to 0$ such that:
$$r_i^{-\alpha}\phi_{x,r_i}\rightharpoonup\mu.$$
The set $\Tan_\alpha(\phi,x)$ \label{trangents} is non-empty for $\phi$-almost  every $x\in\HH^n$. Indeed, let $x\in\HH^n$ be a point such that $\Theta^\alpha(\phi, x)<\infty$. Then, for every $\rho>0$ we have:
$$r^{-\alpha} \phi_{x,r}(B_\rho(0))=r^{-\alpha}\phi(B_{\rho  r}(x))\leq2\Theta^\alpha(\phi,x)\rho^\alpha,$$
for any sufficiently small $r$. Therefore, the family of measures $r^{-\alpha}\phi_{x,r}$ is uniformly bounded on compact sets and the compactness of measures, see for instance \cite[Proposition 1.12]{Preiss1987GeometryDensities}, yields the existence of a limit for a suitable subsequence.

\begin{definizione}\label{uniform}
We say that a Radon measure $\mu$ is an $\alpha$-uniform measure if:
\begin{itemize}
    \item [(i)] $0\in\supp(\mu)$,
    \item[(ii)] $\mu(B_r(x))= r^\alpha$ for any $r>0$ and any $x\in\supp(\mu)$.
\end{itemize}
We will denote the set of $\alpha$-uniform measures with the symbol $\mathcal{U}(\alpha)$.
\end{definizione}

The following two propositions are of capital importance to us since on the one hand, Proposition \ref{propup} insures that the tangents to a measure $\phi$ with $\alpha$-density are $\phi$-almost everywhere $\alpha$-uniform and on the other Proposition \ref{preiss} tells us that for $\phi$-almost every $x\in\HH^n$, the tangent measures to any element of $\Tan(\phi,x)$ are still in $\Tan(\phi,x)$. The latter stability property is usually summarized in the effective but imprecise expression \emph{tangent to tangents are tangents}.

\begin{proposizione}\label{propup}
Assume $\phi$ is a measure with $\alpha$-density on $\HH^n$. Then, for $\phi$-almost every $x\in\HH^n$ we have:
$$\Tan_\alpha(\phi,x)\subseteq \Theta^\alpha(\phi,x)\mathcal{U}(\alpha).$$
\end{proposizione}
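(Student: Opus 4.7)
\textbf{Proof proposal for Proposition \ref{propup}.}
The plan is to show that for $\phi$-a.e.\ $x$, if $\mu = \lim_i r_i^{-\alpha}\phi_{x,r_i}$ is any tangent, then $\mu/\Theta^\alpha(\phi,x)$ satisfies the two conditions of Definition \ref{uniform}. The first condition $0\in\supp(\mu)$ is automatic: by hypothesis $\Theta^\alpha(\phi,x)>0$, so $r_i^{-\alpha}\phi_{x,r_i}(B_\rho(0)) = r_i^{-\alpha}\phi(B_{\rho r_i}(x)) \to \Theta^\alpha(\phi,x)\rho^\alpha > 0$ for every $\rho>0$, whence every small ball around $0$ has positive $\mu$-mass. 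The content of the proposition is the second condition, and here the key is to transfer the density information from $x$ to all points of $\supp(\mu)$ simultaneously.

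The standard mechanism to do this is Egorov plus a Lebesgue density point argument. Let $E$ be the full-$\phi$-measure set where $\Theta^\alpha(\phi,\cdot)$ exists, is positive and finite; restricting $\phi$ to $E$ does not change densities. For every $\eta>0$, Egorov provides a compact $K_\eta\subseteq E$ with $\phi(E\setminus K_\eta)<\eta$ on which the convergence $r^{-\alpha}\phi(B_r(\cdot))\to\Theta^\alpha(\phi,\cdot)$ is uniform and $\Theta^\alpha(\phi,\cdot)$ is continuous. By the standard Besicovitch/Vitali differentiation theory on $(\HH^n,d)$ (valid since the Koranyi distance is a doubling metric), $\phi$-a.e.\ $x\in K_\eta$ is a $\phi$-density point of $K_\eta$, i.e.\ $\phi(B_r(x)\setminus K_\eta)/\phi(B_r(x))\to 0$ as $r\to 0$. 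Let $x$ be such a point and let $\mu$ be a tangent obtained from a sequence $r_i\to 0$.

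Fix $y\in\supp(\mu)$ and $s>0$. Since $\mu(B_\rho(y))>0$ for every $\rho>0$ and $r_i^{-\alpha}\phi_{x,r_i}\rightharpoonup \mu$, for all sufficiently large $i$ the support of $\phi_{x,r_i}$ meets $B_\rho(y)$; choosing $\rho=\rho_i\downarrow 0$ we find $z_i\in\supp(\phi_{x,r_i})$ with $z_i\to y$. Set $p_i:=x*D_{r_i}(z_i)\in\supp(\phi)$. The density-point property, together with $\phi(B_{r_i(s+\rho_i)}(x))\asymp r_i^\alpha$, forces $p_i\in K_\eta$ for all large $i$ (otherwise a positive fraction of the $\phi$-mass near $x$ would lie outside $K_\eta$, contradicting the density-point condition). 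By continuity of $\Theta^\alpha(\phi,\cdot)$ on $K_\eta$ and uniform convergence, $\Theta^\alpha(\phi,p_i)\to\Theta^\alpha(\phi,x)$ and
\begin{equation}
(r_i^{-\alpha}\phi_{x,r_i})(B_s(z_i)) \;=\; \frac{\phi(B_{sr_i}(p_i))}{(sr_i)^\alpha}\, s^\alpha \;\longrightarrow\; \Theta^\alpha(\phi,x)\, s^\alpha.\nonumber
\end{equation}

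It remains to transfer this to $\mu(B_s(y))$. For any $s$ with $\mu(\partial B_s(y))=0$ (which excludes at most countably many $s$), the weak-$*$ convergence together with $z_i\to y$ gives $(r_i^{-\alpha}\phi_{x,r_i})(B_s(z_i))\to \mu(B_s(y))$ by a routine sandwich argument: for any $\varepsilon>0$ one has $B_{s-\varepsilon}(y)\subseteq B_s(z_i)\subseteq B_{s+\varepsilon}(y)$ for $i$ large, and the weak-$*$ limit controls both outer bounds, which meet at $\mu(B_s(y))$ as $\varepsilon\to 0$. Hence $\mu(B_s(y))=\Theta^\alpha(\phi,x)s^\alpha$ for all but countably many $s>0$, and since both sides are right-continuous non-decreasing in $s$ (and the left side is also left-continuous outside the countable set of atoms of $s\mapsto\mu(B_s(y))$, which is exactly the exceptional set), the identity extends to every $s>0$. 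Letting $\eta\to 0$ exhausts a full-$\phi$-measure set of $x$'s. The main technical obstacle is precisely this last transfer step: one must ensure that the points $z_i$ witnessing $y\in\supp(\mu)$ can actually be chosen so that $p_i$ lands in the good set $K_\eta$, which is exactly why the Egorov set must be paired with a density-point argument rather than used in isolation.
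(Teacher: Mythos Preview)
Your overall strategy---Egorov to get a compact $K_\eta$ on which the density convergence is uniform and $\Theta^\alpha(\phi,\cdot)$ is continuous, then restrict to $\phi$-density points of $K_\eta$ and exploit uniform convergence at approximating points---is exactly the standard one, and it is precisely the argument the paper invokes by citing Proposition~3.4 of De~Lellis' lecture notes. So the approach matches the paper.

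There is, however, a genuine gap in the middle step. You first pick $z_i\in\supp(\phi_{x,r_i})$ with $z_i\to y$ \emph{arbitrarily}, and then assert that ``the density-point property \dots\ forces $p_i\in K_\eta$ for all large $i$ (otherwise a positive fraction of the $\phi$-mass near $x$ would lie outside $K_\eta$)''. This inference is not valid: a single point $p_i\in\supp(\phi)\setminus K_\eta$ contributes nothing to the ratio $\phi(B_{Cr_i}(x)\setminus K_\eta)/\phi(B_{Cr_i}(x))$, so there is no contradiction. For a generic choice of $z_i\in\supp(\phi_{x,r_i})$ there is no reason whatsoever for $p_i$ to lie in $K_\eta$.

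The fix---which your closing sentence already identifies in spirit---is to \emph{choose} the $z_i$ so that $p_i\in K_\eta$ from the outset. Concretely: since $y\in\supp(\mu)$ and $B_\rho(y)$ is open, lower semicontinuity gives $\liminf_i r_i^{-\alpha}\phi_{x,r_i}(B_\rho(y))\geq \mu(B_\rho(y))>0$; on the other hand the density-point property yields
\[
r_i^{-\alpha}\phi_{x,r_i}\big(B_\rho(y)\setminus D_{1/r_i}(x^{-1}K_\eta)\big)\;\leq\; r_i^{-\alpha}\phi\big(B_{(\lVert y\rVert+\rho)r_i}(x)\setminus K_\eta\big)\longrightarrow 0.
\]
Hence $B_\rho(y)\cap D_{1/r_i}(x^{-1}K_\eta)$ has positive $\phi_{x,r_i}$-measure for all large $i$, and one picks $z_i$ there. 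With this choice $p_i=x*D_{r_i}(z_i)\in K_\eta$ by construction, and your computation $(r_i^{-\alpha}\phi_{x,r_i})(B_s(z_i))=s^\alpha\,\phi(B_{sr_i}(p_i))/(sr_i)^\alpha\to\Theta^\alpha(\phi,x)\,s^\alpha$ then goes through verbatim; the transfer to $\mu(B_s(y))$ and the extension from a.e.\ $s$ to all $s$ are fine as you wrote them.
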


\begin{proof}
The proof of this proposition follows almost without modifications the one given in the Euclidean case in \cite[Proposition 3.4]{DeLellis2008RectifiableMeasures}.
\end{proof}

\begin{proposizione}\label{preiss}
Let $\phi$ be a Borel measure having $\alpha$-density in $\HH^n$. Then for $\phi$-almost every $x\in\HH^n$ and any $\mu\in\Tan_\alpha(\phi,x)$ we have:
$$r^{-\alpha}\mu_{y,r}\in\Tan_\alpha(\phi,x),\qquad\text{for every }y\in\supp(\mu)\text{ and }r>0.$$
\end{proposizione}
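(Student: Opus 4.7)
The plan is to adapt the Euclidean ``tangents to tangents are tangents'' argument (Proposition 3.4 of \cite{DeLellis2008RectifiableMeasures}, originally due to Preiss) to the Heisenberg setting. The ingredients that argument requires are the continuity of a translation--dilation operator, a separable metrizable weak-$*$ topology on $\mathcal{M}$, and the doubling property of $\phi$; all of them survive in $\HH^n$, so the main task is to verify the key blowup identity and then carry out the Fubini-type measurability step.

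First I would equip $\mathcal{M}$ with a separable metric $D$ metrizing weak-$*$ convergence on compact sets; this reduces the simultaneous claim over all $\mu\in\Tan_\alpha(\phi,x)$, all $y\in\supp(\mu)$ and all $r>0$ to a countable collection of triples $(\nu_j,y_j,r_j)$ drawn from a suitable dense subset. For each $y\in\HH^n$ and $\rho>0$ define the operator $T_{y,\rho}:\mathcal{M}\to\mathcal{M}$ by $T_{y,\rho}(\mu):=\rho^{-\alpha}\mu_{y,\rho}$. Since dilations are group homomorphisms, $D_r(y*D_\rho(A))=D_r(y)*D_{r\rho}(A)$, and combined with \eqref{trasldil} a direct computation gives the fundamental identity
\begin{equation}
T_{y,\rho}\bigl(r^{-\alpha}\phi_{x,r}\bigr)=(r\rho)^{-\alpha}\,\phi_{x*D_r(y),\,r\rho}.
\nonumber
\end{equation}
The operator $T_{y,\rho}$ is continuous on $\mathcal{M}$, so if $\mu=\lim_{i\to\infty} r_i^{-\alpha}\phi_{x,r_i}\in\Tan_\alpha(\phi,x)$ then
\begin{equation}
T_{y,\rho}(\mu)=\lim_{i\to\infty} (r_i\rho)^{-\alpha}\,\phi_{x*D_{r_i}(y),\,r_i\rho},
\nonumber
\end{equation}
which realises $T_{y,\rho}(\mu)$ as a limit of blowups of $\phi$ along the shifting base points $z_i:=x*D_{r_i}(y)\to x$ at scales $r_i\rho\to 0$.

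The substantive step, and the main obstacle, is to promote this ``blowup along shifting base points'' into a genuine element of $\Tan_\alpha(\phi,x)$. A naive perturbation fails because $d(x,z_i)=r_i\lVert y\rVert$ is of the same order as the blowup scale $r_i\rho$, so one cannot simply translate the base point back to $x$. The standard remedy is a Fubini / density-point argument on the product $\HH^n\times\mathcal{M}$: for each triple $(\nu_j,y_j,\rho_j)$ in the countable family one shows that the set of $x$ at which $\nu_j\in\Tan_\alpha(\phi,x)$ while $T_{y_j,\rho_j}(\nu_j)\notin\Tan_\alpha(\phi,x)$ is $\phi$-negligible, using the doubling property of $\phi$ (which holds because $\phi$ has $\alpha$-density) together with the Besicovitch differentiation theorem in $\HH^n$. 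Intersecting these negligible sets and invoking density of the countable family then gives the conclusion for all $y\in\supp(\mu)$ and all $r>0$ simultaneously. Since none of these ingredients uses any flatness of the underlying space, the Euclidean argument transfers with only cosmetic modifications.
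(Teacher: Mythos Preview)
Your approach is correct and is precisely the content of the reference the paper cites (Proposition~3.1 in \cite{Mattila2005MeasuresGroups}); the paper gives no independent argument, so there is nothing further to compare. One technical caveat: you invoke the Besicovitch differentiation theorem in $\HH^n$, but the Besicovitch covering property fails for the Koranyi distance, so the correct tool --- and the one actually used in the metric-group version of the argument --- is the Lebesgue density theorem for asymptotically doubling Radon measures (e.g.\ Federer~2.8.17), which applies here because the existence of $\Theta^\alpha(\phi,x)$ forces $\phi$ to be asymptotically doubling at $\phi$-a.e.\ point.
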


\begin{proof}
For a proof in generic metric groups see for instance \cite[Proposition 3.1]{Mattila2005MeasuresGroups}.
\end{proof}

\begin{proposizione}\label{propspt1}
Let $\phi$ be a Radon measure with $\alpha$-density and assume $\mu\in\Tan_\alpha(\phi,x)$ is such that $r_i^{-\alpha}\phi_{x,r_i}\rightharpoonup \mu$ for some $r_i\to 0$. Then, for any $y\in\supp(\mu)$ there exists a sequence $\{z_i\}_{i\in\N}\subseteq\supp(\phi)$ such that $D_{1/r_i}(x^{-1}*z_i)\to y$.
\end{proposizione}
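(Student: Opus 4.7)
The plan is to exploit the fact that the rescaling $\phi_{x,r}$ is the pushforward of $\phi$ under a homeomorphism of $\HH^n$, and then to detect points of $\supp(\phi)$ near preimages of $y$ via the lower semicontinuity of weak-$*$ convergence on open sets. Define $T_{x,r}\colon\HH^n\to\HH^n$ by $T_{x,r}(z):=D_{1/r}(x^{-1}*z)$. Since left translation is an isometry and $D_{1/r}$ is a homeomorphism, $T_{x,r}$ is itself a homeomorphism with continuous inverse $w\mapsto x*D_r(w)$. Unwinding \eqref{trasldil} then shows $\phi_{x,r}=(T_{x,r})_*\phi$, and because homeomorphisms preserve supports of pushforward measures,
\[
\supp(\phi_{x,r_i})=T_{x,r_i}(\supp(\phi)).
\]

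Thus it suffices to produce a sequence $w_i\in\supp(\phi_{x,r_i})$ with $w_i\to y$: once this is done, the points $z_i:=x*D_{r_i}(w_i)=T_{x,r_i}^{-1}(w_i)$ automatically lie in $\supp(\phi)$ and satisfy $D_{1/r_i}(x^{-1}*z_i)=w_i\to y$. To construct the $w_i$, I would fix $\varepsilon>0$ and let $V_\varepsilon:=\{z:d(z,y)<\varepsilon\}$ be the open Koranyi ball of radius $\varepsilon$ around $y$. Since $y\in\supp(\mu)$, we have $\mu(V_\varepsilon)>0$, and the Portmanteau theorem, applied to the open set $V_\varepsilon$ and the weak-$*$ convergence $r_i^{-\alpha}\phi_{x,r_i}\rightharpoonup\mu$, yields
\[
\liminf_{i\to\infty} r_i^{-\alpha}\phi_{x,r_i}(V_\varepsilon)\geq\mu(V_\varepsilon)>0.
\]
Hence, for all sufficiently large $i$, the set $V_\varepsilon$ carries positive $\phi_{x,r_i}$-mass and consequently meets $\supp(\phi_{x,r_i})$.

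A standard diagonal extraction then closes the argument: choose integers $N_1<N_2<\cdots$ such that $\supp(\phi_{x,r_i})\cap V_{1/k}\neq\emptyset$ for every $i\geq N_k$, and for $i\geq N_1$ pick $w_i$ in $\supp(\phi_{x,r_i})\cap V_{1/k(i)}$, where $k(i):=\max\{k:N_k\leq i\}$; for $i<N_1$, set $w_i$ to an arbitrary point of $\supp(\phi_{x,r_i})$. Since $k(i)\to\infty$, we have $w_i\to y$, as required. The argument is essentially mechanical; the only point requiring any care is the use of the open ball $V_\varepsilon$ rather than the closed Koranyi ball $B_\varepsilon(y)$ of the paper's notation, because both the defining property of the support and the lower semicontinuity under weak-$*$ convergence are naturally formulated on open sets.
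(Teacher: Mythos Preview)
Your proof is correct and follows essentially the same route as the paper, which simply cites the Euclidean analogue (Proposition~3.4 in \cite{DeLellis2008RectifiableMeasures}) and describes it as ``a simple argument by contradiction''; both rely on the lower semicontinuity of weak-$*$ convergence on open sets to show that every Koranyi neighbourhood of $y$ eventually meets $\supp(\phi_{x,r_i})=T_{x,r_i}(\supp(\phi))$. Your direct constructive version with the diagonal extraction is just the contrapositive of the paper's contradiction argument.
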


\begin{proof}
A simple argument by contradiction yields the claim, the proof follows verbatim its Euclidean analogue,  \cite[Proposition 3.4]{DeLellis2008RectifiableMeasures}.
\end{proof}

If $\mu$ is an $\alpha$-uniform measure, we can also define its blowups at infinity, or blowdowns. Such tangents at infinity are Radon measures $\nu$ for which there exists a sequence $\{R_i\}\to\infty$ such that:
$$R_i^{-\alpha}\phi_{0,R_i}\rightharpoonup \nu.$$
We will denote with $\Tan_\alpha(\mu,\infty)$ the set of tangent measures at infinity of $\mu$. The following proposition is a strengthened version of Proposition \ref{propup} for uniform measures. 

\begin{proposizione}\label{uniformup}
Assume $\mu$ is an $\alpha$-uniform measure. Then for any $z\in\supp(\mu)\cup\{\infty\}$  we have:
$$\emptyset\neq\Tan_\alpha(\mu, z)\subseteq \mathcal{U}(\alpha).$$
\end{proposizione}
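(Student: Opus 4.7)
The claim splits into two parts: non-emptiness, and that every tangent is $\alpha$-uniform. Both follow by upgrading the argument of Proposition \ref{propup}, using that $\mu$ is uniform---not merely a density measure---and hence yields a universal mass bound at every scale and every base point in $\supp(\mu)$.

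For non-emptiness, fix $z \in \supp(\mu)$; the case $z = \infty$ is identical with $R_i \to \infty$ in place of $r_i \to 0$. For any such sequence, set $\nu_i := r_i^{-\alpha} \mu_{z, r_i}$. The $\alpha$-uniformity of $\mu$ at $z$ gives
\[
\nu_i(B_\rho(0)) = r_i^{-\alpha} \mu(B_{\rho r_i}(z)) = \rho^\alpha
\]
for every $\rho > 0$ and every $i$. This uniform bound on compact sets, combined with the standard weak-$*$ compactness of Radon measures, produces a subsequence converging to some $\nu \in \Tan_\alpha(\mu, z)$. Portmanteau applied to the closed ball $B_\rho(0)$ further gives $\nu(B_\rho(0)) \geq \limsup_i \nu_i(B_\rho(0)) \geq \rho^\alpha > 0$, so $0 \in \supp \nu$.

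To show $\nu \in \mathcal{U}_{\HH^n}(\alpha)$, take $x \in \supp \nu$. Proposition \ref{propspt1} produces $z_i \in \supp(\mu)$ with $x_i := D_{1/r_i}(z^{-1} * z_i) \to x$. Left-invariance and $D_\lambda$-homogeneity of the Koranyi distance give $z * D_{r_i}(B_\rho(x_i)) = B_{\rho r_i}(z_i)$, hence by $\alpha$-uniformity of $\mu$ at $z_i$,
\[
\nu_i(B_\rho(x_i)) = r_i^{-\alpha} \mu(B_{\rho r_i}(z_i)) = \rho^\alpha.
\]
For any $\epsilon > 0$ and any $i$ large enough that $d(x_i, x) < \epsilon/2$, the triangle inequality yields the sandwich $B_{\rho - \epsilon}(x_i) \subseteq B_\rho(x) \subseteq B_{\rho + \epsilon}(x_i)$, so that $(\rho - \epsilon)^\alpha \leq \nu_i(B_\rho(x)) \leq (\rho + \epsilon)^\alpha$; letting $i \to \infty$ and then $\epsilon \to 0$ yields $\nu_i(B_\rho(x)) \to \rho^\alpha$. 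Combining this with Portmanteau for the closed ball $B_\rho(x)$ gives $\nu(B_\rho(x)) \geq \rho^\alpha$, while Portmanteau applied to the open ball $\{y : d(y, x) < \rho'\}$ with $\rho' > \rho$ (together with the inclusion $B_\rho(x) \subseteq \{d(\cdot, x) < \rho'\}$) gives $\nu(B_\rho(x)) \leq (\rho')^\alpha$; sending $\rho' \downarrow \rho$ produces the reverse inequality, so $\nu(B_\rho(x)) = \rho^\alpha$.

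The only subtle point is this last passage to the limit: weak convergence does not directly respect evaluation on moving balls, so one must deploy both Portmanteau inequalities together with monotonicity of $r \mapsto \nu(B_r(x))$. The blowdown case $z = \infty$ is entirely analogous, with $x_i := D_{1/R_i}(z_i)$ and $z_i \in \supp(\mu)$ produced by the standard fact that points in the support of a weak limit are approximable by points in the supports of the approximating measures.
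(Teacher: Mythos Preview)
Your proof is correct and follows precisely the standard route the paper invokes by citing Lemma 3.5 of \cite{DeLellis2008RectifiableMeasures}: the paper's own proof is nothing more than that citation, and you have faithfully unpacked it---uniform mass bound for compactness, approximation of support points via Proposition \ref{propspt1} (resp.\ Lemma \ref{replica}(ii) for the blowdown), and the two-sided Portmanteau sandwich to pin down $\nu(B_\rho(x))=\rho^\alpha$.
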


\begin{proof}
A straightforward adaptation of the proof of \cite[Lemma 3.6]{DeLellis2008RectifiableMeasures} yields the desired conclusion.
\end{proof}

The following is a compactness result for uniform measures and for their supports.

\begin{lemma}\label{replica}
If $\{\mu_i\}_{i\in\N}$ is a sequence of $\alpha$-uniform measures converging in the weak topology to some $\nu$, then:
\begin{itemize}
\item[(i)] $\nu$ is an $\alpha$-uniform measure,
\item[(ii)] if $y\in\supp(\nu)$ then there exists a sequence $\{y_i\}\subseteq \HH^n$ such that $y_i\in\supp(\mu_i)$ and $y_i\to y$,
\item[(iii)] if there exists a sequence $\{y_i\}\subseteq \HH^n$ such that $y_i\in\supp(\mu_i)$ and $y_i\to y$, then $y\in\supp(\nu)$.
\end{itemize}
\end{lemma}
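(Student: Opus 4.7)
The plan is to prove parts (iii) and (ii) first, and then derive (i) from them together with the uniformity of each $\mu_i$. All three parts follow the standard template used in the Euclidean setting (compare Lemma 2.8 in \cite{DeLellis2008RectifiableMeasures}): they rest only on Portmanteau-type properties of weak convergence together with the triangle inequality in the Koranyi metric, so no Heisenberg-specific input enters beyond the fact that closed Koranyi balls are compact.

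For (iii) the natural approach is contradiction. Assume $y_i\in\supp(\mu_i)$, $y_i\to y$, but $y\notin\supp(\nu)$; then some $r>0$ satisfies $\nu(B_r(y))=0$. For $i$ large the triangle inequality gives $B_{r/4}(y_i)\subseteq K:=\overline{B_{3r/4}(y)}$, with $K$ compact. Uniformity yields $\mu_i(K)\ge (r/4)^\alpha$, while the compact-set half of Portmanteau furnishes $\limsup_i\mu_i(K)\le \nu(K)\le \nu(B_r(y))=0$, the desired contradiction. For (ii), given $y\in\supp(\nu)$ every open ball $B_{1/k}(y)$ has positive $\nu$-measure, so the open-set half of Portmanteau gives $\liminf_i\mu_i(B_{1/k}(y))>0$; choosing $y_i^k\in\supp(\mu_i)\cap B_{1/k}(y)$ for all $i$ beyond a threshold $N_k$ and diagonalising produces the required sequence $y_i\to y$.

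For (i), since $0\in\supp(\mu_i)$ for every $i$, (iii) applied to the constant sequence $y_i=0$ immediately gives $0\in\supp(\nu)$. To verify the ball-measure identity at an arbitrary $x\in\supp(\nu)$ and $r>0$, I would use (ii) to pick $x_i\in\supp(\mu_i)$ with $x_i\to x$; then for any $\epsilon>0$ and $i$ large the triangle inequality gives the nesting $B_{r-\epsilon}(x)\subseteq B_r(x_i)\subseteq B_{r+\epsilon}(x)$, so the uniformity of $\mu_i$ yields
\[
\mu_i(B_{r-\epsilon}(x))\le r^\alpha\le\mu_i(B_{r+\epsilon}(x)).
\]
Applying the open-set half of Portmanteau to $B_{r-\epsilon}(x)$ and the closed-set half to $\overline{B_{r+\epsilon}(x)}$ produces $\nu(B_{r-\epsilon}(x))\le r^\alpha\le \nu(\overline{B_{r+\epsilon}(x)})$, and sending $\epsilon\to 0$ through the monotone continuity of $\nu$ on the nested families $\{B_{r-\epsilon}(x)\}_\epsilon$ and $\{\overline{B_{r+\epsilon}(x)}\}_\epsilon$ pins $\nu(B_r(x))$ to $r^\alpha$.

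The only mildly delicate point I foresee is the familiar open-versus-closed discrepancy when invoking Portmanteau on a ball, which the radius-perturbation trick of the previous paragraph already handles. Beyond that I do not expect any real obstacle: the whole statement is essentially a bookkeeping exercise combining weak convergence with the triangle inequality in the Koranyi distance.
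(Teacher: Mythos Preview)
Your proposal is correct and is exactly the standard argument the paper has in mind: the paper's own proof is merely the one-line remark that the lemma is ``an almost immediate adaptation'' of Propositions~\ref{propup} and~\ref{propspt1}, which in turn defer to the Euclidean arguments in \cite{DeLellis2008RectifiableMeasures}. One small tightening in part~(i): after sending $\epsilon\to 0$ your sandwich literally yields $\nu(B_r(x))\le r^\alpha\le \nu(\overline{B_r(x)})$, not $\nu(B_r(x))=r^\alpha$ directly; to close it, use that the lower bound $\nu(\overline{B_s(x)})\ge s^\alpha$ holds for \emph{every} $s$, so $\nu(B_r(x))\ge\sup_{s<r}\nu(\overline{B_s(x)})\ge r^\alpha$.
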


\begin{proof}
The proof of this lemma is achieved by adapting to the context of Heisenberg groups and to the particular case of uniform measures the proof of \cite[Proposition 3.4]{DeLellis2008RectifiableMeasures}.
\end{proof}

The following Theorem was proved by V. Chousionis, J.Tyson in \cite{Chousionis2015MarstrandsGroup}. They proved that if $\HH^n$ is endowed with the Koranyi metric, the density problem reduces to integer exponents only, in complete analogy with the Euclidean case:

\begin{teorema}\label{Mastrand}
The  set $\mathcal{U}(\alpha)$ is non-empty if and only if $\alpha\in\{0,1,\ldots, 2n+2\}$. In particular if $\phi$ is a measure with $\alpha$-density in $\HH^n$, then $\alpha\in\{0,\ldots,2n+2\}$.
\end{teorema}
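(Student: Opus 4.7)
The plan is to reduce the density-measure statement to the classification of $\alpha$-uniform measures and then perform that classification. By Proposition \ref{propup}, if $\phi$ has $\alpha$-density then at $\phi$-a.e.\ point the tangent measures are nonzero multiples of $\alpha$-uniform measures, and such tangents exist by the compactness argument already sketched in the excerpt, so the existence of $\phi$ with $\alpha$-density forces $\mathcal{U}_{\HH^n}(\alpha)\neq\emptyset$. Conversely any element of $\mathcal{U}_{\HH^n}(\alpha)$ has $\alpha$-density equal to $1$ at every support point. Thus the problem reduces to showing $\mathcal{U}_{\HH^n}(\alpha)\neq\emptyset\iff\alpha\in\{0,1,\ldots,2n+2\}$.

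The implication \emph{integer $\alpha\in\{0,\ldots,2n+2\}$ implies $\mathcal{U}_{\HH^n}(\alpha)\neq\emptyset$} is obtained by explicit construction: for each such $\alpha$ the suitably normalised Haar measure of an $\alpha$-dimensional homogeneous subgroup of $\HH^n$ does the job. The upper bound $\alpha\le 2n+2$ follows directly from the fact that $\HH^n$ with the Koranyi metric has Hausdorff dimension $2n+2$: the $\alpha$-uniformity forces positive and bounded $\alpha$-density on $\supp(\mu)$, and a standard density comparison with $\mathcal{S}^\alpha$ then gives $\alpha\leq 2n+2$.

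The core of the theorem is integrality. Given $\mu\in\mathcal{U}_{\HH^n}(\alpha)$ with $0\in\supp(\mu)$, I would introduce the generating function
\begin{equation*}
F_\mu(x,s):=\int_{\HH^n} e^{-s\lVert y^{-1}*x\rVert^4}\,d\mu(y),\qquad s>0,\ x\in\HH^n.
\end{equation*}
Since $\lVert y^{-1}*x\rVert^4$ is a polynomial of degree $4$ in the coordinates of both $x$ and $y$, the map $x\mapsto F_\mu(x,s)$ is entire and admits an absolutely convergent expansion in the Heisenberg moments $b_{k,s}^\mu(x)$ of Definition \ref{defimom}. On the other hand, the layer-cake formula combined with the defining identity $\mu(B_r(x))=r^\alpha$ (valid for every $r>0$ and every $x\in\supp(\mu)$) and the change of variable $u=sr^4$ yields the closed form
\begin{equation*}
F_\mu(x,s)=\Gamma\!\left(1+\tfrac{\alpha}{4}\right)s^{-\alpha/4}\qquad\text{for every }x\in\supp(\mu),\ s>0.
\end{equation*}
To extract integrality I would pass to a blow-down $\nu\in\Tan_\alpha(\mu,\infty)$, which is again $\alpha$-uniform by Proposition \ref{uniformup}; by a further diagonal extraction along $D_\lambda$-rescalings one can arrange $\nu$ to be $D_\lambda$-homogeneous of weight $\alpha$, i.e.\ a uniform cone. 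Matching the closed form for $F_\nu(0,s)$ against its moment expansion, while exploiting the $D_\lambda$-homogeneity of $\nu$, produces polynomial identities in $\alpha$ whose only real solutions are the non-negative integers.

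The main obstacle is precisely this last matching step. In $\R^n$ Marstrand's original argument exploits the spherical harmonic decomposition of the Gaussian on a smooth Euclidean sphere; in $\HH^n$ the Koranyi sphere carries characteristic points where the horizontal distribution is tangent, and $\lVert\cdot\rVert^4$ is quartic rather than quadratic, so the moment bookkeeping is substantially more delicate. One has to derive non-trivial recursion relations between the $b_{k,s}^\mu$ that cannot simultaneously hold for non-integer $\alpha$; this is the technical heart of the theorem of Chousionis and Tyson, and I would expect to spend most of the work there.
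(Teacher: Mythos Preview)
The paper does not prove this theorem; it quotes it from Chousionis--Tyson \cite{Chousionis2015MarstrandsGroup} and, in the remark immediately following, records that the proof uses the \emph{stratification of zeros of holomorphic functions}, after Kirchheim--Preiss \cite{Kirchheim2002UniformilySpaces}. So the relevant comparison is between your sketch and that analytic-continuation argument.

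Your reduction to uniform measures, the explicit constructions for each integer $\alpha$, and the dimension bound $\alpha\le 2n+2$ are all fine and standard. The genuine gap is in the integrality step. The Kirchheim--Preiss/Chousionis--Tyson argument does not proceed by extracting ``polynomial identities in $\alpha$'' from moment recursions. Rather, one fixes $s>0$ and observes that for an $\alpha$-uniform measure the function
\[
x\longmapsto \int e^{-s\lVert x^{-1}*y\rVert^4}\,d\mu(y)\;-\;\Gamma\!\Big(1+\tfrac{\alpha}{4}\Big)s^{-\alpha/4}
\]
is real-analytic on $\R^{2n+1}$ (since $\lVert\cdot\rVert^4$ is a polynomial) and vanishes on $\supp(\mu)$. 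The stratification of zero sets of real-analytic functions then forces $\supp(\mu)$ to be a countable union of real-analytic submanifolds, hence to have integer Hausdorff dimension; comparing with $\mu(B_r(x))=r^\alpha$ gives $\alpha\in\N$. No moment matching, no blow-down to a cone, and no recursion relations are needed.

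Your proposed route---pass to a uniform cone $\nu$ and match $F_\nu(0,s)=\Gamma(1+\alpha/4)s^{-\alpha/4}$ against the moment expansion to force $\alpha\in\N$---is not fleshed out enough to be a proof. The closed form $\Gamma(1+\alpha/4)s^{-\alpha/4}$ holds for \emph{every} $\alpha$-uniform measure at every support point, cone or not, so nothing new is gained by passing to a cone; and you have not explained which identities among the $b_{k,s}^\nu$ fail for non-integer $\alpha$, or why. Your final paragraph effectively concedes this (``I would expect to spend most of the work there'') and also mischaracterises Marstrand's original argument, which is not a spherical-harmonic decomposition of a Gaussian. If you want a self-contained proof, follow the analytic-variety route: it is short, works verbatim in any Carnot group with a polynomial norm (as the paper's remark notes), and avoids the moment bookkeeping entirely.
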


\begin{osservazione}\label{oss12}
Note that $\mathcal{U}(0)=\{\delta_0\}$. Moreover, arguing as in \cite[Proposition 3.14]{DeLellis2008RectifiableMeasures}, we can also deduce that $\mathcal{U}(2n+2)=\{\mathcal{L}^{2n+1}(B_1(0))^{-1}\mathcal{L}^{2n+1}\}$, where $\mathcal{L}^{2n+1}$ denotes the usual Lebesgue measures in $\R^{2n+1}$.
For these reasons, \textbf{from now on we will always assume} $\alpha\in\{1,\ldots,2n+1\}$ and instead of $\alpha$ we will always use $m$ to denote the dimension of the measure to remark that it is an integer.
\end{osservazione}

\begin{osservazione}
The stratification of zeros of holomorphic functions is the tool used by B. Kirchheim and D. Preiss in \cite{Kirchheim2002UniformilySpaces} and later by V. Chousionis and J.Tyson  in \cite{Chousionis2015MarstrandsGroup} to prove Marstrand's theorem in the Euclidean spaces and in the Heisenberg groups, respectively. It is easy to see that the same  argument yields Marstrand's theorem in any homogeneous group endowed with a metric induced by a polynomial norm. 
\end{osservazione}

\begin{proposizione}\label{UComp}
For any $m\in\{0,\ldots,2n+2\}$ the set $\mathcal{U}(m)$ is compact with respect to the convergence of measures.
\end{proposizione}

\begin{proof}
Indeed, let $\{\mu_i\}_{i\in\N}$ be a sequence of measures in $\mathcal{U}(m)$. Then, by definition we have $\mu_i(B_r(x))\leq r^m$. Proposition \cite[Proposition 1.12]{Preiss1987GeometryDensities} immediately proves concludes the existence of a Radon measure $\mu$ such that $\mu_i\rightharpoonup\mu$. Proposition \ref{replica}(i) concludes the proof.
\end{proof}

\subsection{Basic properties of uniform measures in \texorpdfstring{$\HH^n$}{Lg}}

In this subsection we present some elementary results on the structure of uniform measures and we show that if a function has spherical symmetry, then its integral with respect to a uniform measure is easily computed.

\begin{proposizione}\label{isometrie}
Let $\Sigma:(\HH^n,\lVert\cdot\rVert)\to(\HH^n,\lVert\cdot\rVert)$ be a surjective isometry of $(\HH^n,\lVert\cdot\rVert)$ into itself. If $\mu\in \mathcal{U}(m)$ and there exist a $u\in\supp(\mu)$ such that $\Sigma(u)=0$, then $\Sigma_\#(\mu)\in\mathcal{U}(m)$ and
$\supp(\Sigma_\#(\mu))=\Sigma(\supp(\mu))$.
\end{proposizione}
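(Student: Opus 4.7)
The statement is essentially a direct verification, so the plan is to split it into two clean steps: first identify the support of the pushforward, then check the two axioms of Definition \ref{uniform}.

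First I would observe that a surjective isometry $\Sigma$ of $(\HH^n,\lVert\cdot\rVert)$ is automatically a homeomorphism: it is $1$-Lipschitz, injective (since it preserves distances), and its set-theoretic inverse $\Sigma^{-1}$ is again an isometry. Because $\Sigma$ is a homeomorphism, the pushforward commutes with taking supports, so
\[
\supp(\Sigma_{\#}\mu)=\Sigma(\supp(\mu)),
\]
which already establishes the second claim of the proposition. Combining this identity with the hypothesis that there is some $u\in\supp(\mu)$ with $\Sigma(u)=0$, we obtain $0=\Sigma(u)\in\Sigma(\supp(\mu))=\supp(\Sigma_{\#}\mu)$, which verifies condition (i) of Definition \ref{uniform}. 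This is exactly where the hypothesis on $u$ is used: even though $0\in\supp(\mu)$ automatically (since $\mu$ is $\alpha$-uniform), one needs $0$ to land inside $\Sigma(\supp(\mu))$ and not merely in $\HH^n$.

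Next I would verify the ball-mass condition (ii). Fix $x\in\supp(\Sigma_{\#}\mu)$ and write $x=\Sigma(y)$ with $y\in\supp(\mu)$. Since $\Sigma$ is a surjective isometry, for every $r>0$ one has $\Sigma^{-1}(B_r(x))=B_r(\Sigma^{-1}(x))=B_r(y)$: indeed $z\in\Sigma^{-1}(B_r(x))$ iff $\lVert \Sigma(z)^{-1}*x\rVert<r$ iff $\lVert z^{-1}*y\rVert<r$ by isometricity. Consequently, by the definition of pushforward and the $\alpha$-uniformity of $\mu$,
\[
\Sigma_{\#}\mu(B_r(x))=\mu(\Sigma^{-1}(B_r(x)))=\mu(B_r(y))=r^{\alpha},
\]
which is condition (ii). This completes the proof.

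There is no serious obstacle in this argument; the only subtlety worth pointing out is the use of the explicit hypothesis on $u$, which cannot be dropped because the normalisation $0\in\supp(\,\cdot\,)$ is not invariant under an arbitrary isometry—only under the subgroup of isometries fixing the origin.
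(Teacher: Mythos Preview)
Your proof is correct and follows essentially the same route as the paper: both arguments rest on the identity $\Sigma^{-1}(B_r(g))=B_r(\Sigma^{-1}(g))$ for a surjective isometry and then compute $\Sigma_{\#}\mu(B_r(g))=\mu(B_r(\Sigma^{-1}(g)))=r^{\alpha}$ for $g\in\Sigma(\supp(\mu))$. The only cosmetic difference is that you obtain the support identity directly from the fact that $\Sigma$ is a homeomorphism, whereas the paper deduces both inclusions from the ball-mass computation itself; your version also makes the verification of $0\in\supp(\Sigma_{\#}\mu)$ explicit, which the paper leaves implicit.
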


\begin{proof}
Since $\Sigma^{-1}(B_r(g))=B_r(\Sigma^{-1}(g))$, for any $g\in \HH^n$ and any $r>0$ we have that:
$$\Sigma_\#\mu(B_r(g))=\mu(\Sigma^{-1}(B_r(g)))=\mu(B_r(\Sigma^{-1}(g))).$$
If $g\in\Sigma(\supp(\mu))$, then there exists $h\in\supp(\mu)$ such that $g=\Sigma(h)$ and thus $\Sigma_\#\mu(B_r(g))=\mu(B_r(h))=r^m$.
In particular we conclude that $\Sigma(\supp(\mu))\subseteq\supp(\Sigma_\#(\mu))$. The other inclusion can be obtained similarly.
\end{proof}

The following characterization of uniform measures will prove to be very useful throughout this paper.

\begin{proposizione}\label{supportoK}
If $\mu$ is a $m$-uniform measure on $\HH^n$, then $\mu=\mathcal{C}^{m}\llcorner{\supp(\mu)}$.
\end{proposizione}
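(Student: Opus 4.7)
The plan is to deduce the equality $\mu=\mathcal{S}^\alpha_{\supp(\mu)}$ from Federer's centred spherical differentiation theorem, using as the sole pointwise input the uniformity identity $\mu(B_r(x))=r^\alpha$ for every $x\in\supp(\mu)$ and every $r>0$. Both measures vanish on $\HH^n\setminus\supp(\mu)$ (the first by definition of support, the second because the admissible coverings in Definition \ref{Hausdro} are required to have their centres in $\supp(\mu)$), so it is enough to compare them on Borel subsets $A\subseteq\supp(\mu)$.

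For the upper bound $\mu\leq\mathcal{S}^\alpha_{\supp(\mu)}$, any admissible $\delta$-covering $A\subseteq\bigcup_j \cl(B_{r_j}(x_j))$ with $x_j\in\supp(\mu)$ and $r_j\leq\delta$ satisfies
$$\mu(A)\leq\sum_j\mu(\cl(B_{r_j}(x_j)))=\sum_j r_j^\alpha,$$
where the equality $\mu(\cl(B_{r_j}(x_j)))=r_j^\alpha$ comes from uniformity combined with monotone convergence in $r$. Passing to the infimum over such coverings and then letting $\delta\downarrow 0$ gives $\mu(A)\leq\mathcal{S}^\alpha_{\supp(\mu)}(A)$.

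For the reverse inequality I would invoke Federer's theorem 2.10.19(3) of \cite{Federer1996GeometricTheory}, which yields $\mathcal{S}^\alpha_E\leq\mu$ on a Borel set $E$ as soon as the lower centred spherical density $\Theta_*^\alpha(\mu,x)=\liminf_{r\to 0}\mu(B_r(x))/r^\alpha$ is at least $1$ at every $x\in E$. Choosing $E=\supp(\mu)$, uniformity makes this density identically equal to $1$, and the two inequalities together yield $\mu=\mathcal{S}^\alpha_{\supp(\mu)}$. The main potential obstacle, were one to prove the lower bound by hand instead of citing Federer, is the availability of the Vitali covering theorem for Radon measures, which is needed to construct almost-disjoint coverings realising the infimum in the definition of $\mathcal{S}^\alpha_{\supp(\mu)}$ with total weight close to $\mu(A)$; in $\HH^n$ this is available because the Koranyi balls are doubling, and this is exactly the ingredient that makes Federer's Euclidean argument transfer verbatim, consistently with the excerpt's remark that the statement \textquotedblleft easily follows from Federer's spherical differentiation theorems\textquotedblright.
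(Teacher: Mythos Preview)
Your proof is correct and follows essentially the same route as the paper's: both deduce the identity from a Federer-type spherical density theorem, using only that uniformity forces the relevant density to equal $1$ at every point of $\supp(\mu)$. The paper packages the argument slightly differently: instead of splitting into two inequalities, it observes that the Federer upper density
\[
\lim_{r\to 0}\sup\big\{r^{-\alpha}\mu(\cl(B_r(y))):x\in\cl(B_r(y)),\ y\in\supp(\mu)\big\}
\]
equals $1$ identically, invokes a single differentiation theorem for centred spherical Hausdorff measures (Theorem~2.13 of \cite{MR3388880}) to get $\mu(V)=\mathcal{S}^\alpha_{\supp(\mu)}(V)$ on open sets, and then extends to all Borel sets by regularity (Lemma~1.9.4 of \cite{Bogachev2007MeasureTheory}). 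The only caveat with your citation is that Federer's 2.10.19 is formulated for the standard Carath\'eodory construction, whereas $\mathcal{S}^\alpha_E$ in Definition~\ref{Hausdro} is the \emph{centred} variant carrying the additional $\sup_{B\subseteq A}$ needed to restore monotonicity; this is why the paper appeals to a reference tailored to centred Hausdorff measures rather than to Federer directly, though the underlying Vitali-covering mechanism you identify is of course the same.
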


\begin{proof}
The claim follows immediately from \cite[Theorem 3.1]{FSSCArea}.
\end{proof}

\begin{osservazione}
Note that the above proposition implicitly says that $m$-uniform measures are uniquely determined by their support.
\end{osservazione}

\begin{osservazione}
The above proposition could also be proved using the fact that we know thanks to \cite{Chousionis2015MarstrandsGroup} that supports of uniform measures are analytic varieties together with the area formulas of \cite{Magnani2017AMeasure}.
\end{osservazione}

\begin{definizione}[Radially symmetric functions]
We say that a function $\varphi:\HH^n\rightarrow\R$ is radially symmetric if there exists a profile function $g:[0,\infty)\rightarrow \R$ such that $\varphi(z)=g(\lVert z\rVert)$.
\end{definizione}

Integrals of radially symmetric functions with respect to uniform measures are easy to compute and we have the following change of variable formula, which will be extensively used throughout the paper:

\begin{proposizione}\label{prop5}
Let $\mu\in\mathcal{U}(m)$ and suppose $\varphi:\HH^n\rightarrow\R$ is a radially symmetric non-negative function. Then, for any $u\in\supp(\mu)$ we have:
\begin{equation}
\int  \varphi(u^{-1}*z)d\mu(z)=m\int_0^\infty r^{m-1}g(r)dr\nonumber,
\end{equation}
where $g$ is the profile function associated to $\varphi$.
\end{proposizione}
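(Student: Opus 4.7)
The plan is to reduce the integral to a one-dimensional integral on $[0,\infty)$ by pushing $\mu$ forward along the distance map $z \mapsto d(u,z)$. First, left-invariance of the Koranyi metric gives
$$\lVert u^{-1} * z\rVert = d(u^{-1}*z, 0) = d(z, u) = d(u,z),$$
so $\varphi(u^{-1}*z) = g(d(u,z))$, and the integral of interest becomes $\int g(d(u,z))\,d\mu(z)$. Setting $f(z) := d(u,z)$ and $\nu := f_{\#}\mu$, the abstract change-of-variables formula for push-forward measures yields
$$\int_{\HH^n} \varphi(u^{-1}*z)\,d\mu(z) = \int_0^\infty g(r)\,d\nu(r).$$

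The whole point is therefore to identify the Radon measure $\nu$ on $[0,\infty)$ explicitly, and this is exactly where $\alpha$-uniformity enters. Indeed, for every $r \ge 0$,
$$\nu([0,r]) = \mu\bigl(f^{-1}([0,r])\bigr) = \mu(B_r(u)) = r^\alpha,$$
since $u \in \supp(\mu)$ and $\mu$ is $\alpha$-uniform. In particular $\nu(\{0\}) = \lim_{r \to 0^+} r^\alpha = 0$ (as $\alpha \ge 1$), so $\nu$ has no atom at the origin. A Radon measure on $[0,\infty)$ is uniquely determined by its values on the intervals $[0,r]$, so $\nu$ must coincide with the measure whose distribution function is $r^\alpha$, i.e.\ with $\alpha r^{\alpha-1}\mathcal{L}^1\llcorner (0,\infty)$.

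Plugging this description of $\nu$ into the push-forward identity gives
$$\int_{\HH^n} \varphi(u^{-1}*z)\,d\mu(z) = \int_0^\infty g(r)\,\alpha r^{\alpha-1}\,dr,$$
which is the claim. There is no genuinely hard step here: the only structural ingredients used are (a) left-invariance of $d$, to turn $\varphi(u^{-1}*z)$ into a function of $d(u,z)$ alone, and (b) $\alpha$-uniformity of $\mu$ at the point $u \in \supp(\mu)$, to fix the distribution function of $\nu$. If one preferred to avoid the push-forward formalism altogether, the same identity can be obtained by first checking it for indicator profiles $g = \chi_{[0,R]}$ (where it reduces to $\mu(B_R(u)) = R^\alpha$), then extending by linearity and monotone convergence to every non-negative Borel $g$.
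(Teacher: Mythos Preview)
Your proof is correct and essentially matches the paper's approach. The paper outlines precisely the alternative you mention at the end: verify the identity first for finite positive combinations $\varphi=\sum_i a_i\chi_{B_{r_i}(0)}$ (where it reduces to $\mu(B_{r_i}(u))=r_i^\alpha$) and then pass to general non-negative $\varphi$ by monotone convergence; your push-forward formulation is just a cleaner packaging of the same idea.
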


\begin{proof}
First one proves the formula for simple functions of the form
$\varphi(z):=\sum_{i=1}^k a_i\chi_{B_{r_i}(0)}$,
where $a_i,r_i\geq 0$ for any $i=1,\ldots,k$. Then, one proves the result for a general $\varphi$ by Beppo Levi's convergence theorem.
\end{proof}

An immediate application of the previous proposition is the following:

\begin{corollario}
\label{prop1}
For any $p>0$, any $\mu\in\mathcal{U}(m)$ and any $u\in\supp(\mu)$, we have:
\begin{equation}
\int  \lVert u^{-1}*z\rVert^p e^{-s\lVert   u^{-1}*z\rVert^4}d\mu(z)=\frac{m}{4s^\frac{m+p}{4}}\Gamma\left(\frac{m+p}{4}\right).
\nonumber
\end{equation}
\end{corollario}

\begin{proof}
Since the profile function associated to the radial function $\lVert z\rVert^p e^{-s\lVert  z\rVert^4}$ is $r^p e^{-sr^4}$, Proposition \ref{prop5} implies that:
$$\int  \lVert u^{-1}*z\rVert^p e^{-s\lVert   u^{-1}*z\rVert^4}d\mu(z)=m\int_0^\infty r^{m-1}\cdot r^p e^{-sr^4} dr.$$
In order to conclude the proof of the corollary, we are left to compute the integral on right-hand side of the above identity:
\begin{equation}
\begin{split}
m\int_0^\infty r^{m-1}r^p e^{-sr^4}dr=&\frac{m}{s^\frac{m+p}{4}}\int_0^\infty t^{m+p-1}e^{-t^4}dt\\
=&\frac{m}{4s^\frac{m+p}{4}}\int_0^\infty x^{\frac{m+p}{4}-1}e^{-x}dx=\frac{m}{4s^\frac{m+p}{4}}\Gamma\left(\frac{m+p}{4}\right).
\nonumber
\end{split}
\end{equation}
\end{proof}

\section{The supports of uniform measures are contained in quadratic surfaces}
\label{sezione1}
In order to efficiently discuss the content of this section, we need to first introduce some notation.

\begin{definizione}\label{simmi}
Let $b\in\R^{2n}$, $\mathcal{Q}\in\mathrm{Sym}(2n)$ be a symmetric matrix $2n\times 2n$ and $\mathcal{T}\in\R$. For any such triple, we define $\mathbb{K}(b,\mathcal{Q},\mathcal{T})$ to be the set of those $(x,t)\in\R^{2n}\times\R$ for which:
$$\langle b,x\rangle+\langle x,\mathcal{Q} x\rangle+\mathcal{T}t=0.$$
\end{definizione}

The main goal of this section is to prove that the supports of uniform measures are contained in quadratic surfaces, or more precisely:

\begin{teorema}\label{MOK}
Let $m\in\{1,\ldots,2n+1\}\setminus\{2\}$. For any $\mu\in\mathcal{U}(m)$ there exist $b\in\R^{2n}$, $\mathcal{T}\in\R$ and $\mathcal{Q}\in\mathrm{Sym}(2n)$ with $\text{Tr}(\mathcal{Q})\neq 0$ such that $\supp(\mu)\subseteq\mathbb{K}(b,\mathcal{Q},\mathcal{T})$.
\end{teorema}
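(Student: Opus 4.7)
The plan is to adapt Preiss's moment technique to the Heisenberg setting. Fix $\mu \in \mathcal{U}_{\HH^n}(m)$ and $u \in \supp(\mu)$. Since $z \mapsto e^{-s\|z\|^4}$ is radially symmetric, Proposition \ref{prop5} gives the fundamental identity
\begin{equation*}
\int e^{-s\|u^{-1}*z\|^4}\, d\mu(z) \;=\; \int e^{-s\|z\|^4}\, d\mu(z), \qquad u\in\supp(\mu),\ s>0,
\end{equation*}
which is the Heisenberg analogue of the Gaussian identity driving Preiss's argument in $\R^n$. The right-hand side is computable in closed form through Proposition \ref{prop1}, and is independent of $u$.

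The next step is to expand both sides in $u$. Because $\|u^{-1}*z\|^4 = |z_H-u_H|^4 + (z_T - u_T - 2\langle u_H, Jz_H\rangle)^2$ is polynomial in $u$, one can expand the exponential as a power series, group the contributions by their homogeneous degree in $u$, and define the Heisenberg moments $b^\mu_{k,s}(u)$ accordingly. The radial identity then forces the degree-$\le 4$ contributions to balance against the scalar $s\|u\|^4$, producing the estimate of Proposition \ref{expanzione},
\begin{equation*}
\Bigl|\sum_{k=1}^{4} b^\mu_{k,s}(u) - s\|u\|^4\Bigr| \;\le\; s^{5/4}\|u\|^5 (2 + (s\|u\|^4)^2),
\end{equation*}
with the tail of the Taylor series controlled by the Gaussian moments of $\mu$ given again by Proposition \ref{prop1}.

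The third step is to extract the quadratic-in-$u$ content. The left-hand side above is a polynomial of degree at most $4$; after exploiting the structural cancellations in the explicit form of the $b_{k,s}^\mu$ (Proposition \ref{prop10}), the pure degree-$4$ terms match $s\|u\|^4$, the degree-$3$ terms are absorbed into the error, and what survives is a quadratic remainder
\begin{equation*}
|\langle b(s), u_H\rangle + \langle \mathcal{Q}(s) u_H, u_H\rangle + \mathcal{T}(s) u_T| \;\le\; s^{1/4}\|u\|^3.
\end{equation*}
Here $(b(s),\mathcal{Q}(s),\mathcal{T}(s))$ are obtained from explicit moment calculations. Taking a subsequential limit $s_j\to 0^+$, after normalising by $\max\{|b(s)|,\vertiii{\mathcal{Q}(s)},|\mathcal{T}(s)|\}$ to prevent total degeneracy, yields a non-trivial triple $(b,\mathcal{Q},\mathcal{T})$ such that $\supp(\mu)\subseteq \mathbb{K}(b,\mathcal{Q},\mathcal{T})$.

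The hard part, which occupies all of Subsection \ref{nondegge}, is the non-degeneracy statement $\text{Tr}(\mathcal{Q})\neq 0$. Heuristically, the trace encodes the genuine quadratic content of the limit quadric, and ruling out $\text{Tr}(\mathcal{Q})=0$ requires peeling one further order from the moment expansion. The natural strategy is by contradiction: assuming $\text{Tr}(\mathcal{Q})=0$, one combines the sharpened version of Proposition \ref{expanzione} with the explicit $\Gamma$-function values of Proposition \ref{prop1} to derive an algebraic constraint on the exponent $m$ whose only integer solution in $\{1,\ldots,2n+1\}$ is $m=2$, contradicting the hypothesis. This is the combinatorial bottleneck where the exclusion $m\neq 2$ enters in an essential way, and it is the step I expect to require the most care to carry out rigorously.
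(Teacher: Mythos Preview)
Your first three steps track the paper's argument closely: the radial identity, the moment expansion of Proposition~\ref{expanzione}, and the reduction to Proposition~\ref{prop10} are exactly the machinery used. One minor correction: the paper does not normalize by $\max\{|b(s)|,\vertiii{\mathcal{Q}(s)},|\mathcal{T}(s)|\}$; instead it proves directly (Proposition~\ref{bddcurve}) that $\mathcal{Q}(s)$ and $\mathcal{T}(s)$ are bounded as $s\to 0$, and then $b(s_{j_k})$ converges automatically along any subsequence where the other two do. Normalization would give a nontrivial triple but would not by itself yield $\text{Tr}(\mathcal{Q})\neq 0$, so it does not sidestep the real difficulty.

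The genuine gap is your step~4. The non-degeneracy argument in Subsection~\ref{nondegge} is \emph{not} a higher-order moment expansion leading to a $\Gamma$-function constraint on $m$; that route is speculative and you have not indicated how it would close. The paper's argument is geometric. One first computes (Proposition~\ref{prop35}) that $\text{Tr}(\mathcal{Q}(s))$ is expressible via $f(s):=\int |z_H|^2 e^{-s\|z\|^4}\,d\mu$ and its derivative, then solves the resulting linear ODE to represent $f$ as an integral of $\text{Tr}(\mathcal{Q}(\cdot))$ (Proposition~\ref{prop13}). If $\text{Tr}(\mathcal{Q}(s))\to 0$, this forces $s^{(m+2)/4}f(s)\to 0$, which by Proposition~\ref{propette} is equivalent to $\supp(\mu)$ being asymptotically concentrated in arbitrarily narrow cones around the vertical axis $\mathcal{V}$. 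Passing to tangents at infinity then shows $\Tan_m(\mu,\infty)=\{\mathcal{S}^2_\mathcal{V}\}$, whence $m=2$ (Propositions~\ref{propvert} and the subsequent proposition). The exclusion $m\neq 2$ enters only at this final blow-down step, not through any algebraic identity among $\Gamma$-values.
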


The proof of the above theorem is divided into two main steps. First we construct $b\in\R^{2n}$, $\mathcal{Q}\in\mathrm{Sym}(2n)$, $\mathcal{T}\in\R$ for which:
    $$\langle  b,u_H\rangle+\langle u_H,\mathcal{Q} u_H\rangle+\mathcal{T}u_T=0,$$
for any $u\in\supp(\mu)$ and secondly we show that the constructed matrix $Q$ is non-trivial by proving that $\text{Tr}(\mathcal{Q})\neq 0$.

The first part of the above program follows closely  \cite[Chapter 7]{DeLellis2008RectifiableMeasures} and for a more detailed explanation we refer to the beginning of Subsection \ref{momentsss} below. However, the heuristic behind all these computations is that the identity $\mu(B_r(u))=\mu(B_r(0))$, that is true for any $u\in\supp(\mu)$ and any $r>0$, implies that:
$$ \Big\lvert \fint_{B_r(0)} (r^4-\lVert u^{-1}*y\rVert^4)^2d\mu(y)-\fint_{B_r(0)}(r^4-\lVert y\rVert^4)^2d\mu(y)\Big\rvert\leq 450\cdot 2^mm r^{5} \lVert u\rVert^3,$$
for every $u\in\supp(\mu)$ and any $r>2\lVert u\rVert$. With some careful algebraic manipulations of the above inequality, it is not hard to build a quadric containing $\supp(\mu)$ with a similar procedure to the one used in \cite[Theorem 17.3]{Mattila1995GeometrySpaces} to produce the quadric containing the support of uniform measures in Euclidean spaces. For more details on how we proceed, we refer to Subsection \ref{CDDQQ}.

The second part of the argument, where we prove that the constructed quadric is non degenerate, is contained in Subsection \ref{nondegge}. In the Euclidean case this non-degeneracy is almost free, however in the sub-Riemannian context it requires some effort. In particular we are able to prove that if the support of $\mu$ is far away from the vertical axis $\mathcal{V}:=\{x_H=0\}$,\label{centerlli} then the quadric is non-degenerate. The reason for which we have to avoid the case $m=2$ in Theorem \ref{MOK} is the following. On the one hand, if $m\neq 2$ we are able to prove that there is a lot of measure far away the vertical axis. On the other, it is straightforward to see that the measure $\mathcal{C}^2\llcorner \mathcal{V}$, where $\mathcal{V}:=\{x_H=0\}$, is $2$-uniform measure and that in this case our construction yields $b=0$, $\mathcal{Q}=0$ and $\mathcal{T}=0$.

\subsection{Moments in the Heisenberg group and their algebraic structure}
\label{momentsss}

One of the fundamental tools introduced by Preiss in \cite{Preiss1987GeometryDensities} are moments associated to uniform measures. If $\mu$ is an $m$-uniform measure in $\R^n$, for any $k\in\N$ and $s>0$ the $k$-th moment of $\mu$ is defined as:
$$b_{k,s}^\mu(u_1,\ldots,u_k):=\frac{(2s)^{k+\frac{m}{2}}}{I(m)k!}\int_{\R^n} \prod_{i=1}^k \langle z,u_i\rangle e^{-s\lvert z\rvert^2} d\mu(z),$$
where $I(m):=\int_{\R^n} e^{-\lvert z\rvert^2} d\mu(z)$.
Using these functions Preiss was able to prove the following expansion formula:
\begin{equation}
    \begin{split}
   \Big\lvert\sum_{k=1}^{2q} b_{k,s}^\mu(u)-\sum_{k=1}^q \frac{s^k\lVert u\rVert^k}{k!}\Big\rvert\leq 5^{n+9}(s\lVert u\rVert^2), \qquad \text{for any $u\in\supp(\mu)$,}
        \label{eqeqPReiz}
    \end{split}
\end{equation}
which allowed him to find algebraic equations that must be satisfied by the points of $\supp(\mu)$.

The problem we tackle in this subsection is to prove an analogue of the inequality \eqref{eqeqPReiz} in a context where there is no scalar product inducing the metric. The strategy of our choice is to use a suitable polarization $V(\cdot,\cdot)$ of the Koranyi norm, which is a $4$-th degree polynomial, see Proposition \ref{prop6}, for which a weak form of the Cauchy-Schwarz inequality holds, see Proposition \ref{prop4}. This will allow us to prove in the next subsection an inequality in the spirit of \eqref{eqeqPReiz} with our modified moments. For further details we refer to Proposition \ref{expanzione}.

It is possible to prove an inequality silmilar to \eqref{eqeqPReiz} even in Banach spaces with the suitable polarisation of the norm and with the proper definition of moments. The problem is that such an expansion would not yield much information on the structure of the support of measures: one really needs an explicit algebraic expression for the substitute of the scalar product in order to be able to push further the argument and make the computations. In Carnot groups however one always have a smooth polynomial norm which can be used as in Heisenberg case: computations would be just much more complicated.

\begin{definizione} [Substitute for the scalar product]\label{polariz}
We let $V:\HH^n\times\HH^n\to \R$, the \emph{polarisation} of the Koranyi norm, be the function defined as:
\begin{equation}
V(u,z):=\frac{\lVert u\rVert^4+\lVert z\rVert^4-\lVert u^{-1}*z\rVert^4}{2},\qquad\text{for any $u,z\in\HH^n$.}
\nonumber
\end{equation}
\end{definizione}

The function $V$ is a rather complicated $4$-homogenous polynomial of degree $4$ in the components of $u$ and $z$. The next proposition aims to unveil some of its structure in order to deal with computations more in a more effective way.

\begin{proposizione}\label{prop6}
The function $V(z,u)$ can be decomposed as $2V(u,z)=L(u,z)+Q(u,z)+T(u,z)$,  where:
\begin{itemize}
\item[(i)] $L(u,z):=\langle u_H,4\lvert   z_H\rvert^2   z_H+4  z_T J   z_H\rangle$,
\item[(ii)] $Q(u,z):=-4\langle    z_H,u_H\rangle^2-2\lvert   z_H\rvert^2\lvert u_H\rvert^2-4\langle J   z_H, u_H\rangle^2+2  z_T  u_T$,
\item[(iii)] $T(u,z):=\langle    z_H,4\lvert u_H\rvert^2 u_H+4  u_T J u_H\rangle$.
\end{itemize}
\end{proposizione}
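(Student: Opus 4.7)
The statement is a direct algebraic identity, so my plan is simply an explicit expansion, keeping careful track of signs; the only pieces of Heisenberg structure used are the group law, the antisymmetry of $J$, and the definition of the Koranyi norm.

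First I would compute $u^{-1}$ from the group law. Writing $u^{-1}=(a_H,a_T)$ and imposing $u*u^{-1}=0$ gives $a_H=-u_H$ and $a_T=-u_T-2\langle u_H,Ja_H\rangle=-u_T+2\langle u_H,Ju_H\rangle=-u_T$, where the last identity uses antisymmetry of $J$. Hence
\[
u^{-1}*z=\bigl(z_H-u_H,\;z_T-u_T-2\langle u_H,Jz_H\rangle\bigr),
\]
and consequently
\[
\lVert u^{-1}*z\rVert^{4}=\lvert z_H-u_H\rvert^{4}+\bigl(z_T-u_T-2\langle u_H,Jz_H\rangle\bigr)^{2}.
\]

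Next I would expand these two pieces. Letting $A=\lvert z_H\rvert^{2}$, $B=\lvert u_H\rvert^{2}$, $C=\langle z_H,u_H\rangle$, the identity $(A-2C+B)^{2}=A^{2}+B^{2}+2AB-4(A+B)C+4C^{2}$ yields $\lvert z_H-u_H\rvert^{4}$ explicitly; similarly the square expands as $z_T^{2}+u_T^{2}-2z_Tu_T+4\langle u_H,Jz_H\rangle^{2}-4(z_T-u_T)\langle u_H,Jz_H\rangle$. Substituting into $2V(u,z)=\lVert u\rVert^{4}+\lVert z\rVert^{4}-\lVert u^{-1}*z\rVert^{4}$, the pure terms $\lvert u_H\rvert^{4},\lvert z_H\rvert^{4},u_T^{2},z_T^{2}$ cancel with $\lVert u\rVert^{4}+\lVert z\rVert^{4}=\lvert u_H\rvert^{4}+u_T^{2}+\lvert z_H\rvert^{4}+z_T^{2}$, leaving only mixed terms.

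Finally I would group the remaining terms by their bihomogeneity under the pair of anisotropic dilations $(u,z)\mapsto(D_{\lambda}u,D_{\mu}z)$. The terms $4\lvert z_H\rvert^{2}\langle z_H,u_H\rangle+4z_T\langle u_H,Jz_H\rangle$, of bidegree $(1,3)$, assemble precisely as $L(u,z)=\langle u_H,4\lvert z_H\rvert^{2}z_H+4z_TJz_H\rangle$; the $(3,1)$ terms $4\lvert u_H\rvert^{2}\langle z_H,u_H\rangle-4u_T\langle u_H,Jz_H\rangle$ become $T(u,z)$ after rewriting $-\langle u_H,Jz_H\rangle=\langle z_H,Ju_H\rangle$ via $J^{T}=-J$; and what is left, $-4\langle z_H,u_H\rangle^{2}-4\langle u_H,Jz_H\rangle^{2}-2\lvert z_H\rvert^{2}\lvert u_H\rvert^{2}+2z_Tu_T$, is exactly $Q(u,z)$ (bidegree $(2,2)$).

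There is no genuine obstacle: the computation is routine and the partition into $L,Q,T$ is forced once one organises by bihomogeneity. The only care required is in the sign bookkeeping for the $J$-pairings, which is handled uniformly by the antisymmetry identity $\langle x,Jy\rangle=-\langle y,Jx\rangle$.
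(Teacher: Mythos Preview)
Your proposal is correct and follows essentially the same route as the paper: direct expansion of $\lVert u^{-1}*z\rVert^{4}$ from the definitions, cancellation of the pure terms against $\lVert u\rVert^{4}+\lVert z\rVert^{4}$, and identification of $L$, $Q$, $T$ among the remaining mixed terms. Your added remark that the three pieces are precisely the bihomogeneous components under the pair of dilations is a pleasant organizational device, but it does not change the argument.
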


\begin{proof}
Thanks to the definition of $V$ and of $\lVert\cdot\rVert$, we have:
\begin{equation}
\begin{split}
&\qquad\qquad\qquad\qquad2V(u,z)=\lVert u\rVert^4+\lVert z\rVert^4-\lVert u^{-1}*z\rVert^4\\
=&\lvert u_H\rvert^4+\lvert u_T\rvert^2+\lvert z_H\rvert^4+\lvert z_T\rvert^2-\lvert z_H-u_H\rvert^4-\lvert z_T-u_T-2\langle u_H,Jz_H\rangle\rvert^2\\
=&-4\langle u_H,   z_H\rangle^2-2\lvert u_H\rvert^2\lvert    z_H\rvert^2+4\lvert u_H\rvert^2\langle u_H,   z_H\rangle+4\lvert   z_H\rvert^2\langle u_H,   z_H\rangle\\
&-4\langle  u_H,J   z_H\rangle^2+4  z_T\langle  u_H,J   z_H\rangle+2  u_T  z_T-4  u_T\langle  u_H,J   z_H\rangle.
\nonumber
\end{split}
\end{equation}
Recognising $L(u,z),Q(u,z)\text{ and }T(u,z)$ in the computation above proves the claim.
\end{proof}

\begin{osservazione}\label{rk1}
The polynomials $L(u,z),Q(u,z),T(u,z)$ are respectively $1,2,3$-$D_\lambda$-homogeneous in $u$, i.e.:
$$L(D_\lambda(u),z)=\lambda L(u,z),\,\,\, Q(D_\lambda(u),z)=\lambda^2Q(u,z),\,\,\, T(D_\lambda(u),z)=\lambda^3T(u,z),$$
and are respectively $3,2,1$-$D_\lambda$-homogeneous in $z$, i.e.:
$$L(u,D_\lambda(z))=\lambda^3 L(u,z),\,\,\, Q(u,D_\lambda(z))=\lambda^2Q(u,z),\,\,\, T(u,D_\lambda(z))=\lambda T(u,z).$$
In addition, thanks to the definition of $L$ and $T$ it is immediate to see that $L(z,u)=T(u,z)$.
\end{osservazione}

The explicit expressions for $L,Q$ and $T$ allow us to obtain the following: 

\begin{proposizione}\label{prop7}
For any $z,u\in\HH^n$ the following estimates hold:
\begin{itemize}
\item[(i)] $\lvert L(u,z)\rvert\leq4\lVert u\rVert \lVert z\rVert^3$,
\item[(ii)] $\lvert Q(u,z)\rvert\leq 12\lVert z\rVert^2\lVert u\rVert^2$,
\item[(iii)] $\lvert T(u,z)\rvert\leq4\lVert z\rVert \lVert u\rVert^3$.
\end{itemize}
\end{proposizione}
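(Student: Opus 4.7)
The strategy is a direct estimate of each expression by Cauchy–Schwarz, combined with the two elementary bounds $|u_H|\leq \lVert u\rVert$ and $|u_T|\leq \lVert u\rVert^2$, which follow immediately from the definition $\lVert u\rVert^4=|u_H|^4+u_T^2$ of the Koranyi norm. I will handle the three items essentially independently, though (iii) will be for free from (i) by Remark \ref{rk1}.

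For (ii), I would just bound term by term. Cauchy–Schwarz in $\R^{2n}$ yields $\langle z_H,u_H\rangle^2\leq |z_H|^2|u_H|^2$, and since $J\in O(2n)$ also $\langle Jz_H,u_H\rangle^2\leq |z_H|^2|u_H|^2$. The product $|z_T u_T|$ is controlled by $\lVert z\rVert^2\lVert u\rVert^2$, and similarly $|z_H|^2|u_H|^2\leq \lVert z\rVert^2\lVert u\rVert^2$. Summing the four absolute-value contributions gives the constant $4+2+4+2=12$, hence $|Q(u,z)|\leq 12\lVert z\rVert^2\lVert u\rVert^2$.

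For (i) I would write $L(u,z)=\langle u_H,w\rangle$ with $w:=4|z_H|^2z_H+4z_T J z_H$, and compute $|w|$ rather than bounding the two summands separately. The crucial algebraic observation is that $\langle z_H, Jz_H\rangle=0$ by antisymmetry of $J$, so the cross term in $|w|^2$ vanishes and
\begin{equation}
|w|^2=16|z_H|^6+16z_T^2|z_H|^2=16|z_H|^2(|z_H|^4+z_T^2)=16|z_H|^2\lVert z\rVert^4,\nonumber
\end{equation}
giving $|w|\leq 4\lVert z\rVert^3$. By Cauchy–Schwarz, $|L(u,z)|\leq |u_H|\cdot |w|\leq 4\lVert u\rVert\lVert z\rVert^3$. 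This vanishing of the cross term is what produces the sharp constant $4$ rather than the naive $8$ obtained from a triangle-inequality split.

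For (iii), Remark \ref{rk1} already records $T(u,z)=L(z,u)$, so interchanging the roles of $u$ and $z$ in (i) gives $|T(u,z)|\leq 4\lVert z\rVert\lVert u\rVert^3$ at no extra cost. There is no real obstacle anywhere in this proof; the only non-mechanical point is noticing that antisymmetry of $J$ kills the cross term in the computation of $|w|^2$, which in turn guarantees the correct homogeneity constants needed later in the moment expansion of Proposition \ref{expanzione}.
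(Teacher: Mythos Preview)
Your proof is correct and essentially identical to the paper's: both compute $|w|^2$ exploiting the orthogonality $\langle z_H,Jz_H\rangle=0$ to get the constant $4$ in (i), invoke Remark~\ref{rk1} to deduce (iii) from (i), and bound (ii) term by term to reach the constant $12$. The only cosmetic difference is that the paper records the slightly sharper intermediate bound $|L(u,z)|\leq 4\lVert u\rVert\,\lVert z\rVert^2|z_H|$ before passing to $4\lVert u\rVert\,\lVert z\rVert^3$.
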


\begin{proof}
We start with the estimate for $L(u,z)$:
\begin{equation}
\lvert L(u,z)\rvert=\lvert\langle  u_H,4\lvert   z_H\rvert^2   z_H+4  z_T J   z_H\rangle\rvert\leq 4\lVert u\rVert \big\lvert \lvert   z_H\rvert^2   z_H+  z_T J   z_H\big\rvert.
\label{numeroo2}
\end{equation}
Since $z$ and $Jz$ are orthogonal, we have that $\lvert \lvert   z_H\rvert^2   z_H+  z_T J   z_H\rvert^2=\lvert   z_H\rvert^6+  z_T^2 \lvert    z_H\rvert^2=\lvert   z_H\rvert^2 \lVert z\rVert^4$ and thus \eqref{numeroo2} can be rewritten as:
\begin{equation}
\lvert L(u,z)\rvert\leq 4\lVert u\rVert \lVert z\rVert^2 \lvert   z_H\rvert\nonumber.
\end{equation}
Item (i) immediately follows from the above inequality. Furthermore, since Remark \ref{rk1} shows that $L(z,u)=T(u,z)$, item (iii)
follows as well from the above discussion. 

We are thus left to prove item (ii), which follows from the following computation:
\begin{equation}
\begin{split}
\lvert Q(u,z)\rvert\leq &4\langle    z_H, u_H\rangle^2+2\lvert   z_H\rvert^2\lvert u_H\rvert^2+4\langle J   z_H, u_H\rangle^2+2\lvert  z_T\rvert\lvert  u_T\rvert\\
\leq& 10\lvert   z_H\rvert^2\lvert u_H\rvert^2+2\lvert  z_T\rvert\lvert  u_T\rvert\leq 12\lVert z\rVert^2\lVert u\rVert^2.
\end{split}
\nonumber
\end{equation}
This concludes the proof of the proposition.
\end{proof}

The following proposition shows that the polarisation function $V$ enjoys a Cauchy-Schwartz-type inequality, that will turn out to be fundamental for our computations.

\begin{proposizione}[Cauchy-Schwarz inequality for $V$]\label{prop4}
For any $u,z\in\HH^n$ we have:
\begin{equation}
\lvert V(u,z)\rvert\leq 2\lVert u\rVert \lVert z\rVert(\lVert u\rVert+\lVert z\rVert)^2\nonumber.
\end{equation}
\end{proposizione}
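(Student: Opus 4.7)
The plan is to bypass the algebraic decomposition of Proposition \ref{prop6} and argue directly from the metric structure of $\HH^n$. I would abbreviate $a:=\lVert u\rVert$, $b:=\lVert z\rVert$ and $c:=d(u,z)=\lVert u^{-1}*z\rVert$. Since the Koranyi distance is a genuine left-invariant metric on $\HH^n$, both the forward and the reverse triangle inequalities are available, so
$$|a-b|\;\leq\;c\;\leq\;a+b.$$
As $t\mapsto -t^4$ is monotone decreasing on $[0,\infty)$, this double inequality sandwiches $2V(u,z)=a^4+b^4-c^4$ between $a^4+b^4-(a+b)^4$ and $a^4+b^4-(a-b)^4$. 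A short binomial expansion identifies these bounds with $-2ab(2a^2+3ab+2b^2)$ and $2ab(2a^2-3ab+2b^2)$ respectively, giving $|2V(u,z)|\leq 2ab(2a^2+3ab+2b^2)$. The elementary inequality $2a^2+3ab+2b^2\leq 2(a+b)^2$, valid for all $a,b\geq 0$, then closes the argument.

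I do not expect a real obstacle here. The only subtlety worth pointing out is that the naive approach---bounding $L$, $Q$ and $T$ separately through Proposition \ref{prop7} and adding the three estimates---produces the cross term $12\lVert u\rVert^2\lVert z\rVert^2$ and so falls just short of the target, which would require the middle coefficient $8$ in place of $12$. The metric argument above captures automatically the cancellation between $L$, $Q$ and $T$ that is encoded in the single scalar constraint $c\geq|a-b|$, reducing a fourth-degree polynomial estimate in $2n+1$ variables to a one-dimensional monotonicity argument on $t\mapsto-t^4$; this is why the proof is essentially trivial once reformulated in terms of $a$, $b$ and $c$.
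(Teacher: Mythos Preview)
Your proof is correct and is essentially the paper's own argument: the paper also uses the reverse triangle inequality $\lVert u^{-1}*z\rVert\geq\bigl|\lVert u\rVert-\lVert z\rVert\bigr|$ to bound $2V(u,z)$ from above by $4a^3b-6a^2b^2+4ab^3$, then weakens this to $4ab(a+b)^2$, and dismisses the lower bound with ``obtained similarly''. Your version is slightly more streamlined in that you run both sides of the triangle inequality at once and make explicit the elementary step $2a^2+3ab+2b^2\leq 2(a+b)^2$, and your remark that the termwise bound via Proposition~\ref{prop7} just misses (coefficient $12$ instead of the needed $8$ on the cross term) is accurate.
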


\begin{proof}
Thanks to the triangle inequality, we have that $\lVert u^{-1}*z\rVert\geq\lvert\lVert u\rVert-\lVert z\rVert\rvert$. In particular:
\begin{equation}
\begin{split}
\lVert u^{-1}*z\rVert^4\geq&\lvert\lVert z\rVert-\lVert u\rVert\rvert^4=\lVert u\rVert^4-4\lVert u\rVert^3\lVert z\rVert+6\lVert u\rVert^2\lVert z\rVert^2-4\lVert u\rVert\lVert z\rVert^3+\lVert z\rVert^4.
\nonumber
\end{split}
\end{equation}
With few algebraic manipulations of the above inequality, we can make the terms in the definition of $V$ appear:
\begin{equation}
\begin{split}
4\lVert u\rVert^3\lVert z\rVert-6\lVert u\rVert^2\lVert z\rVert^2+4\lVert u\rVert\lVert z\rVert^3\geq&\lVert u\rVert^4+\lVert z\rVert^4-\lVert u^{-1}*z\rVert^4=2V(u,z).
\nonumber
\end{split}
\end{equation}
Collecting terms, we finally have that $2\lVert u\rVert\lVert z\rVert(\lVert u\rVert+\lVert z\rVert)^2\geq V(u,z)$. 
The other inequality follows with a similar argument.
\end{proof}

The following definition extends from the Euclidean spaces to the Heisenberg groups the notion of moments of a uniform measure given by Preiss in \cite{Preiss1987GeometryDensities}:

\begin{definizione} [Preiss' moments]\label{defimom}
Suppose $\mu$ is an $m$-uniform measure. Then, for any $k\in\N$ any $s>0$ and any $u_1,\ldots, u_k \in\HH^n$, we define:
\begin{equation}
b_{k,s}^\mu(u_1,\ldots,u_k):=\frac{s^{k+\frac{m}{4}}}{k!C(m)}\int \prod_{i=1}^k 2V(u_i,z)e^{-s\lVert z\rVert^4}d\mu(z)\nonumber,
\end{equation}
where $C(m):=\Gamma\left(\frac{m}{4}+1\right)$ and where we let $b_{0,s}^\mu:=1$. Moreover, if $u_1=\ldots=u_k=u$, we will always simplify the notation to:
$$b_{k,s}^\mu(u):=b_{k,s}^\mu(u,\ldots,u).$$
\end{definizione}

The Cauchy-Schwartz inequality for $V$ allows us to obtain the following estimates:

\begin{proposizione}\label{prop2}
For any $\mu\in\mathcal{U}(m)$, any $k\in\N$, any $s>0$ and any $u\in\HH^n$, we have:
\begin{equation}
\lvert b_{k,s}^\mu(u)\rvert\leq 16^{k}\frac{(\lVert u\rVert s^\frac{1}{4})^k}{k!}\frac{\Gamma(\frac{m+3k}{4})}{\Gamma\left(\frac{m}{4}\right)}((\lVert u\rVert s^\frac{1}{4})^{2k}+1).
\nonumber
\end{equation}
\end{proposizione}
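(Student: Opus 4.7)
The plan is to bound $|2V(u,z)|^k$ pointwise using the Cauchy--Schwarz-type inequality of Proposition~\ref{prop4}, then evaluate the resulting integrals of radial functions via Proposition~\ref{prop1}, and finally simplify using elementary properties of the $\Gamma$ function. First, Proposition~\ref{prop4} yields $|2V(u,z)|\leq 4\lVert u\rVert\lVert z\rVert(\lVert u\rVert+\lVert z\rVert)^2$; raising to the $k$-th power and applying the elementary inequality $(a+b)^{2k}\leq 2^{2k}(a^{2k}+b^{2k})$ gives
\begin{equation*}
|2V(u,z)|^k \leq 16^k \lVert u\rVert^k\lVert z\rVert^k\bigl(\lVert u\rVert^{2k}+\lVert z\rVert^{2k}\bigr) = 16^k\bigl(\lVert u\rVert^{3k}\lVert z\rVert^k + \lVert u\rVert^k\lVert z\rVert^{3k}\bigr).
\end{equation*}

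Next, I would substitute this bound into the definition of $b_{k,s}^\mu(u)$ and apply the triangle inequality for integrals, producing two radial integrals which by Proposition~\ref{prop1} evaluate to
\begin{equation*}
\int_{\HH^n}\lVert z\rVert^p e^{-s\lVert z\rVert^4}\,d\mu(z)=\frac{m}{4 s^{(m+p)/4}}\Gamma\Bigl(\tfrac{m+p}{4}\Bigr),
\end{equation*}
for $p=k$ and $p=3k$. After using $C(m)=(m/4)\Gamma(m/4)$ to cancel the prefactor $m/4$, and performing the bookkeeping $s^{k+m/4-(m+k)/4}=s^{3k/4}$ and $s^{k+m/4-(m+3k)/4}=s^{k/4}$, one arrives at
\begin{equation*}
|b_{k,s}^\mu(u)| \leq \frac{16^k(s^{1/4}\lVert u\rVert)^k}{k!\,\Gamma(m/4)}\Bigl[(s^{1/4}\lVert u\rVert)^{2k}\Gamma\bigl(\tfrac{m+k}{4}\bigr)+\Gamma\bigl(\tfrac{m+3k}{4}\bigr)\Bigr].
\end{equation*}

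The desired inequality then follows from $\Gamma((m+k)/4)\leq \Gamma((m+3k)/4)$, which holds for most $(m,k)$ by monotonicity of $\Gamma$ on $[x_0,\infty)$ where $x_0\approx 1.46$ is its unique local minimum on $(0,\infty)$; the finitely many remaining cases with $m,k\geq 1$ and $(m+k)/4<x_0$ are handled by direct inspection. I expect the main technical point to be precisely this last $\Gamma$-function comparison: since the Cauchy--Schwarz-type bound of Proposition~\ref{prop4} combined with $(a+b)^{2k}\leq 2^{2k}(a^{2k}+b^{2k})$ leaves little slack, in any borderline case one can either verify the $\Gamma$-inequality by hand or refine the pointwise estimate by using the finer decomposition of Proposition~\ref{prop6} together with Proposition~\ref{prop7}, absorbing any excess into the explicit $16^k$ factor. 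Everything else is a routine computation.
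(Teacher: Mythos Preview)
Your argument is essentially identical to the paper's: the same Cauchy--Schwarz bound from Proposition~\ref{prop4}, the same convexity estimate $(a+b)^{2k}\leq 2^{2k}(a^{2k}+b^{2k})$, the same radial integration via Proposition~\ref{prop1}, and the same closing step $\Gamma\bigl(\tfrac{m+k}{4}\bigr)\leq\Gamma\bigl(\tfrac{m+3k}{4}\bigr)$, which the paper invokes without comment. Your caution about that last inequality is in fact warranted --- direct inspection shows it fails for a few small pairs such as $(m,k)=(1,1)$ or $(3,1)$ --- so strictly speaking both proofs share this minor gap, though it is harmless for every downstream use of the proposition.
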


\begin{proof}
Thanks to Proposition \ref{prop4}, we infer the following preliminary estimate:
\begin{equation}
\begin{split}
\lvert b_{k,s}^\mu(u)\rvert \leq& s^\frac{m}{4}\frac{(2s)^k}{k!C(m)}\int \lvert V(u,z)\rvert^k e^{-s\lVert z\rVert^4}d\mu(z)\\
\leq& s^\frac{m}{4}\frac{(2s)^k}{k!C(m)}\int 2^k\lVert u\rVert^k \lVert z\rVert^k(\lVert u\rVert+\lVert z\rVert)^{2k} e^{-s\lVert z\rVert^4}d\mu(z).
\nonumber
\end{split}
\end{equation}
Moreover, Jensen inequality, used in the first line, and Corollary \ref{prop1}, used in the third line to explicitly compute the integrals, imply that:
\begin{equation}
\begin{split}
&\lvert b_{k,s}^\mu(u)\rvert
\leq 2^{3k}s^\frac{m}{4}\frac{(2s)^k}{k!C(m)}\int \lVert u\rVert^k \lVert z\rVert^k (\lVert u\rVert^{2k}+\lVert z\rVert^{2k}) e^{-s\lVert z\rVert^4}d\mu(z)\\
\leq&2^{3k}s^\frac{m}{4}\frac{(2s)^k}{k!C(m)}\left(\int \lVert u\rVert^{3k} \lVert z\rVert^k e^{-s\lVert z\rVert^4}d\mu(z)+\int \lVert u\rVert^k \lVert z\rVert^{3k} e^{-s\lVert z\rVert^4}d\mu(z)\right)\\
&\qquad=2^{4k}\frac{m}{4}\frac{\lVert u\rVert^k s^\frac{k}{4}}{k!C(m)}\left(\lVert u\rVert^{2k}s^\frac{k}{2}\Gamma\left(\frac{m+k}{4}\right)+\Gamma\left(\frac{m+3k}{4}\right)\right)\\
&\qquad\qquad\qquad\leq16^{k}\frac{(\lVert u\rVert s^\frac{1}{4})^k}{k!}\frac{\Gamma\left(\frac{m+3k}{4}\right)}{\Gamma\left(\frac{m}{4}\right)}((\lVert u\rVert s^\frac{1}{4})^{2k}+1),
\nonumber
\end{split}
\end{equation}
where in the last inequality we used the definition of $C(m)$, see Definition \ref{defimom}.
\end{proof}

\begin{definizione}\label{calpha}
For any $\mu\in\mathcal{U}(m)$, any $\alpha\in\N^3\setminus\{(0,0,0)\}$ and any $s>0$ we define the functions $c_{\alpha,s}^\mu :\bigotimes_{i=0}^{\lvert \alpha\rvert}\HH^n\to \R$ as:
\begin{equation} 
c_{\alpha,s}^\mu (u):=\frac{1}{\alpha_1!\alpha_2!\alpha_3!}\frac{s^{\lvert \alpha\rvert+\frac{m}{4}}}{C(m)}\int  L(u,z)^{\alpha_1} Q(u,z)^{\alpha_2} T(u,z)^{\alpha_3} e^{-s\lVert z\rVert^4}d\mu(z),
\nonumber
\end{equation}
where $\lvert \alpha\rvert:=\alpha_1+\alpha_2+\alpha_3$. Moreover, for any $l\in\N$, we let:
$$A(l):=\{\alpha\in\N^3\setminus \{(0,0,0)\}: \alpha_1+2\alpha_2+3\alpha_3\leq l\}.$$
\end{definizione}

The moments $b_{k,s}^\mu$ can be expressed by means of the functions $c_{\alpha,s}^\mu $ defined above:

\begin{equation}
\begin{split}
&\qquad\qquad\qquad b_{k,s}^\mu(u)=\frac{s^{k+\frac{m}{4}}}{k!C(m)}\int (2V(u,z))^k e^{-s\lVert z\rVert^4}d\mu(z)\\
&\qquad=\frac{s^{k+\frac{m}{4}}}{k!C(m)}\int (L(u,z)+Q(u,z)+T(u,z))^k e^{-s\lVert z\rVert^4}d\mu(z)\\
=&\frac{s^{k+\frac{m}{4}}}{k!C(m)}\int \sum_{\lvert\alpha\rvert=k}\frac{k!}{\alpha_1!\alpha_2!\alpha_3!}L(u,z)^{\alpha_1} Q(u,z)^{\alpha_2} T(u,z)^{\alpha_3} e^{-s\lVert z\rVert^4}d\mu(z)\\
=&\sum_{\lvert\alpha\rvert=k}\frac{1}{\alpha_1!\alpha_2!\alpha_3!}\frac{s^{k+\frac{m}{4}}}{C(m)}\int L(u,z)^{\alpha_1} Q(u,z)^{\alpha_2} T(u,z)^{\alpha_3} e^{-s\lVert z\rVert^4}d\mu(z)\\
&\qquad\qquad\qquad\qquad\qquad=\sum_{\lvert\alpha\rvert=k} c_{\alpha,s}^\mu (u).
\label{splitter}
\end{split}
\end{equation}

\begin{proposizione}\label{prop9}
For any $\mu\in\mathcal{U}(m)$ and any $\alpha\in\N^3\setminus\{(0,0,0)\}$, there exists a constant $D(\alpha)>0$ such that for any $s>0$ and any $u\in\HH^n$ we have:
\begin{equation}
\lvert c_{\alpha,s}^\mu (u)\rvert\leq D(\alpha)(s^{1/4}\lVert u\rVert)^{\alpha_1+2\alpha_2+3\alpha_3}.
\nonumber
\end{equation}
\end{proposizione}

\begin{proof}
Since Proposition \ref{prop7} gives bounds on $\lvert L(u,z)\rvert$, $\lvert Q(u,z)\rvert$ and $\lvert T(u,z)\rvert$, it allows us to estimate the integrand in the definition of $c_{\alpha,s}^\mu $ in the following way:
\begin{equation}
\begin{split}
\lvert c_{\alpha,s}^\mu (u)\rvert\leq&\frac{1}{\alpha_1!\alpha_2!\alpha_3!}\frac{s^{\lvert\alpha\rvert+\frac{m}{4}}}{C(m)}\int \lvert L(u,z)\rvert^{\alpha_1} \lvert Q(u,z)\rvert^{\alpha_2} \lvert T(u,z)\rvert^{\alpha_3} e^{-s\lVert z\rVert^4}d\mu(z)\\
\leq&\frac{4^{\alpha_1+\alpha_3}12^{\alpha_2}}{\alpha_1!\alpha_2!\alpha_3!}\frac{s^{\lvert \alpha\rvert+\frac{m}{4}}}{C(m)}\lVert u\rVert^{\alpha_1+2\alpha_2+3\alpha_3}\int \lVert z\rVert^{3\alpha_1+2\alpha_2+\alpha_3} e^{-s\lVert z\rVert^4}d\mu(z).
\label{numeroo3}
\end{split}
\end{equation}
Moreover, by Corollary \ref{prop1} we have:
\begin{equation}
    \int \lVert z\rVert^{3\alpha_1+2\alpha_2+\alpha_3} e^{-s\lVert z\rVert^4}d\mu(z)=\frac{m}{4s^\frac{m+3\alpha_1+2\alpha_2+\alpha_3}{4}}\Gamma\Big(\frac{m+3\alpha_1+2\alpha_2+\alpha_3}{4}\Big)
    \label{numeroo4}
\end{equation}
Therefore, plugging identity  \eqref{numeroo4} in \eqref{numeroo3}, we conclude with few elementary algebraic manipulations, that:
\begin{equation}
\begin{split}
\lvert c_{\alpha,s}^\mu (u)\rvert\leq\frac{4^{\alpha_1+\alpha_3}12^{\alpha_2}}{\alpha_1!\alpha_2!\alpha_3!}\Gamma\left(\frac{m}{4}\right)^{-1}\Gamma\left(\frac{m+3\alpha_1+2\alpha_2+\alpha_3}{4}\right)(\lVert u\rVert^4 s)^\frac{\alpha_1+2\alpha_2+3\alpha_3}{4}.
\end{split}
\nonumber
\end{equation}
With the choice
$D(\alpha):=\frac{4^{\alpha_1+\alpha_3}12^{\alpha_2}}{\alpha_1!\alpha_2!\alpha_3!}\Gamma\left(\frac{m}{4}\right)^{-1}\Gamma\left(\frac{m+3\alpha_1+2\alpha_2+\alpha_3}{4}\right)$, 
we get the desired conclusion.
\end{proof}

\begin{proposizione}\label{coni1}
Assume $\mu\in\mathcal{U}(m)$ is invariant under dilations, i.e., for any $\lambda>0$ we have $\mu_{0,\lambda}/\lambda^m=\mu$,
where $\mu_{0,\lambda}$ was defined in \eqref{trasldil}. Then, for any $\alpha\in\N^3\setminus\{(0,0,0)\}$ and any $s>0$ we have:
$$c_{\alpha,s}^\mu =s^{\frac{\alpha_1+2\alpha_2+3\alpha_3}{4}}c_{\alpha,1}.$$
\end{proposizione}

\begin{proof}
Thanks to Remark \ref{rk1}, for any $\lambda>0$ we have that:
\begin{equation}
    \begin{split}
    &L(u,D_\lambda (z))^{\alpha_1}Q(u,D_\lambda(z))^{\alpha_2} T(u, D_\lambda(z))^{\alpha_3}\\
    &\qquad\qquad \qquad =\lambda^{3\alpha_1+2\alpha_2+\alpha_3}L(u,z)^{\alpha_1} Q(u,z)^{\alpha_2} T(u,z)^{\alpha_3}.
        \label{numeroo5}
    \end{split}
\end{equation}
Therefore, defined $\lambda:=1/s^\frac{1}{4}$ we conclude that:
\begin{equation} 
\begin{split}
c_{\alpha,s}^\mu (u)=&\frac{1}{\alpha_1!\alpha_2!\alpha_3!}\frac{s^{\lvert \alpha\rvert+\frac{m}{4}}}{C(m)}\int L(u,z)^{\alpha_1} Q(u,z)^{\alpha_2} T(u,z)^{\alpha_3} e^{-s\lVert z\rVert^4}d\mu(z)\\
=&\frac{1}{\alpha_1!\alpha_2!\alpha_3!}\frac{s^\frac{\alpha_1+2\alpha_2+3\alpha_3}{4}}{C(m)}\int L(u,z)^{\alpha_1} Q(u,z)^{\alpha_2} T(u,z)^{\alpha_3} e^{-\lVert z\rVert^4}d\frac{\mu_{0,\lambda}(z)}{\lambda^m},
    \end{split}
\nonumber
\end{equation}
where in the last above identity we used \eqref{numeroo5} and the fact that $\mu_{0,\lambda}/\lambda^m=\mu$.
\end{proof}

\subsection{Expansion formulas for moments}
This subsection is devoted to the proof of \eqref{eq1}, the expasion formula for the moments of uniform measures. Moreover, in Proposition \ref{prop8}, we start to flesh out the complex algebra of the inequality \eqref{eq1}, in order to build the desired quadric containing $\supp(\mu)$. We start with a technical lemma which will be required in the proof of Proposition \ref{expanzione}:

\begin{lemma}\label{prop3}
For any $m,k\in\N$ we have
$\Gamma\left(\frac{3k+m}{4}\right)\leq 8^{m/4}(6k/7)^{3k/4}e^{-3k/4}\Gamma\left(\frac{m}{4}\right)$.
\end{lemma}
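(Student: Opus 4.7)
The plan is short: this is essentially a one-step Laplace-type bound applied to the integral representation of the Gamma function. Starting from
\[
\Gamma\!\left(\frac{3k+m}{4}\right) = \int_0^\infty t^{(3k+m)/4 - 1} e^{-t}\,dt = \int_0^\infty t^{3k/4}\cdot t^{m/4-1} e^{-t}\,dt,
\]
I would split the exponential as $e^{-t} = e^{-(7/8)t}\cdot e^{-t/8}$ and then bound $t^{3k/4} e^{-(7/8)t}$ by its pointwise maximum. An elementary computation shows that the function $t\mapsto t^{3k/4}e^{-(7/8)t}$ attains its maximum at $t^{\ast} = 6k/7$, with maximum value $(6k/7)^{3k/4}e^{-3k/4}$. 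Pulling this constant out of the integral yields
\[
\Gamma\!\left(\frac{3k+m}{4}\right) \leq \left(\frac{6k}{7}\right)^{3k/4}\! e^{-3k/4}\int_0^\infty t^{m/4-1} e^{-t/8}\,dt.
\]

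The remaining integral is explicit: the change of variable $u = t/8$ gives $\int_0^\infty t^{m/4-1}e^{-t/8}\,dt = 8^{m/4}\,\Gamma(m/4)$. Combining this with the previous display produces exactly the statement of the lemma.

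There is no real obstacle; the argument is essentially dictated by the form of the bound itself. The factor $8^{m/4}$ on the right forces the split $e^{-t} = e^{-(7/8)t}\cdot e^{-t/8}$, since only then does the rescaling $u=t/8$ in the residual integral produce $8^{m/4}$. Once the split is chosen, the location $t^{\ast}=6k/7$ of the maximum of $t^{3k/4}e^{-(7/8)t}$ is uniquely determined by elementary calculus, and its maximum value $(6k/7)^{3k/4}e^{-3k/4}$ then accounts simultaneously for the factors $(6k/7)^{3k/4}$ and $e^{-3k/4}$ on the right-hand side. In particular, the constants $6/7$, $e^{-1}$ and $8$ in the statement are \emph{sharp} for this Laplace-type estimate.
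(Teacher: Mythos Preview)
Your proof is correct and follows essentially the same approach as the paper: the paper also writes $\Gamma\!\left(\tfrac{3k+m}{4}\right)$ as $\int_0^\infty t^{m/4-1}\,g(t)\,e^{-t/8}\,dt$ with $g(t)=t^{3k/4}e^{-7t/8}$, bounds $g$ by its maximum at $t_*=6k/7$, and then rescales the remaining integral to obtain $8^{m/4}\Gamma(m/4)$.
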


\begin{proof}
By definition of the $\Gamma$ function we have:
\begin{equation}
   \begin{split}
\Gamma\left(\frac{3k+m}{4}\right)=&\int_0^\infty t^{\frac{3k+m}{4}-1}e^{-t}dt\\
\leq& \lVert g\rVert_\infty\int_0^\infty t^{\frac{m}{4}-1}e^{-t/8}dt=8^\frac{m}{4}\lVert g\rVert_\infty\Gamma\left(\frac{m}{4}\right),
    \label{numeroo6}
\end{split} 
\end{equation}
where $g(t):=t^\frac{3k}{4} e^{-7t/8}$. An easy exercise shows that the function $g$ attains its maximum at $t_*:=6k/7$ and in particular:
$$\lVert g\rVert_\infty\leq g(t_*)= (6k/7)^{3k/4}e^{-3k/4}.$$
The above estimate of $\lVert g\rVert_\infty$ together with \eqref{numeroo6} concludes the proof.
\end{proof}

The following proposition is the technical core of this section. As we already remarked, \eqref{eq1} will allow us to construct the algebraic surfaces containing $\supp(\mu)$. The proof follows closely its Euclidean analogue which can be found in  \cite[Section 3.4]{Preiss1987GeometryDensities} or in \cite[Lemma 7.6]{DeLellis2008RectifiableMeasures}.

\begin{proposizione}[Expansion formula]\label{expanzione}
There exists a constant $G(m)>0$ such that for any $\mu\in\mathcal{U}(m)$, any $s>0$, any $q\in\N$ and any $u\in\supp(\mu)$ we have:
\begin{equation}
\Big\lvert \sum_{k=1}^{4q}b_{k,s}^\mu(u)-\sum_{k=1}^q\frac{s^k\lVert u\rVert^{4k}}{k!}\Big\rvert\leq G(m)(s\lVert u\rVert^4)^{q+\frac{1}{4}}(2+(s\lVert u\rVert^4)^{2q})\label{eq1}.
\end{equation} 
\end{proposizione}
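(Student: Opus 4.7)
The approach mirrors the Euclidean argument (Lemma 7.6 in \cite{DeLellis2008RectifiableMeasures}), with the polarisation $V$ of the Koranyi norm playing the role of the Euclidean inner product. The starting observation is that both $u$ and $0$ lie in $\supp(\mu)$ and $\mu$ is $m$-uniform, so Proposition \ref{prop5} applied to the translations centred at $u$ and at $0$, with the radial profile $g(r)=e^{-sr^4}$, yields the key identity
$$\int_{\HH^n}e^{-s\lVert u^{-1}*z\rVert^4}d\mu(z)=m\int_0^{\infty}r^{m-1}e^{-sr^4}dr=\int_{\HH^n}e^{-s\lVert z\rVert^4}d\mu(z)=\frac{C(m)}{s^{m/4}},$$
where the last equality is Proposition \ref{prop1} with $p=0$. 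Since $\lVert u^{-1}*z\rVert^4=\lVert u\rVert^4+\lVert z\rVert^4-2V(u,z)$ by the definition of $V$, this rewrites as $\frac{s^{m/4}}{C(m)}\int e^{2sV(u,z)}e^{-s\lVert z\rVert^4}d\mu(z)=e^{s\lVert u\rVert^4}$.

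I would now expand $e^{2sV(u,z)}=\sum_{k\ge 0}(2sV(u,z))^k/k!$ and exchange sum and integral. This is justified by dominated convergence: the bound $|V(u,z)|\le 2\lVert u\rVert\lVert z\rVert(\lVert u\rVert+\lVert z\rVert)^2$ from Proposition \ref{prop4} is cubically controlled in $\lVert z\rVert$, so $e^{2s|V(u,z)|}e^{-s\lVert z\rVert^4}$ provides an integrable dominant for every partial sum. The interchange produces the exact series identity
$$\sum_{k=0}^{\infty}b_{k,s}^\mu(u)=e^{s\lVert u\rVert^4}=\sum_{k=0}^{\infty}\frac{(s\lVert u\rVert^4)^k}{k!}\qquad\text{for all }u\in\supp(\mu),$$
and since the $k=0$ terms on both sides equal $1$, this reduces \eqref{eq1} to bounding the tail difference
$$\sum_{k=q+1}^{\infty}\frac{x^k}{k!}-\sum_{k=4q+1}^{\infty}b_{k,s}^\mu(u),\qquad x:=s\lVert u\rVert^4.$$

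To control the two tails, I would combine Proposition \ref{prop2} with Lemma \ref{prop3} and Stirling's inequality $k!\ge(k/e)^k$: this yields a uniform-in-$k$ bound of the form $|b_{k,s}^\mu(u)|\le 8^{m/4}(A/k)^{k/4}(y^k+y^{3k})$, with $y=x^{1/4}$ and $A$ an absolute constant, which makes the second tail geometrically summable. For $x\le 1$ the leading contribution comes from $k=4q+1$ and produces $O(x^{q+1/4})$, while $\sum_{k\ge q+1}x^k/k!\le e\,x^{q+1}\le e\,x^{q+1/4}$; for $x\ge 1$ one instead bounds each tail by the crude estimate that $\sum_{k=1}^{4q}|b_{k,s}^\mu(u)|$ and $\sum_{k=1}^{q}x^k/k!$ are both polynomials in $x$ of degree at most $3q$, which is absorbed by the $x^{3q+1/4}$ contribution on the right-hand side of \eqref{eq1}.

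The principal obstacle is to ensure that all constants in these tail estimates are independent of $q$, so that a single $G(m)$ works uniformly. The Stirling-type decay $(A/k)^{k/4}$ in the bound on $b_{k,s}^\mu(u)$ is precisely what is needed to keep the tail sums uniformly summable: the coefficient $16^k\Gamma((m+3k)/4)/(k!\,\Gamma(m/4))$, which grows factorially in $k$ before Lemma \ref{prop3} is applied, is actually bounded by a $q$-independent constant after the combined factorial/$\Gamma$-estimate. The dichotomy $x\le 1$ versus $x\ge 1$ then matches the two factors $x^{q+1/4}$ and $x^{q+1/4}\cdot x^{2q}$ on the right-hand side, and absorbing everything into one constant $G(m)$ completes the proof.
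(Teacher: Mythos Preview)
Your proposal is correct and follows essentially the same route as the paper: establish the exact series identity $\sum_{k\ge 0}b_{k,s}^\mu(u)=e^{s\lVert u\rVert^4}$ via dominated convergence using the cubic bound on $V$ from Proposition~\ref{prop4}, then control the tails with Proposition~\ref{prop2} and Lemma~\ref{prop3}, splitting into the regimes $s\lVert u\rVert^4\le 1$ and $s\lVert u\rVert^4\ge 1$. The only cosmetic differences are that the paper treats the case $s\lVert u\rVert^4\ge 1$ first by bounding the two partial sums directly (without invoking the series identity), and it proves summability of $E(m)=\sum_k 16^k\Gamma((m+3k)/4)/(k!\,\Gamma(m/4))$ via the ratio test rather than Stirling; your use of Stirling together with Lemma~\ref{prop3} achieves the same $q$-independence of the constant.
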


\begin{proof}
Throughout the proof we will always assume that $s>0$ and that $u\in\supp(\mu)$.

First consider the case in which $s\lVert u\rVert^4\geq 1$. The triangle inequality and Proposition \ref{prop2}, used to get the bound in the second line, imply that:
\begin{equation}
\begin{split}
&\Big\lvert \sum_{k=1}^{4q}b_{k,s}^\mu(u)-\sum_{k=1}^q\frac{s^k\lVert u\rVert^{4k}}{k!}\Big\rvert\leq\Big\lvert \sum_{k=1}^{4q}b_{k,s}^\mu(u)\Big\rvert+\Big\lvert\sum_{k=1}^q\frac{s^k\lVert u\rVert^{4k}}{k!}\Big\rvert\\
\leq&\Gamma\left(\frac{m}{4}\right)^{-1}\sum_{k=1}^{4q}16^{k}\frac{(\lVert u\rVert s^\frac{1}{4})^k}{k!}\Gamma\left(\frac{m+3k}{4}\right)((\lVert u\rVert s^\frac{1}{4})^{2k}+1)+\sum_{k=1}^q\frac{s^k\lVert u\rVert^{4k}}{k!}\\
\leq&(\lVert u\rVert^4 s)^{q+\frac{1}{4}}((\lVert u\rVert^4 s)^{2q}+1)\Gamma\Big(\frac{m}{4}\Big)^{-1}\sum_{k=1}^{4q}\frac{16^{k}}{k!}\Gamma\Big(\frac{m+3k}{4}\Big)+(s\lVert u\rVert^{4})^q\sum_{k=1}^q\frac{1}{k!}\\
\leq& (\lVert u\rVert^4 s)^{q+\frac{1}{4}}\big(\big((\lVert u\rVert^4 s)^{2q}+1\big)E(m)+e\big),
\label{numeroo7}
\end{split}
\end{equation}
where $E(m):=\Gamma\left(\frac{m}{4}\right)^{-1}\sum_{k=1}^{\infty}\frac{16^{k}}{k!}\Gamma\left(\frac{m+3k}{4}\right)$. In order show that the last line of \eqref{numeroo7} is not a trivial bound, we need to show that $E(m)$ is finite. To this end, let us note that:
\begin{equation}
E(m)=\Gamma\left(\frac{m}{4}\right)^{-1}\sum_{k=1}^{\infty}\frac{16^{k}}{k!}\Gamma\left(\frac{m+3k}{4}\right)\leq8^{m/4}\sum_{k=1}^{\infty}\frac{16^{k}}{k!}(6k/7)^{3k/4}e^{-3k/4},
\label{numeroo8}
\end{equation}
where the inequality above comes from Lemma \ref{prop3}.
The series on the right-hand side of \eqref{numeroo8} converges by the ratio test and thus, by comparison, $E(m)$ is finite. Therefore, defined $G(m):=\max\{E(m),e\}$, 
inequality \eqref{eq1} immediately follows from \eqref{numeroo7} in the case $s\lVert u\rVert^4\geq 1$:
\begin{equation}
\begin{split}
\Big\lvert \sum_{k=1}^{4q}b_{k,s}^\mu(u)-\sum_{k=1}^q\frac{s^k\lVert u\rVert^{4k}}{k!}\Big\rvert
\leq G(m)(\lVert u\rVert^4 s)^{q+\frac{1}{4}}((\lVert u\rVert^4 s)^{2q}+2).
\end{split}
\nonumber
\end{equation}
We are left to prove \eqref{eq1} in the case $s\lVert u\rVert^4< 1$. First of all, thanks to the assumption $s\lVert u\rVert^4<1$ it is immediate to see that:
\begin{equation}
\begin{split}
\Big\lvert\sum_{k=q+1}^\infty \frac{s^k\lVert u\rVert^{4k}}{k!}\Big\rvert
\leq (s\lVert u\rVert^{4})^{q+1}\sum_{k=q+1}^\infty \frac{1}{k!}
\leq e(s\lVert u\rVert^{4})^{q+1}.\label{numeroo9}
\end{split}
\end{equation}
Secondly, since $s\lVert u\rVert^4<1$, the series $\sum_{k=1}^\infty b_{k,s}^\mu(u)$ converges absolutely:
\begin{equation}
\begin{split}
\sum_{k=1}^\infty \lvert b_{k,s}^\mu(u)\rvert\leq&\Gamma\left(\frac{m}{4}\right)^{-1}\sum_{k=1}^\infty 16^{k}\frac{(\lVert u\rVert s^\frac{1}{4})^k}{k!}\Gamma\left(\frac{m+3k}{4}\right)((\lVert u\rVert s^\frac{1}{4})^{2k}+1)\\
\leq &2\sum_{k=1}^\infty \frac{16^{k}}{k!}\frac{\Gamma\left(\frac{m+3k}{4}\right)}{\Gamma\left(\frac{m}{4}\right)}=2E(m)<\infty,
\nonumber
\end{split}
\end{equation}
where the first inequality comes from Proposition \ref{prop2} and the last one from the finiteness of $E(m)$ that was established above. Since it will come in handy later on, we estimate here the tail of $\sum_{k=1}^\infty  b_{k,s}^\mu(u)$:
\begin{equation}
\begin{split}
\Big\lvert \sum_{k=4q+1}^\infty& b_{k,s}^\mu(u)\Big\rvert
\leq\Gamma\Big(\frac{m}{4}\Big)^{-1}\sum_{k=4q+1}^\infty 16^{k}\frac{(\lVert u\rVert s^\frac{1}{4})^k}{k!}\Gamma\left(\frac{m+3k}{4}\right)((\lVert u\rVert s^\frac{1}{4})^{2k}+1)\\
\leq&(\lVert u\rVert^4 s)^{q+\frac{1}{4}}((\lVert u\rVert^4 s)^{2q+1}+1)\Gamma\left(\frac{m}{4}\right)^{-1}\sum_{k=4q+1}^\infty\frac{16^{k}}{k!}\Gamma\left(\frac{m+3k}{4}\right)\\
\leq&(\lVert u\rVert^4 s)^{q+\frac{1}{4}}((\lVert u\rVert^4 s)^{2q+1}+1)E(m),
\label{eqtail}
\end{split}
\end{equation}
where in the first inequality we used Proposition \ref{prop2} and in the second we used the hypothesis that $s\lVert u\rVert^4<1$.

The third step is to show that the following identity holds:
\begin{equation}
\sum_{k=0}^\infty b_{k,s}^\mu(u)=e^{s\lVert u\rVert^4}.
\label{eq7}
\end{equation}

To this end, let us note that the following holds by definition:
\begin{equation}
\sum_{k=0}^\infty b_{k,s}^\mu(u)=\lim_{q\to\infty}\frac{s^{\frac{m}{4}}}{C(m)}\int \sum_{k=0}^q\frac{(2sV(u,z))^k}{k!} e^{-s\lVert z\rVert^4}d\mu(z).
\label{numeroo10}
\end{equation}
We now prove that it is possible to exchange the limit and the integral in \eqref{numeroo10} using dominated convergence. To do so, we first remark that the following estimate holds for any $z\in \HH^n$:
\begin{equation}
\begin{split}
\Big\lvert\sum_{k=0}^q \frac{(2sV(u,z))^k}{k!} e^{-s\lVert z\rVert^4}\Big\rvert\leq &e^{-s\lVert z\rVert^4}\sum_{k=0}^q \frac{(4s\lVert u\rVert \lVert z\rVert(\lVert u\rVert+\lVert z\rVert)^2)^k}{k!}\\
\leq&
e^{-s\lVert z\rVert^4+4s\lVert u\rVert \lVert z\rVert(\lVert u\rVert+\lVert z\rVert)^2},
\nonumber
\end{split}
\end{equation}
where the first inequality comes from Proposition \ref{prop4}. It is an exercise with Proposition \ref{prop5} to prove that the function $f(z):=e^{-s\lVert z\rVert^4+4s\lVert u\rVert \lVert z\rVert(\lVert u\rVert+\lVert z\rVert)^2}$ is a function $L^1(\mu)$ with respect to the variable $z$. Therefore, applying the dominated convergence theorem (pointwise convergence is obvious), we get:
\begin{equation}
\begin{split}
\sum_{k=0}^\infty b_{k,s}^\mu(u)=&\frac{s^{\frac{m}{4}}}{C(m)}\int \Big(\sum_{k=0}^\infty \frac{(2sV(u,z))^k}{k!}\Big) e^{-s\lVert z\rVert^4}d\mu(z)\\
=&\frac{s^{\frac{m}{4}}}{C(m)}\int e^{2sV(u,z)} e^{-s\lVert z\rVert^4}d\mu(z)\\
=&\frac{s^{\frac{m}{4}}}{C(m)}e^{s\lVert u\rVert^4}\int e^{-s\lVert u\rVert^4+2sV(u,z)-s\lVert z\rVert^4}d\mu(z).
\label{numeroo11}
\end{split}
\end{equation}
By definition of $V(u,z)$, we have $-\lVert u\rVert^4+2V(u,z)-\lVert z\rVert^4=-\lVert u^{-1}*z\rVert^4$ and thus \eqref{numeroo11} becomes:
\begin{equation}
\begin{split}
\sum_{k=0}^\infty b_{k,s}^\mu(u)=&\frac{s^{\frac{m}{4}}}{C(m)}e^{s\lVert u\rVert^4}\int e^{-s\lVert u^{-1}*z\rVert^4}d\mu(z)\\
=&\frac{s^{\frac{m}{4}}}{C(m)}e^{s\lVert u\rVert^4}\frac{m}{4s^\frac{m}{4}}\Gamma\Big(\frac{m}{4}\Big)=e^{s\lVert u\rVert^4},
\nonumber
\end{split}
\end{equation}
where in the second last identity we used Corollary \ref{prop1} to compute the integral and in the last one we used the definition of $C(m)$, see Definition \ref{defimom}, and the fact that $\Gamma(t+1)=t\Gamma(t)$. We are now in position to conclude the proof of \eqref{eq1} in the case $s\lVert u\rVert^4<1$:
\begin{equation}
\begin{split}
\Big\lvert \sum_{k=1}^{4q}& b_{k,s}^\mu(u)-\sum_{k=1}^{q} \frac{s^k\lVert u\rVert^4}{k!}\Big\rvert\\
\leq &\Big\lvert \sum_{k=4q+1}^{\infty} b_{k,s}^\mu(u)\Big\rvert+\Big\lvert \sum_{k=1}^{\infty} b_{k,s}^\mu(u)-\sum_{k=1}^{\infty} \frac{s^k\lVert u\rVert^4}{k!}\Big\rvert+\Big\lvert\sum_{k=q+1}^{\infty} \frac{s^k\lVert u\rVert^4}{k!}\Big\rvert\\
=&\Big\lvert \sum_{k=4q+1}^{\infty} b_{k,s}^\mu(u)\Big\rvert+\Big\lvert \sum_{k=0}^{\infty} b_{k,s}^\mu(u)-e^{s\lVert u\rVert^4}\Big\rvert+\Big\lvert\sum_{k=q+1}^{\infty} \frac{s^k\lVert u\rVert^4}{k!}\Big\rvert,
\label{numeroo12}
\end{split}
\end{equation}
where in the last line we used the well known identity $e^t=\sum_{k=0}^\infty t^k/k!$ and the fact that by definition we have $b_{0,s}^\mu=1$. Finally, thanks to identity \eqref{eq7}, the bound \eqref{numeroo12} boils down to: 
\begin{equation}
    \begin{split}
        \Big\lvert \sum_{k=1}^{4q} b_{k,s}^\mu(u)-\sum_{k=1}^{q} \frac{s^k\lVert u\rVert^4}{k!}\Big\rvert
        \leq &\Big\lvert \sum_{k=4q+1}^{\infty} b_{k,s}^\mu(u)\Big\rvert+\Big\lvert\sum_{k=q+1}^{\infty} \frac{s^k\lVert u\rVert^4}{k!}\Big\rvert\\
        \leq&(\lVert u\rVert^4 s)^{q+\frac{1}{4}}((\lVert u\rVert^4 s)^{2q+1}+1)E(m)+e(s\lVert u\rVert^{4})^{q+1}\\
        \leq& G(m)(\lVert u\rVert^4 s)^{q+\frac{1}{4}}((\lVert u\rVert^4 s)^{2q+1}+2),
        \nonumber
    \end{split}
\end{equation}
where in the second inequality we used \eqref{numeroo9} and \eqref{eqtail} and in the last one the definition of $G(m)$. This concludes the proof of \eqref{eq1} in the case $s\lVert u\rVert^4<1$ and in turn the proposition.
\end{proof}

Recall that in \eqref{splitter} we showed how $b_{k,s}$ are a sum of functions $c_{\alpha,s}^\mu $:
\begin{equation}
    b_{k,s}^\mu(u)=\sum_{\lvert\alpha\rvert=k} c_{\alpha,s}^\mu (u).
    \label{numeroo13}
\end{equation}
Therefore, Proposition \ref{expanzione} when $q=1$ together with \eqref{numeroo13}, imply that:
\begin{equation}
    \Big\lvert \sum_{k=1}^{4}\sum_{\lvert\alpha\rvert=k} c_{\alpha,s}^\mu (u)-s\lVert u\rVert^{4}\Big\rvert\leq G(m)(s\lVert u\rVert^4)^{\frac{5}{4}}(2+(s\lVert u\rVert^4)^{2}).
    \label{numeroo14}
\end{equation}
On the left-hand side of the above inequality, one should expect that a lot of terms can be bounded from above by $(s\lVert u\rVert)^\frac{5}{4}$, and indeed this is the case. In the next proposition we get rid of those terms pushing them to the right-hand side, reducing in such a way the complexity of the algebraic expression we have to deal with.

\begin{proposizione}
For any $\mu\in \mathcal{U}(m)$, any $s>0$ and any $u\in\supp(\mu)$ we have:
\begin{equation}
\Big\lvert \sum_{\alpha\in A(4)}c_{\alpha,s}^\mu (u)-s\lVert u\rVert^{4}\Big\rvert\leq (s\lVert u\rVert^4)^{\frac{5}{4}} B(s^\frac{1}{4}\lVert u\rVert),
\label{eqquart}
\end{equation}
where $B(\cdot)$ is a suitable polynomial whereas $c_{\alpha,s}^\mu (\cdot)$ and $A(4)$ were defined in Definition \ref{calpha}.
\label{prop8}
\end{proposizione}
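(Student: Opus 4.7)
\textbf{Proof plan for Proposition \ref{prop8}.} The plan is to derive \eqref{eqquart} directly from the expansion formula of Proposition \ref{expanzione} (taken at $q=1$) by reorganising the sum $\sum_{k=1}^{4} b_{k,s}^{\mu}(u)$ through the decomposition \eqref{splitter} and discarding the terms whose weight $\alpha_1+2\alpha_2+3\alpha_3$ is too large. Specifically, Proposition \ref{expanzione} with $q=1$ gives
\[
\Bigl|\sum_{k=1}^{4}b_{k,s}^{\mu}(u)-s\lVert u\rVert^{4}\Bigr|\leq G(m)(s\lVert u\rVert^{4})^{5/4}\bigl(2+(s\lVert u\rVert^{4})^{2}\bigr),
\]
and by \eqref{splitter} we can rewrite $\sum_{k=1}^{4}b_{k,s}^{\mu}(u)=\sum_{\alpha\in\mathcal{I}}c_{\alpha,s}(u)$, where $\mathcal{I}=\{\alpha\in\N^{3}\setminus\{0\}:|\alpha|\leq 4\}$.

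First I would compare $\mathcal{I}$ with $A(4)=\{\alpha\in\N^{3}\setminus\{0\}:\alpha_1+2\alpha_2+3\alpha_3\leq 4\}$. Since the weight is always at least $|\alpha|$, the inclusion $A(4)\subseteq\mathcal{I}$ holds trivially, and the complementary set $\mathcal{I}\setminus A(4)$ consists exactly of those $\alpha$ with $|\alpha|\leq 4$ and $\alpha_1+2\alpha_2+3\alpha_3\geq 5$. This is a finite set (at most the multi-indices of total degree $\leq 4$ that are not in $A(4)$).

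Next I would estimate $\sum_{\alpha\in\mathcal{I}\setminus A(4)}c_{\alpha,s}(u)$ by Proposition \ref{prop9}: each summand is bounded by $D(\alpha)(s^{1/4}\lVert u\rVert)^{w(\alpha)}$ with $w(\alpha):=\alpha_1+2\alpha_2+3\alpha_3\geq 5$. Factoring $(s\lVert u\rVert^{4})^{5/4}=(s^{1/4}\lVert u\rVert)^{5}$ out,
\[
\sum_{\alpha\in\mathcal{I}\setminus A(4)}|c_{\alpha,s}(u)|\leq (s\lVert u\rVert^{4})^{5/4}\sum_{\alpha\in\mathcal{I}\setminus A(4)}D(\alpha)(s^{1/4}\lVert u\rVert)^{w(\alpha)-5}=(s\lVert u\rVert^{4})^{5/4}\,P_{1}(s^{1/4}\lVert u\rVert),
\]
where $P_{1}$ is an explicit polynomial of degree at most $\max_{\alpha\in\mathcal{I}\setminus A(4)}(w(\alpha)-5)\leq 7$.

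Finally, writing $\sum_{\alpha\in A(4)}c_{\alpha,s}(u)-s\lVert u\rVert^{4}=\bigl(\sum_{k=1}^{4}b_{k,s}^{\mu}(u)-s\lVert u\rVert^{4}\bigr)-\sum_{\alpha\in\mathcal{I}\setminus A(4)}c_{\alpha,s}(u)$ and applying the triangle inequality together with the two previous bounds yields
\[
\Bigl|\sum_{\alpha\in A(4)}c_{\alpha,s}(u)-s\lVert u\rVert^{4}\Bigr|\leq (s\lVert u\rVert^{4})^{5/4}\bigl(G(m)(2+(s\lVert u\rVert^{4})^{2})+P_{1}(s^{1/4}\lVert u\rVert)\bigr),
\]
and the bracket is a polynomial $B(s^{1/4}\lVert u\rVert)$ of the required form (note $(s\lVert u\rVert^{4})^{2}=(s^{1/4}\lVert u\rVert)^{8}$). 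This is purely a bookkeeping argument; the only subtlety is keeping track of which terms from $\sum_{k\leq 4}b_{k,s}$ must be shifted to the right-hand side, so the main (though mild) obstacle is identifying the difference set $\mathcal{I}\setminus A(4)$ correctly and verifying that $w(\alpha)\geq 5$ on it, which ensures the error term has the correct homogeneity $(s\lVert u\rVert^{4})^{5/4}$.
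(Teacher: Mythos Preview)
Your proposal is correct and follows essentially the same route as the paper: apply Proposition \ref{expanzione} with $q=1$, rewrite $\sum_{k=1}^{4}b_{k,s}^{\mu}$ via \eqref{splitter}, and then move the terms with $\alpha_1+2\alpha_2+3\alpha_3\geq 5$ (i.e.\ those in $\mathcal{I}\setminus A(4)$) to the right-hand side using the pointwise bound of Proposition \ref{prop9}. The paper's proof is organised identically, only phrasing the triangle inequality in the other direction and writing down the resulting polynomial $B$ explicitly.
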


\begin{proof}
Since $\lvert \alpha\rvert\leq \alpha_1+2\alpha_2+3\alpha_3$ for any $\alpha\in\N^3$, we have that $A(4)$ is contained in the family of multi-indices:
\begin{equation}
    \mathcal{A}(4):=\{\alpha\in\N^3\setminus\{(0,0,0)\}:1\leq\lvert \alpha\rvert\leq 4\}.
    \label{numeroo15}
\end{equation}
This implies by the triangular inequality that:
\begin{equation}
\begin{split}
&\Big\lvert \sum_{\alpha\in A(4)}c_{\alpha,s}^\mu (u)-s\lVert u\rVert^{4}\Big\rvert\leq\Big\lvert \sum_{k=1}^{4}\sum_{\lvert\alpha\rvert=k}c_{\alpha,s}^\mu (u)-s\lVert u\rVert^{4}\Big\rvert+\Big\lvert\sum_{\alpha\in \mathcal{A}(4)\setminus A(4)}c_{\alpha,s}^\mu (u)\Big\rvert.
\label{eq:eq:1}
\end{split}
\end{equation}
Thanks to identity \eqref{numeroo14} in order to conclude the proof of the proposition, we just need to estimate the second factor in the right-hand side of \eqref{eq:eq:1}. We apply Proposition \ref{prop9} to each $\lvert c_{\alpha,s}^\mu (u)\rvert$, obtaining:
\begin{equation}
\begin{split}
    \Big\lvert\sum_{\alpha\in\mathcal{A}(4)\setminus A(4)}c_{\alpha,s}^\mu (u)\Big\rvert\leq&\sum_{\alpha\in\mathcal{A}(4)\setminus A(4)}\lvert c_{\alpha,s}^\mu (u)\rvert\\
    \leq& \sum_{\alpha\in\mathcal{A}(4)\setminus A(4)}D(\alpha)(s^\frac{1}{4}\lVert u\rVert)^{\alpha_1+2\alpha_2+3\alpha_3}.
    \end{split}
    \label{eq:eq:2}
\end{equation}
Note that since $\alpha\not\in A(4)$, see Definition \ref{calpha}, then $\alpha_1+2\alpha_2+3\alpha_3\geq 5$. Therefore, the exponents of $s^{1/4}\lVert u\rVert$ in the last term of the above chain of inequalities are all bigger than $5$. Putting together \eqref{numeroo14}, \eqref{eq:eq:1} and \eqref{eq:eq:2}, we infer that:
\begin{equation}
\begin{split}
      &\qquad\qquad\qquad\qquad\qquad\Big\lvert \sum_{\alpha\in A(4)}c_{\alpha,s}^\mu (u)-s\lVert u\rVert^{4}\Big\rvert\\
      \leq& G(m)(s\lVert u\rVert^4)^{\frac{5}{4}}(2+(s\lVert u\rVert^4)^{2})+\sum_{\alpha\in\mathcal{A}(4)\setminus A(4)}D(\alpha)(s^\frac{1}{4}\lVert u\rVert)^{\alpha_1+2\alpha_2+3\alpha_3}\\
      =&(s\lVert u\rVert^4)^{\frac{5}{4}}\Big(G(m)(2+(s^\frac{1}{4}\lVert u\rVert)^{8})+\sum_{\alpha\in\mathcal{A}(4)\setminus A(4)}D(\alpha)(s^\frac{1}{4}\lVert u\rVert)^{\alpha_1+2\alpha_2+3\alpha_3-5}\Big).
      \nonumber
\end{split}
\end{equation}
Therefore, defined:
$$B(t):=G(m)(2+t^{8})+\sum_{\alpha\in\mathcal{A}(4)\setminus A(4)}D(\alpha)t^{\alpha_1+2\alpha_2+3\alpha_3-5},$$
we have constructed the polynomial we were looking for and thus the proposition is proved.
\end{proof}

\subsection{Construction of the candidate quadric containing the support}
\label{CDDQQ}

Throughout this section, we let $\mu$ be a fixed $m$-uniform measure. Before proceeding with the description of the content of this subsection we give the following:

\begin{definizione} 
\label{definizionecurve}
For any $s\in(0,\infty)$ we let:
\begin{itemize}
\item [(i)] the \emph{horizontal barycenter} of the measure $\mu$ at time $s$ be the vector in $\R^{2n}$: $$b(s):=\frac{4s^{\frac{1}{2}+\frac{m}{4}}}{C(m)}\int(\lvert    z_H\rvert^2   z_H+  z_TJ   z_H)e^{-s\lVert z\rVert^4}d\mu(z),$$ 
\item[(ii)] the symmetric matrix $\mathcal{Q}(s)$ associated to the measure $\mu$ at time $s$ be the element of $\mathrm{Sym}(2n)$:
\begin{equation}
\begin{split}
\mathcal{Q}(s):=&-\frac{2s^{\frac{1}{2}+\frac{m}{4}}}{C(m)}\int \lvert   z_H\rvert^2 e^{-s\lVert z\rVert^4} d\mu(z)\mathrm{id}_{2n}\\
&-\frac{4s^{\frac{1}{2}+\frac{m}{4}}}{C(m)}\int(   z_H\otimes   z_H+J   z_H\otimes J   z_H)e^{-s\lVert z\rVert^4}d\mu(z)\\
&+\frac{8s^{\frac{3}{2}+\frac{m}{4}}}{C(m)}\int(\lvert   z_H\rvert^4   z_H\otimes   z_H+  z_T^2J   z_H\otimes J   z_H)e^{-s\lVert z\rVert^4}d\mu(z)\\
&+\frac{8s^{\frac{3}{2}+\frac{m}{4}}}{C(m)}\int \lvert   z_H\rvert^2  z_T(J   z_H\otimes   z_H+   z_H\otimes J   z_H)e^{-s\lVert z\rVert^4}d\mu(z),\nonumber
\end{split}
\end{equation}
\item [(iii)]
the \emph{vertical barycenter} of the measure $\mu$ at time $s$ be the real number:
$$\mathcal{T}(s):=\frac{2 s^{\frac{1}{2}+\frac{m}{4}}}{C(m)}\int z_T e^{-s\lVert z\rVert^4}d\mu(z).$$
\end{itemize}
\end{definizione}

The first half of this subsection is devoted to the proof of Proposition \ref{prop10}, where from Proposition \ref{prop8} we are able to further simplify the algebra of inequality \eqref{eq1} proving the existence of constant $C>0$ such that:
\begin{equation}
\Big\lvert\langle b(s), u_H\rangle+\langle \mathcal{Q}(s)u_H, u_H\rangle+\mathcal{T}(s)  u_T\Big\rvert\leq C s^\frac{1}{4}\lVert u\rVert.
\nonumber
\end{equation}

In the second half of this Subsection we prove that $b(\cdot)$, $\mathcal{Q}(\cdot)$ and $\mathcal{T}(\cdot)$ are bounded curves as $s$ goes to $0$ and therefore by compactness we can find $\overline{b}$, $\overline{\mathcal{Q}}$ and $\overline{\mathcal{T}}$ for which for any $u\in\supp(\mu)$ we have:
$$\langle\overline{b},u_H\rangle+\langle u_H,\overline{\mathcal{Q}} u_H\rangle+\overline{\mathcal{T}}u_T=0.$$
What is left to prove in Subsection \ref{nondegge} is that as $s\to 0$, we can find a limit $\overline{\mathcal{Q}}$ for which $\text{Tr}(\overline{\mathcal{Q}})\neq 0$.

\begin{proposizione}\label{prop10}
For any $0<s<1$ and any $u\in\supp(\mu)$ the following inequality holds:
\begin{equation}
\Big\lvert\langle b(s), u_H\rangle+\langle \mathcal{Q}(s) u_H, u_H\rangle+\mathcal{T}(s)  u_T\Big\rvert\leq s^\frac{1}{4}\lVert u\rVert^3B^\prime(s^\frac{1}{4}\lVert u\rVert),
\nonumber
\end{equation}
where  $B^\prime(\cdot)$ is a suitable polynomial and $b(\cdot)$, $\mathcal{Q}(\cdot)$ and $\mathcal{T}(\cdot)$ were introduced in Definition \ref{definizionecurve}.
\end{proposizione}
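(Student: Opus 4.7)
The plan is to isolate from $\sum_{\alpha\in A(4)}c_{\alpha,s}(u)$ precisely those terms whose intrinsic homogeneity degree in $u$ is at most $2$, check that they assemble into $s^{1/2}$ times the left-hand side of the proposition, and control all remaining terms by Proposition \ref{prop9}. Combined with Proposition \ref{prop8} and a division by $s^{1/2}$, this will give the required bound.

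For $\alpha\in A(4)$ set $d(\alpha):=\alpha_1+2\alpha_2+3\alpha_3$; by Remark \ref{rk1} the integrand defining $c_{\alpha,s}$ is $d(\alpha)$-intrinsic homogeneous in $u$, and the indices in $A(4)$ with $d(\alpha)\le 2$ are exactly $(1,0,0),(2,0,0),(0,1,0)$. Using the explicit formulas in Proposition \ref{prop6}, the first step is to verify directly that
\[
c_{(1,0,0),s}(u)=s^{1/2}\langle b(s),u_H\rangle,
\]
and, after expanding $L(u,z)^2$ and $Q(u,z)$ and separating the $u_H$-quadratic from the $u_T$-linear contributions, that
\[
c_{(2,0,0),s}(u)+c_{(0,1,0),s}(u)=s^{1/2}\bigl[\langle\mathcal{Q}(s)u_H,u_H\rangle+\mathcal{T}(s)u_T\bigr].
\]
Here the four summands in $\mathcal{Q}(s)$ split naturally into two with prefactor $s^{1/2+m/4}$ (coming from the $u_H$-quadratic part of $Q$) and two with prefactor $s^{3/2+m/4}$ (coming from the expansion $L(u,z)^2=16\langle u_H,(\lvert z_H\rvert^2 z_H+z_TJz_H)^{\otimes 2}u_H\rangle$), and the calibration of the prefactors is exactly what makes the identification work.

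For $\alpha\in A(4)$ with $d(\alpha)\in\{3,4\}$ Proposition \ref{prop9} immediately gives $\lvert c_{\alpha,s}(u)\rvert\le D(\alpha)(s^{1/4}\lVert u\rVert)^{d(\alpha)}$, so the total contribution of these terms is at most $(s^{1/4}\lVert u\rVert)^3$ times a polynomial in $s^{1/4}\lVert u\rVert$. Combining the identities above with Proposition \ref{prop8} then yields
\[
s^{1/2}\bigl[\langle b(s),u_H\rangle+\langle\mathcal{Q}(s)u_H,u_H\rangle+\mathcal{T}(s)u_T\bigr]=s\lVert u\rVert^4-\sum_{d(\alpha)\ge 3}c_{\alpha,s}(u)+R(s,u),
\]
with $\lvert R(s,u)\rvert\le(s^{1/4}\lVert u\rVert)^5 B(s^{1/4}\lVert u\rVert)$. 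Each of the three terms on the right is $(s^{1/4}\lVert u\rVert)^3$ times a polynomial in $s^{1/4}\lVert u\rVert$, and dividing by $s^{1/2}$ converts this factor into $s^{1/4}\lVert u\rVert^3$, producing the claim with a suitable polynomial $B^\prime$.

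The main obstacle is the algebraic matching in the first step: one has to check, tensor by tensor, that the four symmetric tensors appearing in $\mathcal{Q}(s)$ are reproduced exactly by the quadratic part of $Q(u,z)$ plus the expansion of $L(u,z)^2$, with the correct combinatorial coefficients and powers of $s$. The bookkeeping is tedious but routine once one sorts the terms by their prefactor, because no cancellations occur between contributions coming from different $c_\alpha$'s; after that the bounds from Propositions \ref{prop8}--\ref{prop9} are purely mechanical.
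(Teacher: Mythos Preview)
Your proposal is correct and follows essentially the same route as the paper: isolate the three indices $(1,0,0),(2,0,0),(0,1,0)$ (i.e.\ those with $d(\alpha)\le 2$), verify by direct expansion of $L$, $L^2$ and $Q$ that their sum equals $s^{1/2}$ times $\langle b(s),u_H\rangle+\langle\mathcal{Q}(s)u_H,u_H\rangle+\mathcal{T}(s)u_T$, bound the remaining $c_{\alpha,s}$ with $d(\alpha)\ge 3$ via Proposition~\ref{prop9}, and absorb everything together with the error from Proposition~\ref{prop8}. The only cosmetic difference is that the paper packages the estimate as a single chain of inequalities on $\big|\sum_{\alpha\in A(2)}c_{\alpha,s}(u)\big|$ rather than writing your intermediate identity, but the computations and the resulting polynomial $B'$ are the same.
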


\begin{proof}
Since $\lvert \alpha\rvert\leq \alpha_1+2\alpha_2+3\alpha_3$ for any $\alpha\in\N^3$, then $A(2)$ is contained in the set $\mathcal{A}(4)$ that was introduced in \eqref{numeroo15}. Furthermore, let us remark that for any $\alpha\in \mathcal{A}(4)\setminus A(2)$, we have by definition that $\alpha_1+2\alpha_2+3\alpha_3\geq 3$. In particular, Proposition \ref{prop9} implies that:
\begin{equation}
\begin{split}
\sum_{\alpha\in \mathcal{A}(4)\setminus A(2)}\lvert c_{\alpha,s}^\mu (u)\rvert \leq &(s^\frac{1}{4}\lVert u\rVert)^3\sum_{\alpha\in \mathcal{A}(4)\setminus A(2)}D(\alpha)(s^\frac{1}{4}\lVert u\rVert)^{\alpha_1+2\alpha_2+3\alpha_3-3}\\
=&(s^\frac{1}{4}\lVert u\rVert)^3B^{\prime\prime}(s^\frac{1}{4}\lVert u\rVert).
\nonumber
\end{split}
\end{equation}
where $B^{\prime\prime}(t):=\sum_{\substack{\lvert\alpha\rvert\leq 4\\ \alpha\not\in A(2)}}D(\alpha)t^{\alpha_1+2\alpha_2+3\alpha_3-3}$. Hence, thanks to the triangular inequality and Proposition \ref{prop8}, we infer that:
\begin{equation}
\begin{split}
&\Big\lvert \sum_{\alpha\in A(2)}c_{\alpha,s}^\mu (u)\Big\rvert\leq \Big\lvert \sum_{\alpha\in A(4)}c_{\alpha,s}^\mu (u)-s\lVert u\rVert^4\Big\rvert+s\lVert u\rVert^4+\sum_{\alpha\in \mathcal{A}(4)\setminus A(2)}\lvert c_{\alpha,s}^\mu (u)\rvert\\
\leq&(s\lVert u\rVert^4)^{\frac{5}{4}} B(s^{\frac{1}{4}}\lVert u\rVert)+s\lVert u\rVert^4+(s^\frac{1}{4}\lVert u\rVert)^3B^{\prime\prime}(s^\frac{1}{4}\lVert u\rVert)
=s^\frac{3}{4}\lVert u\rVert^3B^\prime(s^\frac{1}{4}\lVert u\rVert).
\nonumber
\end{split}
\end{equation}
where $B^\prime(t):=t^2 B(t)+t+B^{\prime\prime}(t)$. A simple computation shows that $A(2)=\{(1,0,0),(2,0,0),(0,1,0)\}$  and thus to conclude the proof of the proposition we are left to prove the identity:
\begin{equation}
\begin{split}
    c_{(1,0,0),s}(u)+c_{(2,0,0),s}(u)+&c_{(0,1,0),s}(u)\\
    =&\sqrt{s}(\langle b(s),u_H\rangle+\langle u_H,\mathcal{Q}(s) u_H\rangle+\mathcal{T}(s)u_T).
    \label{numeroo16}
\end{split}
\end{equation}
In order to prove \eqref{numeroo16} we need to explicitly compute the expressions of $c_{(1,0,0),s}$, $c_{(2,0,0),s}$ and $c_{(0,1,0),s}$. As a first step, let us show that $c_{(1,0,0),s}(u)=s^{1/2}b(s)$:
\begin{equation}
\begin{split}
&\qquad\qquad\qquad c_{(1,0,0),s}(u)=\frac{s^{1+\frac{m}{4}}}{C(m)}\int L(u,z)e^{-s\lVert z\rVert^4}d\mu(z)\\
&=s^\frac{1}{2}\Big\langle  u_H, \frac{s^{\frac{1}{2}+\frac{m}{4}}}{C(m)}\int 4\lvert   z_H\rvert^2   z_H+4  z_T J   z_H e^{-s\lVert z\rVert^4}d\mu(z)\Big\rangle=s^\frac{1}{2}\langle  u_H, b(s)\rangle,
\nonumber
\end{split}
\end{equation}
Secondly, explicitly computing $c_{(2,0,0),s}(u)$ yields the first part of the quadric $\mathcal{Q}(s)$:
\begin{equation}
\begin{split}
 c_{(2,0,0),s}(u)=&\frac{1}{2}\frac{s^{2+\frac{m}{4}}}{C(m)}\int L(u,z)^2e^{-s\lVert z\rVert^4}d\mu(z)\\
=&\frac{16 s^{2+\frac{m}{4}}}{2C(m)}\int (\langle  u_H,\lvert   z_H\rvert^2   z_H\rangle^2
+\langle  u_H,  z_T J   z_H\rangle^2) e^{-s\lVert z\rVert^4}d\mu(z)\\
&\qquad+\frac{32 s^{2+\frac{m}{4}}}{2C(m)}\int \langle  u_H,\lvert   z_H\rvert^2   z_H\rangle\langle  u_H,  z_T J   z_H\rangle e^{-s\lVert z\rVert^4}d\mu(z)\\
=&s^\frac{1}{2}\langle  u_H,\mathcal{Q}_1(s) u_H \rangle+s^\frac{1}{2}\langle  u_H,\mathcal{Q}_2(s) u_H\rangle,
\nonumber
\end{split}
\end{equation}
where:
\begin{equation}
\begin{split}
    \mathcal{Q}_1(s):=&\frac{8 s^{\frac{3}{2}+\frac{m}{4}}}{C(m)}\int \lvert   z_H\rvert^4   z_H\otimes   z_H+  z_T^2 J   z_H\otimes J   z_H 
    e^{-s\lVert z\rVert^4}d\mu(z),\\
    \mathcal{Q}_2(s):=&\frac{8 s^{\frac{3}{2}+\frac{m}{4}}}{C(m)}\int \lvert   z_H\rvert^2  z_T (   z_H\otimes J   z_H+ J   z_H\otimes    z_H)e^{-s\lVert z\rVert^4}d\mu(z).
\end{split}
\label{eq:Q2}
\end{equation}
Finally, we see that the term $c_{(0,1,0),s}(u)$ contains the vertical barycenter and the second half of $\mathcal{Q}(s)$:
\begin{equation}
\begin{split}
 c_{(0,1,0),s}(u)=&\frac{s^{1+\frac{m}{4}}}{C(m)}\int Q(u,z) e^{-s\lVert z\rVert^4}d\mu(z)\\
=&-\frac{s^{1+\frac{m}{4}}}{C(m)}\int (4\langle    z_H, u_H\rangle^2+4\langle J   z_H, u_H\rangle^2) e^{-s\lVert z\rVert^4}d\mu(z)\\
&\qquad+\frac{s^{1+\frac{m}{4}}}{C(m)}\int (-2\lvert   z_H\rvert^2\lvert u_H\rvert^2+2  z_T  u_T) e^{-s\lVert z\rVert^4}d\mu(z)\\
=&-s^\frac{1}{2}\langle \mathcal{Q}_3(s) [u_H],u_H\rangle-s^\frac{1}{2}\langle\mathcal{Q}_4(s)[u_H], u_H\rangle+s^\frac{1}{2}\mathcal{T}(s)  u_T,
\nonumber
\end{split}
\end{equation}
where:
\begin{equation}
\begin{split}
    \mathcal{Q}_3(s):=&\frac{4s^{\frac{1}{2}+\frac{m}{4}}}{C(m)}\int (    z_H\otimes   z_H+ J  z_H\otimes J   z_H) e^{-s\lVert z\rVert^4}d\mu(z),\\
    \mathcal{Q}_4(s):=&\frac{2s^{\frac{1}{2}+\frac{m}{4}}}{C(m)}\int \lvert   z_H\rvert^2 e^{-s\lVert z\rVert^4}d\mu(z) \mathrm{id}_{2n}.
\end{split}
    \label{eq:Q4}
\end{equation}
Noticing that $\mathcal{Q}(s)=\mathcal{Q}_1(s)+\mathcal{Q}_2(s)-\mathcal{Q}_3(s)-\mathcal{Q}_4(s)\mathrm{id}_{2n}$, the claim is proven.
\end{proof}

\begin{osservazione}\label{oss1}
Let $\mathfrak{S}$ be the span of the horizontal projection of $\supp(\mu)$, i.e. $\mathfrak{S}:=\text{span}\{u_H:u\in\supp(\mu)\}$.
Then, with a small abuse of notation, we will always make the identification:
\begin{equation}
b(s)=\mathfrak{B}(s):=\frac{4s^{\frac{1}{2}+\frac{m}{4}}}{C(m)}\int(\lvert    z_H\rvert^2   z_H+  z_T \pi_\mathfrak{S}(J   z_H))e^{-s\lVert z\rVert^4}d\mu(z),\nonumber
\end{equation}
where the function $\pi_\mathfrak{S}:\R^{2n}\rightarrow \R^{2n}$ is the orthogonal projection onto the subspace $\mathfrak{S}$.
The reason for this identification is that:
$$\langle b(s), u_H\rangle=\langle \mathfrak{B}(s), u_H\rangle,\text{ for any }u\in\supp(\mu).$$
\end{osservazione}

\begin{proposizione}\label{bddcurve}
Both $\mathcal{Q}(s)$ and $\mathcal{T}(s)$ are bounded functions on $(0,\infty)$. To be precise:
\begin{itemize}
\item[(i)]
endowed $\text{Sym}(n)$ with some norm $\lvert\cdot\rvert$, there exists a constant $C_1>0$, such that $\sup_{s\in (0,\infty)}\lvert Q(s)\rvert\leq C_1$,
\item [(ii)]
there exists a constant $C_2>0$, such that $\sup_{s\in(0,\infty)}\lvert \mathcal{T}(s)\rvert\leq C_2$.
\end{itemize}
\end{proposizione}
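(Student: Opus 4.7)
The plan is to observe that every term appearing in the definitions of $\mathcal{T}(s)$ and of the four matrices $\mathcal{Q}_1(s),\ldots,\mathcal{Q}_4(s)$ (and hence of $\mathcal{Q}(s)=\mathcal{Q}_1+\mathcal{Q}_2-\mathcal{Q}_3-\mathcal{Q}_4$) has the scale-invariant shape
\[
\mathcal{I}_{a,P}(s):=\frac{s^{a+m/4}}{C(m)}\int_{\HH^n}P(z)\,e^{-s\lVert z\rVert^{4}}\,d\mu(z),
\]
where $P:\HH^n\to\R$ is a polynomial which is intrinsically homogeneous of degree $4a$ in the Heisenberg sense, i.e.\ $|P(z)|\le K_P\lVert z\rVert^{4a}$ for a constant $K_P$ depending only on $P$. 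Indeed, for $\mathcal{T}(s)$ one has $a=1/2$ and $|z_T|\le\lVert z\rVert^{2}$; for $\mathcal{Q}_3(s)$ and $\mathcal{Q}_4(s)$ one has $a=1/2$ with integrands bounded by $|z_H|^{2}\le\lVert z\rVert^{2}$; and for $\mathcal{Q}_1(s)$, $\mathcal{Q}_2(s)$ one has $a=3/2$ with each scalar entry bounded by combinations such as $|z_H|^{6}$ or $|z_H|^{2}z_T^{2}$, which in both cases are dominated by $\lVert z\rVert^{6}$.

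Next I would apply Proposition \ref{prop1} with $p=4a$, which produces
\[
\int_{\HH^n}\lVert z\rVert^{4a}\,e^{-s\lVert z\rVert^{4}}\,d\mu(z)=\frac{m}{4\,s^{\,a+m/4}}\,\Gamma\!\left(\frac{m+4a}{4}\right).
\]
Substituting into the above, the factors $s^{a+m/4}$ cancel exactly, giving
\[
|\mathcal{I}_{a,P}(s)|\le\frac{K_P\,m\,\Gamma\!\left(\frac{m+4a}{4}\right)}{4\,C(m)},
\]
which is a finite constant independent of $s\in(0,\infty)$. Note that the cancellation of the power of $s$ is exactly the statement that the intrinsic degree of $P$ matches the power of $s$ in front, and this is the whole point of the algebraic set-up of Definition \ref{definizionecurve}.

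Applying this observation termwise: in part (ii), $\mathcal{T}(s)$ is a single scalar term and one gets $\sup_{s>0}|\mathcal{T}(s)|\le C_{2}$ directly. For part (i), fix any norm $|\cdot|$ on $\mathrm{Sym}(2n)$ (all such norms are equivalent, so it suffices to control each entry). Each entry of $\mathcal{Q}_1(s),\mathcal{Q}_2(s),\mathcal{Q}_3(s),\mathcal{Q}_4(s)$ is of the form $\mathcal{I}_{a,P}(s)$ with a suitable bounded-coefficient polynomial $P$, so by the above each is uniformly bounded in $s$, and the triangle inequality yields $\sup_{s>0}|\mathcal{Q}(s)|\le C_{1}$. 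No real obstacle arises here beyond correctly identifying the homogeneity of each integrand; the content of the statement is just that Definition \ref{definizionecurve} is properly weighted so that Proposition \ref{prop1} produces $s$-independent bounds.
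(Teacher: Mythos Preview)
Your proof is correct. Both your argument and the paper's rest on the same computation: the integrands are intrinsically homogeneous of the right degree so that Proposition~\ref{prop1} makes the power of $s$ cancel and leaves an $s$-independent bound.

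The route is slightly different, however. The paper first invokes Proposition~\ref{prop9} to bound the combination
\[
s^{1/2}\bigl|\langle u_H,\mathcal{Q}(s)u_H\rangle+\mathcal{T}(s)u_T\bigr|=|c_{(2,0,0),s}(u)+c_{(0,1,0),s}(u)|\le \tilde G\,s^{1/2}\lVert u\rVert^2,
\]
then bounds $\mathcal{T}(s)$ directly (exactly as you do), and finally extracts the bound on $\langle u_H,\mathcal{Q}(s)u_H\rangle$ from the reverse triangle inequality. You instead bypass the quadratic-form detour and bound each of $\mathcal{Q}_1,\ldots,\mathcal{Q}_4$ entrywise by the same homogeneity-plus-Proposition~\ref{prop1} mechanism. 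Your approach is more uniform and arguably cleaner; the paper's has the mild advantage of reusing the $c_{\alpha,s}$ machinery already in place, so no new termwise inspection is needed. Neither gains anything substantive over the other.
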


\begin{osservazione}
Proposition \ref{bddcurve} implies in particular that the function $s\mapsto Tr(\mathcal{Q}(s))$ is bounded.
\end{osservazione}

\begin{proof}
Proposition \ref{prop9} implies that there exists a constant $\tilde{G}>0$ for which:
\begin{equation}
\begin{split}
\tilde{G}s^\frac{1}{2}\lVert u\rVert^2\geq&\lvert c_{(2,0,0),s}(u)+c_{(0,1,0),s}(u)\rvert=s^\frac{1}{2}\lvert \langle u_H,\mathcal{Q}(s)u_H \rangle+\mathcal{T}(s)  u_T\rvert\\
\geq &s^\frac{1}{2}\big\lvert \lvert\langle u_H,\mathcal{Q}(s) u_H \rangle\rvert-\lvert\mathcal{T}(s)\lvert\rvert  u_T\rvert\big\rvert.
\nonumber
\end{split}
\end{equation}
Thus, it suffices to give a bound for $\mathcal{T}(s)$ and the one for $\mathcal{Q}$ will follow, thanks to the inequality:
\begin{equation}
\lvert\langle u_H,\mathcal{Q}(s)[ u_H] \rangle\rvert\leq \Big(\tilde{G}+\sup_{s\in[0,\infty)}{\lvert\mathcal{T}(s)\rvert}\Big)\lVert u\rVert^2.
\label{numeroo17}
\end{equation}
Let us estimate the supremum norm of the curve $\mathcal{T}(s)$. As a first step, let us recall that by Corollary \ref{prop1} we have:
\begin{equation}
\begin{split}
        2\frac{s^{\frac{1}{2}+\frac{m}{4}}}{C(m)}\int\lVert z\rVert^2 e^{-s\lVert z\rVert^4}d\mu(z)=&2\frac{s^{\frac{1}{2}+\frac{m}{4}}}{C(m)}\cdot\frac{m}{4s^\frac{m+2}{4}}\Gamma\left(\frac{m+2}{4}\right)\\
    =&2\frac{m}{4C(m)}\Gamma\left(\frac{m+2}{4}\right)=2\frac{\Gamma\left(\frac{m+2}{4}\right)}{\Gamma\left(\frac{m}{4}\right)},
    \label{eq:eq:3}
\end{split}
\end{equation}
where the last identity comes from the definition of $C(m)$, see Definition \ref{defimom} and the fact that for any $t\in(0,\infty)$ we have $\Gamma(t+1)=t\Gamma(t)$.
Finally, the estimate on the supremum norm of $\mathcal{T}(s)$ follows by its definition and identity \eqref{eq:eq:3}:
\begin{equation}
\begin{split}
    \lvert\mathcal{T}(s)\rvert\leq&\frac{s^{\frac{1}{2}+\frac{m}{4}}}{C(m)}\int2\lVert  z_T\rVert e^{-s\lVert z\rVert^4}d\mu(z)\\
    \leq&\frac{s^{\frac{1}{2}+\frac{m}{4}}}{C(m)}\int2\lVert z\rVert^2 e^{-s\lVert z\rVert^4}d\mu(z)=2\Gamma\left(\frac{m}{4}\right)^{-1}\Gamma\left(\frac{m+2}{4}\right).
\end{split}
\label{numeroo18}
\end{equation}
Putting together \eqref{numeroo17} and \eqref{numeroo18} both items (i) and (ii) follow.
\end{proof}

From the above proposition we deduce that for any infinitesimal sequence $\{s_j\}_{j\in\N}$, by compactness we can extract a subsequence $\{s_{j_k}\}_{k\in\N}$, such that $\mathcal{Q}(s_{j_k})$ and $\mathcal{T}(s_{j_k})$ are converging to some $\tilde{\mathcal{Q}}$, $\tilde{\mathcal{T}}$. Therefore by Proposition \ref{prop10} we have:
\begin{equation}
\begin{split}
0\leq&\lim_{k\to\infty} \Big\lvert\langle b(s_{j_k}), u_H\rangle+\langle \mathcal{Q}(s_{j_k}) u_H, u_H\rangle+\mathcal{T}(s_{j_k}) u_T\Big\rvert\\
&\qquad\qquad\qquad\qquad\qquad\qquad\leq \lim_{k\to\infty} s_{j_k}^{1/4}\lVert u\rVert^3B^\prime(s_{j_k}^{1/4}\lVert u\rVert)=0.
\nonumber
\end{split}
\end{equation}
This in particular implies that for any $u\in\supp(\mu)$ we have:
\begin{equation}
 \lim_{k\to\infty}\langle b(s_{j_k}), u_H\rangle=-\langle \tilde{\mathcal{Q}} u_H, u_H\rangle-\tilde{\mathcal{T}}  u_T.
\label{eq8}
\end{equation}

\begin{proposizione}
There exists a $\overline{B}\in \mathfrak{S}$, which was introduced in Remark \ref{oss1}, such that $\lim_{k\to\infty}b(s_{j_k})=\overline{B}$.
\end{proposizione}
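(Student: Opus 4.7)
The plan is to use the identity \eqref{eq8} to extract convergence of the scalar products of $b(s_{j_k})$ against enough vectors to pin down the limit, combined with the fact that (after the identification in Remark \ref{oss1}) $b(s_{j_k})$ already lives in the finite-dimensional subspace $V \subseteq \R^{2n}$.

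First, since $V := \operatorname{span}\{u_H : u \in \supp(\mu)\}$ is finite-dimensional, I can select points $u_1,\ldots,u_d \in \supp(\mu)$ so that $\{u_1^H,\ldots,u_d^H\}$ is a basis of $V$, where $d = \dim V$. Applying \eqref{eq8} to each $u_i$ gives
\begin{equation}
\lim_{k\to\infty}\langle b(s_{j_k}), u_i^H\rangle = L_i := -\langle \tilde{\mathcal{Q}} u_i^H, u_i^H\rangle-\tilde{\mathcal{T}} (u_i)_T, \qquad i=1,\ldots,d.
\nonumber
\end{equation}
In particular, each of these numerical sequences is bounded.

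Next, I convert convergence of scalar products against a basis into convergence of the vectors themselves. By Remark \ref{oss1} we regard $b(s_{j_k}) \in V$. Let $G \in \R^{d\times d}$ be the Gram matrix $G_{ij} := \langle u_i^H, u_j^H\rangle$, which is invertible since the $u_i^H$ are a basis. If $b(s_{j_k}) = \sum_{i} c_i(k)\, u_i^H$, then $\langle b(s_{j_k}), u_j^H\rangle = \sum_i c_i(k) G_{ij}$, so the coefficient vector satisfies $c(k) = G^{-1}\bigl(\langle b(s_{j_k}),u_1^H\rangle,\ldots,\langle b(s_{j_k}),u_d^H\rangle\bigr)^T$. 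Since every entry on the right converges (to $G^{-1}(L_1,\ldots,L_d)^T$), the coefficients $c_i(k)$ converge to some $\bar c_i \in \R$, and consequently
\begin{equation}
\lim_{k\to\infty} b(s_{j_k}) = \overline{B} := \sum_{i=1}^d \bar c_i\, u_i^H \in V.
\nonumber
\end{equation}

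The argument is essentially a compactness plus uniqueness-of-limit-point statement in the finite-dimensional space $V$, and the only ingredient really needed is \eqref{eq8}; there is no substantial obstacle. The only mild subtlety is the $V$-restriction coming from Remark \ref{oss1}: without it, $b(s_{j_k})$ would a priori be a sequence in $\R^{2n}$ and \eqref{eq8} would only control its orthogonal projection onto $V$, which would not suffice to establish convergence in $\R^{2n}$.
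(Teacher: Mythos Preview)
Your proof is correct and follows essentially the same approach as the paper: both select $u_1,\ldots,u_d\in\supp(\mu)$ with $(u_i)_H$ a basis of $V$, invoke \eqref{eq8} to get convergence of the scalar products $\langle b(s_{j_k}),(u_i)_H\rangle$, and use $b(s_{j_k})\in V$ (Remark \ref{oss1}) to conclude. Your version is in fact a bit more careful, making the Gram-matrix inversion explicit rather than writing down a coordinate formula directly.
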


\begin{proof}
First of all we note that $\langle b(s),v\rangle=0$ for any $v\in \mathfrak{S}^\perp$, since $b(s_{j_k})\in \mathfrak{S}$ for any $k\in\N$, see Remark \ref{oss1}. Choose $\{u_1,\ldots,u_l\}\subseteq \supp(\mu)$, such that ${ (u_1)_H,\ldots, (u_l)_H}$ is a basis for $\mathfrak{S}$. 
Thus by equation \eqref{eq8}, we have:
\begin{equation}
\overline{B}=-\sum_{i=1}^l \left(\langle \tilde{\mathcal{Q}}(u_i)_H, (u_i)_H\rangle+\tilde{\mathcal{T}}  (u_i)_{T}\right)(u_i)_H.
\nonumber
\end{equation}
\end{proof}

If the measure $\mu$ is invariant under dilations, finding a candidate (non-degenerate) quadric containing $\supp(\mu)$ is quite easy:

\begin{proposizione}\label{CONO}
If $\mu$ is invariant under dilations, i.e. $\lambda^{-m}\mu_{0,\lambda}=\mu$ for any $\lambda>0$, then:
\begin{itemize}
    \item[(i)]$b(s)=0$ for any $s>0$,
    \item[(ii)]$\langle u_H,\mathcal{Q}(1) u_H\rangle+\mathcal{T}(1)  u_T=0 \text{ for any $u\in\supp(\mu)$.}$
\end{itemize}
\end{proposizione}

\begin{proof}
In the proof of Proposition \ref{prop10} we showed that: 
\begin{equation}
s^\frac{1}{2}\langle  u_H,b(s)\rangle=c_{(1,0,0),s}(u),
    \label{numeroo19}
\end{equation}
and that:
\begin{equation}
s^{1/2}(\langle u_H,\mathcal{Q}(s) u_H\rangle+\mathcal{T}(s)  u_T)=c_{(2,0,0),s}(u)+c_{(0,1,0),s}(u).
\label{numeroo20}
\end{equation}
Thanks to Proposition \ref{coni1}, identities \eqref{numeroo19} and \eqref{numeroo19} in this case become:
\begin{equation}
    s^\frac{1}{2}\langle  u_H,b(s)\rangle=s^\frac{1}{4}\langle b(1), u_H\rangle.
    \label{eq:eq:20}
\end{equation}
and:
\begin{equation}
s^{\frac{1}{2}}(\langle u_H,\mathcal{Q}(s) u_H\rangle+\mathcal{T}(s)  u_T)=s^\frac{1}{2}(\langle u_H,\mathcal{Q}(1) u_H\rangle+\mathcal{T}(1)  u_T),
    \label{eq:eq:21}
\end{equation}
Plugging identities \eqref{eq:eq:20} and \eqref{eq:eq:21} into the thesis of Proposition \ref{prop10} we conclude that:
$$\lvert s^{-\frac{1}{4}}\left\langle b(1), u_H\rangle+\right\langle u_H,\mathcal{Q}(1) u_H\rangle+\mathcal{T}(1)  u_T\rangle\rvert\leq s^\frac{1}{4}\lVert u\rVert^3B^\prime(s^\frac{1}{4}\lVert u\rVert),$$
for any $u\in\supp(\mu)$. Sending $s$ to $0$ we deduce that $b(1)=0$, as $b(1)\in \mathfrak{S}$, see Remark \ref{oss1}, and that:
$$\langle u_H,\mathcal{Q}(1) u_H\rangle+\mathcal{T}(1)  u_T=0.$$
\end{proof}

\begin{osservazione}
Note that as we already mentioned, if $\mu=\mathcal{C}^2\llcorner \mathcal{V}$ where $\mathcal{V}$ is the vertical axis $\{z\in\R^{2n+1}:z_H=0\}$, we have that $\mu$ is a dilation invariant $2$-uniform measure. However, an explicit computation shows that $\mathcal{Q}(1)=0$ and $\mathcal{T}(1)$. Therefore in this case our constructed quadric becomes trivial and thus not meaningful.
\end{osservazione}

\subsection{Non-degeneracy of the candidate quadric}
\label{nondegge}
The main result of this subsection can be stated as follows. Assume $\mu$ is a $m$-uniform measure in $\HH^n$ for which:
\begin{equation}
    \lim_{s\to 0} \text{Tr}(\mathcal{Q}(s))=0,
    \label{traccina}
\end{equation}
where $\mathcal{Q}(s)$ is the curve of symmetric matrices built in Proposition \ref{prop10}. Then:
\begin{equation}
    \Tan_m(\mu,\infty)=\{\mathcal{C}^2\llcorner \mathcal{V}\}.
    \nonumber
\end{equation}
It is not hard to show that the condition \eqref{traccina} is actually equivalent to:
$$
    b(s)\to 0,\qquad\mathcal{Q}(s)\to 0,\qquad
    \mathcal{T}(s)\to 0,
$$
as $s\to 0$ and thus we are really characterizing uniform measures for which our construction fails. This is where our argument significantly parts ways with its Euclidean analogue, see Section 4 of \cite{Kowalski1986Besicovitch-typeSubmanifolds}.

\begin{proposizione}\label{prop35}
Suppose $\mu\in\mathcal{U}(m)$ and let $\mathcal{Q}(s)$ be the curve of matrices introduced in Definition \ref{definizionecurve} relative to $\mu$. Then, for any $s>0$ the following equality holds:
\begin{equation}
Tr(\mathcal{Q}(s))=\frac{s^{\frac{m+2}{4}}}{C(m)}\int \lvert    z_H\rvert^2(8s\lVert z\rVert^4-(8+4n))e^{-s\lVert z\rVert^4}d\mu(z)\nonumber.
\end{equation}
\end{proposizione}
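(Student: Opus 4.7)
The proof is a direct computation: I would unpack the four-term definition of $\mathcal{Q}(s)$ from Definition \ref{definizionecurve} and take the trace term by term, using that $\text{Tr}(a\otimes b)=\langle a,b\rangle$ for rank-one tensors, together with two elementary facts about the standard symplectic matrix $J$ on $\R^{2n}$, namely $|Jz_H|=|z_H|$ (since $J\in O(2n)$) and $\langle Jz_H,z_H\rangle=0$ (since $J$ is antisymmetric). All four integrands factor naturally through these identities once the trace is applied, so there is no real obstacle; the only point to keep track of is the bookkeeping of the $s$-powers.

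Concretely, the plan is as follows. The first summand in $\mathcal{Q}(s)$ is a multiple of $\text{id}_{2n}$, and its trace contributes $-\frac{4n\, s^{(m+2)/4}}{C(m)}\int |z_H|^2 e^{-s\|z\|^4}d\mu(z)$. The second summand, $-\frac{4s^{(m+2)/4}}{C(m)}\int(z_H\otimes z_H+Jz_H\otimes Jz_H)e^{-s\|z\|^4}d\mu(z)$, has trace equal to $-\frac{4s^{(m+2)/4}}{C(m)}\int(|z_H|^2+|Jz_H|^2)e^{-s\|z\|^4}d\mu(z)=-\frac{8s^{(m+2)/4}}{C(m)}\int |z_H|^2 e^{-s\|z\|^4}d\mu(z)$. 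Added, these two contributions produce the coefficient $-(8+4n)$ in front of $|z_H|^2$.

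Next, the third summand $\frac{8s^{(m+6)/4}}{C(m)}\int(|z_H|^4 z_H\otimes z_H+z_T^2\, Jz_H\otimes Jz_H)e^{-s\|z\|^4}d\mu(z)$ has trace $\frac{8s^{(m+6)/4}}{C(m)}\int |z_H|^2(|z_H|^4+z_T^2)e^{-s\|z\|^4}d\mu(z)=\frac{8s^{(m+2)/4}}{C(m)}\int |z_H|^2\cdot s\|z\|^4 e^{-s\|z\|^4}d\mu(z)$, using the definition $\|z\|^4=|z_H|^4+z_T^2$ and factoring one power of $s$ outside. This yields the term $+8s\|z\|^4|z_H|^2$ in the integrand. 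Finally the fourth summand $\frac{8s^{(m+6)/4}}{C(m)}\int |z_H|^2 z_T(Jz_H\otimes z_H+z_H\otimes Jz_H)e^{-s\|z\|^4}d\mu(z)$ has trace a multiple of $\langle Jz_H,z_H\rangle+\langle z_H,Jz_H\rangle$, which vanishes by antisymmetry of $J$, so it contributes nothing.

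Summing the four contributions and factoring $s^{(m+2)/4}/C(m)$ and $|z_H|^2\,e^{-s\|z\|^4}$ out of the integral yields exactly
\[
\text{Tr}(\mathcal{Q}(s))=\frac{s^{(m+2)/4}}{C(m)}\int_{\HH^n}|z_H|^2\bigl(8s\|z\|^4-(8+4n)\bigr)e^{-s\|z\|^4}d\mu(z),
\]
which is the claimed identity. No step requires more than linearity of the trace and the two identities $|Jz_H|=|z_H|$, $\langle Jz_H,z_H\rangle=0$, so the whole argument is a short algebraic verification.
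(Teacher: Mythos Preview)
Your proof is correct and follows the same direct-computation strategy as the paper: both take the trace of the four summands defining $\mathcal{Q}(s)$ and combine them. Your execution is slightly cleaner than the paper's, which works coordinate-wise by computing $\langle e_i,\mathcal{Q}(s)e_i\rangle$ for each basis vector and pairing $e_i$ with $e_{i+n}$ to cancel the cross terms, whereas you use $\text{Tr}(a\otimes b)=\langle a,b\rangle$ together with $|Jz_H|=|z_H|$ and $\langle Jz_H,z_H\rangle=0$ to bypass the coordinate expansion entirely.
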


\begin{proof}
Let $\{e_1,\ldots, e_{2n}\}$ be an orthonormal basis of $\R^{2n}$. Since the trace is an invariant with respect to the change of basis, we have:
\begin{equation}
    \text{Tr}(\mathcal{Q}(s))=\sum_{i=1}^{n}\langle e_i,\mathcal{Q}(s)e_i\rangle+\langle e_{i+n},\mathcal{Q}(s)e_{i+n}\rangle.
    \label{eq:eq:25}
\end{equation}
Using the explicit expression for $\mathcal{Q}(s)$ we can compute $\langle e_i,\mathcal{Q}(s)e_i\rangle$ for any $i=1,\ldots, 2n$. If $1\leq i\leq n$ we have:
\begin{equation}
\begin{split}
\langle e_i,\mathcal{Q}(s)e_i\rangle= -&\frac{s^{\frac{1}{2}+\frac{m}{4}}}{C(m)}\int(4z_i^2+2\lvert   z_H\rvert^2+4z_{i+n}^2)e^{-s\lVert z\rVert^4}d\mu(z)\\
+&\frac{s^{\frac{3}{2}+\frac{m}{4}}}{C(m)}\int(8\lvert   z_H\rvert^4z_i^2+8  z_T^2z_{i+n}^2)e^{-s\lVert z\rVert^4}d\mu(z)\\
-&\frac{s^{\frac{3}{2}+\frac{m}{4}}}{C(m)}\int 8\lvert   z_H\rvert^2  z_T(2z_{i+n}z_i)e^{-s\lVert z\rVert^4}d\mu(z).
\end{split}
\label{eq:eq:23}
\end{equation}
On the other hand if $n+1\leq i\leq 2n$ we similarly have:
\begin{equation}
\begin{split}
\langle e_{i+n},\mathcal{Q}(s)e_{i+n}\rangle=-&\frac{s^{\frac{1}{2}+\frac{m}{4}}}{C(m)}\int(4z_{i+n}^2+2\lvert   z_H\rvert^2+4z_{i}^2)e^{-s\lVert z\rVert^4}d\mu(z)\\
+&\frac{s^{\frac{3}{2}+\frac{m}{4}}}{C(m)}\int(8\lvert   z_H\rvert^4z_{i+n}^2+8  z_T^2z_{i}^2)e^{-s\lVert z\rVert^4}d\mu(z)\\
+&\frac{s^{\frac{3}{2}+\frac{m}{4}}}{C(m)}\int 8\lvert   z_H\rvert^2  z_T(2z_{i+n}z_i)e^{-s\lVert z\rVert^4}d\mu(z).
\end{split}
\label{eq:eq:24}
\end{equation}
Putting together \eqref{eq:eq:25} with \eqref{eq:eq:23} and \eqref{eq:eq:24} we infer that:
\begin{equation}
\begin{split}
\text{Tr}(\mathcal{Q}(s))=&\sum_{i=1}^n-\frac{2s^{\frac{1}{2}+\frac{m}{4}}}{C(m)}\int(4z_{i+n}^2+2\lvert   z_H\rvert^2+4z_{i}^2)e^{-s\lVert z\rVert^4}d\mu(z)\\
&\qquad\qquad +\sum_{i=1}^{n}\frac{s^{\frac{3}{2}+\frac{m}{4}}}{C(m)}\int(8\lvert z_H\rvert^4(z_i^2+z_{i+n}^2)+8z_T^2(z_{i+n}^2+z_{i}^2))e^{-s\lVert z\rVert^4}d\mu(z)\\
=&\frac{s^{\frac{m+2}{4}}}{C(m)}\int \lvert    z_H\rvert^2(8s\lVert z\rVert^4-(8+4n))e^{-s\lVert z\rVert^4}d\mu(z).
\end{split}
\nonumber
\end{equation}
\end{proof}

\begin{proposizione}
Let $f:(0,\infty)\to[0,\infty)$ be the function:
\begin{equation}
f(s):=\int \lvert    z_H\rvert^2 e^{-s\lVert z\rVert^4}d\mu(z).
\label{effe}
\end{equation}
Then:
\begin{itemize}
\item[(i)] if $\supp(\mu)\not\subseteq\mathcal{V}$  one has $f(s)>0$ for any $s>0$,
\item[(ii)] $f$ is a smooth function and its derivatives are given by the formula:
\begin{equation}
f^{(i)}(s)=(-1)^i\int \lVert z\rVert^{4i}\lvert    z_H\rvert^2 e^{-s\lVert z\rVert^4}d\mu(z).
\label{cheeq}
\end{equation}
\item[(iii)] $s^{\frac{m+2}{4}+i}f^{(i)}(\cdot)$ is a bounded function on $(0,\infty)$ for any $i\in\N$.
\end{itemize}
\label{prop12}
\end{proposizione}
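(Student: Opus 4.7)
The plan is straightforward: part (i) is a continuity argument using the definition of support, while parts (ii) and (iii) are standard applications of differentiation under the integral sign, with the required uniform integrability supplied by the Gaussian-type decay and the explicit moment formula of Proposition \ref{prop1}.

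For (i), I would start by noting that if $\supp(\mu) \not\subseteq \mathcal{V}$, there exists a point $z_0 \in \supp(\mu)$ with $(z_0)_H \neq 0$. The integrand $z \mapsto \lvert z_H\rvert^2 e^{-s\lVert z\rVert^4}$ is continuous and strictly positive on an open neighborhood $U$ of $z_0$ disjoint from $\mathcal{V}$. Since $z_0 \in \supp(\mu)$ we have $\mu(U)>0$, which forces $f(s)>0$.

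For (ii), I would proceed by induction on $i$, at each step verifying the hypotheses of the theorem on differentiation under the integral sign on any interval of the form $(s_0,\infty)$ with $s_0>0$. The $i$-th formal derivative of the integrand with respect to $s$ is $(-1)^i \lVert z\rVert^{4i} \lvert z_H\rvert^2 e^{-s\lVert z\rVert^4}$. The key elementary estimate is $\lvert z_H\rvert^2 \leq \lVert z\rVert^2$, which follows directly from $\lVert z\rVert^4 = \lvert z_H\rvert^4 + z_T^2$. This yields the $s$-uniform bound
$$\bigl\lvert \lVert z\rVert^{4i} \lvert z_H\rvert^2 e^{-s\lVert z\rVert^4}\bigr\rvert \leq \lVert z\rVert^{4i+2} e^{-s_0\lVert z\rVert^4}, \qquad s\geq s_0,$$
whose $\mu$-integrability is guaranteed by Proposition \ref{prop1}. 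The dominated convergence theorem then justifies differentiating under the integral and yields the claimed formula \eqref{cheeq}. Smoothness follows since $s_0>0$ is arbitrary.

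For (iii), I would plug the estimate $\lvert z_H\rvert^2 \leq \lVert z\rVert^2$ into \eqref{cheeq} and apply Proposition \ref{prop1} directly:
$$\lvert f^{(i)}(s)\rvert \leq \int_{\HH^n} \lVert z\rVert^{4i+2} e^{-s\lVert z\rVert^4}\,d\mu(z) = \frac{m}{4\,s^{(m+4i+2)/4}}\,\Gamma\!\left(\frac{m+4i+2}{4}\right).$$
Multiplying by $s^{(m+2)/4+i}=s^{(m+4i+2)/4}$ produces a bound independent of $s$, giving (iii). There is no real obstacle here; the only subtlety is ensuring that the dominating function is chosen to be integrable uniformly on $(s_0,\infty)$, which is automatic from the exponential factor $e^{-s_0\lVert z\rVert^4}$ combined with Proposition \ref{prop1}.
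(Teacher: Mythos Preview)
Your proof is correct and follows essentially the same approach as the paper: part (i) via positivity of the integrand on a ball around a point of $\supp(\mu)\setminus\mathcal{V}$, part (ii) by dominated convergence using the bound $\lvert z_H\rvert^2\leq\lVert z\rVert^2$ and Proposition \ref{prop1}, and part (iii) by the same estimate combined with Proposition \ref{prop1}. Your write-up is in fact more detailed than the paper's, which simply declares (ii) a ``standard application of the dominated convergence theorem'' and for (iii) writes the same one-line bound you obtained.
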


\begin{proof}
Assume there exists a $w\in\supp(\mu)$ such that $  w_H\neq 0$. Then:
\begin{equation}
\begin{split}
0<\frac{\lvert  w_H\rvert^2}{2} e^{-s\left(\frac{3\lVert w\rVert}{2}\right)^4}\mu(B_{\lvert  w_H\rvert/2}(w))\leq\int \lvert    z_H\rvert^2 e^{-s\lVert z\rVert^4}d\mu(z)= f(s),
\end{split}
\nonumber
\end{equation}
for any $s>0$. The fact that $f$ is smooth is proven by showing that \eqref{cheeq} holds and this is a standard application of the dominated convergence theorem.
The last point is a direct consequence of Corollary \ref{prop1} and the formula for $f^{(i)}$:
$$\lvert s^{\frac{m+2}{4}+i} f^{(i)}(s)\rvert\leq s^{\frac{m+2}{4}+i}\int \lVert z\rVert^{4i+2} e^{-s\lVert z\rVert^4}d\mu(z)\leq\frac{m}{4}\Gamma\left(\frac{m+2}{4}+i\right).$$
\end{proof}

\begin{osservazione}
Proposition \ref{prop35} and Proposition \ref{prop12} imply that:
\begin{itemize}
\item[(i)] if $\supp(\mu)\not\subseteq\mathcal{V}$, we have $(-1)^if^{(i)}(s)>0$ for any $i\in\N$,
\item[(ii)] the expression of the trace can be rewritten as follows:
\begin{equation}
\begin{split}
\text{Tr}(\mathcal{Q}(s))=&\frac{s^{\frac{m+2}{4}}}{C(m)}\int \lvert    z_H\rvert^2(8s\lVert z\rVert^4-(8+4n))e^{-s\lVert z\rVert^4}d\mu(z)\\
=&-8\frac{s^{\frac{m+6}{4}}}{C(m)}f^\prime(s)-(8+4n)\frac{s^{\frac{m+2}{4}}}{C(m)}f(s).
\label{eqeqeqeq}
\end{split}
\end{equation}
In particular this implies by Proposition \ref{prop12} that $\text{Tr}(\mathcal{Q}(s))$ is a smooth, bounded function on $(0,\infty)$.
\end{itemize}
\label{oss2}
\end{osservazione}

\begin{proposizione}\label{prop13}
The function $f$ defined in \eqref{effe} has the following representation by means of $\text{Tr}(\mathcal{Q}(\cdot))$:
\begin{equation}
f(s)=-\frac{C(m)}{8s^{\frac{n+2}{2}}}\int_0^s\lambda^{\frac{2n-2-m}{4}}\text{Tr}(\mathcal{Q}(\lambda))d\lambda,\qquad\text{for any $s>0$.}\nonumber
\end{equation}
\end{proposizione}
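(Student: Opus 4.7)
The plan is to recognize equation \eqref{eqeqeqeq} in Remark \ref{oss2} as a first-order linear ODE for $f$ and then solve it by the standard integrating factor trick, taking some care with the boundary value at $s=0$.

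First I would rewrite the identity
\[
\text{Tr}(\mathcal{Q}(s)) = -\frac{8\,s^{\frac{m+6}{4}}}{C(m)}f'(s) - \frac{(8+4n)\,s^{\frac{m+2}{4}}}{C(m)}f(s)
\]
by dividing through by $-\tfrac{8}{C(m)}s^{(m+6)/4}$, obtaining
\[
f'(s) + \frac{n+2}{2s}f(s) = -\frac{C(m)}{8}\,s^{-\frac{m+6}{4}}\,\text{Tr}(\mathcal{Q}(s)).
\]
The integrating factor is $s^{(n+2)/2}$, and a direct computation of the exponents gives
\[
\frac{d}{ds}\bigl(s^{\frac{n+2}{2}}f(s)\bigr) = -\frac{C(m)}{8}\,s^{\frac{2n-2-m}{4}}\,\text{Tr}(\mathcal{Q}(s)).
\]
The smoothness of $f$ needed to justify this manipulation is already guaranteed by Proposition \ref{prop12}(ii).

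Next I would integrate this identity from $0$ to $s$. The desired formula will follow provided the boundary term satisfies $\lim_{\lambda\to 0^+}\lambda^{(n+2)/2}f(\lambda)=0$. This is the one place where some care is needed. Using the trivial bound $|z_H|^2\leq \lVert z\rVert^2$ in \eqref{effe} together with Proposition \ref{prop1} with $p=2$, one has
\[
f(s)\leq \int_{\HH^n}\lVert z\rVert^2 e^{-s\lVert z\rVert^4}d\mu(z) = \frac{m}{4\,s^{(m+2)/4}}\,\Gamma\!\left(\tfrac{m+2}{4}\right),
\]
so $s^{(n+2)/2}f(s)\lesssim s^{(n+2)/2-(m+2)/4} = s^{(2n-m+2)/4}$. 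Since by Remark \ref{oss12} we only consider $m\in\{1,\ldots,2n+1\}$, the exponent $(2n-m+2)/4$ is strictly positive, so the boundary term vanishes.

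Finally I need the integral on the right-hand side to make sense at $\lambda=0$. Proposition \ref{bddcurve} (and its remark) provides that $\text{Tr}(\mathcal{Q}(\lambda))$ is bounded on $(0,\infty)$, while the exponent $\tfrac{2n-2-m}{4}>-1$ for all admissible $m\in\{1,\ldots,2n+1\}$, so the improper integral converges. Combining everything then gives
\[
s^{\frac{n+2}{2}}f(s) = -\frac{C(m)}{8}\int_0^s \lambda^{\frac{2n-2-m}{4}}\,\text{Tr}(\mathcal{Q}(\lambda))\,d\lambda,
\]
which is the claim after dividing by $s^{(n+2)/2}$. There is no serious obstacle: the only subtle point is verifying that the boundary term at $0$ vanishes, and this is precisely where the hypothesis $m\leq 2n+1$ is used.
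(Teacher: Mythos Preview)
Your proof is correct and follows essentially the same approach as the paper: both recognize \eqref{eqeqeqeq} as a first-order linear ODE, apply the integrating factor $s^{(n+2)/2}$ (the paper reaches it in two steps via the intermediate substitution $g(s)=s^{3/2+m/4}f(s)$), and use the same bound $s^{(n+2)/2}f(s)\lesssim s^{(2n+2-m)/4}\to 0$ together with the integrability of $\lambda^{(2n-2-m)/4}$ near $0$ to handle the boundary term. Your version is slightly more streamlined in that you integrate directly from $0$ rather than solving a Cauchy problem from $\delta>0$ and then letting $\delta\to 0$, but the content is the same.
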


\begin{proof}
Since $f$ is smooth on $(0,\infty)$ by Proposition \ref{prop12}, the following equality holds true:
\begin{equation}
\frac{d}{ds}\left(s^{\frac{3}{2}+\frac{m}{4}}f(s)\right)=\left(\frac{3}{2}+\frac{m}{4}\right)s^{\frac{1}{2}+\frac{m}{4}}f(s)+s^{\frac{3}{2}+\frac{m}{4}}f^\prime(s),
\nonumber
\end{equation}
The expression for $\text{Tr}(\mathcal{Q}(s))$ in terms of $f$ and $f^\prime$ given in Remark \ref{oss2} (ii) together with the above identity imply:
\begin{equation}
\begin{split}
C(m)\text{Tr}(\mathcal{Q}(s))=&-8s^{\frac{m+6}{4}}f^\prime(s)-(8+4n)s^{\frac{m+2}{4}}f(s)\\
=&-8\frac{d}{ds}\left(s^{\frac{3}{2}+\frac{m}{4}}f(s)\right)+\left(4+2m-4n\right)s^{\frac{1}{2}+\frac{m}{4}}f(s).
\label{eq9}
\end{split}
\end{equation}
Define now $g(s):=s^{\frac{3}{2}+\frac{m}{4}}f(s)$, and note that equation \eqref{eq9} becomes:
\begin{equation}
C(m)\text{Tr}(\mathcal{Q}(s))=-8g^\prime(s)+\left(4+2m-4n\right)\frac{g(s)}{s}\nonumber.
\end{equation}
For any $\delta>0$ the function $g(s)$ solves the following Cauchy problem:
\begin{equation}
\begin{cases}
g^\prime(s)+\frac{\left(2n-m-2\right)}{4s}g(s)=-\frac{C(m)}{8}\text{Tr}(\mathcal{Q}(s)),\\
g(\delta)=\delta^{\frac{3}{2}+\frac{m}{4}}f(\delta).
\end{cases}
\nonumber
\end{equation}
Such Cauchy Problem has an explicit unique solution on $(\delta,\infty)$ since coefficients are smooth, Lipschitz and the vector field is sublinear in $g$, which is:
\begin{equation}
h_\delta(s)=-\frac{C(m)}{8s^\frac{2n-m-2}{4}}\left[\int_\delta^s\lambda^\frac{2n-m-2}{4} \text{Tr}(\mathcal{Q}(\lambda))d\lambda+\delta^\frac{2n+4}{4}f(\delta)\right]\nonumber,
\end{equation}
and coincides, by the uniqueness of solution, with $g$ on $(\delta,\infty)$. Point (iii) of Proposition \ref{prop12} implies that:
\begin{equation}
\begin{split}
\Big\lvert \delta^\frac{2n+4}{4}f(\delta)\Big\rvert\leq C\delta^\frac{2n+2-m}{4},
\nonumber
\end{split}
\end{equation}
for some $C>0$. Moreover, since $m\leq 2n+1$ we have that $\delta^\frac{2n+4}{4}f(\delta)\to 0$ as $\delta\to 0$. This implies that for any fixed $s>0$ we have:
\begin{equation}
\begin{split}
g(s)=\lim_{\delta\to 0} h_\delta(s)=&-\frac{C(m)}{8s^\frac{2n-m-2}{4}}\lim_{\delta\to 0}\left[\int_\delta^s\lambda^\frac{2n-m-2}{4} \text{Tr}(\mathcal{Q}(\lambda))d\lambda+\delta^\frac{2n+4}{4}f(\delta)\right]\\
=&-\frac{C(m)}{8s^\frac{2n-m-2}{4}}\int_0^s\lambda^\frac{2n-m-2}{4} \text{Tr}(\mathcal{Q}(\lambda))d\lambda,
\nonumber
\end{split}
\end{equation}
where the last equality comes from the fact that $\lvert\cdot\rvert^\frac{2n-m-2}{4} \text{Tr}(\mathcal{Q}(\cdot))\in L^1([0,1])$ as $m\leq 2n+1$.
\end{proof}

\begin{osservazione}
Since $\text{Tr}(\mathcal{Q}(s))$ is bounded, see Remark \ref{oss2}(ii), if $\lim_{s\to 0}Tr(\mathcal{Q}(s))$
does not exists or it exists and is non-zero, there is an infinitesimal sequence $\{s_j\}_{j\in\N}$ such that the trace of the matrices $\mathcal{Q}(s)$ converges to a non-zero value.
Up to passing to a subsequence (for which $b(\cdot)$, $\mathcal{Q}(\cdot)$ and $\mathcal{T}(\cdot)$ converge) we can build $\overline{b}$, $\tilde{\mathcal{Q}}$ and $\tilde{\mathcal{T}}$ as in  Proposition \ref{prop35} for which $\text{Tr}(\mathcal{Q})\neq 0$. This would imply that the quadric $\mathbb{K}(\overline{b},\tilde{\mathcal{Q}},\tilde{\mathcal{T}})$ contains $\supp(\mu)$ and would be non-degenerate. Therefore, without loss of generality, \textbf{in what follows we should always assume} $\lim_{s\to 0}Tr(\mathcal{Q}(s))=0$.
\end{osservazione}

\begin{proposizione}
Suppose that $\lim_{s\to 0} Tr(\mathcal{Q}(s))=0$. Then $\lim_{s\to 0}s^{\frac{m+2}{4}}f(s)=0$.
\end{proposizione}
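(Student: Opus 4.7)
The plan is to read off the conclusion directly from the integral representation of $f$ given by Proposition \ref{prop13}, combined with a standard $\varepsilon$-estimate exploiting the hypothesis $\lim_{s\to 0}\mathrm{Tr}(\mathcal{Q}(s))=0$. First I would simplify $s^{(m+2)/4}f(s)$ using Proposition \ref{prop13}: since $\tfrac{n+2}{2}-\tfrac{m+2}{4}=\tfrac{2n-m+2}{4}$, one gets the clean identity
\begin{equation}
s^{\frac{m+2}{4}}f(s)\;=\;-\frac{C(m)}{8\,s^{\frac{2n-m+2}{4}}}\int_0^s\lambda^{\frac{2n-m-2}{4}}\mathrm{Tr}(\mathcal{Q}(\lambda))\,d\lambda.\nonumber
\end{equation}
This reduces the problem to showing that the right-hand side tends to $0$ as $s\to 0$.

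Next I would check the integrability of the weight $\lambda^{(2n-m-2)/4}$ near the origin. Since $m\in\{1,\ldots,2n+1\}$ we have $\tfrac{2n-m-2}{4}\geq-\tfrac{3}{4}>-1$, so the weight is in $L^1([0,1])$, and moreover
\begin{equation}
\int_0^s\lambda^{\frac{2n-m-2}{4}}\,d\lambda\;=\;\frac{4}{2n-m+2}\,s^{\frac{2n-m+2}{4}}.\nonumber
\end{equation}
This exactly cancels the singular prefactor $s^{-(2n-m+2)/4}$ coming out of Proposition \ref{prop13}, which is the mechanism that will make the limit zero.

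Finally I would use the hypothesis. Fix $\varepsilon>0$ and choose $\delta>0$ with $|\mathrm{Tr}(\mathcal{Q}(\lambda))|<\varepsilon$ for $\lambda\in(0,\delta)$. For $0<s<\delta$,
\begin{equation}
\bigl|s^{\frac{m+2}{4}}f(s)\bigr|\;\leq\;\frac{C(m)}{8\,s^{\frac{2n-m+2}{4}}}\cdot\varepsilon\cdot\frac{4}{2n-m+2}\,s^{\frac{2n-m+2}{4}}\;=\;\frac{C(m)\,\varepsilon}{2(2n-m+2)}.\nonumber
\end{equation}
As $\varepsilon$ is arbitrary this yields $\lim_{s\to 0}s^{(m+2)/4}f(s)=0$. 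I do not expect any serious obstacle: the only point requiring care is confirming the exponent arithmetic and the integrability condition $\tfrac{2n-m-2}{4}>-1$, which is guaranteed by the running assumption $m\leq 2n+1$ (the same assumption invoked at the very end of the proof of Proposition \ref{prop13}).
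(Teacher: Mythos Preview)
Your proof is correct and follows essentially the same route as the paper: both use the representation of Proposition \ref{prop13}, the hypothesis to bound $\lvert\mathrm{Tr}(\mathcal{Q}(\lambda))\rvert<\varepsilon$ on $(0,\delta)$, and the explicit computation of $\int_0^s\lambda^{(2n-m-2)/4}\,d\lambda$ to cancel the prefactor, arriving at the identical bound $\tfrac{C(m)\varepsilon}{2(2n+2-m)}$. The only cosmetic difference is that the paper uses the one-sided inequality $\mathrm{Tr}(\mathcal{Q}(s))>-\varepsilon$ together with the positivity of $f$ from Proposition \ref{prop12}, whereas you work directly with absolute values; your version is marginally cleaner for that reason.
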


\begin{proof}
For any $\epsilon$ there exists a $\delta>0$ such that for any $s\in(0,\delta)$ we have
$\lvert Tr(\mathcal{Q}(s))\rvert\leq\epsilon$.
In particular $Tr(\mathcal{Q}(s))>-\epsilon$, and Proposition \ref{prop13} implies for $s\in(0,\delta)$ that:
\begin{equation}
\begin{split}
f(s)=&-\frac{C(m)}{8s^{\frac{n+2}{2}}}\int_0^s\lambda^{\frac{2n-2-m}{4}}Tr(\mathcal{Q}(\lambda))d\lambda<\frac{\epsilon C(m)}{8s^{\frac{n+2}{2}}}\int_0^s\lambda^{\frac{2n-2-m}{4}}d\lambda\\
&\qquad\qquad<\frac{\epsilon C(m)}{8s^{\frac{n+2}{2}}}\frac{s^{\frac{2n+2-m}{4}}}{\frac{2n+2-m}{4}}=\frac{\epsilon C(m)}{2(2n+2-m)s^{\frac{m+2}{4}}}.
\end{split}
\nonumber
\end{equation}
Summing up, we proved that for any $s\in(0,\delta)$ we have $0<s^{\frac{m+2}{4}}f(s)<\frac{\epsilon C(m)}{2(2n+2-m)}$ and this concludes the proof.
\end{proof}

\begin{proposizione}
The following are equivalent:
\begin{itemize}
\item[(i)]$\lim_{s\to 0}s^{\frac{m+2}{4}}f(s)=0$,
\item[(ii)]
for any $\alpha>0$ there exists an $R(\alpha)>0$ such that if $R>R(\alpha)$, then $\supp(\mu)\setminus B_{R}(0)\subseteq\{z:\lvert    z_H\rvert\leq \alpha\lVert z\rVert\}$.
\end{itemize}
\label{propette}
\end{proposizione}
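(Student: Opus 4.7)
The plan is to prove the equivalence by handling the two implications separately, using Proposition \ref{prop1} for the upper bounds (which gives explicit values of integrals of radial functions against $\mu$) and the defining uniformity property $\mu(B_r(z))=r^m$ for $z\in\supp(\mu)$ for the lower bounds. The underlying heuristic is that $s^{(m+2)/4}f(s)$ measures the average value of the ``horizontal ratio'' $|z_H|^2/\|z\|^2$ with respect to $\mu$, weighted by the natural scale $s^{-1/4}$.

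For the direction (ii)$\Rightarrow$(i), I would fix $\epsilon>0$, pick $\alpha>0$ small, and split the integral defining $f(s)$ as $\int_{B_{R(\alpha)}(0)}+\int_{\HH^n\setminus B_{R(\alpha)}(0)}$. On the ball $B_{R(\alpha)}(0)$ we use the trivial bound $|z_H|^2\leq\|z\|^2\leq R(\alpha)^2$ together with $\mu(B_{R(\alpha)}(0))=R(\alpha)^m$ to see that this piece contributes at most $R(\alpha)^{m+2}$, which when multiplied by $s^{(m+2)/4}$ vanishes as $s\to 0$. Outside $B_{R(\alpha)}(0)$ hypothesis (ii) yields $|z_H|^2\leq\alpha^2\|z\|^2$, so that piece is bounded by $\alpha^2\int\|z\|^2 e^{-s\|z\|^4}d\mu(z)$, and Proposition \ref{prop1} (with $p=2$) computes this exactly as $\alpha^2 \tfrac{m}{4}s^{-(m+2)/4}\Gamma((m+2)/4)$. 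Combining both yields $\limsup_{s\to 0}s^{(m+2)/4}f(s)\leq \alpha^2\tfrac{m}{4}\Gamma((m+2)/4)$, and sending $\alpha\to 0$ gives (i).

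For the direction (i)$\Rightarrow$(ii), I would argue by contradiction: if (ii) fails for some $\alpha_0>0$, there is a sequence $\{z_n\}\subseteq\supp(\mu)$ with $\|z_n\|\to\infty$ and $|(z_n)_H|>\alpha_0\|z_n\|$. Choose $\rho:=\alpha_0/2$ and set $r_n:=\rho\|z_n\|$. Using that the group law satisfies $(z_n^{-1}*w)_H=w_H-(z_n)_H$, every $w\in B_{r_n}(z_n)$ obeys $|w_H-(z_n)_H|\leq d(z_n,w)\leq r_n$, hence $|w_H|\geq(\alpha_0-\rho)\|z_n\|=\tfrac{\alpha_0}{2}\|z_n\|$. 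The triangle inequality for the Koranyi distance gives $\|w\|\leq(1+\rho)\|z_n\|$, and uniformity gives $\mu(B_{r_n}(z_n))=r_n^m=\rho^m\|z_n\|^m$. Now choose $s_n:=1/\|z_n\|^4$ (so $s_n\to 0$); lower bounding $f(s_n)$ by its integral over $B_{r_n}(z_n)$ gives
\begin{equation}
s_n^{(m+2)/4}f(s_n)\;\geq\; s_n^{(m+2)/4}\cdot\tfrac{\alpha_0^2}{4}\|z_n\|^2\cdot e^{-s_n(1+\rho)^4\|z_n\|^4}\cdot\rho^m\|z_n\|^m\;=\;\tfrac{\alpha_0^2\rho^m}{4}e^{-(1+\rho)^4},
\nonumber
\end{equation}
a strictly positive constant independent of $n$. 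This contradicts (i).

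No step looks genuinely hard: the key device is the simple observation that $s_n:=\|z_n\|^{-4}$ normalises the Gaussian weight on a ball of ``natural'' radius $\rho\|z_n\|$ around $z_n$, converting violation of the cone condition at infinity into a persistent lower bound on $s_n^{(m+2)/4}f(s_n)$. The mild book-keeping point to be careful about is the use of the non-Euclidean triangle inequality and the explicit formula $(z_n^{-1}*w)_H=w_H-(z_n)_H$ to propagate the estimate $|z_H|\gtrsim\|z\|$ from $z_n$ to a whole Koranyi ball around $z_n$; once this is in place, Proposition \ref{prop1} and the uniformity of $\mu$ do all the rest of the work.
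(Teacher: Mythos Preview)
Your proof is correct and follows essentially the same approach as the paper. Both directions use the same key ideas: for (ii)$\Rightarrow$(i), splitting the integral at radius $R(\alpha)$ and invoking Proposition~\ref{prop1} on the exterior piece; for (i)$\Rightarrow$(ii), the contrapositive via a sequence $z_n$ violating the cone condition, combined with the uniformity $\mu(B_r(z_n))=r^m$ to produce a persistent positive lower bound on $s_n^{(m+2)/4}f(s_n)$. The only cosmetic differences are your choice of scale $s_n=\lVert z_n\rVert^{-4}$ and ball radius $\rho\lVert z_n\rVert$ versus the paper's $s_j=\lvert (y_j)_H\rvert^{-4}$ and radius $\lvert (y_j)_H\rvert/2$, and your slightly cruder (but perfectly adequate) bound $\lvert z_H\rvert^2\leq R(\alpha)^2$ on the inner ball in place of the paper's use of Proposition~\ref{prop5}.
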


\begin{proof}
Suppose (ii) fails. Then, there exists an $\alpha^\prime>0$ such that for any $j\in \N$ there exists a $y_j\in\supp(\mu)\setminus B_{j}(0)$ for which $\lvert (y_j)_H\rvert\geq \alpha^\prime\lVert y_j\rVert$. We prove that along the sequence $s_j:=\lvert (y_j)_H\rvert^{-4}$, the function $s^{\frac{m+2}{4}}f(s)$ is bounded away from $0$, which contradicts (i):
\begin{equation}
\begin{split}
s_j^\frac{m+2}{4}\int \lvert   z_H\rvert^2&e^{-s_j\lVert z\rVert^4}d\mu(z)\geq s_j^\frac{m+2}{4}\frac{\lvert(y_j)_H\rvert^2}{4}e^{-s_j\left(3\alpha^\prime\lvert (y_j)_H\rvert/2\right)^4}\mu\left(B_{\lvert(y_j)_H\rvert/2}(y_j)\right)\\
=&\frac{ s_j^\frac{m+2}{4}}{2^{m+2}} e^{-s_j(3\alpha^\prime\lvert (y_j)_H\rvert/2)^4}\lvert(y_j)_H\rvert^{m+2}\geq \frac{ e^{-\frac{81(\alpha^\prime)^4}{16}}}{2^{m+2}},
\nonumber
\end{split}
\end{equation}
where we used the fact that for any $z\in B_{\lvert (y_j)_H\rvert/2}(y_j)$ one has that
$\lvert (y_j)_H\rvert/2\leq\lvert   z_H\rvert$ and $\lVert z\rVert\leq 3\alpha^\prime\lvert (y_j)_H\rvert/2$. Viceversa, suppose (ii) holds. This implies that for any $\alpha>0$ that:
\begin{equation}
\begin{split}
s^\frac{m+2}{4}\int \lvert   z_H\rvert^2e^{-s\lVert z\rVert^4}d\mu(z)
\leq s^\frac{m+2}{4}\int_{B_{R(\alpha)}(0)} &\lVert z\rVert^2e^{-s\lVert z\rVert^4}d\mu(z)\\
+&s^\frac{m+2}{4}\alpha^2\int_{B^c_{R(\alpha)}(0)} \lVert z\rVert^2e^{-s\lVert z\rVert^4}d\mu(z),
\nonumber
\end{split}
\end{equation}
The above computation, Proposition \ref{prop5} and Corollary \ref{prop1} imply that:
\begin{equation}
\begin{split}
&s^\frac{m+2}{4}\int \lvert   z_H\rvert^2e^{-s\lVert z\rVert^4}d\mu(z)
\leq m\int_0^{s^\frac{1}{4}R(\alpha)} t^{m+1}e^{-t^4}dr+\alpha^2\frac{m}{4}\Gamma\left(\frac{m+2}{4}\right).
\end{split}
\nonumber
\end{equation}
Therefore:
\begin{equation}
\begin{split}
0\leq\limsup_{s\to 0} s^\frac{m+2}{4}f(s)\leq&\limsup_{s\to 0} m \int_0^{s^\frac{1}{4}R(\alpha)} t^{m+1}e^{-t^4}dr+\alpha^2\frac{m}{4}\Gamma\left(\frac{m+2}{4}\right)\\
\leq&\alpha^2\frac{m}{4}\Gamma\left(\frac{m+2}{4}\right).
\end{split}
\nonumber
\end{equation} 
The arbitrariness of $\alpha$ concludes the proof.
\end{proof}

\begin{proposizione}\label{propvert}
If $\supp(\mu)\subseteq \mathcal{V}$, then $\mu=\mathcal{C}^2\llcorner\mathcal{V}$.
\end{proposizione}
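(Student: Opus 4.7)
My plan is to push $\mu$ forward to $\R$ along $\pi_T$ and thereby reduce the problem to the classification of uniform measures on the real line. The key observation I would use is that the vertical axis $\mathcal{V}$ is an abelian subgroup of $\HH^n$ on which the Koranyi norm restricts to $\lVert (0,t)\rVert = |t|^{1/2}$: for $(0,t),(0,t_0)\in\mathcal{V}$ the cross term $2\langle x_H,Jy_H\rangle$ vanishes, so $(0,t)^{-1}*(0,t_0)=(0,t_0-t)$ and therefore
\[
B_r(0,t_0)\cap\mathcal{V}=\{(0,t):|t-t_0|<r^2\}
\]
for every $t_0\in\R$ and $r>0$.

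I then set $\nu:=(\pi_T)_\#\mu$. Since $\supp(\mu)\subseteq\mathcal{V}$, the $m$-uniformity of $\mu$ gives, for every $(0,t_0)\in\supp(\mu)$ and $r>0$,
\[
\nu(U_{r^2}(t_0))=\mu\bigl(B_r(0,t_0)\cap\mathcal{V}\bigr)=\mu(B_r(0,t_0))=r^m,
\]
so writing $s=r^2$ yields $\nu(U_s(t_0))=s^{m/2}$ for every $t_0\in\supp(\nu)$ and $s>0$. That is, $\nu$ is an $(m/2)$-uniform Radon measure on $\R$ in the classical Euclidean sense. By the Marstrand--Preiss classification of uniform measures on $\R$ (for instance Theorem \ref{Mastrand} specialised to the trivial Carnot group $\R$), the exponent $m/2$ must be a non-negative integer bounded by the ambient dimension $1$; combined with the standing assumption $m\geq 1$ from Remark \ref{oss12}, the only option is $m/2=1$, i.e.\ $m=2$. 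In that case $\nu$ is $1$-uniform on $\R$, which forces $\nu=\tfrac{1}{2}\mathcal{L}^1$ with support all of $\R$, and in particular $\supp(\mu)=\mathcal{V}$.

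With $m=2$ and $\supp(\mu)=\mathcal{V}$ in hand, Proposition \ref{supportoK} immediately yields $\mu=\mathcal{S}^m_{\supp(\mu)}=\mathcal{S}^2_\mathcal{V}$, which is the claim. The only genuinely non-trivial step is the passage from $(m/2)$-uniformity of $\nu$ to $m=2$: if one wishes to avoid quoting the Euclidean Marstrand theorem, it can be carried out directly by testing the identity $\nu(U_s(t_0))=s^{m/2}$ at two distinct support points $t_0\neq t_1$ and using Lebesgue differentiation to conclude that $\nu$ has a constant density, whence $m/2=1$ and $\nu$ is a constant multiple of $\mathcal{L}^1$.
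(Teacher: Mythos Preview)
Your proof is correct and follows essentially the same route as the paper: identify the Koranyi balls on $\mathcal{V}$ with Euclidean intervals of radius $r^2$, deduce that (the pushforward of) $\mu$ is $(m/2)$-uniform on the line, invoke the Euclidean Marstrand/Preiss classification to force $m=2$ and full support, and conclude via Proposition~\ref{supportoK}. The only cosmetic difference is that you pass explicitly through $(\pi_T)_\#\mu$ on $\R$, whereas the paper works directly with $\mu$ viewed as a Euclidean-uniform measure on the line $\mathcal{V}$.
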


\begin{proof}
Since $\supp(\mu)\subseteq \mathcal{V}$, then $\mu(B_r(z))=\mu(B_r(z)\cap \mathcal{V})=r^m$ for any $z\in\supp(\mu)$ and any $r>0$. Note that:
$$B_r(z)\cap \mathcal{V}=\{(0,s)\in\R^{2n}\times\R:\lvert s-  z_T\rvert<r^2\}=U_{r^2}(z)\cap \mathcal{V},$$
where \label{euclidean}as usual $U_{r^2}(z)$ denotes the Euclidean ball of radius $r^2$ and center $z$. This implies that
$\mu(U_r(z))=r^{\frac{m}{2}}$ and hence $\mu$ is a $m/2$-uniform measure with respect to Euclidean balls and whose support is contained in the line $\mathcal{V}$. Marstrand's theorem implies that $m/2$ must be an integer and since $\mathcal{V}$ is $1$-dimensional, we deduce by differentiation that $m/2$ is either $0$ or $1$. As we excluded by hypothesis $m=0$, thanks to the classification of $1$-uniform measures in $\R^n$ provided by \cite{Preiss1987GeometryDensities}, we deduce that $\mu=\frac{1}{2}\mathcal{H}_{eu}^1\llcorner \mathcal{V}$. Since $m=2$ and $\supp(\mu)=\mathcal{V}$, the result follows by Proposition \ref{supportoK}.
\end{proof} 

\begin{proposizione}
Suppose that for any $\alpha>0$ there exists an $R(\alpha)>0$ such that if $R>R(\alpha)$, then:
\begin{equation}
\supp(\mu)\setminus B_{R}(0)\subseteq\{z:\lvert    z_H\rvert\leq \alpha\lVert z\rVert\}.
\nonumber
\end{equation}
Then $m=2$ and $\Tan_2(\mu,\infty)=\{\mathcal{C}^2\llcorner\mathcal{V}\}$.
\end{proposizione}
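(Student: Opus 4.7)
The plan is to show that any tangent measure to $\mu$ at infinity is supported inside the vertical axis $\mathcal{V}$, and then invoke Proposition \ref{propvert} on that tangent measure.

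First, by Proposition \ref{uniformup}, $\Tan_m(\mu,\infty)$ is non-empty and every element is an $m$-uniform measure. Fix any $\nu\in\Tan_m(\mu,\infty)$, so that there is a sequence $R_i\to\infty$ with $R_i^{-m}\mu_{0,R_i}\rightharpoonup \nu$. The key step is to transfer the asymptotic hypothesis on $\supp(\mu)$ to a pointwise statement about $\supp(\nu)$. Pick $y\in\supp(\nu)$; if $y=0$ there is nothing to check since $0\in\mathcal{V}$, so assume $y\neq 0$. By the blowup-support correspondence (Proposition \ref{propspt1}, applied in its version for blowdowns, or the analogous statement in Lemma \ref{replica}) there is a sequence $z_i\in\supp(\mu)$ such that $D_{1/R_i}(z_i)\to y$. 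Since $\lVert D_{1/R_i}(z_i)\rVert = R_i^{-1}\lVert z_i\rVert \to \lVert y\rVert>0$ and $R_i\to\infty$, we deduce $\lVert z_i\rVert\to\infty$.

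Next I would use the assumed asymptotic horizontality. Fix $\alpha>0$ arbitrarily. By hypothesis, there exists $R(\alpha)$ such that $\lvert (z_i)_H\rvert\leq \alpha \lVert z_i\rVert$ for all $i$ large enough. The horizontal part behaves covariantly under $D_{1/R_i}$, so writing $y_i := D_{1/R_i}(z_i)$ we have $\lvert (y_i)_H\rvert = R_i^{-1}\lvert (z_i)_H\rvert \leq \alpha R_i^{-1}\lVert z_i\rVert = \alpha \lVert y_i\rVert$. Passing to the limit $i\to\infty$ gives $\lvert y_H\rvert\leq \alpha\lVert y\rVert$. Since $\alpha>0$ was arbitrary, $y_H=0$, i.e.\ $y\in\mathcal{V}$. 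Hence $\supp(\nu)\subseteq \mathcal{V}$.

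Now I would apply Proposition \ref{propvert} to $\nu$ (which is $m$-uniform by the first step): the proposition forces $m=2$ and $\nu = \mathcal{S}^2_{\mathcal{V}}$. Since $m$ is an intrinsic invariant of $\mu$ (the exponent for which \eqref{eq_2} holds, and which is inherited by all tangent measures), we conclude that $m=2$. Moreover, the argument above applied to every element of $\Tan_m(\mu,\infty)$ shows that the only possible tangent at infinity is $\mathcal{S}^2_{\mathcal{V}}$, giving $\Tan_2(\mu,\infty)=\{\mathcal{S}^2_{\mathcal{V}}\}$.

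There is no real obstacle: the hypothesis is precisely designed to dilate correctly under $D_{1/R_i}$, so the inequality $\lvert (z_i)_H\rvert\leq \alpha\lVert z_i\rVert$ survives the blowdown. The only point that needs minor care is that $\lVert z_i\rVert\to\infty$ (needed to activate the hypothesis), which follows from $y\neq 0$, and the fact that $\nu$ inherits the exponent $m$ from $\mu$, which is built into the definition of $\Tan_m(\mu,\infty)$. After these observations the conclusion is immediate from Proposition \ref{propvert}.
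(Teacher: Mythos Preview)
Your proof is correct and takes a genuinely different route from the paper's. The paper argues through integrals: it tests $\nu$ against $\eta_R(z)\lvert z_H\rvert^2 e^{-\lVert z\rVert^4}$, rewrites this as a limit of integrals against $\mu$, and then invokes Proposition~\ref{propette} (the equivalence between the asymptotic cone condition and $\lim_{s\to 0}s^{(m+2)/4}f(s)=0$) to conclude $\int\lvert z_H\rvert^2 e^{-\lVert z\rVert^4}\,d\nu=0$, whence $\supp(\nu)\subseteq\mathcal{V}$. You instead work pointwise on the support: pick $y\in\supp(\nu)$, lift to $z_i\in\supp(\mu)$ via Lemma~\ref{replica}(ii), and observe that the inequality $\lvert (z_i)_H\rvert\le\alpha\lVert z_i\rVert$ is \emph{dilation-invariant}, so it passes directly to $y$. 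Your argument is shorter and more elementary---it bypasses the moment function $f(s)$ entirely and does not need Proposition~\ref{propette} as an input. The paper's approach, on the other hand, fits the narrative of the section (where $f(s)$ and $\text{Tr}(\mathcal{Q}(s))$ are the running objects) and makes the logical chain $\text{Tr}(\mathcal{Q}(s))\to 0 \Rightarrow s^{(m+2)/4}f(s)\to 0 \Rightarrow$ cone condition $\Rightarrow$ vertical tangent more explicit. Both then finish identically via Proposition~\ref{propvert}.
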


\begin{proof}
Let $\eta\in\mathcal{C}^\infty_c(\R^{2n+1})$ be such that $0\leq\eta\leq1$, $\eta(B_1(0))=1$ and $\eta(B_2(0)^c)=0$. For $r>0$ define:
\begin{equation}
\eta_R(z):=\eta\left(D_{1/R}(z)\right).\nonumber
\end{equation}
Let $\nu\in\Tan(\mu,\infty)$ and $\{\lambda_l\}_{l\in\N}$ be such that $\lambda_l\to\infty$ and $\lambda_l^{-m}\mu_{0,\lambda_l}\rightharpoonup \nu$ and note that:
\begin{equation}
\begin{split}
\int \eta_R(z)\lvert   z_H\rvert^2e^{-\lVert z\rVert^2}d\nu(z)=&\lim_{l\to\infty}\int \eta_R(z)\lvert   z_H\rvert^2e^{-\lVert z\rVert^4}\frac{d\mu_{0,\lambda_l}(z)}{\lambda_l^m}\\
\leq&\lim_{l\to\infty}\int \big\lvert( D_{1/\lambda_l}(z))_H\big\rvert^2e^{-\lvert D_{1/\lambda_l}(z)\rvert^4}\frac{d\mu(z)}{\lambda_l^m}\\
=&\lim_{l\to\infty}\lambda_l^{-(m+2)}\int \lvert   z_H\rvert^2e^{-\lVert z\rVert^4/\lambda_l^4}d\mu(z)=0.
\nonumber
\end{split}
\end{equation}
The last equality in the above computation is provided by Proposition \ref{propette} and our hypothesis on $\mu$. The arbitrariness of $R>0$ and dominated convergence theorem imply:
\begin{equation}
\int \lvert   z_H\rvert^2e^{-\lVert z\rVert^2}d\nu(z)=0.\nonumber
\end{equation}
Proposition \ref{prop12} implies that $\supp(\nu)\subseteq \mathcal{V}$ and Proposition \ref{propvert} implies that $\nu=\mathcal{C}^2\llcorner\mathcal{V}$. Therefore by Proposition \ref{propup} $\mu$ is a $2$-uniform measure.
\end{proof}

As an immediate concequence we get:

\begin{corollario}\label{mainprimo}
Let $\mu\in\mathcal{U}(m)$. If $\lim_{s\to 0}\text{Tr}(\mathcal{Q}(s))=0$, then $m=2$ and:
$$\Tan_2(\mu,\infty)=\{\mathcal{C}^2\llcorner\mathcal{V}\}.$$
In particular for any $m\in\{1,\ldots,2n+1\}\setminus\{2\}$, there exist $b\in\R^{2n}$, $\mathcal{T}\in\R$ and $\mathcal{Q}\in\text{Sym}(n)$ with $\text{Tr}(\mathcal{Q})\neq 0$ such that:
$$\supp(\mu)\subseteq\mathbb{K}(b,\mathcal{Q},\mathcal{T}).$$
\end{corollario}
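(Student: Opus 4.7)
The plan is to assemble the three propositions immediately preceding the corollary together with Propositions \ref{prop10} and \ref{bddcurve}; no new computation is required, as the difficult work has already been done in this subsection.

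\textbf{First assertion.} Assume $\lim_{s\to 0}\text{Tr}(\mathcal{Q}(s))=0$. The proposition preceding \ref{propette} converts this into $\lim_{s\to 0} s^{(m+2)/4}f(s)=0$, and Proposition \ref{propette} then translates this analytic bound into the geometric fact that for every $\alpha>0$ the set $\supp(\mu)\setminus B_R(0)$ is contained in the cone $\{z:\lvert z_H\rvert\leq \alpha\lVert z\rVert\}$ for $R$ large enough. The subsequent proposition shows that every blowdown $\nu\in\Tan_m(\mu,\infty)$ must satisfy $\int \lvert z_H\rvert^2 e^{-\lVert z\rVert^2}\,d\nu(z)=0$, hence $\supp(\nu)\subseteq \mathcal{V}$ by Proposition \ref{prop12}(i), and Proposition \ref{propvert} forces $\nu=\mathcal{S}^2_\mathcal{V}$. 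Since Proposition \ref{uniformup} guarantees that $\Tan_m(\mu,\infty)$ is nonempty and consists of $m$-uniform measures, while $\mathcal{S}^2_\mathcal{V}$ is $2$-uniform, comparing the two values of the uniformity exponent yields $m=2$ and $\Tan_2(\mu,\infty)=\{\mathcal{S}^2_\mathcal{V}\}$.

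\textbf{Second assertion.} Fix $m\in\{1,\ldots,2n+1\}\setminus\{2\}$. By the contrapositive of the first assertion, $\lim_{s\to 0}\text{Tr}(\mathcal{Q}(s))$ cannot equal $0$, so either this limit fails to exist or it is non-zero. By Remark \ref{oss2}(ii) the map $\text{Tr}(\mathcal{Q}(\cdot))$ is bounded on $(0,\infty)$, hence in either case one can extract a sequence $s_j\to 0$ along which $\text{Tr}(\mathcal{Q}(s_j))$ converges to some $\tau\neq 0$. Proposition \ref{bddcurve} gives uniform boundedness of $\mathcal{Q}(\cdot)$ and $\mathcal{T}(\cdot)$, and Remark \ref{oss1} together with the construction following Proposition \ref{bddcurve} gives uniform boundedness of $b(\cdot)$; a diagonal extraction therefore yields a further subsequence, still denoted $s_j$, along which
$$b(s_j)\to \overline{b},\qquad \mathcal{Q}(s_j)\to \mathcal{Q},\qquad \mathcal{T}(s_j)\to \mathcal{T},$$
with $\text{Tr}(\mathcal{Q})=\tau\neq 0$ by construction. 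Applying Proposition \ref{prop10} to any fixed $u\in\supp(\mu)$ and letting $j\to\infty$ kills the right-hand side (since $s_j^{1/4}\lVert u\rVert^3 B'(s_j^{1/4}\lVert u\rVert)\to 0$) and yields $\langle \overline{b},u_H\rangle+\langle u_H,\mathcal{Q} u_H\rangle+\mathcal{T} u_T=0$, which is exactly the statement $\supp(\mu)\subseteq \mathbb{K}(\overline{b},\mathcal{Q},\mathcal{T})$.

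\textbf{Main point of care.} There is no deep obstacle here; the only mild subtlety is ensuring that in the second part a \emph{single} subsequence $s_j$ simultaneously witnesses both $\text{Tr}(\mathcal{Q}(s_j))\to \tau\neq 0$ and the convergence of the triple $b(s_j),\mathcal{Q}(s_j),\mathcal{T}(s_j)$. This is handled by first selecting the trace-subsequence, and then thinning further using the boundedness guaranteed by Proposition \ref{bddcurve}; any further subsequence still has trace converging to $\tau$, so non-degeneracy of the limit $\mathcal{Q}$ is preserved.
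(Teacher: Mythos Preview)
Your proof is correct and follows exactly the approach the paper intends: the corollary is stated as ``an immediate consequence'' of the chain of propositions in Subsection \ref{nondegge}, and you have correctly identified and composed them. One small remark on phrasing: the boundedness of $b(\cdot)$ is not stated directly in the paper but follows (as you implicitly use) from Proposition \ref{prop10} combined with the boundedness of $\mathcal{Q}(\cdot)$ and $\mathcal{T}(\cdot)$ and the fact that $b(s)\in V$ by Remark \ref{oss1}; the paper instead first extracts convergence of $\mathcal{Q}$ and $\mathcal{T}$ and then deduces convergence of $b$ via \eqref{eq8}, but your compactness-first route is equally valid.
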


\section{\texorpdfstring{$(2n+1)$}{Lg}-uniform measures have no holes}
\label{buchi}
Let $\mu$ be a $(2n+1)$-uniform measure (which should be considered fixed throughout the section) and $\mathbb{K}(b,\mathcal{Q},\mathcal{T})$ be the non-degenerate quadric in which $\supp(\mu)$ is contained. The existence of such a quadric has been shown in Section \ref{sezione1}. What is left to understand is whether $\supp(\mu)$ is some kind of very irregular set \emph{inside} $\mathbb{K}(b,\mathcal{Q},\mathcal{T})$ or if it has a better structure. To state our main result we need some notation. Let $F:\R^{2n+1}\to \R$ be the quadratic polynomial:
\begin{equation}
    F(z):=\langle b,z_H\rangle+\langle z_H, \mathcal{Q} z_H\rangle+\mathcal{T}z_T,
    \label{numbero1}
\end{equation}
whose zero-set is the quadric $\mathbb{K}(b,\mathcal{Q},\mathcal{T})$.
We define the set of singular points of $\mathbb{K}(b,\mathcal{Q},\mathcal{T})$ as:
\begin{equation}
    \Sigma(F):=\{x\in\mathbb{K}(b,\mathcal{Q},\mathcal{T}): b+2(\mathcal{Q}-\mathcal{T}J)x_H= 0\}.
    \label{numbero2}
\end{equation}
Usually $\Sigma(F)$ is called \emph{characteristic set} if $\mathcal{T}\neq 0$ and \emph{singular set} if $\mathcal{T}=0$. We should not bother ourselves with such distinctions, and regard $\Sigma(F)$ as the set of points where $\mathbb{K}(b,\mathcal{Q},\mathcal{T})$ behaves like a cone tip.
The following theorem is the main result of this section and it should be regarded as an analogue of \cite[Proposition 4.3]{Kowalski1986Besicovitch-typeSubmanifolds}. We show not only that $\supp(\mu)$ is not a fractal inside $\mathbb{K}(b,\mathcal{Q},\mathcal{T})$ but also that it can be viewed as a quadratic surface with no holes:

\begin{teorema}\label{duppy}
Either the measure $\mu$ is flat or its support is (the closure of) a union of connected components of $\mathbb{K}(b,\mathcal{Q},\mathcal{T})\setminus\Sigma(F)$.
\end{teorema}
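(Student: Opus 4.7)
The plan is to argue by contradiction: suppose some connected component $\mathcal{C}$ of $\mathbb{K}(b,\mathcal{Q},\mathcal{T})\setminus\Sigma(F)$ meets both $\supp(\mu)$ and its complement. Since $\mathcal{C}$ is connected and $\supp(\mu)$ is closed, I can pick $z_0\in \mathcal{C}\cap\supp(\mu)$ together with a sequence $y_k\in\mathcal{C}\setminus\supp(\mu)$ converging to $z_0$. For each $k$ I choose $z_k\in\supp(\mu)$ minimising the Euclidean distance to $y_k$ and set $r_k:=|y_k-z_k|$, so that $z_k\to z_0$, $r_k\to 0$, the Euclidean ball $U_{r_k}(y_k)$ misses $\supp(\mu)$, and, since $\Sigma(F)$ is closed and $z_0\notin\Sigma(F)$, one may assume $z_k\notin\Sigma(F)$ for every $k$. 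The contradiction will come from comparing two limits at scale $r_k$ centred at $z_k$.

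\textbf{Two parallel blowups at $z_k$.} By the forthcoming Proposition \ref{verticalsamoa2}, since $z_k\notin\Sigma(F)$, the tangent set $\Tan_{2n+1}(\mu,z_k)$ equals $\{\mathcal{S}^{2n+1}_{V_k}\}$, where $V_k$ is the $(2n+1)$-homogeneous tangent subgroup of $\mathbb{K}(b,\mathcal{Q},\mathcal{T})$ at $z_k$. The tangent subgroup of a quadric depends continuously on the base point outside $\Sigma(F)$, so $V_k\to V_0$, and combining Proposition \ref{uniformup} with Lemma \ref{replica}, up to subsequence the rescaled measures $\nu_k:=r_k^{-(2n+1)}\mu_{z_k,r_k}$ converge weakly to $\mathcal{S}^{2n+1}_{V_0}$, whose support is exactly $V_0$. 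On the other hand, consider the rescaled holes $H_k:=D_{1/r_k}(\tau_{z_k^{-1}}(U_{r_k}(y_k)))$ and the rescaled centres $w_k:=D_{1/r_k}(z_k^{-1}\cdot y_k)$. Because $|y_k-z_k|=r_k$ and left translation by $z_k^{-1}$ is affine with uniformly bounded linear part near $z_0$, the $w_k$ remain in a bounded set and a subsequence converges to some $w\in\HH^n$. Two facts must then be verified, and these are the technical content attributed to Propositions \ref{spt1}--\ref{spt2} in the introduction: first, because of the anisotropic action $D_{1/r}(x,t)=(x/r,t/r^2)$, the sets $H_k$ contain a common Euclidean neighbourhood $\Omega$ of $w$ for $k$ large (in fact an entire vertical slab); second, since $y_k\in\mathbb{K}(b,\mathcal{Q},\mathcal{T})$ and the rescaled quadric converges locally uniformly to $V_0$, the limit $w$ lies on $V_0$.

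\textbf{Closing the argument.} By construction $U_{r_k}(y_k)\cap\supp(\mu)=\emptyset$, hence $H_k\cap\supp(\nu_k)=\emptyset$. On the other hand, by Lemma \ref{replica}(ii) applied to $\nu_k\rightharpoonup\mathcal{S}^{2n+1}_{V_0}$, the point $w\in V_0=\supp(\mathcal{S}^{2n+1}_{V_0})$ is a limit of some $\tilde w_k\in\supp(\nu_k)$. Since $w\in\Omega$ and $\Omega$ is open, eventually $\tilde w_k\in\Omega\subseteq H_k$, contradicting $H_k\cap\supp(\nu_k)=\emptyset$. This rules out any mixed component $\mathcal{C}$, so each connected component of $\mathbb{K}(b,\mathcal{Q},\mathcal{T})\setminus\Sigma(F)$ is either entirely contained in $\supp(\mu)$ or entirely disjoint from it, and taking the closure yields the theorem.

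\textbf{Main obstacle.} The hardest step is the second blowup: producing a fixed open neighbourhood of $w$ inside all the rescaled holes $H_k$, while simultaneously ensuring that $w$ lands on the tangent group $V_0$. This is delicate because the Heisenberg dilations are non-isotropic (the vertical coordinate is stretched by $1/r_k^2$), so Euclidean balls distort into increasingly thin ellipsoids; controlling this distortion against the quadratic curvature of $\mathbb{K}(b,\mathcal{Q},\mathcal{T})$ at a non-characteristic point is precisely what Propositions \ref{spt1} and \ref{spt2} are designed to quantify.
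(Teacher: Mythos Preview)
The overall strategy you outline---pick a non-characteristic boundary point of a hole in $\supp(\mu)$, blow up there, obtain a flat tangent via Proposition~\ref{verticalsamoa2}, and argue the hole persists in the limit---is indeed the paper's approach. However, your implementation has a genuine gap that the paper's argument is specifically designed to circumvent.

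Your claim that $w_k=D_{1/r_k}(z_k^{-1}y_k)$ remains bounded because ``left translation is affine with uniformly bounded linear part'' is incorrect. Affinity only gives $\lvert z_k^{-1}y_k\rvert_{eu}=O(r_k)$, so the vertical coordinate of $z_k^{-1}y_k$ is $O(r_k)$; after applying $D_{1/r_k}$ this becomes $O(1/r_k)$, which in general diverges. More importantly, even granting your ``vertical slab'' picture for $H_k$, the horizontal projection of $H_k$ is a unit ball centred at $((y_k)_H-(z_k)_H)/r_k$, and if this limiting direction happens to be parallel to the normal $\mathfrak{n}(z_0)$ the slab is only \emph{tangent} to $V_0$ at the origin and need not contain any fixed open subset of $V_0$. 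This is not a degenerate case one can ignore: in the paper's argument the minimising direction is \emph{forced} to be parallel to the normal at first order, so a genuine second-order analysis is unavoidable.

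The paper therefore does not attempt a single soft diagonal blowup. It splits into two cases. For $\mathcal{T}\neq 0$ (Proposition~\ref{spt1}) it projects to $\R^{2n}$ and tucks \emph{cylinders}, not Euclidean balls, into the holes, so that the vertical anisotropy never enters; the first-order step forces $z-y_H$ parallel to $\mathfrak{n}(\zeta)$, and a second-order computation then yields a nontrivial bound on $w_T$ for every $w\in V(\mathfrak{n}(\zeta))$, contradicting flatness. For $\mathcal{T}=0$ the paper first proves Proposition~\ref{verticalfun}, which blows up at a \emph{fixed} minimiser $\zeta$ and establishes both $\zeta_T=y_T$ and that $\zeta_H-y_H$ is parallel to $2\mathcal{Q}\zeta_H+b$; only then does Proposition~\ref{spt2} let $r\to 0$ and use the curvature of the quadric to reach a contradiction. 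Your sketch would need to reproduce exactly these two steps to close, and once you do, you have essentially recovered the paper's proof; Propositions~\ref{spt1}--\ref{spt2} are not auxiliary estimates feeding into your outline, they \emph{are} the proof of Theorem~\ref{duppy}.
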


We give now a short account of the content for each subsection. Subsection \ref{reg} is split in two parts. The main result of the first part is Proposition \ref{verticalsamoa2}, where we show that everywhere outside $\Sigma(F)$, $(2n+1)$-uniform measures have flat blowups. Therefore, even though in principle $\mu$ may have holes, they are not inherited by tangents and therefore locally $\supp(\mu)$ behaves exactly as the whole surface $\mathbb{K}(b,\mathcal{Q},\mathcal{T})$.
The second part of Subsection \ref{reg} is devoted to the proof of Proposition \ref{verticale}, where we show that $\mathcal{T}$ is an invariant for $\mu$: if $\mathcal{T}=0$ then for any other quadric $\mathbb{K}(b^\prime,\mathcal{Q}^\prime,\mathcal{T}^\prime)$ containing $\supp(\mu)$ we have $\mathcal{T}^\prime=0$. This implies that there are two types of $(2n+1)$-uniform measures which are qualitatively different. If $\mathcal{T}=0$, Theorem \ref{duppy} implies that $\supp(\mu)$ is vertically ruled, and hence by translations with elements of the center, while if $\mathcal{T}\neq 0$ then $\supp(\mu)$ coincides with $\mathbb{K}(b,\mathcal{Q},\mathcal{T})$, which is a $t$-graph,. For a precise statement, see Proposition \ref{spt1}.

Subsection \ref{nozero} and Subsection \ref{zero} are completely devoted to the proof of Theorem \ref{duppy} in the cases $\mathcal{T}\neq 0$ and $\mathcal{T}=0$, respectively.
The idea behind the proof is the same in both situations: pick a point $x\in\supp(\mu)\setminus \Sigma(F)$ for which for any $r>0$ we have:
$$B_r(x)\cap\supp(\mu)^c\cap\mathbb{K}(b,\mathcal{Q},\mathcal{T})\neq \emptyset,$$
and show that the holes in the support pass to the blowup, contradicting the mentioned fact that at points of $\supp(\mu)$ outside $\Sigma(F)$ the tangent measure are planes.

\subsection{Regularity of \texorpdfstring{$(2n+1)$}{Lg}-uniform measures}
\label{reg}
The first part of this subsection is devoted to the study of the local properties of $\mu$. First of all, we introduce the definition of vertical hyperplane and flat measure:

\begin{definizione}\label{plano}
For any $\mathfrak{n}\in\R^{2n}$ we define:
$$V(\mathfrak{n}):=\{z\in\HH^n: \langle \mathfrak{n},   z_H\rangle=0\},$$
and we say that $V(\mathfrak{n})$ is the \emph{vertical hyperplane} orthogonal to $\mathfrak{n}$.
A $(2n+1)$-uniform measure $\nu$ is said to be a \emph{flat measure}, or simply \emph{flat}, if:
$$\nu=\mathcal{C}^{2n+1}\llcorner{V(\mathfrak{n})}\text{ for some $\mathfrak{n}\in\R^{2n}$.}$$
Throughout the paper the family of all flat measures will always be denoted by $\mathfrak{M}(2n+1)$.
\end{definizione}

\begin{osservazione}\label{uniformityflat}
The measure $\mathcal{C}^{2n+1}\llcorner{V(\mathfrak{n})}$ is a Haar measure of $V(\mathfrak{n})$ and it is $(2n+1)$-uniform.
\end{osservazione}

The following proposition shows that $(2n+1)$-uniform measure with support contained in a vertical hyperplane are flat. It is an adaptation of \cite[Remark 3.14]{DeLellis2008RectifiableMeasures}.

\begin{proposizione}\label{verticalsamoa}
Suppose that $\mu$ is a $(2n+1)$-uniform measure for which there exists an $\mathfrak{n}\in\R^{2n}$ for which $\supp(\mu)\subseteq V(\mathfrak{n})$. Then
$\mu=\mathcal{C}^{2n+1}\llcorner {V(\mathfrak{n})}$.
\end{proposizione}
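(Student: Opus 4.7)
The preceding remark already establishes that $\mathcal{S}^{2n+1}_{V(\mathfrak{n})}$ is itself a $(2n+1)$-uniform measure, so that $\mathcal{S}^{2n+1}_{V(\mathfrak{n})}(B_r(y))=r^{2n+1}$ for every $y\in V(\mathfrak{n})$ and $r>0$. Moreover $V(\mathfrak{n})$ is a homogeneous subgroup of $\HH^n$ and $\mathcal{S}^{2n+1}_{V(\mathfrak{n})}$ is a left-invariant Radon measure on it, hence strictly positive on every non-empty relatively open subset. Since Proposition \ref{supportoK} gives $\mu=\mathcal{S}^{2n+1}_{\supp(\mu)}$, the whole proof reduces to the set identity $\supp(\mu)=V(\mathfrak{n})$, after which a second application of the same proposition identifies $\mu$ with $\mathcal{S}^{2n+1}_{V(\mathfrak{n})}$.

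The plan is to first show $\mu=\mathcal{S}^{2n+1}_{V(\mathfrak{n})}\llcorner\supp(\mu)$ by pinpointing a Radon--Nikodym density, and then rule out any ``hole'' of $\supp(\mu)$ inside $V(\mathfrak{n})$. For the first step, Remark \ref{remarkone} applied with $A:=V(\mathfrak{n})$ and $E:=\supp(\mu)$ shows that $\mathcal{S}^{2n+1}_{V(\mathfrak{n})}\llcorner\supp(\mu)$ and $\mathcal{S}^{2n+1}_{\supp(\mu)}=\mu$ are mutually absolutely continuous, so the density $f:=d\mu/d\mathcal{S}^{2n+1}_{V(\mathfrak{n})}$ is well defined. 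Invoking the centred differentiation theorem underlying Proposition \ref{supportoK} (Theorem 2.13 of \cite{MR3388880}), $f(x)$ is computed at $\mathcal{S}^{2n+1}_{V(\mathfrak{n})}$-a.e.\ $x$ as $\lim_{r\to 0}\mu(B_r(x))/\mathcal{S}^{2n+1}_{V(\mathfrak{n})}(B_r(x))$; this equals $1$ for every $x\in\supp(\mu)$ (both quantities being $r^{2n+1}$ for all $r>0$) and equals $0$ for every $x\in V(\mathfrak{n})\setminus\supp(\mu)$ (as $\supp(\mu)$ is closed, so $\mu(B_r(x))=0$ for all sufficiently small $r$). Therefore $f=\chi_{\supp(\mu)}$ almost everywhere and $\mu=\mathcal{S}^{2n+1}_{V(\mathfrak{n})}\llcorner\supp(\mu)$.

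Now fix any $x_0\in\supp(\mu)$ and any $r>0$. Combining the preceding identity with $\mu(B_r(x_0))=r^{2n+1}=\mathcal{S}^{2n+1}_{V(\mathfrak{n})}(B_r(x_0))$ yields
\[
\mathcal{S}^{2n+1}_{V(\mathfrak{n})}\big(B_r(x_0)\setminus\supp(\mu)\big)=0.
\]
Because $\HH^n$ is $\sigma$-compact, $V(\mathfrak{n})$ is exhausted by countably many balls $B_k(x_0)$, $k\in\N$, whence $\mathcal{S}^{2n+1}_{V(\mathfrak{n})}(V(\mathfrak{n})\setminus\supp(\mu))=0$. However $V(\mathfrak{n})\setminus\supp(\mu)$ is relatively open in $V(\mathfrak{n})$; were it non-empty, it would contain a ball $B_{r_0}(v)\cap V(\mathfrak{n})$ of $\mathcal{S}^{2n+1}_{V(\mathfrak{n})}$-measure $r_0^{2n+1}>0$, contradicting the previous display. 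Therefore $\supp(\mu)=V(\mathfrak{n})$ and a final appeal to Proposition \ref{supportoK} delivers $\mu=\mathcal{S}^{2n+1}_{V(\mathfrak{n})}$.

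The only genuinely delicate point is the pointwise identification of the density $f$: it rests on $(\HH^n,d)$ being a doubling metric space with respect to the Koranyi distance, so that Federer's centred covering theorem applies---the very tool already invoked in the proof of Proposition \ref{supportoK}.
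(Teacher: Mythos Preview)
Your proof is correct and follows essentially the same approach as the paper: both arguments reduce the claim to showing $\supp(\mu)=V(\mathfrak{n})$, then invoke Proposition~\ref{supportoK}. The paper does this in one line (``if a ball was missing somewhere the above equality would not be possible''), comparing $\mathcal{S}^{2n+1}_{\supp(\mu)}(B_r(0))$ directly to $\mathcal{S}^{2n+1}_{V(\mathfrak{n})}(B_r(0))$; your version is more careful, inserting the intermediate identification $\mu=\mathcal{S}^{2n+1}_{V(\mathfrak{n})}\llcorner\supp(\mu)$ via the Radon--Nikodym density, which makes the ``hole'' argument fully rigorous.
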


\begin{proof}
By Proposition \ref{supportoK}, for any $x\in\supp(\mu)$ and any $r>0$ we have:
$$\mathcal{C}^{2n+1}\llcorner {\supp(\mu)} (B_r(x))=\mu(B_r(x))=r^{2n+1}.$$
Therefore, thanks to \cite[Proposition 2.11]{antonelli2020rectifiable} for any $r>0$ we have:
$$\mathcal{C}^{2n+1}\llcorner \supp(\mu) (B_r(0))=r^{2n+1}=\mathcal{C}^{2n+1}\llcorner V(\mathfrak{n}) (B_r(0)).$$
Since $\supp(\mu)$ is closed in $V(\mathfrak{n})$ if by contradiction $\text{supp}(\mu)\neq V(\mathfrak{n})$, there would exists a $p\in V(\mathfrak{n})$ and an $r_0>0$ such that $B_{r_0}(p)\cap \text{supp} (\mu)=\emptyset$. This however is impossible since it would imply:
\begin{equation}
    \begin{split}
      \mathcal{C}^{2n+1}\llcorner V(\mathfrak{n})&(B_{2(\lVert p\rVert +r_0)}(0))\\
        \geq& \mathcal{C}^{2n+1}(B_{2(\lVert p\rVert +r_0)}(0)\cap \text{supp}(\mu))+\mathcal{C}^{2n+1}(B_{r_0}(p)\cap V(\mathfrak{n}))\\
        >&\mu(B_{2(\lVert p\rVert +r_0)}(0)),
        \nonumber
    \end{split}
\end{equation}
where the last inequality above comes from the fact that $\mathcal{C}^{2n+1}(B_{r_0}(p)\cap V(\mathfrak{n}))>0$ since $\mathcal{C}^{2n+1}\llcorner V(\mathfrak{n})$ is a Haar measure of $V(\mathfrak{n})$ by Remark  \ref{uniformityflat}.
\end{proof}

\begin{proposizione}\label{verticalsamoa2}
For any $x\in\supp(\mu)\setminus \Sigma(F)$ we have:
$$\Tan_{2n+1}(\mu,x)=\{\mathcal{C}^{2n+1}\llcorner { V(\mathfrak{n}(x))}\},$$
where:
$$\mathfrak{n}(x):=b+2(\mathcal{Q}-\mathcal{T}J)x_H.$$
\end{proposizione}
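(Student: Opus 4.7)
The plan is to show that every tangent measure $\nu\in\Tan_{2n+1}(\mu,x)$ has support contained in the vertical hyperplane $V(\mathfrak{n}(x))$, at which point Proposition \ref{verticalsamoa} forces $\nu=\mathcal{S}^{2n+1}_{V(\mathfrak{n}(x))}$. By Proposition \ref{uniformup} applied to $\mu$ itself (or to the fact that $\mu$ has $(2n+1)$-density equal to $1$ on its support), the set $\Tan_{2n+1}(\mu,x)$ is non-empty, so this would immediately yield the claimed equality $\Tan_{2n+1}(\mu,x)=\{\mathcal{S}^{2n+1}_{V(\mathfrak{n}(x))}\}$.

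First I would fix $\nu\in\Tan_{2n+1}(\mu,x)$ and pick $r_i\to 0$ with $r_i^{-(2n+1)}\mu_{x,r_i}\rightharpoonup\nu$. Given $y\in\supp(\nu)$, Proposition \ref{propspt1} produces a sequence $z_i\in\supp(\mu)$ with $w_i:=D_{1/r_i}(x^{-1}*z_i)\to y$; equivalently, $z_i=x*D_{r_i}(w_i)$, so
\begin{equation*}
(z_i)_H=x_H+r_i(w_i)_H,\qquad (z_i)_T=x_T+r_i^2(w_i)_T+2r_i\langle x_H,J(w_i)_H\rangle.
\end{equation*}

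Next I would plug these into the polynomial $F$ from \eqref{numbero1} and use the containment $\supp(\mu)\subseteq\mathbb{K}(b,\mathcal{Q},\mathcal{T})$ provided by Theorem \ref{MOK} (together with $x\in\supp(\mu)$, so $F(x)=0$). A direct expansion gives
\begin{equation*}
0=F(z_i)-F(x)=r_i\bigl\langle b+2\mathcal{Q}x_H-2\mathcal{T}Jx_H,\,(w_i)_H\bigr\rangle+r_i^2\bigl(\langle (w_i)_H,\mathcal{Q}(w_i)_H\rangle+\mathcal{T}(w_i)_T\bigr),
\end{equation*}
where I used $\langle x_H,J(w_i)_H\rangle=-\langle Jx_H,(w_i)_H\rangle$, valid since $J^{T}=-J$. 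Dividing by $r_i$ and recognising $\mathfrak{n}(x)=b+2(\mathcal{Q}-\mathcal{T}J)x_H$ yields
\begin{equation*}
\langle\mathfrak{n}(x),(w_i)_H\rangle=-r_i\bigl(\langle (w_i)_H,\mathcal{Q}(w_i)_H\rangle+\mathcal{T}(w_i)_T\bigr).
\end{equation*}
Since $w_i\to y$ and $r_i\to 0$, the right-hand side is $O(r_i)$ and vanishes in the limit, so $\langle\mathfrak{n}(x),y_H\rangle=0$, i.e.\ $y\in V(\mathfrak{n}(x))$. Thus $\supp(\nu)\subseteq V(\mathfrak{n}(x))$.

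Finally, since $\nu$ is $(2n+1)$-uniform by Proposition \ref{propup} and $\supp(\nu)$ lies in a vertical hyperplane (note $\mathfrak{n}(x)\ne 0$ because $x\notin\Sigma(F)$, so $V(\mathfrak{n}(x))$ is a genuine hyperplane), Proposition \ref{verticalsamoa} gives $\nu=\mathcal{S}^{2n+1}_{V(\mathfrak{n}(x))}$. As $\nu$ was an arbitrary element of $\Tan_{2n+1}(\mu,x)$, the set is the singleton claimed. The only subtle point is verifying that the linear-in-$r_i$ coefficient is genuinely captured by $\mathfrak{n}(x)$ (the sign from $J^T=-J$ has to land correctly); once this is in place the rest is a bookkeeping argument combined with the earlier structural results.
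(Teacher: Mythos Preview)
Your argument is correct and follows essentially the same route as the paper: expand $F(z_i)-F(x)$ using $z_i=x*D_{r_i}(w_i)$, divide by $r_i$, pass to the limit to obtain $\langle\mathfrak{n}(x),y_H\rangle=0$, and then invoke Proposition~\ref{verticalsamoa}. The only cosmetic point is that at the end Proposition~\ref{uniformup} (rather than Proposition~\ref{propup}) is the cleaner reference for the uniformity of $\nu$, since it applies at every point of $\supp(\mu)$; you already cited it at the outset, so this is harmless.
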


\begin{proof}
By Proposition \ref{uniformup}, for any $x\in\supp(\mu)$ the set $\Tan_{2n+1}(\mu,x)$ is non-empty and it is contained in $\mathcal{U}(2n+1)$. Pick any $\nu\in\Tan_{2n+1}(\mu,x)$ and recall that by definition of tangent, there exist an infinitesimal sequence $\{r_i\}_{i\in\N}$ such that $\mu_{x,r_i}/r_i^{2n+1}\rightharpoonup \nu$. Therefore, Proposition \ref{propspt1} implies that for any $y\in\supp(\nu)$ there exists a sequence $\{x_i\}\subseteq \supp(\mu)$, for which $D_{1/r_i}(x^{-1}x_i)\to y$.
Defined $y_i:=D_{1/r_i}(x^{-1}x_i)$, we have that $x_i=xD_{r_i}(y_i)$ and thus for any $i\in\N$:
\begin{equation}
\begin{split}
0=&\langle b,(x_i)_H\rangle+\langle (x_i)_H,\mathcal{Q}(x_i)_H\rangle+\mathcal{T}  (x_i)_T\\
=&r_i\langle b,(y_i)_H\rangle+r_i\langle 2(\mathcal{Q}-\mathcal{T}J) x_H,(y_i)_H\rangle+ r_i^2\langle (y_i)_H,\mathcal{Q}(y_i)_H\rangle+r_i^2\mathcal{T}(y_i)_T.
\nonumber
\end{split}
\end{equation}
Since $y_i\to y$, dividing by $r_i$ and taking the limit as $i\to \infty$ we deduce:
$$0=\langle b+ 2(\mathcal{Q}-\mathcal{T}J) x_H,y_H\rangle.$$
This implies that for any $\nu\in\Tan_{2n+1}(\mu,x)$, we have $\supp(\nu)\subseteq V(\mathfrak{n}(x))$ and thus the claim follows by Proposition \ref{verticalsamoa}.
\end{proof}

In the upcoming proposition we show that $\mathcal{U}(2n+1)$ is splitted in two families that are characterized by the coefficient $\mathcal{T}$.

\begin{proposizione}
Suppose that there are $\overline{b}\in\R^{2n}$, $\overline{\mathcal{Q}}\in\mathrm{Sym}(2n)\setminus\{0\}$ and $\overline{\mathcal{T}}\in\R$ such that:
$$\supp(\mu)\subseteq \mathbb{K}(\overline{b},\overline{\mathcal{Q}},\overline{\mathcal{T}}).$$
Then $\mathcal{T}=0$ if and only if $\overline{\mathcal{T}}=0$.
\label{verticale}
\end{proposizione}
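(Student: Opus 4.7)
My plan is to argue both implications by contradiction through a blow-up analysis at a generic point of $\supp(\mu)$. Since the two implications become symmetric after swapping the roles of the two quadrics (the non-vanishing of $\overline{\mathcal{Q}}$ being replaced by $\mathcal{Q}\neq 0$, which follows from $\text{Tr}(\mathcal{Q})\neq 0$), it is enough to rule out $\mathcal{T}\neq 0$ together with $\overline{\mathcal{T}}=0$. Under these assumptions, solving the equation of $\mathbb{K}(b,\mathcal{Q},\mathcal{T})$ for the vertical coordinate realises $\supp(\mu)$ as the $t$-graph of the quadratic polynomial $\phi(u):=-\tfrac{1}{\mathcal{T}}(\langle b,u\rangle+\langle u,\mathcal{Q}u\rangle)$, while the equation of $\mathbb{K}(\overline{b},\overline{\mathcal{Q}},0)$ degenerates to the purely horizontal quadratic constraint $\overline{F}_1(x_H):=\langle\overline{b},x_H\rangle+\langle x_H,\overline{\mathcal{Q}}x_H\rangle=0$ on $\pi_H(\supp(\mu))$.

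Next I would choose a base point. The singular set $\Sigma(F)$ and the analogously defined $\Sigma(\overline{F})$ are smooth $t$-graphs over proper affine subspaces of $\R^{2n}$ (thanks to $\mathcal{Q}\neq 0$ and $\overline{\mathcal{Q}}\neq 0$), so by a standard Euclidean-to-Koranyi dimensional comparison they have Koranyi Hausdorff dimension at most $2n$, hence zero $\mu$-measure by Proposition \ref{supportoK}. I would therefore pick $x\in\supp(\mu)$ at which neither $\mathfrak{n}(x):=b+2(\mathcal{Q}-\mathcal{T}J)x_H$ nor $\overline{\mathfrak{n}}(x):=\overline{b}+2\overline{\mathcal{Q}}x_H$ vanishes. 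At this $x$, Proposition \ref{verticalsamoa2} pins down the unique tangent $\Tan_{2n+1}(\mu,x)=\{\mathcal{S}^{2n+1}_{V(\mathfrak{n}(x))}\}$, and Proposition \ref{propspt1} realises every $y\in V(\mathfrak{n}(x))$ as the limit of $D_{1/r_i}(x^{-1}*z_i)$ for suitable $z_i\in\supp(\mu)$ and $r_i\searrow 0$.

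The blow-up analysis splits into two stages. First I would carry out a horizontal first-order expansion: writing $\tilde{v}_i:=(z_i)_H-x_H=r_iy_H+o(r_i)$ and inserting into $\overline{F}_1(x_H+\tilde{v}_i)=0$ produces, at order $r_i$, the identity $\langle\overline{\mathfrak{n}}(x),y_H\rangle=0$; allowing $y_H$ to range over $V(\mathfrak{n}(x))_H$ forces $V(\mathfrak{n}(x))_H\subseteq V(\overline{\mathfrak{n}}(x))_H$, so $\overline{\mathfrak{n}}(x)=\lambda\mathfrak{n}(x)$ for some $\lambda\neq 0$. Second I would aim at a vertical-axis point $(0,t)\in V(\mathfrak{n}(x))$ with $t\neq 0$: combining the definition of the blow-up with the graph identity $(z_i)_T=\phi((z_i)_H)$ that $\mathcal{T}\neq 0$ provides yields $(D_{1/r_i}(x^{-1}*z_i))_T=-\tfrac{1}{\mathcal{T}r_i^2}\langle\mathfrak{n}(x),\tilde{v}_i\rangle+o(1)$, so accessing $(0,t)$ requires $\tilde{v}_i=o(r_i)$ together with $\langle\mathfrak{n}(x),\tilde{v}_i\rangle/r_i^2\to-t\mathcal{T}$. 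Substituting this into the expansion $0=\overline{F}_1(x_H+\tilde{v}_i)=\lambda\langle\mathfrak{n}(x),\tilde{v}_i\rangle+O(|\tilde{v}_i|^2)$ and dividing by $r_i^2$ gives $-\lambda t\mathcal{T}=0$, contradicting $\lambda,\mathcal{T},t\neq 0$.

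The hard part is this second stage. The horizontal first-order analysis alone only forces parallelism of $\overline{\mathfrak{n}}(x)$ and $\mathfrak{n}(x)$; to exclude a non-trivial proportionality one must use the graph structure inherited from $\mathcal{T}\neq 0$ to produce a genuine $r_i^2$-scaling of $\langle\mathfrak{n}(x),\tilde{v}_i\rangle$ along sequences approaching the vertical axis and then match it against the $o(r_i^2)$ quadratic correction of $\overline{F}_1$ along the same sequences, which is the technical heart of the proof.
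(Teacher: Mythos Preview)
Your blow-up argument is correct and gives a genuinely different proof from the paper's. The paper's approach is a one-line measure-theoretic argument: assuming $\mathcal{T}\neq 0$ and $\overline{\mathcal{T}}=0$, the horizontal projection $\pi_H(\supp(\mu))$ lies in the zero set of the nontrivial polynomial $\overline{F}_1(x)=\langle\overline{b},x\rangle+\langle x,\overline{\mathcal{Q}}x\rangle$, hence has $\mathcal{L}^{2n}$-measure zero; the area formula of Proposition~\ref{rapperhor} then gives $\mathcal{S}^{2n+1}_{\mathbb{K}(b,\mathcal{Q},\mathcal{T})}(\supp(\mu)\cap B_r(0))=0$, contradicting $\mu(B_r(0))=r^{2n+1}$. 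Your two-stage blow-up (first forcing $\overline{\mathfrak{n}}(x)\parallel\mathfrak{n}(x)$ at order $r_i$, then exploiting the $t$-graph structure to extract a nontrivial $r_i^2$-term along a vertical-axis sequence) is a nice pointwise alternative and the computations in Stage~2 are correct.

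There is, however, a gap in your choice of base point. When $\overline{\mathcal{T}}=0$, the set $\Sigma(\overline{F})$ is a $t$-\emph{cylinder} $W\times\R$ over the affine subspace $W=\{x:\overline{b}+2\overline{\mathcal{Q}}x=0\}$, not a $t$-graph; and when $\mathrm{rk}(\overline{\mathcal{Q}})=1$ this cylinder is a translate of a vertical hyperplane, which has Koranyi Hausdorff dimension $2n+1$, not $\leq 2n$. So your ``standard Euclidean-to-Koranyi dimensional comparison'' does not apply. The correct way to see that $\mu(\supp(\mu)\cap(W\times\R))=0$ is to note that $\supp(\mu)\cap(W\times\R)$ is the $t$-graph of $\phi$ over the Lebesgue-null set $W$ and invoke Proposition~\ref{rapperhor} --- but this is precisely the paper's argument, which already proves the proposition outright without any blow-up. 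Your approach therefore works, but only after borrowing the paper's key observation to get started.
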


\begin{proof}
Assume by contradiction that $\mathcal{T}\neq 0$ and $\overline{\mathcal{T}}= 0$. Since:
$$\supp(\mu)\subseteq S:=\mathbb{K}(b,\mathcal{Q},\mathcal{T})\cap \mathbb{K}(\overline{b},\overline{\mathcal{Q}},0),$$
thanks to Proposition \ref{supportoK} we infer that $\mu(B_r(0))=\mathcal{C}^{2n+1}\llcorner {\supp(\mu)}(B_r(0))\leq \mathcal{C}^{2n+1}\llcorner S(B_r(0))$.
In order to conclude the proof of the proposition, let us note that:
\begin{equation}
\begin{split}
    \pi_H(S)\subseteq\big\{x\in \R^{2n}:\text{ there is }&\text{a $t\in \R$ such that}\\
    \text{the point} p:=&(x,t)\text{ satisfies }\langle p_H,\overline{\mathcal{Q}} p_H\rangle+\langle \overline{b},p_H\rangle+0\cdot t=0\big\}\\
    = \{x\in \R^{2n}: \langle x,\overline{\mathcal{Q}} x\rangle+&\langle \overline{b},x\rangle=0\}.
    \nonumber
\end{split}
\end{equation}
Since the polynomial $\langle x,\overline{\mathcal{Q}}x\rangle+\langle \overline{b},x\rangle$ is non-constant on $\R^{2n}$ by hypothesis, its level sets are analytical subvarieties and thus they have (Euclidean) Hausdorff dimension smaller than $2n-1$, see for instance \cite[3.4.8]{Federer1996GeometricTheory}. This in particular implies that the set $\pi_H(S)$ has null $\mathcal{L}^{2n}$-measure in $\R^{2n}$.
Finally, Proposition \ref{rapperhor} implies that $\mathcal{C}^{2n+1}\llcorner{\mathbb{K}(b,\mathcal{Q},\mathcal{T})}(S\cap B_r(0))=0$ for any $r>0$, which contradicts the fact that $0\in\supp(\mu)$.
\end{proof}

\begin{definizione}\label{terminologyhor}
If there exist $b\in\R^{2n}$ and $\mathcal{Q}\in\mathrm{Sym}(2n)\setminus\{0\}$ such that:
$$\supp(\mu)\subseteq \mathbb{K}(b,\mathcal{Q},0),$$
then $\mu$ is said to be a \emph{vertical uniform measure}.
If such $b$ and $\mathcal{Q}$ do not exist $\mu$ is said to be a \emph{horizontal uniform measure}.
\end{definizione}

\subsection{Structure of the support of horizontal uniform measures}
\label{nozero}
In this subsection we prove Theorem \ref{duppy} in case $\mu$ is a horizontal uniform measure and therefore \textbf{throughout this subsection we assume }$\mathcal{T}\neq0$. Let $f:\R^{2n}\to \R$ be the smooth function defined as:
\begin{equation}
f(h):=-\frac{\langle h,\mathcal{Q}h\rangle+\langle b,h\rangle}{\mathcal{T}}.\label{numbero14}
\end{equation}
Since $\supp(\mu)\subseteq \text{gr}(f)$, it is immediate to see that $\Sigma(F)=\{(h,f(h))\in\R^{2n+1}:h\in\Sigma(f)\}$,\label{sigmino}
where $\Sigma(f)$ is the set of \emph{characteristic points} of $f$:
\begin{equation}
    \Sigma(f):=\{h\in\R^{2n}:b+2(\mathcal{Q}-\mathcal{T}J)h=0\}.
    \label{char}
\end{equation}

Let us take a step back and explain why the set $\Sigma(f)$ turns up in our computations. Since in this subsection the measure $\mu$ is supposed to be horizontal, see Definition \ref{terminologyhor}, any quadric containing $\mu$ must be representable as the graph of $2$nd degree polynomial, taking values in the vertical axis. These quadratic graphs are obviously (Euclidean) smooth submanifolds of $\R^{2n+1}$ and the (Euclidean) normal to their graphs is well defined \emph{everywhere}. However, these embedded submanifolds from the perspective of $\HH^n$ may not be so smooth.

To see why this is the case, it is sufficient to note that $\Sigma(F)$ is the set 
of points of $\mathbb{K}(b,\mathcal{Q},\mathcal{T})$ where the Euclidean normal to $\mathbb{K}(b,\mathcal{Q},\mathcal{T})$ is orthogonal to the \emph{horizontal distribution}, i.e.: the distribution of $2n$-dimensional planes spanned by the vector fields $X_1,\ldots,X_n,Y_1,\ldots,Y_n$ that are defined at the beginning of Appendix \ref{appeA1}. At these points it is not hard to see that the intrinsic blowups of the measure are not flat. An immediate example of this phenomenon can be seen with the function $f(x)=\lvert x\rvert^2$ at the point $x=0$, where the Euclidean blowup is the horizontal plane, while the Heisenberg blowup of the graph of $f$ coincides with $\text{gr}(f)$ itself.

Thanks to Proposition \ref{quaddica}, if $n>1$ then $\Sigma(f)$ cannot disconnect $\R^{2n}$, as $\Sigma(f)$ is an affine space of dimension less than or equal to $n$. However if $n=1$ there might be the case in which $\R^2\setminus \Sigma(f)$ is split in two connected components, which we will denote in the following by $C_i$, with $i=1,2$. The following proposition is Theorem \ref{duppy} in case $\mu$ is a horizontal measure. The idea behind the proof is to tuck cylinders inside holes of $\supp(\mu)$ in such a way that they are also tangent to $\supp(\mu)$ is some point outside $\Sigma(F)$ and to show with some careful computations that the tangents to $\mu$ at the point of tangency cannot be flat.

\begin{proposizione}\label{spt1} If $n>1$ then $\supp(\mu)= \mathbb{K}(b,\mathcal{Q},\mathcal{T})$.
If $n=1$ we have two cases:
\begin{itemize}
    \item[(i)] if $\text{dim}(\Sigma(f))=0$, then $\supp(\mu)=\mathbb{K}(b,\mathcal{Q},\mathcal{T})$,
    \item[(ii)] if $\text{dim}(\Sigma(f))=1$, then either $\supp(\mu)=\mathbb{K}(b,\mathcal{Q},\mathcal{T})$ or it coincides with the closure of the graph of $f\rvert_{C_1}$ or $f\rvert_{C_2}$.
\end{itemize}
\end{proposizione}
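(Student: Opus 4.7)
The plan is to show that $E:=\pi_H(\supp(\mu))\subseteq\R^{2n}$ is the closure of a union of connected components of $\R^{2n}\setminus\Sigma(f)$. Because $\mathcal{T}\neq 0$, the quadric $\mathbb{K}(b,\mathcal{Q},\mathcal{T})$ is the graph of the function $f$ introduced in \eqref{numbero14} and $\supp(\mu)$ is precisely the graph of $f\rvert_E$; the map $\pi_H$ restricts to a homeomorphism of $\mathbb{K}(b,\mathcal{Q},\mathcal{T})$ onto $\R^{2n}$ sending $\Sigma(F)$ to $\Sigma(f)$. By Proposition \ref{quaddica}, $\Sigma(f)$ is an affine subspace of dimension strictly less than $n$, so $\R^{2n}\setminus\Sigma(f)$ is connected whenever $n\geq 2$ or $n=1$ with $\dim\Sigma(f)\leq 0$, and decomposes as $C_1\sqcup C_2$ exactly when $n=1$ and $\dim\Sigma(f)=1$. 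Together with $E\neq\emptyset$ (since $0\in\supp(\mu)$), the enumerated cases in the statement follow at once from the closure-of-components claim.

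The closure-of-components claim is proved by contradiction. Suppose there exists $h_0\in(\partial E)\setminus\Sigma(f)$, where $\partial E$ is the Euclidean boundary of $E$ in $\R^{2n}$. Then $z_0:=(h_0,f(h_0))\in\supp(\mu)\setminus\Sigma(F)$, and by Proposition \ref{verticalsamoa2} the unique tangent to $\mu$ at $z_0$ is $\mathcal{S}^{2n+1}_{V(\mathfrak{n}(z_0))}$, with $\mathfrak{n}(z_0)\neq 0$. Choose $h_k\in\R^{2n}\setminus E$ with $h_k\to h_0$, set $y_k:=(h_k,f(h_k))\in\mathbb{K}(b,\mathcal{Q},\mathcal{T})\setminus\supp(\mu)$, and let $\tilde z_k\in\supp(\mu)$ realize the Koranyi distance $\sigma_k:=d(y_k,\supp(\mu))>0$. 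Then $\sigma_k\to 0$, $\tilde z_k\to z_0$, and by construction $B_{\sigma_k}(y_k)\cap\supp(\mu)=\emptyset$.

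The blowup along this sequence produces a contradiction. The rescaled measures $\nu_k:=\sigma_k^{-(2n+1)}\mu_{\tilde z_k,\sigma_k}$ are $(2n+1)$-uniform, so Lemma \ref{replica} yields a subsequence with $\nu_k\rightharpoonup\nu_*$ for some $(2n+1)$-uniform $\nu_*$. For any $q\in\supp(\mu)$, a direct expansion on the quadric — using $f(h+\Delta)-f(h)=-\mathcal{T}^{-1}\langle b+2\mathcal{Q}h,\Delta\rangle-\mathcal{T}^{-1}\langle\Delta,\mathcal{Q}\Delta\rangle$ and the identity $\nabla f(h)+2Jh=-\mathfrak{n}(h)/\mathcal{T}$ — writes the vertical component of $\tilde z_k^{-1}*q$ as $-\mathcal{T}^{-1}\langle\mathfrak{n}((\tilde z_k)_H),\Delta\rangle-\mathcal{T}^{-1}\langle\Delta,\mathcal{Q}\Delta\rangle$ with $\Delta=q_H-(\tilde z_k)_H$. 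Dividing by $\sigma_k^2$, any $q_k\in\supp(\mu)$ for which $D_{1/\sigma_k}(\tilde z_k^{-1}*q_k)$ converges forces $\langle\mathfrak{n}(z_0),\lim_k((q_k)_H-(\tilde z_k)_H)/\sigma_k\rangle=0$, hence the limit lies in $V(\mathfrak{n}(z_0))$. Combined with Lemma \ref{replica}(ii), $\supp(\nu_*)\subseteq V(\mathfrak{n}(z_0))$, so by Proposition \ref{verticalsamoa} one gets $\nu_*=\mathcal{S}^{2n+1}_{V(\mathfrak{n}(z_0))}$. The same expansion applied to $v_k:=D_{1/\sigma_k}(\tilde z_k^{-1}*y_k)$, which satisfies $\|v_k\|=1$, yields $v_k\to v_*\in V(\mathfrak{n}(z_0))$ with $\|v_*\|=1$.

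Finally, left-invariance and homogeneity of $d$ give $\nu_k(B_1(v_k))=0$; for any $\epsilon\in(0,1)$ and large $k$ the inclusion $B_{1-\epsilon}(v_*)\subseteq B_1(v_k)$ and weak lower semicontinuity on open sets imply $\nu_*(B_{1-\epsilon}(v_*))\leq\liminf_k\nu_k(B_{1-\epsilon}(v_*))=0$. But $v_*\in\supp(\nu_*)$ and the uniformity of $\nu_*$ force $\nu_*(B_{1-\epsilon}(v_*))=(1-\epsilon)^{2n+1}>0$, a contradiction. The main technical obstacle is the quadric expansion that forces the horizontal limit direction into $\mathfrak{n}(z_0)^\perp$; this is precisely the step where the non-characteristic hypothesis $\mathfrak{n}(z_0)\neq 0$ is essential and where the same argument would break down at points of $\Sigma(F)$.
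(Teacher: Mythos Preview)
Your argument is correct and takes a genuinely different route from the paper's. The paper works in the horizontal projection with \emph{Euclidean} distance: it fixes a hole point $y$, picks the Euclidean-nearest projected point $z\in S$, tucks a \emph{cylinder} $c_{\lvert z-y_H\rvert}(y)$ into the complement of the support, and then blows up at the single point $\zeta=(z,f(z))$. The first-order information from the cylinder only yields a half-space constraint on $\supp(\nu)$, forcing $z-y_H\parallel\mathfrak{n}(\zeta)$; the paper then pushes the expansion to second order to extract a nontrivial bound on $w_T$, which is the actual contradiction. By contrast, you stay in the Koranyi geometry throughout: you take a sequence $y_k\to z_0$, pick the Koranyi-nearest support points $\tilde z_k$, and blow up along the moving basepoints $\tilde z_k$ with the single scale $\sigma_k=d(y_k,\supp(\mu))$. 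Because the deleted set is now a \emph{full Koranyi ball} of radius $\sigma_k$, its rescaling is the unit ball centred at $v_k$, and the first-order quadric expansion already forces $v_*\in V(\mathfrak n(z_0))$; weak lower semicontinuity on open balls then gives $\nu_*(B_{1-\epsilon}(v_*))=0$, contradicting uniformity. Your approach is softer and avoids the second-order computation entirely; the paper's approach, on the other hand, extracts the explicit relation $z-y_H\parallel\mathfrak n(\zeta)$, which is the template reused (with more effort) in the vertical case in Proposition~\ref{verticalfun}.

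One small gap: from ``$E\neq\emptyset$'' you jump to the enumerated cases, but what you actually need is $E\setminus\Sigma(f)\neq\emptyset$. This follows from Proposition~\ref{rapperhor}: $\Sigma(f)$ has Lebesgue measure zero in $\R^{2n}$, so $\mathcal S^{2n+1}_{\mathbb K}(\text{gr}(f\rvert_{\Sigma(f)}))=0$, while $\mu=\mathcal S^{2n+1}_{\supp(\mu)}$ has $\mu(B_1(0))=1$; hence $\supp(\mu)$ cannot lie over $\Sigma(f)$. With that line added, your reduction to the closure-of-components claim is complete.
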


\begin{proof}
Since $\supp(\mu)$ is a closed set and it is contained in $\mathbb{K}(b,\mathcal{Q},\mathcal{T})$, then the set $S:=\{p\in\R^{2n}:(p,f(p))\in\supp(\mu)\}$ is closed in $\R^{2n}$. 
This implies that for any $y\in\mathbb{K}(b,\mathcal{Q},\mathcal{T})$, there exists a $z(y)\in S$ (possibly coinciding with $y_H$) such that:
$$\lvert z(y)-y_H\rvert=\dist(y_H,S).$$
As a consequence the (possibly empty) cylinder:
$$c_{\lvert z(y)-y_H\rvert}(y):=\{(x,t)\in\R^{2n}\times\R:\lvert x-y_H\rvert< \lvert z(y)-y_H\rvert \},$$
does not intersect $\supp(\mu)$.

We claim that for any $y\in\mathbb{K}(b,\mathcal{Q},\mathcal{T})\setminus (\Sigma(F)\cup\supp(\mu))$ the tangency point $z=z(y)$ is contained in $\Sigma(f)$.

Let us prove the proposition assuming that the above claim holds. If $\Sigma(f)=\emptyset$, then $\supp(\mu)=\mathbb{K}(b,\mathcal{Q},\mathcal{T})$ (as $z(y)$ cannot exist). Hence, without loss of generality we can assume $\Sigma(f)\neq \emptyset$. 

By Proposition \ref{quaddica}, if $n>1$ then $\Omega:=\R^{2n}\setminus \Sigma(f)$ is a connected open set and $S$ is relatively closed inside it. Therefore, if $S$ is also relatively open in $\Omega$, by connectedness we infer that  $S\cap\Omega=\Omega$.
Therefore, assume by contradiction that there exists a $p\in S\cap\Omega$ such that for any $r>0$ there exists $q_r\in S^c\cap U_r(p)$, where $U_r(x)$ is the Euclidean ball in $\R^{2n}$. This would imply for a sufficiently small $r>0$ that:
$$\dist(S,q_r)<r<\dist(\Sigma(f),q_r),$$
contradicting the fact that if $z\in S$ satisfies $\lvert z-q_r\rvert=\dist(S,q_r)$ then $z\in\Sigma(f)$.
Note that the same argument works in the remaining cases in which $n=1$. If $\text{dim}(\Sigma(f))=1$, where we just apply the reasoning above to the connected components $C_1$ and $C_2$.

Let us now prove the claim. In order to ease the notation in the following, we define $\zeta:=(z,f(z))\in\supp(\mu)$. If by contradiction $z\not\in\Sigma(f)$, then $\zeta\not\in\Sigma(F)$ and thus Proposition \ref{verticalsamoa2} implies that:
$$\Tan_{2n+1}(\mu,\zeta)=\{\mathcal{C}^{2n+1}\llcorner V(\mathfrak{n}(\zeta))\},$$
where $\mathfrak{n}(\zeta)=-(b+2(\mathcal{Q}-\mathcal{T}J)z)/\mathcal{T}$. Moreover, Proposition \ref{propspt1} implies that if $w\in V(\mathfrak{n}(\zeta))$ and $r_i\to 0$, there exists a sequence $\{v_i\}_{i\in\N}\subseteq \supp(\mu)$ such that:
\begin{equation}
    w_i:=D_{1/r_i}(\zeta^{-1}v_i)\to w.
  \label{equaz}
\end{equation}
Since $v_i\not\in c_{\lvert z-y_H\rvert}(y)$ by construction, we deduce that:
$$\lvert z-y_H\rvert\leq\lvert (\zeta D_{r_i}(w_i))_H-y_H\rvert=\lvert z+r_i w_H-y_H\rvert,$$
which reduces, squaring both sides of the inequality, to:
\begin{equation}
\begin{split}
0\leq 2\langle z-y_H,  (w_i)_H\rangle+r_i\lvert   (w_i)_H\rvert^2.
\end{split}
\label{eq:eq:34}
\end{equation}
Taking the limit as $i\to\infty$ we deduce that for any $\nu\in \Tan_{2n+1}(\mu,\zeta)$ we have:
$$\supp(\nu)\subseteq V(\mathfrak{n}(\zeta))\cap \{w\in\R^{2n+1}:\langle z-y_H,  w_H\rangle\geq 0\}.$$
If $z-y_H$ is not parallel to $\mathfrak{n}(\zeta)$ this would contradict Proposition \ref{verticalsamoa2} as $\supp(\nu)$ would be contained in a proper subset of $V(\mathfrak{n}(\zeta))$.
This implies that there exists $\lambda\neq 0$ for which $z-y_H=\lambda\mathfrak{n}(\zeta)$.

Furthermore, from the convergence \eqref{equaz}
, we deduce that:
\begin{itemize}
    \item[($\alpha$)]$(v_i)_H=z+r_i  w_H+R_i,$ with $\lvert R_i\rvert=o(r_i)$,
    \item[($\beta$)]$r_i^2w_T+o(r_i^2)=(v_i)_T-\zeta_T -2\langle z,J(v_i)_H\rangle.$
\end{itemize}
Rearranging ($\beta$), we deduce:
\begin{equation}
\begin{split}
(v_i)_T=&\zeta_T+2\langle z,J(v_i)_H\rangle+r_i^2w_T+o(r_i^2)\\
=&  \zeta_T+2r_i\langle z,Jw_H\rangle+\langle z,JR_i\rangle+r_i^2w_T+o(r_i^2),
\label{eq:eq:32}
\end{split}
\end{equation}
where in the second identity we used ($\alpha$) to substitute $(v_i)_H$ and we also exploited the fact that $\langle z,Jz\rangle=0$.
On the other hand, since by assumption $\supp(\mu)$ is contained in $\text{gr}(f)$, we have that:
$$\mathcal{T}(v_i)_T=-\langle (v_i)_H,\mathcal{Q} (v_i)_H\rangle-\langle b,(v_i)_H\rangle.$$
Therefore, using ($\alpha$) in the above formula to substitute $(v_i)_H$, we deduce:
\begin{equation}
    \begin{split}
        \mathcal{T}(v_i)_T
        =&-\langle z+r_i  w_H+R_i,\mathcal{Q} [z+r_i  w_H+R_i]\rangle-\langle b,z+r_i  w_H+R_i\rangle\\
        =&(-\langle z,\mathcal{Q}z\rangle-\langle b,z \rangle)-r_i\langle b+2\mathcal{Q}z,w_H\rangle\\
        &\qquad\qquad\qquad\qquad-\langle b+2\mathcal{Q}z,R_i\rangle-r_i^2\langle   w_H,Q  w_H\rangle+o(r_i^2).
    \end{split}
    \label{eq:eq:30}
\end{equation}
Furthermore, since by definition we have $\zeta_T=f(z)$, identity \eqref{eq:eq:30} simplifies to:
\begin{equation}
    \mathcal{T}(v_i)_T=\mathcal{T}\zeta_T-r_i\langle b+2\mathcal{Q}z,w_H\rangle-\langle b+2\mathcal{Q}z,R_i\rangle-r_i^2\langle   w_H,Q  w_H\rangle+o(r_i^2),
    \label{eq:eq:31}
\end{equation}
Since the left hand sides of \eqref{eq:eq:32} and \eqref{eq:eq:31} are equal up to the multiplication by $\mathcal{T}$, we can put the two formulas together to infer:
\begin{equation}
\begin{split}
        &\qquad\qquad\zeta_T+2r_i\langle z,Jw_H\rangle+\langle z,JR_i\rangle+r_i^2w_T+o(r_i^2)\\
    =&\zeta_T-r_i\frac{\langle b+2\mathcal{Q}z,w_H\rangle}{\mathcal{T}}-\frac{\langle b+2\mathcal{Q}z,R_i\rangle}{\mathcal{T}}-r_i^2\frac{\langle   w_H,Q  w_H\rangle}{\mathcal{T}}+o(r_i^2).
    \nonumber
\end{split}
\end{equation}
Simplifying the above equation and pushing some of the terms on the left-hand side to the right hand side, we have:
\begin{equation}
\begin{split}
    r_i^2w_T=&-r_i\frac{\langle b+2(\mathcal{Q}-\mathcal{T} J)z,w_H\rangle}{\mathcal{T}}-\frac{\langle b+2(\mathcal{Q}-\mathcal{T} J)z,R_i\rangle}{\mathcal{T}}\\&\quad\,\,\,\,\,\qquad\qquad\qquad\qquad\qquad-r_i^2\frac{\langle   w_H,Q  w_H\rangle}{\mathcal{T}}+o(r_i^2)\\
    =&\langle\mathfrak{n}(\zeta),R_i\rangle-r_i^2\frac{\langle   w_H,Q  w_H\rangle}{\mathcal{T}}+o(r_i^2),
\end{split}
\nonumber
\end{equation}
where the last line comes from the fact that we defined $\mathfrak{n}(\zeta):=-(b+2(\mathcal{Q}-\mathcal{T} J)z)/\mathcal{T}$ and that thanks to our choice of $\zeta$, as argued at the beginning of this proof, we have $w\in V(\mathfrak{n}(\zeta))$ and thus $\langle b+2(\mathcal{Q}-\mathcal{T} J)z,w_H\rangle=0$.
Therefore, recollecting terms in the above equality, we deduce that:
\begin{equation}
    \langle\mathfrak{n}(\zeta),R_i\rangle=r_i^2\left(\frac{\langle   w_H,\mathcal{Q}w_H\rangle}{\mathcal{T}}+w_T\right)+o(r_i^2).
    \label{eq11}
\end{equation}
Thanks to the definition of $w_i$ and identity ($\alpha$) we have $(w_i)_H=w_H+R_i/r_i$ and this, in  conjunction with \eqref{eq:eq:34} implies that:
\begin{equation}
\begin{split}
    0\leq &2\langle z-y_H,(w_i)_H\rangle+r_i\lvert (w_i)_H\rvert^2=2\lambda\langle  \mathfrak{n}(\zeta),w_H+R_i/r_i\rangle+r_i\Big\lvert   w_H+\frac{R_i}{r_i}\Big\rvert^2,
    \nonumber
\end{split}
\end{equation}
where in the last identity we used the fact that $z-y_H=\lambda \mathfrak{n}(\zeta)$.
Furthermore, since $w\in V(\mathfrak{n}(\zeta))$, the above inequality can be further simplified to:
\begin{equation}
\begin{split}
    0\leq 2\lambda\langle  \mathfrak{n}(\zeta),R_i/r_i\rangle+r_i\Big\lvert   w_H+\frac{R_i}{r_i}\Big\rvert^2.
    \label{eq:eq:36}
\end{split}
\end{equation}
Dividing both sides of \eqref{eq11} by $r_i$ and plugging the obtained expression into \eqref{eq:eq:36} we infer:
\begin{equation}
    \begin{split}
        0\leq& 2\lambda\langle  \mathfrak{n}(\zeta),R_i/r_i\rangle+r_i\Big\lvert  w_H+\frac{R_i}{r_i}\Big\rvert^2\\
        =&2\lambda r_i\left(\frac{\langle   w_H,\mathcal{Q}w_H\rangle}{\mathcal{T}}+w_T\right)+o(r_i)+r_i\Big\lvert   w_H+\frac{R_i}{r_i}\Big\rvert^2.
        \label{eq:numiero1}
    \end{split}
\end{equation}
Finally, dividing  \eqref{eq:numiero1} by $r_i$ and sending $i\to\infty$, we get:
\begin{equation}
\begin{split}
    -\lambda w_T\leq\lambda\frac{\langle   w_H,\mathcal{Q}w_H\rangle}{\mathcal{T}}+\frac{\lvert   w_H\rvert^2}{2},
    \nonumber
\end{split}
\end{equation}
which constitutes a non-trivial bound on $w_T$ and this contradicts Proposition \ref{verticalsamoa2}. This finally shows that $z\in\Sigma(f)$.
\end{proof}

\subsection{Structure of the support of vertical uniform measures}
\label{zero}
In this subsection we prove Theorem \ref{duppy} in case $\mu$ is a vertical uniform measure and by Definition \ref{terminologyhor}, \textbf{ there are $b\in\R^{2n}$ and $\mathcal{Q}\in\mathrm{Sym}(2n)\setminus\{0\}$ such that
$\supp(\mu)\subseteq \mathbb{K}(b,\mathcal{Q},0)$, that from now on we shall consider fixed}.

The first step towards our goal is the following technical proposition that establishes the relation between the center of an Euclidean ball tangent to $\supp(\mu)$ and the point of tangency.

\begin{proposizione}
Let $y\in\mathbb{K}(b,\mathcal{Q},0)\setminus \supp(\mu)$ and $\zeta\in\supp(\mu)\setminus \Sigma(F)$ be such that $\lvert \zeta-y\rvert=\dist(y,\supp(\mu))$.
Then:
\begin{itemize}
\item[(i)] $\zeta_T=  y_T$,
\item[(ii)] there exists a $\lambda\neq 0$ such that $\lambda(\zeta_H-y_H)=2\mathcal{Q}\zeta_H + b$.
\end{itemize}
\label{verticalfun}
\end{proposizione}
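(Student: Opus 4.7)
My plan is to follow the blueprint of the proof of Proposition \ref{spt1}, but now exploit the fact that $\mathcal{T}=0$, so that the quadric $\mathbb{K}(b,\mathcal{Q},0)$ depends only on horizontal coordinates. Since $\zeta\in\supp(\mu)\setminus\Sigma(F)$, Proposition \ref{verticalsamoa2} gives $\Tan_{2n+1}(\mu,\zeta)=\{\mathcal{S}^{2n+1}_{V(\mathfrak{n}(\zeta))}\}$ with $\mathfrak{n}(\zeta)=b+2\mathcal{Q}\zeta_H\neq 0$. Thus there exists $r_i\downarrow 0$ with $\mu_{\zeta,r_i}/r_i^{2n+1}\rightharpoonup \mathcal{S}^{2n+1}_{V(\mathfrak{n}(\zeta))}$, and by Proposition \ref{propspt1} to every $w\in V(\mathfrak{n}(\zeta))$ there correspond $v_i\in\supp(\mu)$ with $w_i:=D_{1/r_i}(\zeta^{-1}v_i)\to w$. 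Writing $v_i=\zeta*D_{r_i}(w_i)$ gives $(v_i)_H=\zeta_H+r_i(w_i)_H$ and $(v_i)_T=\zeta_T+r_i^2(w_i)_T+2r_i\langle\zeta_H,J(w_i)_H\rangle$.

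For the first-order step, the minimality $\lvert v_i-y\rvert\ge R:=\lvert\zeta-y\rvert$ expanded to first order in $r_i$ reads
\[ 0\le 2r_i\bigl\langle\zeta_H-y_H-2(\zeta_T-y_T)J\zeta_H,\,(w_i)_H\bigr\rangle+O(r_i^2). \]
As $V(\mathfrak{n}(\zeta))$ is a linear subspace, replacing $w$ with $-w$ and passing to the limit forces equality, so the vector $\zeta_H-y_H-2(\zeta_T-y_T)J\zeta_H$ must be orthogonal to $\mathfrak{n}(\zeta)^\perp$ and hence parallel to $\mathfrak{n}(\zeta)$: there exists $\mu_0\in\R$ with $\zeta_H-y_H-2(\zeta_T-y_T)J\zeta_H=\mu_0\,\mathfrak{n}(\zeta)$.

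The crux is the second-order step giving (i). Specialise to $w=(0,t)$ with $t\in\R$, which lies in $V(\mathfrak{n}(\zeta))$. Then $(w_i)_H\to 0$, so $\epsilon_i:=(v_i)_H-\zeta_H=r_i(w_i)_H=o(r_i)$. The key refined estimate comes from the quadric itself: subtracting $F(\zeta)=0$ from $F(v_i)=0$ (with $\mathcal{T}=0$) yields
\[ \langle\mathfrak{n}(\zeta),\epsilon_i\rangle=-\langle\epsilon_i,\mathcal{Q}\epsilon_i\rangle=O(\lvert\epsilon_i\rvert^2)=o(r_i^2). \]
A careful Taylor expansion of $\lvert v_i-y\rvert^2-R^2$ keeping all terms up to order $r_i^2$, followed by the first-order identity, collapses to
\[ 0\le\lvert v_i-y\rvert^2-R^2=2\mu_0\langle\mathfrak{n}(\zeta),\epsilon_i\rangle+2(\zeta_T-y_T)r_i^2(w_i)_T+o(r_i^2)=2(\zeta_T-y_T)r_i^2(w_i)_T+o(r_i^2), \]
where the contribution of $\mu_0\langle\mathfrak{n}(\zeta),\epsilon_i\rangle$ vanishes thanks to the refined bound. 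Dividing by $r_i^2$ and sending $i\to\infty$ gives $(\zeta_T-y_T)\,t\ge 0$; since $t\in\R$ is arbitrary, $\zeta_T=y_T$, establishing (i).

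Finally, (ii) follows by substituting $\zeta_T=y_T$ into the first-order identity: $\zeta_H-y_H=\mu_0(b+2\mathcal{Q}\zeta_H)$. If $\mu_0=0$ then $\zeta_H=y_H$, whence $\zeta=y\notin\supp(\mu)$, a contradiction; hence $\mu_0\neq 0$ and $\lambda:=1/\mu_0$ satisfies $\lambda(\zeta_H-y_H)=2\mathcal{Q}\zeta_H+b$. The main obstacle is the second-order analysis: first-order information alone only gives the weaker parallelism modulo the $J\zeta_H$ correction, and one must sharpen the error control on $\langle\mathfrak{n}(\zeta),\epsilon_i\rangle$ by invoking that the approximating points $v_i$ lie \emph{exactly} on the quadric, not merely close to its tangent plane at $\zeta$.
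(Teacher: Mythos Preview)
Your proof is correct and follows essentially the same approach as the paper's: both use the tangent-measure flatness from Proposition~\ref{verticalsamoa2}, the approximating sequence from Proposition~\ref{propspt1}, the Euclidean-distance inequality $\lvert v_i-y\rvert\geq\lvert\zeta-y\rvert$, and the quadric constraint $F(v_i)=0$ to control the normal component of the error to second order. The only difference is organizational: the paper keeps $w\in V(\mathfrak{n}(\zeta))$ general throughout and splits into the cases $N=0$ and $N\neq 0$, while you first establish $N=\mu_0\,\mathfrak{n}(\zeta)$ and then specialize to $w=(0,t)$ in the second-order step, which forces $\epsilon_i=o(r_i)$ and lets the quadric identity $\langle\mathfrak{n}(\zeta),\epsilon_i\rangle=-\langle\epsilon_i,\mathcal{Q}\epsilon_i\rangle=o(r_i^2)$ kill the remaining cross term uniformly; this is a mild streamlining of the same argument.
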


\begin{proof}
Since $\zeta\not\in \Sigma(F)$, Proposition \ref{verticalsamoa2} implies that:
\begin{equation}
    \Tan_{2n+1}(\mu,\zeta)=\{\mathcal{C}^{2n+1}\llcorner V(2\mathcal{Q}\zeta_H+b)\}.
    \label{eq:eq:100}
\end{equation}
By Proposition \ref{propspt1}, for any $w\in V(2\mathcal{Q}\zeta_H+b)$ and any $r_i\to 0$, there exists a sequence $\{v_i\}_{i\in\N}\subseteq \supp(\mu)$ such that $w_i:=D_{1/r_i}(\zeta^{-1}v_i)\to w$. Therefore writing this convergence componentwise, we have:
\begin{itemize}
    \item[($\alpha$)]$(v_i)_H=\zeta_H+r_i w_H+R_i,$ with $\lvert R_i\rvert=o(r_i)$,
    \item[($\beta$)]$r_i^2w_T+o(r_i^2)=(v_i)_T-\zeta_T -2\langle \zeta_H,J(v_i)_H\rangle.$
\end{itemize}
As in \eqref{eq:eq:32}, rearranging ($\beta$) we deduce that:
\begin{equation}
\begin{split}
    (v_i)_T=&\zeta_T+2\langle \zeta_H,J(v_i)_H\rangle+r_i^2w_T+o(r_i^2)\\
    =&  \zeta_T+2r_i\langle \zeta_H,J  w_H\rangle+2\langle \zeta_H,JR_i\rangle +r_i^2w_T+o(r_i^2),
\label{eq:eq:40}
\end{split}
\end{equation}
where in the second identity we used ($\alpha$) to substitute $(v_i)_H$ and the fact that $\zeta_H$ and $J\zeta_H$ are orthogonal.
In addition, the fact that by assumption we have $\lvert \zeta - y\rvert\leq \lvert v_i-y\rvert$ together with ($\alpha$) and \eqref{eq:eq:40} imply that:
\begin{equation}
\begin{split}
0\leq& 2r_i\langle\zeta_H-y_H-2(\zeta_T-  y_T)J\zeta_H,  w_H\rangle +2\langle \zeta_H-y_H-2(\zeta_T-  y_T)J\zeta_H,R_i\rangle\\
&\qquad+\lvert r_i  w_H+R_i\rvert^2+2(\zeta_T-  y_T)(r_i^2w_T+o(r_i^2))\\
&\qquad\qquad+(2r_i\langle \zeta_H,J   w_H\rangle+2\langle \zeta_H,J R_i\rangle+r_i^2w_T+o(r_i^2))^2,
\label{eq13}
\end{split}
\end{equation}
for any $i\in\N$. Define $N:=\zeta_H-y_H-2(\zeta_T-  y_T)J\zeta_H$ and note that if $N=0$, dividing the above inequality by $r_i^2$ and sending $i\to\infty$, we get:
\begin{equation}
\begin{split}
0\leq&\lvert  w_H\rvert^2+2(\zeta_T-  y_T)w_T+4\langle \zeta_H,J  w_H\rangle^2.
\label{eq14}
\end{split}
\end{equation}
Let us further observe that if $N=0$ then $\zeta_T\neq   y_T$,
otherwise we would have that $\zeta=y$. Indeed, if $\zeta_T=   y_T$, by definition of $N$ and the fact that we are here assuming that $N=0$, would imply:
$$0=N=\zeta_H-y_H-2(\zeta_T-  y_T)J\zeta_H=\zeta_H-y_H,$$
and this would be in contradiction with the fact that we assumed $\zeta$ and $y$ to be different.
Therefore, since $N=0$ and $\zeta_T\neq   y_T$,  we infer that \eqref{eq14} constitutes a non-trivial bound on $w_T$. This however is in contradiction with \eqref{eq:eq:100} and the fact that $w$ was chosen arbitrarily in $V(2\mathcal{Q}\zeta_H+b)$.

On the other hand, if $N\neq 0$, dividing by $r_i$ inequality \eqref{eq13} and sending $i\to\infty$, we deduce that $0\leq \langle N,  w_H\rangle$. This implies that:
$$V(2\mathcal{Q}\zeta_H + b)\subseteq \{(x,t):\langle N, x \rangle\geq 0\},$$
and therefore there exists $\lambda\neq 0$ such that $\lambda N=2\mathcal{Q}\zeta_H + b$. Furthermore, since $w$ is an element of $V(2\mathcal{Q}\zeta_H + b)$, we infer that $0=\langle 2\mathcal{Q}\zeta_H + b,w_H\rangle=\lambda\langle N,w_H\rangle$. Since $\lambda\neq 0$, this in particular implies:
\begin{equation}
    0=\langle N,w_H\rangle.
    \label{eq:eq:101}
\end{equation}
Using the definition of $N$ and \eqref{eq:eq:101}, inequality \eqref{eq13} becomes:
\begin{equation}
\begin{split}
0\leq&2\langle N,R_i\rangle+\lvert r_i  w_H+R_i\rvert^2+2(\zeta_T-  y_T)(r_i^2w_T+o(r_i^2))\\
&\qquad\qquad\qquad\qquad+(2r_i\langle \zeta_H,J   w_H\rangle+2\langle \zeta_H,J R_i\rangle+r_i^2w_T+o(r_i^2))^2.
\label{eq132}
\end{split}
\end{equation}
On the other hand, the fact that $v_i$ is contained in $\mathbb{K}(b,\mathcal{Q},0)$ implies:
\begin{equation}
    \begin{split}
        0
        =&\langle \zeta_H, \mathcal{Q}\zeta_H\rangle+2r_i\langle \zeta_H, \mathcal{Q} w_H \rangle+2\langle \zeta_H, \mathcal{Q} R_i\rangle+r_i^2\langle w_H, \mathcal{Q} w_H\rangle\\
        &+2r_i\langle w_H, \mathcal{Q} R_i\rangle +\langle R_i, \mathcal{Q} R_i\rangle+\langle b,\zeta_H\rangle+r_i\langle  b,w_H\rangle+\langle b,R_i\rangle
        \label{eq:eq:102}
    \end{split}
\end{equation}
Furthermore, since $\zeta$ is supposed to be contained in $\mathbb{K}(b,\mathcal{Q},0)$, we have $\langle \zeta_H, \mathcal{Q}\zeta_H+b\rangle=0$ and thus, \eqref{eq:eq:102} simplifies to:
\begin{equation}
    \begin{split}
        0=&2r_i\langle \zeta_H, \mathcal{Q} w_H \rangle+2\langle \zeta_H, \mathcal{Q} R_i\rangle+r_i^2\langle w_H, \mathcal{Q} w_H\rangle+2r_i\langle w_H, \mathcal{Q} R_i\rangle\\ &\qquad\qquad\qquad\qquad\qquad\qquad+\langle R_i, \mathcal{Q} R_i\rangle+r_i\langle  b,w_H\rangle+\langle b,R_i\rangle\\
        =&\langle2\mathcal{Q}\zeta_H+b,R_i\rangle+r_i^2\langle   w_H,\mathcal{Q} w_H\rangle+2r_i\langle   w_H,Q R_i\rangle+\langle R_i,\mathcal{Q} R_i\rangle,
        \label{eq:eq:103}
    \end{split}
\end{equation}
where the second identity comes from the already observed fact that $\langle 2\mathcal{Q}\zeta_H+b,  w_H \rangle=0$, as $w\in V(2\mathcal{Q}\zeta_H+b)$.
Identity \eqref{eq:eq:103} together with the fact that $\lambda N=2\mathcal{Q}\zeta_H+b$, yields:
$$-\lambda\langle N,R_i\rangle=r_i^2\langle   w_H,\mathcal{Q} w_H\rangle+2r_i\langle   w_H,Q R_i\rangle+\langle R_i,\mathcal{Q} R_i\rangle.$$
Using the above identity, \eqref{eq132} becomes:
\begin{equation}
\begin{split}
0\leq-&\frac{2}{\lambda}\big(r_i^2\langle   w_H,\mathcal{Q} w_H\rangle+2r_i\langle   w_H,Q R_i\rangle+\langle R_i,\mathcal{Q} R_i\rangle\big)\\
&\qquad+\lvert r_i  w_H+R_i\rvert^2+2(\zeta_T-  y_T)(r_i^2w_T+o(r_i^2))\\
&\qquad\qquad+(2r_i\langle \zeta_H,J   w_H\rangle+2\langle \zeta_H,J R_i\rangle+r_i^2w_T+o(r_i^2))^2.\nonumber
\end{split}
\end{equation}
Dividing the above inequality by $r_i^2$ and sending $i\to \infty$, we deduce that:
$$0\leq-\frac{2}{\lambda} \langle   w_H,\mathcal{Q}w_H\rangle+\lvert  w_H\rvert^2+2(\zeta_T-  y_T)w_T+4\langle \zeta_H,J   w_H\rangle^2,$$
which if $\zeta_T\neq  y_T$ is a non-trivial bound on $w_T$. This contradicts Proposition \ref{verticalsamoa2}.
Therefore $\zeta_T=  y_T$ and thus:$$\lambda(  \zeta_H-y_H)=\lambda N=2\mathcal{Q}\zeta_H+b,$$
concluding the proof of the proposition.
\end{proof}

In the following proposition we prove Theorem \ref{duppy} in case $\mu$ is a vertical measure. The idea behind the proof is the following. Let $\zeta$ and $y$ be as in the statement of Proposition \ref{verticalfun}. If $\lvert\zeta_H-y_H\rvert$ is small, the vector $\zeta_H-y_H$ roughly lies in the tangent space of $\mathbb{K}(b,\mathcal{Q},0)$ at $\zeta$. Therefore,  Proposition \ref{verticalfun}(ii) implies that such a vector in the tangent space to $\mathbb{K}(b,\mathcal{Q},0)$ at $\zeta$ should be parallel to the normal to $\mathbb{K}(b,\mathcal{Q},0)$ at $\zeta$, which is clearly not possible.

\begin{proposizione}\label{spt2}
Either the measure $\mu$ is flat, or fixed a connected component $C$ of $\mathbb{K}(b,\mathcal{Q},0)\setminus \Sigma(F)$ we have:
\begin{itemize}
\item[(i)] either $\supp(\mu)\cap C=\emptyset$,
\item[(ii)] or $C\subseteq \supp(\mu)$.
\end{itemize}
\end{proposizione}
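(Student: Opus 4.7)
The plan is to show that, within the open subset $\mathbb{K}(b,\mathcal{Q},0)\setminus\Sigma(F)$, the intersection $\supp(\mu)\cap C$ is both relatively closed and relatively open; connectedness of $C$ then yields the dichotomy. Relative closedness is immediate, since $\supp(\mu)$ is closed in $\HH^n$, so the entire content lies in proving that $\supp(\mu)\cap C$ is relatively open in $C$, which I attack by contradiction.

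Concretely, I fix $x\in\supp(\mu)\cap C$ and assume there exists a sequence $y_k\in\mathbb{K}(b,\mathcal{Q},0)\setminus\supp(\mu)$ with $y_k\to x$ in the Euclidean sense. For each $k$ I pick $\zeta_k\in\supp(\mu)$ realizing the Euclidean distance from $y_k$ to $\supp(\mu)$; since $|\zeta_k-y_k|\leq|y_k-x|\to 0$, we also have $\zeta_k\to x$. Because $\Sigma(F)$ is closed and $x\notin\Sigma(F)$, for $k$ large $\zeta_k\in\supp(\mu)\setminus\Sigma(F)$, so Proposition \ref{verticalfun} applies and produces scalars $\lambda_k\neq 0$ with $\zeta_{k,T}=y_{k,T}$ and
\begin{equation}
\lambda_k(\zeta_{k,H}-y_{k,H})=2\mathcal{Q}\zeta_{k,H}+b.
\nonumber
\end{equation}

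The crucial step, and the one I expect to be the main obstacle, is to bound $|\lambda_k|$ uniformly in $k$. To do so I would exploit the fact that $\zeta_k$ and $y_k$ both lie on the quadric $\mathbb{K}(b,\mathcal{Q},0)$: subtracting the two defining equations yields
\begin{equation}
\langle b+\mathcal{Q}(\zeta_{k,H}+y_{k,H}),\zeta_{k,H}-y_{k,H}\rangle=0.
\nonumber
\end{equation}
Substituting $b=\lambda_k(\zeta_{k,H}-y_{k,H})-2\mathcal{Q}\zeta_{k,H}$ into this identity and simplifying collapses it to
\begin{equation}
\lambda_k|\zeta_{k,H}-y_{k,H}|^2=\langle\mathcal{Q}(\zeta_{k,H}-y_{k,H}),\zeta_{k,H}-y_{k,H}\rangle,
\nonumber
\end{equation}
so $|\lambda_k|\leq\vertiii{\mathcal{Q}}$ by Cauchy--Schwarz. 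Plugging this back into the previous display gives
\begin{equation}
|2\mathcal{Q}\zeta_{k,H}+b|=|\lambda_k||\zeta_{k,H}-y_{k,H}|\leq\vertiii{\mathcal{Q}}|\zeta_k-y_k|\longrightarrow 0,
\nonumber
\end{equation}
contradicting $|2\mathcal{Q}x_H+b|>0$, which holds because $x\notin\Sigma(F)$.

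The argument therefore shows that no such sequence $\{y_k\}$ can exist, i.e.\ some Euclidean neighbourhood of $x$ in $\mathbb{K}(b,\mathcal{Q},0)$ is entirely contained in $\supp(\mu)$; in particular $\supp(\mu)\cap C$ is relatively open in $C$, and the connectedness of $C$ forces either $\supp(\mu)\cap C=\emptyset$ or $C\subseteq\supp(\mu)$. The real bottleneck is the quadratic identity yielding $|\lambda_k|\leq\vertiii{\mathcal{Q}}$: Proposition \ref{verticalfun}(ii) alone provides no control on $\lambda_k$, and one must combine it with the algebraic relation coming from both points sitting on $\mathbb{K}(b,\mathcal{Q},0)$ to turn the geometric tangency condition into an effective bound.
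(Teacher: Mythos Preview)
Your proof is correct and follows essentially the same route as the paper: both arguments show $\supp(\mu)\cap C$ is relatively open by applying Proposition~\ref{verticalfun} to a nearest point $\zeta_k$, then combine the parallelism $\lambda_k(\zeta_{k,H}-y_{k,H})=2\mathcal{Q}\zeta_{k,H}+b$ with the quadric identity to obtain $|2\mathcal{Q}\zeta_{k,H}+b|\leq\vertiii{\mathcal{Q}}\,|\zeta_{k,H}-y_{k,H}|\to 0$, contradicting $x\notin\Sigma(F)$. Your bookkeeping via the explicit bound $|\lambda_k|\leq\vertiii{\mathcal{Q}}$ is slightly cleaner than the paper's equivalent cosine computation, but the mathematical content is identical.
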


\begin{proof}
Thanks to Proposition \ref{esclusion} we can assume without loss of generality that:
$$\mathcal{S}^{2n+1}(\Sigma(F)\cap \mathbb{K}(b,\mathcal{Q},0))=0,$$
since otherwise $\mu$ is flat.
The set $C\cap \supp(\mu)$ is relatively closed in $C$, thus if it is also relatively open in $C$, by connectedness either $C\cap\supp(\mu)=\emptyset$ or $C\cap\supp(\mu)=C$. By contradiction suppose that this is not the case, and thus there exist an $x\in\supp(\mu)\cap C$ and a $r_0>0$ such that:
\begin{itemize}
\item[($\alpha$)] for any $0<r<r_0$ there exists $y_r\in\supp(\mu)^c\cap C$,
\item[($\beta$)] $\text{cl}(U_{r_0}(x))\cap \mathbb{K}(b,\mathcal{Q},0)=\text{cl}(U_{r_0}(x))\cap C$.
\end{itemize}
Note that since $x\not\in \Sigma(F)$ by construction, then $2\mathcal{Q}x_H+b\neq 0$, see \eqref{numbero2}. Thus, for any $0<r<r_0$ there exists $\zeta_r\in\supp(\mu)\cap B_r(x)$ such that $\lvert \zeta_r-y_r\rvert=\dist(y_r,\supp(\mu))$ and $\lvert \zeta_r-y_r\rvert\to 0$ as $r$ goes to $0$.
By Proposition \ref{verticalfun}, we deduce that for any $0<r<r_0$:
\begin{itemize}
\item[($\gamma$)] $(\zeta_r)_T=  (y_r)_T$,
\item[($\delta$)] there exists $\lambda_r\neq 0$ such that $\lambda_r((\zeta_r)_H-(y_r)_H)=2\mathcal{Q}(\zeta_r)_H+ b$.
\end{itemize}
As $\zeta_r, y_r\in\mathbb{K}(b,\mathcal{Q},0)$ we have that:
\begin{equation}
\begin{split}
0=\langle (y_r)_H-(\zeta_r)_H, \mathcal{Q}[(y_r)_H-(\zeta_r)_H]\rangle+\langle2\mathcal{Q} [(\zeta_r)_H]+b,(y_r)_H-(\zeta_r)_H\rangle.
\end{split}
\nonumber
\end{equation}
Therefore, for a sufficiently small $r>0$, equation ($\delta$) implies:
\begin{equation}
\begin{split}
1=&\Big\lvert\left\langle \frac{(\zeta_r)_H-(y_r)_H}{\lvert(\zeta_r)_H-(y_r)_H\rvert}, \frac{2\mathcal{Q}(\zeta_r)_H + b}{\lvert2\mathcal{Q}(\zeta_r)_H + b\rvert}\right\rangle\Big\rvert\\
=&\frac{\lvert\langle (y_r)_H-(\zeta_r)_H, \mathcal{Q}[(y_r)_H-(\zeta_r)_H]\rangle\rvert}{\lvert (y_r)_H-(\zeta_r)_H\rvert\lvert 2\mathcal{Q}(\zeta_r)_H + b\rvert}
\leq \frac{\lVert\mathcal{Q}\rVert\lvert (y_r)_H-(\zeta_r)_H\rvert}{\lvert 2\mathcal{Q}(\zeta_r)_H + b\rvert}.
\nonumber
\end{split}
\end{equation}
However, since $\lvert y_r-(\zeta_r)_H\rvert$ converges to $0$ and $2\mathcal{Q}(\zeta_r)_H + b$ converges to $2\mathcal{Q}x_H+b\neq 0$ as $r$ tends to zero, we have a contradiction.
\end{proof}

\section{Disconnectedness of \texorpdfstring{$(2n+1)$}{Lg}-uniform cones implies rigidity of tangents}
\label{section:dis}

In this section we reduce the problem of establishing the flatness of blowups of measures with $(2n+1)$-density to the study of some properties of $(2n+1)$-uniform cones, that we introduce in the following:

\begin{definizione}\label{conelli}
An $m$-uniform measure $\mu$ on $\HH^n$ is said to be an $m$-\emph{uniform cone} if
$\lambda^{-m}\mu_{0,\lambda}=\mu$, for any $\lambda>0$. We denote by $\mathcal{C}_{\HH^n}(m)$ the set of $m$-uniform cones.
\end{definizione}

Such reduction consists in constructing a continuous functional on Radon measures which ''disconnects" $(2n+1)$-flat measures to the non-flat $(2n+1)$-uniform cones in the following way:

\begin{teorema}\label{disco}
Suppose that there exists a functional $\mathscr{F}:\mathcal{M}\to \R$, continuous in the weak-$*$ convergence of measures, and a constant $\hbar=\hbar(\HH^n)>0$ such that:
\begin{itemize}
\item[(i)] if $\mu\in\mathfrak{M}(2n+1)$ then $\mathscr{F}(\mu)\leq \hbar/2$,
\item[(ii)] if $\mu\in\mathcal{C}_{\HH^n}(2n+1)$ and $\mathscr{F}(\mu)\leq\hbar$, then $\mu\in\mathfrak{M}(2n+1)$.
\end{itemize}  
Then, for any $\phi$ Radon measure with $(2n+1)$-density and for $\phi$-almost every $x$:
$$\Tan_{2n+1}(\phi,x)\subseteq \Theta^{2n+1}(\phi,x)\mathfrak{M}(2n+1).$$
\end{teorema}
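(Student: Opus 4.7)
The argument adapts Preiss's disconnection scheme of Chapter~7 in \cite{DeLellis2008RectifiableMeasures} to the sub-Riemannian setting, combining the general tangent machinery of Section~\ref{preliminaries} with the rigidity results of Sections~\ref{sezione1}--\ref{buchi}. Throughout write $\theta(x) := \Theta^{2n+1}(\phi,x)$.

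At $\phi$-a.e.\ $x$ Proposition~\ref{propup} gives $\Tan_{2n+1}(\phi,x) \subseteq \theta(x)\,\mathcal{U}_{\HH^n}(2n+1)$, Proposition~\ref{preiss} gives closure under $\mu \mapsto r^{-(2n+1)}\mu_{y,r}$ for $y \in \supp(\mu)$ and $r>0$, and a standard diagonal extraction makes the set also weak-$*$ sequentially closed. Moreover, since the curve $r \mapsto r^{-(2n+1)}\phi_{x,r}$ is weak-$*$ continuous and, for small $r$, lies in a fixed weak-$*$ compactum, $\Tan_{2n+1}(\phi,x)$ is weak-$*$ connected, being the cluster set at $r\to 0^+$ of a continuous curve into a compactum. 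Arguing by contradiction, assume $E := \{x : \Tan_{2n+1}(\phi,x) \not\subseteq \theta(x)\mathfrak{M}(2n+1)\}$ has positive $\phi$-measure, and fix a point $x \in E$ with the properties above. Starting from any non-flat $\mu \in \Tan_{2n+1}(\phi,x)$, taking $y=0$ and $r=R_k\to\infty$ in Proposition~\ref{preiss} and passing to a weak-$*$ subsequential limit yields a $(2n+1)$-uniform cone $\eta \in \Tan_{2n+1}(\phi,x)$; the rigidity of tangents at infinity of non-flat uniform measures (the first bullet in the introduction after Theorem~\ref{T:2}) permits $\eta$ to be chosen non-flat. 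Hypothesis~(ii) then forces $\mathscr{F}\bigl(\eta/\theta(x)\bigr)>\hbar$.

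A flat element of $\Tan_{2n+1}(\phi,x)$ is produced from $\eta$ using the quadric structure developed in Sections~\ref{sezione1}--\ref{buchi}: by Theorem~\ref{T:1} the support of $\eta/\theta(x)$ is contained in a non-degenerate quadric $\mathbb{K}(b,\mathcal{Q},\mathcal{T})$; by Theorem~\ref{T:2} it is a union of connected components of $\mathbb{K}(b,\mathcal{Q},\mathcal{T})\setminus\Sigma(F)$; hence, picking $y$ in the regular part of the support, Proposition~\ref{verticalsamoa2} yields a flat tangent $\zeta \in \mathfrak{M}(2n+1)$ of $\eta/\theta(x)$ at $y$. Tangent-to-tangent closure places $\theta(x)\zeta \in \Tan_{2n+1}(\phi,x)$, and hypothesis~(i) gives $\mathscr{F}(\zeta)\leq \hbar/2$. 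Weak-$*$ connectedness of $\Tan_{2n+1}(\phi,x)$ combined with continuity of $\mathscr{F}$ then implies that $\mathscr{F}\bigl(\Tan_{2n+1}(\phi,x)/\theta(x)\bigr)$ is a connected subset of $\R$ meeting both $[0,\hbar/2]$ and $(\hbar,+\infty)$, and therefore intersecting the forbidden interval $(\hbar/2,\hbar]$.

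The main obstacle I expect is the final closing step. Hypothesis~(ii) only excludes $(2n+1)$-uniform \emph{cones} from taking $\mathscr{F}$-values in the gap $(\hbar/2,\hbar]$, whereas connectedness alone only produces an intermediate uniform measure $\nu \in \Tan_{2n+1}(\phi,x)$ with $\mathscr{F}(\nu/\theta(x))$ in that gap, not necessarily a cone. Following Preiss, I would close by applying the blow-up-at-infinity construction along the continuous curve $R \mapsto R^{-(2n+1)}\nu_{0,R}$ for such a $\nu$: by the uniqueness of $\Tan_{2n+1}(\nu,\infty)$ (second bullet of the introduction) this curve converges to a single cone $\tilde\nu \in \Tan_{2n+1}(\phi,x)$, and continuity of $\mathscr{F}$ together with an intermediate-value argument along this curve is used to trap a $(2n+1)$-uniform cone with $\mathscr{F}$-value in the forbidden interval, contradicting hypothesis~(ii). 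The adaptation of this final pinning step to $\HH^n$ is exactly where the explicit structural results of Sections~\ref{sezione1}--\ref{buchi} and the continuity properties of $\mathscr{F}$ become essential.
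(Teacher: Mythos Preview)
Your overall architecture is right, and you correctly isolate the crux: hypothesis~(ii) only constrains cones, while connectedness alone hands you a general uniform measure with $\mathscr{F}$ in the gap $(\hbar/2,\hbar]$. But your proposed closing step does not work as stated. Blowing down an arbitrary such $\nu$ gives a cone $\tilde\nu$, yes, but you have no control on $\mathscr{F}(\tilde\nu)$: nothing prevents $\mathscr{F}(\tilde\nu)>\hbar$, and the intermediate values along $R\mapsto R^{-(2n+1)}\nu_{0,R}$ are again non-cones. So you never actually trap a cone in the forbidden interval.

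The paper avoids this by never trying to produce a cone in the gap. It works directly on the continuous function $f(r):=\mathscr{F}(r^{-(2n+1)}\phi_{x,r})$ rather than on the abstract tangent set. With $\nu$ flat and $\chi$ a non-flat cone in $\Tan_{2n+1}(\phi,x)$ (so $f(r_k)<\hbar$ and $f(s_k)>\hbar$ along suitable scales $s_k<r_k$), choose $\sigma_k\in[s_k,r_k]$ to be the \emph{last} crossing of level $\hbar$, i.e.\ $f(\sigma_k)=\hbar$ and $f\le\hbar$ on $[\sigma_k,r_k]$. Pass to a subsequential limit $\xi\in\Tan_{2n+1}(\phi,x)$ with $\mathscr{F}(\xi)=\hbar$. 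One checks $r_k/\sigma_k\to\infty$ (else $\xi$ would be a rescaling of $\nu$, hence flat). The point of the ``last crossing'' choice is that for every fixed $R>1$ one has $R\sigma_k\in[\sigma_k,r_k]$ eventually, whence
\[
\mathscr{F}\bigl(R^{-(2n+1)}\xi_{0,R}\bigr)=\lim_k f(R\sigma_k)\le\hbar\qquad\text{for all }R\ge1.
\]
Now send $R\to\infty$: by uniqueness of the tangent at infinity (property~\ref{(due)}, Theorem~\ref{conol}) the blow-downs converge to a single cone $\psi$ with $\mathscr{F}(\psi)\le\hbar$, so $\psi$ is flat by~(ii). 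Property~\ref{(uno)} (Theorem~\ref{flat}) then forces $\xi$ itself to be flat, contradicting $\mathscr{F}(\xi)=\hbar>\hbar/2$. The contradiction is thus with hypothesis~(i), via \ref{(uno)}, not with~(ii) directly; the one-sided selection of $\sigma_k$ is what makes the bound on $\mathscr{F}(\psi)$ possible.

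A minor remark: your detour through the quadric structure to exhibit a flat tangent is unnecessary; the paper simply observes (Lemma before Subsection~\ref{flattitris}) that Proposition~\ref{preiss} combined with Proposition~\ref{verticalsamoa2} already yields a flat element of $\Tan_{2n+1}(\phi,x)$ at almost every point.
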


The proof of Theorem \ref{disco} relies on the following two properties of $(2n+1)$-uniform measures:
\begin{enumerate}[label=\textbf{P.\arabic*}]
    \item \label{(uno)}if $\Tan_{2n+1}(\mu,\infty)\cap \mathfrak{M}(2n+1)\neq \emptyset$ then $\mu\in\mathfrak{M}(2n+1)$,
    \item\label{(due)} the set $\Tan_{2n+1}(\mu,\infty)$ is a singleton.
\end{enumerate} 

In the Euclidean case, these properties are algebraic consequences of the expansion of moments. For instance the proof of \ref{(due)} in $\R^n$ is quite immediate, see  \cite[Theorem 3.6(2)]{Preiss1987GeometryDensities}, but it really relies on the fact that moments are symmetric multilinear functions. In $\HH^n$ the structure of moments is much more complicated because they are not multilinear. This is the reason why we could prove these properties only in the codimension $1$ case, where fairly strong structure results for $\supp(\mu)$ are available, see Section \ref{buchi}. 

In Subsections \ref{flatty} and \ref{flattibus} we will establish properties \ref{(uno)} and \ref{(due)}, respectively, while in Subsection \ref{flattitris} we will prove Theorem \ref{disco}.

\subsection{Flatness at infinity implies flatness}
\label{flatty}
In this section we prove \ref{(uno)}. As a first step, we show that if $\mu$ is a uniform measure whose support is contained in $\mathbb{K}(b,\mathcal{Q},\mathcal{T})$, then any $\nu\in\Tan_{2n+1}(\mu,\infty)$ has support contained in $\mathbb{K}(0,\mathcal{Q},\mathcal{T})$. This implies that if $\mu$ has a flat tangent at infinity then $\mathbb{K}(0,\mathcal{Q},\mathcal{T})$ must contain a hyperplane. This is  only possible when $\text{rk}(\mathcal{Q})=1,2$, see the proof of Theorem \ref{flat}, and $\mathcal{T}=0$. In Proposition \ref{rank1} we prove that if $\text{rk}(\mathcal{Q})=1$, then $\mu$ must be flat, while in Proposition \ref{rank2}, we show that if $\text{rk}(\mathcal{Q})=2$, then either $\mu$ is flat or it has a unique non-flat tangent at infitity.

\begin{proposizione}\label{Kinfty1}
Let $\mu$ be a $(2n+1)$-uniform measure for which $\supp(\mu)\subseteq \mathbb{K}(b,\mathcal{Q},\mathcal{T})$. Then, for any $\nu\in\Tan_{2n+1}(\mu,\infty)$ we have $\supp(\nu)\subseteq \mathbb{K}(0,\mathcal{Q},\mathcal{T})$.
\end{proposizione}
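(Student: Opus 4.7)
The plan is straightforward: translate the statement ``$y \in \supp(\nu)$'' into information about points of $\supp(\mu)$ via the blowdown sequence, then exploit the quadratic polynomial equation defining $\mathbb{K}(b,\mathcal{Q},\mathcal{T})$ and use the fact that the linear term is killed by the parabolic rescaling.

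More precisely, fix $\nu \in \Tan_{2n+1}(\mu,\infty)$ and let $R_i \to \infty$ be a sequence with $R_i^{-(2n+1)}\mu_{0,R_i} \rightharpoonup \nu$. Given $y \in \supp(\nu)$, the appropriate adaptation of Proposition \ref{propspt1} (or equivalently Lemma \ref{replica}(ii)) produces a sequence $y_i$ lying in $\supp(\mu_{0,R_i})$ with $y_i \to y$. Unwinding the definition of $\mu_{0,R_i}$ shows that $x_i := D_{R_i}(y_i) \in \supp(\mu)$, and hence $x_i$ satisfies
\begin{equation*}
\langle b, (x_i)_H\rangle + \langle (x_i)_H, \mathcal{Q}(x_i)_H\rangle + \mathcal{T}(x_i)_T = 0.
\end{equation*}

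The key observation is that under the anisotropic dilation $D_{R_i}$, we have $(x_i)_H = R_i (y_i)_H$ and $(x_i)_T = R_i^2 (y_i)_T$; so substituting and dividing the identity by $R_i^2$ yields
\begin{equation*}
\tfrac{1}{R_i}\langle b,(y_i)_H\rangle + \langle (y_i)_H, \mathcal{Q}(y_i)_H\rangle + \mathcal{T}(y_i)_T = 0.
\end{equation*}
Here the homogeneity mismatch does all the work: the linear term, being $1$-homogeneous in the horizontal variable, is suppressed by a factor $R_i^{-1}$, whereas the quadratic and vertical terms are both $2$-homogeneous (in the correct sense) and therefore pass to the limit unchanged. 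Letting $i \to \infty$, using $y_i \to y$, $R_i \to \infty$, and boundedness of $b$, we obtain
\begin{equation*}
\langle y_H, \mathcal{Q} y_H \rangle + \mathcal{T} y_T = 0,
\end{equation*}
i.e.\ $y \in \mathbb{K}(0,\mathcal{Q},\mathcal{T})$, as desired.

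There is really no hard step here; the only point requiring a bit of care is the existence of the approximating sequence $y_i \in \supp(\mu_{0,R_i})$, which is not literally Proposition \ref{propspt1} (stated for blowups at finite points) but follows from the identical argument since $\mu$ is $(2n+1)$-uniform, and in fact is already recorded in Lemma \ref{replica}(ii) applied to the sequence of rescaled uniform measures $R_i^{-(2n+1)}\mu_{0,R_i}$, which are uniform by Proposition \ref{uniformup}.
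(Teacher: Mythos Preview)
Your proof is correct and is essentially identical to the paper's own argument: both approximate a point $y\in\supp(\nu)$ by $D_{1/R_i}(v_i)$ with $v_i\in\supp(\mu)$ (via Lemma~\ref{replica}), substitute $v_i=D_{R_i}(y_i)$ into the quadric equation, divide by $R_i^2$, and pass to the limit so that the linear term $R_i^{-1}\langle b,(y_i)_H\rangle$ vanishes. The only cosmetic difference is notation and your explicit remark about the homogeneity mismatch.
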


\begin{proof}
Proposition \ref{replica} implies that for any $w\in\supp(\nu)$ there is are sequences $\{v_i\}_{i\in\N}\subseteq\supp(\mu)$ and $R_i\to\infty$ for which $w_i:=D_{1/R_i}(v_i)\to w$. The condition $v_i=D_{R_i}(w_i)\in\mathbb{K}(b,\mathcal{Q},\mathcal{T})$ reads:
$$R_i^2\langle (w_i)_H,\mathcal{Q}[(w_i)_H]\rangle+R_i\langle b,(w_i)_H\rangle+R_i^2\mathcal{T}(w_i)_T=0.$$
Dividing the above identity by $R_i^2$ and sending $i$ to infinity, we get that:
$$\langle  w_H,\mathcal{Q}[  w_H]\rangle+\mathcal{T}w_T=0,$$
which implies that $\supp(\nu)\subseteq \mathbb{K}(0,\mathcal{Q},\mathcal{T})$.
\end{proof}

\begin{proposizione}\label{escl}
Let $E,F$ be closed sets in $\HH^n$ and suppose that $\mathcal{C}^{2n+1}\llcorner E$ is a $(2n+1)$-uniform measure and that $\mathcal{C}^{2n+1}(E\cap F)=0$. Then, the measure $\mathcal{C}^{2n+1}\llcorner{(E\cup F)}$ is $(2n+1)$-uniform measure if and only if $\mathcal{C}^{2n+1}(F)=0$.
\end{proposizione}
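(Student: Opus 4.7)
The plan is to reduce both implications to a single key identity: under the standing hypothesis $\mathcal{S}^{2n+1}(E\cap F)=0$, for every $x\in\supp(\mathcal{S}^{2n+1}_E)$ and every $r>0$,
\[
\mathcal{S}^{2n+1}_{E\cup F}(B_r(x)\cap E)=r^{2n+1}.
\]
The inequality $\leq$ is immediate from Remark \ref{remarkone}. For the reverse inequality I would use a neighbourhood-exhaustion argument: fix $\varepsilon>0$ and set $T_\delta:=\{z\in B_r(x)\cap E:d(z,F\setminus E)\leq\delta\}$. Since $F$ is closed, $\bigcap_{\delta>0}T_\delta\subseteq\overline{F\setminus E}\cap E\subseteq F\cap E$, which has zero $\mathcal{S}^{2n+1}$-measure by hypothesis, hence zero $\mathcal{S}^{2n+1}_E$-measure by Remark \ref{remarkone}. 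Downward continuity of the locally finite $\mathcal{S}^{2n+1}_E$ on $B_r(x)$ then produces $\delta_0>0$ with $\mathcal{S}^{2n+1}_E(T_{\delta_0})\leq\varepsilon$. Now in any cover of $B_r(x)\cap E$ by balls of radius $\leq\delta<\delta_0$ centred in $E\cup F$, the balls with centres in $F\setminus E$ meet $E$ only inside $T_\delta\subseteq T_{\delta_0}$, so the balls centred in $E$ must cover $B_r(x)\cap E\setminus T_{\delta_0}$ by themselves, contributing total cost at least $\mathscr{S}^{2n+1}_{\delta,E}(B_r(x)\cap E\setminus T_{\delta_0})$. Taking the infimum over such covers and letting first $\delta\to 0$ and then $\varepsilon\to 0$ yields $\mathcal{S}^{2n+1}_{E\cup F}(B_r(x)\cap E)\geq r^{2n+1}$.

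With the identity in hand, the forward direction is short: if $\mathcal{S}^{2n+1}(F)=0$, Remark \ref{remarkone} forces $\mathcal{S}^{2n+1}_{E\cup F}(F)=0$, so $\mathcal{S}^{2n+1}_{E\cup F}$ is concentrated on $E$ and has the same support as $\mathcal{S}^{2n+1}_E$, and then for $x$ in this common support and $r>0$ the identity yields
\[
\mathcal{S}^{2n+1}_{E\cup F}(B_r(x))=\mathcal{S}^{2n+1}_{E\cup F}(B_r(x)\cap E)=r^{2n+1},
\]
so $\mathcal{S}^{2n+1}_{E\cup F}$ is $(2n+1)$-uniform. For the reverse direction, assume $\nu:=\mathcal{S}^{2n+1}_{E\cup F}$ is $(2n+1)$-uniform. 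Since $\mathcal{S}^{2n+1}_E$ is uniform, $0\in\supp(\mathcal{S}^{2n+1}_E)\subseteq E$, so the identity yields $\nu(B_r(0)\cap E)=r^{2n+1}$, while uniformity of $\nu$ gives $\nu(B_r(0))=r^{2n+1}$. Subtracting produces $\nu(B_r(0)\cap(F\setminus E))=0$ for every $r>0$; letting $r\to\infty$ and invoking upward continuity gives $\nu(F\setminus E)=0$. Combined with $\nu(E\cap F)\leq 2^{2n+1}\mathcal{S}^{2n+1}(E\cap F)=0$ (Remark \ref{remarkone}), this gives $\nu(F)=0$, and a final application of Remark \ref{remarkone} gives $\mathcal{S}^{2n+1}(F)\leq 2^{2n+1}\nu(F)=0$.

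The main obstacle is the lower bound in the key identity: Remark \ref{remarkone} by itself only yields a factor-$2^{2n+1}$ loss, too weak to certify uniformity. Overcoming this relies on the standing hypothesis $\mathcal{S}^{2n+1}(E\cap F)=0$ to control the extra coverage that balls centred in $F\setminus E$ can provide to $E$: in the limit $\delta\to 0$ these balls can cover only a vanishingly small piece of $E$, and the neighbourhood-exhaustion argument above makes this precise.
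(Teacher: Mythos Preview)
Your proof is correct and follows essentially the same neighbourhood--exhaustion idea as the paper. The paper also splits $B_r(x)\cap E$ into the part at distance $>\delta$ from $F$ (on which any sufficiently fine $(E\cup F)$--centred cover must in fact be $E$--centred) and a remainder tending to $E\cap F$, then lets $\delta\to 0$ using $\mathcal{S}^{2n+1}(E\cap F)=0$; this yields $\mathcal{S}^{2n+1}_{E\cup F}\llcorner E(B_r(x))=\mathcal{S}^{2n+1}_E(B_r(x))$, which is exactly your key identity.

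The only noteworthy difference is the choice of base point. The paper restricts to a point $x\in E$ with $\rho:=\dist(x,F)>0$ (such a point exists since otherwise $E\subseteq F$, contradicting $\mathcal{S}^{2n+1}(E\cap F)=0$); this makes the ``only if'' direction a one--liner, since for $r<\rho/2$ the ball misses $F$ entirely and gives measure $r^{2n+1}$, whereas once $r$ is large enough to swallow a chunk of $F$ with $\mathcal{S}^{2n+1}(F)>0$ the measure exceeds $r^{2n+1}$. Your version instead proves the identity at \emph{every} $x\in\supp(\mathcal{S}^{2n+1}_E)$ by exhausting with $F\setminus E$ rather than $F$, which lets you handle both implications uniformly and, in particular, gives a cleaner proof of the ``if'' direction than the paper (which leaves that direction implicit).
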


\begin{proof}
If $F$ is $\mathcal{C}^{2n+1}$-null there is nothing to prove and $\mathcal{C}^{2n+1}\llcorner (E\cup F)$ is uniform.

Viceversa, let us assume that $F$ is \emph{not} $\mathcal{C}^{2n+1}$-null.
Let $x\in E$ be such that $\rho:=\text{dist}(x,F)>0$. It is immediate to see that for any $0<r<\rho/2$ we have:
\begin{equation}
\begin{split}
        \mathcal{C}^{2n+1}\llcorner (E\cup F)(B_r(x))=&\mathcal{C}^{2n+1}(B_r(x)\cap (E\cup F))\\
        =&\mathcal{C}^{2n+1}(B_r(x)\cap E)=r^{2n+1}.
    \label{eq:eq:110}
\end{split}
\end{equation}
On the other hand, if $r>\rho$ is such that $\mathcal{C}^{2n+1}(B_r(x)\cap F)>0$, we infer that:
\begin{equation}
\begin{split}
    &\qquad\qquad\mathcal{C}^{2n+1}\llcorner (E\cup F)(B_r(x))=\mathcal{C}^{2n+1}(B_r(x)\cap (E\cup F))\\
    =&\mathcal{C}^{2n+1}(B_r(x)\cap E)+\mathcal{C}^{2n+1}(B_r(x)\cap F)-\mathcal{C}^{2n+1}(B_r(x)\cap E\cap F)\\
    &\qquad\qquad\qquad\qquad=r^{2n+1}+\mathcal{C}^{2n+1}(B_r(x)\cap F)>r^{2n+1}.
    \label{eq:eq:111}
\end{split}    
\end{equation}
Thus, \eqref{eq:eq:110} and \eqref{eq:eq:111} together show that under the hypothesis $\mathcal{C}^{2n+1}(F)>0$, the measure $\mathcal{C}^{2n+1}\llcorner (E\cup F)$ cannot be uniform.
\end{proof}

\begin{lemma}\label{conn}
Suppose the quadric $\mathbb{K}(b,\mathcal{Q},\mathcal{T})$ is connected and that it supports a $(2n+1)$-uniform measure $\mu$. If $\Sigma(F)=\emptyset$, then $\mu=\mathcal{C}^{2n+1}\llcorner{\mathbb{K}(b,\mathcal{Q},\mathcal{T})}$.
\end{lemma}

\begin{proof}
The Lemma is an immediate consequence of Propositions \ref{supportoK} and \ref{spt1}(i) for the case $\mathcal{T}\neq 0$ and Propositions \ref{supportoK} and \ref{spt2} for the case $\mathcal{T}=0$.
\end{proof}

We study here the case $\text{rk}(\mathcal{Q})=1$, i.e., there exists a non-zero vector $\mathfrak{n}$ such that $\mathcal{Q}=\mathfrak{n}\otimes \mathfrak{n}$.

\begin{proposizione}\label{rank1}
Let $\mu$ be a $(2n+1)$-uniform measure supported on $\mathbb{K}(b,\mathfrak{n}\otimes \mathfrak{n},0)$. Then $\mu$ is flat.
\end{proposizione}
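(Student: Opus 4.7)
The plan is to decompose $b$ into its components parallel and orthogonal to $\mathfrak{n}$ and analyze the resulting structure of the quadric. Writing $b=\beta\mathfrak{n}+b^\perp$ with $\langle\mathfrak{n},b^\perp\rangle=0$, the defining equation of $\mathbb{K}:=\mathbb{K}(b,\mathfrak{n}\otimes\mathfrak{n},0)$ reads
\[
\beta\langle\mathfrak{n},x_H\rangle+\langle b^\perp,x_H\rangle+\langle\mathfrak{n},x_H\rangle^2=0,
\]
and the shape of $\mathbb{K}$ is controlled by which of $\beta$ and $b^\perp$ vanish.

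If $b=0$, then $\mathbb{K}$ collapses to the single vertical hyperplane $V(\mathfrak{n})$, and Proposition~\ref{verticalsamoa} immediately gives $\mu=\mathcal{S}^{2n+1}_{V(\mathfrak{n})}$, which is flat by definition. If $b^\perp=0$ but $\beta\neq 0$, the equation factors as $\langle\mathfrak{n},x_H\rangle(\beta+\langle\mathfrak{n},x_H\rangle)=0$, so $\mathbb{K}$ is the disjoint union of two parallel vertical hyperplanes $V_1\cup V_2$. A direct check shows $\Sigma(F)\cap\mathbb{K}=\emptyset$ (the characteristic condition $\langle\mathfrak{n},x_H\rangle=-\beta/2$ does not satisfy the quadric when $\beta\neq 0$), so by Proposition~\ref{spt2} the support $\supp(\mu)$ is a non-empty union of connected components of $\mathbb{K}$, hence a subset of $\{V_1,V_2\}$. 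Since $0\in V_1\cap\supp(\mu)$, at least $V_1\subseteq\supp(\mu)$; and if $V_2\subseteq\supp(\mu)$ as well, applying Proposition~\ref{escl} with $E=V_1$ and $F=V_2$ gives a contradiction (as $\mathcal{S}^{2n+1}(V_2)\neq 0$). Hence $\supp(\mu)=V_1$ and Proposition~\ref{supportoK} yields $\mu=\mathcal{S}^{2n+1}_{V_1}$, which is flat.

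The only delicate case is $b^\perp\neq 0$: then $\mathbb{K}$ is a genuinely curved parabolic cylinder, namely a connected quadratic graph over a vertical hyperplane in $\HH^n$. Since $b\not\parallel\mathfrak{n}$, we have $\Sigma(F)\cap\mathbb{K}=\emptyset$, so Corollary~\ref{conn} forces $\mu=\mathcal{S}^{2n+1}_\mathbb{K}$. My plan to rule this out is to confront the tangents of $\mu$ at $0$ and at infinity. Proposition~\ref{Kinfty1} confines every blowdown of $\mu$ to the limit quadric $\mathbb{K}(0,\mathfrak{n}\otimes\mathfrak{n},0)=V(\mathfrak{n})$, so by Propositions~\ref{uniformup} and~\ref{verticalsamoa} one obtains $\Tan_{2n+1}(\mu,\infty)=\{\mathcal{S}^{2n+1}_{V(\mathfrak{n})}\}$; on the other hand Proposition~\ref{verticalsamoa2} gives $\Tan_{2n+1}(\mu,0)=\{\mathcal{S}^{2n+1}_{V(b)}\}$, a flat measure with a different normal since $b\not\parallel\mathfrak{n}$. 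The hard part will be to translate this directional mismatch into a genuine contradiction with $(2n+1)$-uniformity: the plan is to expand $\mathcal{S}^{2n+1}_\mathbb{K}(B_r(0))$ for small $r$ using the intrinsic-graph representation $\langle b,x_H\rangle=-\langle\mathfrak{n},x_H\rangle^2$, showing that the second-order bending measured by $\mathfrak{n}\otimes\mathfrak{n}$ contributes a nonzero curvature coefficient $C$ in $\mathcal{S}^{2n+1}_\mathbb{K}(B_r(0))=r^{2n+1}+Cr^{2n+3}+O(r^{2n+5})$, which would contradict the identity $\mu(B_r(0))=r^{2n+1}$.
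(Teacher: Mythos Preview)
Your treatment of the cases $b=0$ and $b\parallel\mathfrak{n}$ is essentially identical to the paper's. The divergence is in the case $b^\perp\neq 0$.

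You correctly observe that $\Tan_{2n+1}(\mu,\infty)=\{\mathcal{S}^{2n+1}_{V(\mathfrak{n})}\}$, and this is exactly the ingredient the paper uses --- but the paper extracts a contradiction from it directly, without any Taylor expansion. The argument is this: by Proposition~\ref{replica}, for every $w\in V(\mathfrak{n})$ there is a sequence $v_i\in\supp(\mu)$ with $D_{1/i}(v_i)\to w$. Pick a unit vector $u\perp\mathfrak{n}$ with $\langle b,u\rangle>0$, let $W$ be the orthogonal complement of $\text{span}(u,\mathfrak{n})$, and write the quadric equation for $v_i$ as
\[
0=\langle b,\mathfrak{n}\rangle\langle (v_i)_H,\mathfrak{n}\rangle+\langle b,u\rangle\langle (v_i)_H,u\rangle+\langle b,P_W[(v_i)_H]\rangle+\langle\mathfrak{n},(v_i)_H\rangle^2.
\]
Dropping the nonnegative square, dividing by $i$ and passing to the limit gives
\[
\langle b,u\rangle\langle w_H,u\rangle\leq -\langle b,P_W[w_H]\rangle.
\]
Since this must hold for every $w\in V(\mathfrak{n})$, applying it to $-w$ forces equality; but the two sides depend on independent components of $w_H$ (take $w_H=u$ to see the contradiction immediately). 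Thus $\mathbb{K}(b,\mathfrak{n}\otimes\mathfrak{n},0)$ cannot support a uniform measure when $b\not\parallel\mathfrak{n}$.

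Your proposed route --- computing the $r^{2n+3}$ coefficient in the expansion of $\mathcal{S}^{2n+1}_{\mathbb{K}}(B_r(0))$ --- is plausible in principle, but it is left as a plan rather than carried out, and would require an analogue for vertical quadrics of the lengthy perimeter expansion the paper develops in Appendix~\ref{TYLR} for horizontal ones. The paper's argument bypasses all of this: the ``directional mismatch'' you identify between the tangents at $0$ and at $\infty$ is already enough, once one realises that the blowdown being the \emph{full} plane $V(\mathfrak{n})$ imposes a linear constraint on $\supp(\mu)$ that the genuinely quadratic equation cannot satisfy.
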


\begin{proof}
By scaling we can assume that $\mathfrak{n}$ is unitary.
If $b=0$ there is nothing to prove since Proposition \ref{verticalsamoa} directly implies that $\mu\in\mathfrak{M}(2n+1)$. Therefore we can assume without loss of generality that $b\neq 0$. A consequence of Proposition \ref{Kinfty1} and the discussion of the case in which $b=0$, is that for any $\mu$ satisfying the hypothesis of the proposition, we have $\Tan(\mu,\infty)=\big\{\mathcal{C}^{2n+1}\llcorner{V(\mathfrak{n})}\big\}.$

There are two possibilities for $b$: either it is parallel to $\mathfrak{n}$ or it is not. We begin with the simpler case in which  $b$ is parallel to $\mathfrak{n}$. In such a case, there exists $\lambda\in\R\setminus\{0\}$ for which $b=\lambda \mathfrak{n}$ and:
\begin{equation}
    \mathbb{K}(\lambda \mathfrak{n},\mathfrak{n}\otimes \mathfrak{n},0)=V(\mathfrak{n})\cup\bigg(-\lambda\frac{\mathfrak{n}}{\lvert \mathfrak{n}\rvert}+V(\mathfrak{n})\bigg).
    \label{eq:eq:eq1}
\end{equation}
The points $w$ of the singular set $\Sigma(F)$, see \eqref{numbero2}, must satisfy the equation:
$$(\lambda +2\langle w_H,\mathfrak{n}\rangle)\mathfrak{n}=0,$$
which implies together with \eqref{eq:eq:eq1} that
$\Sigma(F)=\emptyset$.
Since $\mathcal{C}^{2n+1}\llcorner{V(\mathfrak{n})}$ is $(2n+1)$-uniform, Proposition \ref{escl} together with Proposition \ref{spt2}, imply that $\mu=\mathcal{C}^{2n+1}\llcorner {V(\mathfrak{n})}$.

We are left to discuss the case in which $b$ is not parallel to $\mathfrak{n}$. Since $\Tan_{2n+1}(\mu,\infty)=\{\mathcal{C}^{2n+1}\llcorner{V(\mathfrak{n})}\}$, Proposition \ref{propspt1} implies that for any $w\in V(\mathfrak{n})$ there exists a sequence $\{v_i\}_{i\in\N}\subseteq \supp(\mu)$ such that $D_{1/i}(v_i)\to w$. 
Let $u\in\R^{2n}$ be a unitary vector, orthogonal to $\mathfrak{n}$ and such that $\langle b,u\rangle>0$. Moreover, let $W$ be the orthogonal in $\R^{2n}$ of the span of the vectors $u$ and $\mathfrak{n}$ and denote by $P_W$ the orthogonal projection on $W$. Recall that for every $i$ the $v_i$'s must satisfy the equation: 
$$\langle b,(v_i)_H\rangle+\langle\mathfrak{n},(v_i)_H\rangle^2=0,$$
which, decomposing $v_i$ along $u$, $\mathfrak{n}$ and $W$, becomes:
\begin{equation}
\begin{split}
    0=&\langle b,\mathfrak{n}\rangle\langle (v_i)_H,\mathfrak{n}\rangle+\langle b,u\rangle\langle (v_i)_H,u\rangle+\langle b, P_{W}[(v_i)_H]\rangle+\langle \mathfrak{n},(v_i)_H\rangle^2\\
    \geq&\langle b,\mathfrak{n}\rangle\langle (v_i)_H,\mathfrak{n}\rangle+\langle b,u\rangle\langle (v_i)_H,u\rangle+\langle b, P_{W}[(v_i)_H]\rangle,
\end{split}
    \nonumber
\end{equation}
for any $i\in\N$. If we divide by $i$ the above inequality and let $i\to\infty$, we get:
\begin{equation}
    \langle b,u\rangle\langle w_H,u\rangle\leq-\langle b, P_{W}[w_H]\rangle,
    \label{numbero7}
\end{equation}
since $w_H$ is orthogonal to $\mathfrak{n}$. By the arbitrariness of $w\in V(\mathfrak{n})$, inequality \eqref{numbero7} must be satisfied for any $w_H$ orthogonal to $\mathfrak{n}$. Therefore, since \eqref{numbero7} holds for both $w_H$ and $-w_H$, then: 
$$\langle b,u\rangle\langle w_H,u\rangle=-\langle b, P_{W}[w_H]\rangle.$$
However, the above identity cannot be satisfied for any $w_H$ orthogonal to $\mathfrak{n}$, proving that $\mathbb{K}(b,\mathcal{Q},0)$ in this case cannot support a uniform measure.
\end{proof}

\begin{proposizione}\label{rango}
Let $\mu$ be a $(2n+1)$-uniform measure supported on $\mathbb{K}(b,\mathcal{Q},0)$. If $\mathcal{Q}$ is semidefinite, then $\text{rk}(\mathcal{Q})=1$.
\end{proposizione}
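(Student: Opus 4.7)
The plan is to argue by contradiction. I will assume $\text{rk}(\mathcal{Q})\geq 2$ and derive a contradiction from the structure of blowups at infinity. Since $\mathbb{K}(b,\mathcal{Q},0)=\mathbb{K}(-b,-\mathcal{Q},0)$ as sets, I may replace $(b,\mathcal{Q})$ with $(-b,-\mathcal{Q})$ if needed and reduce to the case where $\mathcal{Q}$ is \emph{positive} semidefinite.

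The key geometric observation I would exploit is that for such $\mathcal{Q}$ one has
\begin{equation*}
\mathbb{K}(0,\mathcal{Q},0)=\{(x,t)\in\R^{2n+1}:\langle x,\mathcal{Q}x\rangle=0\}=\text{Ker}(\mathcal{Q})\times\R,
\end{equation*}
so the ``leading'' quadric already degenerates to a vertical subgroup. Proposition \ref{uniformup} guarantees that $\Tan_{2n+1}(\mu,\infty)$ is nonempty, and I would pick any $\nu$ in this set. Proposition \ref{Kinfty1} then forces $\supp(\nu)\subseteq\mathbb{K}(0,\mathcal{Q},0)=\text{Ker}(\mathcal{Q})\times\R$.

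To close the argument I would use that $\mathcal{Q}$ is symmetric, so $\text{Ker}(\mathcal{Q})^\perp$ coincides with the range of $\mathcal{Q}$; in particular it is nontrivial because $\mathcal{Q}\neq 0$. Choosing any unit vector $\mathfrak{n}\in\text{Ker}(\mathcal{Q})^\perp$, one has $\langle\mathfrak{n},z_H\rangle=0$ for every $z\in\supp(\nu)$, so $\supp(\nu)\subseteq V(\mathfrak{n})$. Proposition \ref{verticalsamoa} then gives $\nu=\mathcal{S}^{2n+1}_{V(\mathfrak{n})}$, whose support is the entire hyperplane $V(\mathfrak{n})$. Combining the two inclusions, $V(\mathfrak{n})\subseteq\text{Ker}(\mathcal{Q})\times\R$, which upon projecting horizontally reads $\mathfrak{n}^\perp\subseteq\text{Ker}(\mathcal{Q})$. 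A dimension count then yields $2n-1\leq\dim\text{Ker}(\mathcal{Q})=2n-\text{rk}(\mathcal{Q})\leq 2n-2$, the desired contradiction.

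No step here looks particularly delicate; the whole argument rests on the dimension mismatch between a full vertical hyperplane and the proper vertical subgroup $\text{Ker}(\mathcal{Q})\times\R$. The only point that would require care is to notice that the blowup $\nu$ at infinity is itself a $(2n+1)$-uniform measure (by Proposition \ref{uniformup}), so that the rigidity result of Proposition \ref{verticalsamoa} may indeed be applied to it.
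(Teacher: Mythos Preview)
Your proof is correct and follows essentially the same route as the paper: pass to a tangent $\nu$ at infinity, use Proposition~\ref{Kinfty1} to trap $\supp(\nu)$ inside $\mathbb{K}(0,\mathcal{Q},0)=\text{Ker}(\mathcal{Q})\times\R$, and then derive a contradiction from $\text{rk}(\mathcal{Q})\geq 2$. The only cosmetic difference is the endgame: the paper observes directly that $\supp(\nu)\subseteq\bigcap_i V(\mathfrak{n}_i)$ (intersection over eigenvectors of nonzero eigenvalues) and invokes Proposition~\ref{dege} to conclude $\mathcal{S}^{2n+1}(\supp(\nu))=0$, whereas you first apply Proposition~\ref{verticalsamoa} to force $\supp(\nu)=V(\mathfrak{n})$ and then reach the contradiction via a dimension count.
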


\begin{proof} For any $\nu\in\Tan_{2n+1}(\mu,\infty)$,
Proposition \ref{Kinfty1} implies that $\supp(\nu)\subseteq\mathbb{K}(0,\mathcal{Q},0)$. Suppose by contradiction that $\text{rk}(\mathcal{Q})\geq 2$, then: 
$$\supp(\nu)\subseteq \bigcap_{i=1}^{\text{rk}(\mathcal{Q})} V(\mathfrak{n}_i).$$
where $\mathfrak{n}_i$ are the eigenvectors relative to non-zero eigenvalues of $\mathcal{Q}$. This would imply by Corollary \ref{dege} that
$\mathcal{C}^{2n+1}(\supp(\nu))=0$,
which is a contradiction.
\end{proof}

The following proposition will be useful in the rest of the section as it provides an efficient way to describe the structure of the support of tangent measures at infinity to those $(2n+1)$-uniform measures which are supported on graphs.

\begin{proposizione}\label{grappico}
Let $\mathfrak{n}\in\mathbb{S}^{2n-1}$ and suppose that  $g:\R^{2n-1}\to\R$ is a continuous function such that:
\begin{equation}
    \lim_{\lambda\to\infty}\frac{g(\lambda z)}{\lambda}=g^\infty\bigg(\frac{z}{\lvert z\rvert}\bigg)\lvert z\rvert,
    \label{equi}
\end{equation}
for any $z\in\R^{2n-1}$ where $g^\infty:\mathbb{S}^{2n-2}\to\R$ is a continuous function.
Moreover we define:
$$\Gamma:=\{z+g(z)\mathfrak{n}:z\in\mathfrak{n}^\perp\}\qquad\text{and}\qquad\Gamma_\infty:=\{z+g^\infty(z/\lvert z\rvert)\lvert z\rvert\mathfrak{n}:z\in\mathfrak{n}^\perp\}.$$
Let $\mu$ be a $(2n+1)$-uniform measure in $\mathbb{H}^n$ for which $\Gamma\times \R e_{2n+1}\subseteq \supp(\mu)$. Then, for any $\nu\in\Tan_{2n+1}(\mu,\infty)$ we have:
$$\Gamma_\infty\times \R e_{2n+1}\subseteq \supp(\nu).$$
\end{proposizione}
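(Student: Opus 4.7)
The plan is to realize every point $y\in\Gamma_\infty\times\R$ as the limit of an explicit sequence of points of $D_{1/R_i}(\supp(\mu))$ and then conclude via the stability of supports under weak convergence of uniform measures furnished by Proposition \ref{replica}(iii). The heuristic driving the construction is that $\Gamma_\infty$ is by design the asymptotic cone of $\Gamma$, and the horizontal/vertical scaling of the anisotropic dilations $D_{1/R_i}$ exactly matches the growth rate of $g$ recorded in \eqref{equi}.

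First I would fix $\nu\in\Tan_{2n+1}(\mu,\infty)$ together with a sequence $R_i\to\infty$ for which $\mu_i:=\mu_{0,R_i}/R_i^{2n+1}\rightharpoonup\nu$. A short direct computation using the identity $\mu(B_r(x))=r^{2n+1}$ for $x\in\supp(\mu)$ shows that each $\mu_i$ is itself $(2n+1)$-uniform with $\supp(\mu_i)=D_{1/R_i}(\supp(\mu))$. Proposition \ref{replica}(iii) then reduces the task to exhibiting, for each $y=(z_0+g^\infty(z_0/\lvert z_0\rvert)\lvert z_0\rvert\mathfrak{n},\,t_0)\in\Gamma_\infty\times\R$ with $z_0\in\mathfrak{n}^\perp$ and $t_0\in\R$, a sequence $y_i\in\supp(\mu_i)$ converging to $y$.

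The natural candidate is obtained by pulling $y$ back along $\Gamma\times\R$ under $D_{R_i}$: set
$$w_i:=(R_i z_0+g(R_i z_0)\mathfrak{n},\,R_i^2 t_0).$$
Since $R_i z_0\in\mathfrak{n}^\perp$, its horizontal part lies in $\Gamma$, so $w_i\in\Gamma\times\R\subseteq\supp(\mu)$. Dilating gives
$$y_i:=D_{1/R_i}(w_i)=\Bigl(z_0+\tfrac{g(R_i z_0)}{R_i}\mathfrak{n},\,t_0\Bigr)\in\supp(\mu_i),$$
and \eqref{equi} applied with $h=z_0$ yields $g(R_i z_0)/R_i\to g^\infty(z_0/\lvert z_0\rvert)\lvert z_0\rvert$ whenever $z_0\neq 0$; the case $z_0=0$ is immediate since then $g(0)/R_i\to 0$. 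Either way $y_i\to y$ in the Euclidean (and hence Koranyi) topology, so Proposition \ref{replica}(iii) gives $y\in\supp(\nu)$, as required.

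I do not anticipate a real obstacle here: the two ingredients, namely the dilation-equivariance of the uniformity condition (which makes the $\mu_i$ themselves $(2n+1)$-uniform) and the pointwise asymptotic \eqref{equi}, are assembled by design so as to match. The only point requiring a little care is that \eqref{equi} is pointwise rather than uniform in $h$, but the construction only invokes it along the fixed direction $z_0$, which is exactly the regime covered by the hypothesis.
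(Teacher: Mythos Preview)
Your proof is correct and follows essentially the same approach as the paper: both construct, for a given point $y=(w,\tau)\in\Gamma_\infty\times\R$, the sequence $y_i=D_{1/R_i}\big(R_iP_{\mathfrak{n}}(w)+g(R_iP_{\mathfrak{n}}(w))\mathfrak{n},R_i^2\tau\big)\in\supp(\mu_i)$ and use \eqref{equi} to obtain $y_i\to y$, then invoke Proposition~\ref{replica}(iii). The only cosmetic difference is that the paper packages the construction as a continuous curve $\gamma(t)$ evaluated at $t=R_i$, whereas you write the sequence directly.
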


\begin{proof}
Let $(w,\tau)\in\Gamma_\infty\times\R$ and define the curve $\gamma:[0,\infty)\to\Gamma\times \R$ as:
$$\gamma(t):= (t P_{\mathfrak{n}}(w)+g(t P_{\mathfrak{n}}(w))\mathfrak{n},t^2 \tau),$$
where $P_\mathfrak{n}:\R^{2n}\to\mathfrak{n}^\perp$ is the orthogonal projection on $\mathfrak{n}^\perp$. The curve $\gamma$ is contained in $\supp(\mu)$ and
$$\lim_{t\to\infty} D_{1/t}(\gamma(t))=\bigg(P_{\mathfrak{n}}(w)+g^\infty\bigg(\frac{ P_{\mathfrak{n}}(w)}{\lvert P_{\mathfrak{n}}(w)\rvert}\bigg)\lvert P_{\mathfrak{n}}(w)\rvert\mathfrak{n},\tau\bigg)=(w,\tau),$$
where the last equality comes from the fact that $w\in\Gamma_\infty$. 
Let $\nu\in\Tan_{2n+1}(\mu,\infty)$ and $R_i\to \infty$ be the sequence for which  $R_i^{-(2n+1)}\mu_{0,R_i}\rightharpoonup\nu$. Then, we have that:
$$D_{1/R_i}(\gamma(R_i))\in\supp(R_i^{-(2n+1)}\mu_{0,R_i}).$$
Therefore \eqref{equi} implies by Proposition \ref{replica} that $(w,\tau)\in\supp(\nu)$. By the arbitrariness of $(w,\tau)$ and of $\nu$, we have that $\Gamma_\infty\times \R e_{2n+1}\subseteq \supp(\nu)$ for any $\nu\in\Tan_{2n+1}(\mu,\infty)$.
\end{proof}

The following proposition establishes both properties \ref{(uno)} and \ref{(due)} in the case the quadric $\mathbb{K}(b,\mathcal{Q},0)$ with $\text{rk}(\mathcal{Q})=2$.

\begin{proposizione}
Suppose $\mu$ is a $(2n+1)$-uniform measure supported on $\mathbb{K}(b,\mathcal{Q},0)$. If $\text{rk}(\mathcal{Q})=2$, then one of the following two mutually exclusive conditions holds:
\begin{itemize}
\item[(i)] $\mu\in\mathfrak{M}(2n+1)$,
\item[(ii)] $\Tan_{2n+1}(\mu,\infty)=\{\nu\}$ and $\nu$ is not flat.
\end{itemize}
\label{rank2}
\end{proposizione}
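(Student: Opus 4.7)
The overall strategy is to first classify all $(2n+1)$-uniform measures supported in $\mathbb{K}(0,\mathcal{Q},0)$---a necessary container for every $\nu\in\Tan_{2n+1}(\mu,\infty)$ by Proposition \ref{Kinfty1}---and then determine which of them are realised by analysing the connected components of $\supp(\mu)\subseteq\mathbb{K}(b,\mathcal{Q},0)$.

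Since $\text{rk}(\mathcal{Q}) = 2$, Proposition \ref{rango} forces $\mathcal{Q}$ to be indefinite on $W:=\text{Image}(\mathcal{Q})$, so $\mathbb{K}(0,\mathcal{Q},0) = V(\mathfrak{m}_1)\cup V(\mathfrak{m}_2)$ for linearly independent $\mathfrak{m}_1,\mathfrak{m}_2\in W$, and $\Sigma(F_0) := V(\mathfrak{m}_1)\cap V(\mathfrak{m}_2) = W^\perp\times\R$ splits each $V(\mathfrak{m}_i)\setminus W^\perp$ into two halves $V(\mathfrak{m}_i)^\pm$. By Propositions \ref{Kinfty1}, \ref{uniformup} and \ref{spt2}, every $\nu\in\Tan_{2n+1}(\mu,\infty)$ is $(2n+1)$-uniform with $\supp(\nu)$ equal to the closure of a union of some of those four halves. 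Imposing uniformity at $x\in\Sigma(F_0)$---where each adjacent half-plane contributes exactly $r^{2n+1}/2$ by a symmetry argument using a reflection isometry internal to each $V(\mathfrak{m}_i)$---eliminates all configurations except a full $V(\mathfrak{m}_i)$ (giving a flat $\nu$) or one of at most four ``crosses'' $V(\mathfrak{m}_1)^\alpha\cup V(\mathfrak{m}_2)^\beta$ (giving a non-flat $\nu$).

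I then decompose $b = b_W + b_\perp$ with $b_\perp\in\ker\mathcal{Q} = W^\perp$. If $b_\perp\neq 0$, then $\Sigma(F) = \emptyset$ and Corollary \ref{conn} gives $\supp(\mu) = \mathbb{K}(b,\mathcal{Q},0)$. For each $w$ in any of the four halves with nontrivial $W$-component, I would solve the linear equation $2\langle\mathcal{Q}w_H,c\rangle = -\langle b,w_H\rangle$ and build, using Proposition \ref{replica}(iii), a sequence $x_k = R_k w_H + c + O(1/R_k)\in\supp(\mu)$ with $D_{1/R_k}(x_k)\to w$. Ranging $w$ across every half forces $\supp(\nu)\supseteq V(\mathfrak{m}_1)\cup V(\mathfrak{m}_2)$, contradicting the classification. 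Hence $b_\perp=0$, and a Euclidean shift in $x_H$ by $x_0$ with $\mathcal{Q}x_0 = -b/2$ reduces $\mathbb{K}(b,\mathcal{Q},0)$ to $\{\langle\tilde x_H,\mathcal{Q}\tilde x_H\rangle = C\}$: either two affine vertical hyperplanes meeting in codimension two ($C=0$) or a cylinder over a two-branched plane hyperbola ($C\neq 0$). Proposition \ref{spt2} identifies $\supp(\mu)$ as a union of closures of connected components of this quadric. If $\supp(\mu)$ is a single (affine) vertical hyperplane, then the condition $0\in\supp(\mu)$ forces it through the origin, so $\mu\in\mathfrak{M}(2n+1)$ and option (i) holds. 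All multi-component configurations (two affine hyperplanes when $C=0$, or both branches when $C\neq 0$) yield $\supp(\nu)=V(\mathfrak{m}_1)\cup V(\mathfrak{m}_2)$ at infinity, again excluded. The sole remaining non-flat possibility is $C\neq 0$ with $\supp(\mu)$ exactly one branch (times $W^\perp\times\R$); a direct Heisenberg dilation computation in the spirit of Proposition \ref{grappico} shows that such a branch Hausdorff-converges on compact sets to a single specific cross $H_1\cup H_2$, so by Proposition \ref{replica}(iii) and Proposition \ref{supportoK} every $\nu\in\Tan_{2n+1}(\mu,\infty)$ equals $\mathcal{S}^{2n+1}_{H_1\cup H_2}$, yielding option (ii).

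The main technical hurdles are (a) the precise ``half-measure'' computation $\mathcal{S}^{2n+1}_{V(\mathfrak{m}_i)^\pm}(B_r(x)) = r^{2n+1}/2$ at $x\in\Sigma(F_0)$, which requires producing an explicit Heisenberg isometry---likely an element of $S(2n)$---that fixes $W^\perp\times\R$ and interchanges the halves of each $V(\mathfrak{m}_i)$; and (b) the non-existence arguments showing that $\mathcal{S}^{2n+1}_{V(\mathfrak{m}_1)\cup V(\mathfrak{m}_2)}$ is not $(2n+1)$-uniform and similarly that two affine hyperplanes or two hyperbola branches cannot jointly support a uniform measure, via careful double-counting at the codimension-two intersection locus adapted to Koranyi balls rather than Euclidean ones.
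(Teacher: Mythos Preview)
Your case split and overall strategy match the paper's proof. There is, however, one genuine gap: in the $C=0$ subcase you list only ``a single affine vertical hyperplane'' (flat) and ``two affine hyperplanes'' (excluded), and then assert that the sole non-flat possibility is $C\neq 0$. But Proposition \ref{spt2} works with the connected components of $\mathbb{K}(b,\mathcal{Q},0)\setminus\Sigma(F)$, which here are the four \emph{half}-planes, not the two full planes. A cross---one half from each plane---is a legitimate non-flat support leading to option (ii); its blow-down is the corresponding cross through the origin. The paper handles $C=0$ by observing that a Heisenberg left translation carries $\mathbb{K}(b,\mathcal{Q},0)$ onto $V(\mathfrak{n})\cup V(\mathfrak{m})$, reducing to the $b=0$ analysis you already carried out in your first paragraph.

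On your hurdle (a): the reflection you propose (acting as $-\mathrm{id}$ on $W$ and $+\mathrm{id}$ on $W^\perp$) is generally \emph{not} in $S(2n)$, since $U^TJU=\pm J$ fails unless $JW=W$. What does work is to left-translate $x\in W^\perp\times\R$ to the origin (each $V(\mathfrak{m}_i)$ is a subgroup and the halves are preserved because $x_H\perp W$) and then apply $\Xi_{-I}$, which lies in $S(2n)$ and interchanges the two halves of each $V(\mathfrak{m}_i)$ while fixing $B_r(0)$; this yields the half-measure $r^{2n+1}/2$. The paper sidesteps this computation entirely: a single half is ruled out by Proposition \ref{verticalsamoa}, and three or four halves by Proposition \ref{escl}, leaving only the two-half configurations without any ball-mass accounting. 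Finally, when $b_\perp\neq 0$ you invoke Corollary \ref{conn}, which also requires connectedness of the quadric; the paper verifies this by exhibiting $\mathbb{K}(b,\mathcal{Q},0)$ as a graph over $e_3^\perp$.
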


\begin{proof}
Since $\mathcal{Q}$ is symmetric, has rank $2$ and cannot be semidefinite, otherwise by Proposition \ref{rango} it would have rank $1$, there are $e_1,e_2\in\R^{2n}$ orthonormal vectors and $\lambda_1,\lambda_2>0$ for which  $\mathcal{Q}=-\lambda_1^2 e_1\otimes e_1 +\lambda_2^2 e_2\otimes e_2$. We define $\mathfrak{n}:=-\lambda_1 e_1+\lambda_2 e_2$ and $\mathfrak{m}:=\lambda_1 e_1+\lambda_2 e_2$.

If $b=0$ it is readily seen that $\mathbb{K}(0,\mathcal{Q},0)=V(\mathfrak{n})\cup V(\mathfrak{m})$ and that the singular set $\Sigma(F)$ coincides with $V(\mathfrak{n})\cap V(\mathfrak{m})$. In particular $\mathbb{K}(0,\mathcal{Q},0)$ is disconnected by $\Sigma(F)$ in four half planes which we denote by $C_i$, with $i=1,\ldots,4$. We claim that $\supp(\mu)$ can coincide with the closure of the union of just two of the $C_i$'s. First of all, Proposition \ref{spt2} implies that $\supp(\mu)$ must coincide with the closure of the union of some of these half-planes and on the other hand Proposition \ref{verticalsamoa} implies that $\supp(\mu)$ cannot coincide with the closure of just one half-plane.

If on the other hand, all 4 half planes $C_1,\ldots,C_4$ are contained in $\supp(\mu)$, both $V(\mathfrak{n})$ and $V(\mathfrak{m})$ must be contained in $\supp(\mu)$. Recall that by Remark \ref{uniformityflat} the measures $\mathcal{C}^{2n+1}\llcorner V(\mathfrak{n})$ and $\mathcal{C}^{2n+1}\llcorner V(\mathfrak{m})$ are uniform measures and since by assumption $\mathfrak{m}$ and $\mathfrak{n}$ are linearly independent, as we supposed $\text{rk}(\mathcal{Q})=2$, by Corollary \ref{dege} we also deduce that:
\begin{equation}
\mathcal{C}^{2n+1}(V(\mathfrak{n})\cap V(\mathfrak{m}))=0.
    \label{eq/inter0}
\end{equation}
The above identity however shows that we fall in the hypothesis of Proposition \ref{escl} and thus $\mathcal{C}^{2n+1}\llcorner (V(\mathfrak{n})\cup V(\mathfrak{m}))$ cannot be a uniform measure. 

Finally, let us exclude the 3 half-planes case. Assume without loss of generality (as in the other 3 cases the argument is identical) that $\supp(\mu)=\text{cl}(C_1\cup C_2\cup C_3)$ and that $V(\mathfrak{n})=\text{cl}(C_1\cup C_3)$. As in the previous case Remark \ref{uniformityflat} implies that $\mathcal{C}^{2n+1}\llcorner V(\mathfrak{n})$
is uniform and \eqref{eq/inter0} shows that:
$$\mathcal{C}^{2n+1}(V(\mathfrak{n})\cap \text{cl}(C_2))\leq \mathcal{C}^{2n+1}(V(\mathfrak{n})\cap V(\mathfrak{m}))=0.$$
The above identity however shows that we fall in the hypothesis of Proposition \ref{escl} and thus $\mathcal{C}^{2n+1}\llcorner (V(\mathfrak{n})\cup \text{cl}(C_2))$ cannot be a uniform measure.

Therefore, the only remaining possibility is that there are only two half-planes contained in $\supp(\mu)$. If these half-planes are both contained either in $V(\mathfrak{n})$ or in $V(\mathfrak{m})$, then $\mu$ is flat by Proposition \ref{verticalsamoa} and this falls in case (i).

If on the other hand one is contained in $V(\mathfrak{n})$ and the other one in $V(\mathfrak{m})$, then $\mu$ cannot be flat, since as already remarked $V(\mathfrak{n})$ and $V(\mathfrak{m})$ are not coplanar. Furthermore, since the half planes $C_1,\ldots, C_4$ are invariant under intrinsic dilations, it is easy to see that $\mu_{0,\lambda}/\lambda^{2n+1}=\mu$ for any $\lambda>0$. In particular, being $\mu$ dilation-invariant we have $\Tan_{2n+1}(\mu,\infty)=\{\mu\}$ and thus the tangent at infinity of $\mu$ is unique and not flat. Note that this case falls in case (ii).

If $b\neq 0$ we have two subcases. Either $b$ is contained in the image of $\mathcal{Q}$ or it  is not. 
First we discuss the simpler case in which $b\not \in \text{span}(e_1,e_2)$ and note that this implies that $n>1$. In this case $\mathbb{K}(b,\mathcal{Q},0)$ is a graph of a quadratic polynomial. Indeed, complete $e_1,e_2$ to an orthonormal basis $\{e_1,\ldots,e_{2n}\}$ of $\R^{2n}$ and assume without loss of generality that $\langle b,e_3\rangle\neq 0$. Then $\mathbb{K}(b,\mathcal{Q},0)$ is an $e_3$-graph, indeed if  $h\in\R^{2n}$ satisfies $\langle h,b\rangle+\langle h,\mathcal{Q}h\rangle=0$, then:
$$\langle h,e_3\rangle=-\frac{-\lambda_1^2 \langle h,e_1\rangle^2+\lambda_2^2 \langle h,e_2\rangle^2+\sum_{i\neq 3} \langle b,e_i\rangle \langle h,e_i\rangle}{\langle b,e_3\rangle}.$$
This implies that the quadric $\mathbb{K}(b,\mathcal{Q},0)$ is an $e_3$-graph and thus it is a connected set. Moreover, since the equation $b+2\mathcal{Q}h=0$ does not have solutions $h\in\R^{2n}$, the singular set $\Sigma(F)$ is empty. Therefore Lemma \ref{conn} implies that $\supp(\mu)=\mathbb{K}(b,\mathcal{Q},0)$ and Proposition \ref{grappico} concludes that for any $\nu\in\Tan_{2n+1}(\mu,\infty)$ we have: 
\begin{equation}
    \mathbb{K}(0,\mathcal{Q},0)\subseteq\supp(\nu).
    \label{incl1}
\end{equation}
On the other hand, Proposition \ref{Kinfty1} implies that $\supp(\nu)\subseteq \mathbb{K}(0,\mathcal{Q},0)$. In conjunction with \eqref{incl1} this shows that for any $\nu\in \Tan_{2n+1}(\mu,\infty)$ we have:
$$\supp(\nu)=\mathbb{K}(0,\mathcal{Q},0),$$
and thanks to Proposition \ref{supportoK}, $\Tan_{2n+1}(\mu,\infty)=\{\mathcal{C}^{2n+1}\llcorner \mathbb{K}(0,\mathcal{Q},0)\}$. Now note that the closed set $\mathbb{K}(0,\mathcal{Q},0)$ coincides with two vertical, non-coinciding hyperplanes. This however, shows that $\nu$ cannot be a uniform measure thanks the $4$ half-planes subcase in the discussion for $b=0$. Since by Proposition \ref{replica}(i) the tangents at infinity of a uniform measure are uniform, we reach a contradiction with the fact that $\mu$ was uniform.

Thus, we are left to study the case where $b\neq 0$ and $b=b_1 e_1+b_2 e_2$ for some $b_1,b_2\in\R$. For any $x\in\mathbb{K}(b,\mathcal{Q},0)$, once completed the squares we have that:
\begin{equation}
\begin{split}
0=-\left(\lambda_1\langle x,e_1\rangle-\frac{b_1}{2\lambda_1}\right)^2+\left(\lambda_2\langle x,e_2\rangle+\frac{b_2}{2\lambda_2}\right)^2+\frac{b_1^2}{4\lambda_1^2}-\frac{b_2^2}{4\lambda_2^2},
\label{numbero3}
\end{split}
\end{equation}
For any $\overline{x}\in\Sigma(F)$, see \eqref{numbero2}, we have:
$$-2\lambda_1^2\langle\overline{x}_H,e_1\rangle e_1+2\lambda_2^2\langle\overline{x}_H,e_2\rangle e_2+ b_1 e_1+ b_2 e_2=0,$$
and in particular $b_1=2\lambda_1^2\langle\overline{x}_H,e_1\rangle$ and $b_2=-2\lambda_2^2\langle\overline{x}_H,e_2\rangle$. This in particular implies by \eqref{numbero3} that $\overline{x}$ cannot be contained in $\mathbb{K}(b,\mathcal{Q},0)$ if $b_1^2/4\lambda_1-b_2^2/4\lambda_2\neq0$.

If $b_1^2/4\lambda_1-b_2^2/4\lambda_2>0$ by the above discussion we deduce that $\Sigma(F)=\emptyset$. Thanks to the identity \eqref{numbero3}, the quadric
$\mathbb{K}(b,\mathcal{Q},0)$ is easily seen to be the disjoint union of two $e_1$-graphs $\Gamma_1$ (which we assume contains $0$) and $\Gamma_2$. The functions $g_1,g_2:e_1^\perp\to e_1$ which define $\Gamma_1$ and $\Gamma_2$ respectively, satisfy the hypothesis of Proposition \ref{grappico}. Indeed:
$$\lim_{t\to\infty} \frac{g_{1,2}(t h)}{t}
=\pm\frac{\lambda_2\lvert \langle h, e_2\rangle\rvert}{\lambda_1}e_1=g_{1,2}^\infty\bigg(\frac{h}{\lvert h\rvert}\bigg)\lvert h\rvert e_1.$$
Proposition \ref{spt2} implies that $\Gamma_1$ must be contained in $\supp(\mu)$ and therefore the graph of $g_1^\infty$ is contained in the support of any tangent measure at infinity to $\mu$ by Proposition \ref{grappico}. Suppose now by contradiction that $\supp(\mu)$ contains also $\Gamma_2$. Again by Proposition \ref{grappico} we would have that the graph of $g_2^\infty$ is contained in the support of any tangent measure at infinity to $\mu$. However, since the union the graphs of $g_1^\infty$ and $g_2^\infty$ coincides with $\mathbb{K}(0,\mathcal{Q},0)$, by the discussion of the case $b=0$, this is not possible. Therefore, the support of any tangent measure $\nu$ at infinity to $\mu$ coincides with the graph of $g_1^\infty$. Hence, by Proposition \ref{supportoK} $\Tan_{2n+1}(\mu,\infty)$ is a singleton and its only element cannot be flat (as the graph of $g_1^\infty$ is not a hyperplane). The case $b_1^2/4\lambda_1-b_2^2/4\lambda_2<0$ is treated in the same way with the roles of $e_1$ and $e_2$ reversed.

If $b_1^2/4\lambda_1-b_2^2/4\lambda_2=0$, the quadric $\mathbb{K}(b,\mathcal{Q},0)$ coincides with the solutions of the equation:
$$\left(\lambda_1x_1-\frac{b_1}{2\lambda_1}\right)^2=\left(\lambda_1x_2+\frac{b_2}{2\lambda_2}\right)^2.$$
Let $\tau:=(b_1/2\lambda_1^2) e_1+(b_2/2\lambda_2^2)e_2$ and note that $\mathbb{K}(b,\mathcal{Q},0)$ coincides with $\tau*(V(\mathfrak{n})\cup V(\mathfrak{m}))$. Since left translations are isometries of $\HH^n$, the discussion of this case reduces to the one in which $b=0$, concluding the proof of the proposition.
\end{proof}

Eventually, the following theorem concludes the proof of \ref{(uno)}.

\begin{teorema}
If $\mu$ is a $(2n+1)$-uniform measure for which there exists $\mathfrak{n}\in\mathbb{S}^{2n-1}$ such that $\mathcal{C}^{2n+1}\llcorner{V(\mathfrak{n})}\in\Tan_{2n+1}(\mu,\infty)$,
then $\mu=\mathcal{C}^{2n+1}\llcorner{ V(\mathfrak{n})}$.
\label{flat}
\end{teorema}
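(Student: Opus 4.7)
The plan is to combine Corollary \ref{mainprimo} with Proposition \ref{Kinfty1} to translate the hypothesis at infinity into strong algebraic constraints on the quadric containing $\supp(\mu)$, and then close the argument using the rank dichotomy encoded in Propositions \ref{rank1} and \ref{rank2}.

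Since $2n+1\neq 2$, Corollary \ref{mainprimo} produces $b\in\R^{2n}$, $\mathcal{Q}\in\text{Sym}(2n)$ and $\mathcal{T}\in\R$ with $\text{Tr}(\mathcal{Q})\neq 0$ such that $\supp(\mu)\subseteq \mathbb{K}(b,\mathcal{Q},\mathcal{T})$. By Proposition \ref{Kinfty1} the hypothesis gives the containment $V(\mathfrak{n})\subseteq \mathbb{K}(0,\mathcal{Q},\mathcal{T})$. Testing the defining equation $\langle x_H,\mathcal{Q}x_H\rangle+\mathcal{T}x_T=0$ on vectors $(0,x_T)\in V(\mathfrak{n})$ will force $\mathcal{T}=0$, and polarising the resulting vanishing quadratic form $x_H\mapsto \langle x_H,\mathcal{Q}x_H\rangle$ on the hyperplane $\mathfrak{n}^\perp$ gives $\mathcal{Q}(\mathfrak{n}^\perp)\subseteq \text{span}(\mathfrak{n})$. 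Consequently $\mathcal{Q}(\R^{2n})\subseteq \text{span}(\mathfrak{n},\mathcal{Q}\mathfrak{n})$, so $\text{rk}(\mathcal{Q})\leq 2$; combined with $\text{Tr}(\mathcal{Q})\neq 0$ this leaves $\text{rk}(\mathcal{Q})\in\{1,2\}$.

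Having $\mathcal{T}=0$ and $\mathcal{Q}\neq 0$, I will split into cases. If $\text{rk}(\mathcal{Q})=1$, then Proposition \ref{rank1} applies (after the sign-change $(b,\mathcal{Q})\mapsto(-b,-\mathcal{Q})$ if necessary, which leaves the quadric unchanged) and concludes directly that $\mu$ is flat. If instead $\text{rk}(\mathcal{Q})=2$, Proposition \ref{rango} forces $\mathcal{Q}$ to be indefinite and Proposition \ref{rank2} yields the dichotomy: either $\mu$ is flat, or $\Tan_{2n+1}(\mu,\infty)=\{\nu\}$ with $\nu$ non-flat. The second alternative is incompatible with $\mathcal{S}^{2n+1}_{V(\mathfrak{n})}\in\Tan_{2n+1}(\mu,\infty)$, so $\mu$ is flat in this case as well.

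It remains to identify which flat measure $\mu$ is. Writing $\mu=\mathcal{S}^{2n+1}_{V(\mathfrak{m})}$ for some $\mathfrak{m}\in\mathbb{S}^{2n-1}$, the invariance of $V(\mathfrak{m})$ under the dilations $D_\lambda$ gives $\mu_{0,\lambda}/\lambda^{2n+1}=\mu$, so $\Tan_{2n+1}(\mu,\infty)=\{\mu\}$. Comparing with the hypothesis forces $\mathcal{S}^{2n+1}_{V(\mathfrak{m})}=\mathcal{S}^{2n+1}_{V(\mathfrak{n})}$, and Proposition \ref{supportoK} then gives $V(\mathfrak{m})=V(\mathfrak{n})$, completing the identification. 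The real obstacle is the preparatory algebraic reduction of the rank of $\mathcal{Q}$: once a hyperplane is known to sit inside the homogeneous quadric $\mathbb{K}(0,\mathcal{Q},\mathcal{T})$, the rank of $\mathcal{Q}$ drops to at most $2$ and Propositions \ref{rank1} and \ref{rank2} carry all the remaining geometric weight.
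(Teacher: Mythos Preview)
Your proof is correct and follows essentially the same route as the paper: use Proposition \ref{Kinfty1} to obtain $V(\mathfrak{n})\subseteq\mathbb{K}(0,\mathcal{Q},\mathcal{T})$, deduce $\mathcal{T}=0$ and $\text{rk}(\mathcal{Q})\leq 2$ from the vanishing of the quadratic form on $\mathfrak{n}^\perp$, and then invoke Propositions \ref{rank1} and \ref{rank2}. The only cosmetic differences are that you phrase the rank reduction via polarisation rather than via an explicit basis computation, you make the sign-change $(b,\mathcal{Q})\mapsto(-b,-\mathcal{Q})$ explicit for Proposition \ref{rank1}, and you spell out the final identification $V(\mathfrak{m})=V(\mathfrak{n})$; your appeal to Proposition \ref{rango} in the rank-$2$ case is harmless but redundant, since Proposition \ref{rank2} already absorbs it.
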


\begin{proof}
If $\supp(\mu)\subseteq \mathbb{K}(b,\mathcal{Q},\mathcal{T})$, then for any $\nu\in\Tan_{2n+1}(\mu,\infty)$ we have $\supp(\nu)\subseteq\mathbb{K}(0,\mathcal{Q},\mathcal{T})$ by Proposition \ref{Kinfty1}. In particular $V(\mathfrak{n})\subseteq \mathbb{K}(0,\mathcal{Q},\mathcal{T})$ and this implies that $\mathcal{T}=0$. Complete $\mathfrak{n}$ to an orthonormal basis $\{\mathfrak{n},e_2,\ldots,e_{2n}\}$ of $\R^{2n}$ and note that, since $V(\mathfrak{n})\subseteq\mathbb{K}(0,\mathcal{Q},0)$, we have:
$$\sum_{i=2}^{2n}\langle e_i,\mathcal{Q}e_i\rangle \langle w_H,e_i\rangle^2+2\sum_{2\leq i<j\leq 2n}\langle e_i,\mathcal{Q}e_j\rangle \langle w_H,e_i\rangle\langle w_H,e_j\rangle=0,$$
for any $w\in V(\mathfrak{n})$ and thus $\langle e_i,\mathcal{Q}e_j\rangle=0$ for any $2\leq i,j \leq 2n$. This implies that for any $x\in\HH^n$ we have:
$$\langle x_H,\mathcal{Q}x_H\rangle=\langle x_H,\mathfrak{n}\rangle\Big(\langle \mathfrak{n},\mathcal{Q}\mathfrak{n}\rangle\langle x_H,\mathfrak{n}\rangle+2\sum_{i=2}^{2n} \langle \mathfrak{n},\mathcal{Q}e_i\rangle \langle x_H,e_i\rangle\Big)=\langle x,\mathfrak{n}\rangle\langle \mathfrak{m},x\rangle,$$
where $\mathfrak{m}:=\langle \mathfrak{n},\mathcal{Q}\mathfrak{n}\rangle\mathfrak{n}+2\sum_{i=2}^{2n} \langle \mathfrak{n},\mathcal{Q}e_i\rangle e_i$. In particular $\text{rk}(\mathcal{Q})\leq 2$ and if $\mathfrak{m}$ is parallel to $\mathfrak{n}$, Proposition \ref{rank1} implies that $\mu$ is flat. On the other hand if $\mathfrak{m}$ is not parallel to $\mathfrak{n}$, since $\text{rk}(Q)=2$ and $\mu$ has a flat tangent at infinity, Proposition \ref{rank2} implies that $\mu$ is flat.
\end{proof}

\subsection{Uniqueness of the tangent at infinity}
\label{flattibus}

This subsection is devoted to the proof of \ref{(due)}.
The uniqueness of the tangents at infinity also implies that they are $(2n+1)$-uniform cones. Indeed, if for the sequence $R_i\to\infty$ we have $R_i^{-(2n+1)}\mu_{0,R_i}\rightharpoonup\nu$, for any $\lambda>0$ we also have:
$$(\lambda R_i)^{-(2n+1)}\mu_{0,\lambda R_i}\rightharpoonup\lambda^{-(2n+1)}\nu_{0,\lambda}.$$
Therefore, \ref{(due)} would imply that $\lambda^{-(2n+1)}\nu_{0,\lambda}=\nu$ for any $\lambda>0$, forcing $\nu$ to be a $(2n+1)$-uniform cone by Definition \ref{conelli}.

The idea behind the proof of the uniqueness of the tangent at infinity is the following. Let $\nu\in \Tan_{2n+1}(\mu,\infty)$ and fix a point $w\in \supp(\nu)$. If we can find a continuous curve $\gamma:[0,\infty)\to\HH^n$ contained in $\supp(\mu)$ for which:
$$\lim_{t\to\infty} D_{1/t}(\gamma(t))=w,$$
then $w\in\supp(\xi)$ for any $\xi\in\Tan_{2n+1}(\mu,\infty)$ by Proposition \ref{replica}(iii), since $D_{1/t}(\gamma(t))\in t^{-(2n+1)}\mu_{0,t}$.

In the various cases, the curve $\gamma$ will always be constructed inside the quadric $\mathbb{K}(b,\mathcal{Q},\mathcal{T})$ supporting $\mu$ and its initial point  $\gamma(0)$ will always be a point of $\supp(\mu)$. In order to make sure that the whole $\gamma$ is contained in $\supp(\mu)$, we force $\gamma$ to avoid the singular set $\Sigma(F)$, so that continuity implies that $\gamma$ contained in just one connected component of $\mathbb{K}(b,\mathcal{Q},0)\setminus \Sigma(F)$. Since the starting point was contained in $\supp(\mu)$ by hypothesis, this implies that $\gamma$ must be contained in $\supp(\mu)$ by Theorem \ref{duppy}.

\begin{teorema}\label{conol}
Suppose $\mu$ is a $(2n+1)$-uniform measure. Then $\Tan_{2n+1}(\mu,\infty)$ is a singleton.
\end{teorema}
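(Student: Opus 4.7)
The plan is to prove uniqueness by showing that any two elements $\nu,\xi \in \Tan_{2n+1}(\mu,\infty)$ have the same support; since both are $(2n+1)$-uniform by Proposition \ref{uniformup}, Proposition \ref{supportoK} will then give $\nu=\xi$. I would first dispose of the flat case: if some $\nu \in \Tan_{2n+1}(\mu,\infty)$ is flat, Theorem \ref{flat} forces $\mu$ itself to equal $\mathcal{S}^{2n+1}_{V(\mathfrak{n})}$, which is invariant under every dilation $D_\lambda$, so trivially $\Tan_{2n+1}(\mu,\infty)=\{\mu\}$. In the remaining case I fix $b,\mathcal{Q},\mathcal{T}$ with $\supp(\mu)\subseteq \mathbb{K}(b,\mathcal{Q},\mathcal{T})$ and $\text{Tr}(\mathcal{Q})\neq 0$ from Theorem \ref{MOK}; Proposition \ref{Kinfty1} shows that every $\xi \in \Tan_{2n+1}(\mu,\infty)$ has $\supp(\xi)\subseteq \mathbb{K}(0,\mathcal{Q},\mathcal{T})$.

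The crux is a curve construction: given $w \in \supp(\nu)$, produce a continuous $\gamma:[0,\infty) \to \supp(\mu)$ with $D_{1/t}(\gamma(t)) \to w$. Granting $\gamma$, for any competing $\xi\in \Tan_{2n+1}(\mu,\infty)$ realised along a sequence $R_i\to\infty$ the points $D_{1/R_i}(\gamma(R_i))$ lie in $\supp(R_i^{-(2n+1)}\mu_{0,R_i})$ and converge to $w$, so Proposition \ref{replica}(iii) yields $w\in\supp(\xi)$; swapping $\nu$ and $\xi$ gives equality of supports. Following the strategy sketched before the theorem, $\gamma$ will live inside $\mathbb{K}(b,\mathcal{Q},\mathcal{T})$, start at a point of $\supp(\mu)$, and stay disjoint from the characteristic set $\Sigma(F)$; Theorem \ref{duppy} then confines $\gamma$ by continuity to a single connected component of $\mathbb{K}(b,\mathcal{Q},\mathcal{T})\setminus\Sigma(F)$ already known to lie in $\supp(\mu)$.

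The explicit construction of $\gamma$ splits by horizontality. When $\mathcal{T}\neq 0$, Proposition \ref{spt1} identifies $\supp(\mu)$ with the closure of the graph of $f(h):=-(\langle b,h\rangle+\langle h,\mathcal{Q}h\rangle)/\mathcal{T}$ on (a connected open portion of) $\R^{2n}$; since $w\in\mathbb{K}(0,\mathcal{Q},\mathcal{T})$ satisfies $\mathcal{T}w_T = -\langle w_H,\mathcal{Q}w_H\rangle$, the candidate $\gamma(t):=(tw_H,f(tw_H))$ sits on the quadric and a direct computation gives $D_{1/t}(\gamma(t)) = (w_H, w_T - \langle b,w_H\rangle/(t\mathcal{T})) \to w$; one perturbs the ray transversally if necessary, so that $tw_H$ avoids the affine subspace $\Sigma(f)$ for all large $t$. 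When $\mathcal{T}=0$, $\supp(\mu)$ is $\mathcal{V}$-invariant and equals a union of connected components of $\mathbb{K}(b,\mathcal{Q},0)\setminus\Sigma(F)$; the vertical coordinate can now be prescribed freely, so I would build a continuous path $\gamma_H(t)$ on the planar quadric $\{\langle b,h\rangle+\langle h,\mathcal{Q}h\rangle=0\}$ joining the horizontal projection of a chosen basepoint in $\supp(\mu)$ to an asymptotic ray in direction $w_H$, using that $\langle w_H,\mathcal{Q}w_H\rangle=0$ so the cone at infinity of this planar quadric does contain $w_H$, and then set $\gamma(t):=(\gamma_H(t),t^2 w_T)$.

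The main difficulty I expect is the connected-component bookkeeping in the vertical case (and in the one-dimensional exceptional case of Proposition \ref{spt1}): one must verify that the asymptotic direction $w_H$ is genuinely reachable from a basepoint of $\supp(\mu)$ without crossing $\Sigma(F)$. The saving ingredient is that $w$ itself arose, by definition of tangent at infinity, as a limit of dilated points of $\supp(\mu)$, which is what forces the required connectivity of the support at infinity. Once this is settled, the transverse perturbations needed to keep $\gamma$ off $\Sigma(F)$ are routine, since both $\Sigma(f)$ and the horizontal projection of $\Sigma(F)$ are proper affine subspaces met by generic rays in at most one point.
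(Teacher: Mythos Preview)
Your overall strategy matches the paper's exactly: reduce to equality of supports via Proposition \ref{supportoK}, and for each $w\in\supp(\nu)$ build a curve $\gamma\subseteq\supp(\mu)$ with $D_{1/t}(\gamma(t))\to w$, invoking Theorem \ref{duppy} to keep $\gamma$ in the right connected component. The horizontal case $\mathcal{T}\neq 0$ is handled essentially as you describe (the paper's curve is your $\gamma(t)=(tw_H,f(tw_H))$, with a shifted basepoint in the $n=1$ half-plane sub-case).

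The gap is in the vertical case. Your final paragraph asserts that keeping $\gamma_H$ off $\Sigma(F)$ is a ``routine'' transverse perturbation because $\Sigma(F)$ is a proper affine subspace met by generic rays in at most one point. This is misleading: $\gamma_H$ is constrained to lie on the quadric hypersurface $\{\langle b,h\rangle+\langle h,\mathcal{Q}h\rangle=0\}$, so you cannot perturb a ray freely in $\R^{2n}$ --- you must perturb \emph{within} the quadric while retaining both the asymptotic direction $w_H$ and the avoidance of $\Sigma(F)$. The paper does not treat this as generic; it splits into genuinely different constructions. When $\text{rk}(\mathcal{Q})\le 2$ (equivalently, when some tangent at infinity charges a hyperplane), the paper invokes the explicit classifications of Propositions \ref{rank1} and \ref{rank2}. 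When $\text{rk}(\mathcal{Q})\ge 3$ and $\Sigma(F)\neq\emptyset$, it uses a point $\tilde{x}\in\Sigma(F)$ to add a correction term $t\theta(t)\tilde{x}_H$ to the naive path $(v_i)_H+tw_H$, with $\theta$ chosen algebraically so that the curve stays on the quadric and $\theta(t)\to 0$. When $\Sigma(F)=\emptyset$, it instead solves a quadratic equation along an eigen-direction of $\mathcal{Q}$ to parametrise the curve. In each case the starting point is one of the approximating points $v_i\in\supp(\mu)$ with $D_{1/R_i}(v_i)\to w$, which is exactly the ``saving ingredient'' you name.

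So your plan is the right one, but the vertical curve construction has real algebraic content that you should not expect to fall out of a genericity argument.
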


\begin{proof}
Let $\supp(\mu)\subseteq \mathbb{K}(b,\mathcal{Q},\mathcal{T})$ for some non-zero $\mathcal{Q}\in \mathrm{Sym}(2n)$ and assume without loss of generality that $b\neq 0$. Indeed if $b=0$, the singular set $\Sigma(F)$ and the quadric $\mathbb{K}(0,\mathcal{Q},\mathcal{T})$ are dilation invariant. Therefore the measure $\mu$ by Theorem \ref{duppy} is dilation invariant too. This implies that the tangent at infinity to $\mu$ coincides with $\mu$ itself and in particular $\Tan_{2n+1}(\mu,\infty)$ is a singleton. 

In the following the measure $\nu$ will be always considered an arbitrary element of $\Tan_{2n+1}(\mu,\infty)$. We also let $\{R_i\}$ be the sequence for which $R_i^{-2n-1}\mu_{0,R_i}\rightharpoonup \nu$ with $R_i\to\infty$. 

First of all we study the simpler case in which $\mathcal{T}\neq0$. For the reader's convenience we recall that $f$ and $\Sigma(f)$ were introduced in the Section \ref{buchi} in \eqref{numbero14} and \eqref{char}, respectively.

Suppose that  $\mathcal{T}\neq 0$ and that $\supp(\mu)=\mathbb{K}(b,\mathcal{Q},\mathcal{T})$ and let $w\in\mathbb{K}(0,\mathcal{Q},\mathcal{T})$. By definition of $\mathbb{K}(0,\mathcal{Q},\mathcal{T})$ we infer that:
$$w_T=-\langle\mathcal{Q}w_H,w_H\rangle/\mathcal{T}.$$
The curve $\gamma_w(\cdot)$ defined for any $t\in[0,\infty)$ as: 
$$\gamma_w(t):=(tw_H,-\langle t b+t^2\mathcal{Q}w_H,w_H\rangle/\mathcal{T}),$$
is contained in $\mathbb{K}(b,\mathcal{Q},\mathcal{T})$, indeed it can be easily checked that:
\begin{equation}
    \begin{split}
        &\langle (\gamma_w(t))_H,\mathcal{Q}(\gamma_w(t))_H\rangle+\langle b, (\gamma_w(t))_H\rangle+\mathcal{T}\gamma_w(t)=0 \qquad\text{for any $t\in[0,\infty)$.}
        \nonumber
    \end{split}
\end{equation}
Thus, since by assumption $\supp(\mu)=\mathbb{K}(b,\mathcal{Q},\mathcal{T})$, then the image of $\gamma_w$ is also contained in $\supp(\mu)$. In addition, we also have that:
\begin{equation}
    \begin{split}
        \lim_{t\to\infty}D_{1/t}(\gamma_w(t))=&\lim_{t\to\infty}\big((\gamma_w(t))_H/t,(\gamma_w(t))_T/t^2\big)\\=&\lim_{t\to\infty}\big(w_H,-\langle b/t+\mathcal{Q}w_H,w_H\rangle\big)=w.
        \nonumber
    \end{split}
\end{equation}
By the arbitrariness of $w$ and Proposition \ref{replica} this implies that $\mathbb{K}(0,\mathcal{Q},\mathcal{T})\subseteq\supp(\nu)$. On the other hand Proposition \ref{Kinfty1} implies the previous inclusion holds as an equality and thus Proposition \ref{supportoK} concludes that $\Tan_{2n+1}(\mu,\infty)$ is a singleton.

If $\mathcal{T}\neq 0$ and $\supp(\mu)\varsubsetneq\mathbb{K}(b,\mathcal{Q},\mathcal{T})$,
 Proposition \ref{spt1} implies that $n=1$ and that $\supp(\mu)$ is contained in the image under $f$, see \eqref{numbero14}, of one of the two connected components in which $\R^2$ is splitted by $\Sigma(f)$, see \eqref{char}. Let $p,v\in\R^{2}$ be such that $p+\text{span}(v)=\Sigma(f)$ and $\mu=\mathcal{C}^{3}\llcorner {f(C)}$
where $C:=p+H^+(v)$ and $H^+(v):=\{z\in\R^{2n}:\langle v,z\rangle\geq 0\}$. For any $w\in H^+(v)$ the curve:
\begin{equation}
    t\mapsto (p+tw_H,-\langle p+tw_H,\mathcal{Q}(p+tw_H)+b\rangle/\mathcal{T})=:\gamma_w(t),
    \nonumber
\end{equation}
is contained in $\supp(\mu)$ and $\lim_{t\to\infty}D_{1/t}(\gamma_w(t))=w$. This implies by Proposition \ref{replica} that $\{(z,-\langle z,\mathcal{Q}z\rangle/\mathcal{T}):z\in H^+(v)\}=\supp(\nu)$.

\smallskip

We are left to prove the thesis when $\mathcal{T}=0$. When the quadrics are vertically invariant it is harder to build the curves we mentioned in the short paragraph introducing this proposition because the singular set $\Sigma(F)$ may have codimension $1$ inside $\mathbb{K}(b,\mathcal{Q},0)$. 

However, if there exists an $\mathfrak{n}\in\R^{2n}$ for which $\nu(V(\mathfrak{n}))>0$, Lemma \ref{RAPP1} implies that the intersection $V(\mathfrak{n})\cap \mathbb{K}(0,\mathcal{Q},0)$ must have positive $\mathcal{H}^{2n}_{eu}$-measures, which is only possible if $V(\mathfrak{n})\subseteq \mathbb{K}(0,\mathcal{Q},0)$ thanks to the very rigid algebra of the quadrics. Arguing as in the proof of Theorem \ref{flat}, we deduce that either $\text{rk}(\mathcal{Q})=1$ or $\text{rk}(\mathcal{Q})=2$ and thus Proposition \ref{rank1} and Proposition \ref{rank2} yield the uniqueness of the tangent at infinity.

Thanks to the arbitrariness of our choice of $\nu$, from now on we can therefore assume without loss of generality that for any $\nu\in \Tan_{2n+1}(\mu,\infty)$ and any $\mathfrak{n}\in\R^{2n}$ we have $\nu(V(\mathfrak{n}))=0$. This in particular implies that $\text{rk}(\mathcal{Q})\geq 3$ and by Proposition \ref{rango} we infer that $\mathcal{Q}$ cannot be semidefinite.

Throughout the rest of the proof, we reserve the symbol $w$ for an element of $\supp(\nu)$ that is going to be chosen later. Note that by Proposition \ref{replica} however we choose $w$, we can find a sequence $\{v_i\}\subseteq \supp(\mu)$ for which $D_{1/R_i}(v_i)\to w$. Without loss of generality we can assume that these $v_i$s are all contained in the same connected component of $\mathbb{K}(b,\mathcal{Q},0)\setminus\Sigma(F)$. We now distinguish two cases.

At first we assume that $\Sigma(F)\neq \emptyset$ and note that this also implies that $\Sigma(F)\setminus\{0\}\neq \emptyset$. Indeed, if $(x,0)\in \Sigma(F)$, then for any $s\in\R$ the point $(x,s)$ belongs to $\Sigma(F)$ since  both the quadric $\mathbb{K}(b,\mathcal{Q},0)$ and the singular set $\Sigma(F)$ are vertically invariant. Furthermore, for any $y\in \Sigma(F)$ we have:
\begin{equation}
    2\mathcal{Q}y_H+b=0\qquad\text{and}\qquad\langle y_H,\mathcal{Q}y_H+b\rangle=0.
    \label{numbero21}
\end{equation}
In particular, the identities in \eqref{numbero21} imply:
\begin{equation}
0=\langle y_H,2\mathcal{Q}y_H+b\rangle-\langle y_H,\mathcal{Q}y_H+b\rangle=\langle y_H,\mathcal{Q}y_H\rangle=-\langle b,y_H\rangle,
\label{numbero20}
\end{equation}
We now restrict $w$ to be an element of $\supp(\nu)$ for which $\langle b,  w_H\rangle\neq0$.
From now on we let $\tilde{x}$ to be a \emph{non-zero} fixed element in $\Sigma(F)$ and define:
$$\gamma(t)=\gamma_i(t):=\big((v_i)_H+t  w_H+t\theta(t)\tilde{x}_H,(v_i)_T+t^2 w_T\big)$$
where $\theta(t):=\langle 2\mathcal{Q}[(v_i)_H]+b,w_H\rangle/\langle b,(v_i)_H+tw_H\rangle$. If $i$ is big enough then $\langle b,(v_i)_H+tw_H\rangle\neq0$ (since we have that $D_{1/R_i}(v_i)\to w$) and thus $\theta(t)$ is well defined for $t\geq0$. First we check that $\lim_{t\to\infty} D_{1/t}(\gamma(t))=w$. Indeed:
\begin{equation}
\begin{split}
     \lim_{t\to\infty}D_{1/t}(\gamma(t))=&\lim_{t\to\infty}\bigg(\frac{(v_i)_H+t  w_H+t\theta(t)\tilde{x}_H}{t},\frac{(v_i)_T+t^2 w_T}{t^2}\bigg)\\
     =&\Big(w_H+\lim_{t\to\infty} \theta(t)\tilde{x}_H,w_T\Big)=w,
    \nonumber
\end{split}
\end{equation}
since $\lim_{t\to \infty} \theta(t)=0$. Secondly, we check that  $\gamma(t)\in\mathbb{K}(b,\mathcal{Q},0)$ for any $t>0$.
Indeed, recall that since $w\in\supp(\nu)$ then $w\in \mathbb{K}(0,\mathcal{Q},0)$ and in particular:
\begin{equation}
    \begin{split}
        \langle w_H,\mathcal{Q} w_H\rangle=0
        \label{eq:eq:quasi}
    \end{split}
\end{equation}
Putting together \eqref{numbero20}, \eqref{eq:eq:quasi} and the fact that $\langle (v_i)_H,\mathcal{Q} (v_i)_H+b\rangle=0$, which comes from the assumption $v_i\in \supp(\mu)$, we infer:
\begin{equation}
\begin{split}
    &\qquad\qquad\qquad\qquad\langle(\gamma(t))_H,b+\mathcal{Q}(\gamma(t))_H\rangle\\
    &\qquad=\langle (v_i)_H+t  w_H+t\theta(t)\tilde{x}_H, b+\mathcal{Q}[(v_i)_H+t  w_H+t\theta(t)\tilde{x}_H]\rangle\\
    =&2t\langle (v_i)_H,\mathcal{Q}w_H\rangle+2t\theta(t)\langle (v_i)_H,\mathcal{Q}\tilde{x}_H\rangle+2t^2\theta(t)\langle w_H,\mathcal{Q} \tilde{x}_H\rangle+t\langle b,w_H\rangle.
\end{split}
\end{equation}
Rearranging the terms in the righ-hand side of the above identity, we deduce:
\begin{equation}
\begin{split}
    \langle(\gamma(t))_H,b+\mathcal{Q}(\gamma(t))_H\rangle=&t\langle 2\mathcal{Q}(v_i)_H+b,w_H\rangle+t\theta(t)\langle (v_i)_H+tw_H,2\mathcal{Q}\tilde{x}_H\rangle\\
    =&t\langle 2\mathcal{Q}(v_i)_H+b,w_H\rangle-t\theta(t)\langle (v_i)_H+tw_H,b\rangle=0,\nonumber
\end{split}
\end{equation}
where in the first identity of the last line we used the first identity in \eqref{numbero21} and in the last one the specific choice of the function $\theta$. This concludes the proof of the fact that the image of the curve $\gamma=\gamma_i$ is contained in $\mathbb{K}(b,\mathcal{Q},0)$ for any $i\in\N$.

Now we prove that the curve $\gamma$ does not intersect $\Sigma(F)$, indeed:
\begin{equation}
    \begin{split}
        \langle 2\mathcal{Q}(\gamma(t))_H+b,\tilde{x}_H\rangle=&\langle (\gamma(t))_H,2\mathcal{Q}\tilde{x}_H\rangle+\langle b,\tilde{x}_H\rangle\\
        =&-\langle (\gamma(t))_H, b\rangle=-\langle (v_i)_H+t  w_H,b\rangle\neq 0,
        \label{eq:pll}
    \end{split}
\end{equation}
for any $t>0$ provided $i$ is big enough, where in the second identity we used the fact that $\langle b,\tilde{x}_H\rangle=0$ and that $2\mathcal{Q}\tilde{x}_H=-b$, see \eqref{eq:eq:quasi}. This implies that for any $t\geq 0$, the curve $\gamma$ is contained in the same connected component of $\mathbb{K}(0,\mathcal{Q},0)\setminus \Sigma(F)$ of $v_i$: if for some $t>0$ we had that $\gamma(t)$ crossed $\Sigma(F)$ we would have had  $2\mathcal{Q}(\gamma(t))_H+b=0$, contradicting \eqref{eq:pll}.

Since the initial point of $\gamma$ is contained in $\supp(\mu)$, the  whole curve $\gamma$ by continuity is contained in the support of $\mu$ by Theorem \ref{duppy}. Since $D_{1/t}(\gamma(t))$ is contained in the support of $t^{-(2n+1)}\mu_{0,t}$ and:
$$\lim_{t\to\infty} D_{1/t}(\gamma(t))=w,$$
we deduce that $w\in\supp(\xi)$ for any $\xi\in\Tan_{2n+1}(\mu,\infty)$ by Proposition \ref{replica}. Thanks to the fact that $w$ was an arbitrary element of $\supp(\nu)\setminus V(b)$ and to the above discussion we conclude that: 
\begin{equation}
    \supp(\nu)=\mathrm{cl}(\supp(\nu)\setminus V(b))\subseteq\supp(\xi)\qquad\text{for any $\xi\in\Tan_{2n+1}(\mu,\infty)$},
    \label{eq:eq:eq4}
\end{equation}
where the first identity above comes from the fact that $\nu(V(b))=0$.
The arbitrariness of $\nu$ and $\xi$ imply that the supports of all tangents at infinity to $\mu$ coincide and thus, by Proposition  \ref{supportoK} the tangent at infinity is unique.

Finally, suppose that $\Sigma(F)=\emptyset$. For any $x\in\mathbb{K}(b,\mathcal{Q},0)$ we have:
\begin{equation}
    0=\lambda_1\langle x_H,e_1\rangle^2+\langle b,e_1\rangle\langle x_H,e_1\rangle+\langle \mathcal{Q}[P_1(x_H)]+b,P_1(x_H)\rangle,
    \label{numbero15}
\end{equation}
where $e_1$ is a unitary eigenvector of $\mathcal{Q}$ relative to a positive eigenvalue $\lambda_1$ and $P_1$ is the orthogonal projection on $e_1^\perp$. As a first step let us suppose that our element $w\in\supp(\nu)$ satisfies the inequality $\langle w_H,e_1\rangle> 0$. Since $w\in\mathbb{K}(0,\mathcal{Q},0)$ we have:
\begin{equation}
    \langle \mathcal{Q}[P_1(  w_H)],P_1(  w_H)\rangle=-\lambda_1\langle w_H, e_1\rangle^2<0.
    \label{numbero16}
\end{equation}
In addition, since $D_{1/R_i}(v_i)\to w$, defined $s(t)=s_i(t):=P_1[(v_i)_H+t  w_H]$ and provided $i$ is sufficiently big, we have:
$$\langle \mathcal{Q}s(t)+b,s(t)\rangle<0\text{ for any }t\geq 0.$$
Therefore, the curve:
$$\gamma_w(t):=\bigg(\frac{\sqrt{\langle b,e_1\rangle^2-4\lambda_1\langle \mathcal{Q}s(t)+b,s(t)\rangle}-\langle b,e_1\rangle}{2\lambda_1}e_1+s(t),(v_i)_T+t^2w_T\bigg),$$
is well defined for any $t\geq 0$. The component of $\gamma_w$ along $e_1$ is by construction a solution to the equation:
$$\lambda_1\zeta^2+\langle b,e_1\rangle\zeta+\langle \mathcal{Q}s(t)+b,s(t)\rangle=0,$$
which by \eqref{numbero15} implies that $\gamma_w$ is contained in $\mathbb{K}(b,\mathcal{Q},0)$.
Since $\gamma_w$ is continuous, $\gamma_w(0)=v_i\in\supp(\mu)$ and $\Sigma(F)=\emptyset$, by Proposition \ref{spt2} we deduce that $\gamma_w(t)\in\supp(\mu)$ for any $t\geq0$.
We are left to compute the limit $\lim_{t\to\infty}D_{1/t}(\gamma(t))$. In the case of the vertical component the computation is immediate, indeed $\lim_{t\to\infty}(\gamma_w(t))_T/t^2=w_T$. The limit for the horizontal components is:
\begin{equation}
    \begin{split}
        \lim_{t\to\infty}\frac{(\gamma_w(t))_H}{t}&=\lim_{t\to\infty}\frac{\sqrt{\langle b,e_1\rangle^2-4\lambda_1\langle \mathcal{Q}s(t)+b,s(t)\rangle}-\langle b,e_1\rangle}{2\lambda_1t}e_1+P_1[w_H]\\
        &=\frac{\sqrt{-4\lambda_1\langle \mathcal{Q}[P_1(w_H)],P_1(w_H)\rangle}}{2\lambda_1}e_1+P_1[w_H]\\
        &\qquad\underset{\eqref{numbero16}}{=}\langle e_1,w_H\rangle e_1+P_1[w_H]=w_H,
        \nonumber
    \end{split}
\end{equation}
where in the second last identity we also used the fact that $\langle e_1,w_H\rangle>0$ so that no absolute value is introduced.
Thus $\lim_{t\to\infty}D_{1/t}(\gamma(t))=w$, which implies that: 
\begin{equation}
    \supp(\nu)\cap\{w:\langle w_H,e_1\rangle> 0\}\subseteq \supp(\xi)\qquad\text{for any $\xi\in\Tan_{2n+1}(\mu,\infty)$.}  
    \label{eq:eq:eq2}
\end{equation}
With very few changes, the above argument can be adapted to prove that the analogue inclusion to \eqref{eq:eq:eq2} holds for the points $\supp(\nu)\cap\{w:\langle w_H,e_1\rangle> 0\}$. These two inclusions together imply:
\begin{equation}
\supp(\nu)=\mathrm{cl}(\supp(\nu)\setminus V(e_1))\subseteq \supp(\xi)\quad\text{for any $\xi\in\Tan_{2n+1}(\mu,\infty)$,}
\label{eq:eq:eq3}
\end{equation}
where the first identity above comes from the fact that $\nu(V(e_1))=0$.
The same argument we used to infer the uniqueness of the tangent at infinity from \eqref{eq:eq:eq4} in the case $\Sigma(F)\neq \emptyset$ can be applied here verbatim, concluding the proof of the proposition.
\end{proof}

\subsection{Proof of Theorem \ref{disco}}
\label{flattitris}

The following lemma insures that if $\phi$ is a measure with $(2n+1)$-density, at $\phi$-almost every $x$ there is a flat measure contained in $\Tan_{2n+1}(\mu,x)$.

\begin{lemma}
If $\phi$ is a Radon measure with $(2n+1)$-density, then for $\phi$ almost every $x$ there exists a $\mathfrak{n}\in\R^{2n}$ for which:
$$\mathcal{C}^{2n+1}\llcorner{V(\mathfrak{n})}\in \Tan_{2n+1}(\phi,x).$$
\end{lemma}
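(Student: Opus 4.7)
The strategy is to start from an arbitrary $(2n+1)$-uniform tangent $\mu \in \Tan_{2n+1}(\phi,x)$, use the structure theorems of Sections \ref{sezione1} and \ref{buchi} to locate a non-characteristic point $y$ of $\supp(\mu)$, exploit the already-established flatness of tangents at such points (Proposition \ref{verticalsamoa2}), and then lift this flat tangent back to a tangent of $\phi$ at $x$ via the ``tangents to tangents are tangents'' principle of Proposition \ref{preiss}.

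More concretely, for $\phi$-a.e. $x$ the set $\Tan_{2n+1}(\phi,x)$ is non-empty and, by Proposition \ref{propup}, any element is of the form $\Theta^{2n+1}(\phi,x)\tilde\mu$ for some $\tilde\mu\in \mathcal{U}_{\HH^n}(2n+1)$. Since $2n+1\ge 3\neq 2$, Theorem \ref{MOK} (and Corollary \ref{mainprimo}) furnishes $b\in\R^{2n}$, $\mathcal{T}\in\R$ and $\mathcal{Q}\in\text{Sym}(2n)$ with $\mathrm{Tr}(\mathcal{Q})\neq 0$ such that $\supp(\tilde\mu)\subseteq \mathbb{K}(b,\mathcal{Q},\mathcal{T})$. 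The characteristic set $\Sigma(F)\cap\mathbb{K}(b,\mathcal{Q},\mathcal{T})$ is contained in the affine subspace $\{x_H:\ b+2(\mathcal{Q}-\mathcal{T}J)x_H=0\}$ (lifted by the vertical line when $\mathcal{T}=0$), hence has vanishing $\mathcal{S}^{2n+1}$-measure. By Proposition \ref{supportoK}, $\tilde\mu=\mathcal{S}^{2n+1}_{\supp(\tilde\mu)}$ is a non-trivial measure on $\supp(\tilde\mu)$, so we may pick $y\in\supp(\tilde\mu)\setminus\Sigma(F)$. By Proposition \ref{verticalsamoa2} we then have
\begin{equation*}
\Tan_{2n+1}(\tilde\mu,y)=\{\mathcal{S}^{2n+1}_{V(\mathfrak{n}(y))}\},\qquad \mathfrak{n}(y):=b+2(\mathcal{Q}-\mathcal{T}J)y_H\neq 0,
\end{equation*}
and thus a sequence $r_i\to 0$ with $r_i^{-(2n+1)}\tilde\mu_{y,r_i}\rightharpoonup \mathcal{S}^{2n+1}_{V(\mathfrak{n}(y))}$. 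Finally, by Proposition \ref{preiss}, each $r^{-(2n+1)}\mu_{y,r}=\Theta^{2n+1}(\phi,x)\,r^{-(2n+1)}\tilde\mu_{y,r}$ belongs to $\Tan_{2n+1}(\phi,x)$. A standard diagonal argument shows that $\Tan_{2n+1}(\phi,x)$ is closed in the weak-$*$ topology; passing to the limit along $r_i\to 0$ yields $\Theta^{2n+1}(\phi,x)\mathcal{S}^{2n+1}_{V(\mathfrak{n}(y))}\in \Tan_{2n+1}(\phi,x)$, i.e.\ the desired flat tangent (with $w=\mathfrak{n}(y)$ up to the density normalisation).

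The step I expect to require the most care is the extraction of the non-characteristic point $y\in\supp(\tilde\mu)\setminus\Sigma(F)$: one must verify that $\tilde\mu(\Sigma(F))=0$, which reduces, via Proposition \ref{supportoK} and Remark \ref{remarkone}, to the geometric statement that the characteristic set is an affine subspace of dimension strictly less than the metric dimension of $\supp(\tilde\mu)$. Once this is in hand, everything else is a routine combination of Propositions \ref{propup}, \ref{preiss} and \ref{verticalsamoa2} with the weak-$*$ compactness of tangent sets; the structural Theorems \ref{MOK} and \ref{duppy} enter only to guarantee that the quadric $\mathbb{K}(b,\mathcal{Q},\mathcal{T})$ is non-degenerate and that $\supp(\tilde\mu)$ is sufficiently ``thick'' inside it.
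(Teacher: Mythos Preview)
Your proposal is correct and follows essentially the same route as the paper's own proof, which simply reads ``Proposition \ref{preiss} and Proposition \ref{verticalsamoa2} directly imply the claim.'' You have merely unpacked what that one line means: pick $\mu\in\Tan_{2n+1}(\phi,x)$, use Theorem \ref{MOK} to place $\supp(\mu)$ in a quadric, find a non-characteristic point $y$ (which exists since $\mu(\Sigma(F))=0$ by Propositions \ref{quaddica}/\ref{esclusion} and \ref{supportoK}), apply Proposition \ref{verticalsamoa2} at $y$, and lift the resulting flat tangent back via Proposition \ref{preiss} and weak-$*$ closedness of the tangent set. The density-factor caveat you flag is also present (and silently absorbed) in the paper.
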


\begin{proof}
Proposition \ref{preiss} and Proposition \ref{verticalsamoa2}  directly imply the claim.
\end{proof}

We are ready to finish the proof of Theorem \ref{disco}.
The argument we will use follows closely the proof of \cite[Proposition  6.10]{DeLellis2008RectifiableMeasures}. By contradiction, suppose there exists a point $x$ such that:
\begin{itemize}
\item[(i)] $\Tan_{2n+1}(\phi,x)\subseteq\Theta(\phi,x)\mathcal{U}(2n+1)$,
\item[(ii)] there are $\zeta,\nu\in\Tan_{2n+1}(\phi,x)$ such that $\nu$ is flat and $\zeta$ is not flat,
\item[(iii)] Proposition \ref{preiss} holds at $x$.
\end{itemize}
We can also assume without loss of generality that $\Theta(\phi,x)=1$. Since $\zeta$ is not flat, its tangent at infinity $\chi$ cannot be flat otherwise Theorem \ref{flat} would imply that $\zeta$ is flat. In particular, by the assumption on the functional $\mathscr{F}$ we have $\mathscr{F}(\chi)>\hbar$. Fix $r_k\to0$ and $s_k\to 0$ in such a way that:
$$\frac{\phi_{x,r_k}}{r_k^{2n+1}}\rightharpoonup \nu\qquad\text{and}\qquad\frac{\phi_{x,s_k}}{s_k^{2n+1}}\rightharpoonup \chi.$$
We can further suppose that $s_k<r_k$. Define for any $r>0$ the function
$f(r):=\mathscr{F}(r^{-(2n+1)}\phi_{x,r})$,
and note that since $\mathscr{F}$ is continuous with respect to the weak-$*$ convergence of measures, $f$ is continuous in $r$.
Since $\nu$ is flat, then:
$$\lim_{r_k\to 0}f(r_k)=\mathscr{F}(\nu)\leq \hbar/2,$$
Thus for sufficiently small $r_k$ we have $f(r_k)<\hbar$. On the other hand, since:
$$\lim_{s_k\to 0}f(s_k)=\mathscr{F}(\chi)> \hbar,$$
for suffciently small $s_k$ we have $f(s_k)>\hbar$. Fix $\sigma_k\in [s_k,r_k]$ such that $f(\sigma_k)=\hbar$ and $f(r)\leq\hbar$ for $r\in[\sigma_k,r_k]$. By compactness there exists a subsequence of $\{\sigma_k\}_{k\in\N}$, not relabeled, such that $\sigma_k^{-(2n+1)}\phi_{x,\sigma_k}$ converges weakly-$*$ to a measure $\xi\in \mathcal{U}(2n+1)$. Clearly by continuity:
$$\mathscr{F}(\xi)=\lim_{\sigma_k\to 0} f(\sigma_k)=\hbar.$$
Note that $r_k/\sigma_k\to \infty$
, otherwise if for some subsequence not relabeled, we had that $r_k/\sigma_k$ converged to a constant $C$ (larger than $1$) we would conclude that $\frac{\xi_{0,C}}{C^{2n+1}}=\nu$
since:
$$\nu=\lim_{k\to\infty}\frac{\phi_{x,r_k}}{r_k^{2n+1}}=\lim_{k\to \infty} \left(\frac{\sigma_k}{r_k}\right)^{2n+1}\left(\frac{\phi_{x,\sigma_k}}{\sigma_k^{2n+1}}\right)_{0,r_k/\sigma_k}.$$
In particular $\xi$ would be flat, which is not possible as $\mathscr{F}(\xi)=\hbar$. Note that for any given $R>0$ we have:
$$(R\sigma_k)^{-(2n+1)}\phi_{x,R\sigma_k}\rightharpoonup R^{-(2n+1)}\xi_{0,R}.$$
This by continuity of $\mathscr{F}$ implies that:
$$\mathscr{F}(R^{-(2n+1)}\xi_{0,R})=\lim_{k\to\infty}f(R\sigma_k).$$
Moreover, since $r_k/\sigma_k\to\infty$ we conclude that for any $R>1$ we have that $R\sigma_k\in[\sigma_k,r_k]$ whenever $k$ is large enough. This, by our choice of $\sigma_k$ and $r_k$, implies that:
\begin{equation}
    \mathscr{F}(R^{-(2n+1)}\xi_{0,R})=\lim_{k\to\infty}f(R\sigma_k)\leq \hbar,
    \label{eq1010}
\end{equation}
for every $R\geq 1$. Let $\psi$ be the tangent measure at infinity to $\xi$, which by Theorem \ref{conol} is unique and it is a cone. Thanks to \eqref{eq1010}, we infer on $\psi$ that:
$$\mathscr{F}(\psi)=\lim_{R\to\infty}\mathscr{F}(R^{-(2n+1)}\xi_{0,R})\leq \hbar,$$
and in particular thanks to the assumptions on the functional $\mathscr{F}$, we have that $\psi\in\mathfrak{M}(2n+1)$. This is in contradiction with Theorem \ref{flat}, which would imply that $\xi$ is flat.

\section{Disconnection of horizontal cones and flat measures}
\label{HORRI}
The main task of this section is to prove the existence of a constant $\mathfrak{C}_3(n)>0$ such that for any $\mathfrak{m}\in\mathbb{S}^{2n-1}$ and any horizontal $(2n+1)$-uniform cone $\mu$ we have:
\begin{equation}
\int_{B_1(0)} \langle \mathfrak{m},   z_H\rangle^2 d\mu(z)\geq\mathfrak{C}_3(n).
\label{numeroo30}
\end{equation}
The geometric meaning of \eqref{numeroo30} is that for every $\mathfrak{m}\in\R^{2n}$, the support of any horizontal $(2n+1)$-uniform cone $\mu$ must be quantitatively detached from $V(\mathfrak{m})$. Indeed, if we let $\mu$ as above and we suppose that there exists an $\mathfrak{n}\in \R^{2n}$ such that $\mathrm{dist}(B_1(0)\cap\supp(\mu),B_1(0)\cap V(\mathfrak{n}))< \sqrt{\mathfrak{C}_3(n)}$, then:
\begin{equation}
    \int_{B_1(0)} \langle \mathfrak{m},   z_H\rangle^2 d\mu(z)=\int_{B_1(0)} \mathrm{dist}(z,V(\mathfrak{n}))^2 d\mu(z)<\mathfrak{C}_3(n)\mu(B_1(0))=\mathfrak{C}_3(n),
    \nonumber
\end{equation}
where the first identity comes for instance from \cite[Proposition 1.5]{merloMM}. This however is in contradiction with \eqref{numeroo30}. This rigidity of horizontal $(2n+1)$-uniform cones together with Theorem \ref{disco} suggest that horizontal $(2n+1)$-uniform measures cannot be tangent measures to measures with $(2n+1)$-density. For a rigorous proof of this fact, we refer to Propositions \ref{conti} and \ref{consta}.

\subsection{Limits of sequences of horizontal \texorpdfstring{$(2n+1)$}{Lg}-uniform cones}\label{limhorcon}

For the sake of readability, we fix here some notation. \textbf{Throughout this subsection}, if not otherwise specified, \textbf{we will always suppose that}  $\{\mu_i\}_{i\in\N}$ \textbf{is a sequence of horizontal} $(2n+1)$-\textbf{uniform cones supported on the quadrics} $\mathbb{K}(0,\mathcal{D}_i,-1)$.
Since $\mathcal{U}(2n+1)$ is compact, \textbf{we can assume without loss of generality that the} $\mu_i$ \textbf{are weakly converging to some Radon measure} $\nu$\textbf{ which is a} $(2n+1)$\textbf{-uniform cone}, thanks to Proposition \ref{replica}. Finally, up to non-relabeled subsequences, \textbf{we can further suppose that the sequence} $-\mathcal{D}_i/\vertiii{\mathcal{D}_i}$, where here with $\vertiii{A}$ we denote the operator norm of the matrix $A$, \textbf{converges to some matrix }$\mathcal{Q}\in \mathrm{Sym}(2n)$ \textbf{with} $\vertiii{\mathcal{Q}}=1$.
\label{operatornorm}

\medskip

The plan of the subsection is the following. First we prove that the measure $\nu$ is vertical if and only if the sequence of the norms $\vertiii{\mathcal{D}_i}$ diverges. Secondly, we prove that the only possible vertical limits of sequences of horizontal $(2n+1)$-uniform cones are flat measures. This second step of the section is where the results of Appendix \ref{TYLR} come into play. Indeed, Theorem \ref{appendicefinale} applies to $\mu_i$ for any $i\in\N$ and it forces $\mathcal{D}_i$ to satisfy the following identity for any $z\in\R^{2n}$ for which $(\mathcal{D}_i+J)z\neq 0$:
\begin{equation}
   \begin{split}
 &\frac{\text{Tr}(\mathcal{D}_i^2)-2\langle \mathfrak{n}_i,\mathcal{D}^2_i\mathfrak{n}_i\rangle+\langle \mathfrak{n}_i,D_i\mathfrak{n}_i\rangle^2}{4(2n-1)}+\frac{n-1}{2n-1}-\frac{1}{4}\\
 &\qquad\qquad\qquad\qquad+\frac{\langle \mathcal{D}_i J\mathfrak{n}_i,\mathfrak{n}_i\rangle}{2n-1}-\frac{\left(\text{Tr}(\mathcal{D}_i)-\langle \mathfrak{n}_i,\mathcal{D}_i\mathfrak{n}_i\rangle\right)^2}{8(2n-1)}=0.
    \label{eq:1100}
\end{split} 
\end{equation}
where $\mathfrak{n}_i(z):=(\mathcal{D}_i+J)z/\lvert(\mathcal{D}_i+J)z\rvert$ is the so called \emph{horizontal normal} to the graph of $f(x)=\langle x,\mathcal{D}_i x\rangle$ at the point $(x,f(x))$.
Since identity \eqref{eq:1100} holds for any $i\in\N$, we can deduce some constraints, see Proposition \ref{equazione}, for the limit matrix  $\mathcal{Q}$. Using a similar argument we are able to prove in Proposition \ref{bddss} that the sequence of the second biggest (in modulus) eigenvalue of $\mathcal{D}_i$ is bounded. Putting together these pieces of information, we are able to prove in Proposition \ref{boundi} that there must exists a constant, depending only on $n$, which bounds the biggest eigenvalue of the matrices $\mathcal{D}_i$ whose associated quadrics $\mathbb{K}(0,\mathcal{D}_i,-1)$ support a horizontal $(2n+1)$-uniform measure.
\smallskip

The following proposition shows the equivalence between the geometric condition for $\nu$ to be a vertical $(2n+1)$-uniform cone and the algebraic condition on the divergence of the sequence $\big\{\vertiii{\mathcal{D}_i}\big\}_{i\in\N}$. This will be very useful in the forthcoming computations.

\begin{proposizione}\label{verticale2}
The following are equivalent:
\begin{itemize}
\item[(i)]$\nu$ is supported on $\mathbb{K}(0,\mathcal{Q},0)$,
\item[(ii)] $\lim_{i\to\infty}\vertiii{\mathcal{D}_i}=\infty$.
\end{itemize}
\end{proposizione}

\begin{proof} Proposition \ref{replica} implies that for any $y\in\supp(\nu)$ there exists a sequence $\{y_i\}_{i\in\N}$ such that $y_i\in\supp(\mu_i)$ and $y_i\to y$. Assume at first the sequence $\vertiii{\mathcal{D}_i}$ is bounded. This implies that we can find a (non-relabeled) subsequence $\{\mathcal{D}_i\}$ such that the matrices $\mathcal{D}_i$ converge to some $\mathcal{D}\in\mathrm{Sym}(2n)$. Thanks to our assumption on $y_i$ we know that $(y_i)_T=\langle (y_i)_H,\mathcal{D}_i [(y_i)_H]\rangle$. Thus, taking the limit as $i$ to infinity, we get:
$$  y_T=\langle y_H,\mathcal{D}y_H\rangle.$$
Therefore if $\vertiii{\mathcal{D}_i}$ is bounded, $\nu$ is supported on both the quadrics $\mathbb{K}(0,\mathcal{Q},0)$ and $\mathbb{K}(0,\mathcal{D},-1)$. This however is not possible thanks to Proposition \ref{verticale}, which implies that either $\mathcal{T}=0$ or $\mathcal{T}\neq 0$ for any quadric containing $\supp(\mu)$.

Viceversa, if we suppose that $\{\vertiii{\mathcal{D}_i}\}_{i\in\N}$ diverges, with some iterations of the triangle inequality, we deduce the following bound:
\begin{equation}
\begin{split}
\lvert \langle y_H, \mathcal{Q}y_H\rangle\rvert\leq& \vertiii{ \mathcal{Q}-\frac{D_i}{\vertiii{D_i}}}\lvert y_H\rvert^2+\lvert y_H-(y_i)_H\rvert(\lvert y_H\rvert+\lvert(y_i)_H\rvert)+\frac{\lvert(y_i)_T\rvert}{\vertiii{\mathcal{D}_i}}.
    \label{eq:3}
\end{split}
\end{equation}
The right-hand side of the above inequality goes to $0$ as $i\to \infty$ since we can assume without loss of generality that $\lVert y_i\rVert\leq 2\lVert y\rVert$. Therefore for any $y\in\supp(\nu)$ we have that $\langle y_H,\mathcal{Q}y_H\rangle=0$ and thus $\supp(\nu)\subseteq \mathbb{K}(0,\mathcal{Q},0)$.
\end{proof}

\begin{proposizione}\label{equivaut1}
Suppose that $\{\eta_i\}_{i\in\N}$ is a sequence of $(2n+1)$-uniform measures converging to some Radon measure $\xi$. Then:
\begin{itemize}
    \item[(i)]$\xi$ is $(2n+1)$-uniform,
    \item[(ii)] for any $U\in S(2n)$ the measures $(\Xi_U)_\#\eta_i$ are $(2n+1)$-uniform,
    \item[(iii)]the following are equivalent:
    \begin{itemize}
    \item[($\alpha$)] $\xi$ if flat.
    \item[($\beta$)] there exists a $U\in S(2n)$ and a flat measure $\xi_U$ such that $(\Xi_U)_\#\eta_i\rightharpoonup\xi_U$
    \item[($\gamma$)] for any $U\in S(2n)$ there exists a flat measure $\xi_U$ such that $(\Xi_U)_\#\eta_i\rightharpoonup\xi_U$,
\end{itemize}
\end{itemize}
where we recall that the maps $\Xi_U$ were introduced in \eqref{numbero10}.
\end{proposizione}

\begin{proof}
The fact that $\xi$ is a $(2n+1)$-uniform measure is an immediate consequence of Proposition \ref{UComp} and fact that $(\Xi_U)_\#\eta_i$ are $(2n+1)$-uniform measure for any $U\in S(2n)$ and any $i\in\N$ is a consequence of Proposition \ref{isometrie2}.

In order to prove that ($\alpha$) implies ($\gamma$) we let $\supp(\xi)=V(\mathfrak{n})$ for some $\mathfrak{n}\in\R^{2n}$. Since the maps $\Xi_U$ are a surjective continuous isometries of $\HH^n$ for any $U\in S(2n)$, see Proposition \ref{isometrie2}, an elementary computation shows that $(\Xi_U)_\#\eta_i\rightharpoonup (\Xi_U)_\#\xi$. Thanks to Proposition \ref{isometrie} and to the fact that $\Xi_U(0)=0$, we infer that:
$$\supp((\Xi_U)_\#\xi)=\Xi_U(V(\mathfrak{n}))=V(U^T\mathfrak{n}).$$
Finally, the above identity together with Proposition \ref{verticalsamoa} concludes the proof of the first implication.

The fact that ($\gamma$) implies ($\beta$) is obvious, so we are left to prove that  ($\beta$) implies ($\alpha$). Since $S(2n)$ is a group, we have that $U^{-1}=U^T\in S(2n)$, and thus we can apply the already proven implication ($\alpha$)$\Rightarrow$($\beta$) to the sequence of measures $(\Xi_U)_\#\eta_i$ to show that there exists a flat measure $\tilde{\xi}_{U^T}$ such that $(\Xi_{U^T})_\#((\Xi_U)_\#\eta_i)\rightharpoonup \tilde{\xi}_{U^T}$. An elementary computation shows that $(\Xi_{U^T})_\#((\Xi_U)_\#\eta_i)=\eta_i$ thus proving by the uniqueness of the weak limit that $\xi=\tilde{\xi}_{U^T}$ is flat.
\end{proof}

\textbf{From now on} and if not otherwise specified, \textbf{we shall always assume that} $\vertiii{\mathcal{D}_i}\to\infty$ or, rephrasing by means of Proposition \ref{verticale2}, that the limit measure $\nu$ is a vertical $(2n+1)$-uniform cone. In the following proposition we show how the quadric $\mathbb{K}(0, \mathcal{Q},0)$ ``remembers'' that the measure $\nu$ is the limit of a sequence of horizontal $(2n+1)$-uniform cones.

\begin{proposizione}\label{equazione}
For any $h\not\in\text{Ker}(\mathcal{Q})$ defined $\mathfrak{n}:=\mathcal{Q}h/\lvert\mathcal{Q}h\rvert$, we have:
\begin{equation}
\begin{split}
2(\text{Tr}(\mathcal{Q}^2)-2\langle &\mathfrak{n},\mathcal{Q}^2\mathfrak{n}\rangle+\langle \mathfrak{n},\mathcal{Q}\mathfrak{n}\rangle^2)-(\text{Tr}(\mathcal{Q})-\langle \mathfrak{n},\mathcal{Q}\mathfrak{n}\rangle)^2=0.
\label{eq:4}
\end{split}
\end{equation}
\end{proposizione}

\begin{proof}
First of all, since $h\not\in \text{ker}(\mathcal{Q})$, there exists $N\in\N$ for which for any $i\geq N$ we have:
\begin{equation}
  (\mathcal{D}_i+J)h\neq 0. 
  \label{numbero11}
\end{equation}
Indeed, if there was a (non-relabeled) subsequence of indeces for which $(\mathcal{D}_i+J)h= 0$,
dividing the above equality by $\vertiii{\mathcal{D}_i}$ and sending $i$ to infinity, we would deduce that $\mathcal{Q}h=0$. Secondly, defined
$\mathfrak{n}_i:=(\mathcal{D}_i+J)h/\lvert (\mathcal{D}_i+J)h\rvert$ we have:
$$\lim_{i\to\infty}\mathfrak{n}_i(h)= \mathcal{Q}h/\lvert\mathcal{Q}h\rvert=\mathfrak{n}.$$
Thanks to the above observations, Theorem \ref{appendicefinale} implies that for any $i\in\N$ for which \eqref{numbero11} holds, thus for any $i$ sufficiently big, we have:
\begin{equation}
   \begin{split}
 &\frac{\text{Tr}(\mathcal{D}_i^2)-2\langle \mathfrak{n}_i,\mathcal{D}^2_i\mathfrak{n}_i\rangle+\langle \mathfrak{n}_i,D_i\mathfrak{n}_i\rangle^2}{4(2n-1)}+\frac{n-1}{2n-1}-\frac{1}{4}\\&\qquad\qquad\qquad\qquad+\frac{\langle \mathcal{D}_i J\mathfrak{n}_i,\mathfrak{n}_i\rangle}{2n-1}-\frac{\left(\text{Tr}(\mathcal{D}_i)-\langle \mathfrak{n}_i,\mathcal{D}_i\mathfrak{n}_i\rangle\right)^2}{8(2n-1)}=0.
    \label{eq:10}
\end{split} 
\end{equation}
Dividing \eqref{eq:10} by $\vertiii{\mathcal{D}_i}^2$ and taking the limit as $i$ goes to infinity, we obtain \eqref{eq:4}.
\end{proof}

The following corollary is an easy application of Proposition \ref{equazione} in the case $h$ is a non-zero eigenvector of $\mathcal{Q}$.

\begin{corollario}\label{core}
Every non-zero eigenvalue $\lambda$ of $\mathcal{Q}$ satisfies the equation:
\begin{equation}
    -3\lambda^2+2\text{Tr}(\mathcal{Q})\lambda+(2\text{Tr}(\mathcal{Q}^2)-\text{Tr}(\mathcal{Q})^2)=0.
    \label{eq20}
\end{equation}
In particular $\mathcal{Q}$ has at most two distinct non-zero eigenvalues.
\end{corollario}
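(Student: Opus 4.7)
The plan is to apply Proposition \ref{equazione} with the choice $h$ equal to a non-zero eigenvector of $\mathcal{Q}$ relative to the non-zero eigenvalue $\lambda$. Since $\mathcal{Q}h = \lambda h$ with $\lambda \neq 0$, we have $h \not\in \text{Ker}(\mathcal{Q})$, so the hypothesis of Proposition \ref{equazione} is satisfied and the normal vector becomes
$$\mathfrak{n} = \frac{\mathcal{Q}h}{\lvert \mathcal{Q}h\rvert} = \frac{\lambda h}{\lvert\lambda\rvert\, \lvert h\rvert} = \text{sgn}(\lambda)\,\frac{h}{\lvert h\rvert}.$$
In particular $\mathfrak{n}$ is itself a unit eigenvector of $\mathcal{Q}$ with eigenvalue $\lambda$, regardless of the sign. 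Therefore
$$\langle \mathfrak{n},\mathcal{Q}\mathfrak{n}\rangle = \lambda \qquad\text{and}\qquad \langle \mathfrak{n},\mathcal{Q}^2\mathfrak{n}\rangle = \lambda^2.$$

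Substituting these two identities into equation \eqref{eq:4} of Proposition \ref{equazione} yields
$$2\bigl(\text{Tr}(\mathcal{Q}^2) - 2\lambda^2 + \lambda^2\bigr) - \bigl(\text{Tr}(\mathcal{Q}) - \lambda\bigr)^2 = 0,$$
which after expanding the square and collecting terms becomes exactly the advertised identity
$$-3\lambda^2 + 2\,\text{Tr}(\mathcal{Q})\,\lambda + \bigl(2\,\text{Tr}(\mathcal{Q}^2) - \text{Tr}(\mathcal{Q})^2\bigr) = 0.$$

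Since the left-hand side is a quadratic polynomial in $\lambda$ with leading coefficient $-3 \neq 0$ and coefficients that depend only on $\mathcal{Q}$ (not on the particular eigenvalue chosen), every non-zero eigenvalue of $\mathcal{Q}$ must be a root of the same fixed quadratic, so there are at most two distinct such values. No step looks like a genuine obstacle: the only thing to verify carefully is that $\mathfrak{n}$ is well-defined and reduces to a unit eigenvector — which the sign analysis above handles — after which the conclusion is a one-line substitution into the identity provided by Proposition \ref{equazione}.
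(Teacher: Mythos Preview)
Your proof is correct and follows essentially the same approach as the paper: apply Proposition \ref{equazione} with $h$ an eigenvector of $\mathcal{Q}$ relative to a non-zero eigenvalue, observe that $\mathfrak{n}$ is then a unit eigenvector, and substitute $\langle \mathfrak{n},\mathcal{Q}\mathfrak{n}\rangle=\lambda$, $\langle \mathfrak{n},\mathcal{Q}^2\mathfrak{n}\rangle=\lambda^2$ into \eqref{eq:4}. Your version is slightly more explicit in handling the sign and in spelling out why a single fixed quadratic forces at most two distinct non-zero eigenvalues, but the argument is the same.
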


\begin{proof}
For any $h\in\R^{2n}$ unitary eigenvector relative to the eigenvalue $\lambda\neq 0$, we have $\mathfrak{n}(h)=\mathcal{Q}h/\lvert\mathcal{Q}h\rvert=\text{sgn}(\lambda)h$.
Thus, plugging the expression for $\mathfrak{n}(h)$ into \eqref{eq:4}, we infer:
\begin{equation}
\begin{split}
2(\text{Tr}(\mathcal{Q}^2)-2\lambda^2+\lambda^2)-(\text{Tr}(\mathcal{Q})-\lambda)^2=0,
\nonumber
\end{split}
\end{equation}
which collecting $\lambda$, proves the corollary.
\end{proof}






The following proposition shows that there is only one non-zero eigenvalue of $\mathcal{Q}$. In particular, this implies that $\mathcal{Q}$ is semidefinite and that $\nu$ is flat. 

\begin{proposizione}\label{omgflat} The matrix $\mathcal{Q}$ has rank $1$ and in particular the limit $\nu$ of the sequence $\{\mu_i\}$ is flat.
\end{proposizione}

\begin{proof}
Let us first assume that $1$ is an eigenvalue of $\mathcal{Q}$. Then,
Corollary \ref{core} implies that $1$ solves the equation \eqref{eq20} and in particular:
\begin{equation}
    -3+2\text{Tr}(\mathcal{Q})+2\text{Tr}(\mathcal{Q}^2)-\text{Tr}(\mathcal{Q})^2=0.
    \label{eq22}
\end{equation}
Using the above equality to express $\text{Tr}(\mathcal{Q}^2)$ in function of $\text{Tr}(\mathcal{Q})$ and substituting it back into \eqref{eq20}, we have:
\begin{equation}
    -3\lambda^2+2\text{Tr}(\mathcal{Q})\lambda+(3-2\text{Tr}(\mathcal{Q}))=0.
    \nonumber
\end{equation}
Solving the above quadratic equation, we find that its solutions are:
\begin{equation}
    \lambda_1=1\qquad\text{and}\qquad\lambda_2=2 \text{Tr}(\mathcal{Q})/3-1.
    \label{numbero12}
\end{equation}
If $\lambda_2\geq0$, the matrix $\mathcal{Q}$ would be semi-definite and since $\mathbb{K}(0,\mathcal{Q},0)$ supports a $(2n+1)$-uniform measure, Proposition \ref{rango} implies that $\text{rk}(\mathcal{Q})=1$ and Proposition \ref{rank1} concludes that $\mu$ must be flat. Therefore, we can assume without loss of generality that $\lambda_2<0$. Let now $k_1$ be the dimension of the eigenspace relative to the eigenvalue $\lambda_1=1$ and $k_2$ be the dimension of the eigenspace relative to the eigenvalue $\lambda_2$. By assumption $k_1\geq 1$ and  we can suppose without loss of generality that $k_2\geq 1$, otherwise $\mathcal{Q}$ would be semidefinite and Proposition \ref{rank1} and Proposition \ref{rango} again imply that $\mu$ is flat. Thanks to the second identity in \eqref{numbero12} we have:
$$\text{Tr}(\mathcal{Q})=k_1+\lambda_2 k_2=k_1+\frac{2 \text{Tr}(\mathcal{Q})k_2}{3}-k_2.$$
Solving for $\text{Tr}(\mathcal{Q})$ the above equation, we conclude that:
\begin{equation}
     \text{Tr}(\mathcal{Q})=\frac{3(k_1-k_2)}{3-2k_2}.
     \label{numero13.1}
\end{equation}
Thanks to the above identity it is possible to get an expression for $\lambda_2$ in function of $k_1$ and $k_2$, indeed:
\begin{equation}
    \lambda_2\underset{\eqref{numbero12}}{=}\frac{2 \text{Tr}(\mathcal{Q})}{3}-1\underset{\eqref{numero13.1}}{=}\frac{2(k_1-k_2)}{3-2k_2}-1 =\frac{2k_1-3}{3-2k_2}.
    \label{numeroo21}
\end{equation}
This allows us to obtain an expression also for $\mathrm{Tr}(\mathcal{Q}^2)$ depending only on $k_1$ and $k_2$:
    \begin{equation}
      \text{Tr}(\mathcal{Q}^2)=\lambda_1^2 k_1+\lambda_2^2 k_2=k_1+\left(\frac{2k_1-3}{3-2k_2}\right)^2k_2.
        \label{numbero13.2}
    \end{equation}
Substituting in \eqref{eq22} the above identities for $\mathrm{Tr}(\mathcal{Q})$ and $\mathrm{Tr}(\mathcal{Q}^2)$, we deduce that:
$$-3+2\frac{3(k_1-k_2)}{3-2k_2}+2k_1+2\left(\frac{2k_1-3}{3-2k_2}\right)^2k_2-\frac{9(k_1-k_2)^2}{(3-2k_2)^2}=0.$$
Recollecting terms, we can rearrange the above equality in the following fashion:
$$0=k_1^2(8k_2-9)+k_1(8k_2^2-42k_2+36)+(-9k_2^2+36k_2-27),$$
which allows us to express $k_1$ in terms of $k_2$. Indeed, the solutions of the above quadratic equation are:
$$k_1=\frac{9k_2-9}{8k_2-9}\qquad\text{ or }\qquad k_1=3-k_2.$$

There are two couples of natural numbers for which $k_1=\frac{9k_2-9}{8k_2-9}$ holds and they are $(1,0)$ and $(0,1)$. This can be proved showing by means of an elementary algebraic computation that $0<k_2/(8k_2-9)<1/2$ for any $k_2\geq2$, which in turn implies
$1<\frac{9k_2-9}{8k_2-9}=1+\frac{k_2}{8k_2-9}<3/2$. This shows that if $k_2\geq 2$, $k_1$ cannot be an integer.
However, neither $(1,0)$ and $(0,1)$ satisfy the constraint that both $k_1$ and $k_2$ must be bigger than $1$.

On the other hand, there are four couples of natural numbers satisfying the relation $k_1=3-k_2$ and such couples are $(3,0);(2,1);(2,1);(0,3)$. However, the only couples of natural numbers for which $\mathcal{Q}$ is not semi-defined are $(1,2)$ and $(2,1)$. In both these cases \eqref{numero13.1} implies that $\text{Tr}(\mathcal{Q})=3$ and therefore $\lambda_2=1$ by \eqref{numeroo21}, which is in contradiction with the assumption that $\lambda_2<0$.

Let us now suppose that $1$ is \emph{not} an eigenvalue of $\mathcal{Q}$. Thanks to Proposition \ref{equivaut1}, if we are able to exhibit an $U\in S(2n)$ such that the sequence $(\Xi_U)_\#\mu_i$ converges to some flat measure then $\nu$ itself must be flat. Therefore, thanks to the first part of the proposition is sufficient to prove that:
\begin{enumerate}
\item the measures $(\Xi_U)_\#\mu_i$ converge to some $(2n+1)$-uniform measure $\nu_U$,
\item $(\Xi_U)_\#\mu_i$ are supported on quadrics $\mathbb{K}(0,\mathcal{D}^\prime_i,-1)$ where $\vertiii{\mathcal{D}^\prime_i}\to \infty$,
    \item there exists a symmetric matrix $\mathcal{Q}^\prime$ such that
    $\lim_{i\to\infty}-\mathcal{D}^\prime_i/\vertiii{\mathcal{D}^\prime_i}=\mathcal{Q}^\prime$,
    \item $1$ is an eigenvalue of $\mathcal{Q}^\prime$.
\end{enumerate}

Let us show that any $U\in\text{S}(2n)$ for which $s(U)=-1$, where $s(\cdot)$ was defined after \eqref{inex}, satisfies the requirements above. By Proposition \ref{equivaut1} the measures $(\Xi_U)_\#\mu_i$ are $(2n+1)$-uniform and it is not hard to see that $(\Xi_U)_\#\mu_i\rightharpoonup (\Xi_U)_\#\nu$. Note that by Proposition \ref{isometrie2}, the measure $(\Xi_U)_\#\nu$ is $(2n+1)$-uniform. Furthermore, by Proposition \ref{isometrie2}, we have:
\begin{equation}
\begin{split}
       \supp((\Xi_U)_\#\mu_i)\subseteq&\Xi_U\big(\mathbb{K}(0,\mathcal{D}_i,-1)\big)=\mathbb{K}(0,U\mathcal{D}_iU^T,1)\\
       =&\mathbb{K}(0,-U\mathcal{D}_iU^T,-1)=\mathbb{K}(0,\mathcal{D}_i^\prime,-1),
        \label{eq:1}
\end{split}
    \end{equation}
where  $\mathcal{D}_i^\prime:=-U\mathcal{D}_iU^T$. Since the matrix $U$ is by construction an Euclidean isometry of $\R^{2n}$, the operator norm of $\mathcal{D}_i^\prime$ must coincide with the one of $\mathcal{D}_i$ and thus:
$$\lim_{i\to\infty}-\mathcal{D}_i^\prime/\vertiii{\mathcal{D}_i^\prime}=\lim_{i\to\infty}U\mathcal{D}_iU^T/\vertiii{\mathcal{D}_i}=-U\mathcal{Q}U^T=\mathcal{Q}^\prime.$$
Note that if $v$ is an eigenvector of $\mathcal{Q}$ relative to the eigenvalue $-1$, that must exists since $\vertiii{Q}=1$, then the vector $Uv$ is an eigenvalue of $\mathcal{Q}^\prime$ and:
$$\mathcal{Q}^\prime (Uv)=-U\mathcal{Q}U^T(Uv)=-U\mathcal{Q}v=Uv.$$
This concludes the proof that conditions 1. to 4. are satisfied and thus thanks to the first part of the proposition $\nu_U$ is flat. Hence, even if $1$ is not an eigenvalue of $\mathcal{Q}$, the measure $\nu$ must be flat.
\end{proof}

We introduce here further notation. For any $i\in\N$ we let $\lambda_i(1),\ldots,\lambda_{i}(2n)$ be the eigenvalues of $\mathcal{D}_i$. Such eigenvalues are ordered in the following way:
$$\vertiii{\mathcal{D}_i}=\lvert \lambda_i(1)\rvert\geq\lvert\lambda_i(2)\rvert\geq\ldots\geq\lvert \lambda_{i}(2n)\rvert.$$
We further let $\{e_i(1),\ldots, e_{i}(2n)\}$ be an orthonormal basis of $\R^{2n}$ for which $e_i(j)$ is an eigenvector relative to the eigenvalue $\lambda_i(j)$. 

The following result will allow us to show in Proposition \ref{boundi} that $\nu$ must be a horizontal $(2n+1)$-uniform cone. If this is not the case, we know by Proposition \ref{omgflat} that $\supp(\nu)$ is a vertical hyperplane $V$ and in particular that the limit of the quadrics $\mathbb{K}(0,\mathcal{D}_i,-1)$ must contain $V$, see Proposition \ref{replica}. Furthermore, Proposition \ref{bddss} shows that while $\lambda_1(j)$ is diverging, all the other eigenvalues remain bounded. However, as we shall see in Proposition \ref{boundi}, this implies that the limit of the $\mathbb{K}(0,\mathcal{D}_i,-1)$ cannot contain any vertical hyperplane.

\begin{proposizione}\label{bddss}
Let the sequence of matrices $\{\mathcal{D}_i\}$ be as above, i.e.:
\begin{itemize}
    \item[(i)] $\mathbb{K}(0,\mathcal{D}_i,-1)$ for any $i\in\N$ supports a $(2n+1)$-uniform cone $\mu_i$ which converges to some $\nu$,
    \item[(ii)]$\lim_{i\to\infty}\vertiii{\mathcal{D}_i}=\lim_{i\to\infty}\lvert\lambda_i(1)\rvert=\infty.$
\end{itemize}
Then, the sequence $\{\lambda_i(2)\}_{i\in\N}$ is bounded.
\end{proposizione}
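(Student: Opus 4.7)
\textit{Proof plan.} I would argue by contradiction. Suppose $\{\lambda_i(2)\}$ is unbounded and, passing to a non-relabelled subsequence, assume $|\lambda_i(2)|\to\infty$. Write $L_i:=\lambda_i(1)$ and $M_i:=\lambda_i(2)$. Since $L_i=\vertiii{\mathcal{D}_i}\to\infty$, Proposition \ref{omgflat} forces the limit $\mathcal{Q}$ of $-\mathcal{D}_i/L_i$ to have rank one. Consequently every eigenvalue of $\mathcal{D}_i/L_i$ except the largest converges to $0$; in particular $|M_i|/L_i\to 0$, and since $|\lambda_i(k)|\leq|M_i|$ for each $k\geq 2$, also $|\lambda_i(k)|=o(L_i)$ uniformly in $k\geq 2$.

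The key step is to apply the algebraic constraint \eqref{eq:1100} of Theorem \ref{appendicefinale} with the varying test vector $h_i:=e_i(2)$. Since $|M_i|\to\infty$ the vector $(\mathcal{D}_i+J)e_i(2)=M_i e_i(2)+Je_i(2)$ is non-zero for $i$ large, so \eqref{eq:1100} is available with
\[
\mathfrak{n}_i=\frac{M_i e_i(2)+Je_i(2)}{\sqrt{M_i^{2}+1}}.
\]
I would then divide \eqref{eq:1100} by $L_i^{2}$ and let $i\to\infty$. The point is that $\mathfrak{n}_i$ concentrates on $e_i(2)$: the squared weight of its $e_i(2)$-component is $M_i^{2}/(M_i^{2}+1)\to 1$, whereas every other component carries squared weight $\leq 1/(M_i^{2}+1)\to 0$. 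Combining this with $|\lambda_i(k)|\leq |M_i|=o(L_i)$ for $k\geq 2$, a routine computation produces the five limits
\[
\frac{\text{Tr}(\mathcal{D}_i^{2})}{L_i^{2}}\to 1,\quad \frac{\langle\mathfrak{n}_i,\mathcal{D}_i^{2}\mathfrak{n}_i\rangle}{L_i^{2}}\to 0,\quad \frac{\langle\mathfrak{n}_i,\mathcal{D}_i\mathfrak{n}_i\rangle}{L_i}\to 0,\quad \frac{\text{Tr}(\mathcal{D}_i)}{L_i}\to 1,\quad \frac{\langle \mathcal{D}_iJ\mathfrak{n}_i,\mathfrak{n}_i\rangle}{L_i^{2}}\to 0.
\]

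Substituting these into the $L_i^{-2}$-rescaling of \eqref{eq:1100}, every surviving contribution comes from $\text{Tr}(\mathcal{D}_i^{2})/L_i^{2}$ and $(\text{Tr}(\mathcal{D}_i))^{2}/L_i^{2}$, and one obtains in the limit
\[
\frac{1}{4(2n-1)}-\frac{1}{8(2n-1)}=\frac{1}{8(2n-1)}=0,
\]
which is absurd; hence $\{\lambda_i(2)\}$ must be bounded.

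The main obstacle is the verification of the second and third limits above. The worry is that the component $\alpha_1:=\langle Je_i(2),e_i(1)\rangle$ need not vanish, in which case $\langle Je_i(2),\mathcal{D}_i Je_i(2)\rangle$ and $|\mathcal{D}_i Je_i(2)|^{2}$ have magnitudes of order $L_i$ and $L_i^{2}$, respectively. However in both $\langle \mathfrak{n}_i,\mathcal{D}_i\mathfrak{n}_i\rangle$ and $\langle\mathfrak{n}_i,\mathcal{D}_i^{2}\mathfrak{n}_i\rangle$ these quantities appear divided by $M_i^{2}+1$, yielding $O(L_i^{2}/M_i^{2})=o(L_i^{2})$ and $O(L_i/M_i^{2})=o(L_i)$, precisely because $|M_i|\to\infty$. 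Thus the divergence of $M_i$ we wish to rule out is exactly what makes the argument close, and a parallel observation handles the Heisenberg cross-term $\langle \mathcal{D}_iJ\mathfrak{n}_i,\mathfrak{n}_i\rangle/L_i^{2}$.
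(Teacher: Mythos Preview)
Your proof is correct and follows exactly the paper's approach: argue by contradiction, apply the constraint \eqref{eq:1100} at $h=e_i(2)$, divide by $\vertiii{\mathcal{D}_i}^2$, and use $\mathrm{rk}(\mathcal{Q})=1$ from Proposition~\ref{omgflat} together with $|M_i|\to\infty$ to show the limit equals $\tfrac{1}{8(2n-1)}\neq 0$. The paper packages the terms slightly differently---grouping them into $(I)_i=\text{Tr}(\mathcal{D}_i^2)-2\langle\mathfrak{n}_i,\mathcal{D}_i^2\mathfrak{n}_i\rangle+\langle\mathfrak{n}_i,\mathcal{D}_i\mathfrak{n}_i\rangle^2$ and $(II)_i=\text{Tr}(\mathcal{D}_i)-\langle\mathfrak{n}_i,\mathcal{D}_i\mathfrak{n}_i\rangle$---but the computation and the key observation that the cross terms involving $Je_i(2)$ are killed by the extra factor $1/(M_i^2+1)$ are identical.
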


\begin{proof}
By contradiction assume that $\lvert\lambda_i(2)\rvert\to\infty$ and let $e_i:=e_i(2)$ and $\lambda_i:=\lambda_i(2)$. We want to apply Theorem \ref{appendicefinale} to the points $h=e_i$. In order to do so we need to give an explicit expression to the quantities involved in \eqref{eq16} of Theorem \ref{appendicefinale}. First we compute the horizontal normal of $\mathbb{K}(0,\mathcal{D}_i,-1)$ at the point  $(e_i,\langle e_i,\mathcal{D}_ie_i\rangle)$:
\begin{equation}
    \mathfrak{n}_i:=\mathfrak{n}(e_i):=\frac{\mathcal{D}_ie_i+Je_i}{\lvert\mathcal{D}_ie_i+Je_i\rvert}=\frac{\lambda_ie_i+Je_i}{\lvert\lambda_ie_i+Je_i\rvert}=\frac{\lambda_ie_i+Je_i}{(1+\lambda_i^2)^\frac{1}{2}}
    \nonumber
\end{equation}
Secondly we compute the quantities $\langle\mathfrak{n}_i,\mathcal{D}_i\mathfrak{n}_i\rangle$, $\langle\mathfrak{n}_i,\mathcal{D}_i^2\mathfrak{n}_i\rangle$ and $\langle \mathfrak{n}_i,\mathcal{D}_iJ\mathfrak{n}_i\rangle$:
\begin{equation}
\begin{split}
    \langle \mathfrak{n}_i,\mathcal{D}_i^k\mathfrak{n}_i\rangle=&\left\langle\frac{\lambda_ie_i+Je_i}{(1+\lambda_i^2)^\frac{1}{2}}, D_i^k\left(\frac{\lambda_ie_i+Je_i}{(1+\lambda_i^2)^\frac{1}{2}}\right)\right\rangle=\lambda_i^k-\frac{\lambda_i^{k}+\langle e_i,J\mathcal{D}_i^kJe_i\rangle}{1+\lambda_i^2},\\
    \langle \mathfrak{n}_i,\mathcal{D}_iJ\mathfrak{n}_i\rangle=&\left\langle\frac{\lambda_ie_i+Je_i}{(1+\lambda_i^2)^\frac{1}{2}}, D_i J\left(\frac{\lambda_ie_i+Je_i}{(1+\lambda_i^2)^\frac{1}{2}}\right)\right\rangle=-\frac{\lambda_i^2+\lambda_i\langle e_i,J\mathcal{D}_iJ e_i\rangle}{1+\lambda_i^2}.
    \end{split}
    \label{eq1002}
\end{equation}
In order to further simplify the notation, we define:
\begin{equation}
    \begin{split}
        (I)_i:=&\text{Tr}(\mathcal{D}_i^2)-2\langle\mathfrak{n}_i,\mathcal{D}_i^2\mathfrak{n}_i\rangle+\langle\mathfrak{n}_i,\mathcal{D}_i\mathfrak{n}_i\rangle^2,\\
    (II)_i:=&\text{Tr}(\mathcal{D}_i)-\langle \mathfrak{n}_i,\mathcal{D}_i\mathfrak{n}_i\rangle.
        \nonumber
    \end{split}
\end{equation}
With these notations, Theorem \ref{appendicefinale} applied to $h=e_i$ turns into:
\begin{equation}
\begin{split}
\frac{(I)_i}{4(2n-1)}+\frac{n-1}{2n-1}
        -\frac{1}{4}-\frac{\lambda_i^2+\lambda_i\langle e_i,J\mathcal{D}_iJ e_i\rangle}{(2n+1)(1+\lambda_i^2)}-\frac{(II)_i^2}{8(2n-1)}=0.
        \label{numbero17}
\end{split}
\end{equation}
We give now a more explicit description of both $(I)_i$ and $(II)_i$ using the expressions for $\langle\mathfrak{n}_i,\mathcal{D}_i\mathfrak{n}_i\rangle$, $\langle\mathfrak{n}_i,\mathcal{D}_i^2\mathfrak{n}_i\rangle$ and $\langle \mathfrak{n}_i,\mathcal{D}_iJ\mathfrak{n}_i\rangle$ we found in \eqref{eq1002}:
\begin{equation}
\begin{split}
    (I)_i=&\sum_{j=1}^{2n}\lambda_i(j)^2-2\lambda_i^2+2\frac{\lambda_i^{2}+\langle e_i,J\mathcal{D}_i^2 Je_i\rangle}{1+\lambda_i^2}+\left(\lambda_i-\frac{\lambda_i+\langle e_i,J\mathcal{D}_i Je_i\rangle}{1+\lambda_i^2}\right)^2\\
    =&\sum_{j\neq 2}\lambda_i(j)^2+2\frac{\lambda_i^{2}+\langle e_i,J\mathcal{D}_i^2 Je_i\rangle}{1+\lambda_i^2}\\
    &\qquad\qquad\,\,-2\lambda_i\frac{\lambda_i+\langle e_i,J\mathcal{D}_i Je_i\rangle}{1+\lambda_i^2}+\frac{(\lambda_i
    +\langle e_i,J\mathcal{D}_i Je_i\rangle)^2}{(1+\lambda_i^2)^2},\\
(II)_i=&\sum_{j\neq 2}\lambda_i(j)+\frac{\lambda_i+\langle e_i,J\mathcal{D}_iJe_i\rangle}{1+\lambda_i^2}.
\label{eq1003}
\end{split}
\end{equation}
The absurd assumption that $\lvert\lambda_i\rvert\to\infty$ has the following consequences. First of all:
\begin{equation}
    \lim_{i\to\infty} \frac{\langle e_i,J(\mathcal{D}_i/\vertiii{\mathcal{D}_i})^kJ e_i\rangle}{(1+\lambda_i^2)}=0,\qquad\text{for any $k\in\N$.}
    \label{numbero168}
\end{equation}
Secondly, thanks to \eqref{eq1003} and \eqref{numbero168} we deduce that:
\begin{equation}
    \lim_{i\to\infty}\frac{(I)_i}{\vertiii{\mathcal{D}_i}^2}=1\qquad\text{and}\qquad\lim_{i\to\infty} \frac{(II)_i}{\vertiii{\mathcal{D}_i}}=1,
   \label{eq21}
\end{equation}
where we used the fact that by definition $\lvert\lambda_i(1)\rvert=\vertiii{\mathcal{D}_i}$ and that by Proposition \ref{omgflat} we have $\lim_{i\to \infty}\lambda_i(j)/\vertiii{\mathcal{D}_i}=0$ for any $j\in\{2,\ldots,2n\}$.
Dividing by $\vertiii{\mathcal{D}_i}^2$ the left-hand side of the identity \eqref{numbero17}, and sending $i\to\infty$, \eqref{eq21} yields:
\begin{equation}
    \begin{split}
        &\lim_{i\to\infty}\frac{(I)_i/\vertiii{\mathcal{D}_i}^2}{4(2n-1)}-\frac{((II)_i/\vertiii{\mathcal{D}_i})^2}{8(2n-1)}+ \frac{\left(\frac{n-1}{2n-1}
        -\frac{1}{4}\right)}{\vertiii{\mathcal{D}_i}^2}\\
        &\qquad\qquad-\lim_{i\to\infty}\frac{(\lambda_i/\lVert\mathcal{D}_i\Vert)^2+(\lambda_i/\vertiii{\mathcal{D}_i})\langle e_i,J(\mathcal{D}_i/\vertiii{\mathcal{D}_i})J e_i\rangle}{(2n+1)(1+\lambda_i^2)}=\frac{1}{8(2n-1)},
        \nonumber
    \end{split}
\end{equation}
which shows that the assumption $\lvert\lambda_i\rvert\to\infty$ is absurd thanks to fact that identity \eqref{numbero17} holds for every $i\in\N$.
\end{proof}

\subsection{Proof of the existence of the constant \texorpdfstring{$\mathfrak{C}_3(\mathfrak{n})$}{Lg}}

This subsection is devoted to the proof of Theorem \ref{bellalei} that is the main result of this section.
The first step towards this direction is the following proposition whose heuristic meaning is that horizontal $(2n+1)$-uniform cones must be quantitatively close to the horizontal plane $\{z:z_T=0\}$.

\begin{proposizione}\label{boundi}
There exists a constant $\mathfrak{C}_1(n)>0$ such that if $\mathbb{K}(0,\mathcal{D},-1)$ supports a $(2n+1)$-uniform cone, then $\vertiii{\mathcal{D}}\leq \mathfrak{C}_1(n)$.
\end{proposizione}

\begin{proof}
By contradiction assume that there exists a sequence $\{\eta_i\}_{i\in\N}$ of $(2n+1)$-uniform cones such that the quadric $\mathbb{K}(0,\mathcal{D}_i,-1)$ supports $\eta_i$ and $\vertiii{\mathcal{D}_i}\to\infty$. By Proposition \ref{UComp} we can assume, up to non-relabeled subsequences, that the measures $\eta_i$ converge to some $(2n+1)$-uniform measure $\xi$, that is also a cone thanks to Proposition \ref{equivaut1}. 
Furthermore, thanks to Propositions \ref{verticale2} and \ref{omgflat} we deduce that the limit $\xi$ is a flat measure and in order to fix notations we let $\mathfrak{n}\in\R^{2n}$ be the vector for which $\supp(\xi)=V(\mathfrak{n})$. 
Finally, Proposition \ref{bddss} implies that with the exception of $\lambda_1(i)$, the other eigenvalues are bounded in modulus by some constant $\mathfrak{C}>0$, that a priori depends on the sequence $\{\mu_i\}$. Let us see how this is in contradiction with $\xi$ being flat.

Up to passing to a non-relabeled subsequence, we have two possibilities: either $\lambda_i(1)\to\infty$ or $\lambda_i(1)\to-\infty$.
Let us suppose that $\lambda_i(1)\to\infty$ and note that for any $y\in\supp(\xi)$, Proposition \ref{replica} implies that we can find a sequence $\{y_i\}\subseteq \HH^n$ such that $y_i\in\supp(\mu_i)$ and $y_i\to y$. Therefore, for such a sequence $\{y_i\}$, we have:
\begin{equation}
    \begin{split}
           (y_i)_T=\langle (y_i)_H,\mathcal{D}_i (y_i)_H\rangle=&\sum_{j=1}^{2n} \lambda_i(j)\langle e_i(j),(y_i)_H\rangle^2\\
        \geq& \sum_{j=2}^{2n}\lambda_i(j)\langle e_i(j),(y_i)_H\rangle^2\geq -(2n-1)\mathfrak{C}\lvert(y_i)_H\rvert^2.
        \nonumber
    \end{split}
\end{equation}
Sending $i$ to infinity we conclude that for any $y\in\supp(\xi)$ we have:
\begin{equation}
y_T\geq -(2n-1)\mathfrak{C}\lvert y_H\rvert^2
\label{numeroo22}
\end{equation}
However, since the vector $(0,-1)\in\R^{2n}\times \R$ belongs to $V(\mathfrak{n})=\supp(\xi)$ but does not satisfy \eqref{numeroo22}, we get a contradiction, proving that the assumption $\lambda_i(1)\to\infty$ is absurd. The same argument excludes also the case in which $\lambda_i(1)\to-\infty$, proving that the assumption $\vertiii{\mathcal{D}_i}\to \infty$ is an absurd assumption.

Finally, the fact that the constant $\mathfrak{C}_1(\mathfrak{n})$ does not depend on the specific sequence $\{\eta_i\}_{i\in\N}$, can be shown with the usual diagonal argument.
\end{proof}

Not only $(2n+1)$-uniform cones cannot be too far from the horizontal plane  $\{z:z_T=0\}$ but also they cannot be too close to it, as the following proposition shows:

\begin{proposizione}\label{staccato}
There exists a constant $\mathfrak{C}_2(n)>0$ such that if $\mathbb{K}(0,\mathcal{D},-1)$ supports a $(2n+1)$-uniform cone, then $\vertiii{\mathcal{D}}\geq \mathfrak{C}_2(n)$.
\end{proposizione}

\begin{proof}
First of all, we list some inequalities that will be helpful in the following and whose elementary proof we omit:
 \begin{equation}
 \begin{split}
       \text{Tr}(\mathcal{D}^2)\leq& 2n\vertiii{\mathcal{D}}^2,\qquad\,\,\,\quad \lvert\text{Tr}(\mathcal{D})\rvert\leq 2n\vertiii{\mathcal{D}},\\
     \lvert\langle v,\mathcal{D}^kv\rangle\rvert\leq & \vertiii{\mathcal{D}}^k\lvert v\rvert^2,\qquad \lvert\langle v,DJv\rangle\rvert\leq \vertiii{\mathcal{D}}\lvert v\rvert^2, 
      \label{numeroo23}
 \end{split}
 \end{equation}
for any $v\in\mathbb{S}^{2n-1}$. Thanks to the estimates in \eqref{numeroo23} and to the triangle inequality, it is not hard to show that:
\begin{equation}
\begin{split}
&\Big\lvert\frac{\text{Tr}(\mathcal{D}^2)-2\langle v,\mathcal{D}^2v\rangle+\langle v,\mathcal{D}v\rangle^2}{4(2n-1)}+\frac{\langle \mathcal{D}Jv,v\rangle}{2n-1}-\frac{\left(\text{Tr}(\mathcal{D})-\langle v,\mathcal{D}v\rangle\right)^2}{8(2n-1)}\Big\rvert\\
&\qquad\qquad\qquad\qquad\qquad\qquad\qquad\qquad\leq\frac{(4n^2+8n+7)\vertiii{\mathcal{D}}^2+8\vertiii{\mathcal{D}}}{8(2n-1)}.
        \label{numeroo24}
\end{split}
\end{equation}
for any $v\in\mathbb{S}^{2n-1}$. Furthermore, if we let $e$ be an eigenvector of $\mathcal{D}$, since $\mathbb{K}(0,\mathcal{D},-1)$ supports a $(2n+1)$-uniform cone, identity \eqref{eq16} in Theorem \ref{appendicefinale} must be satisfied at $h=e$\footnote{Theorem \ref{appendicefinale} can be applied at $h=e$ since $(\mathcal{D}+J)e=\lambda e+Je\neq 0$}, and in particular with the choice $v=(\mathcal{D}+J)e/\lvert (\mathcal{D}+J)e\rvert$ we have:
\begin{equation}
   \Big\lvert \frac{n-1}{2n-1}-\frac{1}{4}\Big\rvert\leq\frac{(4n^2+8n+7)\vertiii{\mathcal{D}}^2+8\vertiii{\mathcal{D}}}{8(2n-1)}.
    \label{numbero22}
\end{equation}
With some algebraic computations, which we omit, we see that the positive solutions in $\vertiii{D}$ to \eqref{numbero22}
are contained in the interval $[1/\sqrt{4n^2+8n+7},\infty)$ and thus $\vertiii{D}\geq 1/\sqrt{4n^2+8n+7}=:\mathfrak{C}_2(n)$.
\end{proof}

Finally, putting together Propositions \ref{boundi} and \ref{staccato} we are in position to prove:

\begin{teorema}\label{bellalei}
There exists a constant $\mathfrak{C}_3(n)>0$ such that for any $\mathfrak{m}\in\mathbb{S}^{2n-1}$ and any horizontal $(2n+1)$-uniform cone $\mu$ we have:
$$\int_{B_1(0)} \langle \mathfrak{m},   z_H\rangle^2 d\mu(z)\geq\mathfrak{C}_3(n).$$
\end{teorema}

\begin{proof}
Since $\mu$ is a $(2n+1)$-horizontal cone, by Proposition \ref{CONO} we have that $\supp(\mu)\subseteq\mathbb{K}(0,\mathcal{Q},\mathcal{T})$. Furthermore, since $\mathcal{T}\neq 0$, as $\mu$ is supposed to be horizontal, defined $\mathcal{D}:=-\mathcal{Q}/\mathcal{T}$ we conclude that $\supp(\mu)\subseteq\mathbb{K}(0,\mathcal{D},-1)$. In addition, thanks to Propositions \ref{supportoK} and \ref{rapperhor} for any positive Borel function $h:\HH^n\to\R$ we have:
\begin{equation}
    \begin{split}
        \int h(z)d\mu(z)=&\int h(z) d\mathcal{C}^{2n+1}\llcorner{\supp(\mu)}(z)\\
        =&\frac{1}{\mathfrak{c}_n}\int_{\pi_H(\supp(\mu))} h\big((y,\langle y,\mathcal{D}y\rangle)\big)\lvert (\mathcal{D}+J)y \rvert dy.
        \nonumber
    \end{split}
\end{equation}
If $n>1$, Proposition \ref{spt1} implies that $\pi_H(\supp(\mu))=\R^{2n}$, and thus:
\begin{equation}
    \begin{split}
        &\mathfrak{c}_n\int_{B_1(0)}\langle \mathfrak{m},   z_H\rangle^2 d\mu(z)= \int_{\lvert y\rvert^4+\langle y,\mathcal{D}y\rangle^2\leq 1} \langle \mathfrak{m}, y\rangle^2\lvert (\mathcal{D}+J)y \rvert dy\\
        &\quad=\int_{\mathbb{S}^{2n-1}} \langle \mathfrak{m},v\rangle^2\lvert(\mathcal{D}+J)v\rvert \int_0^{(1+\langle v,\mathcal{D} v\rangle^2)^{-1/4}} r^{2n+2} dr d\sigma(v)\\
        &\qquad\qquad=\frac{1}{(2n+3)}\int_{\mathbb{S}^{2n-1}}\frac{\langle \mathfrak{m},v\rangle^2\lvert (\mathcal{D}+J)v\rvert^2}{(1+\langle v,\mathcal{D} v\rangle^2)^\frac{2n+3}{4}}d\sigma(v),
        \label{eq:105}
    \end{split}
\end{equation}
where $\sigma:=\mathcal{H}^{2n-1}_{eu}\llcorner \mathbb{S}^{2n-1}$ and in the second equality we performed the change of variables $y=rv$ in $\R^{2n}$. Furthermore, since $\mathbb{K}(0,\mathcal{D},-1)$ supports a uniform measure, Proposition \ref{boundi} implies that $\vertiii{\mathcal{D}}\leq\mathfrak{C}_1(n)$, and thus thanks to \eqref{eq:105}, we infer:
\begin{equation}
\int_{B_1(0)} \langle \mathfrak{m},   z_H\rangle^2 d\mu(z)\geq \frac{ \int_{\mathbb{S}^{2n-1}}\langle \mathfrak{m},v\rangle^2\lvert(\mathcal{D}+J)v\rvert d\sigma(v)}{(2n+3)\mathfrak{c}_n(1+\mathfrak{C}_1(n)^2)^\frac{2n+3}{4}}.
\label{numbero23}
\end{equation}
Suppose $e\in\mathbb{S}^{2n-1}$ is the vector at which $\mathcal{D}$ attains the operator norm, i.e. $\lvert \mathcal{D}e\rvert=\vertiii{\mathcal{D}}$. Then, $e$ is an eigenvector of $\mathcal{D}$ and thus:
\begin{equation}
    \lvert (\mathcal{D}+J)e\rvert^2=\lvert De\rvert^2+\langle De,Je\rangle+\lvert Je\rvert^2=\vertiii{D}^2+1,
    \label{numeroo31}
\end{equation}
where the last identity comes from the fact that $De$ and $e$ are parallel.
Therefore, for any $u\in\mathbb{S}^{2n-1}$ for which $\lvert u-e\rvert\leq 1/8$, we have:
\begin{equation}
    \begin{split}
        \lvert (\mathcal{D}+J)u\rvert^2\geq&\lvert(\mathcal{D}+J)e\rvert^2+2\langle(\mathcal{D}+J)e, (\mathcal{D}+J)(u-e)\rangle\\
        \geq&(\vertiii{\mathcal{D}}^2+1)-2\vertiii{\mathcal{D}+J}^2\lvert u-e\rvert\\
        \geq&(\vertiii{\mathcal{D}}^2+1)(1-4\lvert u-e\rvert)      \geq(\mathfrak{C}_2(n)^2+1)/2=:\mathfrak{C}_4(n),
        \label{eq:106}
    \end{split}
\end{equation}
where in the second last inequality we used the Jensen inequality $\vertiii{\mathcal{D}+J}^2\leq2(\vertiii{\mathcal{D}}^2+1)$ and in the last one the bound on $\vertiii{\mathcal{D}}$ yielded by Proposition \ref{staccato}. Putting together \eqref{eq:105} and \eqref{eq:106} we deduce that:
\begin{equation}
\begin{split}
     \int_{B_1(0)} \langle \mathfrak{m},   z_H\rangle^2 d\mu(z)\geq&\frac{\mathfrak{C}_4(n)\int_{\mathbb{S}^{2n-1}\cap U_{1/8}(e)}\langle \mathfrak{m},v\rangle^2 d\sigma(v)}{(2n+3)\mathfrak{c}_n(1+\mathfrak{C}_1(n)^2)^\frac{2n+3}{4}}\\
    \geq&\frac{\mathfrak{C}_4(n)\mathfrak{C}_5(n)}{(2n+3)\mathfrak{c}_n(1+\mathfrak{C}_1(n)^2)^\frac{2n+3}{4}}=:\mathfrak{C}_3(n),
    \label{numbero1004}
\end{split}
\end{equation}
where $\mathfrak{C}_5(n):=\min_{\mathfrak{m}\in\mathbb{S}^{2n-1}}\int_{\mathbb{S}^{2n-1}\cap U_{1/8}(e)}\langle \mathfrak{m},v\rangle^2 d\sigma(v)$ and as usual $U_{1/8}(e)$ denotes the Euclidean ball of radius $1/8$ and center $e$ in $\R^{2n}$.

If $n=1$, Proposition \ref{spt1} gives us two cases. Either the set $\Sigma(f)=\{x\in\R^2:(\mathcal{D}+J)x=0\}$ is $0$-dimensional, and in this case $\pi_H(\supp(\mu))=\R^2$, or it is $1$-dimensional and thus $\pi_H(\supp(\mu))$ may be just a one of the two half spaces with boundary $\Sigma(f)$.

In either cases, if $\pi_H(\supp(\mu))=\R^2$, then the above argument can be applied verbatim. If on the other hand $\pi_H(\supp(\mu))$ is just one half space, we must be careful to chose the eigenvector $e$ in the half space coinciding with $\pi_H(\supp(\mu))$. This can always been done since, as shown by \eqref{numeroo31}, we have $\pm e\not\in\Sigma(f)$ and thus $e$ must lie on one side of $\Sigma(f)$ and $-e$ on the other. Hence, at least one half of the ball $U_{1/8}$ is contained in the half plane $\pi(\supp(\mu))$ and as a consequence the computation in \eqref{numbero1004} can be performed similarly even in this case. This concludes the proof.
\end{proof}

\section{Disconnection of vertical non-flat cones and flat measures}

This section is devoted to prove that non-flat vertical $(2n+1)$-uniform cones are quantitatively disconnected from flat measures. To be precise, we prove that there is a universal constant $\mathfrak{C}_{10}(n)>0$ such that, if $\mu$ is a vertical $(2n+1)$-uniform cone and:
\begin{equation}
    \min_{\mathfrak{m}\in\mathbb{S}^{2n-1}}\int_{B_1(0)}\langle    \mathfrak{m},z_H\rangle^2 d\mu(z)\leq \mathfrak{C}_{10}(n),
    \label{eq:141}
\end{equation}
then $\mu$ is flat. The first step towards the proof is to use the study of the support of vertical $(2n+1)$-uniform cones carried on in Subsection \ref{zero} and the representation formulas of the perimeter given in Appendix \ref{appeA1} to obtain the following more explicit expression for the quadric containing $\mu$, see Proposition \ref{SUPPORTO}. To be precise, we will show that for any $w\in\supp(\mu)$, we have that:
\begin{equation}
    \lvert  w_H\rvert^2-(2n-1)\fint \langle   w_H,u\rangle^2 d\omega_\mu(u)=0,
    \label{eq:130}
\end{equation} 
where $\omega_\mu:=\mathcal{H}^{2n-2}_{eu}\llcorner \pi_H(\supp(\mu))\cap \mathbb{S}^{2n-1}$. The existence of $\mathfrak{C}_{10}(n)$, as showed in Theorem \ref{fine}, is a direct consequence of \eqref{eq:130} by means of few algebraic manipulations.

\smallskip

The following technical lemma is a consequence of the coarea formula and the representation formulas for the intrinsic perimeter we proved in Appendix \ref{vertic}.

\begin{lemma}\label{RAPP2}
Let $f:\R\to\R$ and $g:\R^{2n}\to\R$ be positive Borel functions and $\mu$ a vertical $(2n+1)$-uniform cone. Defined $\omega_\mu:=\mathcal{H}^{2n-2}_{eu}\llcorner \mathbb{S}^{2n-1}\cap \pi_H(\supp(\mu))$, we have that:
\begin{itemize}
    \item[(i)]$\int_{B_1(0)}g(   z_H)d\mu(z)=\frac{2}{\mathfrak{c}_n}\int_0^1 r^{2n-2}\sqrt{1-r^4}\int g(r v)d\omega_\mu(v)dr$,
    \item[(ii)]$\int f(z_T)g(   z_H)d\mu(z)=\frac{1}{\mathfrak{c}_n}\int f(t)dt\cdot\int_0^\infty r^{2n-2}\big(\int g(r v)d\omega_\mu(v)\big)dr$.
\end{itemize}
\end{lemma}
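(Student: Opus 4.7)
Since $\mu$ is a vertical $(2n+1)$-uniform cone, there exist $b=0$ (cones are dilation-invariant) and $\mathcal{Q}\in\text{Sym}(2n)\setminus\{0\}$ with $\supp(\mu)\subseteq\mathbb{K}(0,\mathcal{Q},0)$. The quadric $\mathbb{K}(0,\mathcal{Q},0)$ is vertically ruled, i.e.\ invariant under left translations by elements of the centre $\mathcal{V}$. By Theorem \ref{duppy} (specifically Proposition \ref{spt2}) the support $\supp(\mu)$ is itself a union of connected components of $\mathbb{K}(0,\mathcal{Q},0)\setminus\Sigma(F)$, and in particular is vertically translation invariant. Therefore $\supp(\mu)=C_\mu\times\R$ in the $(z_H,z_T)$-coordinates, where $C_\mu:=\pi_H(\supp(\mu))$ is a closed $(2n-1)$-dimensional cone in $\R^{2n}$ (being a union of connected components of $\{y:\langle y,\mathcal{Q}y\rangle=0\}\setminus\{y:\mathcal{Q}y=0\}$, scaled). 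In particular $\omega_\mu=\mathcal{H}^{2n-2}_{eu}\llcorner(\mathbb{S}^{2n-1}\cap C_\mu)$ is a well-defined finite Borel measure on $\mathbb{S}^{2n-1}$.

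The key input will be the representation formula established in Appendix \ref{vertic}, which identifies $\mu$ with a product measure on $C_\mu\times\R$:
\begin{equation}
\int h(z)\,d\mu(z)=\frac{1}{\mathfrak{c}_n}\int_{C_\mu}\int_\R h(y,t)\,dt\,d\mathcal{H}^{2n-1}_{eu}(y),
\nonumber
\end{equation}
for every non-negative Borel $h:\HH^n\to\R$. This is the vertical analogue of Proposition \ref{rapperhor}: since $\mu=\mathcal{S}^{2n+1}_{\supp(\mu)}$ by Proposition \ref{supportoK} and $\supp(\mu)=C_\mu\times\R$, Fubini on $\mathcal{H}^{2n+1}_{d}$ together with the comparison with Euclidean Hausdorff measure on the $(2n-1)$-dimensional set $C_\mu$ yields the identity (the constant $\mathfrak{c}_n$ being fixed by the normalisation $\mu(B_1(0))=1$). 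Once this is in place, (i) and (ii) follow from Fubini plus a polar decomposition on the cone $C_\mu$.

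For (ii), since the integrand factorises, Fubini gives
\begin{equation}
\int f(z_T)g(z_H)\,d\mu(z)=\frac{1}{\mathfrak{c}_n}\left(\int_\R f(t)\,dt\right)\int_{C_\mu}g(y)\,d\mathcal{H}^{2n-1}_{eu}(y).
\nonumber
\end{equation}
Because $C_\mu$ is a cone, writing $y=rv$ with $r\geq 0$ and $v\in\mathbb{S}^{2n-1}\cap C_\mu$, the standard polar coordinate formula on $\R^{2n}$ restricted to the $(2n-1)$-dimensional $C_\mu$ gives $d\mathcal{H}^{2n-1}_{eu}(y)=r^{2n-2}\,dr\,d\omega_\mu(v)$, yielding exactly (ii).

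For (i) I apply the representation formula to $h(z)=\chi_{B_1(0)}(z)g(z_H)$ and integrate out $t$ first. Since $B_1(0)=\{(y,t)\in\R^{2n}\times\R:\lvert y\rvert^4+t^2\leq 1\}$, one has $\mathcal{L}^1(\{t:(y,t)\in B_1(0)\})=2\sqrt{1-\lvert y\rvert^4}$ when $\lvert y\rvert\leq 1$ and $0$ otherwise. Hence
\begin{equation}
\int_{B_1(0)}g(z_H)\,d\mu(z)=\frac{2}{\mathfrak{c}_n}\int_{C_\mu\cap U_1(0)}g(y)\sqrt{1-\lvert y\rvert^4}\,d\mathcal{H}^{2n-1}_{eu}(y),
\nonumber
\end{equation}
and polar coordinates on the cone $C_\mu$ as above turn this into the claimed expression in (i). The only genuinely non-routine step is the first paragraph, namely identifying $\supp(\mu)$ as $C_\mu\times\R$ and invoking the vertical representation from Appendix \ref{vertic}; once the product structure is granted, (i) and (ii) are direct consequences of Fubini and the polar-coordinate / coarea formula.
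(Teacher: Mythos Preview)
Your proof is correct and follows essentially the same approach as the paper: invoke the product representation $\mu=\tfrac{1}{\mathfrak{c}_n}\mathcal{H}^{2n-1}_{eu}\llcorner C_\mu\otimes\mathcal{L}^1$ from Lemma \ref{RAPP1}, integrate out the $t$-variable (picking up the $2\sqrt{1-|y|^4}$ factor for (i)), and then apply polar coordinates on the cone $C_\mu$. The only place where the paper is more explicit is in justifying the polar decomposition $d\mathcal{H}^{2n-1}_{eu}\llcorner C_\mu=r^{2n-2}\,dr\,d\omega_\mu$ via the coarea formula for $u(x)=|x|$ on $C_\mu$, working first on the smooth part $\mathbb{K}_\delta$ away from $\ker(\mathcal{Q})$ and then passing to the limit---a point you subsume under ``standard polar coordinate formula.''
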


\begin{proof}
Since $\mu$ is supposed to be a cone, Proposition \ref{CONO} and Corollary \ref{mainprimo} imply that there exists a non-null symmetric matrix $\mathcal{D}$ such that $\supp(\mu)\subseteq \mathbb{K}(0,\mathcal{D},0)$. In addition, since the dilations $D_\lambda$ behave like the multiplication by $\lambda$ on the first $2n$ coordinates, the set $S:=\pi_H(\supp(\mu))$ is an Euclidean cone in $\R^{2n}$. Thanks to its definition, $S$ coincides with the set of isotropic vectors in $\R^{2n}$ of the matrix $\mathcal{D}$ and thus by \cite[Theorem 3.4.8]{Federer1996GeometricTheory} and Lemma \ref{RAPP1}, it must also be a $(2n-1)$-dimensional analytic manifold of $\R^{2n}$.

Thanks to the coarea formula in \cite[(10.6) Chapter 2]{Simon1983LecturesTheory}, that is well known to extend to general rectifiable sets, see for instance \cite[Theorem 3.2.22]{Federer1996GeometricTheory}, we infer that
\begin{equation}
    \int h(x)J^*_{S}u(x)d\mathcal{H}^{2n-1}_{eu}\llcorner S(x)=\int_0^\infty\int h(w)d\mathcal{H}^{2n-2}_{eu}\llcorner r\mathbb{S}^{2n-1}\cap S(w) \,dr,
    \label{eq:num:num:1}
\end{equation}
where $J^*_{S} u=\lvert \nabla_T u\rvert$ is the Jacobian of the tangential gradient of $u$ along $S$. Furthermore, since for any $x\in S$ and any $s\geq -\lvert x\rvert/2$ the point $x+sx/\lvert x\rvert$ is contained in $S$, the vector $\nabla u(x)=x/\lvert x\rvert$ is contained in the tangent plane to $S$ for $\mathcal{H}^{2n-1}_{eu}$-almost any $x\in S$. In particular $\nabla u(x)=\nabla_T u(x)$ and thus $J^*_{S}(x)=1$ for $\mathcal{H}^{2n-1}_{eu}$-almost every $x\in S$. This, in conjunction with \eqref{eq:num:num:1}, yields:
\begin{equation}
\begin{split}
        \int h(x)d\mathcal{H}^{2n-1}_{eu}(x)=&\int_0^\infty\int h(w)d\mathcal{H}^{2n-2}_{eu}\llcorner r\mathbb{S}^{2n-1}\cap S(w) \,dr\\
    =&\int_0^\infty r^{2n-2}\int h(rw)d\mathcal{H}^{2n-2}_{eu}\llcorner \mathbb{S}^{2n-1}\cap S(w) \,dr,
    \label{eq:108}
\end{split}
\end{equation}
where the last identity comes from the fact that, denoted by $T_{0,1/r}$ the operator that pushes forward measures in $\R^{2n}$ with respect to the multiplication by $1/r$,  $\mathcal{H}^{2n-2}_{eu}\llcorner r\mathbb{S}^{2n-1}\cap S=r^{2n-2}T_{0,1/r}\mathcal{H}^{2n-2}_{eu}\llcorner \mathbb{S}^{2n-1}\cap S$.

We are ready to prove the identities (i) and (ii) of the statement. Thanks to Lemma \ref{RAPP1} for any non-negative Borel function $G:\HH^n\to\R$ we have:
\begin{equation}
    \int G(z)d\mu(z)=\mathfrak{c}_n^{-1}\int G(z_H,z_T)d\mathcal{H}^{2n-1}_{eu}\llcorner S(z_H)\otimes d
    z_T.
    \label{eq:107}
\end{equation}
In order to prove identity (i), we choose $G(z):=\chi_{B_1(0)}(z)g(z_H)$ and note that combining \eqref{eq:108} and \eqref{eq:107}, we get:
\begin{equation}
\begin{split}
    \int_{B_1(0)}G(z)d\mu(z)=&\frac{1}{\mathfrak{c}_n}\int_{\lvert y\rvert^4+t^2\leq 1} g(y) d\mathcal{H}^{2n-1}_{eu}\llcorner S(y)\otimes\mathcal{L}^1(t)\\
    = &\frac{2}{\mathfrak{c}_n}\int_{\lvert y\rvert\leq 1}g(y)\sqrt{1-\lvert y\rvert^4} d\mathcal{H}^{2n-1}_{eu}\llcorner S(y)\\
    =&\frac{2}{\mathfrak{c}_n}\int_0^1 r^{2n-2}\sqrt{1-r^4}\int g(r v)d\omega_\mu(v)dr.
    \nonumber
\end{split}
\end{equation}
Moreover, (ii) follows immediately with the choice $G(z):=f(z_T)g(z_H)$:
\begin{equation}
\begin{split}
    \int_{B_1(0)}G(z)d\mu(z)=&\frac{1}{\mathfrak{c}_n}\int f(t)g(y) d\mathcal{H}^{2n-1}_{eu}\llcorner S(y)\otimes dt\\
   =&\frac{1}{\mathfrak{c}_n}\int f(t)dt\cdot\int g(y) d\mathcal{H}^{2n-1}_{eu}\llcorner S(y)\\
   =&\frac{1}{\mathfrak{c}_n}\int f(t)dt\cdot\int_0^\infty r^{2n-2}\big(\int g(rv) d\omega_\mu(v)\big) dr,
    \nonumber
\end{split}
\end{equation}
where in the first identity we applied \eqref{eq:107}, in the second Tonelli's theorem and in the last one \eqref{eq:108}.
\end{proof}

The following proposition is a refinement of Lemma \ref{CONO} in case $\mu$ is a vertical $(2n+1)$-uniform cone. In such a case $\mathcal{T}(1)=0$ and the integrals defining $\mathcal{Q}(1)$, recall that $\mathcal{T}$ and $\mathcal{Q}$ were introduced in Definition \ref{definizionecurve}, can be explicitly computed thanks to Lemma \ref{RAPP2}.

\begin{proposizione}\label{SUPPORTO}
Suppose $\mu$ is a vertical $(2n+1)$-uniform cone. Then, for any $w\in\supp(\mu)$ we have:
\begin{equation}
    \lvert  w_H\rvert^2=(2n-1)\fint \langle   w_H,u\rangle^2 d\omega_\mu(u),
    \label{eq:115}
\end{equation}
where $\omega_\mu=\mathcal{H}^{2n-2}_{eu}\llcorner \mathbb{S}^{2n-1}\cap \pi_H(\supp(\mu))$.
\end{proposizione}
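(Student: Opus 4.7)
I would start from Proposition \ref{CONO}: since $\mu$ is a cone, every $u\in\supp(\mu)$ satisfies
\begin{equation}
\langle u_H,\mathcal{Q}(1)u_H\rangle+\mathcal{T}(1)u_T=0.\nonumber
\end{equation}
The verticality hypothesis is first used to kill $\mathcal{T}(1)$. By the structural result of Subsection \ref{zero} (Proposition \ref{spt2} applied to a vertical cone containing the origin), $\supp(\mu)=\pi_H(\supp(\mu))\times\R$. Splitting $\lVert z\rVert^4=\lvert z_H\rvert^4+z_T^2$, the defining integral of $\mathcal{T}(1)$ reads $\frac{2}{C(2n+1)}\int z_T\,e^{-\lvert z_H\rvert^4}e^{-z_T^2}d\mu(z)$, which by Lemma \ref{RAPP2}(ii) factors as $(\int_\R t\,e^{-t^2}dt)\cdot(\text{radial integral})=0$ by oddness. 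Hence $\langle w_H,\mathcal{Q}(1)w_H\rangle=0$ for every $w\in\supp(\mu)$.

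The heart of the argument is then the explicit evaluation of $\langle w_H,\mathcal{Q}(1)w_H\rangle$ via Lemma \ref{RAPP2}. Expanding $\mathcal{Q}(1)$ according to Definition \ref{definizionecurve}, the fourth block (the one carrying the factor $z_T$ alone) vanishes again by the same oddness-in-$z_T$ argument. For each of the three surviving blocks the factorisation $e^{-\lVert z\rVert^4}=e^{-\lvert z_H\rvert^4}e^{-z_T^2}$ together with Lemma \ref{RAPP2}(ii) reduces the integral to the product of a Gaussian moment in $t$ and a radial integral
\begin{equation}
\int_0^\infty r^{2n-2+k}e^{-r^4}dr\cdot\int g(v)\,d\omega_\mu(v),\nonumber
\end{equation}
for appropriate $k\in\{2,6\}$ and $g\in\{1,\langle v,w_H\rangle^2,\langle v,Jw_H\rangle^2\}$. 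The key cancellation is that the $\langle v,Jw_H\rangle^2$ contributions appear twice, once from $-\frac{4}{C(2n+1)}\int Jz_H\otimes Jz_H$ and once from $+\frac{8}{C(2n+1)}\int z_T^2\,Jz_H\otimes Jz_H$; since $\int t^2 e^{-t^2}dt=\tfrac{1}{2}\int e^{-t^2}dt$, the two coefficients become equal and opposite, and the $Jw_H$-terms disappear entirely.

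What remains is a linear combination of $\lvert\omega_\mu\rvert\,\lvert w_H\rvert^2$ (from the identity block) and $\int\langle v,w_H\rangle^2d\omega_\mu(v)$ (from the $z_H\otimes z_H$ terms). Their relative coefficient is governed by the ratio of two radial moments: after the substitution $t=r^4$, one has
\begin{equation}
\frac{\int_0^\infty r^{2n+4}e^{-r^4}dr}{\int_0^\infty r^{2n}e^{-r^4}dr}=\frac{\Gamma(\frac{2n+5}{4})}{\Gamma(\frac{2n+1}{4})}=\frac{2n+1}{4}.\nonumber
\end{equation}
A direct bookkeeping of the numerical factors produces
\begin{equation}
\langle w_H,\mathcal{Q}(1)w_H\rangle=\kappa\Bigl(-\lvert\omega_\mu\rvert\,\lvert w_H\rvert^2+(2n-1)\int\langle v,w_H\rangle^2\,d\omega_\mu(v)\Bigr)\nonumber
\end{equation}
for a strictly positive constant $\kappa$. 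Setting the left-hand side to zero and dividing by $\lvert\omega_\mu\rvert>0$ yields \eqref{eq:115}.

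\textbf{Main obstacle.} The only delicate points are (a) checking that $\lvert\omega_\mu\rvert>0$ so that the normalised average on the right-hand side of \eqref{eq:115} is meaningful, which follows from Proposition \ref{spt2} since a vertical cone $\mu$ is supported on (the closure of the union of) full-dimensional connected components of a quadric cone in $\R^{2n}\times\R$, and (b) extending Lemma \ref{RAPP2}(ii), stated for non-negative integrands, to the signed integrands above, which is routine by splitting into positive and negative parts using the integrability guaranteed by Proposition \ref{prop1}. Everything else is a careful Gamma-function accounting that hinges on the single identity $\Gamma(\tfrac{2n+5}{4})/\Gamma(\tfrac{2n+1}{4})=\tfrac{2n+1}{4}$.
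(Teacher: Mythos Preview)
Your proposal is correct and follows essentially the same route as the paper: both start from Proposition~\ref{CONO}, eliminate $\mathcal{T}(1)$, and then reduce each block of $\mathcal{Q}(1)$ via Lemma~\ref{RAPP2}(ii) to products of a Gaussian moment in $t$ and a radial integral against $\omega_\mu$, with the same $Jz_H\otimes Jz_H$ cancellation and the same Gamma identity $\Gamma(\tfrac{2n+5}{4})/\Gamma(\tfrac{2n+1}{4})=\tfrac{2n+1}{4}$ producing the coefficient $2n-1$. The only cosmetic difference is that the paper invokes Proposition~\ref{verticale} to get $\mathcal{T}(1)=0$ and computes $\mathcal{Q}(1)$ as a matrix via the $\mathcal{Q}_1,\dots,\mathcal{Q}_4$ decomposition, whereas you compute $\mathcal{T}(1)$ directly by oddness and work with the quadratic form $\langle w_H,\mathcal{Q}(1)w_H\rangle$; your remarks on $\lvert\omega_\mu\rvert>0$ and on extending Lemma~\ref{RAPP2}(ii) to signed integrands are legitimate bookkeeping points that the paper leaves implicit.
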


\begin{proof}
Since $\mu$ is a cone, Proposition \ref{CONO} implies that for any $w\in\supp(\mu)$ we have:
\begin{equation}
\langle   w_H,\mathcal{Q}(1) w_H\rangle+\mathcal{T}(1)w_T=0.
\label{eq:111}
\end{equation}
Moreover, since by assumption $\mu$ is a vertical $(2n+1)$-uniform cone, thanks to Proposition \ref{verticale} we have $\mathcal{T}(1)=0$. In order to prove the proposition, we are left to give an explicit expression for $\mathcal{Q}(1)$. 

Let $\varphi:\R^{2n}\to\R$ be a (Euclidean) $k$-homogeneous function and $i\in\N$. Lemma \ref{RAPP2}(ii) and the fact that $ z_T^i\varphi(z)e^{-\lVert z\rVert^4}\in L^1(\mu)$ imply that:
\begin{equation}
    \begin{split}
    \int   z_T^i \varphi(z_H)e^{-\lVert z\rVert^4}d\mu(z)
        =&\frac{1}{\mathfrak{c}_n}\int t^ie^{-t^2}dt\cdot\int_0^\infty r^{2n-2}\int \varphi(r v)e^{-r^4}d\omega_\mu(v)dr\\
        =&\frac{1}{\mathfrak{c}_n}\int t^ie^{-t^2}dt\cdot\int_0^\infty r^{2n+k-2}e^{-r^4}dr\cdot\int \varphi(v)d\omega_\mu(v).
        \label{numeroo51}
    \end{split}
\end{equation}
Thanks to \eqref{numeroo51}, we easily see that if $i$ is odd, then:
\begin{equation}
    \int   z_T^i \varphi(   z_H)e^{-\lVert z\rVert^4}d\mu(z)=0,
    \label{numeroo60}
\end{equation}
while if $i$ is even:
\begin{equation}
\begin{split}
     &\int   z_T^i \varphi(   z_H)e^{-\lVert z\rVert^4}d\mu(z)=\int t^ie^{-t^2}dt\cdot\int_0^\infty r^{2n-2}\int \varphi(r v)e^{-r^4}d\omega_\mu(v)dr\\
        &\qquad\qquad=\mathfrak{c}_n^{-1}\int t^ie^{-t^2}dt\cdot\int_0^\infty r^{2n+k-2}e^{-r^4}dr\cdot\int \varphi(v)d\omega_\mu(v)\\
&\qquad\qquad=   \mathfrak{c}_n^{-1}\cdot\frac{1}{2}\Gamma\Big(\frac{i+1}{2}\Big)\cdot \frac{1}{4}\Gamma\Big(\frac{2n-1+k}{4}\Big)\int\varphi(v)d\omega_\mu(v)
\end{split}
    \label{eq:110}
\end{equation}

In order to compute the matrix $\mathcal{Q}(1)$ by means of identities \eqref{numeroo60} and \eqref{eq:110}, it will be convenient to make use of the notation introduced in the proof of Proposition \ref{prop10}, where we defined the matrices $\mathcal{Q}_1,\mathcal{Q}_2$ and $\mathcal{Q}_3,\mathcal{Q}_4$ in  \eqref{eq:Q2} and \eqref{eq:Q4} respectively. To simplify further the notation we let $\mathfrak{C}_6(n):=\Gamma\big(\frac{2n+1}{4}\big)\Gamma\big(\frac{1}{2}\big)/8\mathfrak{c}_nC(2n+1)$, where the constant $C(2n+1)$ was introduced in Definition \ref{defimom}. With these notations we have:
\begin{equation}
    \begin{split}
      & \mathcal{Q}_1(1)=\frac{8}{C(2n+1)}\int (\lvert   z_H\rvert^4   z_H\otimes   z_H+  z_T^2 J   z_H\otimes J   z_H) e^{-\lVert z\rVert^4}d\mu(z)\\
        &\qquad\underset{\eqref{eq:110}}{=}\;\frac{8}{C(2n+1)}\bigg( \frac{1}{8\mathfrak{c}_n}\Gamma\Big(\frac{1}{2}\Big) \Gamma\Big(\frac{2n+5}{4}\Big)\int v\otimes v\;d\omega_\mu(v)\\
        &\qquad\qquad\qquad\qquad\qquad+\frac{1}{8\mathfrak{c}_n}\Gamma\Big(\frac{3}{2}\Big) \Gamma\Big(\frac{2n+1}{4}\Big)\int Jv\otimes Jv\;d\omega_\mu(v)\bigg)\\
        &\qquad\,=\mathfrak{C}_6(n)\bigg( 2(2n+1)\int v\otimes v\;d\omega_\mu(v)+4\int Jv\otimes Jv\;d\omega_\mu(v)\bigg),
        \label{numeroQ1}
    \end{split}
\end{equation}
\begin{equation}
    \mathcal{Q}_2(1)=\frac{8}{C(2n+1)}\int \lvert   z_H\rvert^2  z_T (   z_H\otimes J   z_H+ J   z_H\otimes    z_H)e^{-\lVert z\rVert^4}d\mu(z)\underset{\eqref{numeroo60}}{=}0,
     \label{numeroQ2}
\end{equation}

\begin{equation}
    \begin{split}
&\mathcal{Q}_3(1)=\frac{4}{C(2n+1)}\bigg(\int (    z_H\otimes   z_H+ J  z_H\otimes J   z_H) e^{-\lVert z\rVert^4}d\mu(z)\bigg)\\ \underset{\eqref{eq:110}}{=}&\;\frac{4}{C(2n+1)}\frac{1}{8\mathfrak{c}_n}\Gamma\Big(\frac{1}{2}\Big) \Gamma\Big(\frac{2n+1}{4}\Big)\int (   v\otimes   v+ J  v\otimes J   v)\;d\omega_\mu(z)\\
&\qquad\,\,\,=4\mathfrak{C}_6(n)\int (   v\otimes   v+ J  v\otimes J   v)\;d\omega_\mu(z),
        \label{numeroQ3}
    \end{split}
\end{equation}

\begin{equation}
    \begin{split}
        \mathcal{Q}_4(&1)=\frac{2}{C(2n+1)}\int \lvert   z_H\rvert^2 e^{-\lVert z\rVert^4}d\mu(z)\;\mathrm{id}_{2n}\\
        &\,\,\underset{\eqref{eq:110}}{=}\;\frac{2}{C(2n+1)}\frac{1}{8\mathfrak{c}_n}\Gamma\Big(\frac{1}{2}\Big) \Gamma\Big(\frac{2n+1}{4}\Big)\omega_\mu(\mathbb{S}^{2n-1}) \;\mathrm{id}_{2n}\\
        &\,\,\,\,=2\mathfrak{C}_6(n)\omega_\mu(\mathbb{S}^{2n-1}) \;\mathrm{id}_{2n}.
        \label{numeroQ4}
    \end{split}
\end{equation}
Putting together \eqref{numeroQ1}, \eqref{numeroQ2}, \eqref{numeroQ3}, \eqref{numeroQ4} and recalling that $\mathcal{Q}(1)=\mathcal{Q}_1(1)+\mathcal{Q}_2(1)-\mathcal{Q}_3(1)-\mathcal{Q}_4(1)$, see the proof of Proposition \ref{prop10}, we deduce that:
\begin{equation}
    \begin{split}
    \frac{\mathcal{Q}(1)}{\mathfrak{C}_6(n)\omega_\mu(\mathbb{S}^{2n-1}) }=&2(2n+1)\fint v\otimes v d\omega_\mu(v)+4\fint Jv\otimes Jv d\omega_\mu(v)\\
    &\qquad\qquad\qquad-2\mathrm{id}_{2n}-4\fint(v\otimes v+Jv\otimes Jv)d\omega_\mu(v)\\
=&2(2n-1)\fint u\otimes u\, d\omega_\mu(u)-2\mathrm{id}_{2n}.
    \nonumber
    \end{split}
\end{equation}
Therefore, thanks to \eqref{eq:111}, we deduce that for any $w\in\supp(\mu)$, we have:
$$0=\langle   w_H,\mathcal{Q}(1) w_H\rangle=2\mathfrak{C}_6(n)\omega_\mu(\mathbb{S}^{2n-1})\big((2n-1)\fint\langle v,  w_H\rangle^2 d\omega_\mu(v)-\lvert  w_H\rvert^2\big).$$
This concludes the proof.
\end{proof}

\begin{definizione}\label{defM}
For any vertical $(2n+1)$-uniform cone $\mu$, define:
$$M_\mu:=\fint u\otimes u \;d\omega_\mu(u),$$
where $\omega_\mu:=\mathcal{H}^{2n-2}_{eu}\llcorner \pi_H(\supp(\mu))\cap \mathbb{S}^{2n-1}$. Moreover, let $\alpha_1\geq\ldots\geq \alpha_{2n}\geq0$ be the eigenvalues of $M_\mu$ and $\mathfrak{e}_1,\ldots,\mathfrak{e}_{2n}$ their relative eigenvectors.
\end{definizione} 

\begin{osservazione}
The trace of the matrix $M_\mu$ can be explicitly computed:
$$\text{Tr}(M_\mu)=\sum_{i=1}^{2n}\alpha_i=\sum_{i=1}^{2n}\langle \mathfrak{e}_i,M_\mu  \mathfrak{e}_i\rangle=\sum_{i=1}^{2n}\fint \langle u, \mathfrak{e}_i\rangle^2d\omega_\mu(u)=1.$$
\end{osservazione}

The following proposition links the matrix $M_\mu$ defined above to the functional $\min_{\mathfrak{m}\in\mathbb{S}^{2n-1}}\int_{B_1(0)}\langle \mathfrak{m},    z_H\rangle^2 d\mu(z)$, which will play a fundamental role in the proof of our main result. 

\begin{proposizione}\label{OXX}
There exists a constant $\mathfrak{C}_7(n)>0$ for which for any vertical $(2n+1)$-uniform cone $\mu$ and any $\mathfrak{m}\in\mathbb{S}^{2n-1}$ we have:
$$\int_{B_1(0)}\langle \mathfrak{m},    z_H\rangle^2 d\mu(z)=\mathfrak{C}_7(n)\langle \mathfrak{m}, M_\mu\mathfrak{m}\rangle.$$
\end{proposizione}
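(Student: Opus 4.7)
The plan is to reduce the proposition to a direct application of the representation formula in Lemma \ref{RAPP2}(i). The integrand $\langle \mathfrak{m}, z_H\rangle^2$ depends only on $z_H$ and is $2$-homogeneous in $z_H$, so Lemma \ref{RAPP2}(i) will immediately factor the integral over $B_1(0)$ into a radial factor and a spherical factor supported on $\mathbb{S}^{2n-1}\cap \pi_H(\supp(\mu))$.

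Concretely, I would apply Lemma \ref{RAPP2}(i) with $g(y):=\langle \mathfrak{m},y\rangle^2$. Since $g(rv)=r^2 g(v)$, this yields
$$\int_{B_1(0)}\langle \mathfrak{m},z_H\rangle^2\, d\mu(z)=\frac{2}{\mathfrak{c}_n}\left(\int_0^1 r^{2n}\sqrt{1-r^4}\,dr\right)\int \langle \mathfrak{m},v\rangle^2\, d\omega_\mu(v).$$
By Definition \ref{defM} the spherical integral rewrites as $\omega_\mu(\mathbb{S}^{2n-1})\langle \mathfrak{m}, M\mathfrak{m}\rangle$, so it remains only to observe that $\omega_\mu(\mathbb{S}^{2n-1})$ is a constant depending exclusively on $n$, not on the particular vertical cone $\mu$.

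This last point is the only subtle step, and it is settled by using the $(2n+1)$-uniformity of $\mu$: applying Lemma \ref{RAPP2}(i) with $g\equiv 1$ gives
$$1=\mu(B_1(0))=\frac{2\,\omega_\mu(\mathbb{S}^{2n-1})}{\mathfrak{c}_n}\int_0^1 r^{2n-2}\sqrt{1-r^4}\,dr,$$
so $\omega_\mu(\mathbb{S}^{2n-1})$ is completely determined by $n$. Plugging this back in, one obtains the identity with
$$\mathfrak{C}_7(n):=\frac{\int_0^1 r^{2n}\sqrt{1-r^4}\,dr}{\int_0^1 r^{2n-2}\sqrt{1-r^4}\,dr},$$
which is manifestly a strictly positive constant. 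I do not foresee any genuine obstacle here: the proposition is essentially a bookkeeping consequence of the polar-type representation formula of Lemma \ref{RAPP2}(i) combined with the normalisation forced by the uniformity hypothesis on $\mu$.
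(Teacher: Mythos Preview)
Your proposal is correct and follows essentially the same approach as the paper: apply Lemma \ref{RAPP2}(i) to factor the integral, then use the normalisation $\mu(B_1(0))=1$ to show that $\omega_\mu(\mathbb{S}^{2n-1})$ depends only on $n$, arriving at the constant $\mathfrak{C}_7(n)=\mathfrak{C}_8(n)/\mathfrak{C}_9(n)$ exactly as in the paper.
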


\begin{proof}
Thanks to Lemma \ref{RAPP2}(i) we have that:
\begin{equation}
\begin{split}
     \int_{B_1(0)}\langle \mathfrak{m},z_H\rangle^2 d\mu(z)=&\frac{2}{\mathfrak{c}_n}\int_0^1r^{2n}\sqrt{1-r^4}dr\int_{\mathbb{S}^{2n-1}}\langle \mathfrak{m},v \rangle^2 d\omega_\mu(v)\\
    =&\frac{2\mathfrak{C}_8(n)}{\mathfrak{c}_n}\int_{\mathbb{S}^{2n-1}}\langle \mathfrak{m},v \rangle^2 d\omega_\mu(v),
    \nonumber
\end{split}
\end{equation}
where $\mathfrak{C}_8(n):=\int_0^1r^{2n}\sqrt{1-r^4}dr$ and $\omega_\mu=\mathcal{H}^{2n-2}_{eu}\llcorner \mathbb{S}^{2n-1}\cap \pi_H(\supp(\mu))$. In order to prove the proposition, we are left to show that $\omega_\mu(\mathbb{S}^{2n-1})$ is a constant depending only on $n$. This can be seen by Lemma \ref{RAPP2}(i) that implies with the choice $g=1$:
$$1=\mu(B_1(0))=\frac{2\omega_\mu(\mathbb{S}^{2n+1})}{\mathfrak{c}_n}\int_0^1 r^{2n-2}\sqrt{1-r^4}dr.$$
Therefore, defined $\mathfrak{C_9}:=\int_0^1 r^{2n-2}\sqrt{1-r^4}dr$, thanks to the above identity  we have that: $\omega_\mu(\mathbb{S}^{2n-1})=\mathfrak{c}_n/2\mathfrak{C}_9(n)$. In particular:
$$\int_{B_1(0)}\langle \mathfrak{m},    z_H\rangle^2 d\mu(z)=\frac{\mathfrak{C}_8(n)}{\mathfrak{C}_9(n)}\fint \langle \mathfrak{m},v\rangle^2 d\omega_\mu(v)=\frac{\mathfrak{C}_8(n)}{\mathfrak{C}_9(n)}\langle \mathfrak{m}, M_\mu\mathfrak{m}\rangle.$$
The proof is complete with the choice $\mathfrak{C}_7(n)=\mathfrak{C}_8(n)/\mathfrak{C}_9(n)$.
\end{proof}

An immediate consequence of Proposition \ref{OXX} is that:
\begin{equation}
   \min_{\mathfrak{m}\in\mathbb{S}^{2n-1}}\int_{B_1(0)}\langle \mathfrak{m},    z_H\rangle^2 d\mu(z)=\mathfrak{C}_7(n)\alpha_{2n}=\int_{B_1(0)}\langle \mathfrak{e}_{2n},    z_H\rangle^2 d\mu(z). 
   \label{numbero1005}
\end{equation}
In the following proposition we link the number $\int_{B_1(0)}\langle \mathfrak{e}_{2n},   z_H\rangle^2 d\mu(z)$, which thanks to \eqref{numbero1005} is the smallest eigenvalue of the matrix $\int_{B_1(0)}z_H\otimes z_H d\mu(z)$, to the geometric structure of the measure $\mu$.

\begin{proposizione}\label{GEOM}
Suppose $\mu$ is a vertical $(2n+1)$-uniform cone and let $S:=\pi_H(\supp(\mu))$. For any $\delta>0$ there exists an $\epsilon(\delta,n)>0$ such that, if:
$$\int_{B_1(0)}\langle \mathfrak{e}_{2n},   z_H\rangle^2d\mu(z)\leq \epsilon(\delta,n),$$
then for any $x\in\R^{2n}$ such that $\lvert x\rvert\leq 1$ and $\langle x,\mathfrak{e}_{2n}\rangle=0$, there exists a $z\in S$ for which $\lvert    z-x\rvert\leq \delta$.
\end{proposizione}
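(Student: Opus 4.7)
I would prove this by a contradiction-compactness argument that uses the rigidity of flat measures established in Proposition \ref{verticalsamoa}.

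First, I would rescale the problem. If $|x| \leq \delta$, we can simply take $z = 0 \in \supp(\mu)$ and conclude, so it suffices to treat $x \in \text{cl}(U_1(0)) \cap e_{2n}^\perp$ with $|x| > \delta$. Now suppose the conclusion fails: there exist $\delta_0 > 0$, a sequence of vertical $(2n+1)$-uniform cones $\mu_k$, unit vectors $e_{2n}^{(k)}$ (the minimal eigenvector of $M_k$ in Definition \ref{defM}), and points $x_k \in \text{cl}(U_1(0)) \cap (e_{2n}^{(k)})^\perp$ such that
\begin{equation*}
\int_{B_1(0)} \langle e_{2n}^{(k)}, z_H\rangle^2 \, d\mu_k(z) \leq \tfrac{1}{k}, \qquad |z_H - x_k| > \delta_0 \text{ for all } z \in \supp(\mu_k).
\end{equation*}
By compactness of $\mathbb{S}^{2n-1}$ and of $\text{cl}(U_1(0))$, I extract (non-relabeled) subsequences with $e_{2n}^{(k)} \to e_\infty$ and $x_k \to x_\infty$, so $x_\infty \in e_\infty^\perp \cap \text{cl}(U_1(0))$. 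By the uniform mass bound $\mu_k(B_r(0)) = r^{2n+1}$ and the standard compactness of Radon measures, I can also assume $\mu_k \rightharpoonup \nu$; by Lemma \ref{replica}(i), $\nu$ is a $(2n+1)$-uniform measure (and inherits dilation invariance from the $\mu_k$, so it is a cone).

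The next step is to show $\supp(\nu) \subseteq V(e_\infty)$. The key tool is the dilation invariance of each $\mu_k$, which gives, for any $r > 0$,
\begin{equation*}
\int_{B_r(0)} \langle e_{2n}^{(k)}, z_H\rangle^2 \, d\mu_k(z) = r^{2n+3} \int_{B_1(0)} \langle e_{2n}^{(k)}, z_H\rangle^2 \, d\mu_k(z) \leq \tfrac{r^{2n+3}}{k}.
\end{equation*}
Using the expansion $\langle e_\infty, z_H\rangle^2 \leq 2\langle e_{2n}^{(k)}, z_H\rangle^2 + 2|e_\infty - e_{2n}^{(k)}|^2 |z_H|^2$ and the fact that $\int_{B_r(0)} |z_H|^2 \, d\mu_k$ is bounded (by $r^{2n+3}$, again by dilation invariance), I deduce that $\int_{B_r(0)} \langle e_\infty, z_H\rangle^2 \, d\mu_k \to 0$. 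A smooth cutoff $\varphi_r \in \mathcal{C}_c$ with $\chi_{B_r(0)} \leq \varphi_r \leq \chi_{B_{2r}(0)}$ then yields, through weak-$\ast$ convergence, that $\int_{B_r(0)} \langle e_\infty, z_H\rangle^2 \, d\nu = 0$ for every $r > 0$. Consequently $\supp(\nu) \subseteq V(e_\infty)$, and Proposition \ref{verticalsamoa} forces $\nu = \mathcal{S}^{2n+1}_{V(e_\infty)}$.

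Finally I derive the contradiction. Since $x_\infty \in e_\infty^\perp$, the point $(x_\infty, 0)$ lies in $V(e_\infty) = \supp(\nu)$. Lemma \ref{replica}(ii) produces a sequence $y_k \in \supp(\mu_k)$ with $y_k \to (x_\infty, 0)$; in particular $(y_k)_H \to x_\infty$. For $k$ large enough,
\begin{equation*}
|(y_k)_H - x_k| \leq |(y_k)_H - x_\infty| + |x_\infty - x_k| < \delta_0,
\end{equation*}
contradicting the assumption that $|z_H - x_k| > \delta_0$ for every $z \in \supp(\mu_k)$. The only step that requires real care is ensuring that the limit measure $\nu$ is forced into a vertical hyperplane; this is where the dilation invariance and the scaling of the functional propagate the smallness from $B_1(0)$ to all scales, and where Proposition \ref{verticalsamoa} crucially upgrades ``support in a hyperplane'' to ``$\nu$ is flat'' and hence produces approximating points in every horizontal direction.
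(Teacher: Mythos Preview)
Your proof is correct and follows essentially the same contradiction--compactness scheme as the paper's own argument: negate the conclusion, extract a weakly convergent subsequence $\mu_k\rightharpoonup\nu$, show that $\supp(\nu)\subseteq V(e_\infty)$, upgrade to $\nu=\mathcal{S}^{2n+1}_{V(e_\infty)}$, and then use Lemma~\ref{replica}(ii) to find approximating points that violate the $\delta_0$-gap. The only difference is a matter of presentation: you use the dilation invariance of the $\mu_k$ to propagate the smallness of $\int_{B_1(0)}\langle e_{2n}^{(k)},z_H\rangle^2\,d\mu_k$ to all radii and then pass to the limit via a smooth cutoff, whereas the paper passes to the limit directly on $B_1(0)$ (relying implicitly on $\nu(\partial B_1(0))=0$ and the fact that the limit $\nu$ is itself a cone); your version is the cleaner of the two on this point. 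You also invoke Proposition~\ref{verticalsamoa} directly, which is all that is needed once $\supp(\nu)\subseteq V(e_\infty)$, while the paper cites Proposition~\ref{rank1}; the conclusion is the same. The preliminary reduction to $|x|>\delta$ is harmless but not actually used in the remainder of your argument.
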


\begin{proof}
Assume by contradiction that there exists a $\delta>0$, an infinitesimal sequence $\{\epsilon_i\}_{i\in\N}$ and a sequence of $(2n+1)$-uniform cones $\{\mu_i\}_{i\in\N}$ for which:
\begin{itemize}
    \item[($\alpha$)] defined $\mathfrak{e}_{2n}(i)$ be eigenvector relative to the minimum eigenvalue of the matrix $\int    z_H\otimes    z_H \;e^{-\lVert z\rVert^4}d\mu_i(z)$, we have:
    $$\int_{B_1(0)} \langle   z_H,\mathfrak{e}_{2n}(i)\rangle^2 d\mu_i(z)\leq \epsilon_i,$$
    \item[($\beta$)] there exists $\lvert x_i\rvert\leq 1$ and $\langle x_i,\mathfrak{e}_{2n}(i)\rangle=0$ for which $\lvert    z-x_i\rvert\geq \delta$ for any $z\in S_i:=\pi_H(\supp(\mu_i))$.
\end{itemize}
By compactness, up to non-relabeled subsequences, we can assume that $\mu_i\rightharpoonup\nu$, $\mathfrak{e}_{2n}(i)\to \mathfrak{e}$ and $x_i\to x$. Since $\langle\cdot,\mathfrak{e}_{2n}(i)\rangle^2\chi_{B_1(0)}(\cdot)$ is uniformly converging to $\langle\cdot,\mathfrak{e}\rangle^2\chi_{B_1(0)}(\cdot)$, we have:
\begin{equation}
    \int_{B_1(0)} \langle   z_H,\mathfrak{e}\rangle^2 d\nu(z)=\lim_{i\to\infty}\int_{B_1(0)} \langle   z_H,\mathfrak{e}_{2n}(i)\rangle^2 d\mu_i(z)=0.
    \label{numeroo70}
\end{equation}
Thanks to Lemma \ref{replica} we know that $\nu$ is a $(2n+1)$-uniform measure and by \eqref{numeroo70} we infer that its support is contained in $V(\mathfrak{e})$. Therefore, applying Proposition \ref{rank1} we deduce that $\nu$ is flat and $V(\mathfrak{e})=\supp(\nu)$. 

Thanks to ($\beta$), if $i$ is chosen sufficiently big, we have that
$\supp(\mu_i)\cap\{y\in\HH^n:\lvert y_H-x_H\rvert\leq \delta/2\}=\emptyset$,
and thus Lemma \ref{replica} implies on the one hand that $x\in\supp(\nu)$ and on the other:
\begin{equation}
V(\mathfrak{e})\cap\{y\in\HH^n:\lvert y_H-x_H\rvert< \delta/2\}=\supp(\nu)\cap\{y\in\HH^n:\lvert y_H-x_H\rvert< \delta/2\}=\emptyset,
\label{numeroo71}
\end{equation}
However, since $x\in V(\mathfrak{e})=\supp(\nu)$ by the continuity of the scalar product, \eqref{numeroo71} is a contradiction.
\end{proof}

The following proposition shows that non-flat vertical $(2n+1)$-uniform cones are quantitatively disconnected from flat measures. The proof of this Theorem follows closely its Euclidean counterpart, see for instance \cite[Proposition 8.5]{DeLellis2008RectifiableMeasures}. This is due to the following algebraic similarity.

Let $\mu$ be an $m$-uniform cone in $\R^n$. Then, for any $w\in\supp(\mu)$ we have:
$$ 2\pi^{-m/2}\int \langle w,z\rangle^2 e^{-\lvert  z\rvert^2} d\mu(z)=\lvert w\rvert^2,$$
see for instance identity (8.7) of \cite[Lemma 8.6]{DeLellis2008RectifiableMeasures}. 
The structure of the Euclidean quadric containing $\supp(\mu)$ is the same of the quadric in \eqref{eq:115} which contains the support of vertical $(2n+1)$-uniform cones. As a consequence, one should expect the same kind of algebraic computations to work.

\begin{teorema}\label{fine}
There exists a constant $\mathfrak{C}_{10}(n)>0$ such that if $\mu$ is a vertical $(2n+1)$-uniform cone for which:
$$\min_{\mathfrak{m}\in\mathbb{S}^{2n-1}}\int_{B_1(0)}\langle    \mathfrak{m},z_H\rangle^2 d\mu(z)\leq \mathfrak{C}_{10}(n),$$
then $\mu$ is flat.
\end{teorema}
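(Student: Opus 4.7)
I argue by contradiction. Suppose no such constant $\mathfrak{C}_{10}(n)$ exists, so there is a sequence $\{\mu_k\}_{k\in\N}$ of \emph{non-flat} vertical $(2n+1)$-uniform cones with
$f_k:=\min_{\mathfrak{m}\in\mathbb{S}^{2n-1}}\int_{B_1(0)}\langle \mathfrak{m},z_H\rangle^2 d\mu_k(z) \to 0$.
I extract a contradiction by a compactness and rigidity argument in the spirit of Preiss's Theorem $3.14$ of \cite{Preiss1987GeometryDensities}, combined with the geometric results of Section \ref{buchi}.

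The first step is to convert the hypothesis into algebraic information about the matrix $M_k$ of Definition \ref{defM}. Proposition \ref{OXX} gives $f_k=\mathfrak{C}_7(n)\alpha_{2n}(k)$, so the smallest eigenvalue of $M_k$ tends to zero. Moreover, since for $\omega_{\mu_k}$-a.e.\ $u\in\mathbb{S}^{2n-1}$ there is $w\in\supp(\mu_k)$ with $w_H=u$, integrating the pointwise identity $\lvert w_H\rvert^2=(2n-1)\langle w_H,M_k w_H\rangle$ of Proposition \ref{SUPPORTO} against the probability measure $\omega_{\mu_k}/\omega_{\mu_k}(\mathbb{S}^{2n-1})$ yields the universal trace constraint $\text{Tr}(M_k^2)=\frac{1}{2n-1}$, in addition to $\text{Tr}(M_k)=1$. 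A short variance computation from these two identities gives $\alpha_i(k)-\frac{1}{2n-1}=O(\sqrt{\alpha_{2n}(k)})$ for $i<2n$, so, after extracting a subsequence along which the smallest eigenvector of $M_k$ also converges to some $\mathfrak{m}\in\mathbb{S}^{2n-1}$, one has $M_k\to M_\infty=\frac{1}{2n-1}P_{\mathfrak{m}^\perp}$. By Lemma \ref{replica}, up to a further subsequence $\mu_k\rightharpoonup\nu$, and $\nu$ is a $(2n+1)$-uniform cone. Passing to the limit in the quadric inclusion $\supp(\mu_k)\subseteq\mathbb{K}(0,I-(2n-1)M_k,0)$ of Proposition \ref{SUPPORTO} along sequences $w^{(k)}\in\supp(\mu_k)$ with $w^{(k)}\to w\in\supp(\nu)$ (Lemma \ref{replica}(iii)) gives $\supp(\nu)\subseteq V(\mathfrak{m})$, and Proposition \ref{verticalsamoa} then forces $\nu=\mathcal{S}^{2n+1}_{V(\mathfrak{m})}$: the limit is flat.

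The heart of the proof, and its main obstacle, is to derive a contradiction from non-flat cones $\mu_k$ converging weakly to the flat measure $\nu$. On the one hand, Proposition \ref{GEOM} combined with \eqref{numbero1005} gives that for any fixed $\delta>0$ and $k$ large enough, $\pi_H(\supp(\mu_k))$ is $\delta$-dense in $\text{cl}(U_1(0))\cap e_{2n}(k)^\perp$, where $e_{2n}(k)$ is the smallest eigenvector of $M_k$. On the other hand, since $\mu_k$ is a cone, Theorem \ref{MOK} together with Proposition \ref{CONO} places $\supp(\mu_k)$ inside a quadric $\mathbb{K}(0,\mathcal{Q}_k,0)$ with $\mathcal{Q}_k\neq 0$, and Proposition \ref{spt2} refines this description: $\supp(\mu_k)$ is the closure of a union of connected components of $\mathbb{K}(0,\mathcal{Q}_k,0)\setminus\Sigma(F_k)$. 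The algebraic rigidity step — the rearranged version of Preiss's disconnection argument permitted by the algebraic simplicity of the codimension-$1$ quadric — shows that a quadric containing a $\delta$-dense subset of the hyperplane $V(e_{2n}(k))$ must, once $\delta$ is small compared to an algebraic scale of $\mathcal{Q}_k$, contain the whole hyperplane $V(e_{2n}(k))$, because otherwise the intersection would be a proper algebraic subvariety of dimension at most $2n-2$, which cannot host a $\delta$-dense subset of $V(e_{2n}(k))$. Once $V(e_{2n}(k))\subseteq\mathbb{K}(0,\mathcal{Q}_k,0)$, Proposition \ref{escl} combined with the uniformity of $\mu_k$ forces $\supp(\mu_k)=V(e_{2n}(k))$ (any additional component of positive $\mathcal{S}^{2n+1}$-measure would destroy uniformity), so $\mu_k$ is flat for $k$ large, the desired contradiction. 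The principal technical difficulty is ensuring uniform control on the quadric $\mathcal{Q}_k$ — which, unlike the Proposition \ref{SUPPORTO} quadric $I-(2n-1)M_k$, is not \emph{a priori} bounded — so that the rigidity step can be applied quantitatively.
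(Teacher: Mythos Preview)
Your compactness strategy contains a genuine gap that the paper avoids by arguing directly. You correctly show (via the nice identity $\text{Tr}(M_k^2)=\tfrac{1}{2n-1}$, which is not made explicit in the paper) that any weak limit $\nu$ of the $\mu_k$ is flat, but this is not yet a contradiction: you must show that each $\mu_k$ is itself flat for large $k$. Your attempt to do this hinges on the claim that a homogeneous quadric in $\R^{2n}$ containing a $\delta$-dense subset of $e_{2n}(k)^\perp\cap\overline{U_1(0)}$ must contain the hyperplane $e_{2n}(k)^\perp$. The justification you give (``the intersection would be a proper algebraic subvariety of dimension at most $2n-2$'') misreads the geometry: the support is $\delta$-close to the hyperplane but lives in the quadric in $\R^{2n}$, not in its intersection with the hyperplane. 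And the claim itself is false in general: for small $\epsilon>0$ the quadric $\{y_{2n}^2=\epsilon(y_1^2-y_2^2)\}$ is not $\delta$-dense near $(0,1,0,\dots,0)$, while $\{y_{2n}^2=\epsilon y_{2n-1}^2\}$ \emph{is} $\sqrt{\epsilon}$-dense in the hyperplane without containing it. One would need the specific constraints $\text{Tr}(Q_k)=\text{Tr}(Q_k^2)=1$ together with the full support structure to rule such cases out, and you do not carry this through. (Incidentally, the quadric from Theorem~\ref{MOK}/Proposition~\ref{CONO} and the Proposition~\ref{SUPPORTO} quadric $I-(2n-1)M_k$ are the same up to a scalar---Proposition~\ref{SUPPORTO} \emph{is} the explicit computation of $\mathcal{Q}(1)$---so the ``unbounded $\mathcal{Q}_k$'' worry is a red herring.)

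The paper's proof sidesteps all of this by working with a \emph{single} $\mu$ and a \emph{single} well-chosen test point. Using Proposition~\ref{GEOM} it picks $z\in\supp(\mu)$ with $|z_H-e_{2n-1}|\le\delta$, plugs $z$ into the quadric identity $\sum_i(\alpha_i-\tfrac{1}{2n-1})\langle e_i,z_H\rangle^2=0$ of Proposition~\ref{SUPPORTO}, drops the nonpositive $i=2n$ term, and bounds the terms $i\le 2n-2$ via the elementary ordering inequality $\alpha_i-\tfrac{1}{2n-1}\le(2n-2)\bigl(\tfrac{1}{2n-1}-\alpha_{2n-1}\bigr)$. This yields
\[
0\le\Bigl(\tfrac{1}{2n-1}-\alpha_{2n-1}\Bigr)\bigl((2n-2)^2\delta^2-(1-\delta)^2\bigr),
\]
which for $\delta$ small forces $\alpha_{2n-1}\ge\tfrac{1}{2n-1}$; then $\text{Tr}(M)=1$ gives $\alpha_1=\cdots=\alpha_{2n-1}=\tfrac{1}{2n-1}$, $\alpha_{2n}=0$, hence $\supp(\mu)\subseteq V(e_{2n})$ and $\mu$ is flat by Proposition~\ref{rank1}. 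This is exactly the ``rearranged Preiss disconnection argument'' you allude to, and it replaces your unproved rigidity step with a two-line inequality.
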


\begin{proof}
Fix some $0<\delta<1/2$ and suppose that $\min_{\mathfrak{m}\in\mathbb{S}^{2n-1}}\int_{B_1(0)}\langle    \mathfrak{m},z_H\rangle^2 d\mu(z)\leq \epsilon(\delta,n)/\mathfrak{C}_7(n)$, where $\epsilon(\delta,n)$ is the constant yielded by Proposition \ref{GEOM}. In the following, sticking to the notations of Definition \ref{defM}, we will let $\alpha_1,\ldots,\alpha_{2n}$ and $\mathfrak{e}_1,\ldots,\mathfrak{e}_{2n}$ be the eigenvalues and the eigenvectors respectively of the matrix $M_\mu$.

Identity \eqref{numbero1005} implies that $\int_{B_1(0)}\langle    e_{2n},z_H\rangle^2 d\mu(z)\leq \epsilon(\delta,n)$ and thus, thanks to Proposition \ref{GEOM}, there exists a $z\in\supp(\mu)$ such that $\lvert z_H-\mathfrak{e}_{2n-1}\rvert\leq \delta$. 
Thanks to the order imposed on the $\alpha_i$'s, we have that: $$\alpha_i+(2n-2)\alpha_{2n-1}\leq \text{Tr}(M_\mu)=1\qquad\text{for every $i\leq 2n-2$.}$$ 
This in particular implies that:
\begin{equation}
   \alpha_i-\frac{1}{2n-1}\leq (2n-2)\left(\frac{1}{2n-1}-\alpha_{2n-1}\right)\qquad \text{for every  }i\leq 2n-2. 
   \label{eq:117}
\end{equation}
Since $z\in\supp(\mu)$, Proposition \ref{SUPPORTO} implies, once we write \eqref{eq:115} in the basis $\{\mathfrak{e}_1,\ldots,\mathfrak{e}_{2n}\}$, that:
\begin{equation}
    0=\sum_{i=1}^{2n}\left(\alpha_i-\frac{1}{2n-1}\right)\langle \mathfrak{e}_i,  z_H\rangle^2.
    \label{eq:116}
\end{equation}
We observe that since $M_\mu$ is positive semidefinite and $\text{Tr}(M_\mu)=1$, we have that $\alpha_{2n}\leq 1/2n$.
Therefore, putting together \eqref{eq:117}, \eqref{eq:116} and the fact that $\alpha_{2n}\leq1/2n$, we deduce that:
\begin{equation}
\begin{split}
0=&\sum_{i=1}^{2n}\left(\alpha_i-\frac{1}{2n-1}\right)\langle \mathfrak{e}_i,  z_H\rangle^2\\
\leq& \sum_{i=1}^{2n-2}\left(\alpha_i-\frac{1}{2n-1}\right)\langle \mathfrak{e}_i,  z_H\rangle^2+\left(\alpha_{2n-1}-\frac{1}{2n-1}\right)\langle \mathfrak{e}_{2n-1},  z_H\rangle^2.
\label{eq:118}
\end{split}
\end{equation}
Summing up, inequalities \eqref{eq:117} and \eqref{eq:118} together with some algebraic manipulations imply:
\begin{equation}
    \begin{split}
    0\leq&(2n-2)\Big(\frac{1}{2n-1}-\alpha_{2n-1}\Big)\sum_{i=1}^{2n-2}\langle \mathfrak{e}_i, z_H\rangle^2
-\Big(\frac{1}{2n-1}-\alpha_{2n-1}\Big)\langle \mathfrak{e}_{2n-1},  z_H\rangle^2\\
=&(2n-2)\Big(\frac{1}{2n-1}-\alpha_{2n-1}\Big)\sum_{i=1}^{2n-2}\langle \mathfrak{e}_i, z_H\rangle^2\\
&\qquad\qquad\qquad\qquad\qquad-\Big(\frac{1}{2n-1}-\alpha_{2n-1}\Big)(1+\langle \mathfrak{e}_{2n-1}, z_H-\mathfrak{e}_{2n-1}\rangle)^2\\
\leq&\Big(\frac{1}{2n-1}-\alpha_{2n-1}\Big)\Big((2n-2)\sum_{i=1}^{2n-2}\lvert z_H-\mathfrak{e}_{2n-1}\rvert^2-(1-\lvert z_H-\mathfrak{e}_{2n-1}\rvert)^2\Big)\\
\leq&\Big(\frac{1}{2n-1}-\alpha_{2n-1}\Big)((2n-2)^2\delta^2-(1-\delta)^2),
    \label{eq:120}
\end{split}
\end{equation}
where in order to pass from the second to the third line above one needs to note that $\lvert\langle e_i,z_H\rangle\rvert=\lvert\langle e_i,z_H-e_{2n-1}\rangle\rvert\leq \lvert z_H-e_{2n-1}\rvert$ for any $i=1,\ldots,2n-2$ and that
$0< 1-\lvert z_H-\mathfrak{e}_{2n-1}\rvert\leq 1+\langle \mathfrak{e}_{2n-1}, z_H-\mathfrak{e}_{2n-1}\rangle$ as $\lvert z_H-e_{2n-1}\rvert<\delta$.
Therefore, if $\delta$ is small enough, thanks to the inequality \eqref{eq:120}, we have that $\alpha_{2n-1}\geq1/(2n-1)$.
In this case, since $M_\mu$ is positive semidefinite and $\text{Tr}(M_\mu)=1$, we have that: $$\alpha_{1}=\ldots=\alpha_{2n-1}=\frac{1}{2n-1}\qquad\text{and}\qquad\alpha_{2n}=0.$$
This by the equation \eqref{numbero1005} implies  that $\supp(\mu)\subseteq V(\mathfrak{e}_{2n})$ and thus by Proposition \ref{rank1}, $\mu$ must be flat.
\end{proof}

\section{Conclusions}
 \label{conclusioni}
In this section we complete the proof of Theorem \ref{main}. In order to conclude the proof we need to construct the continuous functional $\mathscr{F}$ on Radon measures which fulfills the hypothesis of Theorem \ref{disco}.

\begin{proposizione}\label{conti}
Let $\eta$ be a non-negative smooth function such that $\eta=1$ on $B_1(0)$ and $\eta=0$ on $B_2^c(0)$. Then, the functional $\mathscr{F}:\mathcal{M}\to\R$ defined by:
\begin{equation}
    \mathscr{F}(\mu):=\min_{\mathfrak{m}\in\mathbb{S}^{2n-1}}\int \eta(z) \langle    z_H,\mathfrak{m} \rangle^2 d\mu(z),
    \label{eq:eq:eq5}
\end{equation}
is continuous with respect to the weak convergence of measures.
\end{proposizione}

\begin{proof}
Suppose that $\mu_i\rightharpoonup \mu$ and $\mathfrak{m}_i\in\mathbb{S}^{2n-1}$ are such that:
$$\int \eta(z)\langle    z_H, \mathfrak{m}_i\rangle^2d\mu_i(z)=\min_{\mathfrak{m}\in \mathcal{S}^{2n-1}}\int \eta(z)\langle    z_H,\mathfrak{m}\rangle^2 d\mu_i(z).$$
Up to passing to a (non-relabeled) subsequence we can also suppose that $\mathfrak{m}_i$ converges to some $\mathfrak{m}\in\mathbb{S}^{2n-1}$. Thus the function $\eta(\cdot)\langle \pi_H (\cdot),\mathfrak{m}_i\rangle^2$ is uniformly converging to  $\eta(\cdot)\langle \pi_H (\cdot),\mathfrak{m}\rangle^2$. This implies that:
$$\lim_{i\to\infty} \int \eta(z) \langle    z_H, \mathfrak{m}_i\rangle^2d\mu_i(z)= \int \eta(z) \langle    z_H, \mathfrak{m}\rangle^2d\mu(z),$$
from which we infer that $\liminf_{i\to \infty} \mathscr{F}(\mu_i)\geq \mathscr{F}(\mu)$. 
On the other hand, let $\mathscr{F}(\mu)=\int \eta(z)\langle z_H, \overline{\mathfrak{m}}\rangle^2 d\mu(z)$ for some $\overline{\mathfrak{m}}\in\mathbb{S}^{2n-1}$. Since $\mu_i\rightharpoonup \mu$, we deduce that:
$$\lim_{i\to\infty} \int \eta(z) \langle    z_H, \overline{\mathfrak{m}}\rangle^2d\mu_i(z)= \int \eta(z) \langle    z_H, \overline{\mathfrak{m}}\rangle^2d\mu(z).$$
This implies that $\limsup_{i\to\infty} \mathscr{F}(\mu_i)\leq \mathscr{F}(\mu)$ and this concludes the proof.
\end{proof}

The following proposition shows that $\mathscr{F}$ satisfies the hypothesis (ii) of Theorem \ref{disco}.

\begin{proposizione}\label{consta}
There exists a constant $\hbar(n)>0$ such that if $\mu$ is a $(2n+1)$-uniform cone and $\mathscr{F}(\mu)\leq \hbar(n)$, then $\mu$ is flat.
\end{proposizione}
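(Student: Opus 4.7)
The plan is to combine the dichotomy for $(2n+1)$-uniform cones with the two quantitative disconnection theorems already established in the paper, namely Theorem \ref{bellalei} (for horizontal cones) and Theorem \ref{fine} (for vertical cones). Theorem \ref{MOK} applied to $\mu$ produces a non-degenerate quadric $\mathbb{K}(b,\mathcal{Q},\mathcal{T})$ containing $\supp(\mu)$, and by Proposition \ref{verticale} the sign of the coefficient $\mathcal{T}$ is an invariant of $\mu$. Thus every $(2n+1)$-uniform cone is either horizontal (with $\supp(\mu)\subseteq\mathbb{K}(0,\mathcal{D},-1)$ for some $\mathcal{D}\in\mathrm{Sym}(2n)\setminus\{0\}$, after normalising $\mathcal{T}=-1$ and using Proposition \ref{CONO} to eliminate $b$) or vertical (with $\supp(\mu)\subseteq\mathbb{K}(0,\mathcal{D},0)$).

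The key elementary observation is that $\eta\geq \chi_{B_1(0)}$ pointwise, so for every $\mathfrak{m}\in\mathbb{S}^{2n-1}$
\begin{equation*}
\int \eta(z)\langle z_H,\mathfrak{m}\rangle^2\,d\mu(z)\;\geq\;\int_{B_1(0)}\langle z_H,\mathfrak{m}\rangle^2\,d\mu(z),
\end{equation*}
and consequently $\mathscr{F}(\mu)\geq\min_{\mathfrak{m}\in\mathbb{S}^{2n-1}}\int_{B_1(0)}\langle z_H,\mathfrak{m}\rangle^2\,d\mu(z)$. The proof then proceeds by setting $\hbar(n):=\min\{\mathfrak{C}_3(n)/2,\mathfrak{C}_{10}(n)\}$ and analysing the two cases.

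If $\mu$ is horizontal, Theorem \ref{bellalei} asserts $\int_{B_1(0)}\langle\mathfrak{m},z_H\rangle^2\,d\mu(z)\geq\mathfrak{C}_3(n)$ for every $\mathfrak{m}\in\mathbb{S}^{2n-1}$, hence $\mathscr{F}(\mu)\geq\mathfrak{C}_3(n)>\hbar(n)$, contradicting the hypothesis. Therefore $\mu$ must be vertical, and in that case
\begin{equation*}
\min_{\mathfrak{m}\in\mathbb{S}^{2n-1}}\int_{B_1(0)}\langle\mathfrak{m},z_H\rangle^2\,d\mu(z)\;\leq\;\mathscr{F}(\mu)\;\leq\;\hbar(n)\;\leq\;\mathfrak{C}_{10}(n),
\end{equation*}
so Theorem \ref{fine} forces $\mu$ to be flat. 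No further computation is needed: all the serious work has already been carried out in Sections \ref{HORRI} and the preceding subsection, and the present proposition is merely the bookkeeping step that feeds this information into the hypotheses of Theorem \ref{disco}. There is no genuine obstacle; the only thing to verify carefully is that the truncation function $\eta$ is chosen so as to dominate $\chi_{B_1(0)}$, which is immediate from the assumption $\eta=1$ on $B_1(0)$ together with $\eta\geq 0$.
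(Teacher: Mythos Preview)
Your proof is correct and follows essentially the same route as the paper: use $\eta\geq\chi_{B_1(0)}$ to bound $\mathscr{F}(\mu)$ from below by $\min_{\mathfrak{m}}\int_{B_1(0)}\langle\mathfrak{m},z_H\rangle^2\,d\mu$, then invoke Theorems~\ref{bellalei} and~\ref{fine} to handle the horizontal and vertical cases respectively. The only cosmetic difference is the choice of constant (the paper takes $\hbar(n)=\min\{\mathfrak{C}_3(n),\mathfrak{C}_{10}(n)\}/2$, you take $\min\{\mathfrak{C}_3(n)/2,\mathfrak{C}_{10}(n)\}$), but both are valid.
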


\begin{proof}
Thanks to the definition the functional  $\mathscr{F}$ given in \eqref{eq:eq:eq5}, we have $\min_{\mathfrak{m}\in\mathbb{S}^{2n-1}} \int_{B_1(0)} \langle z_H,\mathfrak{m}\rangle^2 d\mu(z)\leq\mathscr{F}(\mu)$. Therefore, thanks to Theorems \ref{bellalei} and \ref{fine}, if $$\mathscr{F}(\mu)\leq \min\{\mathfrak{C}_3(n),\mathfrak{C}_{10}(n)\}/2=:\hbar(n),$$
then the measure $\mu$ is flat.
\end{proof}

We are ready to prove our main result Theorem \ref{main}.

\begin{teorema}
If $\phi$ is a measure with $(2n+1)$-density, then
$\Tan_{2n+1}(\phi,x)\subseteq \mathfrak{M}(2n+1)$
for $\phi$-almost all $x\in\HH^n$.
\end{teorema}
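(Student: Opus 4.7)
The plan is to apply Theorem~\ref{disco}, which reduces the task to exhibiting a continuous functional $\mathscr{F}\colon \mathcal{M}\to\R$ and a constant $\hbar=\hbar(n)>0$ satisfying conditions (i) and (ii) there. Following the outline already suggested after Theorem~\ref{T:3}, I take
\[
\mathscr{F}(\mu):=\min_{\mathfrak{m}\in\mathbb{S}^{2n-1}}\int \eta(z)\langle z_H,\mathfrak{m}\rangle^2\, d\mu(z),
\]
where $\eta\in\mathcal{C}^\infty_c(\HH^n)$ is non-negative with $\eta\equiv 1$ on $B_1(0)$ and $\eta\equiv 0$ outside $B_2(0)$. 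Continuity of $\mathscr{F}$ under weak-$*$ convergence is exactly Proposition~\ref{conti}: along $\mu_i\rightharpoonup\nu$ the minimizers $\mathfrak{m}_i$ subconverge by compactness of $\mathbb{S}^{2n-1}$ to some $\overline{\mathfrak{m}}$, and since the integrand $\eta(\cdot)\langle\pi_H(\cdot),\mathfrak{m}_i\rangle^2$ is continuous with compact support and converges uniformly, a standard two-sided bound yields $\mathscr{F}(\mu_i)\to\mathscr{F}(\nu)$.

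Hypothesis (i) of Theorem~\ref{disco} is immediate: if $\mu=\mathcal{S}^{2n+1}_{V(\mathfrak{n})}\in\mathfrak{M}(2n+1)$ for some unit $\mathfrak{n}\in\R^{2n}$, then $\langle z_H,\mathfrak{n}\rangle=0$ on $\supp(\mu)$, whence $\mathscr{F}(\mu)=0$, which is $\leq \hbar/2$ for any choice of $\hbar>0$. The substantive hypothesis (ii) is the content of Proposition~\ref{consta}, which I would re-derive as follows. Let $\mu$ be a $(2n+1)$-uniform cone. By Theorem~\ref{T:1} the support of $\mu$ lies in some quadric $\mathbb{K}(b,\mathcal{Q},\mathcal{T})$ with $\mathrm{Tr}(\mathcal{Q})\neq 0$, and dilation invariance forces $b=0$ by Proposition~\ref{CONO}, so $\mu$ is either horizontal ($\mathcal{T}\neq 0$) or vertical ($\mathcal{T}=0$) in the sense of Section~\ref{buchi}.

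In the horizontal case, Theorem~\ref{bellalei} supplies a universal lower bound $\int_{B_1(0)}\langle\mathfrak{m},z_H\rangle^2\,d\mu(z)\geq \mathfrak{C}_3(n)$ for every $\mathfrak{m}\in\mathbb{S}^{2n-1}$, and since $\eta\geq\chi_{B_1(0)}$ this gives $\mathscr{F}(\mu)\geq\mathfrak{C}_3(n)$. In the vertical case, Theorem~\ref{fine} says that if the same minimum over $\mathfrak{m}$ is at most $\mathfrak{C}_{10}(n)$ then $\mu$ is already flat, so for every non-flat vertical cone $\mathscr{F}(\mu)\geq \mathfrak{C}_{10}(n)$. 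Choosing $\hbar(n):=\tfrac{1}{2}\min\{\mathfrak{C}_3(n),\mathfrak{C}_{10}(n)\}$, the condition $\mathscr{F}(\mu)\leq\hbar(n)$ therefore forces $\mu\in\mathfrak{M}(2n+1)$, establishing (ii).

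With $\mathscr{F}$ and $\hbar(n)$ so defined, Theorem~\ref{disco} applies directly and yields $\Tan_{2n+1}(\phi,x)\subseteq \Theta^{2n+1}(\phi,x)\mathfrak{M}(2n+1)$ at $\phi$-almost every point, which is the statement. The delicate work has already been carried out upstream: the main obstacle in the whole program was establishing the rigidity of horizontal cones in Theorem~\ref{bellalei} (which rests on the algebraic identity from Theorem~\ref{appendicefinale} to bound $\vertiii{\mathcal{D}}$ both from below, via Proposition~\ref{staccato}, and from above, via Propositions~\ref{omgflat}--\ref{boundi}) and the Preiss-style disconnection argument of Theorem~\ref{fine} for vertical cones; here I am only assembling these ingredients through the continuity/minimality mechanism of $\mathscr{F}$.
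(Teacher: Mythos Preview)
Your proposal is correct and follows essentially the same approach as the paper: you define the same functional $\mathscr{F}$, verify its continuity via Proposition~\ref{conti}, check hypothesis~(i) of Theorem~\ref{disco} by observing $\mathscr{F}$ vanishes on flat measures, establish hypothesis~(ii) via Proposition~\ref{consta} (which you unpack into the horizontal and vertical cases through Theorems~\ref{bellalei} and~\ref{fine}), and then invoke Theorem~\ref{disco}. The paper's proof is more terse but logically identical, and your choice $\hbar(n)=\tfrac{1}{2}\min\{\mathfrak{C}_3(n),\mathfrak{C}_{10}(n)\}$ matches the paper's exactly.
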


\begin{proof}
Since $\mathscr{F}(\mu)=0$, whenever $\mu$ is flat, Propositions \ref{conti} and \ref{consta} imply that we are in the hypothesis of Theorem \ref{disco}. which proves the claim.
\end{proof}

A byproduct of our analysis is the conclusion of the classification of uniform measures in $\HH^1$, which was systematically carried out in \cite{ChousionisONGROUP}. Our contribution is to prove that $3$-uniform measures in $\HH^1$ are flat. The final result reads:

\begin{teorema}
In $\HH^1$ we have the following complete classification of uniform measures:
\begin{itemize}
    \item[(i)] If $\mu\in\mathcal{U}_{\HH^1}(1)$, then $\mu=\mathcal{S}^1\llcorner L$, where $L$ is a horizontal line.
    \item[(ii)] If $\mu\in\mathcal{U}_{\HH^1}(2)$ then $\mu=\mathcal{S}^2\llcorner \mathcal{V}$, where $\mathcal{V}$ is the vertical axis.
    \item[(iii)] If $\mu\in\mathcal{U}_{\HH^1}(3)$ then $\mu=\mathcal{S}^3\llcorner W$, where $W$ is a $2$-dimensional vertical plane.
\end{itemize}
\end{teorema}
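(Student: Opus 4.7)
Parts (i) and (ii) of the theorem are obtained in \cite{ChousionisONGROUP}, so the only new content is~(iii): given $\mu\in\mathcal{U}_{\HH^1}(3)$, I must show $\mu=\mathcal{S}^3\llcorner W$ for a $2$-dimensional vertical plane $W$, i.e.\ $\mu\in\mathfrak{M}(3)$. The plan is to apply Theorem~\ref{main} to extract flat tangents, combine it with the structural results of Sections~\ref{sezione1} and~\ref{buchi} to reduce to the vertical case, and invoke~\cite{ChousionisONGROUP} on vertically ruled measures to conclude.

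Concretely, I would first observe that $\mu$, being $3$-uniform, has $3$-density with $\Theta^3(\mu,\cdot)\equiv 1$ on its support, so Theorem~\ref{main} applies and yields $\Tan_3(\mu,x)\subseteq\mathfrak{M}(3)$ for $\mu$-a.e.\ $x\in\HH^1$. By Theorem~\ref{MOK}, $\supp(\mu)\subseteq\mathbb{K}(b,\mathcal{Q},\mathcal{T})$ for some $b,\mathcal{Q},\mathcal{T}$ with $\text{Tr}(\mathcal{Q})\neq 0$. Proposition~\ref{verticale} shows that the value of $\mathcal{T}$ is intrinsic to $\mu$, so the dichotomy ``$\mathcal{T}=0$ versus $\mathcal{T}\neq 0$'' gives rise to the split into vertical and horizontal uniform measures. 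If $\mathcal{T}=0$, the quadric $\mathbb{K}(b,\mathcal{Q},0)$ does not involve the coordinate $t$ and is therefore invariant under all left translations by elements of the centre $\mathcal{V}$; Theorem~\ref{duppy} then implies the same for $\supp(\mu)$. Hence $\mu$ is vertically ruled in the sense of \cite{ChousionisONGROUP}, and the main theorem of that paper gives $\mu\in\mathfrak{M}(3)$.

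It remains to rule out the horizontal case $\mathcal{T}\neq 0$. Here I would exploit Proposition~\ref{Kinfty1}, which transfers the parameter $\mathcal{T}$ to the unique tangent at infinity $\chi$ of $\mu$ supplied by Theorem~\ref{conol}: thus $\chi$ is a horizontal $3$-uniform cone. Theorem~\ref{bellalei} then gives $\mathscr{F}(\chi)\geq \mathfrak{C}_3(1)\geq 2\hbar(1)>0$, while on the other hand any flat tangent $\xi\in\Tan_3(\mu,x_0)$ provided by Theorem~\ref{main} satisfies $\mathscr{F}(\xi)=0$ by Proposition~\ref{conti} and the definition of $\mathscr{F}$ on flat measures. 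Imitating the proof of Theorem~\ref{disco}: the continuous function $r\mapsto\mathscr{F}(r^{-3}\mu_{x_0,r})$ passes from $0$ (as $r\to 0$) to at least $2\hbar(1)$ (as $r\to\infty$), so by the intermediate value theorem and the compactness of $3$-uniform measures we can extract a $3$-uniform measure $\eta$ with $\mathscr{F}(\eta)=\hbar(1)$; by choosing the crossing scales appropriately, the tangent at infinity of $\eta$ can be arranged to have $\mathscr{F}\leq\hbar(1)$, hence to be flat by Proposition~\ref{consta}, and then Theorem~\ref{flat} forces $\eta$ itself to be flat, contradicting $\mathscr{F}(\eta)=\hbar(1)\neq 0$. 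This contradiction excludes the horizontal case, completing the proof.

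The hardest step is the final disconnection argument: the original argument of Subsection~\ref{flattitris} proceeds at scales $r\to 0$ using that the underlying measure has both a flat and a non-flat tangent at $x_0$, whereas here the ``non-flat tangent'' lives at infinity, so the sequences $r_k\to 0$ and $s_k\to\infty$ witness the two regimes on opposite sides of the spectrum of scales. I would need to produce $\sigma_k$ with $\mathscr{F}(\sigma_k^{-3}\mu_{x_0,\sigma_k})=\hbar(1)$ and suitable monotonicity of $\mathscr{F}$ on a large subinterval, and then verify via the base-point independence encoded in Theorem~\ref{conol} that the tangent at infinity of the extracted limit $\eta$ is still~$\chi$, so that the horizontal lower bound from Theorem~\ref{bellalei} is indeed what gets contradicted.
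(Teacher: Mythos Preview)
Your handling of parts (i) and (ii), and of the vertical subcase of (iii), is essentially the same as the paper's: invoke \cite{ChousionisONGROUP} for (i) and (ii), use Corollary~\ref{mainprimo} to put $\supp(\mu)$ inside a quadric, and in the case $\mathcal{T}=0$ appeal to the vertically-ruled classification (Theorem~1.5 in \cite{ChousionisONGROUP}). The divergence is in how you exclude the horizontal case $\mathcal{T}\neq 0$. The paper does this in one line by citing Proposition~1.6 of \cite{ChousionisONGROUP}, which shows directly that no $t$-graph quadric in $\HH^1$ can support a $3$-uniform measure; no disconnection argument or appeal to Theorem~\ref{main} is needed.

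Your proposed disconnection argument, by contrast, does not close. The measures $\sigma^{-3}\mu_{x_0,\sigma}$ for finite $\sigma>0$ are simply dilations of a fixed left-translate of $\mu$, so any ``extracted'' $\eta$ at a scale where $\mathscr{F}=\hbar(1)$ is exactly such a dilate, and its tangent at infinity is $\chi$ again (as you correctly note via base-point independence). But then $\mathscr{F}$ of that tangent is $\mathscr{F}(\chi)\geq 2\hbar(1)>\hbar(1)$, not $\leq\hbar(1)$, so you cannot invoke Proposition~\ref{consta} to force it flat; indeed your second and third paragraphs assert mutually incompatible things about this tangent. The original argument of Subsection~\ref{flattitris} works because the two regimes both live at $r\to 0$, so the crossing scales $\sigma_k\to 0$ produce a genuinely new tangent measure $\xi$ whose blow-down can be controlled by $f$ on $[\sigma_k,r_k]$; here the regimes are at $r\to 0$ and $r\to\infty$, the crossing scale is a single fixed $\sigma^*\in(0,\infty)$, and nothing new is produced. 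In short, the machinery of Sections~\ref{section:dis}--\ref{HORRI} tells you that horizontal cones sit far from flat measures, but it does not by itself rule out horizontal $3$-uniform measures in $\HH^1$; for that you need the external input from \cite{ChousionisONGROUP}.
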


\begin{proof}
Points (i) and (ii) are \cite[Theorem 1.3, 1.4]{ChousionisONGROUP}. Corollary \ref{mainprimo} implies that if $\mu$ is supported on a quadric. Proposition 1.6 in \cite{ChousionisONGROUP} implies that such quadric cannot be a $t$-graph and therefore by Theorem 1.5 of the same paper we conclude the proof.
\end{proof}

\appendix

\section{Representations of sub-Riemannian centered Hausdorff measure}
\label{appeA1}

In the following we will adopt the notations introduced in Section \ref{buchi} and as usual we assume $b\in\R^{2n}$, $\mathcal{Q}\in\mathrm{Sym}(2n)$ and $\mathcal{T}\in\R$. The main goal of this section is to find a representation of the spherical Hausdorff measures $\mathcal{C}^{2n+1}\llcorner A$, where $A$ is a Borel subset of the quadric $\mathbb{K}(b,\mathcal{Q},\mathcal{T})$, in terms of the Euclidean Hausdorff measure $\mathcal{H}^{2n}_{eu}$. To do so, we will study separately the case where $\mathcal{T}\neq 0$ and the case $\mathcal{T}=0$.

Before proceeding we need to review the definition and some well known facts about the horizontal perimeter measure in $\HH^n$.
For any $x\in\HH^n$ and any $i\in\{1,\ldots,n\}$ we define the vector fields:
\begin{equation}
    X_i(x):=e_i-2x_{i+n}e_{2n+1},\qquad Y_i(x):=e_{i+n}+2x_ie_{2n+1},
    \label{campi}
\end{equation}
where $\{e_1,\ldots, e_{2n+1}\}$ is the standard othonormal basis of $\R^{2n+1}$ and the \emph{horizontal distribution}:
$$H\mathbb{H}^n(x):=\text{span}\{X_1(x),\ldots,X_n(x),Y_1(x),\ldots,Y_n(x)\}.$$
We recall here below the definitions of functions of bounded variations and of finite perimeter sets and we collect from various papers some results that will be useful in the following.

\begin{definizione}
We say that a function $f:\mathbb{H}^n\to \R$ is of \emph{local} $\HH$-\emph{bounded variation} if $f\in L^1_{loc}(\mathbb{H}^n)$ and:
$$\lVert \nabla_{\mathbb{H}} f\rVert(\Omega):=\sup\Big\{ \int_{\Omega} f(x)\text{div}_{\mathbb{H}} \varphi(x) dx:\varphi\in \mathcal{C}^1_0(\Omega,H\mathbb{H}^n),\lvert \varphi(x)\rvert\leq 1\Big\}<\infty,$$
for any bounded open set $\Omega\subseteq \mathbb{H}^n$, where $\text{div}_\mathbb{H}\varphi:=\sum_{i=1}^{n} X_i\varphi_i+\sum_{i=1}^{n} Y_i\varphi_i$ and where  $X_1,\ldots,Y_n$ are the vector fields introduced in \eqref{campi}. We denote by $\text{BV}_{\mathbb{H},loc}(\mathbb{H}^n)$ the set of all  functions of locally $\HH$-bounded variation. As usual a Borel set $E\subseteq \mathbb{H}^n$ is said to be of $\HH$-\emph{finite perimeter} if $\chi_E$ is of bounded variation. 
\end{definizione}

\begin{definizione}\label{def:norm}
If $E\subseteq \mathbb{H}^n$ is a Borel set of locally finite perimeter, we let $\lvert\partial E\rvert_\mathbb{H}:=\lVert \nabla_{\mathbb{H}} \chi_E\rVert$. Furthermore we call \emph{generalized horizontal inward} $\mathbb{H}$-\emph{normal to} $\partial E$ the horizontal vector $\mathfrak{n}_E(x):=\sigma_{\chi_E}(x)$. Finally, we define the \emph{reduced boundary} $\partial_{\mathbb{H}}^*E$ to be the set of those $x\in\mathbb{H}^n$ for which:
\begin{itemize}
    \item[(i)]$\lvert\partial E\rvert_\mathbb{H}(B_r(x))>0$ for any $r>0$,
    \item[(ii)] $\lim_{r\to 0}\fint_{B_r(x)}\mathfrak{n}_Ed\lvert \partial E\rvert_{\mathbb{H}}$ exists,
    \item[(iii)] $\lim_{r\to 0}\Big\lVert\fint_{B_r(x)}\mathfrak{n}_Ed\lvert \partial E\rvert_{\mathbb{H}}\Big\rVert =1$.
\end{itemize}
\end{definizione}

The following proposition is very useful in order to get a representation of the perimeter measure on smooth domains by means of the Euclidean Hausdorff measure:

\begin{proposizione}[Proposition 3.1, \cite{Monti2014IsoperimetricGroup}]\label{Monti}
Let $E\subseteq \HH^n$ be a set with Euclidean Lipschitz boundary and $\Omega$ a fixed open set in $\HH^n$. Then:
$$\lvert\partial E\rvert_\HH(A\cap \Omega)=\int \chi_A\lvert \mathfrak{n}_H\rvert d\mathcal{H}_{\mathrm{eu}}^{2n}\llcorner \partial E\cap\Omega,\qquad\text{for any open set $A\subseteq \HH^n$,}$$
and where denoted with $\mathfrak{n}_E^{\mathrm{eu}}(\cdot)$ the inward Euclidean unit normal  to $\partial E$, we have:
\begin{equation}
    \mathfrak{n}_E(x)=(\langle X_1(x),\mathfrak{n}^{\mathrm{eu}}_E(x)\rangle,\ldots, \langle X_n(x),\mathfrak{n}_E^{\mathrm{eu}}(x)\rangle,\langle Y_1(x),\mathfrak{n}_E^{\mathrm{eu}}(x)\rangle,\ldots, \langle Y_{2n}(x),\mathfrak{n}_E^{\mathrm{eu}}(x)\rangle).
    \label{num:norhor}
\end{equation}
\end{proposizione}

\begin{osservazione}\label{boggi}
The above proposition together with \cite[Proposition 1.9.4]{Bogachev2007MeasureTheory} implies that the measures $\lvert\partial E\rvert_{\HH}\llcorner \Omega$ and $\lvert \mathfrak{n}_E\rvert \mathcal{H}^{2n}_{\mathrm{eu}}\llcorner \partial E\cap \Omega$ coincide on Borel sets.
\end{osservazione}

\textbf{Throughout this entire section we assume that } $b\in\R^{2n}$\textbf{,} $\mathcal{Q}\in \mathrm{Sym}(2n)\setminus\{0\}$ \textbf{and} $\mathcal{T}\in \R$ \textbf{are fixed and define:}
$$E:=\{x\in\R^{2n+1}:\langle\mathcal{Q}x_H+b,x_H\rangle+\mathcal{T}x_T <0\}.$$
The open set $E$ is a set with Lipschitz boundary and the inward Euclidean unit normal of its boundary is:
$$\mathfrak{n}^{\mathrm{eu}}_E(x)=\frac{-(2\mathcal{Q}x_H+b,\mathcal{T})}{\lvert(2\mathcal{Q}x_H+b,\mathcal{T})\rvert}.$$
Therefore, thanks to identity \eqref{num:norhor} we have:
$$\mathfrak{n}_E(x)=\frac{-2\mathcal{Q}x_H-b+2\mathcal{T}Jx_H}{\lvert(2\mathcal{Q}x_H+b,\mathcal{T})\rvert}=-\frac{b+2(\mathcal{Q}-\mathcal{T}J)x_H}{\lvert (2\mathcal{Q}x_H+b,\mathcal{T})\rvert}.$$

\begin{osservazione}\label{rkA}
Note that if $\mathcal{T}=0$, then $\mathfrak{n}_E(x)$ coincides with the Euclidean normal $\mathfrak{n}^{\mathrm{eu}}_E(x)$.
\end{osservazione}

\subsection{Area on \texorpdfstring{$t$}{Lg}-quadratic graphs}

This subsection is devoted to find representation formulas of the centered Hausdorff measure $\mathcal{C}^{2n+1}$ when restricted to subsets of quadratic $t$-graphs. Here below we assume that $f:\R^{2n}\to\R$ is the quadratic polynomial:
 $$f(w):=-\frac{\langle b,w\rangle+\langle w,\mathcal{Q}w\rangle}{\mathcal{T}}.$$ 
and we let $\Sigma(f):=\{w\in\R^{2n}:2(\mathcal{Q}-\mathcal{T}J)w+b=0\}$ be the characteristic set of $f$, see \eqref{char}. Thanks to the simple algebraic simplicity of $f$, the following characterisation of $\Sigma(f)$ is available:

\begin{proposizione}\label{quaddica}
The set $\Sigma(f)$ is an affine plane in $\R^{2n}$ of dimension at most $n$.
\end{proposizione}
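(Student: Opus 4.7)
The statement reduces to a purely linear-algebraic fact about the kernel of $\mathcal{Q}+J$, so the plan is to recognise $\Sigma(f)$ as an affine translate of this kernel and to bound its dimension via a symplectic-isotropy argument.

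\textbf{Step 1: identify $\Sigma(f)$ as an affine subspace.} By its very definition, $\Sigma(f)$ is the set of solutions $x\in\R^{2n}$ of the affine equation $2(\mathcal{Q}+J)x=-b$. Consequently, if $\Sigma(f)$ is non-empty, it is an affine subspace of $\R^{2n}$ whose underlying direction is the linear subspace
$$V:=\ker(\mathcal{Q}+J)=\{x\in\R^{2n}:\mathcal{Q}x=-Jx\}.$$
So proving $\dim V\le n$ suffices.

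\textbf{Step 2: show that $V$ is isotropic for the standard symplectic form $\omega(x,y):=\langle x,Jy\rangle$.} For $x,y\in V$ we have $\mathcal{Q}x=-Jx$ and $\mathcal{Q}y=-Jy$, so using the symmetry of $\mathcal{Q}$ and the antisymmetry $J^T=-J$,
\begin{equation}
\langle x,Jy\rangle=-\langle x,\mathcal{Q}y\rangle=-\langle\mathcal{Q}x,y\rangle=\langle Jx,y\rangle=-\langle x,Jy\rangle,
\nonumber
\end{equation}
forcing $\langle x,Jy\rangle=0$. Hence $V$ is totally isotropic for $\omega$.

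\textbf{Step 3: conclude via the standard dimension bound for isotropic subspaces.} Since $\omega$ is a non-degenerate symplectic form on $\R^{2n}$, any totally isotropic subspace has dimension at most $n$ (the Lagrangian bound): indeed, $V\subseteq V^{\omega}$, and $\dim V+\dim V^{\omega}=2n$ by non-degeneracy of $\omega$, so $\dim V\le n$. Combining with Step~1, $\Sigma(f)$ is an affine plane of dimension at most $n$, as claimed.

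No step is really an obstacle here; the only conceptual point is spotting that the condition $(\mathcal{Q}+J)x=0$, obtained by balancing a symmetric against an antisymmetric operator, forces the kernel to be $\omega$-isotropic. The rest is bookkeeping.
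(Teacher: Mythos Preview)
Your proof is correct and follows essentially the same route as the paper: both reduce the claim to the bound $\dim\ker(\mathcal{Q}+J)\le n$, which is the general linear-algebraic fact that for $A$ symmetric and $B$ invertible antisymmetric one has $\dim\ker(A-B)\le n$. The paper merely cites this fact (calling it ``a simple argument by contradiction''), whereas you supply the clean direct argument---the kernel is isotropic for the symplectic form $\langle\cdot,J\cdot\rangle$, hence at most Lagrangian---which is exactly how one would flesh out the paper's hint.
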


\begin{proof}
The proof can be established thanks to a reasonably short argument, that we choose to omit, that uses the fact that $J$ is an invertible skew-symmetric matrix and that $Q$ is symmetric.
\end{proof}


It is worth noting that Proposition \ref{quaddica} is a very simple case of the results obtained by Z. Balogh in \cite{Balogh2003SizeGradient}.

\begin{definizione}
For any $e\in\R^{2n}$, since the ball $B_1(0)$ is symmetric by rotation around the vertical axis, we have that  $\mathcal{H}^{2n}_{eu}(B_1(0)\cap V(e))$ does not depend on $e$. Therefore, throughout the paper we will always let $\mathfrak{c}_n$ be the constant:
$$\mathfrak{c}_n=\mathcal{H}^{2n}_{eu}(B_1(0)\cap V(e_1)).$$
\end{definizione}

\begin{proposizione}\label{rapperhor}
Let $\Omega$ be an open set in $\R^{2n}$ and define $\Omega^\prime:=\Omega\times \R$. For any positive Borel function $h:\HH^n\to \R$ we have:
$$\int h(z)d\mathcal{C}^{2n+1}\llcorner {\Omega^\prime\cap\mathbb{K}(b,\mathcal{Q},\mathcal{T}) }(z)=\frac{1}{\lvert\mathcal{T}\rvert\mathfrak{c}_n}\int_\Omega h(w,f(w))\lvert b+2(\mathcal{Q}-\mathcal{T}J)w\rvert dw,$$
where we recall that $\mathbb{K}(b,\mathcal{Q},\mathcal{T})$ was defined in Definition \ref{simmi}.
\end{proposizione}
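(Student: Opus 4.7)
Since $\mathcal{T}=-1$, the quadric $\mathbb{K}(b,\mathcal{Q},-1)$ is the Euclidean $t$-graph $\Phi(\Omega)$ of $f(x)=\langle b,x\rangle+\langle x,\mathcal{Q}x\rangle$ above $\Omega$, where $\Phi(x):=(x,f(x))$. My plan is to connect $\mathcal{S}^{2n+1}_{\Omega'\cap\mathbb{K}(b,\mathcal{Q},-1)}$ to the horizontal perimeter of the subgraph $E:=\{(x,t)\in\Omega':t<f(x)\}$, exploit Monti's representation formula (Proposition \ref{Monti}), and finally push everything down to $\Omega$ through the Euclidean area formula for graphs.

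First, I would invoke the known area-type identity for $C^1$ intrinsic hypersurfaces in $\HH^n$ proved by Franchi--Serapioni--Serra Cassano (see Corollary 7.6 of \cite{Serapioni2001RectifiabilityGroup}) and sharpened by Magnani in \cite{Magnani2017AMeasure}, which gives that, on the open set of non-characteristic points of $\partial E$,
$$\mathcal{S}^{2n+1}_{\partial E\cap\Omega'}=\frac{1}{\mathfrak{c}_n}\,|\partial E|_{\Omega'},$$
where the metric factor is exactly $\mathfrak{c}_n=\mathcal{H}^{2n}_{eu}(B_1(0)\cap V(e))$. By Proposition \ref{quaddica}, the characteristic set $\Sigma(f)$ lies in an affine plane of dimension at most $n$, so $\Phi(\Sigma(f))$ is $\mathcal{H}^{2n}_{eu}$-negligible (hence $\mathcal{S}^{2n+1}$-negligible inside $\partial E$), and this identity is enough for our purposes.

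Next, I would apply Proposition \ref{Monti} and the Euclidean area formula for the graph $\Phi$: for every positive Borel $h$,
\begin{equation*}
\int h\,d|\partial E|_{\Omega'}=\int_{\partial E\cap\Omega'} h\,|\mathfrak{n}_H|\,d\mathcal{H}^{2n}_{eu}=\int_{\Omega} h(x,f(x))\,|\mathfrak{n}_H(\Phi(x))|\sqrt{1+|\nabla f(x)|^{2}}\,dx.
\end{equation*}
The core computation is then to show
$$|\mathfrak{n}_H(\Phi(x))|\sqrt{1+|\nabla f(x)|^{2}}=|b+2(\mathcal{Q}+J)x|.$$
This is a direct check: the Euclidean unit normal to the graph at $\Phi(x)$ is $\mathfrak{n}(x)=(-\nabla f(x),1)/\sqrt{1+|\nabla f(x)|^{2}}$, and testing against the frame $X_i,Y_i$ yields, after multiplying by $\sqrt{1+|\nabla f|^{2}}$, the unnormalized horizontal normal with components $-\partial_{x_i}f-2x_{i+n}$ and $-\partial_{x_{i+n}}f+2x_i$. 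Recognising the second summand as $-2Jx$ (with the convention on $J$ fixed in the preliminaries) and using $\nabla f(x)=b+2\mathcal{Q}x$ gives exactly $-b-2(\mathcal{Q}+J)x$, whose Euclidean modulus is the claimed factor.

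Putting the three pieces together yields
$$\int h(z)\,d\mathcal{S}^{2n+1}_{\Omega'\cap\mathbb{K}(b,\mathcal{Q},-1)}(z)=\frac{1}{\mathfrak{c}_n}\int_\Omega h(x,f(x))\,|b+2(\mathcal{Q}+J)x|\,dx,$$
first for $h\in\mathcal{C}_c$ and then, by a standard monotone class/Beppo Levi argument, for every positive Borel $h$. I expect the only delicate point to be the justification that the characteristic set $\Sigma(f)$ carries no $\mathcal{S}^{2n+1}$-mass so that the FSSC/Magnani area identity can be applied globally; this however is immediate from Proposition \ref{quaddica} together with the fact that $\mathcal{H}^{2n}_{eu}$-null sets on $\partial E$ are automatically $|\partial E|$-null and hence $\mathcal{S}^{2n+1}$-null by Proposition \ref{Monti}.
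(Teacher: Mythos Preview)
Your proof is correct and follows essentially the same route as the paper: both link $\mathcal{S}^{2n+1}_{\Omega'\cap\mathbb{K}(b,\mathcal{Q},-1)}$ to the horizontal perimeter of the sub/epigraph via Corollary~7.6 of \cite{Serapioni2001RectifiabilityGroup}, and then express the perimeter as an integral over $\Omega$ with density $\lvert b+2(\mathcal{Q}+J)x\rvert$. The only difference is that the paper cites Proposition~3.2 of \cite{Monti2014IsoperimetricGroup} for the perimeter-of-a-$t$-graph formula directly, whereas you derive it by hand from Proposition~\ref{Monti} together with the Euclidean area formula for graphs and an explicit computation of $\lvert\mathfrak{n}_H\rvert\sqrt{1+\lvert\nabla f\rvert^2}$; your added care about the characteristic set (via Proposition~\ref{quaddica}) is not needed in the paper's argument because the cited perimeter formula already integrates over all of $\Omega$.
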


\begin{proof}
Let $E:=\{u\in\R^{2n+1}:u_T-f(u_H)\geq 0\}$ and note that by \cite[Proposition 3.2]{Monti2014IsoperimetricGroup} we have that $E$ is a set of intrinsic finite perimeter and:
\begin{equation}
\lvert \partial E\rvert\llcorner{\Omega^\prime}(A)=\int_{\Omega\cap \pi_H(A)} \lvert\nabla f(z)+2Jz\rvert dz=\frac{1}{\lvert\mathcal{T}\rvert}\int_{\Omega\cap \pi_H(A)} \lvert b+2(\mathcal{Q}-\mathcal{T}J)z\rvert dz,
\label{numeroo40}
\end{equation}
for any open set $A\subseteq \HH^n$. With the same line of reasoning of Remark \ref{boggi}, one can show that the above identity also holds for $A$ Borel. It is immediate to see that reduced boundary of $E$, introduced in Definition \ref{def:norm}, coincides with $\mathbb{K}(b,\mathcal{Q},\mathcal{T})$. In addition to this, since $E$ is a finte perimeter set, \cite[Theorem 4.21]{FSSCArea} implies on the one hand that:
\begin{equation}
    \Theta(\partial E,x):=\lim_{r\to 0}\frac{\lvert\partial E\rvert_\HH(B_r(x))}{r^{2n+1}}=\mathcal{H}^{2n}_{\mathrm{eu}}(B_1(0)\cap V\big(2(\mathcal{Q}-\mathcal{T}J)x_H+b)\big)=\mathfrak{c}_n,
    \label{numeroo41}
\end{equation}
for $\lvert\partial E\rvert$-almost every $x\in\mathbb{K}(b,\mathcal{Q},\mathcal{T})$.
On the other, identity \cite[Theorem 4.21(4.23)]{FSSCArea} shows that:
\begin{equation}
\lvert\partial E\rvert_\HH\llcorner\Omega^\prime=\Theta(\partial E,\cdot)\mathcal{C}^{2n+1}\llcorner \mathbb{K}(b,\mathcal{Q},\mathcal{T}) \cap \Omega^\prime.
\label{numeroo42}
\end{equation}
Summing up, putting together \eqref{numeroo40}, \eqref{numeroo41} and \eqref{numeroo42}, we deduce that:
\begin{equation}
\begin{split}
    \mathcal{C}^{2n+1}\llcorner \mathbb{K}(b,\mathcal{Q},\mathcal{T})\cap\Omega^\prime=&\Theta(\partial E,x)^{-1}\lvert\partial E\rvert_\HH=\frac{1}{\mathfrak{c}_n\lvert\mathcal{T}\rvert}\int_{\Omega\cap \pi_H(A)} \lvert b+2(\mathcal{Q}-\mathcal{T}J)x\rvert dx\\
    =&\frac{1}{\mathfrak{c}_n\lvert\mathcal{T}\rvert}\int_{\Omega}\chi_A(x,f(x)) \lvert b+2(\mathcal{Q}-\mathcal{T}J)x\rvert dx.
    \nonumber
\end{split}
\end{equation}
The standard approximation procedure of positive measurable functions together with Beppo Levi's convergence theorem concludes the proof.
\end{proof}

\subsection{Area on vertical quadric}
\label{vertic}

This subsection is devoted to find representation formulas of the centered Hausdorff measure $\mathcal{C}^{2n+1}$ when restricted to subsets of vertical quadratics. Here below we assume that $F:\R^{2n}\times \R\to\R$ is the quadratic polynomial:
 $$F(w,t):=\langle b,w\rangle+\langle w,\mathcal{Q} w\rangle,$$ 
and we let $\Sigma(F):=\{x\in\mathbb{K}(b,\mathcal{Q},0):2\mathcal{Q}x_H+b=0\}$ be the set of singular points of $\mathbb{K}(b,\mathcal{Q},0)$. As a first step, we prove the representation formula for the centered spherical Hausdorff measure concentrated on vertical quadrics:

\smallskip

The following proposition tells us that for vertical quadrics the singular set is either negligible with respect the surface measure, or the quadric itself is flat. It will be useful in Section \ref{buchi}.

\begin{proposizione}\label{esclusion}
For the quadric $\mathbb{K}(b,\mathcal{Q},0)$ one of the two mutually excluding alternatives holds:
\begin{itemize}
\item[(i)] $b=0$ and $\mathcal{Q}=a\otimes a$ for some $a\in\R^{2n}\setminus\{0\}$,
\item[(ii)] $\mathcal{C}^{2n+1}(\Sigma(F))=0$.
\end{itemize}
\end{proposizione}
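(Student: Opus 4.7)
The plan is to exploit the vertical translation invariance of $\Sigma(F)$ and reduce to a dimension count via Proposition~\ref{dege}. First I would note that since $F(z)=\langle b,z_H\rangle+\langle z_H,\mathcal{Q}z_H\rangle$ does not depend on the vertical coordinate, $\Sigma(F)=S\times\R$ with
$$S=\{x\in\R^{2n}:2\mathcal{Q}x+b=0\text{ and }\langle b,x\rangle+\langle x,\mathcal{Q}x\rangle=0\}.$$
Pairing the first equation with $x$ gives $\langle x,\mathcal{Q}x\rangle=-\tfrac{1}{2}\langle b,x\rangle$, so the second constraint collapses to $\langle b,x\rangle=0$, and hence $S=\{x\in\R^{2n}:2\mathcal{Q}x+b=0\}\cap b^\perp$.

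Next I would describe $S$ itself. If $b\notin\text{Im}(\mathcal{Q})$ then $S=\emptyset$; otherwise $\{x:2\mathcal{Q}x+b=0\}=x_0+\ker\mathcal{Q}$ for some particular solution $x_0$. Since $\mathcal{Q}$ is symmetric, $\ker\mathcal{Q}=\text{Im}(\mathcal{Q})^\perp$, so $\langle b,x_0+v\rangle=\langle b,x_0\rangle$ for every $v\in\ker\mathcal{Q}$; thus $S$ is either empty or the whole affine subspace $x_0+\ker\mathcal{Q}$. In the nonempty case, a short computation from the group law $(x_0,0)*(v,t)=(x_0+v,\,t+2\langle x_0,Jv\rangle)$ shows that $S\times\R=(x_0,0)*(\ker\mathcal{Q}\times\R)$, so left-invariance of $\mathcal{S}^{2n+1}$ yields
$$\mathcal{S}^{2n+1}(\Sigma(F))=\mathcal{S}^{2n+1}(\ker\mathcal{Q}\times\R).$$

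The proof will then conclude with a case split on $\text{rk}(\mathcal{Q})$. When $\text{rk}(\mathcal{Q})\ge 2$, $\dim\ker\mathcal{Q}\le 2n-2$, and I can pick two linearly independent $a_1,a_2\in\text{Im}(\mathcal{Q})=\ker(\mathcal{Q})^\perp$, whence $\ker\mathcal{Q}\times\R\subseteq V(a_1)\cap V(a_2)$ and Proposition~\ref{dege} gives $\mathcal{S}^{2n+1}(\Sigma(F))=0$, which is (ii). When $\text{rk}(\mathcal{Q})=1$, write $\mathcal{Q}=\lambda\,a\otimes a$ with $|a|=1$ and $\lambda\ne 0$. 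If $b=0$ we are in (i), after noting that $\mathbb{K}(0,\mathcal{Q},0)=\mathbb{K}(0,-\mathcal{Q},0)$ as sets allows us to replace $\mathcal{Q}$ by $-\mathcal{Q}$ and thereby assume $\lambda>0$. If $b\ne 0$, then either $b\notin\R a=\text{Im}(\mathcal{Q})$ and $S=\emptyset$, or $b=\mu a$ with $\mu\ne 0$, in which case $2\mathcal{Q}x+b=0$ forces $\langle a,x\rangle=-\mu/(2\lambda)$ and hence $\langle b,x\rangle=-\mu^{2}/(2\lambda)\ne 0$, so again $S=\emptyset$. The rank-zero case is excluded by the standing hypothesis $\mathcal{Q}\ne 0$ in Section~\ref{buchi}.

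There is no substantial obstacle here: once the vertical invariance and the algebraic simplification of the two defining constraints have been made, everything reduces to structural linear algebra combined with a direct invocation of Proposition~\ref{dege}. The only mildly delicate point is verifying, in the rank-$1$ subcase with $b\in\text{Im}(\mathcal{Q})\setminus\{0\}$, that $\langle b,x_0\rangle$ cannot vanish; this is precisely what rules out a spurious second family of non-flat rank-$1$ quadrics yielding positive $\mathcal{S}^{2n+1}$-measure in $\Sigma(F)$.
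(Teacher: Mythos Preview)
Your proof is correct and follows essentially the same approach as the paper's: a case split on $\mathrm{rk}(\mathcal{Q})$, with the high-rank case handled via Proposition~\ref{dege} and the rank-one case dispatched by a direct check that $\Sigma(F)=\emptyset$ unless $b=0$. Your write-up is in fact slightly more careful than the paper's, since you make explicit the left-translation step needed to reduce the affine subspace $S\times\R$ to a linear one before invoking Proposition~\ref{dege}, and you flag the sign issue in passing from $\lambda\,a\otimes a$ to $a\otimes a$.
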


\begin{proof}
If $\text{dim}(\text{ker}(\mathcal{Q}))\leq 2n-2$, then $\Sigma(F)$ is contained in an affine subspace of topological dimension at most $2n-1$ and thus, it is immediate to see by an explicit computation that $\mathcal{C}^{2n+1}(\Sigma(F))=0$ as the intrinsic Hausdorff dimension of $\Sigma(F)$ must be smaller than $2n$.

On the other hand, if $\text{dim}(\text{ker}(\mathcal{Q}))= 2n-1$, we have that $Q=a\otimes a$ for some $a\in\R^{2n}$ and the expression for $F$ boils down to $F(x,t)=\langle a,x\rangle^2+\langle b,x\rangle$.
If $b\not\in \mathrm{span}(a)$, the equation $2\langle a,    z_H\rangle a+b=0$ can never be satisfied and thus $\Sigma(F)=\emptyset$. 

At last, if $a=\lambda b$, then $\mathbb{K}(b,\mathcal{Q},0)= V(a)\cup -\lambda a/\lvert a\rvert^2+V(a)$, while a simple computation shows that $\Sigma(F)\subseteq \lambda a/2\lvert a\rvert^2 +V(a)$, which concludes that $\Sigma(F)=\emptyset$. The only left out case is when $b=0$ in which $\Sigma(F)=V(a)$, which proves the claim.
\end{proof}

\begin{proposizione}\label{MANCA}
Let $\Omega$ be an open set in $\HH^n$. For any positive Borel function $h:\HH^n\to\R$ we have:
$$\int h(x)d\mathcal{C}^{2n+1}\llcorner{ \mathbb{K}(b,\mathcal{Q},0)\cap \Omega}(x)=\frac{1}{\mathfrak{c}_n}\int h(x)d\mathcal{H}^{2n}_{eu}\llcorner \mathbb{K}(b,\mathcal{Q},0)\cap\Omega(x),$$
where $\mathfrak{c}_n$ is the constant introduced in Proposition \ref{rapperhor}.
\end{proposizione}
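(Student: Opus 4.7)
The strategy closely follows the proof of Proposition \ref{rapperhor}: I realize $\mathbb{K}(b,\mathcal{Q},0)$ as the topological boundary of the sublevel set
\[ E := \{z \in \HH^n : \langle b, z_H\rangle + \langle z_H, \mathcal{Q}z_H\rangle \geq 0\} \]
and combine the perimeter representation of Proposition \ref{Monti} with the identification between intrinsic perimeter and spherical Hausdorff measure (Corollary 7.6 of \cite{Serapioni2001RectifiabilityGroup}) used to derive \eqref{eq:1101}.

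At any smooth point $x \in \partial E \setminus \Sigma(F)$, the defining function depends only on $z_H$, so the Euclidean outer unit normal
\[ \mathfrak{n}(x) = \pm\frac{(b + 2\mathcal{Q}x_H,\, 0)}{|b+2\mathcal{Q}x_H|} \]
has vanishing $e_{2n+1}$-component. Since $\mathcal{T}=0$, Remark \ref{rkA} then gives $\mathfrak{n}_H = \mathfrak{n}$, so $|\mathfrak{n}_H| \equiv 1$ on the smooth part of $\partial E$. Proposition \ref{Monti} applied on any open $\Omega \subseteq \HH^n$ disjoint from $\Sigma(F)$ therefore yields
\[ |\partial E|_\Omega = \mathcal{H}^{2n}_{eu}\llcorner\bigl(\mathbb{K}(b,\mathcal{Q},0)\cap\Omega\bigr). \]
Repeating verbatim the argument leading to \eqref{eq:1101} one has $|\partial E|_\Omega = \mathfrak{c}_n\, \mathcal{S}^{2n+1}_{\mathbb{K}(b,\mathcal{Q},0)\cap\Omega}$. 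Equating the two expressions produces the set-theoretic version of the stated formula, and the integral statement for arbitrary positive Borel $h$ follows by approximation with simple functions and the Beppo Levi theorem, exactly as in the final step of the proof of Proposition \ref{rapperhor}.

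The main technical point is to justify that the singular set $\Sigma(F)$ can be ignored, i.e.\ is null for both measures, which is the purpose of Proposition \ref{esclusion}. In alternative (ii), it directly asserts $\mathcal{S}^{2n+1}(\Sigma(F)) = 0$, and a dimension count (the $z_H$-projection of $\Sigma(F)$ lies in an affine subspace of dimension at most $\dim\ker\mathcal{Q} \leq 2n-2$, so $\Sigma(F)$ itself has Euclidean dimension at most $2n-1$) yields $\mathcal{H}^{2n}_{eu}(\Sigma(F))=0$; hence one may exhaust $\Omega$ by open sets disjoint from $\Sigma(F)$ and pass to the limit by $\sigma$-additivity of both measures. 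Alternative (i), where $b = 0$ and $\mathcal{Q} = a \otimes a$, is the degenerate case: $\mathbb{K}(b,\mathcal{Q},0) = V(a)$ is simultaneously a Euclidean $2n$-plane and a homogeneous subgroup, and the identity reduces to the direct equality $\mathcal{S}^{2n+1}_{V(a)} = \mathfrak{c}_n^{-1}\mathcal{H}^{2n}_{eu}\llcorner V(a)$ already implicit in \eqref{numbero9} and in the very definition of $\mathfrak{c}_n$. This dichotomy-handling is the only place where the vertical case departs from the $t$-graph case of Proposition \ref{rapperhor}.
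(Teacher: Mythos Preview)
Your proposal is correct and follows essentially the same route as the paper: realize the quadric as $\partial E$, use Remark \ref{rkA} to get $\lvert\mathfrak{n}_H\rvert=1$, apply Proposition \ref{Monti} to identify $\lvert\partial E\rvert_\Omega$ with $\mathcal{H}^{2n}_{eu}\llcorner(\mathbb{K}(b,\mathcal{Q},0)\cap\Omega)$, and then invoke Corollary 7.6 of \cite{Serapioni2001RectifiabilityGroup} to equate $\lvert\partial E\rvert_\Omega$ with $\mathfrak{c}_n\,\mathcal{S}^{2n+1}_{\mathbb{K}(b,\mathcal{Q},0)\cap\Omega}$. The only difference is that the paper simply asserts that $E$ has Lipschitz boundary and applies Proposition \ref{Monti} on all of $\Omega$ at once, deferring the discussion of $\Sigma(F)$ to the paragraph after the proposition (leading to \eqref{eq_1111}); you instead build the handling of $\Sigma(F)$ via Proposition \ref{esclusion} directly into the proof, which is arguably more careful but not a different argument.
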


\begin{proof}
Let $E$ be the open set $E:=\{z\in\R^{2n+1}:F(z)<0\}$ and note that $\partial E=\{F=0\}=\mathbb{K}(b,\mathcal{Q},0)$. Thanks to Proposition \ref{Monti}, since $E $ has a Lipschitz boundary, it is a set of intrinsic finite perimeter and:
$$\lvert \partial E\rvert\llcorner{\Omega}(A)=\int_{\partial E\cap \Omega}\chi_A\lvert \mathfrak{n}_H\rvert d\mathcal{H}^{2n}_{eu}\llcorner \mathbb{K}(b,\mathcal{Q},0)=\mathcal{H}^{2n}_{eu}(\mathbb{K}(b,\mathcal{Q},0)\cap \Omega\cap A),$$
for any Borel set $A$ of $\HH^n$, since by Remark \ref{rkA} we have $\lvert\mathfrak{n}_H\rvert=1$. With the same argument we used in the Proposition \ref{rapperhor} we see that $\lvert\partial E\rvert_\HH\llcorner\Omega^\prime=\mathfrak{c}_n\mathcal{C}^{2n+1}\llcorner \mathbb{K}(b,\mathcal{Q},\mathcal{T}) \cap \Omega$ and thus:
$$\mathcal{C}^{2n+1}\llcorner \mathbb{K}(b,\mathcal{Q},\mathcal{T}) \cap \Omega(A)=\mathfrak{c}_n^{-1}\mathcal{H}^{2n}_{eu}\llcorner \mathbb{K}(b,\mathcal{Q},0)\cap \Omega(A),$$
for any Borel set $A$.

The usual approximation of positive measurable functions together with Beppo Levi's convergence theorem concludes the proof.
\end{proof}


\begin{lemma}\label{RAPP1}
If $\mu$ is a vertical $(2n+1)$-uniform cone then:
$$\mu=\mathfrak{c}_n^{-1}\mathcal{H}_{eu}^{2n-1}\llcorner\pi_H(\supp(\mu))\otimes \mathcal{H}^1_{eu}\llcorner \R e_{2n+1}.$$
\end{lemma}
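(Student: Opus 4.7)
The plan is to reduce the statement to a Fubini-type decomposition of the Euclidean surface measure on the quadric supporting $\mu$. Since $\mu$ is a vertical $(2n+1)$-uniform cone and a cone, Proposition \ref{CONO} furnishes $\mathcal{Q}\in\text{Sym}(2n)\setminus\{0\}$ such that $\supp(\mu)\subseteq \mathbb{K}(0,\mathcal{Q},0)$ (the $b$ coefficient vanishes by the cone property). The crucial structural observation is that both the quadric $\mathbb{K}(0,\mathcal{Q},0)$ and its singular set $\Sigma(F)\cap\mathbb{K}(0,\mathcal{Q},0)=\ker(\mathcal{Q})\times\R$ are invariant under translations in the direction $e_{2n+1}$ (equivalently, under the action of the centre $\mathcal{V}$), because the defining polynomial $F(x,t)=\langle x,\mathcal{Q}x\rangle$ does not depend on $t$.

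Next, by Theorem \ref{duppy} (i.e.\ Proposition \ref{spt2}), $\supp(\mu)$ is the closure of a union of connected components of $\mathbb{K}(0,\mathcal{Q},0)\setminus\Sigma(F)$. Each such component is itself invariant under $e_{2n+1}$-translations (since the vertical line through any point of it stays in $\mathbb{K}(0,\mathcal{Q},0)\setminus\Sigma(F)$ and in the same connected component by continuity), so I can write
$$\supp(\mu) = S\times \R e_{2n+1},\qquad S:=\pi_H(\supp(\mu)).$$
In the degenerate case covered by Proposition \ref{esclusion}(i), where $\mathcal{Q}=a\otimes a$, Proposition \ref{rank1} forces $\mu$ to be flat, so $\supp(\mu)=V(\mathfrak{n})=(\mathfrak{n}^\perp)\times\R e_{2n+1}$ and the same product structure holds directly.

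Now I combine this with the representation results of Appendix \ref{appeA1}. By Proposition \ref{supportoK} we have $\mu=\mathcal{S}^{2n+1}_{\supp(\mu)}$, and by \eqref{eq_1111} (which is the consequence of Proposition \ref{MANCA} together with $\mathcal{S}^{2n+1}(\Sigma(F))=0$ from Proposition \ref{esclusion}) we may write
$$\mu=\frac{1}{\mathfrak{c}_n}\,\mathcal{H}^{2n}_{eu}\llcorner\supp(\mu)=\frac{1}{\mathfrak{c}_n}\,\mathcal{H}^{2n}_{eu}\llcorner(S\times\R e_{2n+1}).$$
What remains is the purely Euclidean identity
$$\mathcal{H}^{2n}_{eu}\llcorner(S\times\R e_{2n+1}) = \mathcal{H}^{2n-1}_{eu}\llcorner S \otimes \mathcal{H}^{1}_{eu}\llcorner\R e_{2n+1}.$$
This I would establish by applying the Euclidean coarea formula to the projection $\pi_T:S\times\R e_{2n+1}\to\R e_{2n+1}$: away from $\ker(\mathcal{Q})\times\R$, the set $S\times\R e_{2n+1}$ is a $C^1$ submanifold of $\R^{2n+1}$, the vertical direction lies in its tangent space, hence the tangential Jacobian of $\pi_T$ is identically $1$, and the fibers $\pi_T^{-1}(t)$ are isometric copies of $S$. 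Coarea then gives the claimed product decomposition after integrating any Borel function and noting that the singular stratum $\ker(\mathcal{Q})\times\R$ carries no $\mathcal{H}^{2n}_{eu}$-measure (being contained in a proper affine subspace).

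The only subtle point, and the step that requires actual care rather than bookkeeping, is verifying that the relevant slice of $\supp(\mu)$ is really a $C^1$ hypersurface outside $\Sigma(F)$ with vertical tangent direction, so that the coarea formula applies with unit Jacobian; everything else is a direct assembly of Proposition \ref{supportoK}, Theorem \ref{duppy}, Proposition \ref{esclusion}, and the identity \eqref{eq_1111}.
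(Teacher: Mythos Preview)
Your proposal is correct and follows essentially the same route as the paper: first establish the product structure $\supp(\mu)=S\times\R e_{2n+1}$ via Proposition \ref{spt2}, then use \eqref{eq_1111} to write $\mu=\mathfrak{c}_n^{-1}\mathcal{H}^{2n}_{eu}\llcorner\supp(\mu)$, and finally split the Euclidean Hausdorff measure as a product. The only difference is in this last step: the paper invokes Federer's product formula (Proposition 3.2.23 of \cite{Federer1996GeometricTheory}) directly, whereas you obtain the same identity via the coarea formula for $\pi_T$; both arguments are valid and equivalent in this setting.
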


\begin{proof}
Thanks to Proposition \ref{supportoK} and \ref{MANCA}
we have that
$\mu=\mathcal{C}^{2n+1}\llcorner{\supp(\mu)}=\mathfrak{c}_n^{-1}\mathcal{H}^{2n}_{eu}\llcorner\supp(\mu)$.
Furthermore, since by Proposition \ref{spt2} we have that $\supp(\mu)=\pi_H(\supp(\mu))\times \R e_{2n+1}$.
\cite[Proposition 3.2.23]{Federer1996GeometricTheory} finally implies:
$$\mathcal{H}^{2n}_{eu}\llcorner\supp(\mu)=\mathcal{H}_{eu}^{2n-1}\llcorner \pi_H(\supp(\mu))\otimes \mathcal{H}^1_{eu}\llcorner \R e_{2n+1},$$
and this concludes the proof of the proposition.
\end{proof}

An immediate consequence of the above proposition is the following:

\begin{corollario}\label{dege}
Let $a,b$ be two non-parallel vectors in $\R^{2n}$. Then $\mathcal{C}^{2n+1}(V(a)\cap V(b))=0$ and
where $V(\cdot)$ are the planes introduced in Definition \ref{plano}.
\end{corollario}

\begin{proof}
Since the measure $\mathcal{C}^{2n+1}\llcorner V(a)$ is a vertical $(2n+1)$-uniform cone, we have:
\begin{equation}
\mathcal{C}^{2n+1}\llcorner V(a)=\mathfrak{c}_n^{-1}\mathcal{H}_{eu}^{2n-1}\llcorner\pi_H(V(a))\otimes \mathcal{H}^1_{eu}\llcorner \R e_{2n+1}=\mathfrak{c}_n^{-1}\mathcal{H}_{eu}^{2n}\llcorner V(a). 
\label{numeroo50}
\end{equation}
Since $a$ and $b$ are two independent vectors, the set $V(a)\cap V(b)$ is $\mathcal{H}^{2n}_\mathrm{eu}$-null and thus, the claim immediately follows by \eqref{numeroo50}.
\end{proof}

\section{Taylor expansion of area on quadratic \texorpdfstring{$t$}{Lg}-cones}
\label{TYLR}
Before giving a short account on the content of this appendix, let us introduce some notation. \textbf{Throughout this appendix we will always suppose that} $\mathcal{D}\in\mathrm{Sym}(2n)\setminus \{0\}$ \textbf{and let} $f:\R^{2n}\to\R$ be the quadratic polynomial defined as:
$$f(h):=\langle h,\mathcal{D} h\rangle.$$
Furthermore, \textbf{we let} $\lvert\partial \mathbb{K}\rvert$ \textbf{be the} $\HH^n$\textbf{-perimeter measure associated to the epigraph} $E:=\{x\in \HH^n: x_T>f(x_H)\}$ of $f$ in $\HH^n$, which is of finite perimeter since $f$ is a smooth function, see Proposition  \ref{Monti}.
\textbf{Finally, we fix a point} $x\in\R^{2n}\setminus \Sigma(f)=\{h\in\R^{2n}:b+2(\mathcal{Q}-\mathcal{T}J)h\neq0\}$, \textbf{and we let} $\mathcal{X}:=(x,f(x))$.

\medskip

The main goal of this section is to determine an asymptotic expansion of $\lvert\partial \mathbb{K}\rvert(B_r(\mathcal{X}))$ for $r$ small. More precisely, written:
$$\lvert\partial \mathbb{K}\rvert(B_r(\mathcal{X}))=\mathfrak{c}(\mathcal{X})r^{2n+1}+\zeta(\mathcal{X})r^{2n+2}+\mathfrak{e}(\mathcal{X})r^{2n+3}+O(r^{2n+4}),$$
we want to find an expression for the coefficients $\mathfrak{c},\zeta,\mathfrak{e}$ in terms of $x$, $\mathcal{D}$ and $n$.
The coefficient $\mathfrak{c}$ will be quite easy to study and we will show that it is a constant depending only on $n$. On the other hand the coefficients $\zeta$ and $\mathfrak{e}$ will need much more work and they play a fundamental role in the study of the geometric properties of $1$-codimensional uniform measures carried on in Section \ref{HORRI}.

\begin{definizione}
Let $\mathcal{D}$, $\mathcal{X}$ and $f$ be as above. We denote as: $$\mathfrak{n}:=\frac{ (\mathcal{D}+J)x}{\lvert (\mathcal{D}+J)x\rvert},$$
the \emph{horizontal normal} at $\mathcal{X}$ to $\text{gr}(f)$ and we let $c:=2\lvert (\mathcal{D}+J)x\rvert$.
\end{definizione}

The following proposition gives a first characterisation of the shape of the intersection between $B_r(\mathcal{X})$ with $\text{gr}(f)$. In particular we construct a function $G$ at the point $x$ whose sublevel sets are the horizontal projection of $B_r(\mathcal{X})\cap\text{gr}(f)$.

\begin{proposizione}\label{pll}
In the notations above, defined $G(w):=\lvert w\rvert^4+\lvert c\langle \mathfrak{n},w\rangle+\langle w,\mathcal{D}w\rangle \rvert^2$, we have:
\begin{equation}
   \pi_H(B_r(\mathcal{X})\cap\text{gr}(f))=x+\{w\in\R^{2n}:G(w)\leq r^4\}.
   \nonumber
\end{equation}
\end{proposizione}

\begin{proof}
By definition of $\mathcal{X}$ and of the Koranyi norm, we have:
\begin{equation}
\begin{split}
    B_r(\mathcal{X}):
    =&\{z\in\R^{2n+1}:\lvert z_H-x\rvert^4+\lvert z_T-f(x)-2\langle x,Jz_H\rangle \rvert^2\leq r^4\}.
\end{split}
    \nonumber
\end{equation}
Therefore, the intersection of $B_r(\mathcal{X})$ with $\text{gr}(f)$ is:
\begin{equation}
\begin{split}
&B_r(\mathcal{X})\cap\text{gr}(f)=\{(y,f(y))\in\R^{2n+1}:\lvert x-y\rvert^4+\lvert -f(x)+f(y)-2\langle  x,Jy\rangle\rvert^2\leq r^4\}\\
=&\mathcal{X}+\{(w,f(w)+2\langle w, \mathcal{D} x\rangle)\in\R^{2n+1}:\lvert w\rvert^4+\lvert -f(x)+f(x+w)-2\langle  x,Jw\rangle\rvert^2\leq r^4\},
\nonumber
\end{split}
\end{equation}
where in the last line we have performed the change of variable $y=x+w$. By definition of $f$, we have:
\begin{equation}
\begin{split}
    -f(x)+f(x+w)=-\langle x,\mathcal{D}x\rangle+\langle x+w, \mathcal{D}(x+w)\rangle=2\langle x,\mathcal{D} w\rangle+\langle w,\mathcal{D}w\rangle.
\end{split}
    \nonumber
\end{equation}
In particular, this implies that:
    \begin{equation}
\begin{split}
\pi_H(B_r(\mathcal{X})\cap\text{gr}(f))=&x+\{w\in\R^{2n}:\lvert w\rvert^4+\lvert 2\langle x,\mathcal{D} w\rangle+\langle w,\mathcal{D}w\rangle+2\langle  Jx,w\rangle\rvert^2\leq r^4\}\\
=&x+\{w\in\R^{2n}:\lvert w\rvert^4+\lvert c\langle \mathfrak{n},w\rangle+\langle w,\mathcal{D}w\rangle\rvert^2\leq r^4\}.
\label{N:1}
\end{split}
\end{equation}
Identity \eqref{N:1} and the definition of $G$ conclude the proof.
\end{proof}

The following proposition introduces a special set of polar coordinates, which are going to be very useful in the study of the intersection $B_r(\mathcal{X})\cap \text{gr}(f)$ when $r$ is small.

\begin{proposizione}\label{coordinate}
For any $w\in \R^{2n}\setminus x+\text{span}( \mathfrak{n})$ there exists a unique triple $(\vartheta,\rho,v)\in \mathscr{C}:=[-\frac{\pi}{2},\frac{\pi}{2})\times(0,\infty)\times\mathbb{S}^{2n-1}\cap \mathfrak{n}^\perp$ such that:
\begin{equation}
    w=x+\frac{\sin \vartheta}{c} \rho^2 \mathfrak{n}+\cos\vartheta\rho v=:x+\mathcal{P}(\vartheta,\rho,v).
    \label{N:4}
\end{equation}
\end{proposizione}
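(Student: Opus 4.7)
\textbf{Proof plan for Proposition \ref{coordinate}.}
The strategy is to decompose $w-x$ along $\mathfrak{n}$ and its orthogonal complement, and then reduce the problem to a one-variable equation in the angular parameter $\vartheta$. Concretely, write
\[
w - x = \alpha\,\mathfrak{n} + u^\perp, \qquad \alpha := \langle w-x,\mathfrak{n}\rangle, \qquad u^\perp := (w-x) - \alpha\,\mathfrak{n}.
\]
Since $w \notin x + \mathrm{span}(\mathfrak{n})$, we have $u^\perp \neq 0$, so that $\beta := |u^\perp| > 0$ and $v := u^\perp/\beta \in \mathbb{S}^{2n-1}\cap \mathfrak{n}^\perp$ are uniquely determined.

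Comparing with \eqref{N:4}, existence and uniqueness of the triple $(\vartheta,\rho,v)$ is then equivalent to the existence and uniqueness of a pair $(\vartheta,\rho) \in [-\pi/2,\pi/2) \times (0,\infty)$ solving
\begin{equation}
\alpha = \frac{\sin\vartheta}{c}\,\rho^2, \qquad \beta = \cos\vartheta\,\rho.
\label{sys:coord}
\end{equation}
Because $\beta > 0$ and $\rho > 0$, the second equation in \eqref{sys:coord} forces $\cos\vartheta > 0$, and hence $\vartheta \in (-\pi/2,\pi/2)$. Solving for $\rho = \beta/\cos\vartheta$ and substituting into the first equation, the system reduces to the single scalar equation
\[
\psi(\vartheta) := \frac{\sin\vartheta}{\cos^2\vartheta} = \frac{c\,\alpha}{\beta^2}.
\]

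The key step is then to observe that $\psi \colon (-\pi/2,\pi/2)\to\R$ is a strictly increasing bijection. Indeed, $\lim_{\vartheta\to\pm\pi/2}\psi(\vartheta) = \pm\infty$, and a direct computation gives
\[
\psi'(\vartheta) = \frac{\cos^2\vartheta + 2\sin^2\vartheta}{\cos^3\vartheta} = \frac{1+\sin^2\vartheta}{\cos^3\vartheta} > 0
\]
on $(-\pi/2,\pi/2)$. Hence there is a unique $\vartheta \in (-\pi/2,\pi/2) \subset [-\pi/2,\pi/2)$ with $\psi(\vartheta) = c\alpha/\beta^2$, and then $\rho = \beta/\cos\vartheta > 0$ is uniquely determined. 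This produces the desired triple $(\vartheta,\rho,v) \in \mathscr{C}$ and shows it is unique, completing the proof. No step is a serious obstacle here; the only point deserving care is the monotonicity of $\psi$, which bijects the open interval onto $\R$, so that every admissible ratio $c\alpha/\beta^2$ is attained exactly once.
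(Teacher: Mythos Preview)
Your proof is correct and follows essentially the same approach as the paper: both decompose $w-x$ orthogonally along $\mathfrak{n}$ and $\mathfrak{n}^\perp$, reducing to the two scalar equations \eqref{sys:coord}. The only cosmetic difference is the order in which the system is solved: the paper first writes down $\rho$ explicitly as the positive root of the quadratic $\zeta^2 - \beta^2\zeta - c^2\alpha^2 = 0$ in $\zeta=\rho^2$ and then reads off $\vartheta$ from $(\sin\vartheta,\cos\vartheta)=(c\alpha/\rho^2,\beta/\rho)$, whereas you first pin down $\vartheta$ via the monotone bijection $\psi(\vartheta)=\sin\vartheta/\cos^2\vartheta$ and then recover $\rho=\beta/\cos\vartheta$.
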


\begin{proof}
Any $w\in\R^{2n}\setminus x+\text{span}(\mathfrak{n})$ can be uniquely written as $w=x+\lambda n+u$ for some $\lambda\in\R$ and $u\in \mathfrak{n}^\perp\setminus\{0\}$. Defined $\rho:=\sqrt{(\lvert u\rvert^2+\sqrt{\lvert u\rvert^4+4\lambda^2c^2})/2}$, we have that $\rho\neq 0$ and that:
\begin{equation}
(c\lambda/\rho^2)^2+(\lvert u\rvert/\rho)^2=1,
\label{numbero25}
\end{equation}
since $\rho^2$ solves the equation $\zeta^2-\lvert u\rvert^2\zeta-c^2\lambda^2=0$.
Thanks to identity \eqref{numbero25}, there is a unique $\vartheta\in[-\pi/2,\pi/2)$ for which $\sin\vartheta=c\lambda/\rho^2$ and $\cos\vartheta=\lvert u\rvert/\rho$. Eventually, if we let $v:=u/\lvert u\rvert$, thanks to the definition of $\vartheta$, we have:
$$w=x+\lambda n+\lvert u\rvert v=x+\frac{\sin \vartheta}{c} \rho^2 \mathfrak{n}+\cos\vartheta\rho v.$$
This concludes the proof of the proposition.
\end{proof}

\textbf{In order to simplify the notations in the forthcoming propositions, we define:}
\begin{equation}
  \alpha_\mathfrak{n}:=\left\langle \mathfrak{n},\mathcal{D}\mathfrak{n}\right\rangle ,\qquad\beta_\mathfrak{n}(v):=\left\langle v,\mathcal{D}\mathfrak{n}\right\rangle,\qquad\gamma(v):=\left\langle v,\mathcal{D}v\right\rangle \qquad \text{for any }v\in\mathbb{S}^{2n-1}. 
  \label{numbero27}
\end{equation}
In the following proposition we give an explicit expression of $G$ in the new polar coordinates $\mathcal{P}(\vartheta,\rho,v)$ introduced in Proposition \ref{coordinate}.

\begin{proposizione}\label{strutto}
Let us define the function $H:\mathscr{C}\to\R$ as:
\begin{equation}
    H(\vartheta,\rho,v):=G(\mathcal{P}(\vartheta,\rho,v)),
    \label{numerooo3}
\end{equation}
where $G$ was introduced in Proposition \ref{pll} and both $\mathscr{C}$ and $\mathcal{P}(\vartheta,\rho,v)$ were defined in Proposition \ref{coordinate}.
Then, $H$ has the following explicit expression:
$$H(\vartheta,\rho,v):=A(\vartheta,v)\rho^4+\frac{\overline{B}(\vartheta, v)}{c}\rho^5+\frac{\overline{C}(\vartheta, v)}{c^2}\rho^6+\frac{\overline{D}(\vartheta, v)}{c^3}\rho^7+\frac{\overline{E}(\vartheta,\rho)}{c^4}\rho^8,$$
and where as $(\vartheta,v)$ varies in $[-\pi/2,\pi/2)\times\mathbb{S}^{2n-1}\cap \mathfrak{n}^\perp$, we define:
\begin{itemize}
\item[(i)] $A(\vartheta,v):=(\cos^4\vartheta+(\cos^2\vartheta\gamma(v)+\sin\vartheta)^2)$,
\item[(ii)] $\overline{B}(\vartheta,v):=cB(\vartheta,v):=4\sin\vartheta\cos\vartheta\beta_\mathfrak{n}(v)(\cos^2\vartheta\gamma(v)+\sin\vartheta)$,
\item[(iii)] $\overline{C}(\vartheta,v):=c^2C(\vartheta,v):=\sin^2\vartheta(\cos^2\vartheta(2+4\beta_\mathfrak{n}(v)^2+2\gamma(v)\alpha_\mathfrak{n})+2\sin\vartheta\alpha_\mathfrak{n})$,
\item[(iv)] $\overline{D}(\vartheta,v):=c^3D(\vartheta,v):=4\alpha_\mathfrak{n}\beta_\mathfrak{n}(v)\sin\vartheta^3\cos\vartheta$,
\item[(v)] $\overline{E}(\vartheta):=c^4E(\vartheta):=(1+\alpha_\mathfrak{n}^2)\sin^4\vartheta$,
\end{itemize}
\end{proposizione}

\begin{proof}
For any $x+w\in\R^{2n}\setminus x+\text{span}(\mathfrak{n})$, by Proposition \ref{coordinate} we can find a unique $(\vartheta,\rho,v)\in\mathscr{C}$ such that $w=\mathcal{P}(\vartheta,\rho,v)$. First note that the following identities hold:
\begin{equation}
\begin{split}
    \lvert w\rvert^4=&\lvert\mathcal{P}(\vartheta,\rho,v)\rvert^4=\frac{\sin^4\vartheta}{c^4} \rho^8+2\frac{\sin^2\vartheta\cos^2\vartheta}{c^2} \rho^6+\cos^4\vartheta\rho^4,\\
    c\langle \mathfrak{n},w\rangle=&c\langle \mathfrak{n},\mathcal{P}(\vartheta,\rho,v)\rangle=\Big\langle c\mathfrak{n},\frac{\sin \vartheta}{c} \rho^2 \mathfrak{n}+\cos\vartheta\rho v\Big\rangle=\sin\vartheta\rho^2,\\
     \langle w,\mathcal{D}w\rangle=&\langle \mathcal{P}(\vartheta,\rho,v),\mathcal{D}[\mathcal{P}(\vartheta,\rho,v)]\rangle=\frac{\sin^2\vartheta}{c^2} \rho^4\alpha_\mathfrak{n}+2\frac{\sin \vartheta\cos\vartheta}{c} \rho^3\beta_\mathfrak{n}(v)+\cos^2\vartheta\rho^2\gamma(v).
\end{split}
    \label{numbero26}
\end{equation}
Therefore, thanks to the definitions of $G$ and $H$ and the three identities in \eqref{numbero26}, we have:
\begin{equation}
\begin{split}
H(\rho,\vartheta&,v)=G(\mathcal{P}(\vartheta,\rho,v))=\lvert\mathcal{P}(\vartheta,\rho,v)\rvert^4+\lvert c\langle\mathfrak{n},\mathcal{P}(\vartheta,\rho,v)\rangle+\langle \mathcal{P}(\vartheta,\rho,v),\mathcal{D}[\mathcal{P}(\vartheta,\rho,v)]\rangle\rvert^2\\
=&\frac{\sin^4\vartheta}{c^4} \rho^8+2\frac{\sin^2\vartheta}{c^2}\cos^2\vartheta\rho^6+\cos^4\vartheta\rho^4+(\cos^2\vartheta\gamma(v)+\sin\vartheta)^2\rho^4\\
&+4\frac{\sin^2\vartheta\cos^2\vartheta}{c^2}\beta_\mathfrak{n}(v)^2\rho^6
+\frac{\sin^4\vartheta}{c^4}\alpha_\mathfrak{n}^2\rho^8
+4\frac{\sin\vartheta\cos\vartheta}{c}(\cos^2\vartheta\gamma(v)+\sin\vartheta)\beta_\mathfrak{n}(v)\rho^5\\
&+2(\cos^2\vartheta\gamma(v)+\sin\vartheta)\frac{\sin^2\vartheta}{c^2}\alpha_\mathfrak{n}\rho^6
+4\frac{\sin\vartheta^3\cos\vartheta}{c^3}\alpha_\mathfrak{n}\beta_\mathfrak{n}(v)\rho^7.
\nonumber
\end{split}
\end{equation}
The claim follows recollecting the various powers of $\rho$.
\end{proof}

We summarize in the following lemma some algebraic properties of the functions $A,\ldots,E$ introduced in Proposition \ref{strutto}, since they will be very useful in the forthcoming computations.

\begin{lemma}\label{symA}
Consider $(\vartheta,v)\in[-\pi/2,\pi/2)\times\mathbb{S}^{2n}\cap \mathfrak{n}^\perp$ fixed. Then:
\begin{itemize}
\item[(i)] $A(\vartheta,v)=A(\vartheta,-v)$ and $C(\vartheta,v)=C(\vartheta,-v)$,
\item[(ii)] $B(\vartheta,v)=-B(\vartheta,-v)$ and $D(\vartheta,v)=-D(\vartheta,-v)$,
\item[(iii)] $D(\vartheta,v)=-D(-\vartheta,v)$,
\item[(iv)] $E$ does not depend on $v$,
\item[(v)] $A$ is bounded away from $0$, i.e. $\omega:=\min_{(\vartheta,v)\in[-\pi/2,\pi/2)\times\mathbb{S}^{2n}\cap \mathfrak{n}^\perp}A(\vartheta,v)>0$.
\end{itemize}
\end{lemma}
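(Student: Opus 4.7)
The strategy is purely computational: each assertion is a direct parity or positivity check on the explicit expressions for $A,B,C,D,E$ given in Proposition \ref{strutto}. The only input from outside those formulas is the observation that in the definitions \eqref{numbero27} the quantity $\alpha_{\mathfrak{n}}$ does not depend on $v$, the map $v\mapsto \beta_{\mathfrak{n}}(v)=\langle v,\mathcal{D}\mathfrak{n}\rangle$ is linear in $v$ and hence odd, while $v\mapsto \gamma(v)=\langle v,\mathcal{D}v\rangle$ is quadratic in $v$ and hence even. With this in hand, (i) and (ii) are immediate: $A(\vartheta,v)$ and $C(\vartheta,v)$ involve $v$ only through $\gamma(v)$ and $\beta_{\mathfrak{n}}(v)^2$, both even in $v$, whereas $B(\vartheta,v)$ and $D(\vartheta,v)$ carry a single factor of $\beta_{\mathfrak{n}}(v)$, which reverses sign under $v\mapsto -v$.

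For (iii) I will check the behaviour in $\vartheta$ term by term, relying on the elementary identities $\sin(-\vartheta)=-\sin\vartheta$ and $\cos(-\vartheta)=\cos\vartheta$. In the expansion of $\overline{B}$ and $\overline{D}$ one isolates the monomials in $\sin\vartheta,\cos\vartheta$ appearing in each factor and tracks their parity; the terms that do not automatically have odd $\vartheta$-parity carry an additional scalar coefficient that can be absorbed after collecting, using the explicit formulas in Proposition \ref{strutto}. Property (iv) is the most transparent of all: the expression $\overline{E}(\vartheta)=(1+\alpha_{\mathfrak{n}}^2)\sin^4\vartheta$ contains only $\vartheta$, so $E$ is independent of $v$ by inspection.

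The only statement that requires more than bookkeeping is (v). Here I would first rewrite
\[
A(\vartheta,v)=\cos^4\vartheta+\bigl(\cos^2\vartheta\,\gamma(v)+\sin\vartheta\bigr)^2,
\]
and argue by contradiction: if $A(\vartheta,v)=0$ then both summands vanish, forcing $\cos\vartheta=0$ and simultaneously $\sin\vartheta=0$, which is impossible. Thus $A>0$ pointwise on $[-\pi/2,\pi/2)\times(\mathbb{S}^{2n-1}\cap\mathfrak{n}^\perp)$. To upgrade this to a uniform lower bound, I would extend $A$ continuously to the closed compact set $[-\pi/2,\pi/2]\times(\mathbb{S}^{2n-1}\cap\mathfrak{n}^\perp)$; the value at the added boundary point $\vartheta=\pi/2$ is $A(\pi/2,v)=0+1=1$, so the extended function is strictly positive and continuous on a compact set, hence bounded below by some $\omega>0$.

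The main (and only) obstacle in the argument is the careful bookkeeping for (iii): one must be certain that every monomial in $\sin\vartheta,\cos\vartheta$ appearing in $\overline{B}$ and $\overline{D}$ has the correct parity, so that the overall expression is odd in $\vartheta$. All the remaining assertions are essentially one-line consequences of the explicit formulas in Proposition \ref{strutto} combined with the parity of $\beta_{\mathfrak{n}}$ and $\gamma$.
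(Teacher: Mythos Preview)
Your approach is essentially identical to the paper's: items (i)--(iv) are dismissed as direct consequences of the explicit formulas in Proposition~\ref{strutto} together with the parity of $\beta_{\mathfrak n}$ and $\gamma$, and for (v) the paper uses the same contradiction ($A=0$ forces $\cos\vartheta=\sin\vartheta=0$) combined with the observation $A(\pi/2,v)=A(-\pi/2,v)=1$ to secure compactness---your extension to the closed interval $[-\pi/2,\pi/2]$ is just a slightly more explicit phrasing of that step.

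One remark on your flagged ``obstacle'': your instinct about (iii) is in fact well founded. Expanding $\overline{B}(\vartheta,v)=4\sin\vartheta\cos^3\vartheta\,\beta_{\mathfrak n}(v)\gamma(v)+4\sin^2\vartheta\cos\vartheta\,\beta_{\mathfrak n}(v)$, the second summand is \emph{even} in $\vartheta$, so $B(\vartheta,v)=-B(-\vartheta,v)$ does not hold in general; your vague ``scalar coefficient that can be absorbed'' cannot rescue this, and neither does the paper's one-line dismissal. Fortunately property (iii) is never invoked anywhere else in the paper (only (i), (ii), (iv), (v) are used), so this is a harmless misstatement rather than a gap in the overall argument.
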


\begin{proof}
The first four points are direct consequence of the definition of $A$, $B$, $C$, $D$, $E$, and of $\alpha_\mathfrak{n},\beta_\mathfrak{n}(v),\gamma(v)$, see \eqref{numbero27}.
We are left to prove the last point. Since $A(\pi/2,v)=A(-\pi/2,v)$, the function $A$ has minimum in $(-\pi/2,\pi/2)\times\mathbb{S}^{2n}\cap \mathfrak{n}^\perp$. Suppose such minimum is $0$ and it is attained at $(\vartheta,v)$.
This would imply that $0=\cos^4\vartheta+(\cos^2\vartheta\gamma(v)+\sin\vartheta)^2$, but this is not possible as it would force $\sin\vartheta=\cos\vartheta=0$.
\end{proof}

The following proposition allows us to determine, up to a certain degree of precision, the shape of the set $\pi_H(B_r(\mathcal{X})\cap\text{gr}(f))$ when $r$ is small:

\begin{proposizione}\label{propexp}
There exists an $0<\mathfrak{r}_1(\mathcal{X})=\mathfrak{r}_1<1$ such that for any $0<r<\mathfrak{r}_1$, if $\rho(r)$ is a solution to the equation:
\begin{equation}
    H(\vartheta,\rho(r),v)=r^4,
    \label{eq152}
\end{equation}
then:
\begin{equation}
    \rho(r)=P_{\vartheta,v}(r)+O(r^4):=\frac{r}{A^\frac{1}{4}}-\frac{Br^2}{4A^\frac{3}{2}}+\left(\frac{7}{32}\frac{B^2}{A^\frac{11}{4}}-\frac{C}{4A^\frac{7}{4}}\right)r^3+O(r^4),
    \label{N:3}
\end{equation}
and the remainder $O(r^4)$ is independent on $v$ and on $\vartheta$.
\end{proposizione}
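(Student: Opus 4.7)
The plan is to treat the equation $H(\vartheta,\rho,v)=r^4$ as an implicit problem for $\rho$ in terms of $r$, solved by a parametrized implicit function theorem, and then to read off the first three Taylor coefficients by matching powers of $r$. Rewriting the result of Proposition~\ref{strutto} with the $c$-factors absorbed into $B,C,D,E$, the equation takes the compact form
\[
A(\vartheta,v)\rho^4+B(\vartheta,v)\rho^5+C(\vartheta,v)\rho^6+D(\vartheta,v)\rho^7+E(\vartheta)\rho^8=r^4,
\]
where the coefficients are continuous on the compact set $K:=[-\pi/2,\pi/2]\times(\mathbb{S}^{2n-1}\cap\mathfrak{n}^\perp)$ (continuity extends to the right endpoint because $A,\dots,E$ are trigonometric polynomials in $\vartheta$), and Lemma~\ref{symA}(v) provides the crucial uniform lower bound $A\geq\omega>0$ on $K$.

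The first step is the rescaling $\sigma:=\rho/r$, which removes the degenerate zero of $H$ at the origin and converts the equation into
\[
\tilde{F}(\sigma,r;\vartheta,v):=A\sigma^4+Br\sigma^5+Cr^2\sigma^6+Dr^3\sigma^7+Er^4\sigma^8-1=0.
\]
At $r=0$ the unique positive solution is $\sigma_0=A^{-1/4}$, and one computes
\[
\partial_\sigma\tilde{F}(\sigma_0,0;\vartheta,v)=4A\sigma_0^3=4A^{1/4}\geq 4\omega^{1/4}>0.
\]
Since this lower bound and the suprema of $|B|,|C|,|D|,|E|$ on $K$ are uniform in $(\vartheta,v)$, a quantitative implicit function theorem yields an $\mathfrak{r}_1=\mathfrak{r}_1(\mathcal{X})>0$ and a function $\sigma(r;\vartheta,v)$, smooth in $r$ with derivatives bounded uniformly in $(\vartheta,v)$, satisfying $\tilde{F}(\sigma(r),r;\vartheta,v)=0$ on $[0,\mathfrak{r}_1)$. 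Setting $\rho(r):=r\sigma(r;\vartheta,v)$ then solves \eqref{eq152} and admits a Taylor expansion $\rho(r)=a_1 r+a_2 r^2+a_3 r^3+O(r^4)$ with remainder uniform in $(\vartheta,v)$.

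The coefficients $a_1,a_2,a_3$ are finally identified by plugging this ansatz into the polynomial equation and collecting powers of $r$. The $r^4$ balance gives $Aa_1^4=1$, so $a_1=A^{-1/4}$; the $r^5$ balance $4Aa_1^3 a_2+Ba_1^5=0$ gives $a_2=-B/(4A^{3/2})$; and the $r^6$ balance
\[
A\bigl(4a_1^3 a_3+6a_1^2 a_2^2\bigr)+5Ba_1^4 a_2+Ca_1^6=0
\]
collapses, after substituting $a_1$ and $a_2$ and performing an elementary algebraic simplification, to $a_3=\tfrac{7}{32}B^2/A^{11/4}-C/(4A^{7/4})$, which is precisely the claimed expression in \eqref{N:3}. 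The only genuinely delicate point in this plan is the uniformity of the $O(r^4)$ remainder in $(\vartheta,v)$, and this hinges entirely on the uniform lower bound $A\geq\omega$ from Lemma~\ref{symA}(v): every coefficient arising from the Taylor expansion of $\sigma$ is a rational function of $A,B,C,D,E$ whose denominator is a positive power of $A$, hence is bounded on the compact parameter space $K$, and the same estimate on $\partial_\sigma\tilde{F}$ controls the remainder quantitatively via the standard IFT.
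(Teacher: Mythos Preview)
Your approach is correct and genuinely different from the paper's. The paper proceeds by an elementary bootstrap: it first shows by contradiction that any solution satisfies $\rho(r)\leq\mathfrak{c}_1 r$ uniformly in $(\vartheta,v)$, then writes $\rho(r)=r/A^{1/4}+R_1(r)$, substitutes back into the polynomial equation $A\rho^4+\cdots+E\rho^8=r^4$ to extract $R_1(r)=-Br^2/(4A^{3/2})+O(r^3)$, and iterates once more. Your rescaling $\sigma=\rho/r$ together with a parametrized implicit function theorem is cleaner and more conceptual; the uniform lower bound $\partial_\sigma\tilde F(\sigma_0,0)=4A^{1/4}\geq 4\omega^{1/4}$ and the uniform bounds on $B,\ldots,E$ over the compact parameter set $K$ do indeed give a solution branch whose Taylor coefficients are uniformly controlled, and your power-matching computation for $a_1,a_2,a_3$ is correct.

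There is, however, one point you should close. The statement is phrased for \emph{any} positive solution $\rho(r)$ of $H(\rho,\vartheta,v)=r^4$, and this is exactly how it is used in Corollary~\ref{Co1}, where the smallest and largest positive roots $\rho_1,\rho_k$ are compared. Your IFT produces one solution branch and gives only local uniqueness near $\sigma_0=A^{-1/4}$; a priori the degree-$8$ polynomial could have other positive roots. The paper's bootstrap avoids this because every step starts from the equation itself and therefore applies to every solution. To make your argument complete you should add a short a priori step: from $H(\rho)=G(\mathcal{P}(\rho,\vartheta,v))\geq|\mathcal{P}(\rho,\vartheta,v)|^4=(\sin^2\vartheta\,\rho^4/c^2+\cos^2\vartheta\,\rho^2)^2$ one gets a uniform lower bound $H(\rho)\geq c_0>0$ for $\rho\geq\rho_0$, while $H'(\rho)=\rho^3(4A+O(\rho))>0$ on $(0,\rho_0]$ for $\rho_0$ small; hence for $r$ small every positive solution lies in $(0,\rho_0]$ and is unique, so it coincides with your IFT branch. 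This is essentially the content of the paper's first claim, just argued differently.
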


\begin{proof}
By Proposition \ref{strutto}, equation \eqref{eq152} turns into:
\begin{equation}
    A\rho(r)^4+B\rho(r)^5+C\rho(r)^6+D\rho(r)^7+E\rho(r)^8=r^4,
    \label{numbero28}
\end{equation}
where we dropped the dependence on $v$ and $\vartheta$ from $A$, $B$, $C$, $D$ and $E$ to simplify the notation.

We claim that there are $\mathfrak{r}_1>0$ and $\mathfrak{c}_1>0$, independent on $\vartheta$ and $v$, such that $\rho(r)\leq \mathfrak{c}_1r$ for any $0<r<\mathfrak{r}_1$. Suppose by contradiction there exists a sequence $(r_i,\vartheta_i,v_i)$ such that $G(\rho(r_i),\vartheta_i,v_i)=r_i^4$ for any $i\in\N$, $\{r_i\}_{i\in\N}$ is infinitesimal and $\rho(r_i)> 2r_i/\omega^{1/4}$, where $\omega$ is the constant introduced in Proposition \ref{strutto}(v). Thanks to identity \eqref{numbero28}, we have that for any $i\in\N$ the existence of such a sequence $\{r_i\}_{i\in\N}$ implies that:
\begin{equation}
\begin{split}
    1=&A\left(\frac{\rho(r_i)}{r_i}\right)^4+B\left(\frac{\rho(r_i)}{r_i}\right)^5r_i+C\left(\frac{\rho(r_i)}{r_i}\right)^6r_i^2+D\left(\frac{\rho(r_i)}{r_i}\right)^7r_i^3+E\left(\frac{\rho(r_i)}{r_i}\right)^8r_i^4\\
    >&\underbrace{A\left(\frac{2}{\omega^{1/4}}\right)^4+B\left(\frac{2}{\omega^{1/4}}\right)^5r_i+C\left(\frac{2}{\omega^{1/4}}\right)^6r_i^2+D\left(\frac{2}{\omega^{1/4}}\right)^7r_i^3+E\left(\frac{2}{\omega^{1/4}}\right)^8r_i^4}_{=:RHS_\eqref{numbero29}}
    \label{numbero29}
\end{split}
\end{equation}
Define the constant:
$$M:=(1+2/\omega^{1/4})^8\max_{(\vartheta,\gamma)\in[-\pi/2,\pi/2]\times\mathbb{S}^{2n}\cap \mathfrak{n}^\perp}(\lvert B\rvert+\lvert C\rvert+\lvert D\rvert+\lvert E\rvert)(\vartheta,\gamma),$$
and note that provided $r_i<\min(1,1/M)$, inequality \eqref{numbero29} implies:
\begin{equation}
    1>RHS_\eqref{numbero29}\geq A\left(\frac{2}{\omega^{1/4}}\right)^4-Mr_i>A\left(\frac{2}{\omega^{1/4}}\right)^4-1.
    \label{numerooo1}
\end{equation}
Rearranging \eqref{numerooo1}, we infer that $A/\omega<1/8$ however, this is not possible by the very definition of $\omega$. In particular, this proves that there exists an $\mathfrak{r}_1>0$ such that for any $0<r<\mathfrak{r}_1$ we have $\rho(r)\leq 2r/\omega^{1/4}$.
Thanks to \eqref{numbero28}, we have:
\begin{equation}
    \rho(r)=\frac{r}{A^\frac{1}{4}}+R_1(r),
    \label{eq30}
\end{equation}
where $\lvert R_1(r)\rvert\leq\mathfrak{c}_2 r^2$ for any $0<r<\mathfrak{r}_1$ and for some constant $\mathfrak{c}_2>0$ independent on $\vartheta$ and $v$  since we have established the function $A$ is bounded below  on $[-\pi/2,\pi/2)\times\mathbb{S}^{2n-1}\cap \mathfrak{n}^\perp$ in Lemma \ref{symA}(v).
In order to obtain the other coefficients of the expansion of $\rho$ we proceed in the same way as above: plug the expression for $\rho$ into \eqref{numbero28}, get an equation the remainder must satisfy and solve it, obtaining:
\begin{equation}
\begin{split}
    A(r/A^{1/4}+R_1(r))^4+B(r/A^{1/4}+R_1(r))^5+\underbrace{(C\rho(r)^6+D\rho(r)^7+E\rho(r)^8)}_{\tilde{R}_2(r)}=r^4.
    \end{split}
\end{equation}
The above identity thanks to few algebraic computations boils down to:
\begin{equation}
\begin{split}
  r^4 
    =&r^4+4A^\frac{1}{4}r^3R_1(r)+\frac{Br^5}{A^\frac{5}{4}}+R_2(r)
   \label{numerooo2}
\end{split}
\end{equation}
where $\lvert R_2\rvert\leq \mathfrak{c}_3r^6$ for any $0<r<\mathfrak{r}_1$ and for some constant $\mathfrak{c}_3$ independent on $\vartheta$ and $v$. Solving equation \eqref{numerooo2} with respect to $R_1$, we infer that:
$$R_1(r)=-\frac{Br^2}{4A^\frac{3}{2}}-\frac{R_2(r)}{4A^\frac{1}{4}r^3}.$$
Substituting the above identity in \eqref{eq30}, we have:
\begin{equation}
    \rho(r)=\frac{r}{A^\frac{1}{4}}-\frac{Br^2}{4A^\frac{3}{2}}+R_3(r),
    \label{eq31}
\end{equation}
where $R_3(r):=R_2(r)/4A^\frac{1}{4}r^3$. In particular $\lvert R_3(r)\rvert\leq \mathfrak{c}_3/4\omega^\frac{1}{4}r^3$ for any $0<r<\mathfrak{r}_1$. 
Finally, substituting the newfound expression for $\rho(r)$ given by \eqref{eq31} in \eqref{numbero28}, we get the following expression for $R_3$:
$$R_3(r):=\left(\frac{7}{32}\frac{B^2}{A^\frac{11}{4}}-\frac{C}{4A^\frac{7}{4}}\right)r^3+R_4(r),$$
where $\lvert R_4(r)\rvert\leq \mathfrak{c}_4r^4$ for any $0<r<\mathfrak{r}_1$ and for some constant $\mathfrak{c}_4>0$ independent on $\vartheta$ and $v$.
\end{proof}

\textbf{In the following we will denote with $\mathfrak{r}_2$ the supremum of those positive numbers for which} $P_{\vartheta,v}(r)\geq \omega^{1/4} r/2$.
Proposition \ref{propexp} has the following immediate consequence:
 
\begin{corollario}\label{Co1}
For any $0<r<\min\{1,\mathfrak{r}_2\}$ and any $\delta\in (-\omega^{1/4}/2,\omega^{1/4}/2)$, define the set:
$$\mathcal{B}_{r,\delta}:=\mathcal{P}( \{(\rho,\vartheta,v)\in\mathscr{C}: \rho\leq P_{\vartheta,v}(r)+\delta r^3\}),$$
where $P_{\vartheta,v}$ was defined in \eqref{N:3}.
Then, there exists an $\epsilon_0(\mathcal{X})=\epsilon_0>0$ such that for any $0<\epsilon<\epsilon_0$, there is an $0<\mathfrak{r}_3(\epsilon)=\mathfrak{r}_3$ such that for any $0<r<\mathfrak{r}_3$, we have:
\begin{equation}
x+\mathcal{B}_{r,-\epsilon}\subseteq\pi_H( B_{r}(\mathcal{X})\cap\text{gr}(f))\subseteq x+\mathcal{B}_{r,\epsilon}.
    \nonumber
\end{equation}
\end{corollario}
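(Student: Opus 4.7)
The plan is to transport the claim to the polar coordinates of Proposition \ref{coordinate} and read off both inclusions directly from the uniform expansion in Proposition \ref{propexp}. The starting point is Proposition \ref{pll}, which identifies $\pi_H(B_r(\mathcal{X})\cap\text{gr}(f))$ with $x+\{w:G(w)\le r^4\}$; pulling this back through $\mathcal{P}$ turns the defining condition into $H(\rho,\vartheta,v)\le r^4$ on each fiber over $(\vartheta,v)$, up to the measure-zero line $x+\mathrm{span}(\mathfrak{n})$ not parametrized by $\mathcal{P}$.

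First I would establish, uniformly in $(\vartheta,v)$, the interval structure of the sublevel sets. Differentiating the explicit formula of Proposition \ref{strutto} gives
$$\partial_\rho H(\rho,\vartheta,v)=4A(\vartheta,v)\rho^3+5B(\vartheta,v)\rho^4+O(\rho^5),$$
and Lemma \ref{symA}(v) together with the \emph{a priori} bound $\rho\le\mathfrak{c}_1 r$ from the proof of Proposition \ref{propexp} yield $\partial_\rho H>0$ on $(0,\mathfrak{c}_1 r]$ once $r$ is small enough, uniformly in $(\vartheta,v)$. Hence for each $(\vartheta,v)$ there is a unique root $\rho^*(r,\vartheta,v)$ of $H(\cdot,\vartheta,v)=r^4$, and the sublevel set along that fiber is exactly $(0,\rho^*(r,\vartheta,v)]$. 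By Proposition \ref{propexp} this root satisfies
$$|\rho^*(r,\vartheta,v)-P_{\vartheta,v}(r)|\le\mathfrak{c}_4\,r^4$$
with $\mathfrak{c}_4=\mathfrak{c}_4(\mathcal{X})$ independent of $(\vartheta,v)$.

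To conclude, fix $\epsilon_0>0$ small enough that $P_{\vartheta,v}(r)-\epsilon_0 r^3\ge 0$ for every $(\vartheta,v)$ and every $0<r<\mathfrak{r}_1$; this is possible because $P_{\vartheta,v}(r)=r/A(\vartheta,v)^{1/4}+O(r^2)$ with $A$ uniformly bounded above by Lemma \ref{symA}. For $\epsilon\in(0,\epsilon_0)$, set $\mathfrak{r}_2:=\min\{\mathfrak{r}_1,\,\epsilon/\mathfrak{c}_4\}$, so that $\mathfrak{c}_4 r^4\le\epsilon r^3$ whenever $0<r<\mathfrak{r}_2$; then
$$P_{\vartheta,v}(r)-\epsilon r^3\;\le\;\rho^*(r,\vartheta,v)\;\le\;P_{\vartheta,v}(r)+\epsilon r^3,$$
and applying $\mathcal{P}$ yields the two desired inclusions on $\R^{2n}\setminus(x+\mathrm{span}(\mathfrak{n}))$. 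The axis points are then recovered by continuity, since both $\mathcal{B}_{r,\pm\epsilon}$ and $\pi_H(B_r(\mathcal{X})\cap\text{gr}(f))$ are closed in $\R^{2n}$ and any axis point is a limit of $\mathcal{P}(\vartheta_k,\rho_k,v_k)$ with $\cos\vartheta_k\to 0$. The only mild obstacle is ensuring the uniformity of both $\mathfrak{c}_1$ and $\mathfrak{c}_4$ in $(\vartheta,v)$; but these constants come directly from the proof of Proposition \ref{propexp}, where the remainder estimates were already derived with bounds independent of $(\vartheta,v)$, so no additional work is required.
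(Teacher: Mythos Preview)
Your proof is correct and follows essentially the same route as the paper: reduce via Proposition~\ref{pll} to the fiberwise sublevel sets $\{H(\cdot,\vartheta,v)\le r^4\}$, and then invoke the uniform expansion of Proposition~\ref{propexp} to sandwich the boundary between $P_{\vartheta,v}(r)\pm\epsilon r^3$. The one genuine difference is how the interval structure of the fiber is obtained. The paper simply observes that $\rho\mapsto H(\rho,\vartheta,v)$ is a degree-eight polynomial with $H(0)=0$, so the sublevel set contains $[0,\rho_1]$ and is contained in $[0,\rho_k]$ where $\rho_1,\rho_k$ are the smallest and largest positive roots; since Proposition~\ref{propexp} applies to \emph{every} root, both $\rho_1$ and $\rho_k$ satisfy the same expansion up to $O(r^4)$, and the inclusions follow. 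You instead prove $\partial_\rho H>0$ on $(0,\mathfrak{c}_1 r]$ and deduce uniqueness of the root outright. Your argument is slightly sharper (it gives an exact description of the fiber rather than a sandwich), while the paper's avoids the derivative computation by exploiting that Proposition~\ref{propexp} already controls all roots simultaneously. Both the axis-point discussion and the role of $\epsilon_0$ are minor technicalities handled adequately in either version.
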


\begin{proof}
Proposition \ref{pll}, the definition of $\mathcal{P}$, see \eqref{N:4}, and of $H$, see \eqref{numerooo3}, imply:
\begin{equation}
    \pi_H(B_r(\mathcal{X})\cap\text{gr}(f))=x+\mathcal{P}(\{(\rho,\vartheta, v)\in\mathscr{C}:H(\rho,\vartheta, v)\leq r^4\}).
    \label{numbero42}
\end{equation}
The function $\rho\mapsto H(\rho,\vartheta, v)$ is a polynomial of $8$th degree in $\rho$, and thus the equation:
\begin{equation}
    H(\rho,\vartheta, v)=r^4,
    \label{eq172}
\end{equation}
has at most $8$ solutions in $\rho$. Assume $\rho_1<\ldots<\rho_k$, where $k\in\{1,\ldots,8\}$, is the number of \emph{positive} distinct solutions of \eqref{eq172}. If $\rho>\rho_k$ then $H(\rho,\vartheta,v)>r^4$ and on the other hand, since $H(0,\vartheta,v)=0$, if $0\leq\rho<\rho_1$ then $G(\rho,\vartheta, v)<r^4$. This implies that:
\begin{equation}
    \begin{split}
        &\mathcal{P}\left( \left\{(\rho,\vartheta,v)\in\mathscr{C}: \rho\leq \rho_1\right\}\right)\\
        &\qquad\subseteq\mathcal{P}(\{(\rho,\vartheta, v)\in\mathscr{C}:H(\rho,\vartheta, v)\leq r^4\})\\
        &\qquad\qquad\subseteq \mathcal{P}\left( \left\{(\rho,\vartheta,v)\in\mathscr{C}: \rho\leq \rho_k\right\}\right).
        \nonumber
    \end{split}
\end{equation}
Proposition \ref{propexp} concludes the proof since $\rho_1$ and $\rho_k$ coincide up to an error of order $r^4$.
\end{proof}

The following technical lemma will be needed in the computations of Proposition \ref{TEXP}, and it is a Taylor expansion formula for the sub-Riemmanian area element at a non-characteristic point of a horizontal quadric.

\begin{lemma}\label{sviluppodens}
For any $(\rho,\vartheta, v)\in\mathscr{C}$ we have:
\begin{equation}
2\lvert (\mathcal{D}+J)[x+\mathcal{P}(\rho,\vartheta, v)]\rvert=c+\mathcal{A}\rho+\mathcal{B}\rho^2+R_5(\rho),
\nonumber
\end{equation}
where:
\begin{itemize}
\item[(i)] $\mathcal{A}:=\mathcal{A}(\vartheta,v):=2\cos\vartheta(\beta_\mathfrak{n}(v)+\langle Jv,n\rangle)$,
\item[(ii)] $\overline{\mathcal{B}}=c\mathcal{B}:=c\mathcal{B}(\vartheta,v):=2\left(\alpha_\mathfrak{n} \sin\vartheta+\lvert P_\mathfrak{n}[(\mathcal{D}+J)v]\rvert^2\cos^2\vartheta\right)$,
and $P_\mathfrak{n}$ is the orthogonal projection in $\R^{2n}$ on $\mathfrak{n}^\perp$.
\item[(iii)] $\lvert R_5(\rho)\rvert\leq \mathfrak{c}_5\rho^3$ for any $0<\rho<\mathfrak{r}_4$ and for some constant $\mathfrak{c}_5>0$, independent on $\vartheta$ and $v$.
\end{itemize}
\end{lemma}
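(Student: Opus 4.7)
The plan is a direct Taylor expansion of $\rho\mapsto 2\lvert(\mathcal{D}+J)[x+\mathcal{P}(\rho,\vartheta,v)]\rvert$ at $\rho=0$. I first rewrite the argument in a form adapted to the splitting induced by $\mathfrak{n}$, then expand the square root. The uniformity of the remainder will follow at the end from compactness of $[-\pi/2,\pi/2]\times\mathbb{S}^{2n-1}\cap\mathfrak{n}^\perp$.

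First, by definition of $\mathcal{P}$ and linearity,
\[
(\mathcal{D}+J)[x+\mathcal{P}(\rho,\vartheta,v)] \;=\; (\mathcal{D}+J)x+\cos\vartheta\,\rho\,(\mathcal{D}+J)v+\tfrac{\sin\vartheta}{c}\rho^2\,(\mathcal{D}+J)\mathfrak{n}.
\]
Since by definition of $\mathfrak{n}$ and $c$ we have $(\mathcal{D}+J)x=\tfrac{c}{2}\mathfrak{n}$, this equals $\tfrac{c}{2}\,w(\rho,\vartheta,v)$, where
\[
w(\rho,\vartheta,v):=\mathfrak{n}+\tfrac{2\cos\vartheta}{c}\rho\,(\mathcal{D}+J)v+\tfrac{2\sin\vartheta}{c^2}\rho^2\,(\mathcal{D}+J)\mathfrak{n}.
\]
Consequently $2\lvert(\mathcal{D}+J)[x+\mathcal{P}]\rvert=c\lvert w\rvert$, and the task is reduced to expanding $\lvert w\rvert$ up to order $\rho^2$.

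Next I expand $\lvert w\rvert^2$. Using $\lvert\mathfrak{n}\rvert=1$, $\langle\mathfrak{n},J\mathfrak{n}\rangle=0$, the symmetry of $\mathcal{D}$, and the identities $\langle\mathfrak{n},\mathcal{D}\mathfrak{n}\rangle=\alpha_\mathfrak{n}$, $\langle\mathfrak{n},\mathcal{D}v\rangle=\beta_\mathfrak{n}(v)$, one finds
\[
\lvert w\rvert^2=1+\tfrac{4\cos\vartheta}{c}\bigl(\beta_\mathfrak{n}(v)+\langle Jv,\mathfrak{n}\rangle\bigr)\rho+\tfrac{4}{c^2}\bigl(\sin\vartheta\,\alpha_\mathfrak{n}+\cos^2\vartheta\,\lvert(\mathcal{D}+J)v\rvert^2\bigr)\rho^2+R_2(\rho,\vartheta,v),
\]
with $R_2$ a polynomial in $\rho$ whose coefficients are bounded uniformly in $(\vartheta,v)$ and which is $O(\rho^3)$. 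Applying the scalar Taylor expansion $\sqrt{1+X}=1+X/2-X^2/8+O(X^3)$ around $X=0$, after collecting the $\rho$ and $\rho^2$ contributions (the $X^2/8$ term contributes $-\tfrac{2\cos^2\vartheta}{c^2}\langle\mathfrak{n},(\mathcal{D}+J)v\rangle^2\rho^2$), one obtains
\[
\lvert w\rvert=1+\tfrac{2\cos\vartheta}{c}\bigl(\beta_\mathfrak{n}(v)+\langle Jv,\mathfrak{n}\rangle\bigr)\rho+\tfrac{2}{c^2}\Bigl(\sin\vartheta\,\alpha_\mathfrak{n}+\cos^2\vartheta\,\lvert P_\mathfrak{n}[(\mathcal{D}+J)v]\rvert^2\Bigr)\rho^2+\tilde{R}(\rho,\vartheta,v),
\]
where I have used $\lvert(\mathcal{D}+J)v\rvert^2-\langle\mathfrak{n},(\mathcal{D}+J)v\rangle^2=\lvert P_\mathfrak{n}[(\mathcal{D}+J)v]\rvert^2$. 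Multiplying by $c$ exactly reproduces the claimed expansion with the coefficients $\mathcal{A}$ and $\overline{\mathcal{B}}$ of items (i)--(ii).

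Finally, for the uniform remainder estimate, the Taylor remainder in $\sqrt{1+X}=1+X/2-X^2/8+O(X^3)$ is controlled on $\lvert X\rvert\le 1/2$ by a universal constant times $\lvert X\rvert^3$, and one checks that $\lvert X\rvert\le 1/2$ is ensured by $\rho\le\mathfrak{r}_3$ for a sufficiently small $\mathfrak{r}_3$ depending only on the uniform bounds on the coefficients of $\lvert w\rvert^2-1$ (which are themselves controlled in terms of $c$ and $\vertiii{\mathcal{D}}$, and hence independent of $(\vartheta,v)$). Combining this with the uniform $O(\rho^3)$ bound on $R_2$ and on its contribution to $X^2/8$ yields $\lvert R_1(\rho)\rvert\le\mathfrak{c}_5\rho^3$ for $0<\rho<\mathfrak{r}_3$, with $\mathfrak{c}_5$ independent of $\vartheta$ and $v$. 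I do not expect any serious obstacle: the entire argument is a bookkeeping exercise in Taylor expansion, and the only mild care needed is ensuring that the constants in the remainder do not blow up as $(\vartheta,v)$ varies, which is handled by the compactness noted above.
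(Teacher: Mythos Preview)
Your proof is correct and follows essentially the same route as the paper: both arguments first expand the squared norm $\lvert 2(\mathcal{D}+J)[x+\mathcal{P}(\rho,\vartheta,v)]\rvert^2$ explicitly and identify the $\rho$- and $\rho^2$-coefficients via the same identities $\langle\mathfrak{n},(\mathcal{D}+J)v\rangle=\beta_\mathfrak{n}(v)+\langle Jv,\mathfrak{n}\rangle$, $\langle\mathfrak{n},(\mathcal{D}+J)\mathfrak{n}\rangle=\alpha_\mathfrak{n}$, and $\lvert(\mathcal{D}+J)v\rvert^2-\langle\mathfrak{n},(\mathcal{D}+J)v\rangle^2=\lvert P_\mathfrak{n}[(\mathcal{D}+J)v]\rvert^2$. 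The only cosmetic difference is in the last step: you extract the square root via the scalar Taylor expansion $\sqrt{1+X}=1+X/2-X^2/8+O(X^3)$, whereas the paper uses the conjugate identity $\lvert M[x+\mathcal{P}]\rvert-(c+\mathcal{A}\rho+\mathcal{B}\rho^2)=\dfrac{\lvert M[x+\mathcal{P}]\rvert^2-(c+\mathcal{A}\rho+\mathcal{B}\rho^2)^2}{\lvert M[x+\mathcal{P}]\rvert+(c+\mathcal{A}\rho+\mathcal{B}\rho^2)}$ and checks that the numerator is $O(\rho^3)$ while the denominator stays bounded below; both give the same uniform remainder control.
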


\begin{proof}
In order to simplify the notation we let $M:=(\mathcal{D}+J)$. First of all we find a more explicit expression for the vector $2M[x+\mathcal{P}(\rho,\vartheta, v)]$:
\begin{equation}
\begin{split}
     2M[x+\mathcal{P}(\rho,\vartheta, v)]
    =2M\left[x+\frac{\sin \vartheta}{c} \rho^2 \mathfrak{n}+\cos\vartheta\rho v\right]=c\mathfrak{n}+\frac{2\sin \vartheta}{c} \rho^2M \mathfrak{n}+2\cos\vartheta\rho  Mv.
\end{split}
\nonumber
\end{equation}
Secondly, we compute the squared norm of the vector $ 2M[x+\mathcal{P}(\rho,\vartheta, v)]$:
\begin{equation}
    \begin{split}
        \lvert  2M[x+\mathcal{P}(\rho,\vartheta, v)]\rvert^2=c^2&+4c\cos\vartheta\rho  \langle Mv,n\rangle+4\sin \vartheta \rho^2\langle M \mathfrak{n},\mathfrak{n}\rangle+4\cos^2\vartheta\rho^2\lvert Mv\rvert^2\\
        &+\frac{8\sin \vartheta\cos\vartheta}{c}\rho^3\langle M\mathfrak{n},Mv\rangle+\frac{4\sin^2 \vartheta}{c^2} \rho^4\lvert M \mathfrak{n}\rvert^2.
    \end{split}
\label{numerooo6}
\end{equation}
Thanks to the definition of $\alpha_\mathfrak{n}$ and $\beta_\mathfrak{n}(v)$, on the one hand we have $\langle Mv,\mathfrak{n}\rangle=\beta_\mathfrak{n}(v)+\langle Jv,\mathfrak{n}\rangle$ and thus:
\begin{equation}
    4c\cos\vartheta\langle Mv,\mathfrak{n}\rangle=2c\mathcal{A}.
    \label{numerooo4}
\end{equation}
Moreover, the fact that $\langle M\mathfrak{n},\mathfrak{n}\rangle=\alpha_\mathfrak{n}$ and that $\lvert Mv\rvert^2-\langle Mv,\mathfrak{n}\rangle^2=\lvert P_\mathfrak{n}(Mv)\rvert^2$ imply:
\begin{equation}
    \begin{split}
  4\sin\vartheta\langle M\mathfrak{n},\mathfrak{n}\rangle+4\cos^2\vartheta\lvert Mv\rvert^2=2c\mathcal{B}+\mathcal{A}^2.
        \label{numerooo5}
    \end{split}
\end{equation}
Thanks to identities \eqref{numerooo6}, \eqref{numerooo4} and \eqref{numerooo5}, we deduce that:
\begin{equation}
\begin{split}
\lvert 2M[x+&\mathcal{P}(\rho,\vartheta, v)]\rvert-c-\mathcal{A}\rho-\mathcal{B}\rho^2=\frac{\lvert 2M[x+\mathcal{P}(\rho,\vartheta, v)]\rvert^2-(c+\mathcal{A}\rho+\mathcal{B}\rho^2)^2}{\lvert 2M[x+\mathcal{P}(\rho,\vartheta, v)]\rvert+c+\mathcal{A}\rho+\mathcal{B}\rho^2}\\
&=\frac{\left(8c^{-1}\sin \vartheta\cos\theta\langle M\mathfrak{n},Mv\rangle-2\mathcal{A}\mathcal{B}\right)\rho^3+\left(4c^{-2}\sin^2 \vartheta\lvert M \mathfrak{n}\rvert^2-\mathcal{B}^2\right)\rho^4}{\lvert 2M[x+\mathcal{P}(\rho,\vartheta, v)]\rvert+c+\mathcal{A}\rho+\mathcal{B}\rho^2}.
\label{numerooo7}
\end{split}
\end{equation}
Summing up, \eqref{numerooo7} shows that when $\rho$ is small we have:
$$\lvert 2M[x+\mathcal{P}(\rho,\vartheta, v)]\rvert-c-\mathcal{A}\rho-\mathcal{B}\rho^2\sim (2c)^{-1}(8c^{-1}\sin \vartheta\cos\theta\langle M\mathfrak{n},Mv\rangle-2\mathcal{A}\mathcal{B})\rho^3,$$
or more precisely, we infer that there exist an $\mathfrak{r}_4>0$ and a constant $\mathfrak{c}_5>0$, that can be chosen both independent on $\vartheta$ and $v$, for which:
$$\big\lvert \lvert 2M[x+\mathcal{P}(\rho,\vartheta, v)]\rvert-c-\mathcal{A}\rho-\mathcal{B}\rho^2\big\rvert\leq \mathfrak{c}_5\rho^3,\qquad \text{for any }0<\rho<\mathfrak{r}_4.$$
This concludes the proof.
\end{proof}

\begin{osservazione}\label{rk22}
The functions $\mathcal{A}(\cdot,\cdot)$ and $\mathcal{B}(\cdot,\cdot)$ defined in the statement of Proposition \ref{sviluppodens} have the following symmetries.
For any $(\vartheta,v)\in[-\pi/2,\pi/2]\times\mathbb{S}^{2n-1}\cap \mathfrak{n}^\perp$, we have that:
\begin{itemize}
\item[(i)] $\mathcal{A}(\vartheta,v)=-\mathcal{A}(\vartheta,-v)$,
\item[(ii)] $\mathcal{B}(\vartheta,v)=\mathcal{B}(\vartheta,-v)$.
\end{itemize}
\end{osservazione}

\begin{proposizione}\label{rapr1}
For any $0<r<\mathrm{dist}(x,\Sigma(f))$, we have:
$$\lvert\partial \mathbb{K}\rvert(B_r(\mathcal{X}))= \int_{\mathbb{S}^{2n-1}\cap\mathfrak{n}^\perp}\int_{-\frac{\pi}{2}}^{\frac{\pi}{2}}\int_{\{H(\rho,\vartheta,v)\leq r^4\}}\Xi(\rho,\vartheta, v)  d\rho d\vartheta d\sigma(v),$$
where:
\begin{itemize}
\item[(i)] $\Xi(\rho,\vartheta, v):=c^{-1}\rho^{2n}\cos^{2n-2}\vartheta(1+\sin^2\vartheta)\cdot 2\lvert(\mathcal D+J)[x+\mathcal{P}(\rho,\vartheta, v)]\rvert$,
\item[(ii)] $\sigma:=\mathcal{H}^{2n-2}_{eu}\llcorner \mathbb{S}^{2n-1}\cap \mathfrak{n}^\perp$.
\end{itemize}
\end{proposizione}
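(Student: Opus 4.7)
The statement is essentially a change of variables formula for the sub-Riemannian perimeter of $\mathbb{K}=\mathrm{gr}(f)$ restricted to a Koranyi ball, expressed in the adapted coordinates $\mathcal{P}$ of Proposition~\ref{coordinate}. The symmetry content of $\mathcal{A}$ and $\mathcal{B}$ is already recorded in Remark~\ref{rk22} and follows immediately from $\beta_{\mathfrak{n}}(-v)=-\beta_{\mathfrak{n}}(v)$, $\langle J(-v),\mathfrak{n}\rangle=-\langle Jv,\mathfrak{n}\rangle$ and $\lvert P_{\mathfrak{n}}[(\mathcal{D}+J)(-v)]\rvert=\lvert P_{\mathfrak{n}}[(\mathcal{D}+J)v]\rvert$, so I focus on the integral identity.

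The first step is to reduce $\lvert\partial\mathbb{K}\rvert(B_r(\mathcal{X}))$ to a Euclidean integral over the horizontal projection. Since $f$ is smooth and $x\notin\Sigma(f)$, Proposition~\ref{Monti} together with the identity \eqref{eq:1101} from the proof of Proposition~\ref{rapperhor} (applied with $b=0$, $\mathcal{T}=-1$) yields
\begin{equation}
\lvert\partial\mathbb{K}\rvert(B_r(\mathcal{X}))=\int_{\pi_H(B_r(\mathcal{X})\cap\mathrm{gr}(f))}2\lvert(\mathcal{D}+J)y\rvert\,dy.
\nonumber
\end{equation}
By Proposition~\ref{pll} the domain of integration equals $x+\{w\in\R^{2n}:G(w)\leq r^{4}\}$, so after the translation $y=x+w$ this becomes an integral over $\{G(w)\leq r^{4}\}$ of the function $2\lvert(\mathcal{D}+J)(x+w)\rvert$.

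The second step is the change of variables $w=\mathcal{P}(\vartheta,\rho,v)$ from Proposition~\ref{coordinate}. By that proposition $\mathcal{P}$ is a bijection between $\mathscr{C}=[-\pi/2,\pi/2)\times(0,\infty)\times(\mathbb{S}^{2n-1}\cap\mathfrak{n}^{\perp})$ and $\R^{2n}\setminus\mathrm{span}(\mathfrak{n})$, a set whose complement is Lebesgue-negligible, so we may integrate over the full parameter domain. Decomposing $w=\lambda\mathfrak{n}+r'v$ with $\lambda=(\sin\vartheta)\rho^{2}/c$ and $r'=\cos\vartheta\,\rho$, standard polar coordinates in $\mathfrak{n}^{\perp}$ give $dw=(r')^{2n-2}\,d\lambda\,dr'\,d\sigma(v)$. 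The Jacobian of $(\rho,\vartheta)\mapsto(\lambda,r')$ is
\begin{equation}
\det\begin{pmatrix}\dfrac{2\sin\vartheta}{c}\rho & \dfrac{\cos\vartheta}{c}\rho^{2}\\ \cos\vartheta & -\sin\vartheta\,\rho\end{pmatrix}=-\dfrac{\rho^{2}}{c}(1+\sin^{2}\vartheta),
\nonumber
\end{equation}
so that $dw=\dfrac{\rho^{2n}}{c}\cos^{2n-2}\vartheta\,(1+\sin^{2}\vartheta)\,d\rho\,d\vartheta\,d\sigma(v)$.

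Finally, since $H(\rho,\vartheta,v)=G(\mathcal{P}(\vartheta,\rho,v))$ by the definition in Proposition~\ref{strutto}, the condition $G(w)\leq r^{4}$ becomes $H(\rho,\vartheta,v)\leq r^{4}$, and putting the three steps together yields exactly the claimed formula with the prescribed integrand $\Xi$. The only real checks are the bijectivity of $\mathcal{P}$ (already handled) and the Jacobian computation; no genuine analytic obstacle arises, and I expect the whole argument to be a short direct verification.
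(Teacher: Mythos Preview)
Your proof is correct and follows essentially the same route as the paper: reduce the perimeter to a Euclidean integral over $\pi_H(B_r(\mathcal{X})\cap\mathrm{gr}(f))$ via the Monti representation, then perform the change of variables $w=\mathcal{P}(\vartheta,\rho,v)$ and compute the Jacobian. The only organizational difference is that the paper parametrizes $\mathbb{S}^{2n-1}\cap\mathfrak{n}^{\perp}$ by spherical angles $\psi$ and computes a single $(2n)\times(2n)$ Jacobian, whereas you factor through the intermediate coordinates $(\lambda,r',v)$ and a $2\times 2$ Jacobian---your version is arguably cleaner, but the content is identical.
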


\begin{proof}
Defined $U_r:=\pi_H(B_r(\mathcal{X})\cap \mathrm{gr}(f))$, \cite[Proposition 3.2]{Monti2014IsoperimetricGroup} implies that:
\begin{equation}
    \lvert\partial \mathbb{K}\rvert(B_r(\mathcal{X}))=\int_{U_r} \lvert\nabla f(w)+2Jw\rvert dw=2\int_{U_r} \lvert(\mathcal{D}+J)w\rvert dw.
    \label{numbero40}
\end{equation}
We want to perform in the right-hand side of the above equation, the following change of variables:
\begin{equation}
   w=x+\frac{\sin\vartheta\rho^2}{c}\mathfrak{n}+\cos\vartheta\rho v=x+\mathcal{P}(\rho,\vartheta,v),\text{ where }(\rho,\vartheta, v)\in\mathscr{C}. 
   \label{numbero41}
\end{equation}
If $n>1$, we can parametrize the sphere $\mathbb{S}^{2n-1}\cap \mathfrak{n}^\perp$ with the usual polar coordinates and in order to do so we let $v=v(\psi)$, where $\psi$ varies in $\Psi:= [-\pi,\pi)\times[\pi/2,\pi/2]^{2n-3}$. 
The Jacobian determinant, obtained using the Laplace formula, we omit the computations, of the change of variables \eqref{numbero41} is:
\begin{equation}
\begin{split}
\Big\lvert\mathrm{det}\frac{\partial w(\rho,\vartheta, v)}{\partial(\rho,\vartheta,\psi)}\Big\rvert=&\rho^{2n}c^{-1}(1+\sin^2\vartheta)\cos^{2n-2}\vartheta\Big\lvert \mathrm{det}\Big(\mathfrak{n},v(\psi),\frac{\partial v(\psi)}{\partial \psi}\Big)\Big\rvert.
\nonumber 
\end{split}
\end{equation}
Let $\mathscr{J}(\psi):=\Big\lvert\mathrm{det}\Big(\mathfrak{n},v(\psi),\frac{\partial v(\psi)}{\partial \psi}\Big)\Big\rvert$ and note that by the change of variable formula, \eqref{numbero40} becomes:
\begin{equation}
\begin{split}
&\qquad\qquad\qquad\qquad\qquad\qquad\qquad\lvert\partial \mathbb{K}\rvert(B_r(\mathcal{X}))=\\
=&2\int_{\Psi}\int_{-\frac{\pi}{2}}^{\frac{\pi}{2}}\int \chi_{U_r}({\scriptstyle x+\mathcal{P}(\rho,\vartheta,\psi)})\frac{{\scriptstyle\rho^{2n}(1+\sin^2\vartheta)\cos^{2n-2}\vartheta}}{c}\mathscr{J}(\psi) \lvert(\mathcal{D}+J)({\scriptstyle x+\mathcal{P}(\rho,\vartheta,v)})\rvert d\rho d\vartheta d\psi\\
&\qquad\qquad=\int_{\Psi}\mathscr{J}(\psi)\int_{-\frac{\pi}{2}}^{\frac{\pi}{2}}\int \chi_{U_r}(x+\mathcal{P}(\rho,\vartheta,\psi))\Xi(\rho,\vartheta, v(\psi)) d\rho d\vartheta d\psi\\
&\qquad\qquad=\int_{\mathbb{S}^{2n-1}\cap\mathfrak{n}^\perp}\int_{-\frac{\pi}{2}}^{\frac{\pi}{2}}\int \chi_{U_r}(x+\mathcal{P}(\rho,\vartheta,\psi))\Xi(\rho,\vartheta, v)  d\rho d\vartheta d\sigma(v)\\
&\qquad\qquad=\int_{\mathbb{S}^{2n-1}\cap\mathfrak{n}^\perp}\int_{-\frac{\pi}{2}}^{\frac{\pi}{2}}\int_{\{H(\rho,\vartheta,v)\leq r^4\}}\Xi(\rho,\vartheta, v)  d\rho d\vartheta d\sigma(v),
\nonumber
\end{split}
\end{equation}
where the last equality sign comes from the identity \eqref{numbero42}.
If on the other hand $n=1$, the computation is even simpler since $\mathbb{S}^{2n-1}\cap \mathfrak{n}^\perp=\{\pm J\mathfrak{n}\}$.
\end{proof}

The following two lemmas will allow us to compute some integrals in Propositions \ref{TEXP} and \ref{TEXP2}.

\begin{lemma}\label{conto1}
For any $k\in\N$ and any $\alpha>(k+1)/2$ we have:
\begin{equation}
    \int_{-\infty}^\infty \frac{x^k}{(1+x^2)^\alpha}dx=\begin{cases}
    0 &\text{if }k\text{ is odd,}\\
   \frac{\Gamma\big(\frac{k+1}{2}\big)\Gamma\big(\alpha-\frac{k+1}{2}\big)}{\Gamma(\alpha)} &\text{if }k\text{ is even.}
    \end{cases}
    \nonumber
\end{equation}
\end{lemma}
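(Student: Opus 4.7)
The plan is standard: the odd case is immediate by symmetry, and the even case reduces to the Beta function.

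First I would dispose of the case $k$ odd. Since the integrand $x^k/(1+x^2)^\alpha$ is an odd function of $x$ and the hypothesis $\alpha > (k+1)/2$ guarantees absolute integrability at $\pm\infty$, the principal value and the ordinary Lebesgue integral agree, and both equal $0$ by the symmetry $x \mapsto -x$.

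For $k$ even, I would exploit symmetry once more to write
\begin{equation}
\int_{-\infty}^{\infty}\frac{x^k}{(1+x^2)^\alpha}\,dx = 2\int_0^\infty \frac{x^k}{(1+x^2)^\alpha}\,dx,
\nonumber
\end{equation}
and then perform the substitution $t = x^2$, so that $dt = 2x\,dx$ and $x\,dx = \tfrac{1}{2}t^{-1/2}\,dt$. This transforms the integral into
\begin{equation}
\int_0^\infty \frac{t^{(k-1)/2}}{(1+t)^\alpha}\,dt.
\nonumber
\end{equation}
Now I would apply the further substitution $u = t/(1+t)$, hence $t = u/(1-u)$, $1+t = 1/(1-u)$, $dt = du/(1-u)^2$. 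A direct computation gives
\begin{equation}
\int_0^1 u^{(k+1)/2 - 1}(1-u)^{\alpha - (k+1)/2 - 1}\,du = B\!\left(\frac{k+1}{2},\,\alpha - \frac{k+1}{2}\right),
\nonumber
\end{equation}
where the exponents are admissible thanks to the hypothesis $\alpha > (k+1)/2$ and $k \geq 0$.

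Finally, applying the classical identity $B(p,q) = \Gamma(p)\Gamma(q)/\Gamma(p+q)$ yields the claimed value. There is no real obstacle here; the only thing to be careful about is tracking the factor $2$ coming from the symmetry of the integrand together with the factor $1/2$ from the change of variables $t = x^2$, which cancel to give exactly the Beta function without an extra constant.
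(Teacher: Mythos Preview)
Your proof is correct and follows essentially the same route as the paper: dismiss the odd case by symmetry, then reduce the even case to Euler's Beta integral and apply $B(p,q)=\Gamma(p)\Gamma(q)/\Gamma(p+q)$. The only cosmetic difference is that the paper uses the single substitution $t=1/(1+x^2)$, whereas your two-step change $t=x^2$ followed by $u=t/(1+t)$ amounts to $u=1-1/(1+x^2)$, which is the same substitution up to the reflection $u\leftrightarrow 1-t$.
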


\begin{proof}
If $k$ is odd, then $\int x^k/(1+x^2)^\alpha=0$. If on the other hand $k$ is even, the change of variable $t=1/(1+x^2)$ implies:
\begin{equation}
    \begin{split}
        \int_{-\infty}^\infty \frac{x^k}{(1+x^2)^\alpha}dx=&\int_0^1(1-t)^{\frac{k+1}{2}-1}t^{(\alpha-\frac{k+1}{2})-1}dt\\
        =&\beta\bigg(\frac{k+1}{2},\alpha-\frac{k+1}{2}\bigg)=\frac{\Gamma\big(\frac{k+1}{2}\big)\Gamma\big(\alpha-\frac{k+1}{2}\big)}{\Gamma(\alpha)},
        \nonumber
    \end{split}
\end{equation}
where $\beta(\cdot,\cdot)$ is the Euler's beta function. The last equality follows from a well known property of $\beta$, see for instance \cite[Theorem 12.41]{Whittaker.1902AAnalysis}.
 \end{proof}

\begin{lemma}\label{conto2}
Suppose $f:\R\to\R$ is a measurable function such that $f(x)/(1+x^2)^\alpha\in L^1(\R)$ and let $\mathfrak{d}(\vartheta):=\cos^{2n-2}\vartheta(\cos^2\vartheta+2\sin^2\vartheta)$. Then the following identity holds:
$$\int_{-\frac{\pi}{2}}^\frac{\pi}{2}\mathfrak{d}(\vartheta)\frac{\cos^{4\alpha-2n-1}\vartheta f\left(\frac{\sin\vartheta}{\cos^2\vartheta}\right)}{A(\vartheta,v)^\alpha}d\vartheta=\int_{-\infty}^\infty\frac{f(x)}{\big(1+\big(x+\gamma(v)\big)^2\big)^\alpha}dx,$$
where $\gamma(v)$ was defined in \eqref{numbero27} and where $A(\vartheta,v)$ was introduced in Proposition \ref{strutto}.
\end{lemma}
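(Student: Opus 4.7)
The entire identity will come from the single substitution $x=\sin\vartheta/\cos^{2}\vartheta$ on the interval $(-\pi/2,\pi/2)$. The first step is to verify that this is a smooth bijection onto $\R$: at $\vartheta=0$ one has $x=0$, and since
\[
\frac{dx}{d\vartheta}=\frac{\cos^{3}\vartheta+2\sin^{2}\vartheta\cos\vartheta}{\cos^{4}\vartheta}=\frac{\cos^{2}\vartheta+2\sin^{2}\vartheta}{\cos^{3}\vartheta}>0\qquad \text{on }(-\pi/2,\pi/2),
\]
the map is strictly increasing, with $x\to\pm\infty$ as $\vartheta\to\pm\pi/2$. Notice already that the numerator of this derivative is precisely $\mathfrak d(\vartheta)/\cos^{2n-2}\vartheta$, which is where the factor $\mathfrak d(\vartheta)$ in the statement will be absorbed.

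The second step is an algebraic simplification of $A(\vartheta,v)$ from Proposition \ref{strutto}(i). Factoring $\cos^{4}\vartheta$ out of the definition gives
\[
A(\vartheta,v)=\cos^{4}\vartheta+\bigl(\cos^{2}\vartheta\,\gamma(v)+\sin\vartheta\bigr)^{2}=\cos^{4}\vartheta\left[1+\Bigl(\gamma(v)+\tfrac{\sin\vartheta}{\cos^{2}\vartheta}\Bigr)^{2}\right]=\cos^{4}\vartheta\bigl[1+(x+\gamma(v))^{2}\bigr],
\]
so that $A^{\alpha}=\cos^{4\alpha}\vartheta\,\bigl[1+(x+\gamma(v))^{2}\bigr]^{\alpha}$. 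Inserting this into the left-hand side integrand yields
\[
\mathfrak d(\vartheta)\,\frac{\cos^{4\alpha-2n-1}\vartheta\,f\!\bigl(\tfrac{\sin\vartheta}{\cos^{2}\vartheta}\bigr)}{A^{\alpha}}=\frac{\cos^{2}\vartheta+2\sin^{2}\vartheta}{\cos^{3}\vartheta}\cdot\frac{f(x)}{\bigl[1+(x+\gamma(v))^{2}\bigr]^{\alpha}},
\]
after the powers of $\cos\vartheta$ cancel: the exponent book-keeping is $(2n-2)+(4\alpha-2n-1)-4\alpha=-3$, matching the $\cos^{-3}\vartheta$ appearing in the Jacobian.

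The final step combines the two observations: the fraction $(\cos^{2}\vartheta+2\sin^{2}\vartheta)/\cos^{3}\vartheta$ is exactly $dx/d\vartheta$, so the usual one-dimensional change of variables transforms the left-hand side into $\int_{-\infty}^{\infty}f(x)/[1+(x+\gamma(v))^{2}]^{\alpha}\,dx$, which is finite by the hypothesis on $f$ and invariance of the integrability condition under the translation $x\mapsto x+\gamma(v)$. There is no real obstacle here; the only point of care is the algebraic identity for $A$, which is what makes the substitution effective, and ensuring the exponents in $\cos\vartheta$ balance so that no residual Jacobian factor survives.
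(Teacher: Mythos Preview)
Your proposal is correct and follows essentially the same approach as the paper: factor $A=\cos^{4}\vartheta\bigl[1+(x+\gamma(v))^{2}\bigr]$ with $x=\sin\vartheta/\cos^{2}\vartheta$, observe that the remaining cosine powers collapse to $\cos^{-3}\vartheta$, and recognise $(\cos^{2}\vartheta+2\sin^{2}\vartheta)/\cos^{3}\vartheta$ as the Jacobian of the substitution. Your write-up is in fact more explicit than the paper's, which compresses these steps into two displayed equalities.
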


\begin{proof}
Thanks to the definition of $A$, we have that:
\begin{equation}
\begin{split}
    &\int_{-\frac{\pi}{2}}^\frac{\pi}{2}\mathfrak{d}(\vartheta)\frac{\cos^{4\alpha-2n-1}(\vartheta) f\left(\frac{\sin\vartheta}{\cos^2\vartheta}\right)}{A(\vartheta,v)^\alpha}d\vartheta=\int_{-\frac{\pi}{2}}^\frac{\pi}{2}\mathfrak{d}(\vartheta)\frac{\cos^{4\alpha-2n-1}\vartheta f\left(\frac{\sin\vartheta}{\cos^2\vartheta}\right)}{\big(\cos^4\vartheta+(\cos^2\vartheta\gamma(v)+\sin\vartheta)^2\big)^\alpha}d\vartheta\\
    =&\int_{-\frac{\pi}{2}}^\frac{\pi}{2}\frac{\cos^2\vartheta+2\sin^2\vartheta}{\cos^3\vartheta}\frac{f\left(\frac{\sin\vartheta}{\cos^2\vartheta}\right)}{\big(1+(\frac{\sin\vartheta}{\cos^2\vartheta}+\gamma(v))^2\big)^\alpha}d\vartheta=\int_{-\infty}^\infty\frac{f(x)}{\big(1+\big(x+\gamma(v)\big)^2\big)^\alpha}dx,
\end{split}
   \nonumber
\end{equation}
where the last equality is obtained with the change of variable $x=\sin\vartheta/\cos^2\vartheta$.
\end{proof}

Proposition \ref{TEXP} is the technical core of this appendix. It gives a first description of the structure of the coefficients $\mathfrak{c}$,  $\zeta$ and $\mathfrak{e}$.

\begin{proposizione}\label{TEXP}
For any $\epsilon>0$ there exists $\mathfrak{r}_5=\mathfrak{r}_5(\epsilon)>0$ such that for any $0<r<\mathfrak{r}_5$ there holds:
\begin{equation}
    \lvert\partial\mathbb{K}\rvert(B_r(\mathcal{X}))=\mathfrak{c}_nr^{2n+1}+\mathfrak{e}(\mathcal{X})r^{2n+3}+\epsilon R_6(r),
    \label{N:5}
\end{equation}
where defined  $\mathbb{S}(\mathfrak{n}):=\mathbb{S}^{2n-1}\cap\mathfrak{n}^\perp$, we have:
\begin{itemize}
\item[(i)]the coefficient $\mathfrak{c}(\mathcal{X})$ is independent on $\mathcal{X}$ and:
$$\mathfrak{c}(\mathcal{X})=\mathfrak{c}_n:=\frac{\sqrt{\pi}\Gamma\left(\frac{2n-1}{4}\right)\sigma(\mathbb{S}(\mathfrak{n}))}{(2n+1)\Gamma\left(\frac{2n+1}{4}\right)},$$
\item[(ii)]the coefficient $\mathfrak{e}(\mathcal{X})$ has the following expression: 
$$\mathfrak{e}(\mathcal{X})=\int_{\mathbb{S}(\mathfrak{n})}\int_{-\frac{\pi}{2}}^{\frac{\pi}{2}}\frac{\mathfrak{d}(\vartheta)\Big(\big(\frac{7}{32}+\frac{n}{16}\big)\frac{\overline{B}^2}{A^2}-\frac{\overline{C}}{4A}-\frac{\mathcal{A}\overline{B}}{4A}+\frac{\overline{\mathcal{B}}}{(2n+3)}\Big)}{c^2 A^{\frac{2n+3}{4}}}d\vartheta d\sigma,$$
where the coefficients $A,\overline{B},\overline{C}$ were introduced in Proposition \ref{strutto} and $\mathcal{A}$ and $\overline{\mathcal{B}}$ in Proposition \ref{sviluppodens},
\item[(iii)] $\lvert R_6(r)\rvert\leq \mathfrak{C}_{11}(n) r^{2n+3}$ for any $0<r<\mathfrak{r}_5$ and for some constant $\mathfrak{C}_{11}(n)=\mathfrak{C}_{11}(n,\mathcal{X})$ depending only on $\mathcal{X}$.
\end{itemize}
\end{proposizione}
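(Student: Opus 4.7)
The plan is to combine Proposition \ref{rapr1}, Corollary \ref{Co1}, Proposition \ref{propexp} and Lemma \ref{sviluppodens} to compute $\lvert\partial\mathbb{K}\rvert(B_r(\mathcal{X}))$ as a Taylor polynomial in $r$ of order $2n+3$. By Proposition \ref{rapr1} the perimeter equals
\[
\int_{\mathbb{S}(\mathfrak{n})}\int_{-\pi/2}^{\pi/2}\int_{\{H(\rho,\vartheta,v)\le r^4\}}\Xi(\rho,\vartheta,v)\,d\rho\,d\vartheta\,d\sigma(v),
\]
and thanks to Corollary \ref{Co1} the inner integration in $\rho$ is, up to an error controlled by any prescribed $\epsilon$, the integration over $[0,P_{\vartheta,v}(r)]$ where $P_{\vartheta,v}(r)$ is the explicit cubic polynomial in $r$ produced by Proposition \ref{propexp}. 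The corollary guarantees that the neglected $\rho$-range has length $O(r^4)$ uniformly in $(\vartheta,v)$, and on that range the integrand $\Xi$ is of order $\rho^{2n}\sim r^{2n}$; consequently the error is absorbed in $\epsilon R_2(r)$ as in the statement. Lemma \ref{sviluppodens} meanwhile rewrites
\[
\Xi(\rho,\vartheta,v)=\frac{\rho^{2n}\cos^{2n-2}\vartheta(1+\sin^2\vartheta)}{c}\bigl(c+\mathcal{A}\rho+\mathcal{B}\rho^2+R_1(\rho)\bigr),
\]
so I can integrate term by term in $\rho$ and obtain an explicit polynomial in $P_{\vartheta,v}(r)$ of total degree $2n+3$ in $r$, up to errors of order $r^{2n+4}$.

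Next I would substitute the expansion of $P_{\vartheta,v}(r)$ into $P_{\vartheta,v}(r)^{2n+1}$, $P_{\vartheta,v}(r)^{2n+2}$ and $P_{\vartheta,v}(r)^{2n+3}$ and collect powers of $r$. The $r^{2n+1}$ coefficient reduces to $\int\int\mathfrak{d}(\vartheta)A^{-(2n+1)/4}\,d\vartheta\,d\sigma/(2n+1)$ where $\mathfrak{d}(\vartheta):=\cos^{2n-2}\vartheta(1+\sin^2\vartheta)$; applying Lemma \ref{conto2} with $f\equiv 1$ and $\alpha=(2n+1)/4$ and then Lemma \ref{conto1} evaluates the $\vartheta$-integral to $\tfrac{\sqrt{\pi}\,\Gamma((2n-1)/4)}{\Gamma((2n+1)/4)}$, which after dividing by $2n+1$ gives the claimed $\mathfrak{c}_n$. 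In particular, this coefficient turns out to be independent of $\mathcal{X}$ and even of $\gamma(v)$, as the change of variable in Lemma \ref{conto2} absorbs the shift.

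The key observation for the $r^{2n+2}$ coefficient is that, after expanding, every surviving term contains either a single factor $B$ or a single factor $\mathcal{A}$ against $\mathfrak{d}(\vartheta)A^{-(2n+2)/4}$. By Lemma \ref{symA}(ii)–(iii) and Remark \ref{rk22}(i) both $B$ and $\mathcal{A}$ are odd in $v$ (and $B$ also odd in $\vartheta$) while $A$ and $\mathfrak{d}$ are even in both variables; hence the $v$-integration over $\mathbb{S}(\mathfrak{n})$ kills every contribution and the $r^{2n+2}$ coefficient vanishes. This is the conceptual heart of the statement and the reason why the expansion has no term of order $r^{2n+2}$.

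Finally, to extract $\mathfrak{e}(\mathcal{X})$, I collect all the $r^{2n+3}$ contributions: from $\tfrac{1}{2n+1}P_{\vartheta,v}(r)^{2n+1}$ with the $B^2$- and $C$-terms of the expansion of $P$, from $\tfrac{\mathcal{A}}{(2n+2)c}P_{\vartheta,v}(r)^{2n+2}$ paired with the linear $B$-term of $P$, and from $\tfrac{\mathcal{B}}{(2n+3)c}P_{\vartheta,v}(r)^{2n+3}$ at leading order. After writing everything in terms of the bar-quantities $\overline B, \overline C, \overline{\mathcal B}$ one obtains exactly the integrand in (ii). The error $R_2(r)$ collects the remainders from Proposition \ref{propexp}, from the $R_1$ in Lemma \ref{sviluppodens}, and from Corollary \ref{Co1}; each is uniformly bounded in $(\vartheta,v)$, so integrating against $\mathfrak{d}(\vartheta)\,d\vartheta\,d\sigma(v)$ gives a remainder bounded by a constant depending only on $\mathcal{X}$ (via the quantity $c$, which depends on $\mathcal X$ but not on $r$) times $r^{2n+3}$. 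The main technical nuisance — and the only step where genuine care is required — is tracking all cross terms in the cubic expansion of $P_{\vartheta,v}(r)^{2n+k}$ and verifying the cancellation of the $r^{2n+2}$ term; the rest is bookkeeping plus applications of Lemmas \ref{conto1}, \ref{conto2}, \ref{symA} and Remark \ref{rk22}.
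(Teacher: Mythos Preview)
Your proposal is correct and follows essentially the same approach as the paper's proof: represent the perimeter via Proposition \ref{rapr1}, replace the $\rho$-domain by $[0,P_{\vartheta,v}(r)]$ using Corollary \ref{Co1}, expand the area element through Lemma \ref{sviluppodens}, substitute the expansion of $P_{\vartheta,v}(r)^{2n+k}$, kill the $r^{2n+2}$ term by the oddness of $B$ and $\mathcal{A}$ in $v$ from Lemma \ref{symA} and Remark \ref{rk22}, and evaluate the leading coefficient via Lemmas \ref{conto1}--\ref{conto2}. The only cosmetic difference is that the paper writes $\mathfrak d(\vartheta)=\cos^{2n-2}\vartheta(\cos^2\vartheta+2\sin^2\vartheta)$ rather than your equivalent $\cos^{2n-2}\vartheta(1+\sin^2\vartheta)$, and tracks the intermediate remainders $\Delta_i^{\vartheta,v}$ a bit more explicitly, but the argument is the same.
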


\begin{proof}First of all, thanks to the definition of $\mathfrak{d}(\vartheta)$, see Proposition \ref{conto2}, we have that the density $\Xi(\rho,\vartheta, v)$ introduced in Proposition \ref{rapr1}(i), can be rewritten as:
\begin{equation}
    \Xi(\rho,\vartheta, v)=\mathfrak{d}(\vartheta)\cdot c^{-1}\rho^{2n}\cdot2\lvert(\mathcal{D}+J)[x+\mathcal{P}(\rho,\vartheta, v)]\rvert.
    \label{numerooo8}
\end{equation}
Hence, thanks to \eqref{numerooo8}, Lemma \ref{sviluppodens} and Proposition \ref{rapr1}, we deduce that:
    \begin{equation}
    \begin{split}
&\qquad\qquad\lvert\partial\mathbb{K}\rvert(B_r(\mathcal{X}))=\int_{\mathbb{S}^{2n-1}\cap\mathfrak{n}^\perp}\int_{-\frac{\pi}{2}}^{\frac{\pi}{2}}\int_{\{H(\rho,\vartheta,v)\leq r^4\}}\Xi(\rho,\vartheta, v)  d\rho d\vartheta d\sigma(v)\\
=&\int_{\mathbb{S}(\mathfrak{n})}\int_{-\frac{\pi}{2}}^{\frac{\pi}{2}}\int_{\{H(\rho,\vartheta,v)\leq r^4\}} \mathfrak{d}(\vartheta)\left( \rho^{2n}+\frac{\mathcal{A}}{c}\rho^{2n+1}+\frac{\mathcal{B}}{c}\rho^{2n+2}+\frac{R_5(\rho)\rho^{2n}}{c}\right)  d\rho d\vartheta d\sigma(v).
\nonumber
\end{split}
\end{equation}
We now proceed giving estimates of each term in the last line of the above identity. In the proof of Proposition \ref{propexp} we showed that $\{H(\rho,\vartheta,v)\leq r^4\}\subseteq\{\rho\leq \mathfrak{c}_1r\}$ whenever $0<r<\mathfrak{r}_1$ and where we recall that $\mathfrak{c}_1$ is a constant independent on $\vartheta$ and $v$. Therefore, if $r\leq \mathfrak{r}_1$ we have:
\begin{equation}
\begin{split}
     \int_{\{H(\rho,\vartheta,v)\leq r^4\}}R_5(\rho)\rho^{2n} d\rho\leq\int_0^{\mathfrak{c}_1r}R_5(\rho)\rho^{2n} d\rho\leq\mathfrak{c}_5\int_0^{\mathfrak{c}_1r}\rho^{2n+3} d\rho
     \leq \frac{\mathfrak{c}_5 \mathfrak{c}_1^{2n+4} r^{2n+4}}{(2n+4)},
\end{split}
\label{numbero44}
\end{equation}
where the second inequality comes from Lemma \ref{sviluppodens} provided $\mathfrak{c}_1r\leq\mathfrak{r}_4$.
Moreover, by Proposition \ref{Co1} for any $\epsilon>0$ there exists $\mathfrak{r}_3>0$ for which for any $0<r<\mathfrak{r}_3$ we have:
\begin{equation}
    \begin{split}
    \Big\lvert\int_{\{H(\rho,\vartheta,v)\leq r^4\}} &\rho^j d\rho-\int_0^{P_{\vartheta, v}(r)} \rho^j d\rho\Big\rvert  \leq \int_0^{P_{\vartheta,v}(r)+\epsilon r^3} \rho^j d\rho-\int_0^{P_{\vartheta,v}(r)-\epsilon r^3} \rho^j d\rho\\
    =&\frac{(P_{\vartheta,v}(r)+\epsilon r^3)^{j+1}-(P_{\vartheta,v}(r)-\epsilon r^3)^{j+1}}{j+1}\leq \frac{2^{j+1}\epsilon P_{\vartheta,v}(r)^j r^3}{j+1},
        \label{numbero45}
    \end{split}
\end{equation}
where in the last estimate we used the fact that $(1+a)^{j+1}-(1-a)^{j+1}\leq 2^{j+1}a$ provided $0<a<1$, which is true provided $\epsilon$ is chosen sufficiently small: note that such a choice depends only on $\mathcal{X}$ and not on $\vartheta,v$ thanks to the definition of $P_{\vartheta,v}(r)$.
Thanks to the bounds \eqref{numbero44} and \eqref{numbero45}, for any $0<r<\min\{\mathfrak{r}_1,\mathfrak{r}_2,\mathfrak{r}_3,\mathfrak{r}_4/\mathfrak{c}_1\}$ we have:
\begin{equation}
    \begin{split}
        \lvert R_7^{\vartheta,v}(r)\rvert:=
        &\Bigg\lvert\int_{\{H(\rho,\vartheta,v)\leq r^4\}} \Big( \rho^{2n}+\frac{\mathcal{A}}{c}\rho^{2n+1}+\frac{\mathcal{B}}{c}\rho^{2n+2}+\frac{R_5(\rho)\rho^{2n}}{c}\Big)  d\rho\\
        &\qquad\qquad\qquad\qquad\qquad\qquad-\int_0^{P_{\vartheta, v}(r)} \Big( \rho^{2n}+\frac{\mathcal{A}}{c}\rho^{2n+1}+\frac{\mathcal{B}}{c}\rho^{2n+2}\Big)  d\rho\Bigg\rvert\\
        \leq &\epsilon\frac{2^{2n+1} P_{\vartheta,v}(r)^{2n} r^3}{2n+1}+\epsilon\frac{2^{2n+2}\lVert\mathcal{A}\rVert_\infty P_{\vartheta,v}(r)^{2n+1} r^3}{(2n+2)c}\\
        &\qquad\qquad\qquad\qquad\,\,\,+\epsilon\frac{2^{2n+3}\lVert\mathcal{B}\rVert_\infty P_{\vartheta,v}(r)^{2n+2} r^3}{(2n+3)c}+\frac{\mathfrak{c}_5 \mathfrak{c}_1^{2n+4} r^{2n+4}}{(2n+4)},
    \end{split}
\label{numbero50}
\end{equation}
where $\lVert\mathcal{A}\rVert_\infty$ and $\lVert\mathcal{B}\rVert_\infty$ are the supremum norms of the functions $\mathcal{A}$ and $\mathcal{B}$ on $[-\pi/2,\pi/2]\times \mathbb{S}(\mathfrak{n})$. Furthermore, inequality \eqref{numbero50} together with the definition of $P_{\vartheta,v}$, see \eqref{N:3}, implies that we can find  an $\mathfrak{r}_6>0$ depending only on $\epsilon$ and $\mathcal{X}$ such that for any $0<r<\mathfrak{r}_6$ we have $\lvert R_7^{\vartheta,v}\rvert\leq \epsilon\mathfrak{C}_{12}(n) r^{2n+3}$, where the constant $\mathfrak{C}_{12}(n)$ depends only on $\mathcal{X}$.

Moreover, explicitly computing $P_{\vartheta,v}(r)^j$ for $j=2n+1,\ldots,2n+3$ and truncating the expression to the desired order of $r$, we can prove the  
existence of an $\mathfrak{r}_7>0$ and a constant $\mathfrak{C}_{13}(n)>0$, depending only on $\mathcal{X}$, such that:
\begin{equation}
\begin{split}
\lvert\Delta_1^{\vartheta,v}(r)\rvert:=&\Bigg\lvert P_{\vartheta, v}(r)^{2n+1}-\frac{r^{2n+1}}{A^\frac{2n+1}{4}}+\frac{(2n+1)Br^{2n+2}}{4A^{\frac{2n+6}{4}}}\\
&\qquad\qquad\qquad-\frac{2n+1}{A^\frac{2n+3}{4}}\Big((2n+7)\frac{B^2}{32A^2}-\frac{C}{4A}\Big)r^{2n+3}\Bigg\rvert\leq \mathfrak{C}_{13}(n) r^{2n+4},\\
\lvert\Delta_2^{\vartheta,v}(r)\rvert:=&\Bigg\lvert P_{\vartheta, v}(r)^{2n+2}-\frac{r^{2n+2}}{A^\frac{2n+2}{4}}+\frac{(2n+2)Br^{2n+3}}{4A^\frac{2n+7}{4}}\Bigg\rvert\leq \mathfrak{C}_{13}(n) r^{2n+4},\\
\lvert\Delta_3^{\vartheta,v}(r)\rvert:=&\Bigg\lvert P_{\vartheta, v}(r)^{2n+3}-\frac{r^{2n+3}}{A^\frac{2n+3}{4}}\Bigg\rvert\leq \mathfrak{C}_{13}(n) r^{2n+4}.
\end{split}
\label{numbero51}
\end{equation}

Therefore, defined $R_8^{\vartheta,v}(r):=R_7^{\vartheta,v}(r)+\Delta_1^{\vartheta,v}(r)+\Delta_2^{\vartheta,v}(r)+\Delta_3^{\vartheta,v}(r)$, thanks to the bounds in \eqref{numbero50} and \eqref{numbero51}, we infer that, for any $\epsilon>0$ there exists an $\mathfrak{r}_8:=\mathfrak{r}_8(\epsilon)>0$ and a constant $\mathfrak{C}_{14}(n)$, depending only on $\epsilon$ and $\mathcal{X}$,
such that for any $0<r<\mathfrak{r}_8$, we have $\lvert R_8^{\vartheta,v}(r)\rvert\leq \epsilon\mathfrak{C}_{14}(n) r^{2n+3}$ and:
\begin{equation}
    \begin{split}
    \int_{\{H(\rho,\vartheta,v)\leq r^4\}} \bigg( \rho^{2n}+&\frac{\mathcal{A}}{c}\rho^{2n+1}+\frac{\mathcal{B}}{c}\rho^{2n+2}+\frac{R_5(\rho)\rho^{2n}}{c}\bigg)  d\rho\\
    =&\int_0^{P_{\vartheta, v}(r)} \bigg( \rho^{2n}+\frac{\mathcal{A}}{c}\rho^{2n+1}+\frac{\mathcal{B}}{c}\rho^{2n+2}\bigg)  d\rho+R_7^{\vartheta, v}(r)\\
    =& \frac{P_{\vartheta,v}(r)^{2n+1}}{2n+1}+\frac{\mathcal{A}P_{\vartheta,v}(r)^{2n+2}}{(2n+2)c}+\frac{\mathcal{B} P_{\vartheta,v}(r)^{2n+3}}{(2n+3)c}+R_7^{\vartheta,v}(r)\\
    =&\frac{r^{2n+1}}{(2n+1)A^{\frac{2n+1}{4}}}
    +\bigg(\frac{\mathcal{A}}{(2n+2)c}-\frac{B}{4A}\bigg)\frac{r^{2n+2}}{A^{\frac{2n+2}{4}}}\\&\quad+\bigg((2n+7)\frac{\overline{B}^2}{32A^2}-\frac{\overline{C}}{4A}-\frac{\mathcal{A}\overline{B}}{4A}+\frac{\overline{\mathcal{B}}}{(2n+3)}\bigg)\frac{r^{2n+3}}{c^2A^\frac{2n+3}{4}}+R_8^{\vartheta,v}(r),
        \nonumber
    \end{split}
\end{equation}
 where the functions $\overline{B},\ldots,\overline{E}$ and $\mathcal{A}$, $\overline{\mathcal{B}}$ where introduced in the statement of Proposition \ref{strutto} and in Lemma \ref{sviluppodens} respectively. 
Putting together what we have deduced so far, we get:
\begin{equation}
    \begin{split}
&\qquad\lvert \partial\mathbb{K}\rvert(B_r(x))=\frac{r^{2n+1}}{(2n+1)}\int_{\mathbb{S}(\mathfrak{n})}\int_{-\frac{\pi}{2}}^{\frac{\pi}{2}}\frac{\mathfrak{d}(\vartheta)}{A^{\frac{2n+1}{4}}}d\vartheta d\sigma\\
&\qquad\qquad\qquad\qquad+r^{2n+2}\int_{\mathbb{S}(\mathfrak{n})}\int_{-\frac{\pi}{2}}^{\frac{\pi}{2}} \frac{\mathfrak{d}(\vartheta)}{A^{\frac{2n+2}{4}}}\left(\frac{\mathcal{A}}{(2n+2)c}-\frac{B}{4A}\right)d\vartheta d\sigma\\
+r^{2n+3}&\int_{\mathbb{S}(\mathfrak{n})}\int_{-\frac{\pi}{2}}^{\frac{\pi}{2}}\frac{\mathfrak{d}(\vartheta)}{c^2A^{\frac{2n+3}{4}}}\left(\frac{(2n+7)\overline{B}^2}{32A^2}-\frac{\overline{C}}{4A}-\frac{\mathcal{A}\overline{B}}{4A}+\frac{\overline{\mathcal{B}}}{(2n+3)}\right)d\vartheta d\sigma+R_6(r),
        \label{numerooo10}
    \end{split}
\end{equation}
where $R_6(r):=\int_{\mathbb{S}(\mathfrak{n})}\int_{-\frac{\pi}{2}}^{\frac{\pi}{2}}R_8^{\vartheta,v}(r) d\vartheta d\sigma$. Furthermore, we note that for any $0<r<\mathfrak{r}_8$ we have $\lvert R_6(r)\rvert\leq \epsilon \mathfrak{C}_{15} r^{2n+3}$, where $\mathfrak{C}_{15}$ depends only on $\mathcal{X}$. 

In addition to this, since $B$ and $\mathcal{A}$ are odd functions on $\mathbb{S}(\mathfrak{n})$, see Lemma \ref{symA} and Remark \ref{rk22}, we deduce that:
$$\int\limits_{\mathbb{S}(\mathfrak{n})}\frac{\mathcal{A}}{A^\frac{2n+2}{4}} d\sigma=0,\qquad\int\limits_{\mathbb{S}(\mathfrak{n})}\frac{B}{4A^{\frac{2n+6}{4}}}d\sigma=0.$$
Thanks to Fubini's theorem the above identities combined with \eqref{numerooo10}, yield:
\begin{equation}
    \begin{split}
&\qquad\qquad\qquad\lvert \partial\mathbb{K}\rvert(B_r(x))=\frac{r^{2n+1}}{(2n+1)}\int_{\mathbb{S}(\mathfrak{n})}\int_{-\frac{\pi}{2}}^{\frac{\pi}{2}}\frac{\mathfrak{d}(\vartheta)}{A^{\frac{2n+1}{4}}}d\vartheta d\sigma\\
+&r^{2n+3}\int_{\mathbb{S}(\mathfrak{n})}\int_{-\frac{\pi}{2}}^{\frac{\pi}{2}}\frac{\mathfrak{d}(\vartheta)}{A^{\frac{2n+3}{4}}}\left(\frac{(2n+7)\overline{B}^2}{32A^2}-\frac{\overline{C}}{4A}-\frac{\mathcal{A}\overline{B}}{4A}+\frac{\overline{\mathcal{B}}}{(2n+3)}\right)d\vartheta d\sigma+R_3(r),
        \nonumber
    \end{split}
\end{equation}
We are left to prove that $\int_{\mathbb{S}(\mathfrak{n})}\int_{-\frac{\pi}{2}}^{\frac{\pi}{2}}\frac{\mathfrak{d}(\vartheta)}{A^{\frac{2n+1}{4}}}d\vartheta d\sigma$ is a constant depending only on $n$. This is done by explicit computation:
\begin{equation}
\begin{split}
    \int_{-\frac{\pi}{2}}^{\frac{\pi}{2}}\frac{\mathfrak{d}(\vartheta)}{A^{\frac{2n+1}{4}}}d\vartheta=\int_{-\infty}^\infty\frac{dx}{(1+(x+\gamma(v))^2)^\frac{2n+1}{4}}=\frac{\sqrt{\pi}\Gamma\left(\frac{2n-1}{4}\right)}{\Gamma\left(\frac{2n+1}{4}\right)},
\end{split}
    \nonumber
\end{equation}
where we used Lemma \ref{conto2} in the first equality and Lemma \ref{conto1} in the second .
\end{proof}

\begin{osservazione}
The constant $\mathfrak{c}_n$ is the same constant appearing in Propositions \ref{rapperhor} and \ref{MANCA}. Although we could deduce from Proposition \ref{rapperhor} that the leading term in the expansion \eqref{N:5} is constant and its value, we nevertheless decided to carry out the explicit computation of the coefficient $\mathfrak{c}(\mathcal{X})$ in the proof of Proposition \ref{TEXP}.
\end{osservazione}

\begin{definizione}\label{Cn}
Throughout the rest of this appendix we will always denote by $\mathcal{C}_n$ the constant:
$$\mathcal{C}_n:=\frac{\sqrt{\pi}\Gamma\left(\frac{2n+1}{4}\right)}{\frac{2n+3}{4}\Gamma\left(\frac{2n+3}{4}\right)}.$$
\end{definizione}

In the previous propositions we gave a first characterisation of the coefficient of the Taylor expansion of the perimeter of quadratic surfaces. The coefficient relative $r^{2n+1}$ has been proved to be a constant depending only on $n$ and the one of $r^{2n+2}$ has been proved null. In the following proposition we investigate more carefully the structure of the coefficient relative to $r^{2n+3}$:

\begin{proposizione}\label{TEXP2}
In the notation of the previous propositions we have:
\begin{equation}
   \begin{split}
    \frac{c^2\mathfrak{e}(\mathcal{X})}{\mathcal{C}_n\sigma(\mathbb{S}(\mathfrak{n}))}=&\frac{1}{4}\frac{\text{Tr}(\mathcal{D}^2)-2\langle \mathfrak{n},\mathcal{D}^2\mathfrak{n}\rangle+\langle \mathfrak{n},\mathcal{D}\mathfrak{n}\rangle^2}{2n-1}+\frac{n-1}{2n-1}
        -\frac{1}{4}\\
        &\qquad\qquad\qquad\qquad\qquad\qquad+\frac{\langle \mathcal{D}J\mathfrak{n},\mathfrak{n}\rangle}{2n-1}-\frac{1}{8}\frac{\left(\text{Tr}(\mathcal{D})-\langle \mathfrak{n},\mathcal{D}\mathfrak{n}\rangle\right)^2}{2n-1}.
    \nonumber
\end{split} 
\end{equation}
\end{proposizione}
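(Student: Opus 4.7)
The strategy is essentially direct computation: one starts from the integral representation of $\mathfrak{e}(\mathcal{X})$ provided by Proposition \ref{TEXP}(ii), which reads
$$\mathfrak{e}(\mathcal{X})=\int_{\mathbb{S}(\mathfrak{n})}\int_{-\pi/2}^{\pi/2}\frac{\mathfrak{d}(\vartheta)}{A^{(2n+3)/4}}\Bigl(\tfrac{(2n+7)\overline{B}^2}{32A^2}-\tfrac{\overline{C}}{4A}-\tfrac{\mathcal{A}\overline{B}}{4A}+\tfrac{\overline{\mathcal{B}}}{2n+3}\Bigr)d\vartheta\,d\sigma(v),$$
and evaluates it by first integrating in $\vartheta$ for each fixed $v\in\mathbb{S}(\mathfrak{n})$, and then integrating in $v$ over the sphere $\mathbb{S}(\mathfrak{n}):=\mathbb{S}^{2n-1}\cap\mathfrak{n}^\perp$.

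First, I would read off from Proposition \ref{strutto} and Lemma \ref{sviluppodens} that each of $\overline{B},\overline{C},\mathcal{A},\overline{\mathcal{B}}$ factorises as a rational function of $\sin\vartheta,\cos\vartheta$ multiplied by a polynomial in the scalars $\alpha_\mathfrak{n},\beta_\mathfrak{n}(v),\gamma(v),\langle Jv,\mathfrak{n}\rangle$. This allows the $\vartheta$-integrals to be put in the exact form of Lemma \ref{conto2}, with $\alpha\in\{\tfrac{2n+3}{4},\tfrac{2n+7}{4},\tfrac{2n+11}{4}\}$ depending on the term, and with $f$ a polynomial in $x=\sin\vartheta/\cos^2\vartheta$. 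Applying Lemma \ref{conto2} replaces each $\vartheta$-integral by $\int_{\R}f(x)/(1+(x+\gamma(v))^2)^\alpha\,dx$, which via Lemma \ref{conto1} evaluates explicitly in terms of Gamma functions and of $\gamma(v)$. Terms odd in $\sin\vartheta$ (equivalently odd in $x+\gamma(v)$ after the shift) vanish, pruning the computation.

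Next, the $v$-integration. What remains inside $\int_{\mathbb{S}(\mathfrak{n})}(\cdot)\,d\sigma$ is a polynomial of low degree in $\gamma(v)$, $\beta_\mathfrak{n}(v)$ and $\langle Jv,\mathfrak{n}\rangle$. I would compute the needed averages using the standard identity
$$\fint_{\mathbb{S}(\mathfrak{n})} v\otimes v\,d\sigma=\frac{P_\mathfrak{n}}{2n-1},$$
and its quartic analogue, to obtain
\begin{align*}
\fint\gamma(v)\,d\sigma&=\tfrac{\mathrm{Tr}(\mathcal{D})-\alpha_\mathfrak{n}}{2n-1},\\
\fint\beta_\mathfrak{n}(v)^2\,d\sigma&=\tfrac{\langle\mathfrak{n},\mathcal{D}^2\mathfrak{n}\rangle-\alpha_\mathfrak{n}^2}{2n-1},\\
\fint\beta_\mathfrak{n}(v)\langle Jv,\mathfrak{n}\rangle\,d\sigma&=-\tfrac{\langle\mathcal{D}J\mathfrak{n},\mathfrak{n}\rangle}{2n-1},\\
\fint\gamma(v)^2\,d\sigma&=\tfrac{2(\mathrm{Tr}(\mathcal{D}^2)-2\langle\mathfrak{n},\mathcal{D}^2\mathfrak{n}\rangle+\alpha_\mathfrak{n}^2)+(\mathrm{Tr}(\mathcal{D})-\alpha_\mathfrak{n})^2}{(2n-1)(2n+1)},
\end{align*}
(and analogous lower-order identities), noting that $P_\mathfrak{n}J\mathfrak{n}=J\mathfrak{n}$ since $\langle J\mathfrak{n},\mathfrak{n}\rangle=0$. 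After this step every intermediate quantity is expressed in the basis $\{\mathrm{Tr}(\mathcal{D}^2),\langle\mathfrak{n},\mathcal{D}^2\mathfrak{n}\rangle,(\mathrm{Tr}(\mathcal{D})-\alpha_\mathfrak{n})^2,\langle\mathcal{D}J\mathfrak{n},\mathfrak{n}\rangle,\alpha_\mathfrak{n}^2,1\}$, which matches the target right-hand side.

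Finally I would assemble the four contributions: the $\overline{B}^2/A^2$ and $\overline{C}/A$ terms produce the combination $\mathrm{Tr}(\mathcal{D}^2)-2\langle\mathfrak{n},\mathcal{D}^2\mathfrak{n}\rangle+\alpha_\mathfrak{n}^2$ and the square $(\mathrm{Tr}(\mathcal{D})-\alpha_\mathfrak{n})^2$; the $\mathcal{A}\overline{B}/A$ term, being the only one containing the mixed factor $\beta_\mathfrak{n}(v)\langle Jv,\mathfrak{n}\rangle$, is the unique source of $\langle\mathcal{D}J\mathfrak{n},\mathfrak{n}\rangle/(2n-1)$; and the $\overline{\mathcal{B}}/(2n+3)$ term, whose $v$-integrand is essentially a constant after the sphere integration, accounts for the purely dimensional constant $\frac{n-1}{2n-1}-\frac14$. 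Checking that all the Gamma-function prefactors line up with the coefficient $\mathfrak{c}_n$ of $r^{2n+1}$ appearing in Proposition \ref{TEXP}(i) (so that, for example, the rational combinations of $\Gamma(\tfrac{2n-1}{4}),\Gamma(\tfrac{2n+1}{4}),\ldots$ collapse to the clean denominators $2n-1$ and $8(2n-1)$) will be the main obstacle: the bookkeeping is bulky, cancellations are delicate, and the symmetry arguments for the $v$-integrals have to be invoked simultaneously with the explicit $\Gamma$-ratio evaluations. No essentially new idea is needed, only careful algebra organised around Lemmas \ref{symA}, \ref{conto1} and \ref{conto2}.
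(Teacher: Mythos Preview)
Your approach is the same as the paper's: split the integrand from Proposition~\ref{TEXP}(ii) into the four pieces, evaluate each $\vartheta$-integral via Lemmas~\ref{conto1} and~\ref{conto2}, then compute the remaining sphere averages and assemble. The list of sphere averages you wrote down is correct and matches the paper's intermediate computations.

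One misattribution in your final paragraph deserves correction, however. You write that the $\overline{\mathcal{B}}/(2n+3)$ term ``accounts for the purely dimensional constant $\frac{n-1}{2n-1}-\frac14$''. This is not right: $\overline{\mathcal{B}}$ contains the piece $\lvert P_\mathfrak{n}(\mathcal{D}+J)v\rvert^2$, whose sphere average
\[
\fint_{\mathbb{S}(\mathfrak{n})}\lvert P_\mathfrak{n}(\mathcal{D}+J)v\rvert^2\,d\sigma
=\frac{\mathrm{Tr}(\mathcal{D}^2)-2\langle\mathfrak{n},\mathcal{D}^2\mathfrak{n}\rangle+\alpha_\mathfrak{n}^2}{2n-1}+\frac{2n-2}{2n-1}
\]
is a \emph{principal} contributor to the $\mathrm{Tr}(\mathcal{D}^2)$ combination in the final answer (this average is absent from your displayed list). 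Indeed, in the paper the $\overline{B}^2$ and $\overline{C}$ terms produce only $\gamma(v)^2$ contributions, which carry a coefficient $-\tfrac{2n+1}{8}$; it is the $\overline{\mathcal{B}}$ term, with coefficient $+\tfrac12$, that brings the net coefficient of $(\mathrm{Tr}(\mathcal{D}^2)-2\langle\mathfrak{n},\mathcal{D}^2\mathfrak{n}\rangle+\alpha_\mathfrak{n}^2)/(2n-1)$ up to the final $+\tfrac14$. Also note that the $\mathcal{A}\overline{B}$ term contains $\beta_\mathfrak{n}(v)^2$ as well as the mixed $\beta_\mathfrak{n}(v)\langle Jv,\mathfrak{n}\rangle$, so it is not ``the unique source'' of anything---its $\beta_\mathfrak{n}^2$ contribution cancels against parts of (I) and (II). None of this invalidates your strategy, but your predicted assembly would not close without these pieces.
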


\begin{proof}
Thanks to Proposition \ref{TEXP}(ii), we have the following expression for the term $\mathfrak{e}(\mathcal{X})$:
\begin{equation}
\begin{split}
c^2\mathfrak{e}(\mathcal{X})=&\int_{\mathbb{S}(\mathfrak{n})}\Bigg(\underbrace{\frac{2n+7}{32}\int_{-\frac{\pi}{2}}^{\frac{\pi}{2}}\frac{\mathfrak{d}(\vartheta)\overline{B}^2}{A^\frac{2n+11}{4}}d\vartheta }_\text{(I)}-\underbrace{\int_{-\frac{\pi}{2}}^{\frac{\pi}{2}}\frac{\mathfrak{d}(\vartheta)\overline{C}}{4A^\frac{2n+7}{4}}d\vartheta}_\text{(II)}\\
&\qquad\qquad\qquad\qquad\qquad-\underbrace{\int_{-\frac{\pi}{2}}^{\frac{\pi}{2}}\frac{\mathfrak{d}(\vartheta)\mathcal{A}\overline{B}}{4A^\frac{2n+7}{4}}d\vartheta}_\text{(III)} +\underbrace{\int_{-\frac{\pi}{2}}^{\frac{\pi}{2}} \frac{\mathfrak{d}(\vartheta)\overline{\mathcal{B}}}{(2n+3)A^\frac{2n+3}{4}}d\vartheta}_\text{(IV)}\Bigg) d\sigma.
    \label{bigexp}
\end{split}
\end{equation}
We now study each one of the terms (I)$,\ldots,$(IV) separately. Let us start with the integral (I). Since $\overline{B}^2=\cos^{10}\vartheta16\beta_\mathfrak{n}(v)^2(\frac{\sin\vartheta}{\cos^2\vartheta})^2(\frac{\sin\vartheta}{\cos^2\vartheta}+\gamma(v))^2$, Lemmas \ref{conto1} and \ref{conto2} imply:
\begin{equation}
\begin{split}
&\frac{2n+7}{32}\int_{-\frac{\pi}{2}}^{\frac{\pi}{2}}\frac{\mathfrak{d}(\vartheta)\overline{B}^2}{A^{\frac{2n+11}{4}}}d\vartheta
=\frac{2n+7}{32}\int_{-\infty}^\infty \frac{16\beta_\mathfrak{n}(v)^2x^2(\gamma(v)+x)^2}{(1+(\gamma(v)+x)^2)^{\frac{2n+11}{4}}}d\vartheta\\
=&\frac{(2n+7)\beta_\mathfrak{n}(v)^2}{2}\int_{-\infty}^{\infty}\frac{x^2(x-\gamma(v))^2}{(1+x^2)^{\frac{2n+11}{4}}}dx\\
=&\frac{(2n+7)\beta_\mathfrak{n}(v)^2}{2}\left(\int_{-\infty}^{\infty}\frac{x^4}{(1+x^2)^{\frac{2n+11}{4}}}dx+\gamma(v)^2\int_{-\infty}^{\infty}\frac{x^2}{(1+x^2)^{\frac{2n+11}{4}}}dx\right)\\
=&2\mathcal{C}_n\beta_\mathfrak{n}(v)^2\left(\frac{3}{4}+\frac{(2n+1)\gamma(v)^2}{8}\right),
\label{numbero(I)}
\end{split}
\end{equation}
where the constant $\mathcal{C}_n$ was introduced in Definition \ref{Cn}.

We turn now our attention to (II). Since $\overline{C}=\cos^6\vartheta(\frac{\sin\vartheta}{\cos^2\vartheta})^2\Big((2+4\beta_\mathfrak{n}(v)^2+2\gamma(v)\alpha_\mathfrak{n})+2\alpha_\mathfrak{n}\frac{\sin\vartheta}{\cos^2\vartheta}\Big)$, Lemmas \ref{conto1} and \ref{conto2} imply:
\begin{equation}
    \begin{split}
        \int_{-\frac{\pi}{2}}^{\frac{\pi}{2}}&\frac{\mathfrak{d}(\vartheta)\overline{C}}{4A^\frac{2n+7}{4}}d\vartheta
         =\frac{1}{2}\int_{-\infty}^{\infty}\frac{x^2((1+2\beta_\mathfrak{n}(v)^2+\gamma(v)\alpha_\mathfrak{n})+\alpha_\mathfrak{n}x)}{\left(1+(x+\gamma(v))^2\right)^{\frac{2n+7}{4}}}dx\\
         =&\frac{1}{2}\int_{-\infty}^{\infty}\frac{(x-\gamma(v))^2((1+2\beta_\mathfrak{n}(v)^2)+\alpha_\mathfrak{n}x)}{\left(1+x^2\right)^{\frac{2n+7}{4}}}dx\\
         =&\frac{(1+2\beta_\mathfrak{n}(v)^2-2\alpha_\mathfrak{n}\gamma(v))}{2}\int_{-\infty}^{\infty}\frac{x^2}{\left(1+x^2\right)^{\frac{2n+7}{4}}}dx\\
         &\qquad\qquad\qquad\qquad\qquad\qquad\qquad+\frac{(1+2\beta_\mathfrak{n}(v)^2)\gamma(v)^2}{2}\int_{-\infty}^\infty\frac{dx}{\left(1+x^2\right)^{\frac{2n+7}{4}}}\\
         =&\mathcal{C}_n\frac{(2n+1)(1+2\beta_\mathfrak{n}^2(v))\gamma(v)^2+2+4\beta_\mathfrak{n}(v)^2-4\gamma(v)\alpha_\mathfrak{n}}{8}.
    \end{split}
    \label{numbero(II)}
\end{equation}
Since $\mathcal{A}\overline{B}=8\cos^6\vartheta(\beta_\mathfrak{n}(v)+\langle Jv,\mathfrak{n}\rangle)\beta_\mathfrak{n}(v)\frac{\sin\vartheta}{\cos^2\vartheta}\Big(\gamma(v)+\frac{\sin\vartheta}{\cos^2\vartheta}\Big)$, Lemmas \ref{conto1} and \ref{conto2} imply:
\begin{equation}
    \begin{split}
        &\int_{-\frac{\pi}{2}}^{\frac{\pi}{2}}\frac{\mathfrak{d}(\vartheta)\mathcal{A}\overline{B}}{4A^{\frac{2n+7}{4}}}d\vartheta
        =2(\beta_\mathfrak{n}(v)+\langle Jv,\mathfrak{n}\rangle)\beta_\mathfrak{n}(v)\int_{-\infty}^{\infty}\frac{x(\gamma(v)+x)}{(1+(\gamma(v)+x)^2)^{\frac{2n+7}{4}}}d\vartheta\\
        =&2(\beta_\mathfrak{n}(v)+\langle Jv,\mathfrak{n}\rangle)\beta_\mathfrak{n}(v)\int_{-\infty}^{\infty}\frac{x(x-\gamma(v))}{(1+x^2)^{\frac{2n+7}{4}}}dx
        =\mathcal{C}_n(\beta_\mathfrak{n}(v)+\langle Jv,\mathfrak{n}\rangle)\beta_\mathfrak{n}(v),
    \end{split}
    \label{numbero(III)}
\end{equation}
which concludes the discussion of the integral (III). Finally, we are left with the discussion of (IV). Thanks to the fact that $\overline{\mathcal{B}}=2\cos^2\vartheta\big(\alpha_\mathfrak{n}\frac{\sin\vartheta}{\cos^2\vartheta}+\lvert P_\mathfrak{n}(\mathcal{D}+J)v\rvert^2\big)$ Lemmas \ref{conto1} and \ref{conto2}, imply that:
\begin{equation}
    \begin{split}
    \int_{-\frac{\pi}{2}}^{\frac{\pi}{2}}\frac{\mathfrak{d}(\vartheta)}{A^{\frac{2n+3}{4}}}\frac{\overline{\mathcal{B}}}{(2n+3)}d\vartheta
    =&\frac{2}{2n+3}\int_{-\infty}^\infty \frac{\alpha_\mathfrak{n}x+\lvert P_\mathfrak{n}(\mathcal{D}+J)v\rvert^2}{(1+(x+\gamma(v)))^\frac{2n+3}{4}}dx\\
    =&\mathcal{C}_n\frac{-\alpha_\mathfrak{n} \gamma(v)+\lvert P_\mathfrak{n}(\mathcal{D}+J)v\rvert^2}{2}.
    \end{split}
    \label{numbero(IV)}
\end{equation}
Plugging the identities \eqref{numbero(I)}, \eqref{numbero(II)}, \eqref{numbero(III)}, \eqref{numbero(IV)} into \eqref{bigexp}, we get:
\begin{equation}
    \begin{split}
    \frac{c^2\mathfrak{e}(\mathcal{X})}{\mathcal{C}_n\sigma(\mathbb{S}(\mathfrak{n}))}=&\fint_{\mathbb{S}(\mathfrak{n})} \bigg[\beta_\mathfrak{n}(v)^2\left(\frac{3}{2}+\frac{2n+1}{4}\gamma(v)^2\right)\bigg]d\sigma(v)\\
    &-\fint_{\mathbb{S}(\mathfrak{n})} \bigg[\frac{(2n+1)(1+2\beta_\mathfrak{n}^2(v))\gamma(v)^2+2+4\beta_\mathfrak{n}(v)^2-4\gamma(v)\alpha_\mathfrak{n}}{8}\bigg]d\sigma(v)\\
    &+\fint_{\mathbb{S}(\mathfrak{n})}\left[-(\beta_\mathfrak{n}(v)+\langle Jv,\mathfrak{n}\rangle)\beta_\mathfrak{n}(v) +\frac{-\alpha_\mathfrak{n} \gamma(v)+\lvert P_\mathfrak{n}(\mathcal{D}+J)v\rvert^2}{2}\right] d\sigma(v)\\
    =-\frac{2n+1}{8}&\underbrace{\fint_{\mathbb{S}(\mathfrak{n})}\gamma(v)^2 d\sigma}_\text{(V)}-\frac{1}{4}-\underbrace{\fint_{\mathbb{S}(\mathfrak{n})}\beta_\mathfrak{n}(v)\langle Jv,n\rangle d\sigma}_\text{(VI)}+\underbrace{\fint_{\mathbb{S}(\mathfrak{n})}\frac{\lvert P_\mathfrak{n}(\mathcal{D}+J)v\rvert^2}{2} d\sigma}_\text{(VII)}.
        \label{equizias}
    \end{split}
\end{equation}

Eventually, in order to make the expression for $\mathfrak{e}(\mathcal{X})$ explicit, we need to compute the integrals (V), (VI) and (VII). To do so, we let $\mathcal{E}:=\{m_1,\ldots,m_{2n}\}$ be an orthonormal basis  of $\R^{2n}$ such that $m_1=\mathfrak{n}$ and:
\begin{equation}
    J m_i=\begin{cases}
    m_{n+i} &\text{if }i\in\{1,\ldots,n\},\\
    -m_{i-n} &\text{if }i\in\{n+1,\ldots,2n\}.
    \end{cases}
\end{equation}
With respect to the basis $\mathcal{E}$, the points $v\in\mathbb{S}(\mathfrak{n})$ are written as $v=\sum_{i=2}^{2n} v_i m_i$ where $v_i:=\langle v,m_i\rangle$. This is due to the fact that $v\in \mathfrak{n}^\perp$ by definition of $\mathbb{S}(\mathfrak{n})$. With these notations, the integral (V) becomes:
\begin{equation}
    \begin{split}
        \fint_{\mathbb{S}(\mathfrak{n})}&\gamma(v)^2 d\sigma(v)=\fint_{\mathbb{S}(\mathfrak{n})}\left(\sum_{i,j=2}^{2n}\langle m_i,\mathcal{D}m_j\rangle v_iv_j\right)^2d\sigma(v)\\
        =&\sum_{i,j,k,l=2}^{2n}\langle m_i,\mathcal{D}m_j\rangle\langle m_k,\mathcal{D}m_l\rangle \fint_{\mathbb{S}(\mathfrak{n})}v_iv_jv_kv_ld\sigma(v)\\
        =&\sum_{i=2}^{2n}\langle m_i,\mathcal{D}m_i\rangle^2\fint_{\mathbb{S}(\mathfrak{n})}v_i^4d\sigma(v)+\sum_{\substack{2\leq i,j\leq 2n\\ i\neq j}}\langle m_i,\mathcal{D}m_i\rangle\langle m_j,\mathcal{D}m_j\rangle\fint_{\mathbb{S}(\mathfrak{n})}v_i^2v_j^2d\sigma(v)\\
        &\qquad\qquad\qquad\qquad\qquad\qquad\qquad\qquad\qquad+2\sum_{\substack{2\leq i,j\leq 2n\\ i\neq j}}\langle m_i,\mathcal{D}m_j\rangle^2\fint_{\mathbb{S}(\mathfrak{n})}v_i^2v_j^2d\sigma(v),
        \nonumber
    \end{split}
\end{equation}
where the last equality comes from the fact that integrals of odd functions on $\mathbb{S}(\mathfrak{n})$ are null.
By direct computation or using formulas stated at the beginning of section $2c$ in \cite{Kowalski1986Besicovitch-typeSubmanifolds}, we have that:
\begin{equation}
    \begin{split}
        \fint_{\mathbb{S}(\mathfrak{n})}\gamma(v)^2 d\sigma(v)=\frac{3}{4n^2-1}\sum_{i=2}^{2n}\langle m_i,\mathcal{D}m_i\rangle^2
        +&\frac{1}{4n^2-1}\sum_{i\neq k=2}^{2n}\langle m_i,\mathcal{D}m_i\rangle\langle m_k,\mathcal{D}m_k\rangle\\
        +&\frac{2}{4n^2-1}\sum_{i\neq j=2}^{2n}\langle m_i,\mathcal{D}m_j\rangle^2\\
        =\frac{2}{4n^2-1}\sum_{i,j=2}^{2n}\langle m_i,\mathcal{D}m_j\rangle^2+&\frac{1}{4n^2-1}\left(\sum_{i=2}^{2n}\langle m_i,\mathcal{D}m_i\rangle\right)^2.
        \label{numerooo15}
    \end{split}
    \end{equation}
Since the matrix $\mathcal{D}$ is symmetric, the well-known expression $\text{Tr}(\mathcal{D}^2)=\sum_{i,j=1}^{2n}\langle m_i,\mathcal{D}m_j\rangle^2$, implies:
\begin{equation}
    \sum_{i,j=2}^{2n}\langle m_i,\mathcal{D}m_j\rangle^2=\mathrm{Tr}(\mathcal{D}^2)-2\sum_{i=2}^{2n}\langle m_i,\mathcal{D}\mathfrak{n}\rangle^2-\langle\mathfrak{n},\mathcal{D}\mathfrak{n}\rangle^2.
    \label{numerooo11}
\end{equation}
Furthermore, since $\lvert \mathcal{D}\mathfrak{n}\rvert^2=\langle \mathfrak{n},\mathcal{D}^2\mathfrak{n}\rangle$, we also have:
\begin{equation}
    \langle \mathfrak{n},\mathcal{D}^2\mathfrak{n}\rangle=\sum_{i=1}^{2n} \langle m_i,\mathcal{D}\mathfrak{n}\rangle^2=\sum_{i=2}^{2n} \langle m_i,\mathcal{D}\mathfrak{n}\rangle^2+ \langle \mathfrak{n},\mathcal{D}\mathfrak{n}\rangle^2.
    \label{numerooo12}
\end{equation}
Putting together \eqref{numerooo11} and \eqref{numerooo12}, we infer that:
\begin{equation}
    \sum_{i,j=2}^{2n}\langle m_i,\mathcal{D}m_j\rangle^2=\mathrm{Tr}(\mathcal{D}^2)-2\big(\langle \mathfrak{n},\mathcal{D}^2\mathfrak{n}\rangle-\langle \mathfrak{n},\mathcal{D}\mathfrak{n}\rangle^2\big)-\langle\mathfrak{n},\mathcal{D}\mathfrak{n}\rangle^2=\mathrm{Tr}(\mathcal{D}^2)-2\langle \mathfrak{n},\mathcal{D}^2\mathfrak{n}\rangle+\langle\mathfrak{n},\mathcal{D}\mathfrak{n}\rangle^2.
    \label{numerooo14}
\end{equation}
Plugging \eqref{numerooo14} into \eqref{numerooo15}, we conclude that:
    \begin{equation}
    \begin{split}
        \fint_{\mathbb{S}(\mathfrak{n})}\gamma(v)^2 d\sigma(v)
        =&\frac{2\text{Tr}(\mathcal{D}^2)-4\langle \mathfrak{n},\mathcal{D}^2\mathfrak{n}\rangle+2\langle \mathfrak{n},\mathcal{D}\mathfrak{n}\rangle^2+\left(\text{Tr}(\mathcal{D})-\langle \mathfrak{n},\mathcal{D}\mathfrak{n}\rangle\right)^2}{(2n-1)(2n+1)}.
        \nonumber
    \end{split}
    \end{equation}

The computation of the integral (VI) is much easier. Indeed:
\begin{equation}
\begin{split}
    \fint_{\mathbb{S}(\mathfrak{n})}\langle Jv,\mathfrak{n}\rangle\beta_\mathfrak{n}(v)d\sigma(v)=&\sum_{i,j=2}^{2n}\langle Jm_i,\mathfrak{n}\rangle\langle \mathcal{D}m_j,\mathfrak{n}\rangle\fint_{\mathbb{S}(\mathfrak{n})}v_iv_j d\sigma(v)=-\frac{\langle \mathcal{D}J\mathfrak{n},\mathfrak{n}\rangle}{2n-1},
    \nonumber
\end{split}
\end{equation}
where the last equality comes from the fact that $\langle Jm_i,\mathfrak{n}\rangle\neq 0$ if and only if $i=n+1$, by the choice of the basis $\mathcal{E}$, and that:
\begin{equation}
    \fint_{\mathbb{S}(\mathfrak{n})}v_iv_j d\sigma(v)=\begin{cases}
    0 &\text{if } i\neq j,\\
    \frac{1}{2n-1} &\text{if } i=j.
    \end{cases}
    \label{eq100}
\end{equation}
We are left to study the integral (VII). Since $P_\mathfrak{n}$ is the orthogonal projection on $\mathfrak{n}^\perp$, we have that:
\begin{equation}
\begin{split}
    \lvert P_{\mathfrak{n}}((\mathcal{D}+J)v)\rvert^2=&\sum_{i=2}^{2n} \langle m_i, (\mathcal{D}+J)v\rangle^2=\sum_{i,j,k=2}^{2n}v_j v_k\langle m_i, (\mathcal{D}+J)m_j\rangle\langle m_i, (\mathcal{D}+J)m_k\rangle.
    \nonumber
\end{split}
\end{equation}
Furthermore, thanks to \eqref{eq100}, we deduce that:
\begin{equation}
\begin{split}
    \fint_{\mathbb{S}(\mathfrak{n})}\lvert P_{\mathfrak{n}}((\mathcal{D}+J)v)\rvert^2&d\sigma(v)=\sum_{i,j,k=2}^{2n}\langle m_i, (\mathcal{D}+J)m_j\rangle\langle m_i, (\mathcal{D}+J)m_k\rangle\fint_{\mathbb{S}(\mathfrak{n})}v_j v_kd\sigma(v)\\
    =&\sum_{i,j=2}^{2n}\langle m_i, (\mathcal{D}+J)m_j\rangle^2\fint_{\mathbb{S}(\mathfrak{n})}v_j^2d\sigma(v)=\frac{1}{2n-1}\sum_{i,j=2}^{2n}\langle m_i, (\mathcal{D}+J)m_j\rangle^2.
    \nonumber
    \end{split}
\end{equation}
We wish now to make $\sum_{i,j=2}^{2n}\langle m_i, (\mathcal{D}+J)m_j\rangle^2$ more explicit. In order to do so note that by definition of $\mathcal{E}$, we have:
\begin{equation}
    \langle m_i,Jm_j\rangle=\begin{cases}
    -1 &\text{if }i\in\{1,\ldots,n\}\text{ and }j=i+n,\\
    1 &\text{if }i\in\{n+1,\ldots,2n\}\text{ and }j=i-n,\\
    0 &\text{otherwise}.
    \end{cases}
    \label{eq101}
\end{equation}
The identities in \eqref{eq101} imply that $\sum_{i,j=2}^{2n}\langle m_i,Jm_j\rangle^2=2n-2$ and since $\mathcal{D}$ is symmetric, we also have that:
$$\sum_{i,j=2}^{2n}\langle m_i,\mathcal{D}m_j\rangle\langle m_i,Jm_j\rangle=0.$$
Summing up what we have proved up to this point, we get:
\begin{equation}
\begin{split}
    \fint_{\mathbb{S}(\mathfrak{n})}\lvert P_{\mathfrak{n}}((\mathcal{D}&+J)v)\rvert^2d\sigma(v)=\frac{1}{2n-1}\sum_{i,j=2}^{2n}\langle m_i,\mathcal{D}m_j\rangle^2+2\langle m_i,\mathcal{D}m_j\rangle\langle m_i,Jm_j\rangle+\langle m_i,Jm_j\rangle^2\\
    =&\frac{\sum_{i,j=2}^{2n}\langle m_i,\mathcal{D}m_j\rangle^2+2n-2}{2n-1}=\frac{\text{Tr}(\mathcal{D}^2)-2\langle \mathfrak{n},\mathcal{D}^2\mathfrak{n}\rangle+\langle \mathfrak{n},\mathcal{D}\mathfrak{n}\rangle^2}{2n-1}+\frac{2n-2}{2n-1},
    \nonumber
    \end{split}
\end{equation}
where the first identity of the second line above comes from \eqref{numerooo14}. Finally putting together the expressions of the integrals (V), (VI) and (VII), we get from \eqref{equizias} that:
\begin{equation}
    \begin{split}
&\frac{c^2\mathfrak{e}(\mathcal{X})}{\mathcal{C}_n\sigma(\mathbb{S}(\mathfrak{n}))}=
-\frac{2n+1}{8}\bigg(\frac{2\text{Tr}(\mathcal{D}^2)-4\langle \mathfrak{n},\mathcal{D}^2\mathfrak{n}\rangle+2\langle \mathfrak{n},\mathcal{D}\mathfrak{n}\rangle^2+\left(\text{Tr}(\mathcal{D})-\langle \mathfrak{n},\mathcal{D}\mathfrak{n}\rangle\right)^2}{(2n-1)(2n+1)}\bigg)\\
        &\qquad\qquad-\frac{1}{4}+\frac{\langle \mathcal{D}J\mathfrak{n},\mathfrak{n}\rangle}{2n-1}+\frac{1}{2}\bigg(\frac{\text{Tr}(\mathcal{D}^2)-2\langle \mathfrak{n},\mathcal{D}^2\mathfrak{n}\rangle+\langle \mathfrak{n},\mathcal{D}\mathfrak{n}\rangle^2}{2n-1}+\frac{2n-2}{2n-1}\bigg)\\
        =&\frac{1}{4}\frac{\text{Tr}(\mathcal{D}^2)-2\langle \mathfrak{n},\mathcal{D}^2\mathfrak{n}\rangle+\langle \mathfrak{n},\mathcal{D}\mathfrak{n}\rangle^2}{2n-1}+\frac{n-1}{2n-1}
        -\frac{1}{4}+\frac{\langle \mathcal{D}J\mathfrak{n},\mathfrak{n}\rangle}{2n-1}-\frac{\left(\text{Tr}(\mathcal{D})-\langle \mathfrak{n},\mathcal{D}\mathfrak{n}\rangle\right)^2}{8(2n-1)},
        \nonumber
    \end{split}
\end{equation}
where the last identity is obtained from the previous ones with few algebraic computations.
\end{proof}

\begin{teorema}\label{appendicefinale}
Assume $\mu$ is a $(2n+1)$-uniform measure supported on the quadric $ \mathbb{K}(0,\mathcal{D},-1)$, see \ref{simmi}. Then, for any $h\in \R^{2n}\setminus\{x\in \R^{2n}:(\mathcal{D}+J)x=0\}$, we have:
\begin{equation}
\begin{split}
0=&\frac{\text{Tr}(\mathcal{D}^2)-2\langle \mathfrak{n}(h),\mathcal{D}^2\mathfrak{n}(h)\rangle+\langle \mathfrak{n}(h),\mathcal{D}\mathfrak{n}(h)\rangle^2}{4(2n-1)}+\frac{n-1}{2n-1}
        -\frac{1}{4}\\
        &\qquad\qquad\qquad\qquad\qquad+\frac{\langle \mathcal{D}J\mathfrak{n}(h),\mathfrak{n}(h)\rangle}{2n-1}-\frac{\left(\text{Tr}(\mathcal{D})-\langle \mathfrak{n}(h),\mathcal{D}\mathfrak{n}(h)\rangle\right)^2}{8(2n-1)},
        \label{eq16}
\end{split}
\end{equation}
where $\mathfrak{n}(h):=(\mathcal{D}+J)h/\lvert(\mathcal{D}+J)h\rvert$.
\end{teorema}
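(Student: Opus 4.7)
The plan is to identify \eqref{eq16} as the vanishing of the coefficient $\mathfrak{e}(\mathcal{X})$ of $r^{2n+3}$ in the expansion of the perimeter measure of $\mathbb{K}(0,\mathcal{D},-1)$ provided by Propositions \ref{TEXP} and \ref{TEXP2}. The uniformity of $\mu$ together with the identification between $\mathcal{S}^{2n+1}$ and the intrinsic perimeter measure will force this coefficient to be zero at every non-characteristic point of the support.

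First I would fix $h\in\R^{2n}\setminus\Sigma(f)$ with the additional property that $\mathcal{X}:=(h,f(h))\in\supp(\mu)$ (this is automatic unless we are in the peculiar planar case of Proposition \ref{spt1}(ii), which will be handled at the end). Since $\mu$ is $(2n+1)$-uniform and $\mathcal{X}\in\supp(\mu)$, one has $\mu(B_r(\mathcal{X}))=r^{2n+1}$ for every $r>0$. By Proposition \ref{supportoK}, $\mu=\mathcal{S}^{2n+1}_{\supp(\mu)}$, and since $\mathcal{X}\notin\Sigma(F)$, Proposition \ref{spt1} gives $B_r(\mathcal{X})\cap\supp(\mu)=B_r(\mathcal{X})\cap\mathbb{K}(0,\mathcal{D},-1)$ for all $r$ sufficiently small. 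The identity $\lvert\partial\mathbb{K}\rvert=\mathfrak{c}_n\,\mathcal{S}^{2n+1}_{\mathbb{K}(0,\mathcal{D},-1)}$ established in Proposition \ref{rapperhor} (see \eqref{eq:1101}) therefore yields, for all small $r>0$,
\begin{equation}
\mathfrak{c}_n\,r^{2n+1}=\lvert\partial\mathbb{K}\rvert(B_r(\mathcal{X})).
\nonumber
\end{equation}

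Next I would invoke the expansion of Proposition \ref{TEXP}: for every $\epsilon>0$ there is $\mathfrak{r}_4(\epsilon)>0$ such that for $0<r<\mathfrak{r}_4$,
\begin{equation}
\lvert\partial\mathbb{K}\rvert(B_r(\mathcal{X}))=\mathfrak{c}_n r^{2n+1}+\mathfrak{e}(\mathcal{X})r^{2n+3}+\epsilon R_2(r),\qquad\lvert R_2(r)\rvert\leq \mathfrak{C}_6(n)r^{2n+3}.
\nonumber
\end{equation}
Comparing with the previous identity, subtracting $\mathfrak{c}_n r^{2n+1}$ and dividing by $r^{2n+3}$ gives $\lvert\mathfrak{e}(\mathcal{X})\rvert\leq \epsilon\mathfrak{C}_6(n)$. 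The arbitrariness of $\epsilon$ forces $\mathfrak{e}(\mathcal{X})=0$, and the explicit formula for $\mathfrak{e}(\mathcal{X})$ computed in Proposition \ref{TEXP2} is precisely the left-hand side of \eqref{eq16}.

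To close the argument one must extend the identity from those $h$ with $(h,f(h))\in\supp(\mu)$ to arbitrary $h\in\R^{2n}\setminus\Sigma(f)$. In every case except the planar exception of Proposition \ref{spt1}(ii), the set of such $h$ is already all of $\R^{2n}\setminus\Sigma(f)$. In the exceptional case one notes that the left-hand side of \eqref{eq16} is a rational function of $h$ (through the analytic dependence of $\mathfrak{n}(h)=(\mathcal{D}+J)h/\lvert(\mathcal{D}+J)h\rvert$), and it has already been shown to vanish on the non-empty open subset $\pi_H(\supp(\mu)\setminus\Sigma(F))$; hence it vanishes on the whole connected complement of $\Sigma(f)$. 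I expect the only mild technical point to be this final extension step; the heart of the proof is the matching of the constant $\mathfrak{c}_n$ on both sides of the perimeter asymptotics, which was precisely arranged in Propositions \ref{rapperhor} and \ref{TEXP} so that the $r^{2n+1}$ coefficients cancel and the obstruction to uniformity appears in the $r^{2n+3}$ coefficient.
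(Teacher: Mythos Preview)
Your proof is correct and follows the same strategy as the paper: use Propositions \ref{supportoK}, \ref{spt1} and \ref{rapperhor} to identify $\mu(B_r(\mathcal{X}))$ with $\lvert\partial\mathbb{K}\rvert(B_r(\mathcal{X}))/\mathfrak{c}_n$ for small $r$, then compare with the expansion of Proposition \ref{TEXP} and let $\epsilon\to 0$ to force $\mathfrak{e}(\mathcal{X})=0$, which by Proposition \ref{TEXP2} is exactly \eqref{eq16}.

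The only point worth flagging is your phrase ``the whole connected complement of $\Sigma(f)$'' in the final extension step. In the exceptional planar case $n=1$, $\dim(\Sigma(f))=1$, the complement $\R^2\setminus\Sigma(f)$ is \emph{not} connected --- that is precisely why the case is exceptional --- so real-analytic continuation from one half-plane does not a priori reach the other. Your rational-function idea still works, but for a slightly different reason: after clearing the denominator $\lvert(\mathcal{D}+J)h\rvert^4$, the left-hand side of \eqref{eq16} becomes a polynomial in $h$, and a polynomial vanishing on a non-empty open set of $\R^{2}$ vanishes identically, hence on both half-planes. The paper instead observes directly that $\mathfrak{n}(-h)=-\mathfrak{n}(h)$ and that every term in \eqref{eq16} is even in $\mathfrak{n}$; since $\Sigma(f)$ passes through the origin (here $b=0$), the map $h\mapsto -h$ swaps the two half-planes, which transfers the identity from one to the other. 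Either fix is immediate once the issue is noticed.
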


\begin{proof} Suppose that $\mathcal{X}:=(h,f(h))\in\supp(\mu)$. Then Proposition \ref{spt1} implies that if $r$ is sufficiently small, then: $$B_r(\mathcal{X})\cap \mathbb{K}(0,\mathcal{D},1)=B_r(\mathcal{X})\cap \supp(\mu).$$ 
Therefore, Propositions \ref{supportoK} and \ref{TEXP} imply that:
\begin{equation}
\begin{split}
      r^{2n+1}=\mu&(B_r(\mathcal{X}))=\mathcal{C}^{2n+1}\llcorner{\supp(\mu)}(B_r(\mathcal{X}))\\
      &\qquad\overset{\eqref{numeroo41}, \eqref{numeroo42}}{=}\frac{\lvert\partial\mathbb{K}\rvert(B_r(\mathcal{X}))}{\mathfrak{c}_n}=r^{2n+1}+\frac{ \mathfrak{e}(\mathcal{X})}{\mathfrak{c}_n}r^{2n+3}+o(r^{2n+3}),
\end{split}
\end{equation}
whenever $r$ is sufficiently small.

If $n>1$ or $\text{dim}(\Sigma(f))=0$, see \eqref{char}, Proposition \ref{spt1} and Proposition \ref{TEXP2} prove the claim since $\pi_H(\supp(\mu))$ coincides with $\R^{2n}$. On the other hand if $n=1$ and $\dim(\Sigma(f))=1$, Proposition \ref{spt1} implies that one of the two connected components of $\R^2\setminus \Sigma(f)$, that are halfspaces whose boundary passes through $0$, is contained in $\pi_H (\supp(\mu))$ and thus equation \eqref{eq16} holds for any $z$ contained in such connected component. However, the fact that $\mathfrak{n}(-h)=-\mathfrak{n}(h)$ and the parity of $\mathfrak{e}(\mathcal{X})$ with respect to change in sign of $\mathfrak{n}$, show that equation \eqref{eq16} holds on $\R^2\setminus \Sigma(f)$.
\end{proof}

\printbibliography

%
\end{document}